\newtheorem{theorem}{Theorem}[chapter]
\newtheorem{lemma}[theorem]{Lemma}
\newtheorem{proposition}[theorem]{Proposition}
\newtheorem{corollary}[theorem]{Corollary}
\theoremstyle{definition}
\newtheorem{definition}[theorem]{Definition}
\newtheorem{example}[theorem]{Example}
\theoremstyle{remark}
\newtheorem{remark}[theorem]{Remark}
\DeclareMathOperator{\Int}{Int}
\DeclareMathOperator{\Ad}{Ad}
\DeclareMathOperator{\ind}{ind}
\DeclareMathOperator{\Hom}{Hom}
\def\2by2#1#2#3#4{\left(
\begin{array}{cc}
{#1}&{#2}\\  {#3}&{#4}\end{array}
\right)}
\def\vectr#1#2{\binom{#1}{#2}}
\def\otd{\otimes\cdots\otimes}
\def\bs{\backslash}
\font \boldfrak eufb10
\def\bfr#1{\hbox{\boldfrak #1}}
\def\otd{\otimes\cdots\otimes}
\def\nat{{\natural}}
\def\p{\mathfrak{p}}
\def\v#1{\underline{#1}}
\def\paperbook{paper}
\def\C{\mathbb{C}}
\def\R{\mathbb{R}}
\def\Z{\mathbb{Z}}
\def\F{\mathbb{F}}
\def\g{\mathfrak{g}}
\def\t{\mathfrak{t}}
\def\z{\mathfrak{z}}
\def\gP{\mathfrak{P}}
\def\gO{\mathfrak{O}}
\def\gJ{\mathfrak{J}}
\def\cG{\mathcal{G}}
\def\cZ{\mathcal{Z}}
\def\cA{\mathscr{A}}
\def\cB{\mathscr{B}}
\def\cK{\mathcal{K}}
\def\cI{\mathcal{I}}
\def\cM{\mathcal{M}}
\def\cN{\mathcal{N}}
\def\cP{\mathcal{P}}
\def\cH{\mathcal{H}}
\def\cZ{\mathcal{Z}}
\def\cS{\mathcal{S}}
\def\s{\mathfrak{s}}
\def\tr{\hbox{tr}}
\def\gal{\hbox{Gal}}
\def\bs{\backslash}
\def\bG{\mathbf{G}}
\def\bGL{\mathbf{GL}}
\def\bH{\mathbf{H}}
\def\bM{\mathbf{M}}
\def\bN{\mathbf{N}}
\def\bS{\mathbf{S}}
\def\bT{\mathbf{T}}
\def\bU{\mathbf{U}}
\def\bZ{\mathbf{Z}}
\def\ni{\noindent}
\def\bGU{\mathbf{U}}
\numberwithin{section}{chapter}
\numberwithin{equation}{chapter}
\begin{document}
\frontmatter
\title{Distinguished Tame Supercuspidal Representations}

\author{Jeffrey Hakim\footnote{Research of the first author supported in part by an NSA grant.}}
\address{Department of Mathematics and Statistics, American University, 4400 Massachusetts Avenue NW, Washington, DC 20016}
\email{jhakim@american.edu}

\author{Fiona Murnaghan\footnote{Research of the second author supported in part by an
 NSERC Discovery Grant.}}

\address{Department of Mathematics, University of Toronto, 40 Saint George Street, Toronto,
Canada M5S 2E4}
\email{fiona@math.toronto.edu}

\date{July 1, 2006}
\subjclass{Primary 22E50, 11F70;\\Secondary 11F67}
\keywords{supercuspidal representations, involutions, distinguished representations}

\begin{abstract}
This \paperbook\ studies the behavior
of Jiu-Kang Yu's tame supercuspidal representations
relative to involutions of reductive $p$-adic groups.
Symmetric space methods are used to illuminate
various aspects of Yu's construction. Necessary
conditions for a tame supercuspidal representation
of $G$
to be distinguished by (the fixed points of)
an involution of $G$ are expressed in
terms of properties of the $G$-orbit of the associated $G$-datum.
When these conditions are satisfied, the question
of whether a tame supercuspidal representation
is distinguished reduces to the question of
whether certain cuspidal representations of finite
groups of Lie type are distinguished relative to
particular quadratic characters.
As an application of the main results, we obtain necessary
and sufficient conditions for
equivalence of two of Yu's supercuspidal representations
associated to distinct $G$-data.
\end{abstract}

\maketitle

\setcounter{page}{4}
\tableofcontents
\mainmatter

\chapter{Introduction}

\section{General overview}
\label{sec:overview}

Let $G$ be the group $\bG(F)$ of $F$-rational points
of a connected, reductive $F$-group $\bG$, where $F$ is
a nonarchimedean local field of odd residual characteristic.
In this paper, we analyze the behavior,
relative to $F$-involutions of $\bG$, of those
irreducible supercuspidal representations of
$G$ constructed by Jiu-Kang Yu in \cite{Y}.  The terminology
 ``tame supercuspidal representation'' will always be used to refer 
precisely to Yu's representations.   The reader should consult  \cite{Y}
for more details on the motivation for this terminology.  \label{tameness}

Suppose that $\theta$ is an involution of $G$, that is, an
automorphism of $\bG$ of order two
that is defined over $F$, and let $G^\theta$
be the subgroup of $G$ consisting of those points that are fixed by $\theta$.
Given $\theta$, the theory of distinguished representations
involves the study of the space $\Hom_{G^\theta}(\pi,1)$,
for $\pi$ an irreducible smooth representation of $G$.
The latter space comprises those linear forms
$\lambda:V_\pi\rightarrow \C$ on the representation
space $V_\pi$ of $\pi$ that satisfy $\lambda(\pi(h)v)=\lambda(v)$
for all $h\in G^\theta$ and $v\in V_\pi$. The representations
$\pi$ for which $\Hom_{G^\theta}(\pi,1)$ is nonzero are called
$\theta$-\textit{distinguished representations} or 
$G^\theta$-\textit{distinguished representations} \label{distinguishedness} (or simply
\textit{distinguished representations} when the choice of
$\theta$ is clear). Our main result concerning distinguished 
representations, Theorem \ref{maindimformula}, gives a formula for
the dimension of $\Hom_{G^\theta}(\pi,1)$ when $\pi$
is a tame supercuspidal representation of $G$.  
The other main results of the paper, Theorems~\ref{partialeqprob}
and \ref{equivtheorem},
give necessary and sufficient conditions for equivalence
of two tame supercuspidal representations associated to
distinct $G$-data. These conditions
are expressed in terms of properties of the data that are
used in Yu's construction. As discussed below, these theorems are 
proved by applying our results concerning
distinguished tame supercuspidal representations
in a particular setting.

In Section~\ref{sec:fivetwotwo}, we discuss the statement of Theorem~\ref{maindimformula} 
in more detail and in Section~\ref{sec:distoverview} we describe some of its better known 
applications to the theory of harmonic analysis on symmetric spaces and the theory of 
periods of automorphic forms.
Before doing this, we would like to emphasize that our work has a variety of applications with no apparent connection to the theory of distinguished representations 
(such as Theorems~\ref{partialeqprob} and \ref{equivtheorem})
and these results shed light on the most basic aspects of tame supercuspidal representations.
Some of these applications are necessary in the development of Theorem \ref{maindimformula} and others are a consequence of it.

To describe them, let us develop some background.  Yu's construction, which is recalled in
some detail in Sections~\ref{sec:notations} and \ref{sec:construction}, starts with an object
$\Psi$ that we call a \textit{generic cuspidal $G$-datum}. \label{cuspGdat}
(The term ``cuspidal''  is included to emphasize that we are working with data that yield supercuspidal representations, since the construction applies to a more general class of data to give representations that are not necessarily supercuspidal.)
Yu associates to each generic cuspidal $G$-datum
$\Psi$ a compact-mod-center subgroup $K=K(\Psi)$
of $G$ and an irreducible smooth representation
$\kappa=\kappa(\Psi)$ of $K$, such that the
representation $\pi=\pi(\Psi)$ of $G$ obtained by
smooth, compactly supported induction from $\kappa$
is irreducible and supercuspidal.

Given two generic cuspidal $G$-data $\Psi_1$ and $\Psi_2$, one may ask when 
$\kappa (\Psi_1)$ and $\kappa (\Psi_2)$ or $\pi (\Psi_1)$ and
 $\pi (\Psi_2)$ are equivalent.  In other words, we are asking about the fibers of the maps $\Psi \mapsto [\kappa(\Psi)]$ and $\Psi\mapsto [\pi(\Psi)]$, where $[\tau]$ is used to denote the equivalence class of a representation $\tau$.  Though these questions do not seem to involve distinguished representations, the answers are a consequence of Theorem \ref{maindimformula}.
Understanding why this is so begins with the elementary observation that if
 $\pi_1 = \pi(\Psi_1)$ and $\pi_2= \pi(\Psi_2)$ then $\pi_1$ and $\pi_2$ are equivalent 
exactly when the tensor product representation $\pi_1\times \tilde\pi_2$  of $G\times G$ 
is distinguished with respect to the involution $\theta (g,h) = (h,g)$.  
(In general, $\tilde\pi$ denotes the contragredient of $\pi$.)
One then introduces a natural contragredient operation $\Psi\mapsto\tilde\Psi$ on generic 
cuspidal $G$-data. This operation is defined in Section~\ref{sec:contragredients}
and it has the property that $\pi (\tilde\Psi)$ is equivalent to
the contragredient $\tilde\pi(\Psi)$ of $\pi(\Psi)$.
Next, one defines a product operation $\Psi\times \Psi'$ 
in such a way that the product of 
two generic cuspidal $G$-data is a generic cuspidal $(G\times G)$-datum and 
$\pi (\Psi\times\Psi')$ is equivalent to $\pi(\Psi)\times \pi(\Psi')$.  
(See Section~\ref{sec:Yuproducts}.)
Then we note that $\pi_1$ and $\pi_2$ are equivalent
 exactly when $\pi (\Psi_1\times\tilde\Psi_2)$ is 
distinguished and we apply Theorem \ref{maindimformula} to determine 
when this occurs.
 
Our description of the fibers of the map $\Psi \mapsto [\pi(\Psi)]$
is in terms of two basic operations on generic cuspidal $G$-data: 
(1) conjugation by $G$, and (2) refactorization.
As for conjugation by $G$, there is an obvious way in which an $F$-automorphism of $\bG$ acts on the set of all generic cuspidal $G$-data.  It is easy to see that in the special case of conjugation by an element of $G$, the equivalence class of the resulting tame supercuspidal representation is preserved.

Refactorization generalizes a procedure used in Howe's construction
 of tame supercuspidal representations of general linear groups.  
A given generic cuspidal datum involves certain quasicharacters $\phi_i$ and a 
finite-dimensional representation $\rho$ that is related to a cuspidal
representation of a reductive group over a finite field. In the case of general linear
groups $\rho$ is associated to a quasicharacter of an unramified torus 
(via the construction of Deligne and Lusztig).
The quasicharacters $\phi_i$, together with the quasicharacter associated to $\rho$,
 are rough analogues of the factors in the Howe factorization 
in Howe's construction. 
In Howe's construction, the Howe factorization is not unique and one often needs 
to adjust the factors for convenience in applications.   Our notion of refactorization 
gives a similar adjustment of the $\phi_i$'s and of $\rho$ in such a way that the 
equivalence classes of the corresponding representations $\kappa$ and $\pi$ do not change.  
For more information on the relation between Howe's
construction and Yu's construction as it applies to general linear groups,
the reader may refer to Section~\ref{sec:Howeconstruction}.

We show in Theorem~\ref{partialeqprob} that  whenever two generic cuspidal $G$-data 
yield equivalent tame supercuspidal representations then the $G$-data are 
essentially related by 
refactorization and conjugation.  In Theorem~\ref{equivtheorem}, we give
necesssary and sufficient conditions for equivalence in terms of
conjugacy of certain groups attached to the two generic,
cuspidal $G$-data, and equivalence up to conjugacy of certain twists of
representations occurring in the $G$-data.
Our determination of the fibers of Yu's map $\Psi\mapsto [\pi (\Psi)]$ complements 
the recent work \cite{K} of Ju-Lee Kim, which studies the image of the correspondence.
We also determine the fibers of the map $\Psi\mapsto [ \kappa (\Psi)]$.

Utilizers of Yu's construction might find some value in our exposition of the 
construction.   For example, Yu's construction associates to a generic cuspidal
$G$-datum not just a tame supercuspidal representation of $G$, but also
a finite sequence $\vec \pi$ of representations.  Yu indicates that this 
fact should be of some use in constructing inductive arguments when studying tame supercuspidal representations.  
We explain this idea in more detail and provide applications of it.

We also emphasize the fact that the inducing representation $\kappa = \kappa (\Psi)$ has a natural tensor product decomposition
$\kappa_{-1}\otd \kappa_d$, where $\kappa_i$ is attached to the quasicharacter $\phi_i$, when $i\ne -1$, and $\kappa_{-1}$ is attached to the depth zero representation $\rho$.  The study of $\kappa(\Psi)$ often reduces to a study of the factors $\kappa_i$ in much the same way that certain aspects in the theory of automorphic representations reduce to analogous local issues.  In the case of automorphic representations, the uniqueness of Whittaker models (or some symmetric space analogue of this) is the key.  In the theory of tame supercuspidal representations, this is replaced by certain uniqueness properties of Heisenberg groups over finite fields with prime order.
 
In two previous papers \cite{HM2} and \cite{HM3}, we studied distinguishedness
of tame supercuspidal representations of general linear groups relative
to three different involutions. In Section~\ref{sec:glexamples}, we 
indicate how the results of those papers can be interpreted in relation 
to Theorem~\ref{maindimformula}.
In Section~\ref{sec:examples}, we give two examples of tame supercuspidal
representations $\pi$ and involutions $\theta$ with $\dim{\rm Hom}_{G^\theta}(\pi,1)>1$. 
We also give
an example of an application of Theorem~\ref{maindimformula}
to a family of tame supercuspidal representations (originally
studied in \cite{HM3}) 
for which two distinct $K$-orbits of involutions
could potentially contribute to the space of
$G^\theta$-invariant linear forms on the space of $\pi$.
However, the analysis in \cite{HM3} can be used to see that only one of these
$K$-orbits does contribute a nonzero value.

The authors thank Jeffrey Adler for helpful conversations,
and both Joshua Lansky and the referee for detailed comments on 
the manuscript.

 \section{The main theorem}
\label{sec:fivetwotwo}
We now offer a rough statement of Theorem \ref{maindimformula}.  Section \ref{sec:mainproof} gives an extended outline of the proof and thereby provides a road-map for the structure of much of the \paperbook.  (The reader is encouraged to use Section~\ref{sec:mainproof} as a guide rather than reading the \paperbook\ sequentially.)
For the remainder of this section, we fix both $G$ and the inducing subgroup $K$.  A generic, cuspidal $G$-datum with $K(\Psi) =K$ will be called a ``$(G,K)$-datum.''  If $\Psi_1$ and $\Psi_2$ are $(G,K)$-data that are related by refactorization and conjugation by $K$ then we write $\Psi_1\sim\Psi_2$ and regard the data as being equivalent.  (From Theorem \ref{partialeqprob}, we see that $\Psi_1\sim \Psi_2$ exactly when $\kappa (\Psi_1)\simeq\kappa (\Psi_2)$, though the proof requires Theorem \ref{maindimformula}.)
The set of $(G,K)$-data will be denoted by $\Xi$, while the set of equivalence classes 
of $(G,K)$-data will be denoted by $\Xi^K$.

Now fix a $G$-orbit $\Theta$ of involutions of $G$ relative to the action $g\cdot \theta= \Int (g)\circ \theta \circ \Int (g)^{-1}$ with $\Int (g)(h) = ghg^{-1}$.  Let $\Theta^K$ be the set of $K$-orbits contained in $\Theta$.  If $\Theta'\in \Theta^K$ and $\xi\in \Xi^K$ define
$$\langle \Theta',\xi\rangle_K = \dim \Hom_{K^\theta}(\kappa(\Psi),1),$$ where $\theta$ and $\Psi$ are arbitrary elements of $\Theta'$ and $\xi$, respectively, and $K^\theta = K\cap G^\theta$.  It is straightforward to verify that the choices of $\theta$ and $\Psi$ are of no consequence and thus $\langle \Theta',\xi\rangle_K$ is a well defined (finite) number.
It may happen that $K$ is not $\theta$-stable, but in this case $\langle \Theta',\xi\rangle_K$ must vanish.  

So we have a correspondence between the set $\Theta^K$ of $K$-orbits of involutions in 
$\Theta$ and the set $\Xi^K$ of equivalence classes of $(G,K)$-data such that 
$\Theta'$ and $\xi$ correspond
when $\langle \Theta',\xi\rangle_K$ is nonzero.  Similarly, we have a correspondence between the set of $G$-orbits of involutions of $G$ and the set $\Xi^K$ defined via the pairing
$$\langle \Theta,\xi\rangle_G = \dim\Hom_{G^\theta} (\pi(\Psi),1).$$
Recall that the latter Hom-spaces are the fundamental objects studied in this \paperbook.

Proposition \ref{compatorbsum} gives a preliminary formula 
$$\langle \Theta ,\xi\rangle_G = m_K(\Theta)\sum_{\Theta'\in \Theta^K} \langle \Theta', \xi\rangle_K$$ for $\langle \Theta,\xi\rangle_G$ in terms of the constants $\langle \Theta',\xi\rangle_K$ and   a finite geometric constant 
$m_K(\Theta)$ attached to $\Theta$ (described in Sections \ref{sec:mainproof} and \ref{sec:Mackey}).  

The crucial issue is the analysis of the $\langle \Theta' ,\xi\rangle_K$'s.  Assume $\Theta'$ and $\xi$ have been fixed and $\langle \Theta',\xi\rangle_K$ is nonzero.  The nonvanishing condition is a severe restriction that guarantees that $\theta\in \Theta'$ and $\Psi\in \xi$
may be chosen so that $\Psi$ is $\theta$-symmetric in a natural sense.  In fact, if $\theta\in \Theta'$ and $\Psi\in \xi$ are chosen arbitrarily then there exists a refactorization $\dot\Psi$ of $\Psi$ and an element $k\in K$ such that ${}^k\dot\Psi$ is $\theta$-symmetric or, equivalently, $\dot\Psi$ is $(k^{-1}\cdot\theta)$-symmetric.

Once $\Psi$ and $\theta$ have been chosen so that $\Psi$ is $\theta$-symmetric, the task of studying $\langle \Theta',\xi\rangle_K$ becomes more accessible.  Though we do not fully compute $\langle \Theta', \xi\rangle_K$, we obtain a formula for it in terms of simpler objects that essentially involve the representation theory of finite groups.

To further explain our results,  let us first recall in slightly more detail what it 
means for $\Psi$ to be a generic cuspidal $G$-datum.   The datum $\Psi$ is a 4-tuple 
$(\vec\bG,y,\rho,\vec\phi)$ where, very roughly speaking, the components are as follows:
\begin{itemize}
\item $\vec\bG$ is a tower $\bG^0\subset \cdots \subset \bG^d = \bG$ of $F$-subgroups of $\bG$.  
\item $y$ is a suitable point in the extended Bruhat-Tits building of $G^0=\bG^0(F)$.
\item $\rho$ is an irreducible representation of the normalizer $K^0$ of the parahoric subgroup
$G^0_{y,0}$ in $G^0$, the restriction $\rho\,|\, G_{y,0^+}^0$ of $\rho$ to
the pro-unipotent radical $G_{y,0^+}^0$ of $G_{y,0}^0$ is a multiple of
the trivial representation, and $\rho\,|\, G^0_{y,0}$ contains the inflation
of an irreducible cuspidal representation of the finite group $G_{y,0}^0/G_{y,0^+}^0$.
\item $\vec\phi = (\phi_0,\dots ,\phi_d)$, where $\phi_i$ is a quasicharacter of 
$G^i = \bG^i(F)$, and $\phi_i$ satisfies a certain genericity condition relative 
to $G^{i+1}$ if $i\not=d$.
\end{itemize}

Twisting the representation $\rho$ by the character $\prod_{i=0}^d (\phi_i\, |\,K^0)$ 
gives a representation $\rho'$ of $K^0$ that is invariant under refactorizations of $\Psi$.

Let $p$ be the residual characteristic of $F$.
Using the theory of Heisenberg representations over a field with $p$ elements, we define a certain character $\eta_\theta^\prime$ of exponent two of the group $K^{0,\theta}$ of 
$\theta$-fixed points in $K^0$.  
The formula
$$
\langle \Theta' ,\xi\rangle_K = \dim \Hom_{K^{0,\theta}} (\rho' ,\eta'_\theta)
$$   
is one of the results in Theorem~\ref{maindimformula}.
 In other words, Theorem \ref{maindimformula} essentially reduces the theory of 
distinguished tame supercuspidal representations to the theory of distinguished 
representations of finite groups of Lie type.  If the representations of the latter 
finite groups are Deligne-Lusztig representations, one can appeal to Lusztig's 
results \cite{L}.
Otherwise, little is known about the dimension of 
$ \Hom_{K^{0,\theta}} (\rho' ,\eta'_\theta)$.

Theorem~\ref{maindimformula} also gives precise information about the set of all $K$-orbits 
$\Theta'\in \Theta^K$ such that $\langle \Theta',\xi\rangle_K$ is nonzero.  
(See also Proposition \ref{oldLemmasBandC}.)  Specifically, if $\Theta'$ is one such 
$K$-orbit and $\theta\in \Theta'$ then for any other such orbit $\Theta''$ there exists 
$g\in G$ such that $g\theta (g)^{-1}\in K^0$ and $g\cdot \theta\in \Theta''$.  
This information is potentially very useful in the computation of $\langle \Theta,\xi\rangle_G$.

\section{Distinguished representations}
\label{sec:distoverview}

The terminology ``distinguished representation'' is most frequently used
in the mathematical subculture which centers around Jacquet's theory of relative trace formulas and periods of automorphic representations.  (See \cite{J}.) We wish to stress, however, that this \paperbook\ is addressed at several mathematical audiences, only one of which  is the latter group of mathematicians.  What is standard background material for one of these audiences needs to be explained to the other audiences and we have therefore attempted not to assume too much prerequisite knowledge of the reader. 

At the most basic level, the importance of $\theta$-distinguished representations is derived from Frobenius reciprocity, which implies that $\Hom_{G^\theta}(\pi,1)$ is canonically isomorphic to $\Hom_G (\pi , C^\infty(G^\theta\bs G))$.  Thus the $\theta$-distinguished representations may be viewed as precisely the representations which enter into the harmonic analysis on the $F$-symmetric space $G^\theta\bs G$.  But this hardly begins to describe the broad significance of the theory of distinguished representations to representation theory.  

For a given involution $\theta$, the set of all $\theta$-distinguished representations tends to be the image of an important correspondence, such as a Langlands lifting or a theta-lifting.  In practice, the existence of such correspondences is established through indirect means, such as global trace formula arguments.  It is hoped that the results and techniques in this \paperbook\ might provide a better understanding of distinguished representations which will lead to explicit constructions of correspondences between sets of representations on different groups.  

The linear forms in $\Hom_{G^\theta}(\pi,1)$ may be viewed as local analogues of period integrals associated to an automorphic representation.  Such periods arise in many contexts and the local and global theories often  closely parallel each other.   For example, it is not unusual for the existence of a pole of an automorphic $L$-function to depend on whether  or not a certain period integral vanishes.  Similarly, the image of a lifting map  may be 
described by a period condition.  Given a reductive group and a Levi subgroup, one may consider the problem of when an irreducible representation of the Levi subgroup induces (via parabolic induction) an irreducible representation of the group in which it is embedded.  It turns out that whether or not the induced representation is irreducible often depends on whether or not the inducing representation is distinguished in a suitable sense.
In short, the applications of distinguished representations and periods of automorphic forms are numerous and the literature is vast.  We refer to the survey article \cite{J} for more information and references.

\section{Synopsis of the main proof}
\label{sec:mainproof}

The main result in this \paperbook, Theorem \ref{maindimformula}, gives 
an expression for the space $\Hom_{G^\theta}(\pi,1)$, where $\theta$ is an involution of $G$ and $\pi = \ind_K^G(\kappa)$ is a tame supercuspidal representation associated to some 
generic cuspidal $G$-datum 
$\Psi = (\vec\bG , y, \rho,\vec\phi)$ by Yu's construction.  We now  give a general 
outline of the proof of Theorem \ref{maindimformula}.

The first step is elementary.   We use Mackey's theory of induced representations to obtain a canonical isomorphism
$$\Hom_{G^\theta}(\pi ,1)\cong \bigoplus_{KgG^\theta\in K\bs G/G^\theta} \Hom_{K\cap gG^\theta g^{-1}} (\kappa,1).$$   The latter sum is parametrized by double cosets in the space $K\bs G/G^\theta$, however, it is also possible to translate this parametrizing set to a space of twisted conjugacy classes or to a space of orbits of involutions.  (See Section~\ref{sec:Mackey}.) 

If one is interested in studying the geometry of the parametrizing space, then it is 
sometimes more convenient to work in terms of twisted conjugacy classes.  This is the case, 
for example, if $G$ is a general linear group and $G^\theta$ is a unitary or 
orthogonal group, since 
then one ends up working with the convenient space of all hermitian matrices or 
orthogonal matrices of a given rank.  (See \cite{HMa1}, \cite{HMa2} or \cite{HM2}.)  

For many purposes, the most illuminating approach is to work in terms of orbits of involutions.
Recall that we are letting $G$ act on the involutions of $G$, with
$g\cdot \theta = \Int (g)\circ \theta \circ \Int(g)^{-1}$ for $g\in G$ and $\theta$
an involution of $G$.  Let $\Theta$ be the $G$-orbit of $\theta$.  Then we may 
write $d_\Theta (\pi)$ for the dimension of $\Hom_{G^\theta}(\pi,1)$, since it is easy to see that the latter dimension stays constant as we vary the choice of $\theta$ in a fixed $G$-orbit $\Theta$.  Lemma \ref{orbmult} translates the above decomposition of $\Hom_{G^\theta}(\pi,1)$ into a dimension formula
$$d_\Theta (\pi) = m_K(\Theta) \sum_{\Theta'} d_{\Theta'}(\kappa),$$ where we are summing over the $K$-orbits $\Theta'$ contained in $\Theta$,
and $d_{\Theta'}(\kappa)$ is the dimension of $\Hom_{K\cap G^{\theta'}}(\kappa,1)$,
for $\theta'\in \Theta'$.  The factor 
$m_K(\Theta)$ is geometric in nature and reflects the fact that various double cosets naturally correspond to a single $K$-orbit $\Theta'$ and all of these double cosets which get grouped together for a fixed $\Theta'$ make exactly the same contribution to $d_\Theta(\pi)$.  These multiplicities are not evident when one uses the double coset point of view.

Suppose now we want to study a particular summand $\Hom_{K\cap gG^\theta g^{-1}}(\kappa,1)$.  One of the first things to observe is that $gG^\theta g^{-1} = G^{g\cdot \theta}$.  So, after replacing $g\cdot \theta$ by $\theta$, we may as well assume we are studying $\Hom_{K^\theta}(\kappa,1)$, where $K^\theta = K\cap G^\theta$.

Next, we develop and apply a basic tool called ``refactorization.''  \label{refactorintro} In the datum $\Psi = (\vec\bG,y,\rho, \vec\phi)$, the last component $\vec\phi$ is a sequence $(\phi_0,\dots ,\phi_d)$ of quasicharacters which are analogous to the factors in Howe's factorization in the construction of tame supercuspidal representations of general linear groups.  (See \cite{Ho}, \cite{Moy}.)  In Howe's construction, it is possible to make certain adjustments to the factors without affecting the supercuspidal representation which is constructed.  An analogue of this is developed in 
Section~\ref{sec:refactorization}.  Then, in Section~\ref{sec:symmetrizing}, we show that, after refactoring, we may assume that our given datum is ``weakly $\theta$-symmetric'' \label{weaksymintro} in the sense that each component $\bG^i$ of $\vec\bG$ is $\theta$-stable and $\phi_i \circ\theta = \phi_i^{-1}$.

Having reduced to studying $\Hom_{K^\theta}(\kappa,1)$ when $\Psi$ is weakly $\theta$-symmetric, we now make a further reduction.  We show that the latter Hom-space vanishes unless $\theta$ fixes the point $[y]$ in the reduced building $\cB_{\rm red}(\bG,F)$ which comes from $y$.  The condition $\theta [y] = [y]$ should really be viewed as a condition on the $K$-orbit $\Theta'$ of $\theta$, since $\theta [y]= [y]$ implies $\theta' [y]=[y]$ for all $\theta'\in \Theta'$.  Showing that $\Hom_{K^\theta}(\kappa,1)=0$ unless $\theta[y]=[y]$ requires a descent argument which we now describe.  

Let us say that the cuspidal  $G$-datum $\Psi$ has ``degree $d$''  \label{degreeintro} if $\vec\bG$ and $\vec\phi$ have $d+1$ components.  Roughly speaking, one obtains another cuspidal $G$-datum $\partial\Psi$ of degree $d-1$ by deleting the last entries in $\vec\bG$ and $\vec\phi$.  Let $\partial\kappa$ be the analogue of  $\kappa$ for $\partial\Psi$.  We will say that $\kappa$ is ``quadratically distinguished'' \label{qdistintro} if there is a character $\alpha$ of $K^\theta$ such that $\alpha^2=1$ and $\Hom_{K^\theta}(\kappa,\alpha)$ is nonzero. Proposition \ref{quaddistprop} asserts that if $\kappa$ is quadratically distinguished then so is $\partial\kappa$.  Suppose now that $\Hom_{K^\theta}(\kappa,1)\ne 0$.  Then, in particular, $\kappa$ is quadratically distinguished.  Applying Proposition \ref{quaddistprop} repeatedly, we ultimately deduce that the representation $\rho$ is quadratically distinguished.  Now suppose that $\theta [y]\ne [y]$.  We show that $\rho$ cannot be quadratically distinguished because that would contradict the cuspidality of the representation $\bar\rho$ of $G^0_{y,0}/G^0_{y,0^+}$ which comes from $\rho$.  The latter argument is contained in the proof of Proposition \ref{compatorbsum}.

It should be noted that the proof of Proposition \ref{quaddistprop} is deceptively short.  In fact, the proof uses a difficult geometric fact, developed in Section \ref{sec:maxiso} which says that the fixed points of $\theta$ determine a maximal isotropic subspace in each of the symplectic spaces associated to the Heisenberg $p$-groups involved in Yu's construction (under the assumption of weak $\theta$-symmetry).  This fact is quite simple to establish when $\theta [y] = [y]$, but requires a lengthy argument otherwise.

We are now reduced to studying $\Hom_{K^\theta}(\kappa,1)$ in the case of a weakly $\theta$-symmetric cuspidal  $G$-datum with $\theta [y]=[y]$.  For this, we need a stronger tool than the $\partial$ operator described above, namely, the factorization theory in 
Section~\ref{sec:factorization}.  The inducing representation $\kappa$ has a factorization 
$\kappa =\kappa_{-1}\otd \kappa_d$, where $\kappa_{-1}$ is attached to $\rho$ and, otherwise, 
$\kappa_i$ is attached to $\phi_i$.  (See Section~\ref{sec:construction} for more details.)  This factorization may be viewed as roughly analogous to the factorization of an automorphic representation into local components.  For automorphic representations, in ideal circumstances, one has a Hasse principle which reduces a particular property of an automorphic representation into a collection of analogous local problems.  Similarly, in our case we are able to separate out the contributions of the various $\phi_i$'s to $\Hom_{K^\theta}(\kappa,1)$, under the present symmetry conditions.  In particular, we show in 
Section~\ref{sec:factorization} that $\Hom_{K^\theta}(\kappa,1)$ factors as a tensor product of Hom-spaces, where there is one Hom-space for each $\phi_i$ and another Hom-space for~$\rho$.

The factorization theory requires a ``multiplicity one'' \label{multoneintro} argument analogous to the use of the multiplicity one property of local Whittaker models in the theory of automorphic representations.  Ultimately, we show that the Hom-space attached to each $\phi_i$ has dimension one and thus makes no contribution to the tensor product.  This leaves us with the factor associated to $\rho$, which is exactly what one needs to finish the proof of Theorem \ref{maindimformula}.

Showing that the Hom-space for $\kappa_i$ has dimension one involves several steps.  Our description of Yu's construction in Section~\ref{sec:construction} is different from \cite{Y} in style in that it highlights the fact that $\kappa$ is a tensor product.  Each of the factors $\kappa_i$, other than $\kappa_{-1}$, is obtained by starting with a Heisenberg representation $\tau_i$ on some space $V_i$.  Then, using the theory of the Weil representation, the representation $\tau_i$ is extended to a representation $\phi'_i$ on a larger group, but with the same representation space $V_i$.  The representation $\kappa_i$ of $K$ is obtained by a natural inflation process from the group $K^{i+1}$ on which $\phi'_i$ is defined.  (Note that the symplectic space involved in the Heisenberg construction may have dimension zero, in which case $\tau_i$, $\phi'_i$ and $\kappa_i$ have dimension one.)

The Heisenberg representation $\tau_i$ is defined on a certain quotient $\cH_i= J^{i+1}/N_i$ of subgroups of $K$ and we also regard it as a representation of $J^{i+1}$.  In Section~\ref{sec:Heis}, we study in detail the abstract theory of representations of Heisenberg $p$-groups, such as $\cH_i$.  In particular, we consider the behavior of Heisenberg representations with respect to an involution $\alpha$ of the Heisenberg group.  In the present context, the involution $\theta$ of $G$ determines an involution $\alpha$ of $\cH_i$ and we show that $\Hom_{\cH^\alpha_i}(\tau_i,1)$ has dimension one, where $\cH_i^\alpha$ is the group of fixed points of $\alpha$ in $\cH_i$.

Suppose $\lambda_i$ is a nonzero element of $\Hom_{\cH^\alpha_i}(\tau_i,1)$.  Since $\tau_i$, $\phi'_i$ and $\kappa_i$ all act on the same representation space $V_i$, we may consider the invariance properties of $\lambda_i$ with respect to $\phi'_i$ and $\kappa_i$.  In Proposition \ref{multonekappai}, we show that there is a character $\xi_i$ of $K^\theta$ with $\xi_i^2=1$ such 
$$\Hom_{\cH^\alpha_i}(\tau_i,1) = \Hom_{K^{i+1,\theta}}(\phi'_i,\xi_i) = \Hom_{K^\theta}(\kappa_i, \xi_i).$$  The space $\Hom_{K^\theta}(\kappa_i,\xi_i)$ is the Hom-space attached to $\kappa_i$ which occurs in the factorization theory.

The theory of Heisenberg $p$-groups and their representations is discussed in several
 sections of the \paperbook.  Section~\ref{sec:Heis} discusses the intrinsic theory, that is,
 those aspects of the theory that do not depend on how $\cH_i$ embeds as a subquotient of $K$, 
but instead only depend on $\cH_i$ viewed as an abstract group.  For example, 
Yu's notion of ``special isomorphism'' \label{sisointro} is discussed in Section~\ref{sec:Heis}.
 (A special isomorphism is essentially an isomorphism of an abstract Heisenberg $p$-group 
with some standard Heisenberg $p$-group.)  The extrinsic properties of the Heisenberg 
$p$-groups $\cH_i$ associated to generic quasicharacters $\phi_i$ are first discussed in 
Section~\ref{sec:genHeis}.  It is necessary to choose special isomorphisms that are 
simultaneously well behaved with respect to the ``embedding'' of the Heisenberg group in 
$K$ and compatible with the involution $\theta$.  We show that, in fact, the special 
isomorphisms constructed by Yu are suitably compatible with $\theta$.  Later, in 
Section~\ref{sec:appliedHeis}, we concentrate on the space $\Hom_{K^\theta}(\kappa_i,\xi_i)$.

\chapter{Algebraic background}

\section{Basic facts about distinguished supercuspidal  representations}
\label{sec:Mackey}

Let $G$ be a totally disconnected group with center $Z$ and suppose $K$ is an open subgroup of $G$ such that $K/(K\cap Z)$ is compact.  Let $(\kappa,V_\kappa)$ be a smooth irreducible representation of $K$ and let $\pi = \ind_K^G(\kappa)$ be the representation of $G$ obtained by smooth compactly supported induction from $\kappa$.  Hence, the space of $\pi$ consists of all functions $f: G\to V_\kappa$ that satisfy $$f(kg) = \kappa(k)f(g),$$ for all $k\in K$ and $g\in G$, that have compact support modulo $Z$, and are right $K_f$-invariant for some open subgroup $K_f$ of $G$.

The representation $\pi$ is irreducible exactly when $\Hom_G(\pi,\pi)$ has dimension one.  Mackey theory describes the latter space in terms of the intertwining properties of $\kappa$.  In particular, we have a canonical decomposition
$$\Hom_G(\pi,\pi)\cong \bigoplus_{KgK\in
K\bs G/K}I_g(\kappa),$$
where $$I_g(\kappa)= \Hom_{gKg^{-1}\cap
K}({}^g\kappa,\kappa)$$ and ${}^g\kappa$ is the
representation of $gKg^{-1}$ on the space of
$\kappa$ given by
${}^g\kappa (g')= \kappa( g^{-1}g'g)$.   So $\pi$ is irreducible exactly when there is a unique double coset $KgK$ such that $I_g(\kappa)\ne 0$.  In fact, this double coset must be the double coset of the identity element, since $I_1(\kappa)$ is nonzero.

Now assume $\pi$ is irreducible.  The contragredient of $\pi$ is the representation $\tilde\pi= \ind_K^G(\tilde\kappa)$, where $\tilde\kappa$ is the contragredient of $\kappa$. 
Let $\kappa\times\tilde\kappa$ and $\pi\times\tilde\pi$ be the tensor product representations
of $K\times K$ and $G\times G$, respectively.
(Note that we are using the $\times$ notation, relative to representations, to denote
a representation of a direct product of groups that is obtained as a tensor product
of representations of the factor groups. Elsewhere we use the $\otimes$ notation
in the setting where the factor groups are equal and where we are restricting
the tensor product to the diagonal subgroup of the direct product.)
 Given a $K$-invariant pairing on $\kappa\times\tilde\kappa$, we obtain a $G$-invariant pairing on 
$\pi\times \tilde\pi$ given by
$$\langle f,\tilde f\rangle = \int_{K\bs G} \langle f(g),\tilde f(g)\rangle\ dg.$$ 
The latter formula implies that the matrix coefficients of $\pi$ have
compact-mod-center support. In other words, $\pi$ is supercuspidal.

The previous discussion of intertwining generalizes as follows.  Suppose $H$ is a closed subgroup of $G$.  
The space $\Hom_H(\pi,1)$ decomposes canonically as $$\Hom_H(\pi,1)\cong \bigoplus_{KgH\in
K\bs G/H} I_{gHg^{-1}}(\kappa),$$
where $$I_{gHg^{-1}}(\kappa)=
\Hom_{K\cap gHg^{-1}}(\kappa,1).$$ The isomorphism
is given explicitly by $\Lambda\mapsto
(\lambda_g)$, with
$$\Lambda(f)= \sum_{KgH\in K\bs
G/H}\quad\sum_{h\in (g^{-1}Kg\cap H)\bs
H} \lambda_g(f(gh)).$$  
To see that this truly generalizes the above discussion, one 
replaces $(G,H,\pi)$ by $(G\times G,G,\pi\times\tilde\pi)$, where $G$ 
is embedded as the diagonal of $G\times G$.  One also uses the fact 
that $\Hom_G(\pi,\pi) \cong \Hom_G(\pi\times\tilde\pi,1)$.

\begin{definition}\label{defdist}
The representation $\pi$ is {\it $H$-distinguished} if $\Hom_H(\pi,1)$ is nonzero.  Similarly, we 
say $\kappa$ is {\it $(K\cap H)$-distinguished} if $I_H(\kappa)$ is nonzero.
\end{definition}

Thus $\pi$ is $H$-distinguished exactly when there exists at least one double coset $KgH$ such that $\kappa$ is $(K\cap gHg^{-1})$-distinguished.
In other words, for each double coset
$KgH$, compact induction $\ind_K^G$ defines a
functor that maps
$(K\cap gHg^{-1})$-distinguished representations $\kappa$ of
$K$ to distinguished representations
$\pi$ of $G$.  

\begin{remark}\label{contrmult} 
\begin{itemize}
\item[(i)] If $\pi$ is as above, then, since $\kappa$ is finite-dimensional,
$I_{gHg^{-1}}(\kappa)$ is naturally isomorphic to 
$I_{gHg^{-1}}(\tilde\kappa)$ for each $g\in G$.
Hence it follows from the discussion above that
$\Hom_H(\pi,1)$ is isomorphic to $\Hom_H(\tilde\pi,1)$.
\item[(ii)] More generally, suppose that $\pi$ is an
irreducible admissible representation of $G$ on a Hilbert
space $V$ with inner product $(\cdot,\cdot)$. If 
$\pi$ is unitary (relative to the given inner product),
then $\Hom_H(\pi,1)$ and $\Hom_H(\tilde\pi,1)$ are
naturally isomorphic. Indeed, if $v\in V$ is fixed, then
the linear functional $(\cdot,v)$ that maps a vector
$v^\prime\in V$ to $(v^\prime,v)$ belongs to the space
of $\tilde\pi$. If $\lambda\in\Hom_H(\pi,1)$, then
the element $\lambda^\vee$ of $\Hom_H(\tilde\pi,1)$
that corresponds to $\lambda$ satisfies $\lambda^\vee
((\cdot,v))=\lambda(v)$ for $v\in V$.
\end{itemize}
\end{remark}

The pairs $(G,H)$ that are of interest to us are as follows.
Let $F$ be a nonarchimedean local field of odd residual
characteristic. From now on, we take $G$ to be the
group $\bG(F)$ of $F$-rational points of a connected 
reductive group $\bG$ that is defined over $F$.
 (Throughout this \paperbook, we adopt this convention of using 
boldface letters for $F$-groups and the corresponding non-bold letters 
for the $F$-rational points.)  

\begin{definition}\label{definv}
An {\it involution of $G$} is an automorphism $\theta$ of $\bG$ of 
order two that is defined over $F$.
\end{definition}

The set of such involutions will be denoted by $\cI$.  Given $\theta$, the subgroup of $\bG$ consisting of the fixed points of $\theta$ will be denoted $\bG^\theta$.  We are interested in 
analyzing $\Hom_H(\pi,1)$ when $H = G^\theta$, for some $\theta\in \cI$, 
and $\pi$ is an irreducible supercuspidal representation of $G$ that
is tame in the sense of \cite{Y}.

If $g\in G$, let $\Int(g)$ be the automorphism of $G$ given by conjugation by $g$.
The group $G$ acts on $\cI$ by
$$g\cdot \theta = \Int ( g)\circ \theta\circ \Int (g^{-1})$$ and we have $$\bG^{g\cdot \theta} = g\bG^\theta g^{-1}.$$  
The following elementary result says that the isotropy group of an involution $\theta$ is nearly $G^\theta Z$.

\begin{lemma}\label{invstab}
If $\theta\in \cI$ and $g\in G$ then $g\cdot \theta = \theta$ if and only if $g\theta(g)^{-1}\in Z$.
\end{lemma}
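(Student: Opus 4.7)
The plan is to unwind the two sides of the proposed equivalence into a single statement about how the element $\theta(g)^{-1}g$ interacts with the rest of $\bG$, and then push through the standard computation that this element must lie in $Z$.

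First I would unravel the left-hand side. By the definition of the action of $G$ on $\cI$, the condition $g\cdot\theta=\theta$ says precisely that $\Int(g)\circ\theta=\theta\circ\Int(g)$ as $F$-automorphisms of $\bG$. Evaluating both sides on an arbitrary element $h\in\bG$, this becomes
$$g\,\theta(h)\,g^{-1}=\theta(g)\,\theta(h)\,\theta(g)^{-1},$$
or equivalently $(\theta(g)^{-1}g)\,\theta(h)\,(\theta(g)^{-1}g)^{-1}=\theta(h)$ for every $h\in\bG$.

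Next I would use that $\theta$ is an automorphism of $\bG$, so as $h$ ranges over $\bG$ the element $\theta(h)$ also does. Therefore the condition above is equivalent to $\theta(g)^{-1}g$ centralizing all of $\bG$, i.e.\ lying in $Z(\bG)$. Since $g,\theta(g)\in G=\bG(F)$, this element already belongs to $\bG(F)$, hence to $Z(\bG)(F)=Z$.

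Finally I would translate between the two equivalent forms: if $\theta(g)^{-1}g=z\in Z$, then $g=\theta(g)z$, and centrality of $z$ gives $g\theta(g)^{-1}=\theta(g)z\theta(g)^{-1}=z\in Z$; the reverse direction is identical. Combining these equivalences finishes the proof. There is no real obstacle here — the only subtlety worth being careful about is that one should phrase the centralizer argument at the level of the algebraic group $\bG$ (or equivalently its $\bar F$-points), not merely in the abstract group $G=\bG(F)$, to guarantee that the centralizing element actually lies in the scheme-theoretic center $Z(\bG)$ rather than in some possibly larger abstract centralizer inside $G$.
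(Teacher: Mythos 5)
Your proof is correct: the paper states Lemma \ref{invstab} without proof (calling it elementary), and your computation — rewriting $g\cdot\theta=\theta$ as $\Int(g)\circ\theta=\theta\circ\Int(g)$, deducing that $\theta(g)^{-1}g$ centralizes $\bG$ and hence lies in $\bZ(F)=Z$ via Zariski density/the scheme-theoretic center, and then passing between $\theta(g)^{-1}g\in Z$ and $g\theta(g)^{-1}\in Z$ — is exactly the intended elementary verification. The only cosmetic simplification would be to evaluate $(g\cdot\theta)(h)=g\,\theta(g)^{-1}\theta(h)\theta(g)\,g^{-1}$ directly, which produces the element $g\theta(g)^{-1}$ of the statement at once and makes the final translation step unnecessary.
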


For each $g\in G$, there is a canonical  isomorphism
$$\Hom_{G^{ \theta}}(\pi,1)\cong \Hom_{G^{g\cdot\theta}}(\pi,1)$$ given explicitly by $\Lambda\mapsto \Lambda\circ\pi(g^{-1})$.  Consequently, the property of being $G^\theta$-distinguished only depends on the $G$-orbit $\Theta$ of $\theta$ and we may define
$$\langle \Theta ,\pi\rangle_G = \dim \Hom_{G^\theta}(\pi,1),$$ where $\theta$ is any element of $\Theta$.  Similarly, if $\Theta'$ is a $K$-orbit in $\cI$ then we let
$$\langle \Theta' ,\kappa \rangle_K  = \dim \Hom_{K\cap G^\theta}(\kappa,1),$$ where $\theta$ is an arbitrary element of $\Theta'$.  We observe that in the latter pairings only the equivalence class of the representation ($\pi$ or $\kappa$) is significant.

\begin{definition}\label{defdistorb}
The representation $\pi$ is {\it $\Theta$-distinguished} if  $\langle\Theta ,\pi\rangle_G$ is nonzero. 
The representation $\kappa$ is {\it $\Theta'$-distinguished} if  $\langle \Theta' ,\kappa\rangle_K$ is nonzero.
\end{definition}

When $\pi= \ind_K^G (\kappa)$ then $\pi$ is $\Theta$-distinguished if and only if $\kappa$ is $\Theta'$-distinguished for some $\Theta'\subset \Theta$.   

Now fix a $G$-orbit $\Theta$ in $\cI$ and let $\Theta^K$ denote the set of $K$-orbits in $\Theta$.  

\begin{lemma}\label{sthetaTheta} If $\theta\in \Theta$ and $\Theta'\in \Theta^K$ let $$S(\theta, \Theta') = \{\, KgG^\theta \in K\bs G/G^\theta \ | \ g\cdot \theta\in \Theta'\,\}.$$  Then:
\begin{enumerate}
\item $S(g\cdot \theta , \Theta') = S(\theta,\Theta')g^{-1}$, if $g\in G$.
\item $S(\theta, K\cdot \theta)$ consists of the double cosets in $K\bs G/G^\theta$ that contain an element $g$ such that $g\theta (g)^{-1}\in Z$.
\item If $\theta\in \Theta$ and $g\in G$ then $Kg_1 G^\theta \mapsto Kgg_1g^{-1} G^{g\cdot \theta}$, with $g_1\theta (g_1)^{-1}\in Z$, defines a bijection between the sets $S(\theta, K\cdot \theta)$ and $S(g\cdot \theta,Kg\cdot \theta)$.
\item The cardinality of $S(\theta,\Theta')$ only depends on the $G$-orbit $\Theta$, and not on the choice of $\theta$ and $\Theta'$.
\end{enumerate}
\end{lemma}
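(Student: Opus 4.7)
The plan is to prove the four items in order: parts (1) and (2) follow quickly from unpacking definitions together with Lemma~\ref{invstab}, part (3) is the substantive bijection, and (4) is a formal consequence of (1) and (3).

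For part (1), I would exploit $G^{g\cdot\theta}=gG^\theta g^{-1}$ to rewrite any double coset $KhG^{g\cdot\theta}$ as $KhgG^\theta g^{-1}$. Under the substitution $h'=hg$, which bijects $G$ with itself, the membership condition $h\cdot(g\cdot\theta)\in\Theta'$ becomes $h'\cdot\theta\in\Theta'$, directly yielding $S(g\cdot\theta,\Theta')=S(\theta,\Theta')g^{-1}$. For part (2), if $KhG^\theta\in S(\theta,K\cdot\theta)$ then $h\cdot\theta=k\cdot\theta$ for some $k\in K$, so $(k^{-1}h)\cdot\theta=\theta$; Lemma~\ref{invstab} then gives $(k^{-1}h)\theta(k^{-1}h)^{-1}\in Z$, so $k^{-1}h$ is a representative of the required form. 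The converse is obtained by writing a representative $h'$ with $h'\theta(h')^{-1}\in Z$ as $h'=khg_0$ with $k\in K$, $g_0\in G^\theta$, which yields $h\cdot\theta=k^{-1}\cdot\theta\in K\cdot\theta$.

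For part (3), I would define $\phi\colon S(\theta,K\cdot\theta)\to S(g\cdot\theta,Kg\cdot\theta)$ by taking $KhG^\theta$ with representative $h$ satisfying $h\theta(h)^{-1}\in Z$ (available by (2)) and setting $\phi(KhG^\theta)=Kghg^{-1}G^{g\cdot\theta}$. Since conjugation by $g$ maps the stabilizer of $\theta$ onto the stabilizer of $g\cdot\theta$, a direct computation shows $(ghg^{-1})(g\cdot\theta)(ghg^{-1})^{-1}=gh\theta(h)^{-1}g^{-1}\in Z$, so by (2) the image lies in $S(g\cdot\theta,K(g\cdot\theta))=S(g\cdot\theta,Kg\cdot\theta)$. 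The candidate inverse $\psi\colon Kh_1G^{g\cdot\theta}\mapsto Kg^{-1}h_1gG^\theta$, using analogous representatives $h_1$, composes with $\phi$ to give the identity at the level of representatives. The main obstacle will be verifying well-definedness at the level of double cosets: if $h,h'$ represent the same double coset with both $h\theta(h)^{-1}$ and $h'\theta(h')^{-1}$ in $Z$, writing $h'=khg_0$ forces $k\theta(k)^{-1}\in Z$, and one must show that $gkg^{-1}$ is absorbed into $K\cdot G^{g\cdot\theta}$; the argument will use that $h\in\mathrm{Stab}_G(\theta)$ implies $hG^\theta h^{-1}=G^\theta$ (since $h^{-1}\theta(h)\in Z$ commutes past $G^\theta$), together with the extension structure of $\mathrm{Stab}_G(\theta)/G^\theta$ by a subgroup of $Z$.

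For part (4), fix $\theta\in\Theta$ and let $\Theta'\in\Theta^K$ be arbitrary. Pick $\theta'\in\Theta'$ and write $\theta'=g\cdot\theta$, so that $\Theta'=Kg\cdot\theta$. By (1) applied with $\Theta'$ fixed, $|S(\theta,\Theta')|=|S(g\cdot\theta,\Theta')|=|S(g\cdot\theta,Kg\cdot\theta)|$; by (3), $|S(g\cdot\theta,Kg\cdot\theta)|=|S(\theta,K\cdot\theta)|$. Hence $|S(\theta,\Theta')|$ always equals $|S(\theta,K\cdot\theta)|$, and applying (3) again (in the opposite direction, varying $\theta$ within $\Theta$ rather than $\Theta'$) shows that this common value depends only on the $G$-orbit~$\Theta$.
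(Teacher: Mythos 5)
Your treatment of (1), (2) and (4) coincides with the paper's own argument: (1) is the substitution $h\mapsto hg$ together with $KhG^{g\cdot\theta}=KhgG^{\theta}g^{-1}$, (2) is exactly the two-way application of Lemma~\ref{invstab}, and (4) is the same reduction, via (1), to the case $\theta\in\Theta'$ followed by an appeal to (3). For (3), the portion you actually prove --- existence by (2) of a representative $g_1$ with $g_1\theta(g_1)^{-1}\in Z$, the computation $(gg_1g^{-1})\bigl((g\cdot\theta)(gg_1g^{-1})\bigr)^{-1}=g_1\theta(g_1)^{-1}\in Z$ showing the assigned coset lies in $S(g\cdot\theta,K(g\cdot\theta))$, and the candidate inverse obtained by exchanging $\theta$ with $g\cdot\theta$ and $g$ with $g^{-1}$ --- is precisely the content of the paper's proof of (3); the paper records nothing beyond this.

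The gap is the step you flag and then postpone: independence of the image from the choice of admissible representative. The route you sketch (normality of $G^{\theta}$ in $\mathrm{Stab}_G(\theta)$ together with the embedding of $\mathrm{Stab}_G(\theta)/G^{\theta}$ into $Z$) does not produce the required absorption $gkg^{-1}\in KG^{g\cdot\theta}$. Indeed, if $g_1'=kg_1h$ with $k\in K\cap\mathrm{Stab}_G(\theta)$ and $h\in G^{\theta}$, then under the twisted-conjugation dictionary of this section the two candidate images are the $(g\cdot\theta)$-twisted $K$-orbits of the central elements $z$ and $z\zeta$, where $z=g_1\theta(g_1)^{-1}$ and $\zeta=k\theta(k)^{-1}$; so well-definedness of your map is equivalent to the containment
$$Z\cap\{\,k\theta(k)^{-1}\ |\ k\in K\,\}\ \subseteq\ Z\cap\{\,k\,(g\cdot\theta)(k)^{-1}\ |\ k\in K\,\}$$
for the given $g$. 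That is a genuine constraint on the pair $(K,\Theta)$, not a formal consequence of the structure you invoke: for instance, with $G=\mathrm{GL}_2(F)$, $-1\in(F^{\times})^2$, $\theta(x)={}^tx^{-1}$, $K=F^{\times}\mathrm{GL}_2(\gO_F)$ and $g=\mathrm{diag}(1,\varpi)$, the left-hand set is $(F^{\times})^2\gO_F^{\times}$ while the right-hand set is $(F^{\times})^2$, so admissible representatives of the single coset $KG^{\theta}$ land in distinct cosets of $K\bs G/G^{g\cdot\theta}$. Hence the argument you outline cannot close the step as stated. You should be aware that the published proof is silent on this very point --- it records only the verification you already gave --- so a rigorous completion of (3) requires either establishing the containment above in the situations where the lemma is applied, or some additional input beyond what either your proposal or the paper supplies.
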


\begin{proof}
The set $S(g\cdot \theta,\Theta')$ consists of the double cosets $Kh G^{g\cdot \theta}$ as $h$ ranges over the elements of $G$ such that $hg\cdot \theta\in \Theta'$.  But $KhG^{g\cdot\theta} = KhgG^\theta g^{-1}$. 
 This implies Property~(1).  Lemma~\ref{invstab} implies Property (2).

To prove (3), it suffices to show $Kg_1 G^\theta \mapsto Kgg_1g^{-1} G^{g\cdot \theta}$ gives a well-defined map from $S(\theta, K\cdot \theta)$ to $S(g\cdot \theta,Kg\cdot \theta)$, since then we obtain an inverse map by replacing $\theta$ by $g\cdot \theta$ and then replacing $g$ by $g^{-1}$.  So assume $Kg_1G^\theta\in S(\theta, K\cdot \theta)$ and $g_1\theta (g_1)^{-1}\in Z$.  It is easily verified that $gg_1g^{-1} \theta_1(gg_1g^{-1})^{-1} = g_1\theta (g_1)^{-1}\in Z$.  Property (3) follows.

It remains to prove (4). Suppose we are given $\theta\in \Theta$ and $\Theta'\in \Theta^K$.  We may choose $g\in G$ so that $g\cdot \theta\in \Theta'$.  Applying Property (1), we see that $S(\theta, \Theta') = S(g\cdot \theta , Kg\cdot \theta)g$.  Consequently, it suffices to prove Property (4) in the special case in which $\theta\in \Theta'$.  But this follows immediately from (3).
\end{proof}

Given a $G$-orbit $\Theta$ in $\cI$, we now let $m_K(\Theta)$ denote the cardinality of $S(\theta,\Theta')$ for any (hence all) $\theta\in\Theta$ and $\Theta'\in \Theta^K$.
The fact that $m_K(\Theta)$ does not depend on the choice of $\theta$ follows from Lemma \ref{sthetaTheta}(4).  If $\theta\in \Theta'\in \Theta^K$,
we have a surjective map $K\bs G/G^\theta \to \Theta^K$ that sends the double coset $KgG^\theta$ to the $K$-orbit of $g\cdot \theta$.  Note that  the cardinality of the fiber of $\Theta'$ is  $m_K(\Theta)$.
Then we have the following preliminary multiplicity formula:

\begin{lemma}\label{orbmult}
If $\Theta$ is a $G$-orbit in $\cI$ then $$\langle \Theta ,\pi\rangle_G = 
m_K(\Theta)\, \sum_{\Theta'\in\Theta^K}  \langle \Theta' ,\kappa\rangle_K.$$
\end{lemma}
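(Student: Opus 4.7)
The plan is to reorganize the Mackey decomposition
$$\Hom_{G^\theta}(\pi,1)\cong \bigoplus_{KgG^\theta\in K\bs G/G^\theta}\Hom_{K\cap gG^\theta g^{-1}}(\kappa,1)$$
recorded at the start of Section~\ref{sec:Mackey} by grouping the double cosets according to the $K$-orbit of $g\cdot\theta$. Concretely, I would partition $K\bs G/G^\theta$ along the fibers of the surjection $K\bs G/G^\theta\to\Theta^K$ sending $KgG^\theta\mapsto K\cdot(g\cdot\theta)$; the fiber above $\Theta'\in\Theta^K$ is exactly the set $S(\theta,\Theta')$ from Lemma~\ref{sthetaTheta}.

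The first step is to evaluate each individual summand in terms of $\langle\Theta',\kappa\rangle_K$. Using $gG^\theta g^{-1}=G^{g\cdot\theta}$, the $g$-th summand is $\Hom_{K\cap G^{g\cdot\theta}}(\kappa,1)$. If $g$ is replaced by $kg$ with $k\in K$, then $g\cdot\theta$ is replaced by $k\cdot(g\cdot\theta)$ and $K\cap G^{g\cdot\theta}$ by its $K$-conjugate $k(K\cap G^{g\cdot\theta})k^{-1}$; the map $\lambda\mapsto\lambda\circ\kappa(k^{-1})$ then yields a canonical isomorphism between the two Hom-spaces, since for $k'\in k(K\cap G^{g\cdot\theta})k^{-1}$ one has $k^{-1}k'k\in K\cap G^{g\cdot\theta}$. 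Consequently each summand has dimension $\langle\Theta',\kappa\rangle_K$, depending only on the $K$-orbit $\Theta'$ of $g\cdot\theta$, which is exactly what the definition of $\langle\Theta',\kappa\rangle_K$ requires.

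The second step is the counting: by the remark immediately preceding the lemma together with Lemma~\ref{sthetaTheta}(4), every fiber $S(\theta,\Theta')$ has the same cardinality $m_K(\Theta)$. Partitioning the Mackey sum along these fibers therefore gives
$$\langle\Theta,\pi\rangle_G=\sum_{\Theta'\in\Theta^K}\ \sum_{KgG^\theta\in S(\theta,\Theta')}\langle\Theta',\kappa\rangle_K = m_K(\Theta)\sum_{\Theta'\in\Theta^K}\langle\Theta',\kappa\rangle_K,$$
which is the asserted identity.

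The argument is essentially bookkeeping built on Lemma~\ref{sthetaTheta}, and I do not anticipate any serious obstacle. The only point that needs a moment of care is checking that $\lambda\mapsto\lambda\circ\kappa(k^{-1})$ really is well defined on the relevant Hom-spaces, but this is the same elementary verification that underlies the canonical isomorphism $\Hom_{G^\theta}(\pi,1)\cong\Hom_{G^{g\cdot\theta}}(\pi,1)$ recorded earlier in the section.
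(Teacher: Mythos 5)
Your proof is correct and follows exactly the route the paper intends: partition the Mackey sum over $K\backslash G/G^\theta$ along the fibers of the surjection onto $\Theta^K$ (each of cardinality $m_K(\Theta)$ by Lemma~\ref{sthetaTheta}(4)), noting that $\dim\Hom_{K\cap G^{g\cdot\theta}}(\kappa,1)$ is constant on each fiber via the canonical isomorphism $\lambda\mapsto\lambda\circ\kappa(k^{-1})$. The paper omits an explicit proof precisely because the argument is the bookkeeping you carry out.
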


We have just discussed the relation between the double cosets in the space $K\bs G/G^\theta$ and $K$-orbits of involutions in the $G$-orbit of $\theta$.  One can also describe things in the language of twisted conjugation.  Let $\cS_\theta$ denote the set of elements of the form $g\theta(g)^{-1}$, with $g\in G$.  Then $K$ acts on $\cS_\theta$ by $\theta$-twisted conjugation: $k\cdot x = kx\theta(k)^{-1}$.  Let $\cS^K_\theta$ denote the set of $K$-orbits in $\cS_\theta$.  We have a bijection $K\bs G/G^\theta\to \cS^K_\theta$ that sends the double coset $KgG^\theta$ to the $K$-orbit of $g\theta(g)^{-1}$.  Thus our surjective mapping from $K\bs G/G^\theta$ to $\Theta^K$ can be replaced by a surjection $\cS^K_\theta \to \Theta^K$.  We observe that the latter map is simply the map that sends the $K$-orbit of $x\in \cS_\theta$ to the $K$-orbit of the involution $\Int (x)\circ \theta$.  To summarize, we have a commuting triangle
$$\xymatrix{
K\bs G/G^\theta\ar@{<->}[rr]\ar@{>>}[dr]&
&\cS^K_\theta\ar@{>>}[dl]\\
&\Theta^K}$$ with the maps being given by:
$$\xymatrix{
K gG^\theta\ar@{<->}[rr]\ar@{|->}[dr]&
&K\cdot g\theta(g)^{-1}\ar@{|->}[dl]\\
&Kg\cdot \theta}$$
The above triangles convey that there are three essentially equivalent settings for studying distinguished representations.  
The space $\Theta^K$ is the most canonical, since it does not depend on the choice of a particular involution $\theta$.  It also highlights the fact that the double cosets and twisted $\theta$-orbits which comprise a fiber over an element of $\Theta^K$ are fused in the theory of distinguished representations.  In practice, the $K\bs G/G^\theta$ and $\cS^K_\theta$ 
settings have their advantages too.  For example, it is often most natural to study the geometry of $K\bs G/G^\theta$ and $\Theta^K$ by transferring to $\cS^K_\theta$.  (An example of this occurs when one studies the geometry of $GL(n)/U(n)$ via hermitian matrices.)

We close with some remarks regarding $m_K(\Theta)$.  For simplicity, 
we assume $Z\subset K$ since this will be the case with Yu's construction.  Given a $G$-orbit $\Theta$ in $\cI$ and $\theta\in \Theta$, we define abelian groups
\begin{equation*}
\begin{split}
Z^1_\Theta &=\{\, z\in Z\ | \ \theta (z)= z^{-1}\,\},\cr
B^1_\Theta &=\{\, z\theta (z)^{-1}\ | \ z\in Z\,\},\cr
H^1_\Theta & = Z^1_\Theta/B^1_\Theta.
\end{split}
\end{equation*}
As the notations suggest, the groups $Z^1_\Theta$, $B^1_\Theta$ and $H^1_\Theta$ do not depend on the choice of $\theta$ in $\Theta$.  

\begin{lemma}\label{mThetabound}  If $\Theta$ is a $G$-orbit in $\cI$ and $Z\subset K$ then $m_K(\Theta)\le |H^1_\Theta| < \infty$.
\end{lemma}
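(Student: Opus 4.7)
The plan is to reduce via Lemma~\ref{sthetaTheta}(4) to the case $\Theta'=K\cdot\theta$, construct an injection from $S(\theta,K\cdot\theta)$ into $H^1_\Theta$ using the map $g\mapsto [g\theta(g)^{-1}]$, and then verify finiteness of $H^1_\Theta$ from standard Galois-cohomology considerations.

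First I would invoke Lemma~\ref{sthetaTheta}(2) to pick, for each double coset in $S(\theta,K\cdot\theta)$, a representative $g\in G$ with $z_g:=g\theta(g)^{-1}\in Z$. A direct calculation gives $\theta(z_g)=\theta(g)g^{-1}=z_g^{-1}$, so $z_g\in Z^1_\Theta$ and the assignment $KgG^\theta\mapsto [z_g]\in H^1_\Theta$ defines a function $\Phi\colon S(\theta,K\cdot\theta)\to H^1_\Theta$ (depending on choices of representatives).

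The crux is injectivity of $\Phi$. Suppose $g_1,g_2$ are representatives satisfying $[z_{g_1}]=[z_{g_2}]$, so that $z_{g_1}z_{g_2}^{-1}=z\theta(z)^{-1}$ for some $z\in Z$. Setting $h=g_2^{-1}g_1$ and using the identity $\theta(g_i)=z_{g_i}^{-1}g_i$, one computes $\theta(h)=z_{g_2}z_{g_1}^{-1}h$, and therefore $h\theta(h)^{-1}=z_{g_1}z_{g_2}^{-1}=z\theta(z)^{-1}$. Hence $z^{-1}h\in G^\theta$, and $g_1=g_2 h=g_2 z(z^{-1}h)\in zg_2 G^\theta\subset Kg_2G^\theta$, the last inclusion using the hypothesis $Z\subset K$. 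Thus $Kg_1G^\theta=Kg_2G^\theta$, which shows that distinct double cosets produce distinct classes under $\Phi$ and gives $m_K(\Theta)\le |H^1_\Theta|$.

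For the finiteness of $|H^1_\Theta|$, I would first observe that $H^1_\Theta$ is $2$-torsion: for any $z\in Z^1_\Theta$ one has $z^2=z\cdot \theta(z)^{-1}\in B^1_\Theta$. Since $Z=\bZ(F)$ for $\bZ$ the center of $\bG$, which is an $F$-group of multiplicative type over the local field $F$, the cohomology of the finite group $\langle\theta\rangle$ acting on $Z$ is finite by the standard finiteness theorem in Galois cohomology; alternatively, one reduces via the short exact sequence $1\to\bZ^0\to\bZ\to\pi_0(\bZ)\to 1$ to the case of an $F$-torus and invokes Tate--Nakayama. The finiteness step is the one I expect will require the most care in citing the correct reference, while the essential content of the lemma is contained in the injectivity calculation above, which is forced once one writes down what the hypothesis $Z\subset K$ is good for.
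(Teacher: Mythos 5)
Your injectivity argument for $m_K(\Theta)\le|H^1_\Theta|$ is correct and is essentially the paper's proof transcribed from the twisted-conjugation picture into a direct double-coset injectivity; the paper phrases the same content via two nested equivalence relations on $\cS_\theta\cap Z$, but the ingredients and the role played by the hypothesis $Z\subset K$ are the same. Your $2$-torsion observation $z^2=z\theta(z)^{-1}\in B^1_\Theta$ for $z\in Z^1_\Theta$ is also precisely what the paper uses.

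The gap is in the finiteness of $H^1_\Theta$. The groups $Z^1_\Theta$ and $B^1_\Theta$ are cocycles and coboundaries for the group $\langle\theta\rangle\cong\Z/2\Z$ acting on $Z=\bZ(F)$ through the $F$-rational involution $\theta$ of $\bG$, not for the absolute Galois group of $F$. Neither Tate--Nakayama nor the standard finiteness theorems for Galois $H^1$ of groups of multiplicative type apply as stated to $H^1(\langle\theta\rangle,Z)$, and the sequence $1\to\bZ^0\to\bZ\to\pi_0(\bZ)\to1$ is not the right devissage here, since the relevant torus is not $\bZ^0$ but the identity component $\bT$ of $\{z\in\bZ\ | \ \theta(z)=z^{-1}\}$. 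The elementary argument, which your $2$-torsion observation has already set up, is: $H^1_\Theta$ is a quotient of $Z^1_\Theta/(Z^1_\Theta)^2$, and $Z^1_\Theta/T$ is finite, so it suffices to show $T/T^2$ is finite. Decompose $\bT$ up to isogeny as a product of an $F$-split torus $\bT_s$ and an anisotropic one $\bT_a$; for $T_s$ this reduces to finiteness of $F^\times/(F^\times)^2$, while for $T_a$ one uses that $T_a$ is compact and $(T_a)^2$ is open because squaring is submersive. This is the step that genuinely uses local-field input, and it cannot be offloaded to a Galois-cohomological black box.
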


\begin{proof}
The number $m_K(\Theta)$ is the number of $K$-orbits in $\cS^K_\theta$ in each 
fiber over $\Theta^K$.  We may as well simply consider the fiber of $K\cdot \theta$. 
 If $x=g\theta(g)^{-1}\in \cS_\theta$ then the $K$-orbit of $x$ lies in the fiber of 
$K\cdot \theta$ 
if and only if $g\cdot\theta\in K\cdot\theta$. By Lemma~\ref{sthetaTheta}(2), this is 
equivalent to $kg\theta(kg)^{-1}\in Z$ for some $k\in K$, that is,
$kx\theta(k)^{-1}\in Z$.
So $m_K(\Theta)$ may be interpreted as the number of $K$-orbits in $\cS^K_\theta$ 
that have a representative in $\cS_\theta\cap Z$.

Define an equivalence relation on $\cS_\theta \cap Z$ by $z_1\sim z_2$ when $K\cdot z_1 = K\cdot z_2$.  In other words, $z_1\sim z_2$ exactly when  $z_1z_2^{-1}\in Z\cap K^{1-\theta}$, where
$$K^{1-\theta} = \{\,  k\theta(k)^{-1}\ | \ k\in K\,\}.$$  
We can now interpret $m_K(\Theta)$ as the number of equivalence classes in $\cS_\theta\cap Z$.

Another equivalence relation may be defined on $\cS_\theta\cap Z$ by letting $z_1\approx z_2$ when $z_1z_2^{-1}\in B^1_\Theta$.    Let $n$ be the number of these equivalence classes.  Then $n\le |H^1_\Theta|$.  Moreover, since $z_1\approx z_2$ implies $z_1\sim z_2$, we have $m_K(\Theta)\le n$.  
  Hence, we have
$m_K(\Theta ) \le |H^1_\Theta|$ as claimed.

It remains to show that $H^1_\Theta$ is finite.
Since $(Z^1_\Theta)^2\subset B_\Theta^1$, it suffices
to show that $Z^1_\Theta / (Z^1_\Theta)^2$ is finite.
Let $\bT$ be the identity component of $\{\, z\in \bZ\ | \ \theta(z)=z^{-1}
\,\}$. As $Z_\Theta^1/T$ is finite, it is enough to show
that $T/T^2$ is finite. 
The group $\bT$ is an $F$-torus and is a product of
an anisotropic $F$-torus $\bT_a$ and a split 
$F$-torus $\bT_s$, with $\bT_a\cap \bT_s$ finite.
Finiteness of $F^\times/(F^\times)^2$ implies that $T_s/(T_s)^2$
is finite. Because the
map $t\mapsto t^2$ is submersive,  $(T_a)^2$ is open in $T_a$.
In addition, $T_a$ is
compact, Thus the quotient $T_a/(T_a)^2$ is finite.
\end{proof}

\section{$\theta$-stable subgroups}
\label{sec:stablesubgroups}

Various aspects of our theory demand that we deal with subgroups that are stable under a given involution $\theta$ of our group $G$.  Assuming $A$, $B$ and $C$ are $\theta$-stable subgroups of $G$ with $C= AB$, we often need to know that we have a decomposition $C^\theta = A^\theta B^\theta$ involving the subgroups of $\theta$-fixed points.  If $A\cap B$ is trivial then the latter decomposition is automatic.  More generally, it is elementary to see that if a cohomology set $H^1_\theta (A\cap B)$, defined below, is trivial then we again get the desired decomposition.  It then remains to determine when the cohomology vanishes in the cases of interest to us.  This turns out to reduce to establishing the existence of suitable square roots.  Our main result in this section, Proposition \ref{twodivprop}, states that every subgroup of a Moy-Prasad group of positive depth must have trivial cohomology with respect to any automorphism $\alpha$ of exponent two.

We now define the appropriate cohomology.  Suppose $C$ is a group and $\alpha$ is an automorphism of $C$ such that $\alpha^2=1$.  The subgroup of fixed points of $\alpha$ is denoted $C^\alpha$ and, more generally, if $D$ is a subgroup of $C$ then we let $D^\alpha = D\cap C^\alpha$.  If $D$ is $\alpha$-stable then we use the notations
\begin{equation*}
\begin{split}
Z^1_\alpha (D)&= \{\, z\in D\ | \ \alpha (z) = z^{-1}\,\} \cr
B^1_\alpha (D)&= \{\, y\alpha (y)^{-1}\ | \ y\in D\,\}.
\end{split}
\end{equation*}
We also let $H^1_\alpha (D)$ denote the space of $D$-orbits in $Z^1_\alpha (D)$ with respect to the action: $x\cdot y= xy\alpha(x)^{-1}$.  Thus $H^1_\alpha (D)=1$ exactly when $Z^1_\alpha (D) = B^1_\alpha(D)$.

\begin{lemma}\label{alphafactor}
Suppose $\alpha$ is an automorphism of a group $C$ such that $\alpha^2=1$.  Assume $A$ and $B$ are $\alpha$-stable subgroups of  $C$ such that $C=AB$ and $H^1_\alpha (A\cap B)=\{1\}$.  Then $C^\alpha = A^\alpha B^\alpha$.
\end{lemma}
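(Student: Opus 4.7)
The plan is to take an arbitrary $c\in C^\alpha$, write $c=ab$ with $a\in A$ and $b\in B$ using the hypothesis $C=AB$, and then ``correct'' this decomposition by multiplying and dividing by a suitable element of $A\cap B$ to produce a new decomposition $c=a'b'$ with $a'\in A^\alpha$ and $b'\in B^\alpha$. The cohomological hypothesis $H^1_\alpha(A\cap B)=\{1\}$ is precisely what makes such a correction available.

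First I would introduce the obstruction element $z := a^{-1}\alpha(a)$. Applying $\alpha$ to the equation $ab=c=\alpha(c)=\alpha(a)\alpha(b)$ immediately gives $z = a^{-1}\alpha(a) = b\alpha(b)^{-1}$, whose first expression lies in $A$ (using $\alpha$-stability of $A$) and second expression lies in $B$ (using $\alpha$-stability of $B$). Hence $z\in A\cap B$. A one-line computation shows $\alpha(z)=\alpha(a)^{-1}a = z^{-1}$, so $z\in Z^1_\alpha(A\cap B)$.

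By the hypothesis $H^1_\alpha(A\cap B)=\{1\}$, we have $Z^1_\alpha(A\cap B)=B^1_\alpha(A\cap B)$, so there exists $y\in A\cap B$ with $z = y\alpha(y)^{-1}$. The proposal is then to set
\[
a' := ay, \qquad b' := y^{-1}b.
\]
Clearly $a'\in A$, $b'\in B$, and $a'b' = ab = c$. The verification that $\alpha(a')=a'$ follows from $\alpha(a') = \alpha(a)\alpha(y) = a\,z\,\alpha(y) = a\,y\alpha(y)^{-1}\alpha(y) = ay = a'$, and the verification $\alpha(b')=b'$ follows symmetrically from $\alpha(y)y^{-1}=z^{-1}$ together with $\alpha(b)=z^{-1}b$ (which comes from rewriting $b\alpha(b)^{-1}=z$). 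Thus $c=a'b'\in A^\alpha B^\alpha$.

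There is no real obstacle: the entire content is the elementary cocycle/coboundary manipulation, and the only conceptual step is recognizing that the discrepancy measuring failure of $a,b$ to be $\alpha$-fixed lies in $A\cap B$ and represents an element of $H^1_\alpha(A\cap B)$. The reverse inclusion $A^\alpha B^\alpha\subseteq C^\alpha$ is immediate since $A^\alpha,B^\alpha\subseteq C^\alpha$, completing the argument.
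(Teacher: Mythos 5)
Your proof is correct and follows essentially the same route as the paper: identify the obstruction $a^{-1}\alpha(a)=b\alpha(b)^{-1}\in Z^1_\alpha(A\cap B)$, use $H^1_\alpha(A\cap B)=\{1\}$ to write it as $y\alpha(y)^{-1}$ with $y\in A\cap B$, and correct to $a'=ay$, $b'=y^{-1}b$. The only difference is that you spell out the verifications $\alpha(a')=a'$ and $\alpha(b')=b'$, which the paper leaves implicit.
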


\begin{proof}
Suppose $c\in C^\alpha$ and choose $a\in A$ and $b\in B$ such that $c=ab$.  We have $ab= c=\alpha (c) = \alpha (a)\alpha (b)$ and thus $a^{-1}\alpha (a) = b\alpha (b)^{-1} \in Z^1_\alpha (A\cap B)$.  We can therefore choose $y\in A\cap B$ such that $a^{-1}\alpha (a) = y\alpha (y)^{-1}$.  Letting $a' = ay$ and $b'= y^{-1} b$ defines elements $a'\in A^\alpha$ and $b'\in B^\alpha$ such that $c= a' b'$.
\end{proof}

\begin{definition}\label{deftwodiv}
A subset $S$ of group $C$ is {\it 2-divisible} if for
every $s\in S$ there exists $t\in S$ such that
$t^2 =s$.  If $t$ is always unique then we say $C$ is {\it strongly $2$-divisible} and we write $t=\sqrt{s}$.
\end{definition}

\begin{lemma}\label{twodivHone}
Suppose $\alpha$ is an automorphism of  a group $C$ such that $\alpha^2=1$.  If
$Z^1_\alpha (C)$ is
2-divisible then $H^1_\alpha (C) = \{ 1\}$. 
If $C$ is strongly 2-divisible  then $Z^1_\alpha (C)$ is also 
strongly 2-divisible and, consequently, $H^1_\alpha (C)=\{ 1\}$.
\end{lemma}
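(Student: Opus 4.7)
The plan is to unwind the definitions and produce an explicit coboundary representative for each cocycle. Since $H^1_\alpha(C)=\{1\}$ is equivalent to $Z^1_\alpha(C)=B^1_\alpha(C)$, and the inclusion $B^1_\alpha(C)\subset Z^1_\alpha(C)$ is automatic (for $y\in C$ one has $\alpha(y\alpha(y)^{-1}) = \alpha(y)y^{-1} = (y\alpha(y)^{-1})^{-1}$), only the reverse inclusion needs to be established.

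For the first assertion, I would take an arbitrary $z\in Z^1_\alpha(C)$ and use 2-divisibility of $Z^1_\alpha(C)$ to pick $t\in Z^1_\alpha(C)$ with $t^2=z$. Then $\alpha(t)=t^{-1}$, so
\[
 t\,\alpha(t)^{-1} \;=\; t\cdot t \;=\; t^2 \;=\; z,
\]
exhibiting $z$ as an element of $B^1_\alpha(C)$. This gives $Z^1_\alpha(C)\subset B^1_\alpha(C)$, hence $H^1_\alpha(C)=\{1\}$.

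For the second assertion, the task reduces to showing that strong 2-divisibility of $C$ forces strong 2-divisibility of the subset $Z^1_\alpha(C)$; the first assertion then immediately finishes the proof. Given $z\in Z^1_\alpha(C)$, set $t=\sqrt{z}\in C$. Applying $\alpha$ to $t^2=z$ and using $\alpha(z)=z^{-1}$ yields
\[
 \alpha(t)^2 \;=\; \alpha(z) \;=\; z^{-1} \;=\; (t^{-1})^2.
\]
Thus $\alpha(t)$ and $t^{-1}$ are both square roots of $z^{-1}$ in $C$, so by the uniqueness built into strong 2-divisibility $\alpha(t)=t^{-1}$, which is exactly the statement that $t\in Z^1_\alpha(C)$. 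Uniqueness of square roots in $C$ also gives uniqueness inside $Z^1_\alpha(C)$, so $Z^1_\alpha(C)$ is itself strongly 2-divisible.

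The argument is essentially formal, with no serious obstacle; the only point requiring attention is the verification in the second assertion that the square root of an element of $Z^1_\alpha(C)$ remains in $Z^1_\alpha(C)$, and here the uniqueness hypothesis is exactly what is needed to conclude $\alpha(t)=t^{-1}$ rather than merely $\alpha(t)^2=(t^{-1})^2$.
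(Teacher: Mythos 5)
Your proof is correct and follows essentially the same approach as the paper: the first assertion is handled by the identity $z = t^2 = t\cdot\alpha(t)^{-1}$ for a square root $t\in Z^1_\alpha(C)$, and the second by using uniqueness of square roots in $C$ to show $\alpha(\sqrt{z})=(\sqrt{z})^{-1}$. The only cosmetic difference is that the paper first records the commutation relations $\alpha(\sqrt{h})=\sqrt{\alpha(h)}$ and $\sqrt{h^{-1}}=(\sqrt h)^{-1}$ separately and then chains them, whereas you compute $\alpha(t)^2=(t^{-1})^2$ directly and invoke uniqueness once; the content is the same.
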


\begin{proof}
Suppose $Z^1_\alpha (C)$ is 2-divisible and $h\in Z^1_\alpha(C)$.  Then
we may choose $t\in Z^1_\alpha (C)$ such that $t^2
= h$.  We then have $h = t^2 =
t\alpha(t)^{-1}\in B^1_\alpha (C)$, which proves
our assertion.

Assume now that $C$ is strongly 2-divisible.  We observe that if $h\in C$ then 
$\sqrt{\alpha(h)}$ and $\alpha (\sqrt{h})$ are both square roots of $\alpha (h)$ and hence they must be equal.  Similarly, we have the relation $\sqrt{h^{-1}}= \smash{\sqrt h}^{-1}$.
If $h\in Z^1_\alpha (C)$ then $\alpha (\sqrt{h}) = \sqrt{\alpha (h)}= \sqrt{h^{-1}}=
 \smash{\sqrt h}^{-1}$.  The claim follows.
\end{proof}

We now focus on the examples of most importance to us and revert to our usual notations.  
Thus $\bG$ is a connected, reductive $F$-group that splits over a tamely ramified extension 
of $F$, and $G=\bG(F)$. 
Let $\bfr{g}$ be the Lie algebra of $\bG$, and let $\g=\bfr{g}(F)$. (In analogy
with our conventions for $F$-groups, we use boldface fraktur letters for
the Lie algebras of $F$-groups and the corresponding non-boldface fraktur
letters for the $F$-rational points of the Lie algebras.) Let $\cB(\bG,F)$
be the (extended) Bruhat-Tits building of $G$. If $x\in \cB(\bG,F)$ and $t$
is a nonnegative real number, let $G_{x,t}$ and $G_{x,t^+}$ be the
associated filtration subgroups of $G$, which we are referring to as
Moy-Prasad groups. Similarly, let $\g_{x,t}$ and $\g_{x,t^+}$
be the associated filtration lattices of $\g$. For more information on
the Moy-Prasad groups and filtrations, the reader may refer to
Section~\ref{sec:buildings}.
 The main result of this section is the following:

\begin{proposition}\label{twodivprop}
Every subgroup $\cG$ of a Moy-Prasad group $G_{x,0^+}$ with $x\in \cB (\bG,F)$
is strongly 2-divisible and thus $H^1_\alpha (\cG)=\{ 1\}$ for all automorphisms $\alpha$ of $\cG$ such that $\alpha^2=1$.
\end{proposition}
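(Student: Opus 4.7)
The plan is to exploit that $G_{x,0^+}$ is a pro-$p$ group with $p$ odd (since $F$ has odd residual characteristic). I will use two standard features of the Moy--Prasad filtration on $G_{x,0^+}$: first, $[G_{x,r},G_{x,s}]\subset G_{x,r+s}$ for $r,s>0$; second, each abelian subquotient $G_{x,r}/G_{x,r^+}$ is, via the Moy--Prasad isomorphism, identified with the $\mathbb{F}$-vector space $\g_{x,r}/\g_{x,r^+}$, where $\mathbb{F}$ is the residue field (of characteristic $p>2$). In particular, multiplication by $2$ is bijective on each subquotient, and $g^{p^n}\to 1$ for every $g\in G_{x,0^+}$.

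I would first establish uniqueness of square roots. Suppose $h_1,h_2\in\cG$ satisfy $h_1^2=h_2^2=g$, and put $u=h_1h_2^{-1}$. Assume toward contradiction that $u\ne 1$, and let $s$ be the unique value with $u\in G_{x,s}\setminus G_{x,s^+}$. Expand
$$g=h_1^2=(uh_2)^2 = u\cdot(h_2uh_2^{-1})\cdot h_2^2 = u\cdot(h_2uh_2^{-1})\cdot g.$$
The commutator bound gives $h_2uh_2^{-1}\equiv u \pmod{G_{x,s^+}}$, whence $u^2\in G_{x,s^+}$. Under the Moy--Prasad isomorphism, this translates to $2\bar u=0$ in $\g_{x,s}/\g_{x,s^+}$, forcing $\bar u=0$ because $p\ne 2$. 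This contradicts the choice of $s$, so $u=1$.

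For existence, given $g\in\cG$, the integers $(p^n+1)/2$ are well-defined (as $p$ is odd), and I set $h_n=g^{(p^n+1)/2}\in\langle g\rangle\subset\cG$. Consecutive ratios satisfy $h_{n+1}h_n^{-1}=g^{p^n(p-1)/2}$, and since $g^{p^n}\to 1$ in the pro-$p$ topology, these ratios lie in arbitrarily deep Moy--Prasad subgroups. Thus $(h_n)$ is Cauchy in $G_{x,0^+}$, converging to some $h$ with $h^2=\lim g^{p^n+1}=g\cdot \lim g^{p^n}=g$. Each $h_n$ belongs to $\cG$, so $h$ lies in $\cG$ (the subgroups $\cG$ under consideration being closed in $G_{x,0^+}$), providing the desired square root. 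Combined with uniqueness, this shows $\cG$ is strongly $2$-divisible; Lemma~\ref{twodivHone} then immediately gives $H^1_\alpha(\cG)=\{1\}$ for any involutive automorphism $\alpha$ of $\cG$.

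The main obstacle is the uniqueness step: because $h_1$ and $h_2$ need not commute, one cannot simply pass to an abelian quotient and invoke the bijectivity of doubling. The delicate input is controlling the cross-term $h_2uh_2^{-1}$ via the commutator bound $[G_{x,r},G_{x,s}]\subset G_{x,r+s}$, which is precisely what allows the odd-$p$ bijectivity on $G_{x,s}/G_{x,s^+}$ to be brought to bear.
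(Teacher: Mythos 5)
Your argument is correct, and its existence half takes a genuinely different route from the paper's. The paper proceeds by successive approximation along the Moy--Prasad filtration: it first shows (Lemma \ref{MPsubtwodiv}) that every subgroup of a graded piece $G_{x,t:t^+}$, $t>0$, is strongly 2-divisible, and then (Lemmas \ref{MPtwodiv} and \ref{abstractsqroot}) builds, inside $\cG$ itself and relative to the chain $\cG_i=\cG\cap G_{x,t_i}$, a sequence $x_0,x_1,\dots$ with $x_i^{-2}a\in\cG_{i+1}$, whose limit is the square root; uniqueness is extracted from the uniqueness of each approximant modulo the next filtration step (equivalently, from the remark that a 2-divisible subgroup of a strongly 2-divisible group is strongly 2-divisible). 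You instead use only that $G_{x,0^+}$ is pro-$p$ with $p$ odd: the square root is written in closed form as $\lim_n g^{(p^n+1)/2}$, a limit taken inside the abelian closed subgroup topologically generated by $g$, so for existence you need neither the Moy--Prasad isomorphism nor any approximation scheme, only that the filtration subgroups form a neighbourhood basis of $1$ with $p$-group quotients (whence $g^{p^n}\to 1$ and the partial products are Cauchy). Your uniqueness step---forcing $u^2\in G_{x,s^+}$ at the depth $s$ of $u=h_1h_2^{-1}$ via the commutator bound, then killing $\bar u$ in the quotient $G_{x,s:s^+}$, which has no 2-torsion---is the same mechanism the paper applies at each stage of its approximation, used once. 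What the paper's formulation buys is an abstract square-root lemma applicable to any suitable descending chain, with all approximants kept inside $\cG$; what yours buys is brevity and an explicit formula locating $\sqrt{g}$ in $\overline{\langle g\rangle}$.

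One shared caveat: both proofs obtain $\sqrt{g}$ as a limit, hence really establish the statement for closed subgroups $\cG$ (you say this explicitly; in the paper, the limit of the sequence furnished by Lemma \ref{abstractsqroot} likewise lies a priori only in the closure of $\cG$). The literal ``every subgroup'' statement fails for non-closed subgroups: for $\bG=\bGL_1$ and $F=\Q_p$, the infinite cyclic subgroup of $G_{x,0^+}=1+\gP_F$ generated by $1+p$ is not 2-divisible. So this is a caveat of the proposition as stated rather than a defect of your argument; every subgroup to which the proposition is applied in the paper is closed.
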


Before proving Proposition~\ref{twodivprop}, we establish several auxiliary lemmas.

As discussed in Section \ref{sec:hypotheses}, when ${\bG}$ splits over
a tamely ramified extension of $F$,
given $x\in \cB(\bG,F)$ and $t>0$, there is an isomorphism 
between the quotient group
$$G_{x,t:t^+} = G_{x,t}/G_{x,t^+}$$ and the corresponding Lie algebra quotient 
$$\g_{x,t:t^+} = \g_{x,t}/\g_{x,t^+}$$ of additive groups.  
Yu refers to the latter isomorphism as the {\it Moy-Prasad isomorphism}. \label{defMPiso}
  It turns out to be a highly useful tool in inductive arguments, such as the one we use 
to prove Proposition~\ref{twodivprop}.

\begin{lemma}\label{MPsubtwodiv}
Every subgroup of a group $G_{x,t:t^+}$, with $t>0$, is strongly 2-divisible.
\end{lemma}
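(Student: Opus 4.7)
The plan is to transfer the squaring operation on $G_{x,t:t^+}$ to the doubling operation on the additive group $\g_{x,t:t^+}$ via the Moy-Prasad isomorphism, and then exploit the fact that the residual characteristic $p$ of $F$ is odd so that $2$ is a unit on every $p$-torsion $\mathfrak{o}_F$-module.

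First, invoke the Moy-Prasad isomorphism $G_{x,t:t^+}\cong \g_{x,t:t^+}$, which exists because $\bG$ splits over a tame extension of $F$ and $t>0$. Under this group isomorphism, the squaring map $g\mapsto g^2$ on $G_{x,t:t^+}$ corresponds to the doubling map $X\mapsto 2X$ on $\g_{x,t:t^+}$. So it suffices to prove that for every additive subgroup $\mathfrak{H}$ of $\g_{x,t:t^+}$, the map $X\mapsto 2X$ is a bijection of $\mathfrak{H}$ onto itself.

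Next, observe that $\g_{x,t:t^+}$ is an $\mathfrak{o}_F$-module and, because $t>0$, every element is $p$-power torsion: if $X\in\g_{x,t:t^+}$, then $p^nX\in \g_{x,t^+}$ for $n$ large enough, since the Moy-Prasad filtration of $\g$ is bounded below in depth by the $\mathfrak{p}_F$-multiplication action (for instance, $\mathfrak{p}_F\cdot \g_{x,s}\subset \g_{x,s+e}$ for a suitable $e>0$). Fix $X\in \mathfrak{H}$ of order dividing $p^n$. Since $p$ is odd, $2$ is a unit in $\Z/p^n\Z$; choose an integer $m$ with $2m\equiv 1\pmod{p^n}$. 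Then $mX\in\mathfrak{H}$ (subgroups are closed under integer multiplication) and $2(mX)=X$, giving existence of a half. For uniqueness, if $2Y=2Y'=X$ in $\mathfrak{H}$, then $2(Y-Y')=0$, and since $Y-Y'$ has $p$-power order and $2$ is invertible modulo that order, $Y=Y'$.

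Combining these observations, doubling is a bijection of $\mathfrak{H}$ onto itself with a canonical inverse, so $\mathfrak{H}$ is strongly $2$-divisible in the additive sense. Transporting back through the Moy-Prasad isomorphism, every subgroup of $G_{x,t:t^+}$ is strongly $2$-divisible. There is no real obstacle here beyond making the Moy-Prasad identification explicit and invoking the standing hypothesis that the residual characteristic is odd; the only point requiring a moment's care is the subgroup-closure step, which is handled by realising the square root as an integer multiple rather than as an abstract element of the ambient group.
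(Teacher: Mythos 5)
Your proposal is correct and follows essentially the same route as the paper: both transfer the problem through the Moy--Prasad isomorphism to a subgroup of $\g_{x,t:t^+}$, which is killed by the odd prime $p$, and exploit the invertibility of $2$ there. The only cosmetic difference is the finish: the paper notes that doubling is an injective endomorphism of a finite group and hence bijective, whereas you realize the (unique) square root explicitly as the integer multiple $mX$ with $2m\equiv 1 \pmod{p^n}$, which keeps it inside the subgroup just as the paper's argument does.
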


\begin{proof}
In light of the Moy-Prasad isomorphism, we might as well show that every subgroup $H$ of $\g_{x,t:t^+}$ is strongly 2-divisible.  Now $\gamma\mapsto 2\gamma$ defines a group homomorphism from $H$ into itself.  The kernel of this homomorphism is trivial.  Thus, since $H$ is finite, the map must be surjective.  Our claim follows.
\end{proof}

The next step in the proof of Proposition~\ref{twodivprop} is the case of the 
Moy-Prasad groups themselves.  

\begin{lemma}\label{MPtwodiv}
The Moy-Prasad groups $G_{x,t}$ with $x\in \cB (\bG,F)$ and $t>0$ are  strongly 2-divisible.
\end{lemma}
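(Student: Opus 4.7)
My plan is to prove strong 2-divisibility of $G_{x,t}$ by a successive approximation argument along the Moy-Prasad filtration, and then extract uniqueness of square roots from the commutator-shrinking property of the filtration. The key inputs are Lemma~\ref{MPsubtwodiv}, the Moy-Prasad isomorphism $G_{x,s:s^+}\cong \g_{x,s:s^+}$ at each jump $s>0$, and the standard commutator inclusion $[G_{x,r},G_{x,s}]\subseteq G_{x,r+s}$.

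For existence, given $g\in G_{x,t}$, enumerate the filtration jumps at $x$ above $t$ as $t=s_0<s_1<s_2<\cdots\to\infty$, so $G_{x,s_n^+}=G_{x,s_{n+1}}$. I build $h_n\in G_{x,t}$ inductively with $h_n^2g^{-1}\in G_{x,s_{n+1}}$ and $h_nh_{n-1}^{-1}\in G_{x,s_n}$. The base case applies Lemma~\ref{MPsubtwodiv} to the graded piece $G_{x,t:t^+}$, producing $h_0$ with $h_0^2\equiv g\pmod{G_{x,s_1}}$. For the inductive step, write $h_n^2=gm$ with $m\in G_{x,s_{n+1}}$ and try $h_{n+1}=h_nv$ with $v\in G_{x,s_{n+1}}$. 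A direct computation using $[h_n^{-1},v]\in [G_{x,t},G_{x,s_{n+1}}]\subseteq G_{x,t+s_{n+1}}\subseteq G_{x,s_{n+2}}$ gives $(h_nv)^2=h_n^2v^2c$ with $c\in G_{x,s_{n+2}}$. So it suffices to choose $v$ with $v^2\equiv m^{-1}$ in $G_{x,s_{n+1}:s_{n+1}^+}$, which is again possible by Lemma~\ref{MPsubtwodiv}. The sequence $(h_n)$ is Cauchy in the compact group $G_{x,t}$, and its limit $h$ satisfies $h^2=g$.

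For uniqueness, suppose $h_1,h_2\in G_{x,t}$ with $h_1^2=h_2^2$ and set $v=h_2h_1^{-1}$. Since then also $v=h_2^{-1}h_1$, one has $v^2=(h_2h_1^{-1})(h_2^{-1}h_1)=[h_2,h_1^{-1}]\in G_{x,2t}$. If $v\neq 1$, let $s$ be the largest filtration level with $v\in G_{x,s}$. Under the Moy-Prasad isomorphism the image of $v^2$ in $G_{x,s:s^+}$ equals $2\bar v\neq 0$ (because $\bar v\neq 0$ and the residual characteristic is odd), so $v^2\notin G_{x,s^+}$. Combined with $v^2\in G_{x,2t}$, this forces $s\geq 2t$. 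Iterating the same argument, $v\in G_{x,2^nt}$ for every $n$, hence $v\in\bigcap_n G_{x,2^nt}=\{1\}$, a contradiction. Thus $h_1=h_2$.

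The main obstacle is the non-abelian nature of $G_{x,t}$, and it is handled throughout by the commutator-shrinking property: this is what places the correction term $c$ in $(h_nv)^2=h_n^2v^2c$ into a strictly deeper filtration group, so that each inductive step genuinely improves precision, and it is also what forces $v^2=[h_2,h_1^{-1}]$ to lie in $G_{x,2t}$ during the doubling step of the uniqueness argument. Once this is accounted for, both halves of the proof reduce, via the Moy-Prasad isomorphism, to the elementary bijectivity of multiplication by $2$ on the additive graded pieces $\g_{x,s:s^+}$ that was recorded in Lemma~\ref{MPsubtwodiv}.
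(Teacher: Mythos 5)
Your existence argument is essentially the paper's: the paper enumerates the filtration jumps above $t$ and builds the square root by the same successive approximation, correcting at each stage through the Moy--Prasad isomorphism (Lemma~\ref{MPsubtwodiv}) and absorbing the non-commutativity via $[G_{x,t},G_{x,s_{n+1}}]\subset G_{x,s_{n+2}}$; your identity $(h_nv)^2=h_n^2v^2c$ is the same manipulation that appears there. Where you genuinely differ is uniqueness: the paper gets it for free from the construction, since at every stage the approximant is unique modulo the next filtration subgroup (squaring is injective on each graded piece), so any square root must agree with the limit modulo every jump group; you instead give a separate a posteriori argument.

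In that uniqueness argument one step does not work as written. From $v=h_2h_1^{-1}=h_2^{-1}h_1$ and $v^2=[h_2,h_1^{-1}]\in G_{x,2t}$ you correctly conclude that the depth $s$ of $v$ satisfies $s\ge 2t$, but ``iterating the same argument'' does not yield $v\in G_{x,2^nt}$: the elements $h_1,h_2$ are only known to lie in $G_{x,t}$, so repeating the commutator estimate verbatim never improves the bound $v^2\in G_{x,2t}$. The repair is immediate: since $h_2=vh_1$ and $[vh_1,h_1^{-1}]=[v,h_1^{-1}]$, you have $v^2=[v,h_1^{-1}]\in[G_{x,s},G_{x,t}]\subset G_{x,s+t}$, while your graded-piece computation shows the image of $v^2$ in $G_{x,s:s^+}$ is $2\bar v\ne 0$, so $v^2$ has depth exactly $s$. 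Hence $s\ge s+t$, which is impossible for $v\ne 1$, and no iteration is needed (alternatively, the corrected iteration gives $v\in G_{x,(n+1)t}$ for all $n$, which also suffices). With this adjustment your proof is complete, and apart from this packaging of uniqueness it follows the same route as the paper's.
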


\begin{proof}
Define real numbers $t_0<t_1< \dots$ by the conditions $G_{x,t_0} = G_{x,t}\ne G_{x,t_0^+}$ and $G_{x,t_{i+1}} = G_{x,t_i^+}\ne G_{x,t_{i+1}^+}$, for $i\ge 0$.  Given $a\in G_{x,t_0}$, we construct a convergent sequence $x_0,x_1,\dots$ in $G_{x,t}$ such that $x_i$ is the unique square root of $a$ modulo $G_{x,t_{i+1}}$.  Once we establish that such a sequence exists, then it follows that its limit must be the unique square root of $a$ in $G_{x,t_0}$.

Using the Moy-Prasad isomorphism $G_{x,t_0:t_1} \cong \g_{x,t_0:t_1}$, we see that we can choose $x_0\in G_{x,t_0}$ so that $x_0^{-2}a\in G_{x,t_1}$ and, moreover, $x_0$ is unique modulo $G_{x,t_1}$.

Now suppose $x_i\in G_{x,t}$ has been defined, uniquely modulo $G_{x, t_{i+1}}$,
 so that $x_i^{-2}a\in G_{x,t_{i+1}}$.  Choose $y_{i+1}\in G_{x,t_{i+1}}$ so that $y_{i+1}^{-2} x_i^{-2}a\in G_{x,t_{i+2}}$.  Then, again using a   Moy-Prasad isomorphism, we see that $y_{i+1}$ is unique modulo $G_{x,t_{i+2}}$.  Let $x_{i+1} = x_iy_{i+1}$.  Then $$x_{i+1}^{-2}a = (y^{-1}_{i+1}(x_i^{-1}y_{i+1}^{-1}x_iy_{i+1}) y_{i+1})(y_{i+1}^{-2}x_i^{-2}a).$$  We observe that $x_i^{-1}y_{i+1}^{-1}x_iy_{i+1}$ lies in the commutator subgroup $
[G_{x,t},G_{x,t_{i+1}}]$, which is contained in $G_{x,t_{i+2}}$.  Since $y_{i+1}$ normalizes $G_{x,t_{i+2}}$, we deduce that $x_{i+1}^{-2}a\in G_{x,t_{i+2}}$.  Our assertion follows.
\end{proof}

The construction in the previous proof can be cast in a more general setting to yield:

\begin{lemma}\label{abstractsqroot} Let $\cG$ be a group.  Suppose $\cG= \cG_0, \cG_1,\dots$ is a sequence of subgroups of $\cG$ such that the commutator group $[\cG,\cG_i]$ is contained in $\cG_{i+1}$ for all $i$.  Then for each $i$ the group $\cG_{i+1}$ is a normal subgroup of $\cG_i$ and the quotient group $\cG_i/\cG_{i+1}$ is abelian.  Assume the latter abelian groups are all 2-divisible.  Then for each element $a\in \cG$ there exists a sequence $x_0,x_1,\dots$ in $\cG$ such that $x_i^{-1}x_{i+1}\in \cG_{i+1}$ and $x_i^{-2}a\in \cG_{i+1}$ for all $i$.
\end{lemma}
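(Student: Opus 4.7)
The plan is to mirror, at the abstract group-theoretic level, the inductive construction used in the proof of Lemma~\ref{MPtwodiv}, with the $2$-divisibility hypothesis on each quotient $\cG_i/\cG_{i+1}$ playing the role the Moy-Prasad isomorphism played there.

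First I would dispose of the preliminary assertions. From the inclusions $\cG_{i+1}\subseteq\cG_i\subseteq\cG$, one has $[\cG_i,\cG_{i+1}]\subseteq[\cG,\cG_i]\subseteq\cG_{i+1}$, giving normality of $\cG_{i+1}$ in $\cG_i$; and $[\cG_i,\cG_i]\subseteq[\cG,\cG_i]\subseteq\cG_{i+1}$ shows the quotient is abelian. Likewise $[\cG,\cG_{i+1}]\subseteq\cG_{i+2}\subseteq\cG_{i+1}$, so in fact each $\cG_{i+1}$ is normal in all of $\cG$, a point I will use repeatedly below.

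Next I would construct $x_0,x_1,\dots$ by recursion. The base case is immediate: $2$-divisibility of $\cG_0/\cG_1$ furnishes $x_0\in\cG$ with $x_0^{-2}a\in\cG_1$. For the inductive step, assume $x_i$ satisfies $x_i^{-2}a\in\cG_{i+1}$ and search for $x_{i+1}$ of the form $x_{i+1}=x_i y_{i+1}$ with $y_{i+1}\in\cG_{i+1}$. Expanding,
$$
x_{i+1}^2 \;=\; x_i y_{i+1} x_i y_{i+1} \;=\; x_i^2\,(x_i^{-1}y_{i+1}x_i)\,y_{i+1}.
$$
Since $[x_i^{-1},y_{i+1}]\in[\cG,\cG_{i+1}]\subseteq\cG_{i+2}$, the conjugate $x_i^{-1}y_{i+1}x_i$ is congruent to $y_{i+1}$ modulo $\cG_{i+2}$; combined with the normality of $\cG_{i+2}$ in $\cG$ this yields $x_{i+1}^2\equiv x_i^2\,y_{i+1}^2\pmod{\cG_{i+2}}$. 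Consequently, $x_{i+1}^{-2}a\in\cG_{i+2}$ is equivalent to $y_{i+1}^2\equiv x_i^{-2}a\pmod{\cG_{i+2}}$. Both sides lie in $\cG_{i+1}$, so $2$-divisibility of $\cG_{i+1}/\cG_{i+2}$ produces such a $y_{i+1}$. By construction $x_i^{-1}x_{i+1}=y_{i+1}\in\cG_{i+1}$, completing the induction.

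The only place requiring real care is the noncommutativity when expanding $x_{i+1}^2$; everything rests on the observation that the commutator hypothesis, applied one level down, makes the conjugation term $x_i^{-1}y_{i+1}x_i$ invisible modulo $\cG_{i+2}$. Once that identity is in hand, the argument reduces to solving a $2$-divisibility problem in the abelian quotient $\cG_{i+1}/\cG_{i+2}$, and no further difficulty arises.
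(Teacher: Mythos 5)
Your proof is correct and follows essentially the same route as the paper: the paper's proof also sets $x_{i+1}=x_iy_{i+1}$ with $y_{i+1}\in\cG_{i+1}$ chosen via $2$-divisibility of $\cG_{i+1}/\cG_{i+2}$, and delegates the commutator bookkeeping to the phrase ``a similar argument as in the proof of Lemma~\ref{MPtwodiv}.'' You have simply spelled out that bookkeeping (including the useful observation that each $\cG_{i+1}$ is normal in all of $\cG$, which plays the role of ``$y_{i+1}$ normalizes $G_{x,t_{i+2}}$'' in Lemma~\ref{MPtwodiv}), so there is no substantive difference.
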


\begin{proof}
Since $\cG_0/\cG_1$ is 2-divisible, we may choose $x_0\in \cG$ such that $x_0^{-2}a\in \cG_1$.  Once $x_0,\dots, x_i$ have been chosen, as required in the statement of the lemma, we choose $y_{i+1}\in \cG_{i+1}$ so that $y^{-2}_{i+1}x_i^{-2}a\in \cG_{i+2}$, using the fact that $\cG_{i+1}/\cG_{i+2}$ is 2-divisible.  Then we take $x_{i+1}= x_iy_{i+1}$.  A similar argument as in the proof of Lemma \ref{MPtwodiv} shows that this constructs a sequence with the desired properties.
\end{proof}

\begin{proof}[Proof of Proposition \ref{twodivprop}]  Given $\cG$, we let $\cG_i = G_{x,t_i}\cap\cG$, where $t_i$ is defined as in the proof of Lemma \ref{MPtwodiv}, for $i=0,1,\dots$.  Then 
$[\cG,\cG_i]\subset \cG_{i+1}$ for all $i$.  Next, we observe that $\cG_i/\cG_{i+1}$ is isomorphic to the image of $\cG_i$ in $G_{x,t_i:t_i^+}$ and hence, according to Lemma \ref{MPsubtwodiv}, it must be strongly 2-divisible.  We can now use Lemma \ref{abstractsqroot} to construct a square root of any element of $\cG$.  Thus $\cG$ is 2-divisible.  Our assertion now follows from the fact that a 2-divisible subgroup of a strongly 2-divisible group (in this case $G_{x,t}$) must be strongly 2-divisible.
\end{proof}

\section{Heisenberg representations over $\F_p$}
\label{sec:Heis}

Representations of finite groups enter into the construction of tame supercuspidal representations in two important ways.  The first involves cuspidal representations of finite groups of Lie type.  
The second is the focus of this section and it involves Heisenberg and Weil representations associated to symplectic spaces over fields of order $p$, where $p$ is the characteristic of the residue field of $F$.  (Recall that we have assumed that $p\ne 2$.)  In this section, we develop the theory of distinguished representations in the context of Heis\-enberg
$p$-groups, that is, Heisenberg groups associated
to finite symplectic spaces over a field $\F_p$
of odd prime order.  Because of the
extraordinary nature of the Heisenberg group,
we are able to obtain with relatively little
effort reasonably complete results.  

Much of the material in this chapter logically precedes Yu's construction and this explains its placement towards the beginning of this \paperbook.  However, before reading this chapter, some readers may wish to better understand the context in which the theory we describe is useful.  These readers are advised to skip ahead to our description of Yu's construction in the next chapter and the subsequent applications, referring back to this chapter as needed along the way.

The main
results are as follows.  Let $\cH$ be a
Heisenberg $p$-group with center $\cZ$ and
suppose $\alpha$ is an automorphism of $\cH$ of
order two.  Let $\cH_\alpha^+$ be the subgroup
of fixed points of $\alpha$.  Let $\tau$ be the
Heisenberg representation associated to some
nontrivial character $\zeta$ of $\cZ$.  It is shown in Lemmas \ref{hplusfixed} and \ref{HmodZhom} that in order
for $\Hom_{\cH_\alpha^+}(\tau,1)$ to be
nonzero,  it is necessary and
sufficient that $\alpha\,|\,\cZ$ is not the identity
map.  Therefore, we always assume $\alpha\,|\,\cZ$ is
nontrivial.  In this case, Lemma \ref{hplusfixed} shows that 
$\Hom_{\cH_\alpha^+}(\tau,1)$ has dimension one
and we give an explicit generator. We also show
that $\tau$ must satisfy the symmetry relation
$\tau\circ \alpha\simeq
\tilde\tau$, where $\tilde\tau$ is the
contragredient of $\tau$.  Many of the other main results in this section, including the latter fact, are summarized in Theorem \ref{Heisthm}.
We show in Theorem \ref{HeisGelf} that
the pair
$(\cH,\cH_\alpha^+)$ is a Gelfand pair in the
sense that for every irreducible representation
$\rho$ of $\cH$ the space ${\rm
Hom}_{\cH_\alpha^+}(\rho,1)$ has dimension at
most one. Equivalently, the space of
$\cH_\alpha^+$-fixed vectors in the space of
$\rho$ has dimension at most one.
Now let $\widehat{\cH}_\alpha^-$ be the set of elements in
$\cH$ such that $\alpha (h) = h^{-1}$.  Then we
show in Theorem \ref{Heisthm} that the images of $\cH_\alpha^+$ and
$\widehat{\cH}_\alpha^-$ in the symplectic space $\cH/\cZ$
form a polarization.  This fact is
applied to construct polarizations of the
Heisenberg groups occurring in Yu's construction
and these polarizations are ideal for studying
distinguished tame supercuspidal representations.

We also recall in Definition \ref{defspeciso} Yu's notion of a special isomorphism and we establish in Remark \ref{specisomu} and Lemmas \ref{specisoeq}--\ref{polsplitting} some  basic properties of special isomorphisms which result from abstract finite group theory.  Special isomorphisms are needed when extending Heisenberg representations to obtain Weil representations.  The Heisenberg representations of interest to us are associated to generic characters and they will be discussed in the next section.  For Heisenberg representations coming from generic characters, Yu has defined special isomorphisms in a canonical way.  However, for our applications involving distinguished representations, there are other special isomorphisms that are far more convenient to use.  An important fact, which will become evident in the next section, is that the choice of special isomorphisms which one uses in Yu's construction does not affect the isomorphism class of the supercuspidal representation produced by Yu's construction (so long as the special isomorphisms are ``relevant'' in a sense which we will define).

We now warn the reader that for most of this
section we will be using the traditional additive
notation for our symplectic spaces and
Heisenberg groups, rather than the
multiplicative notation which is natural for our
primary applications.

Let us review  some basic facts about finite
Heisenberg
$p$-groups and their Heisenberg representations. 
Let  $C$ be a cyclic group of order $p$ and let $W$ be a group that is isomorphic to a finite direct sum of copies of $C$.  Assume we have a pairing $\langle \; ,\; \rangle : W\times W\to C$ such that:
\begin{itemize}
\item \quad $\langle a+b,c\rangle = \langle a,c\rangle + \langle b,c\rangle$, for all $a,b,c\in W$, and,
\item\quad  $\langle a,b\rangle = -\langle b,a\rangle$, for all $a,b\in W$.
\end{itemize}

\ni In other words, $W$ is essentially a finite symplectic space over $\F_p$.
In this situation, 
we use  the notation $W\boxtimes C$ for the set $W\times C$ viewed as a group with multiplication
$$(w_1,z_1)(w_2,z_2) = (w_1+w_2, z_1+z_2+\frac{1}{2}\langle w_1,w_2\rangle ).$$  We remark that that $W\boxtimes C$ is abelian exactly when $W$ is totally isotropic, that is, $\langle a,b\rangle =0$, for all $a,b\in W$.  

\begin{definition}\label{Hpgroup}
A {\it Heisenberg $p$-group} is any abstract group $\cH$ that is isomorphic to some group $W\boxtimes C$, where  $W$ is nondegenerate.
\end{definition}

We now assume $\cH = W\boxtimes \F_p$ where $W$ is a nondegenerate symplectic space over $\F_p$ of dimension $2\ell$.  Let $\cZ = 0\boxtimes \F_p$ be the center of $\cH$.
Fix a polarization (a.k.a., a complete polarization or a Witt decomposition) $W= W^+ +
W^-$ and embed $W$ in $\cH$ via $w\mapsto (w,0)$. 
We stress that this embedding is not a group homomorphism and its image is not a subgroup of $\cH$.  However, the restriction of this embedding to $W^+$ and $W^-$ gives homomorphisms that we use to identify $W^+$ and $W^-$ with abelian subgroups of $\cH$.

Let $(\tau , V)$ be the Heisenberg
representation
$\hbox{Ind}_{W^-\boxtimes \F_p}^\cH (1\times \zeta)$, where
$\zeta$ is a nontrivial character of $\cZ$. 
The isomorphism class of the Heisenberg
representation depends only on the choice of
$\zeta$, however, our specific model for the
Heisenberg representation also depends on the
choice of polarization.

The Weil representation is defined as follows.  Fix $\F_p$-bases
$e_1,\dots ,e_\ell$ and $e_{\ell+1},\dots
,e_{2\ell}$ of $W^+$ and $W^-$, respectively,
such that, with respect to the basis $e_1,\dots,
e_{2\ell}$ of $W$, the symplectic form on
$W$ is the associated to the matrix $$j = \2by2
0{1_\ell}{-1_\ell}0.$$  Then $\cS = \hbox{Sp}(W)$
consists of block matrices $s=\2by2 abcd$ such that
${}^tsjs= j$.  Let $\cM$ denote the subgroup of
block diagonal matrices in $\cS$.  Such matrices
have the form $$m(y)=\2by2 y00{{}^ty^{-1}},$$ where
$y$ ranges over $GL(\ell,\F_p)$.  We recall that
$\cM$ has a unique character of order two and it
is defined by $$\chi^{\cM}(m(y))= (\det
y)^{(p-1)/2}.$$  We also observe that $\cM$ is
precisely the stabilizer of the polarization  $W=
W^++W^-$.

We will represent $\cS$ and $\cH$ as  subgroups of
$GL(2\ell+2 ,\F_p)$.  Specifically, 
$s=\2by2 abcd\in \cS$ maps to the matrix 
$$\left(
\begin{array}{ccc}
1&0&0\cr 0&s&0\cr 0&0&1
\end{array}
\right)
$$ and $h=(w,z)\in
\cH$ maps to 
$$\left(
\begin{array}{ccc}
1&\frac{1}{2}{}^twj&z\cr
0&1_\ell&w\cr 0&0&1
\end{array}
\right)
.$$  With these
representations, $\cS$ acts by conjugation on $\cH$
and we have a semidirect product $\cS\ltimes \cH$
that consists of elements
$$s\ltimes h = 
\left(
\begin{array}{ccc}
1&0&0\cr 0&s&0\cr
0&0&1
\end{array}
\right)
\left(
\begin{array}{ccc}
1&\frac{1}{2}{}^twj&z\cr
0&1_\ell&w\cr 0&0&1
\end{array}
\right)
.$$  

Except in the case in which $p=3$ and $\ell =1$, there is a unique
extension $\hat\tau : \cS\ltimes \cH\to GL(V)$ of
the Heisenberg representation to $\cS\ltimes
\cH$.  Indeed, any two such extensions would be twists of each other by a character of $\cS$, however, since $\cS$ is its own commutator subgroup it does not admit any nontrivial characters.
On the other hand, when $p=3$ and $\ell =1$ the commutator subgroup of $\cS$ is a proper subgroup of $\cS$ and there are three extensions of $\tau$ to $\cS\ltimes \cH$.  In the appendix at the end of this section, we single out a unique extension of $\tau$ that we will denote by $\hat\tau$.  (This is consistent with the conventions in \cite{Ge}.)

\begin{definition}\label{HeisWeilrep}
Suppose $\tau$ is a Heisenberg representation of a Heisenberg $p$-group 
of the form  $\cH = W\boxtimes \F_p$.  Except in the case when $p=3$ and $\ell=1$, the {\it Heisenberg-Weil lift} of $\tau$ to $\cS\ltimes \cH$ is the unique extension $\hat\tau$ of $\tau$ to a representation of $\cS\ltimes \cH$ that acts on the space of $\tau$.  When $p=3$ and $\ell=1$, the {\it Heisenberg-Weil lift} of $\tau$ to $\cS\ltimes \cH$  is the extension $\hat\tau$ of $\tau$ defined in Section \ref{sec:eselltwo}.
In all cases, the {\it Weil representation} of $\cS$ associated to $\tau$ is the restriction of $\hat\tau$ to
$\cS$.  The notions of Heisenberg-Weil lift and Weil representation for abstract Heisenberg $p$-groups are discussed below in Remark \ref{absHWliftdef}.
\end{definition}

The contragredient of $(\tau , V)$ is the representation $(\tilde\tau, \widetilde V)$  induced from the character $1\times \zeta^{-1}$ of $W^-\boxtimes \F_p$.  
An $\cH$-invariant pairing between $\tau$ and $\tilde\tau$ is given by:
$$\langle f_1,f_2\rangle  = \sum_{w_+\in W^+} f_1(w_+)\
f_2(w_+) = \sum_{h\in W^-\cZ\bs \cH}
f_1(h)\
f_2(h).$$  
The Heisenberg-Weil lift $\hat{\tilde\tau}$ of $\tilde\tau$ has a contragredient $(\hat{\tilde\tau})^\sim$ that restricts to $\tau$.  Since $(\hat{\tilde\tau})^\sim$ must be a Heisenberg-Weil lift of $\tau$, the uniqueness of such lifts implies that $(\hat{\tilde\tau})^\sim = \hat\tau$.
Hence, there must exist a nonzero $\cS\ltimes \cH$-invariant pairing between the Heisenberg-Weil lift $\hat\tau$ of $\tau$ and  $\hat{\tilde\tau}$.  Since such a pairing must also be $\cH$-invariant and since such pairings must be unique up to scalar multiples, it must be the case that the pairing we have defined is automatically $\cS\ltimes \cH$-invariant.  Note that the mapping $\varphi \mapsto \langle -,\varphi \rangle$ defines a $\cS\ltimes \cH$-equivariant isomorphism from $\widetilde V$ to $\Hom(V,\C)$, where $\cS\ltimes \cH$ acts on the latter space by $$(x\cdot \lambda )(v) = \lambda (\hat\tau (x^{-1}) v).$$

When
$\cK$ is a subgroup of $\cH$, let $V^\cK$ be the
space of right $\cK$-invariant functions in $V$ or, equivalently, the space of $\cK$-fixed vectors in $V$.
If $x\in \cH$ let  $V^\cK_x$ denote the
space of functions in $V^\cK$ with support in
$W^-\cZ x\cK$.  Lemma 7.1 in \cite{HM2}
says that $$V^\cK =\bigoplus_{x\in W^-\cZ\bs
\cH/\cK} V^\cK_x$$ and
$V^\cK_x$ is zero unless $W^- x\cK x^{-1}\cap \cZ
= \{ 1\}$, in which case it has dimension one.  When $V^\cK_x$ is nonzero, it must be generated by the function $\varphi_x$ that vanishes outside $W^-\cZ x\cK$ and is defined on $W^-\cZ x\cK$ by
$$
\varphi_x (w_-zxk ) = \zeta (z),
$$ 
with $w_-\in W^-$, $z\in \cZ$ and $k\in \cK$.  Thus, the above decomposition of $V^\cK$ allows us to compute $V^\cK$ exactly, not merely up to isomorphism.  Applying the previous discussion to $\widetilde V$ and using $\varphi\mapsto \langle - , \varphi\rangle$ to identify $\widetilde V$ with $\Hom (V,\C)$, we obtain an explicit description of $\Hom_{\cK}(\tau,1)$ which we record in the following lemma:

\begin{lemma}\label{fixedvecs}
If $\cK$ is a subgroup of $\cH$ then 
$$\Hom_{\cK}(\tau,1) = \bigoplus_{x\in W^-\cZ\bs
\cH/\cK} \widetilde{V}^\cK_x.$$ The space
$\widetilde{V}^\cK_x$ is zero unless $W^- x\cK x^{-1}\cap \cZ
= \{ 1\}$, in which case it has dimension one and is generated by the linear form
$$\lambda_x (\varphi)= \sum_{k\in \cK} \varphi (xk).$$  
\end{lemma}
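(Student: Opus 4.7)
My approach is to translate the cited result (Lemma 7.1 of \cite{HM2}, which describes $V^\cK$ for the induced Heisenberg representation $V$) into the desired description of $\Hom_\cK(\tau,1)$ by applying that result to the contragredient $\tilde\tau$ and then identifying the resulting space $\widetilde V^\cK$ with $\Hom_\cK(\tau,1)$ via the explicit $\cH$-invariant pairing recorded above. Concretely, the map $\tilde\varphi\mapsto \langle\,\cdot\,,\tilde\varphi\rangle$ is an $\cH$-equivariant isomorphism $\widetilde V\cong\Hom(V,\C)$ that restricts to $\widetilde V^\cK\cong\Hom_\cK(\tau,1)$, and $\tilde\tau=\hbox{Ind}_{W^-\boxtimes\F_p}^\cH(1\times\zeta^{-1})$ is of the form covered by the cited lemma. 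Applying the lemma to $\tilde\tau$ in place of $\tau$ yields
$$\widetilde V^\cK=\bigoplus_{x\in W^-\cZ\bs\cH/\cK}\widetilde V_x^\cK,$$
with each summand being zero or one-dimensional, nonvanishing being characterized by triviality of the character $1\times\zeta^{-1}$ on $W^-\cZ\cap x\cK x^{-1}$. Since $\zeta$ is a faithful character of the cyclic group $\cZ\cong\F_p$, this condition is equivalent to $W^-\cZ\cap x\cK x^{-1}\subseteq W^-$, and a short application of the Heisenberg commutator rule shows this in turn is equivalent to $W^- x\cK x^{-1}\cap\cZ=\{1\}$, matching the stated geometric form.

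When the condition holds, the cited lemma also provides an explicit generator $\tilde\varphi_x$ of $\widetilde V_x^\cK$, supported on $W^-\cZ x\cK$ and satisfying $\tilde\varphi_x((w^-,z)xk)=\zeta^{-1}(z)$. Unravelling
$$\langle f,\tilde\varphi_x\rangle=\sum_{h\in W^-\cZ\bs\cH}f(h)\tilde\varphi_x(h)$$
using the transformation property of $f$ under $W^-\cZ$ and the nonvanishing condition (which forces any $xkx^{-1}\in W^-\cZ$ to lie in $W^-$) shows that this pairing equals $|x^{-1}W^- x\cap\cK|^{-1}\sum_{k\in\cK}f(xk)$, a nonzero scalar multiple of the formula for $\lambda_x$ in the statement. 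Hence $\lambda_x$ generates the same one-dimensional space.

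The main bookkeeping obstacle is translating the ``character-trivial on the intersection'' condition into the stated form $W^- x\cK x^{-1}\cap\cZ=\{1\}$, and correctly accounting for the passage from coset representatives to the unweighted sum over all $k\in\cK$. As an independent check, one may verify both $\cK$-invariance of $\lambda_x$ (immediate from re-indexing the sum) and the necessity of the condition directly: if $k_0\in\cK$ satisfies $xk_0x^{-1}=(w^-,z)$ with $z\ne 0$, substituting $k\mapsto k_0k$ in the defining sum and applying $f((w^-,z)h)=\zeta(z)f(h)$ yields $\lambda_x(f)=\zeta(z)\lambda_x(f)$, forcing $\lambda_x=0$.
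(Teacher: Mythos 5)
Your proposal is correct and follows essentially the same route as the paper: the paper likewise obtains the lemma by applying Lemma 7.1 of \cite{HM2} to the contragredient model $\widetilde V$ and transferring the resulting decomposition to $\Hom_{\cK}(\tau,1)$ via the $\cH$-invariant pairing $\varphi\mapsto\langle\,\cdot\,,\varphi\rangle$. Your extra bookkeeping (the equivalence of the character-triviality and geometric nonvanishing conditions, and the scalar $|x^{-1}W^-x\cap\cK|^{-1}$ relating $\langle\,\cdot\,,\tilde\varphi_x\rangle$ to $\lambda_x$) is accurate and simply spells out what the paper leaves implicit.
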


\begin{example}\label{exzfixed}
If $\cK$ contains $\cZ$ then we see that $\widetilde{V}^\cK_x=0$
for all $x$ and thus $\Hom_{\cK} (\tau,1) =0$.
\end{example}

\begin{example}\label{exwplus}
Suppose $\cK = W^+$.  Then $\cH = W^- \cZ W^+$ implies $\widetilde{V}^\cK = \widetilde{V}^\cK_1$  and, since $W^-W^+\cap \cZ=\{1\}$,
we see that $\widetilde{V}^\cK_1 \cong \C$.  So $\Hom_{\cK}(\tau,1)$ is
one-dimensional  and it is spanned by the linear form
 defined by  $$\lambda_1(\varphi) = \sum_{w_+\in W^+}\varphi (w_+).$$
\end{example}

\begin{example}\label{exwzero}
Suppose $\cK=W_0$ is some
arbitrary maximal totally isotropic subspace in
$W$. If we are only interested in the dimension of $\Hom_{W_0}(\tau ,1)$, we can replace $\tau$ by an equivalent representation so that the present example reduces to the previous example.  An alternate approach using the Weil representation is as follows.  
Choose $s\in\cS$ such that $  s\cdot W_0
=  W^+$.  Thus the subgroups $W_0$ and $W^+$ of $\cS\ltimes \cH$ are
conjugate by an element of $\cS$.  Now if $v\in V$ then
\begin{equation*}
\begin{split}
v\in V^{W_0}&\Leftrightarrow\tau
(w_0)v=v,\forall w_0\in W_0\\
&\Leftrightarrow \tau
(s^{-1}\cdot w_+)v=v,\forall w_+\in W^+\\ 
&\Leftrightarrow
\hat\tau(s)^{-1}\tau(w_+)\hat\tau (s) v=v,\forall
w_+\in W^+ \\
&\Leftrightarrow \tau(w_+)\hat\tau (s)
v=\hat\tau(s)v,\forall w_+\in W^+\\
&\Leftrightarrow \hat\tau(s)v\in V^{W^+}.
\end{split}
\end{equation*}
In other words, $\hat\tau(s)V^{W_0}=
V^{W^+}\cong \C$.
\end{example}

Example \ref{exwzero} generalizes as follows:

\begin{lemma}\label{hplusfixed}
Suppose $\cH^+$ is an abelian subgroup of a Heisenberg $p$-group $\cH$ with center $\cZ$ such that the image $W^+$ of $\cH^+$ in $W$ is a maximal isotropic subspace and $\cH^+\cap \cZ = \{ 1\}$.  Then $\Hom_{\cH^+}(\tau ,1)$ has dimension one.\end{lemma}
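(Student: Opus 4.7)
The plan is to deduce the lemma directly from Lemma~\ref{fixedvecs} applied with $\cK = \cH^+$. By that lemma,
\[
\Hom_{\cH^+}(\tau,1) = \bigoplus_{x\in W^-\cZ\bs \cH/\cH^+} \widetilde{V}^{\cH^+}_x,
\]
so it suffices to show (i) there is exactly one double coset in $W^-\cZ\bs \cH/\cH^+$, and (ii) the non-vanishing condition $W^- x\cH^+ x^{-1}\cap \cZ = \{1\}$ holds for the representative $x=1$.

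For (i), the key observation is that $W^- + W^+ = W$ is a polarization, so in particular $W^-\cap W^+ = \{0\}$. Since $\cH^+$ maps onto $W^+$ under the projection $\cH \to W = \cH/\cZ$ and $W^-$ embeds as a subgroup mapping isomorphically onto $W^-\subset W$, the image of $W^-\cH^+$ in $W$ is $W^- + W^+ = W$. Therefore $W^-\cH^+\cdot \cZ = \cH$, and since $\cZ$ is central this gives $W^-\cZ\cH^+ = \cH$, i.e. there is a single double coset.

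For (ii), suppose $w_- h^+ \in \cZ$ with $w_-\in W^-$ and $h^+\in \cH^+$. Projecting to $W$ (and using that the projection is a group homomorphism onto the abelian group $W$, so the commutator term in the Heisenberg law drops out) gives $w_- + \overline{h^+} = 0$ in $W$, where $\overline{h^+}\in W^+$ denotes the image of $h^+$. From $W^-\cap W^+ = \{0\}$ we conclude $w_- = 0$ and $\overline{h^+}=0$, so $h^+\in \cH^+\cap \cZ = \{1\}$ by hypothesis. Thus $w_- h^+ = 1$, which verifies $W^-\cH^+\cap \cZ = \{1\}$.

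Combining (i) and (ii), Lemma~\ref{fixedvecs} gives $\Hom_{\cH^+}(\tau,1) = \widetilde{V}^{\cH^+}_1$, which is one-dimensional, and an explicit generator is the linear form $\lambda_1(\varphi) = \sum_{h^+\in \cH^+}\varphi(h^+)$. The only subtle point is checking that the projection $\cH\to W$ really is a homomorphism when restricted to the product $W^-\cdot \cH^+$ as above, but this is immediate because $W = \cH/\cZ$ is abelian, so there is no genuine obstacle; the argument is essentially a bookkeeping consequence of the maximal isotropy assumption and the disjointness condition $\cH^+\cap \cZ = \{1\}$.
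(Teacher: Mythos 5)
There is a genuine gap. In the statement of the lemma, $W^+$ is the image of $\cH^+$ in $W$ — an \emph{arbitrary} maximal isotropic subspace — whereas the $W^-$ appearing in Lemma~\ref{fixedvecs} is the subspace fixed once and for all in the model $\tau=\mathrm{Ind}_{W^-\boxtimes\F_p}^{\cH}(1\times\zeta)$. You assert that "$W^-+W^+=W$ is a polarization," but nothing in the hypotheses guarantees that these two Lagrangians are transverse; they may intersect nontrivially, and indeed $W^+$ may equal $W^-$ (take $\cH^+=W^-\times\{0\}$, which is abelian, meets $\cZ$ trivially, and has maximal isotropic image). In that situation both steps of your argument collapse: in (i) the set $W^-\cZ\cH^+$ is not all of $\cH$, so there are $|W/(W^-+W^+)|>1$ double cosets in $W^-\cZ\backslash\cH/\cH^+$, and in (ii) the inference $w_-=-\overline{h^+}\in W^-\cap W^+=\{0\}$ is exactly what fails. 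One-dimensionality in the general case is true, but it arises because the non-vanishing criterion $W^-x\cH^+x^{-1}\cap\cZ=\{1\}$ kills all but one double coset, and that analysis (a computation with the character $\mu$ describing the section $\cH^+$ over $W^+$ and its shift under conjugation by $x$) is precisely what your proof omits. This generality matters for the intended application: in Theorem~\ref{Heisthm} the subgroup $\cH_\alpha^+$ has image an arbitrary Lagrangian determined by the involution, with no relation to the model's $W^-$. (A smaller point: the lemma is stated for an abstract Heisenberg $p$-group, so one should also note the reduction to $\cH=W\boxtimes\F_p$ before invoking Lemma~\ref{fixedvecs}.)

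Your computation is correct, and closest to Example~\ref{exwplus}, in the special case where the image of $\cH^+$ happens to be complementary to $W^-$; the paper instead reduces the general case to that situation. Its proof writes each element of $\cH^+$ as $(w_+,\mu(w_+))$, observes that commutativity makes $\mu$ a character of $W^+$, hence $\mu(w_+)=\langle w_+,w_0\rangle$ for some $w_0\in W$, and deduces from $(w_+,\mu(w_+))=(w_0,0)^{-1}(w_+,0)(w_0,0)$ that $\cH^+$ is conjugate \emph{in $\cH$} to the flat section $W^+\times\{0\}$; Example~\ref{exwzero} then conjugates this Lagrangian to the model's $W^+$ by a symplectic element via the Weil representation. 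To repair your argument you would either follow that route, or carry out the full Lemma~\ref{fixedvecs} analysis for an arbitrary Lagrangian image, showing directly that exactly one double coset satisfies $W^-x\cH^+x^{-1}\cap\cZ=\{1\}$.
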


\begin{proof}
Examining the statement of the lemma, it is easy to see that it suffices to prove it in the case in which $\cH = W\boxtimes \F_p$.  For each $w_+\in W^+$, there exists a unique element in $\cH^+$ of the form $(w_+,\mu(w_+))$, where $\mu (w_+)$ lies in $\F_p$, and every element of $\cH^+$ can be uniquely expressed in this way.  Since $\cH^+$ is abelian, it is easy to see that $\mu$ defines a homomorphism from $W^+$ to $\F_p$.  Hence, we can choose $w_0\in W$ such that $\mu (w_+) = \langle w_+,w_0\rangle$, for all $w_+\in W^+$.  The identity
$$(w_+,\mu(w_+))= (w_0,0)^{-1} (w_+,0)(w_0,0)$$ shows that $\cH^+$ is conjugate to 
$W^+= W^+\times \{ 0\}$.
We can now apply Example~\ref{exwzero} to deduce that $\Hom_{\cH^+}(\tau,1)$ has dimension one.
\end{proof}

We now turn to the problem of computing
$\Hom_{\cH_\alpha^+}(\tau ,1)$ when $\alpha$ is
an automorphism of $\cH$ of order two with fixed
points
$\cH_\alpha^+$.  
In the cases of interest to us, the group $\cH_\alpha^+$
will be a group of the form $\cH^+$ mentioned in
Lemma~\ref{hplusfixed}.
The following result may be found in \cite{Ho}:

\begin{lemma}\label{HmodZautos}
The group of all
automorphisms of $\cH= W\boxtimes \F_p$ that act trivially on
$\cZ$ is canonically isomorphic to $\cS\ltimes
W$, where $\cS$ acts on $\cH$ via
its action on the first factor and $W=\cH/\cZ$
acts on $\cH$ by inner automorphisms.
\end{lemma}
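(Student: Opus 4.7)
The plan is to write down the natural homomorphism from the ``trivial-on-$\cZ$'' automorphism group to $\cS$, exhibit a splitting, and identify the kernel with $W$ acting by inner automorphisms. Let $\mathrm{Aut}_\cZ(\cH)$ denote the group of automorphisms of $\cH$ that act trivially on $\cZ$.

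First I would check that any $\alpha\in \mathrm{Aut}_\cZ(\cH)$ descends to an element of $\cS$. Since $\cZ$ is the center of $\cH$ (so $\alpha$-stable), $\alpha$ induces $\bar\alpha \in \mathrm{Aut}(\cH/\cZ) = \mathrm{Aut}(W)$. The commutator identity in $\cH$ reads $[(w_1,z_1),(w_2,z_2)] = (0,\langle w_1,w_2\rangle)$, and since $\alpha$ preserves both commutators and $\cZ$ pointwise, we get $\langle \bar\alpha w_1, \bar\alpha w_2\rangle = \langle w_1, w_2\rangle$, i.e.\ $\bar\alpha \in \cS$. This gives a homomorphism $q:\mathrm{Aut}_\cZ(\cH) \to \cS$.

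Next I would construct a splitting: for $s\in\cS$ define $\alpha_s(w,z) = (sw,z)$. The fact that $s$ preserves $\langle\;,\;\rangle$ immediately makes $\alpha_s$ a group homomorphism fixing $\cZ$, and $q(\alpha_s) = s$. Thus $\mathrm{Aut}_\cZ(\cH) = \ker(q) \rtimes \cS$, where the semidirect action of $\cS$ on $\ker(q)$ will be described after identifying $\ker(q)$.

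The main (and only really computational) step is the identification of $\ker(q)$ with $W$ via inner automorphisms. Given $\alpha\in\ker(q)$, write $\alpha(w,z) = (w, z + f(w))$ for some set map $f:W\to\F_p$ (note $f(0)=0$ because $\alpha$ is a homomorphism). Expanding the condition $\alpha((w_1,z_1)(w_2,z_2)) = \alpha(w_1,z_1)\alpha(w_2,z_2)$ gives
\begin{equation*}
f(w_1+w_2) = f(w_1) + f(w_2),
\end{equation*}
so $f\in \mathrm{Hom}(W,\F_p)$. Nondegeneracy of the symplectic form provides a unique $w_0\in W$ with $f(w) = \langle w_0, w\rangle$ for all $w\in W$. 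A direct computation (using $(w_0,0)(w,z)(w_0,0)^{-1} = (w, z+\langle w_0,w\rangle)$, since the center-values coming from the two halves of the conjugation combine to $\langle w_0,w\rangle$) shows $\alpha = \mathrm{Int}(w_0,0)$. Conversely, every such inner automorphism lies in $\ker(q)$, so $\ker(q) \cong W$ via $w_0 \mapsto \mathrm{Int}(w_0,0)$, which is exactly the identification of $\cH/\cZ$ with the inner automorphism group of $\cH$.

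Finally, to confirm the semidirect product structure is the stated one, I would verify that the action of $\cS$ on $\ker(q)\cong W$ obtained by conjugation inside $\mathrm{Aut}_\cZ(\cH)$ agrees with the natural $\cS$-action on $W$: a brief computation gives $\alpha_s \circ \mathrm{Int}(w_0,0) \circ \alpha_s^{-1} = \mathrm{Int}(sw_0, 0)$, as desired. The hardest (but still short) point is purely algebraic: forcing $f$ to be a homomorphism from the cocycle identity, and then invoking nondegeneracy to write it as pairing with $w_0$; everything else is formal.
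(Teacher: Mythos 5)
Your proof is correct. Note, however, that the paper does not supply its own proof of this lemma — it simply cites \cite{Ho} (Howe) — so there is no in-paper argument to compare against. Your argument is the natural one: inducing $\bar\alpha\in\cS$ from preservation of commutators and triviality on $\cZ$, producing the obvious splitting $s\mapsto\alpha_s$, and identifying the kernel of $q$ with $W$ by first showing $f\in\Hom(W,\F_p)$ from the cocycle identity and then invoking nondegeneracy to write $f=\langle w_0,\cdot\rangle$. The conjugation computation $(w_0,0)(w,z)(w_0,0)^{-1}=(w,z+\langle w_0,w\rangle)$ is correct (the two $\tfrac12$-terms add rather than cancel because the form is antisymmetric), and your verification that $\alpha_s\circ\Int(w_0,0)\circ\alpha_s^{-1}=\Int(sw_0,0)$ correctly confirms the stated semidirect product structure. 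This is a complete proof of the lemma, matching the reformulation the paper gives immediately after the statement: every such $\alpha$ has the form $\alpha(w,z)=(s_0\cdot w,\,z+\langle w_0,w\rangle)$.
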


One can rephrase this as follows. 
Every automorphism $\alpha$ of $\cH$ that is
trivial on $\cZ$ has the form
$\alpha (w,z)= (s_0\cdot w, z+ \langle w_0,
w\rangle)$, for some $s_0\in \cS$ and $w_0\in W$.

\begin{lemma}\label{HmodZhom}
If $\alpha$ is an automorphism
of $\cH$ of order two that acts trivially on
$\cZ$ then
$\Hom_{\cH_\alpha^+}(\tau,1)=0$.
\end{lemma}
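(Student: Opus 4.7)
The plan is to reduce this to Example~\ref{exzfixed} by the central character obstruction. Since $\alpha$ fixes $\cZ$ pointwise, we have $\cZ\subset \cH_\alpha^+$. The Heisenberg representation $\tau=\mathrm{Ind}_{W^-\boxtimes\F_p}^{\cH}(1\times\zeta)$ has central character $\zeta$ (which is nontrivial by assumption), so $\tau(z)=\zeta(z)\cdot\mathrm{id}_V$ for every $z\in\cZ$.

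Suppose $\lambda\in \Hom_{\cH_\alpha^+}(\tau,1)$. Then for every $v\in V$ and every $z\in\cZ$,
\[
\lambda(v)=\lambda(\tau(z)v)=\zeta(z)\,\lambda(v).
\]
Choosing $z\in\cZ$ with $\zeta(z)\ne 1$ forces $\lambda(v)=0$ for every $v$, hence $\lambda=0$. Equivalently, apply Lemma~\ref{fixedvecs} with $\cK=\cH_\alpha^+\supset\cZ$: the condition $W^-x\cK x^{-1}\cap\cZ=\{1\}$ fails for every $x$, so each $\widetilde V^{\cK}_x$ vanishes and the direct sum is zero, exactly as recorded in Example~\ref{exzfixed}.

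There is no real obstacle: the lemma is an immediate consequence of the fact that $\tau$ has nontrivial central character and that $\alpha$-fixedness on $\cZ$ places the center inside $\cH_\alpha^+$. The content of the lemma is simply to isolate this central-character obstruction so that, in the subsequent analysis, one may assume without loss of generality that $\alpha|_\cZ$ is nontrivial (which is the hypothesis under which Lemma~\ref{hplusfixed} is stated and applied).
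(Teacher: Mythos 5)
Your proof is correct and is essentially identical to the paper's argument: both observe that $\alpha$ acting trivially on $\cZ$ forces $\cZ\subset\cH_\alpha^+$, and then use the nontriviality of the central character $\zeta$ to conclude that any $\cH_\alpha^+$-invariant linear form must vanish. The alternative phrasing via Lemma~\ref{fixedvecs} and Example~\ref{exzfixed} that you mention is simply the same obstruction recast in the explicit model, so there is no substantive difference.
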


\begin{proof}
If $\lambda\in
\Hom_{\cH_\alpha^+}(\tau,1)$ and $z\in \cZ$ then
since $z\in \cH_\alpha^+$ we have
$\zeta(z)\lambda(v)= \lambda
(\tau(z)v)=\lambda(v)$.  Choosing $z$ so that
$\zeta(z)\ne 1$, we see that we must have
$\lambda=0$.
\end{proof}

So the previous lemma says that the automorphisms
of most interest to us will always have
nontrivial restriction to $\cZ$.  Nevertheless, we
still need to have some facts about the
automorphisms that are trivial on $\cZ$. 

We remark that the notion of an $\F_p$-linear automorphism of $W$ coincides with the notion of an automorphism of the additive group of $W$ and thus we can  use the ambiguous terminology ``automorphism of $W$.''  Let $\cS^-$ be the set of all automorphisms
$s$ of $W$ such that $\langle s\cdot w,s\cdot
w'\rangle = -\langle w,w'\rangle$, for all $w,w'\in
W$.   Then the (disjoint) union $\widehat\cS = \cS\sqcup \cS^-$ is a
group that acts on $\cH$ by its natural action on the first factor of $W\boxtimes \F_p$.  

The next
fact follows from an obvious calculation:

\begin{lemma}\label{SWordtwo}
The elements of order two in
$\widehat\cS\ltimes W$ are the nontrivial elements
of the form
$(s,w)$, where $s\in \widehat\cS$ has order two,
$w\in W$  and
$s\cdot w = -w$.
\end{lemma}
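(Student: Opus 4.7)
The plan is to verify the claim by a direct squaring computation in the semidirect product, combined with the standing assumption that $p$ is odd (which forces $W$ to contain no nonzero element of order two). Recall that in $\widehat\cS\ltimes W$ the group law is $(s_1,w_1)(s_2,w_2)=(s_1 s_2,\, w_1+s_1\cdot w_2)$, where $s_1\cdot w_2$ denotes the natural $\F_p$-linear action of $\widehat\cS$ on $W$. A straightforward calculation gives
\[
(s,w)^2 \;=\; \bigl(s^2,\; w+s\cdot w\bigr),
\]
so $(s,w)$ has order dividing two precisely when $s^2=1$ and $s\cdot w=-w$.

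It remains to isolate the elements of order \emph{exactly} two. Suppose $(s,w)\ne(1,0)$ satisfies the two relations above. If $s=1$, the second relation becomes $2w=0$; since $W$ is a vector space over $\F_p$ with $p$ odd, this forces $w=0$, contradicting $(s,w)\ne(1,0)$. Hence $s\ne 1$, and combined with $s^2=1$ we conclude that $s$ has order exactly two. Conversely, given any $s\in\widehat\cS$ of order two and any $w\in W$ with $s\cdot w=-w$, the element $(s,w)$ is automatically nontrivial (because $s\ne 1$) and the squaring computation shows it has order two. These two implications together give the stated characterization.

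There is no substantive obstacle; the entire proof is a one-line manipulation of the semidirect-product formula. The only point that requires care is the appeal to $p\ne 2$ to exclude the degenerate possibility $s=1$, $w\ne 0$; without this hypothesis one would additionally pick up the nontrivial elements of the $2$-torsion of $W$, and the statement as written would fail.
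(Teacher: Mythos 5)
Your proof is correct and is essentially the "obvious calculation" the paper alludes to but does not write out: squaring in the semidirect product gives $(s^2,\,w+s\cdot w)$, so order dividing two forces $s^2=1$ and $s\cdot w=-w$, and $p\ne2$ then rules out $s=1$ with $w\ne0$. This matches the paper's intent exactly.
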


When $s\in \widehat\cS$ has order two, we take
\begin{equation*}
\begin{split}
W_s^+&= \{\, w\in W\ | \ s\cdot w=w\,\} \\
W_s^-&= \{\, w\in W\ | \ s\cdot w=-w\,\}.
\end{split}
\end{equation*}

\begin{lemma}\label{seigdec}
If $s\in \widehat{\cS}$ has order two
then the decomposition
$$w= \frac{w+s\cdot w}{2}+ \frac{w-s\cdot w}{2}$$
gives rise to a direct sum decomposition $W=
W_s^+ +W_s^-$.  If $s\in \cS^-$ then this is a
polarization of $W$.  Conversely, if $W=W^+ +W^-$
is a polarization of $W$ then there is a unique
$s\in
\cS^-$ of order two such that $W^+=W_s^+$ and
$W^- =W_s^-$.
\end{lemma}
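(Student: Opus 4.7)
The plan is to handle the two directions of the statement separately, after first setting up the eigenspace decomposition for an arbitrary order-two element $s$ of $\widehat{\cS}$. Since $p$ is odd, we may divide by $2$ throughout.

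First, I would verify the decomposition $W = W_s^+ + W_s^-$. Applying $s$ to $\tfrac{w+s\cdot w}{2}$ gives $\tfrac{s\cdot w + w}{2}$, so this element lies in $W_s^+$; applying $s$ to $\tfrac{w-s\cdot w}{2}$ gives $\tfrac{s\cdot w - w}{2}$, so this element lies in $W_s^-$. Hence the claimed identity realizes $w$ as an element of $W_s^+ + W_s^-$. If $w$ lies in $W_s^+\cap W_s^-$, then $w = s\cdot w = -w$, forcing $2w=0$ and therefore $w=0$ since $p$ is odd. So the sum is direct.

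Next, assume $s\in \cS^-$. For $w,w'\in W_s^+$, the defining relation of $\cS^-$ gives
\[
\langle w,w'\rangle \;=\; \langle s\cdot w, s\cdot w'\rangle \;=\; -\langle w,w'\rangle,
\]
so $\langle w,w'\rangle = 0$; thus $W_s^+$ is totally isotropic. For $w,w'\in W_s^-$ the same identity yields $\langle -w,-w'\rangle = -\langle w,w'\rangle$, i.e.\ again $\langle w,w'\rangle = 0$, so $W_s^-$ is totally isotropic as well. Since a totally isotropic subspace of the $2\ell$-dimensional nondegenerate symplectic space $W$ has dimension at most $\ell$, and $\dim W_s^+ + \dim W_s^- = 2\ell$ by the first step, both summands must have dimension exactly $\ell$. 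Hence each is a maximal isotropic subspace and together they give a polarization.

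For the converse, given a polarization $W = W^+ + W^-$, define $s\in \mathrm{End}_{\F_p}(W)$ by $s\cdot w = w$ for $w\in W^+$ and $s\cdot w = -w$ for $w\in W^-$, extended $\F_p$-linearly. Then $s$ has order two and clearly $W_s^+ = W^+$, $W_s^- = W^-$. To see $s\in \cS^-$, write any $w,w'\in W$ uniquely as $w = w^+ + w^-$ and $w' = w'^+ + w'^-$ with $w^\pm, w'^\pm \in W^\pm$; using that $W^+$ and $W^-$ are totally isotropic,
\[
\langle w,w'\rangle = \langle w^+,w'^-\rangle + \langle w^-,w'^+\rangle,
\]
while
\[
\langle s\cdot w, s\cdot w'\rangle = \langle w^+ - w^-, w'^+ - w'^-\rangle = -\langle w^+, w'^-\rangle - \langle w^-, w'^+\rangle,
\]
so $\langle s\cdot w,s\cdot w'\rangle = -\langle w,w'\rangle$. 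Finally, any $s'\in \cS^-$ with $W_{s'}^+ = W^+$ and $W_{s'}^- = W^-$ must act as $+1$ on $W^+$ and $-1$ on $W^-$, hence agrees with $s$, giving uniqueness.

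There is no real obstacle here; the argument is a direct calculation whose only subtle ingredients are the existence of $1/2$ in $\F_p$ and the fact that a totally isotropic subspace of $W$ has dimension at most $\ell$ (used to upgrade isotropy to maximality via the dimension count).
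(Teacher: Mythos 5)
Your proof is correct and follows essentially the same route as the paper: the eigenspace decomposition via $w=\tfrac{w+s\cdot w}{2}+\tfrac{w-s\cdot w}{2}$, the isotropy computation using the defining relation of $\cS^-$, and the converse construction $s\cdot(w_++w_-)=w_+-w_-$ with the same bilinearity calculation. The only difference is that you spell out details (triviality of the intersection, the dimension count for maximality, uniqueness) that the paper dismisses as obvious, which is fine.
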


\begin{proof}
The first statement is obvious and the
second statement follows from an obvious
calculation.  Now suppose we are given a
polarization
$W=W^+ +W^-$.  Given $w=w_++ w_-$, with $w_+\in
W^+$ and $w_-\in W^-$, define $s\cdot w = w_+
-w_-$.  The calculation $\langle w_+ -w_-,
w'_+-w'_-\rangle = -\langle w_-,w'_+\rangle -
\langle w_+,w'_-\rangle = -\langle w_++ w_-, w'_++
w_-\rangle$ shows that $s\in \cS^-$.  Since $s$ has
order two, the proof is complete.
\end{proof}

\begin{lemma}\label{ordtwoauto}
The only automorphism of $\cZ$
of order two is $z\mapsto z^{-1}$ or, equivalently, $(0,z_0)\mapsto (0,-z_0)$.
\end{lemma}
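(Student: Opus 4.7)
The plan is essentially a one-line reduction: since $\cZ = 0\boxtimes\F_p$ is cyclic of prime order $p$, its automorphism group is the multiplicative group $(\Z/p\Z)^\times \cong \F_p^\times$, and I just need to identify the elements of order dividing two.

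Specifically, I would observe that any automorphism of $\cZ$ has the form $z\mapsto az$ (writing $\cZ$ additively as $\F_p$) for a unique $a\in\F_p^\times$. Such an automorphism has order dividing two precisely when $a^2=1$ in $\F_p$, i.e.\ when $(a-1)(a+1)=0$. Since $\F_p$ is an integral domain, this forces $a=\pm 1$. The value $a=1$ gives the identity (order one), so the unique automorphism of order exactly two corresponds to $a=-1$, which is the inversion map $z\mapsto z^{-1}$, or in additive notation $(0,z_0)\mapsto (0,-z_0)$.

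The hypothesis that $p$ is odd (recalled at the start of the section) is implicit in the statement: if $p$ were $2$ then $-1=1$ in $\F_p$ and there would be no automorphism of order two at all. There is essentially no obstacle here; the lemma is a pure statement about cyclic groups of odd prime order and the proof is immediate. I would present it in two or three sentences.
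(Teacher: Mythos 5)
Your proof is correct and follows essentially the same route as the paper: identify $\mathrm{Aut}(\cZ)$ with $(\Z/p\Z)^\times$ via $a\mapsto(z\mapsto az)$, solve $a^2=1$ to get $a=\pm1$, and use that $p$ is odd to single out $a=-1$ as the unique element of order two. No gaps.
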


\begin{proof}
Let us identify $\cZ$ with 
$\Z/p\Z$.  Then we have an isomorphism
$(\Z/p\Z)^\times \cong \hbox{Aut}(\Z/p\Z)$ that
sends $n+p\Z$ to the automorphism $\alpha_n(m) =
mn \; (\hbox{mod }p)$.  But $\alpha_n^2=1$ exactly
when $p$ divides $n^2-1$ or, equivalently, when
$n\equiv \pm 1\; (\hbox{mod }p)$.  Our claim
follows.
\end{proof}

We are interested in the automorphisms $\alpha$ of
$\cH$ that have order two and are nontrivial on
$\cZ$.  Such automorphisms $\alpha$ must satisfy
$\alpha(z)=z^{-1}$ for all $z\in \cZ$. On the
other hand, given two such automorphisms,
$\alpha_1$ and $\alpha_2$, their composite must be
trivial on $\cZ$.  So the key  remaining question
is whether any such automorphisms exist.  The
following result is obvious:

\begin{lemma}\label{polarauto}
Suppose $W= W^+ + W^-$ is a
polarization of $W$.  If $w_+\in W^+$, $w_-\in
W^-$ and $z\in \cZ$ let $\alpha(w_++w_-,z) =
(w_+-w_-,-z)$.  Then $\alpha$ is an automorphism
of $\cH$ of order two that is nontrivial on $\cZ$.
\end{lemma}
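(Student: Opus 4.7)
The plan is to verify directly each required property of $\alpha$, since the assertion ultimately reduces to a single computation using the isotropy of $W^+$ and $W^-$. I will decompose each $w \in W$ uniquely as $w = w^+ + w^-$ with $w^\pm \in W^\pm$, and check in turn that $\alpha$ is (i) a homomorphism, (ii) of order two, (iii) nontrivial on $\cZ$ (bijectivity follows from (ii)).

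For (i), given $(w_1,z_1),(w_2,z_2) \in \cH$, the product is $(w_1+w_2,\ z_1+z_2+\tfrac{1}{2}\langle w_1,w_2\rangle)$, so
\[
\alpha\bigl((w_1,z_1)(w_2,z_2)\bigr) = \bigl(w_1^+ + w_2^+ - w_1^- - w_2^-,\ -z_1 - z_2 - \tfrac{1}{2}\langle w_1,w_2\rangle\bigr).
\]
On the other hand,
\[
\alpha(w_1,z_1)\alpha(w_2,z_2) = \bigl(w_1^+ - w_1^- + w_2^+ - w_2^-,\ -z_1 - z_2 + \tfrac{1}{2}\langle w_1^+ - w_1^-,\ w_2^+ - w_2^-\rangle\bigr).
\]
The second coordinates agree provided $\langle w_1^+ - w_1^-,\ w_2^+ - w_2^-\rangle = -\langle w_1,w_2\rangle$. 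This is the key identity; I would derive it by expanding both sides bilinearly and using that $\langle w_1^+,w_2^+\rangle = 0 = \langle w_1^-,w_2^-\rangle$ because $W^+$ and $W^-$ are isotropic. Both sides then equal $-\langle w_1^+,w_2^-\rangle - \langle w_1^-,w_2^+\rangle$, so $\alpha$ is a homomorphism.

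For (ii), a direct application of the definition gives $\alpha^2(w_+ + w_-,z) = \alpha(w_+ - w_-,-z) = (w_+ + w_-, z)$, so $\alpha^2 = 1$ and $\alpha$ is automatically bijective. For (iii), $\alpha(0,z) = (0,-z) \ne (0,z)$ whenever $z \ne 0$, so $\alpha$ restricts to the nontrivial automorphism of $\cZ$. Apart from confirming the sign identity for the symplectic form, every step is immediate, so I do not anticipate any genuine obstacle.
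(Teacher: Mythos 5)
Your verification is correct and is exactly the direct computation the paper is silently invoking when it calls the result obvious: the only nontrivial point is the sign identity $\langle w_1^+ - w_1^-, w_2^+ - w_2^-\rangle = -\langle w_1, w_2\rangle$, which follows from total isotropy of $W^+$ and $W^-$, and you derive it correctly. Nothing is missing, and the order-two and nontriviality checks are immediate as you say.
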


The discussion above addresses groups of the form $W\boxtimes \F_p$.  
Recall that an abstract group that is isomorphic  to a group $W\boxtimes \F_p$ 
is called a Heisenberg $p$-group  if $W$ is nondegenerate.  
Let $\cH$ be a Heisenberg $p$-group with center $\cZ$ and let $W= \cH/\cZ$.  
The commutator map 
\begin{equation*}
\begin{split}
\cH\times \cH & \to \cZ\cr
(h_1,h_2)&\mapsto [h_1,h_2] = h_1h_2 h_1^{-1} h_2^{-1}
\end{split}
\end{equation*}
yields a bimultiplicative $\cZ$-valued symplectic form on $W$
\begin{equation*}
\begin{split}
W\times W&\to \cZ\cr
(h_1\cZ, h_2\cZ)&\mapsto \langle h_1\cZ, h_2\cZ\rangle = [h_1,h_2].
\end{split}
\end{equation*}
  Let
$W^\sharp = W\boxtimes \cZ$.

\begin{definition}\label{defspeciso}
A {\it special isomorphism on $\cH$} is  a
homomorphism
$\nu :\cH\to W^\sharp$ such
that the following diagram  commutes:
$$
\xymatrix{
1 \ar[r] & {\mathcal Z} \ar[r]\ar@{=}[d]
& {\mathcal H} \ar[r]\ar[d]^\nu
& {\mathcal W} \ar[r]\ar@{=}[d] & 1 \\
1 \ar[r] & {\mathcal Z} \ar[r]
& {\mathcal W}^\sharp \ar[r] & {\mathcal W} \ar[r] & 1.}
$$  
\end{definition}

\begin{remark}\label{specisomu}
To give a special isomorphism $\nu$ on $\cH$ is equivalent to giving a set theoretic function
$\nu : \cH \to W^\sharp$ of the form $\nu (h) = (h\cZ, \mu(h))$, where $\mu : \cH \to \cZ$ is any function that satisfies:
\begin{enumerate}
\item $\mu(z) = z$, for all $z\in \cZ$, and
\item $\mu(h_1h_2) = \mu(h_1)\mu(h_2) [h_1,h_2]^{(p+1)/2}$, for all $h_1,h_2\in \cH$.
\end{enumerate}
Given two special isomorphisms $\nu_1(h) = (h\cZ,\mu_1(h))$ and $\nu_2(h) = (h\cZ, \mu_2 (h))$, we may  define a homomorphism $\chi: W\to \cZ$ by $\chi (h\cZ) = \mu_2(h)\mu_1(h)^{-1}$ and then we have $\nu_2 (h) = \chi(h\cZ) \nu_1(h)$, for all $h\in \cH$.  For each such character $\chi$, there exists a unique $w_0\in W$ such that $\chi (w) = \langle w,w_0\rangle$.  Fixing $\nu$ and varying $w_0$ to obtain other special isomorphisms, we see that the set of special isomorphisms on $\cH$ forms a principal homogeneous space of $W$.
\end{remark}

\begin{lemma}\label{specisoeq}
Suppose $\nu_1$ and $\nu_2$ are special isomorphisms on $\cH$.  Then the following are equivalent:
\begin{enumerate}
\item $\nu_1=\nu_2$,
\item $\nu_1^{-1}(W\times 1) = \nu_2^{-1}(W\times 1)$,
\item There exists $s\in \cS$ such that $\nu_2= s\circ \nu_1$.
\end{enumerate}
\end{lemma}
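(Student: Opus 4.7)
The implications $(1)\Rightarrow(2)$ and $(1)\Rightarrow(3)$ are immediate: for $(3)$, take $s$ to be the identity element of $\cS$. So the real content is in showing that $(2)$ and $(3)$ each imply $(1)$, and my plan in both cases is to unwind Definition~\ref{defspeciso} and Remark~\ref{specisomu} directly.

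For $(3)\Rightarrow(1)$, I would use the defining commutative diagram. The group $\cS=\hbox{Sp}(W)$ acts on $W^\sharp = W\boxtimes\cZ$ by $s\cdot(w,z)=(s\cdot w,z)$, and this is a group automorphism precisely because $s$ preserves the symplectic form on $W$. If $\nu_2 = s\circ\nu_1$, then applying the projection $W^\sharp\to W$ to both sides and invoking the commutative diagram in Definition~\ref{defspeciso} for each of $\nu_1$ and $\nu_2$ separately forces $s\cdot(h\cZ) = h\cZ$ for every $h\in\cH$. Since $h\mapsto h\cZ$ surjects onto $W$, this says $s$ fixes every element of $W$; hence $s$ is the identity of $\cS$ and $\nu_2=\nu_1$.

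For $(2)\Rightarrow(1)$, I would exploit the parametrization from Remark~\ref{specisomu}. Write $\nu_i(h)=(h\cZ,\mu_i(h))$ with $\mu_i$ satisfying the two conditions given there, so that $\mu_2(h)=\chi(h\cZ)\mu_1(h)$ for a unique homomorphism $\chi : W\to\cZ$. Then $\nu_i^{-1}(W\times 1)$ is the level set $\{h\in\cH : \mu_i(h)=1\}$. The one step that is not pure bookkeeping, and which I expect to be the main point of the argument, is the observation that $\nu_1^{-1}(W\times 1)$ already projects \emph{onto} all of $W$: given $h'\in\cH$, the identities $\mu_1(z)=z$ and $[h',z]=1$ for $z\in\cZ$ combine via condition~(2) of Remark~\ref{specisomu} to give $\mu_1(h'z)=\mu_1(h')z$, so choosing $z=\mu_1(h')^{-1}$ produces an element of $\nu_1^{-1}(W\times 1)$ lying over $h'\cZ$. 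Granted $(2)$, every such $h$ satisfies $\mu_2(h)=1$, that is, $\chi(h\cZ)=1$; by surjectivity $\chi\equiv 1$ on $W$, whence $\mu_1=\mu_2$ and $\nu_1=\nu_2$.
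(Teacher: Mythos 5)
Your proof is correct and follows essentially the same route as the paper: $(3)\Rightarrow(1)$ by projecting to $W$ and concluding $s$ acts trivially, and $(2)\Rightarrow(1)$ by showing the character $\chi$ parametrizing the discrepancy vanishes on $W$ because $\nu_1^{-1}(W\times 1)$ maps onto $W$. The only cosmetic difference is that you verify this surjectivity directly from the identities for $\mu_1$ in Remark~\ref{specisomu}, whereas the paper simply takes $h=\nu_1^{-1}(w,1)$, implicitly using that $\nu_1$ is bijective; both are valid.
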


\begin{proof}
We first show that (3) implies (1).  Suppose $s$ is as in the statement of Condition (3).  If $h\in \cH$ and $w=h\cZ$ then there exist elements $z_1,z_2\in \cZ$ such that $\nu_i(h) = (w,z_i)$, for $i=1,2$.  On the other hand, $\nu_2(h) = s\cdot \nu_1(h) = (sw,z_1)$.  So we have $(w,z_2)= (sw,z_1)$.  It follows that $s=1$ and thus $\nu_1=\nu_2$.

Next, we show (2) implies (1).  Assume Condition (2) holds.   Let $\chi : W\to \cZ$ be defined by $\nu_2 (h) = \chi(h\cZ) \nu_1(h)$, for all $h\in \cH$.  Now fix $w\in W$ and let $h= \nu_1^{-1}(w,1)$.   Then $\nu_2(\nu_1^{-1}(w,1))= \nu_2(h) = \chi(w)\nu_1(h)= \chi(w) (w,1)$.  This shows that $\chi(w)(w,1)$ lies in $W\times 1$.  Hence $\chi(w)=1$.  Since Condition (1) obviously implies the other two conditions, the proof is complete.
\end{proof}

For reference purposes, we record the following obvious fact:

\begin{lemma}\label{specisores}
Suppose $\nu':\cH'\to W'^\sharp$ is a special isomorphism on a Heisenberg $p$-group with center $\cZ$.  Suppose also that $\cH$ is a Heisenberg $p$-subgroup of $\cH'$, that is, $\cH$ is a subgroup of $\cH'$ with center $\cZ$ and the space $W= \cH/\cZ$ is a nondegenerate subspace of $W'= \cH'/\cZ$.  Then restricting $\nu'$ to $\cH$ gives a special isomorphism $\nu: \cH \to W^\sharp$.
\end{lemma}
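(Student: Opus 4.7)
The plan is straightforward. The only real content is that the restriction of $\nu'$ to $\cH$ actually takes values in the subgroup $W^\sharp$ of $W'^\sharp$, and that this restriction is itself a homomorphism fitting into the defining commutative square of a special isomorphism. First I would unpack the group structures: the multiplication on $W'^\sharp = W'\boxtimes\cZ$ is determined by the symplectic form on $W'$ coming from commutators in $\cH'$, and the multiplication on $W^\sharp = W\boxtimes \cZ$ uses the form on $W$ coming from commutators in $\cH$. Since $\cH$ sits inside $\cH'$, commutators in $\cH$ are commutators in $\cH'$, so the symplectic form on $W$ is just the restriction of that on $W'$. This means the set-theoretic inclusion $W\times\cZ \hookrightarrow W'\times\cZ$ is a group homomorphism, i.e., $W^\sharp$ is literally a subgroup of $W'^\sharp$.

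Next, I would apply Remark \ref{specisomu} to write $\nu'(h') = (h'\cZ, \mu'(h'))$ for some function $\mu': \cH'\to\cZ$ satisfying the two compatibility conditions. For $h\in\cH$, the first coordinate $h\cZ$ lies in $W\subset W'$, while $\mu'(h)\in\cZ$; hence $\nu'(h)\in W\times\cZ = W^\sharp$. Thus the set-theoretic restriction $\nu := \nu'|_\cH$ maps $\cH$ into $W^\sharp$, and it is a group homomorphism because $\nu'$ is a homomorphism and $W^\sharp$ is a subgroup of $W'^\sharp$ (equivalently, $\mu := \mu'|_\cH$ still satisfies the two conditions of Remark \ref{specisomu}, since these conditions only involve the values of $\mu'$ on elements of $\cH$ together with their commutators, which all lie in $\cH$).

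Finally, I would observe that the defining diagram
$$
\xymatrix{
1\ar[r] & \cZ \ar[r]\ar@{=}[d] & \cH \ar[r]\ar[d]^\nu & W \ar[r]\ar@{=}[d] & 1 \\
1\ar[r] & \cZ \ar[r] & W^\sharp \ar[r] & W \ar[r] & 1
}
$$
commutes, because it is obtained by restricting the corresponding diagram for $\nu'$: the left square commutes since $\nu$ agrees with $\nu'$ on $\cZ$ (which is the identity on $\cZ$), and the right square commutes since the projection $W^\sharp\to W$ is the restriction of $W'^\sharp\to W'$ and $\nu'$ is compatible with the projection $\cH'\to W'$. There is no real obstacle here; the entire lemma is essentially a bookkeeping check that the symplectic structure on $W$ is induced from the one on $W'$, which is immediate from the definition of the form via commutators.
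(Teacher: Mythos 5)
Your verification is correct and is precisely the bookkeeping one would do to justify this claim; the paper itself records the lemma as "the following obvious fact" without proof, so there is nothing to compare against beyond confirming that your checks are the right ones. The essential observations — that the symplectic form on $W$ is the restriction of that on $W'$ because commutators in $\cH$ are computed in $\cH'$, hence $W^\sharp$ is a subgroup of $W'^\sharp$; that $\nu'$ carries $\cH$ into $W^\sharp$ since $\nu'(h)=(h\cZ,\mu'(h))$ with $h\cZ\in W$; and that the two conditions of Remark \ref{specisomu} for $\mu'|_\cH$ and the commuting diagram all restrict — are exactly what makes the statement hold, and you have stated them cleanly.
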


\begin{remark}\label{absHWliftdef}  Suppose $\cH$ is a Heisenberg $p$-group with center $\cZ$ and let $\zeta$ be a nontrivial  character of $\cZ$.  There is a unique isomorphism of the additive group of $\F_p$ with the group $\mu_p$ of complex $p$-th roots of unity that sends $1\in \F_p$ to $e^{2\pi i/p}$.  We use this to identify $\F_p$ with $\mu_p$.  Then a nontrivial character of $\cZ$, such as $\zeta$, is nothing other than a group isomorphism $\cZ\to \F_p$.  As above, we have a nondegenerate $\cZ$-valued symplectic form on $W= \cH/\cZ$.  Using the isomorphism $\zeta : \cZ\to \F_p$, we may view the values of the symplectic form as elements of $\F_p$.  We have an isomorphism of $W^\sharp= W\boxtimes \cZ$ with $W\boxtimes \F_p$ given by $(w,z)\mapsto (w,\zeta (z))$.  Now fix a special isomorphism $\nu : \cH \to W^\sharp$.  We obtain a semidirect product $\cS\ltimes_\nu\cH$ by pulling back the natural semidirect product $\cS\ltimes W^\sharp$ via $1\times \nu$.  Let $\tau$ be a Heisenberg representation of $\cH$ with central character $\zeta$.  Let $\tau^\sharp$ be the associated representation of $W^\sharp$.  Identifying $W^\sharp$ with $W\boxtimes \F_p$, as above, we obtain a Heisenberg-Weil lift of $\tau^\sharp$ to a representation $\hat\tau^\sharp$ of $\cS\ltimes W^\sharp$, in the sense of Definition \ref{HeisWeilrep}.  The corresponding representation $\hat\tau$ of $\cS\ltimes_\nu\cH$ will be referred to as the ``Heisenberg-Weil lift of $\tau$ to $\cS\ltimes_\nu \cH$.''  Except in the case when $p=3$ and $\ell=1$, this is just the unique extension of $\tau$ to $\cS\ltimes_\nu \cH$.
\end{remark}

\begin{definition}\label{defsppolar}
Suppose $\cH$ is a Heisenberg $p$-group with center $\cZ$ and let $W= \cH/\cZ$.   A {\it 
polarization of $\cH$}
is a pair $(\cH^+,\widehat{\cH}^-)$ of subgroups of $\cH$ with
images
$W^+$ and
$W^-$, respectively, in $W$ such
that $\cH^+\cap \cZ=\{1\}$ and
$\cZ\subset \widehat{\cH}^-$ 
and
$W = W^+ + W^-$ is a
polarization of $W$.  A {\it split
polarization of $\cH$}
is a pair $(\cH^+,\cH^-)$ of subgroups of $\cH$ with
images
$W^+$ and
$W^-$ in $W$ such
that $\cH^+\cap \cZ=\{1\}=\cH^-\cap \cZ$ 
and
$W = W^+ + W^-$ is a
polarization of $W$.  A {\it splitting} of a polarization $(\cH^+,\widehat{\cH}^-)$ of $\cH$ consists of the choice of a subgroup $\cH^-$ of $\widehat{\cH}^-$ such that 
$\widehat{\cH}^- = \cH^-\cZ$ and $(\cH^+,\cH^-)$ is a split polarization.
\end{definition}

The significance of split polarizations is seen in the next result:

\begin{lemma}\label{sppolarspeciso}
If $(\cH^+,\cH^-)$ is a split polarization of a Heisenberg $p$-group $\cH$  then there is an associated special isomorphism ${\nu} : \cH \to W^\sharp$ that is given by
$$\nu (w_+w_-z) = (\bar w_+\bar w_-,  z [w_+,w_-]^{(p+1)/2}),$$ where $w_+\in \cH^+$, $w_-\in \cH^-$, $z\in \cZ$ and $\bar w_+$ and $\bar w_-$ are the images of $w_+$ and $w_-$ in $W$.  If $\nu' : \cH\to W^\sharp$ is any special isomorphism such that $\nu'(\cH^+)$ and $\nu'(\cH^-)$ are contained in $W\times 1$ then $\nu'= \nu$.
\end{lemma}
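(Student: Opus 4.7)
The plan is to verify $\nu$ directly using the characterization of special isomorphisms in Remark~\ref{specisomu}: it suffices to show that the function $\mu(w_+ w_- z) = z[w_+, w_-]^{(p+1)/2}$ is well defined, satisfies $\mu(z)=z$ on $\cZ$, and obeys the cocycle identity $\mu(h_1 h_2) = \mu(h_1)\mu(h_2)[h_1,h_2]^{(p+1)/2}$.

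First I would establish that every $h \in \cH$ factors uniquely as $w_+ w_- z$ with $w_+ \in \cH^+$, $w_- \in \cH^-$, $z \in \cZ$. Existence follows by lifting the decomposition $\bar h = \bar w_+ + \bar w_-$ coming from $W = W^+ + W^-$ to preimages in $\cH^+$ and $\cH^-$, then correcting by an element of $\cZ$; uniqueness follows from $W^+ \cap W^- = 0$ combined with $\cH^\pm \cap \cZ = \{1\}$. I would also record that $\cH^+$ and $\cH^-$ are abelian: each commutator lies in $\cZ$ and its image under any nontrivial character of $\cZ$ is precisely the $\F_p$-value of the symplectic form on the corresponding image in $W$, which vanishes on the isotropic subspaces $W^\pm$. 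The condition $\mu(z)=z$ is then immediate from the factorization.

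The heart of the argument is the cocycle identity. Writing $h_1 = w_+ w_- z$ and $h_2 = w'_+ w'_- z'$, I would push $w'_+$ past $w_-$ using $w_- w'_+ = [w_-, w'_+]\, w'_+ w_-$, with $c:=[w_-, w'_+] \in \cZ$ central. Using abelianness of $\cH^\pm$, this yields the canonical factorization $h_1 h_2 = (w_+ w'_+)(w_- w'_-)(cz z')$, hence
\[
\mu(h_1 h_2) = c\,zz'\,[w_+ w'_+,\,w_- w'_-]^{(p+1)/2}.
\]
Expanding both $[w_+ w'_+, w_- w'_-]$ and $[h_1,h_2]=[w_+ w_-,w'_+ w'_-]$ via bimultiplicativity of the commutator (which holds because it takes values in $\cZ$), and using $[w_+,w'_+]=[w_-,w'_-]=1$ together with $[w'_+,w_-]=c^{-1}$, the desired identity reduces to $c\cdot c^{-(p+1)/2}=c^{(p+1)/2}$, i.e.\ $c^{p}=1$. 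This holds because $\cZ$ has order $p$; the exponent $(p+1)/2$ in the definition of $\mu$ is designed precisely so this cancellation works. This cocycle verification is the main computational step, and no other step is genuinely difficult.

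For the uniqueness clause, Remark~\ref{specisomu} gives that any other special isomorphism $\nu'$ satisfies $\nu'(h) = \chi(\bar h)\,\nu(h)$ for a unique homomorphism $\chi: W \to \cZ$. From the explicit formula, $\nu(w_\pm) = (\bar w_\pm, 1)$ for $w_\pm \in \cH^\pm$ (taking $w_\mp=1$ and $z=1$), so $\nu'(w_\pm) = (\bar w_\pm, \chi(\bar w_\pm))$. The hypothesis $\nu'(\cH^\pm) \subset W \times 1$ thus forces $\chi$ to vanish on both $W^+$ and $W^-$, and since $W = W^+ + W^-$ we conclude $\chi$ is trivial and $\nu'=\nu$.
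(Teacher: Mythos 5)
Your proof is correct. Both halves check out: the unique factorization $h=w_+w_-z$ is justified exactly as you say (bijectivity of $\cH^\pm\to W^\pm$ from $\cH^\pm\cap\cZ=\{1\}$, plus directness of $W=W^++W^-$), the commutator bookkeeping in the cocycle identity is right, and the reduction to $c^p=1$ with $c=[w_-,w'_+]$ is precisely where the exponent $(p+1)/2$ (that is, $1/2$ mod $p$) does its work; the uniqueness argument via the homomorphism $\chi:W\to\cZ$ of Remark~\ref{specisomu} is also sound, since $\nu(w_\pm)=(\bar w_\pm,1)$ forces $\chi$ to vanish on $W^+$ and $W^-$, hence on all of $W$.

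Your route does differ from the paper's in both parts, though. For existence, the paper simply cites Lemma~10.1 of \cite{Y} for the fact that $\nu$ is a special isomorphism; you instead verify the two conditions of Remark~\ref{specisomu} directly, which amounts to reproving Yu's lemma and makes the argument self-contained at the cost of the explicit commutator computation. For uniqueness, the paper identifies $\nu^{-1}(W\times 1)$ as the set of elements $w_+w_-[w_+,w_-]^{-(p+1)/2}$, checks that $\nu'$ sends these into $W\times 1$, concludes $\nu^{-1}(W\times 1)\subset \nu'^{-1}(W\times 1)$, and then uses equal cardinalities together with Lemma~\ref{specisoeq} (equality of the preimages of $W\times 1$ implies equality of special isomorphisms). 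You bypass that finiteness/cardinality step by working directly with the torsor structure of Remark~\ref{specisomu}: $\nu'=\chi(\cdot)\,\nu$ with $\chi$ trivial on $W^++W^-=W$. The two uniqueness arguments are close in spirit (the proof of Lemma~\ref{specisoeq} itself runs through the character $\chi$), but yours is the more direct of the two, while the paper's has the small advantage of only ever evaluating $\nu'$ on the single family of elements $w_+w_-[w_+,w_-]^{-(p+1)/2}$.
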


\begin{proof}
The fact that $\nu$ defines a special isomorphism is essentially Lemma 10.1 of \cite{Y}.  Clearly, $\nu^{-1}(W\times 1)$ consists of the elements of $\cH$ of the form $$w_+w_-[w_+,w_-]^{-(p+1)/2},$$ with $w_+\in \cH^+$ and $w_-\in \cH^-$.  We observe that $\nu'(w_+w_-[w_+,w_-]^{-(p+1)/2}) = (\bar w_+,1)(\bar w_-,1)[w_+,w_-]^{-(p+1)/2} = (\bar w_+\bar w_-,1)\in W\times 1$.  Therefore,
$\nu^{-1}(W\times 1)$ is contained in $\nu'^{-1}(W\times 1)$.  Since these are finite sets of the same cardinality, they must be equal.  Hence, $\nu=\nu'$ by Lemma \ref{specisoeq}. 
\end{proof}

We now fix a
polarization $(\cH^+,\widehat{\cH}^-)$ of $\cH$.  The subgroup $\widehat{\cH}^-$
is necessarily a maximal abelian subgroup of
$\cH$.  The set of splittings of the exact sequence 
$$1\to \cZ \to \widehat{\cH}^-\to W^-\to 1$$ is a principal homogeneous space of $W^+$.  This is seen as follows.  Let $\delta : \widehat{\cH}^-\to W^-$ be the natural projection.  Then a splitting is a homomorphism $\sigma : W^-\to \widehat{\cH}^-$ such that $\delta\circ\sigma = {\rm Id}_{W^-}$.  Associated to the splitting is an isomorphism $\widehat{\cH}^- \cong W^- \times \cZ$ given by $h\mapsto (\delta (h), h\; \sigma(\delta(h))^{-1})$.  This isomorphism restricts to the identity map on $\cZ$ and modding out by the center gives the identity map on $W^-$.  Conversely, if we are given an isomorphism $\mu : \widehat{\cH}^- \cong  W^-\times \cZ$ that restricts to ${\rm Id}_\cZ$ and induces the identity map on $W^-$, we obtain a splitting $\sigma$ by 
$\sigma (w) = \mu^{-1} (w,1)$.  
Now suppose $w_+\in W^+$ and define an automorphism $\gamma$ of $W^- \times \cZ$ by 
$$
\gamma (w,z) = (w, \langle w,w_+\rangle + z).
$$  
Let $\sigma$ be a splitting, and $\mu$ the associated automorphism of
$W^-\times \cZ$.
Then $\gamma\circ \mu$ is an isomorphism $\widehat{\cH}^-\cong W^-\times \cZ$ 
that restricts to ${\rm Id}_\cZ$ and induces the identity map on $W^-$ and 
$w\mapsto \mu^{-1}(\gamma^{-1}(w,1))$ defines a splitting, which we will
denote by $w^+\cdot\sigma$.

Given a polarization of $\cH$ and a nontrivial character $\zeta$ of $\cZ$, a splitting of the polarization is essentially equivalent to the choice of an extension $\hat\zeta$ of $\zeta$ to a character of $\widehat{\cH}^-$ or, in other words, it is the same as the choice of an irreducible component 
of the induced representation ${\rm Ind}_{\cZ}^{\widehat{\cH}^-}(\zeta)$.  Indeed, the image of such a character $\hat\zeta$ must be the group $\mu_p$ of complex $p$-th roots of unity and the composite 
$$\ker\hat\zeta \hookrightarrow \cH \rightarrow\kern-.7em\rightarrow  W^-$$
must be an isomorphism.  The inverse map $W^- \to \ker\hat\zeta$ composed with the inclusion $\ker\hat\zeta\hookrightarrow \widehat{\cH}^-$ defines a splitting of the polarization with $\cH^- = \ker\hat\zeta$.
The choice of $\hat\zeta$ (or, equivalently, the splitting) determines a Heisenberg representation
$$\tau = {\rm Ind}_{\widehat{\cH}^-}^\cH (\hat\zeta).$$  We emphasize that we mean to say ``representation'' rather than ``isomorphism class of representations.''  

In general, if one is given an abstract Heisenberg $p$-group $\cH$  then there is no canonical special isomorphism on $\cH$ and no canonical polarization of $\cH$.  However, in our applications, the Heisenberg $p$-groups will be embedded as subquotient groups of an ambient group $G$.  Following Yu, we will use the extrinsic properties of $\cH$ to obtain canonical special isomorphisms on our Heisenberg $p$-groups.  The group $G$ will also come with an involution $\theta$ that yields canonical polarizations of our $p$-Heisenberg groups.  
The next result says that we may use our canonical 
special isomorphisms to obtain canonical splittings for our canonical polarizations.

\begin{lemma}\label{polsplitting}
Suppose $\nu : \cH\to W^\sharp$ is a special isomorphism on  a Heisenberg $p$-group $\cH$ with center $\cZ$ and suppose $(\cH^+,\widehat{\cH}^-)$ is a polarization of $\cH$.  Then the set $\cH^-= \widehat{\cH}^- \cap \nu^{-1}(W\times 1)$  is a subgroup of $\widehat{\cH}^-$ that defines a splitting of $(\cH^+,\widehat{\cH}^-)$ which is canonically associated to $\nu$.
\end{lemma}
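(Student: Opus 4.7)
The plan is to verify first that $\cH^-$ is a subgroup of $\widehat{\cH}^-$, and then to check the two defining properties of a splitting from Definition \ref{defsppolar}, namely $\cH^-\cap\cZ=\{1\}$ and $\widehat{\cH}^-=\cH^-\cZ$.

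Applying the five-lemma to the commutative diagram in Definition \ref{defspeciso} shows that $\nu$ is a group isomorphism from $\cH$ onto $W^\sharp$ that restricts to the identity on $\cZ$. Restricting $\nu$ to $\widehat{\cH}^-$ therefore gives an isomorphism onto the preimage of $W^-$ under the projection $W^\sharp\twoheadrightarrow W$, which is $W^-\boxtimes\cZ\subset W^\sharp$. The key algebraic observation is that, because $W^-$ is totally isotropic, the cocycle term involving the symplectic form vanishes on pairs in $W^-\times W^-$ in the multiplication of $W^\sharp$; consequently $W^-\times 1$ is in fact a subgroup of $W^\sharp$, isomorphic to $W^-$ via the first projection. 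Thus
$$\cH^-=\widehat{\cH}^-\cap\nu^{-1}(W\times 1)=\nu^{-1}(W^-\times 1)$$
is a subgroup of $\widehat{\cH}^-$.

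The splitting conditions now follow immediately. Since $\nu|_\cZ$ is the identity, $\nu(\cH^-\cap\cZ)=(W^-\times 1)\cap\nu(\cZ)=\{(0,1)\}$, hence $\cH^-\cap\cZ=\{1\}$. A cardinality count then gives $|\cH^-\cZ|=|\cH^-|\cdot|\cZ|=|W^-|\cdot|\cZ|=|\widehat{\cH}^-|$, so $\widehat{\cH}^-=\cH^-\cZ$. The image of $\cH^-$ in $W$ is obviously $W^-$, and combined with the hypothesis $\cH^+\cap\cZ=\{1\}$ built into the given polarization, this shows that $(\cH^+,\cH^-)$ is a split polarization.

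Canonicity needs no extra argument: $\cH^-$ is defined intrinsically in terms of $\nu$ and $(\cH^+,\widehat{\cH}^-)$ with no auxiliary choices. There is no deep obstacle in the proof; the only subtle point is recognizing that $W^-\times 1$ is a subgroup of $W^\sharp$, which is an immediate consequence of the isotropy of $W^-$ and powers the rest of the argument.
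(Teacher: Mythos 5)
Your proof is correct and follows essentially the same route as the paper's short argument: identify $\cH^-$ with $\nu^{-1}(W^-\times 1)$ and verify directly that $\cH^-\cap\cZ=\{1\}$ and $\widehat{\cH}^-=\cH^-\cZ$. The only cosmetic differences are that the paper exhibits the decomposition explicitly (for $\nu(\hat h^-)=(w,z)$ one has $\hat h^- z^{-1}\in\cH^-$, so $\hat h^-=(\hat h^- z^{-1})z$) instead of your cardinality count, and it leaves the subgroup property of $\cH^-$ --- your observation that $W^-\times 1$ is a subgroup because $W^-$ is totally isotropic --- implicit.
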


\begin{proof}
Since $\nu(\cZ)=\cZ$ intersects $W\times \{\, 1\,\}$ only trivially,
we have $\cH^-\cap \cZ=\{\, 1\,\}$. If $\hat h^-\in\widehat{\cH}^-$ has
image $(w,z)\in W^\sharp$, then
$\hat h^-z^{-1}\in \cH^-$, so
$\widehat{\cH}^-\subset \cH^-\cZ$. The reverse containment is
obvious.
\end{proof}


We now recall a basic result from \cite{Ge}.  For concreteness, we take $W_0$ to be $\F_p^\ell$ with the standard dot product and let 
$$
W= \F^{2\ell}_p = \left\{\, \vectr x y \ | \ x,y\in W_0\,\right\}
$$
 with the symplectic form 
$$\left\langle \vectr{x_1}{y_1} ,\vectr{x_2}{y_2}\right\rangle =x_1\cdot y_2 - y_1\cdot x_2.$$ We also use the polarization
\begin{equation*}
\begin{split}
W^+&=\left\{\, \vectr x0\ | \ x\in W_0\,\right\}\cr
W^-&=\left\{\, \vectr 0y\ | \ y\in W_0\,\right\}.
\end{split}
\end{equation*}
We identify $GL(W)= GL(2\ell ,\F_p)$ with the group of invertible block matrices $\2by2 abcd$ with $\ell\times\ell$ blocks.  Then
$$
\cS = {\rm Sp}(W)= {\rm Sp}(2\ell,\F_p) = 
\left\{\, g\in GL(W)\ | \ {}^t g\2by2 01{-1}0 g = \2by2 01{-1}0\,\right\}.
$$
We also define subgroups
\begin{equation*}
\begin{split}
\cM&=\left\{\, \2by2 y00{{}^ty^{-1}}\ | \ y\in GL(\ell,\F_p)\,\right\}\cr
&= \{\, s\in \cS\ | \ sW^+\subset W^+, sW^-\subset W^-\,\}\cr
\cN&=\left\{\, \2by2 1x01\ | \ x\in M(\ell, \F_p), {}^tx=x\,\right\}\cr
\cP&=\cM\cN = \left\{\, s\in \cS\ | \ sW^+\subset W^+\,\right\}.
\end{split}
\end{equation*}
Then $\cP$ is a maximal parabolic subgroup in $\cS$ with Levi decomposition $\cP = \cM\cN$.
Such a parabolic subgroup in $\cS$, that is, one whose Levi factor is isomorphic
to $GL(\ell,\F_p)$, is known as a Siegel parabolic.  Define characters of $\cM$ and $\cP$ by
\begin{equation*}
\begin{split}
\chi^{\cM}\left( \2by2 y00{{}^ty^{-1}}\right)&=(\det y)^{(p-1)/2},\quad  y\in GL(\ell,\F_p),\cr
\chi^{\cP}(mn)&=\chi^{\cM}(m),\quad m\in\cM, n\in \cN.
\end{split}
\end{equation*}
The values of these characters lie in $\{ \pm 1\}$.

Consider now the Heisenberg representation $\tau_+ = {\rm Ind}_{W^+\times\F_p}^\cH (1\times \zeta)$ of $\cH = W\boxtimes \F_p$ associated to some nontrivial central character $\zeta$.  (We are using the notation $\tau_+$ rather than $\tau$ to emphasize that this is a different model for the Heisenberg representation than we were using earlier, namely, it is not ${\rm Ind}_{W^-\times\F_p}^\cH (1\times \zeta)$.)  Let $\hat\tau_+$ denote the Heisenberg-Weil lift of $\tau_+$ to 
$\cS\ltimes \cH$.  Then it is known that
$$
(\hat\tau_+ (g)\varphi) (h) = \chi^\cP (g)\ \varphi (g^{-1}\cdot h),
$$ 
for all $g\in \cP$ and $h\in \cH$.  (See Theorem 2.4 (b) in \cite{Ge}.)       
Lemma \ref{hplusfixed} implies that ${\rm Hom}_{W^+}(\tau_+,1)$ has dimension one.  However,  we are using a different model for the Heisenberg representation than the one used in Lemma \ref{hplusfixed}.  The space of $W^+$-invariant linear forms for the Heisenberg representation $\tau_+$ is spanned by the linear form
$$\lambda_+ (\varphi) = \varphi (1).$$  When $g\in \cP$, we have 
$$\lambda_+ (\hat\tau_+(g)\varphi) = (\hat\tau_+ (g)\varphi)(1) = \chi^\cP (g) \varphi (1) = \chi^\cP (g) \lambda_+ (\varphi).$$
This fact is expressed in a more abstract setting in the following result.

\begin{lemma}\label{realtrace}
Let $\tau$ be a Heisenberg representation of a Heisenberg $p$-group $\cH$.  Let $\cZ$ be the center of $\cH$ and let $\zeta$ be the central character of $\tau$.  Suppose $\nu$ is a special isomorphism on $\cH$ and, as in Remark \ref{absHWliftdef}, define  the Heisenberg-Weil lift $\hat\tau$ of  $\tau$ to $\cS \ltimes_\nu \cH$.  Assume we are given a polarization $W= W^+ + W^-$ of $W=\cH/\cZ$ and define $\cP$, $\cM$, $\chi^\cP$ and $\chi^\cM$ as above.  
Then ${\rm Hom}_{\nu^{-1}(W^+\times \{\,1\,\})}(\tau ,1)$ has dimension one and if $\lambda$ lies in this space then $$\lambda(\hat\tau (g)\varphi) = \chi^\cP (g)\ \lambda(\varphi),$$  for all $g\in \cP$ and all $\varphi$ in the space of $\tau$.  The trace of $\hat\tau\, |\,\cM$ is real-valued and its sign is given by $\chi^\cM$.
\end{lemma}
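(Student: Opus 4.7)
The plan is to transport the entire setup via $\nu$ and $\zeta$ to the concrete model $\cH = W \boxtimes \F_p$ studied in the paragraph preceding the statement. By Remark \ref{absHWliftdef}, $\nu$ together with $\zeta$ identifies $\cH$ with $W \boxtimes \F_p$ in such a way that $\nu^{-1}(W^+ \times \{1\})$ corresponds to the subgroup $W^+ \times \{0\}$, $\cS \ltimes_\nu \cH$ corresponds to the usual semidirect product $\cS \ltimes \cH$, and $\hat\tau$ corresponds to the Heisenberg-Weil lift in the sense of Definition \ref{HeisWeilrep}. So it suffices to prove the three assertions in the standard setting.

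For the $\Hom$-space and the transformation formula, I would work with the equivalent model $\tau_+ = {\rm Ind}_{W^+ \times \F_p}^\cH(1 \times \zeta)$ already introduced before the lemma. Lemma \ref{hplusfixed} applied with $\cH^+ = W^+ \times \{0\}$ (whose image in $W$ is the maximal isotropic $W^+$ and which meets $\cZ$ trivially) gives $\dim \Hom_{W^+}(\tau_+, 1) = 1$, generated by the evaluation functional $\lambda_+(\varphi) = \varphi(1)$. For $g \in \cP$, the formula
$$(\hat\tau_+(g)\varphi)(h) = \chi^\cP(g)\,\varphi(g^{-1} \cdot h), \qquad h \in \cH,$$
quoted from \cite{Ge}, gives at $h = 1$ the identity $\lambda_+(\hat\tau_+(g)\varphi) = \chi^\cP(g)\,\varphi(g^{-1} \cdot 1) = \chi^\cP(g)\,\lambda_+(\varphi)$, since the action of $\cS$ on $\cH$ fixes the identity element.

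For the trace claim, I would use the same formula to make the action of $\cM$ visible on a basis. The space of $\tau_+$ is naturally identified with $\C[W^-]$ via $\varphi \mapsto \varphi|_{W^-}$, and under this identification the basis $\{\varphi_w\}_{w \in W^-}$ of delta-functions transforms under $m \in \cM$ as $\hat\tau_+(m)\varphi_w = \chi^\cM(m)\,\varphi_{m \cdot w}$, using the restriction $\chi^\cP|_\cM = \chi^\cM$ and the fact that $m$ preserves $W^-$. Taking the trace yields
$$\tr \hat\tau_+(m) = \chi^\cM(m) \cdot |\{\, w \in W^- : m \cdot w = w\,\}|,$$
which is a nonnegative integer times $\chi^\cM(m) \in \{\pm 1\}$. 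This gives both the reality of the trace and the claim about its sign.

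The main obstacle here is administrative rather than conceptual: one must carefully track how $\nu$ and $\zeta$ transfer the various subgroups and representations, and one must remember to switch from the model of $\tau$ induced from $W^-$ that was used earlier in the section to the model $\tau_+$ induced from $W^+$, for which the explicit formula on $\cP$ from \cite{Ge} is directly applicable. Once the correct model is in place, all three claims fall out of the formula together with a direct basis computation.
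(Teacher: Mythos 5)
Your argument is correct, and for the first two assertions it coincides with the paper's: both rest on Lemma \ref{hplusfixed} to get one-dimensionality of the $\Hom$-space in the $\tau_+$-model, and on the formula $(\hat\tau_+(g)\varphi)(h) = \chi^\cP(g)\,\varphi(g^{-1}\cdot h)$ from \cite{Ge} evaluated at $h=1$ to get the transformation law. For the trace assertion, however, you take a genuinely different route. The paper simply cites Theorem~2.4(c) and Proposition~1.4(b) of \cite{Ge}, whereas you derive it directly from the same transformation formula already in hand: identifying the space of $\tau_+$ with $\C[W^-]$ and observing that the basis of delta-functions transforms as $\hat\tau_+(m)\varphi_w = \chi^\cM(m)\,\varphi_{m\cdot w}$ for $m\in\cM$, so that
$$\tr\hat\tau_+(m) = \chi^\cM(m)\cdot\bigl|\{\,w\in W^-:\, m\cdot w = w\,\}\bigr|.$$
This is a real integer whose sign is $\chi^\cM(m)$, since $0\in W^-$ is always fixed so the count is strictly positive. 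Your version has the mild advantage of being self-contained, invoking only the one formula from \cite{Ge} that the paper already quotes rather than two additional results; and it makes visible why the sign statement is not vacuous (positivity of the fixed-point count). The transportation step via $\nu$ and $\zeta$ at the start is the right bookkeeping, as is the remark that one must switch from the $W^-$-induced model used earlier in the section to the $W^+$-induced model $\tau_+$ to which Gérardin's formula directly applies.
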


\begin{proof}
The facts regarding the trace of $\hat\tau\, |\,\cM$ follow from Theorem 2.4 (c) and Proposition 1.4 (b) in \cite{Ge}.  Everything else follows from the discussion preceding 
the statement of this lemma.
\end{proof}

As indicated above, the Heisenberg $p$-groups of most interest to us are associated to some ambient group $G$ that is equipped with an involution $\theta$.  The involution $\theta$ induces involutions of the various Heisenberg 
$p$-groups.  The next result tells us that these involutions of the 
Heisenberg $p$-groups induce polarizations.

\begin{theorem}\label{Heisthm}
Let $\cH$ be a Heisenberg
$p$-group and suppose $\alpha$ is an automorphism
of order two of $\cH$ whose restriction to the
center $\cZ$ is not the identity map.  Then the sets 
\begin{equation*}
\begin{split}
\cH_\alpha^+&= \{\, h\in \cH\ | \ \alpha
(h) = h\,\},\cr
\widehat{\cH}_\alpha^-&= \{\, h\in \cH \ | \ \alpha
(h) = h^{-1} \,\},
\end{split}
\end{equation*}
are subgroups of $\cH$ that form a polarization of $\cH$.  Let $W_\alpha^+$ and
$W_\alpha^-$  be the images of $\cH_\alpha^+$
and $\widehat{\cH}_\alpha^-$, respectively, in $W =
\cH/\cZ$. 
If $\bar\alpha$ is the automorphism of $W$
obtained from $\alpha$ by reduction modulo $\cZ$
then $\bar\alpha\in \cS^-$ and  
\begin{equation*}
\begin{split}
W_{\alpha}^+&= \{\, w\in W\ | \
\bar\alpha (w) = w\,\},\cr
W_{\alpha}^-&= \{\, w\in W\ | \ \bar\alpha
(w) = -w \,\}.
\end{split}
\end{equation*}
Let $\tau$ be a Heisenberg representation of $\cH$.  Then the space
$\Hom_{\cH_\alpha^+}(\tau,1)$ has dimension one and $\tau$ satisfies $\tau\circ \alpha\simeq
\tilde\tau$.  
Let
$\nu : \cH \to W^\sharp$ be a special isomorphism such that 
$\nu (\cH_\alpha^+) = W^+_\alpha\times 1$ and let $\hat\tau$ be the Heisenberg-Weil 
lift of $\tau$ to $\cS\ltimes_\nu\cH$.  If $\chi^{\cP}$ is the
unique character  of order two of the parabolic
subgroup 
$$\cP=\{\, s\in \cS \ | \ s\cdot
W_{\alpha}^+\subset W_{\alpha}^+\,\}$$  then
$$\Hom_{\cH^+_\alpha}(\tau,1)\subset \Hom_{\cP}(\hat\tau,\chi^\cP).$$
\end{theorem}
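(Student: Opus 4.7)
The plan is to first build the polarization, then deduce the Hom-space dimension from Lemma \ref{hplusfixed}, then handle the symmetry and the $\cP$-covariance.

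The first block of work produces the polarization. Since $\alpha|\cZ$ has order $2$ and is nontrivial, Lemma \ref{ordtwoauto} forces $\alpha(z)=z^{-1}$ for all $z\in \cZ$. Passing to the symplectic quotient $W=\cH/\cZ$, the commutator identity $[\alpha(h_1),\alpha(h_2)]=\alpha([h_1,h_2])=[h_1,h_2]^{-1}$ shows that $\bar\alpha$ negates the symplectic form; hence $\bar\alpha\in \cS^-$, and Lemma \ref{seigdec} gives a polarization $W=W^+_{\bar\alpha}+W^-_{\bar\alpha}$ into the $\pm 1$-eigenspaces.

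The next step is to identify the images $W^\pm_\alpha$ of $\cH^+_\alpha$ and $\widehat{\cH}^-_\alpha$ with $W^\pm_{\bar\alpha}$. The inclusions $\subseteq$ are obvious. For the reverse inclusion on the $-$-side: given a lift $h\in\cH$ of $w\in W^-_{\bar\alpha}$, one has $\alpha(h)h\in\cZ$; applying $\alpha$ together with $\alpha^2=1$ and $\alpha|\cZ=\mathrm{inv}$ yields $(\alpha(h)h)^2=1$, forcing $\alpha(h)h=1$ by the odd order of $\cZ$, so $h\in\widehat{\cH}^-_\alpha$ automatically. For the reverse inclusion on the $+$-side: given a lift $h$ of $w\in W^+_{\bar\alpha}$, write $\alpha(h)=hz$ for some $z\in\cZ$, and choose $z_0\in\cZ$ with $z_0^2=z$ (possible since $|\cZ|=p$ is odd); then $hz_0^{-1}$ is $\alpha$-fixed and still projects to $w$. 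Now $\widehat{\cH}^-_\alpha$ is a subgroup because any two of its elements commute (their images lie in the totally isotropic $W^-_{\bar\alpha}$, so their commutator is trivial in $\cZ$), and $\cH^+_\alpha$ is abelian because any commutator $[h_1,h_2]\in\cZ$ of two $\alpha$-fixed elements is $\alpha$-fixed, hence equal to its own inverse, hence trivial in the odd-order $\cZ$. Combined with the obvious facts $\cH^+_\alpha\cap\cZ=\{1\}$ and $\cZ\subset\widehat{\cH}^-_\alpha$, this verifies the polarization axioms of Definition \ref{defsppolar}.

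With the polarization in hand, Lemma \ref{hplusfixed} applies to the abelian subgroup $\cH^+_\alpha$ (its image $W^+_\alpha$ is maximal isotropic and $\cH^+_\alpha\cap\cZ=\{1\}$), giving $\dim\Hom_{\cH^+_\alpha}(\tau,1)=1$. The symmetry $\tau\circ\alpha\simeq\tilde\tau$ follows from Stone--von Neumann: both sides are irreducible Heisenberg representations of $\cH$, and $\tau\circ\alpha$ has central character $\zeta\circ\alpha=\zeta^{-1}$, which is also the central character of $\tilde\tau$.

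For the final containment, the hypothesis $\nu(\cH^+_\alpha)=W^+_\alpha\times\{1\}$ means exactly $\cH^+_\alpha=\nu^{-1}(W^+_\alpha\times\{1\})$, and the parabolic $\cP$ of the theorem is the stabilizer of $W^+_\alpha$ in $\cS$. Lemma \ref{realtrace}, applied with $W^+:=W^+_\alpha$, then yields directly $\lambda(\hat\tau(g)\varphi)=\chi^\cP(g)\lambda(\varphi)$ for every $\lambda$ in the one-dimensional Hom-space and every $g\in\cP$; that is, $\Hom_{\cH^+_\alpha}(\tau,1)\subset\Hom_\cP(\hat\tau,\chi^\cP)$. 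The main technical obstacle is the surjectivity onto the eigenspaces together with the abelian/subgroup arguments---each is short, but all rely essentially on the odd order of the center, which makes the square-root and squaring tricks available.
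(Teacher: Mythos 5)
Your proof follows essentially the same route as the paper's: deduce $\bar\alpha\in\cS^-$ from the inversion on the center and the commutator identity, identify $W_\alpha^\pm$ with the eigenspaces of $\bar\alpha$ via a square-root argument in the odd-order center, invoke Lemmas \ref{seigdec} and \ref{hplusfixed}, read off $\tau\circ\alpha\simeq\tilde\tau$ from central characters, and finish with Lemma \ref{realtrace}.

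One small slip in the $+$-eigenspace surjectivity: with $\alpha(h)=hz$ and $z_0^2=z$, the element $hz_0^{-1}$ is not $\alpha$-fixed. Indeed $\alpha(hz_0^{-1})=\alpha(h)\alpha(z_0)^{-1}=hz\cdot z_0=hz_0^3$, which equals $hz_0^{-1}$ only when $z_0^4=1$, i.e.\ $z_0=1$. You want $hz_0$ instead: $\alpha(hz_0)=hz\cdot z_0^{-1}=hz_0^2 z_0^{-1}=hz_0$. (Equivalently, the paper takes $z_0^2=h^{-1}\alpha(h)$ and uses the adjusted lift $hz_0$.) With that sign corrected, the argument goes through.

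A stylistic difference worth noting: you prove that $\widehat\cH_\alpha^-$ is a subgroup by observing that its image in $W$ is totally isotropic so its elements commute, while the paper instead identifies $\widehat\cH_\alpha^-$ with $\{\,h\in\cH\mid h\alpha(h)\in\cZ\,\}$ and concludes it is the unique maximal abelian subgroup over $W_\alpha^-$. Both are correct; yours is perhaps marginally more direct. In your $-$-eigenspace argument, the step ``$(\alpha(h)h)^2=1$'' is compressed: it uses not only $\alpha|\cZ=\mathrm{inv}$ but also the fact that $h$ and $\alpha(h)$ commute, which follows because $\alpha(h)h\in\cZ$ forces $\alpha(h)=zh^{-1}$ for a central $z$. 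It would be worth spelling that out.
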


\begin{proof} The fact that $\bar\alpha$ lies in $\cS^-$ follows from
 Lemma \ref{ordtwoauto}
and the computation $[\alpha (h_1),\alpha (h_2)] = \alpha ([h_1,h_2]) = [h_1,h_2]^{-1}$.

Suppose $h\in\cH$ and $w=h\cZ\in W$. 
We claim that
$\bar\alpha (w) = w$ exactly when there exists
$z\in \cZ$ such that $\alpha (hz)= hz$.  Indeed,
$\bar\alpha(w)=w$ is equivalent to the condition
$h^{-1}\alpha(h)\in \cZ=\cZ^2$.  Thus
we may choose $z\in \cZ$ such that $h^{-1}\alpha
(h)=z^2$ or, equivalently, $\alpha(hz) = hz$.  Hence, $W_\alpha^+$ is identical to the set of fixed points of $\bar\alpha$.  

If $w=h\cZ$ then $\bar\alpha( w) = - w$ is equivalent to the existence of $z\in \cZ$ such that $h\alpha (h) =z$.  If the latter condition holds, then $z= h\alpha (h) = h(\alpha (h) h) h^{-1} = h \alpha (z) h^{-1} = \alpha (z) = z^{-1}$.  But $z=z^{-1}$ implies $z=1$ and thus $\alpha (h) = h^{-1}$.  So we have shown that $W_\alpha^-$ consists of those elements $w\in W$ such that $\bar\alpha (w)=-w$.
Lemma \ref{seigdec} now
implies
$W= W_\alpha^++ W_\alpha^-$ is a polarization
of $W$.

The argument in the previous paragraph actually shows that 
$$
\widehat{\cH}_\alpha^- = \{\, h\in \cH \ | \ h\alpha (h) \in \cZ\,\}.
$$  This implies that $\widehat{\cH}_\alpha^-$ is a group.  Moreover, it is a subgroup of $\cH$ that is
 invariant under translations by $\cZ$ and that projects to $W_\alpha^-$.  These properties imply that $\widehat{\cH}_\alpha^-$ must be the unique maximal abelian subgroup of $\cH$ that projects to $W_\alpha^-$.  It now follows that we have a polarization of $\cH$.

Lemma \ref{hplusfixed} implies that 
$\Hom_{\cH_\alpha^+}(\tau,1)$ has dimension one.   We obtain the relation  $\tau\circ \alpha\simeq
\tilde\tau$ upon noting that $\tau\circ\alpha$ and $\tilde\tau$ are both Heisenberg representations with central character inverse to that of $\tau$.  
The final assertion follows from Lemma \ref{realtrace} and the assumption 
$\nu (\cH^+_\alpha) = W_\alpha^+\times \{\,1\,\}$. \end{proof}


\begin{theorem}\label{HeisGelf}
Let $\cH$ and $\alpha$ be as in Theorem~\ref{Heisthm}.
Then
$$
\cH_\alpha^+ \alpha(h)\cH_\alpha^+ =
\cH_\alpha^+ h^{-1}\cH_\alpha^+,'
$$ 
for all
$h\in \cH$ and thus, according to Gelfand's Lemma,
$(\cH ,\cH_\alpha^+)$ is a Gelfand pair.
\end{theorem}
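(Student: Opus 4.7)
The plan is to exploit the polarization $(\cH_\alpha^+, \widehat{\cH}_\alpha^-)$ of $\cH$ supplied by Theorem~\ref{Heisthm}. First, I would verify that $\cH = \cH_\alpha^+ \widehat{\cH}_\alpha^-$. Since $W_\alpha^+ + W_\alpha^-$ is a direct sum decomposition of $W$ and $\cZ \subset \widehat{\cH}_\alpha^-$, the subgroup $\widehat{\cH}_\alpha^-$ is exactly the preimage in $\cH$ of $W_\alpha^-$; so for any $h \in \cH$, one can lift the $W_\alpha^+$-component of $h\cZ$ to some $h^+ \in \cH_\alpha^+$ and observe that $(h^+)^{-1} h$ lies in $\widehat{\cH}_\alpha^-$.

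Next, given $h \in \cH$, write $h = h^+\hat h^-$ with $h^+ \in \cH_\alpha^+$ and $\hat h^- \in \widehat{\cH}_\alpha^-$. Because $\alpha$ fixes $\cH_\alpha^+$ pointwise and inverts the elements of $\widehat{\cH}_\alpha^-$, one computes
\[
\alpha(h) = h^+ (\hat h^-)^{-1}, \qquad h^+ h^{-1} h^+ = h^+ (\hat h^-)^{-1} (h^+)^{-1} h^+ = h^+ (\hat h^-)^{-1}.
\]
Thus $\alpha(h) = h^+ h^{-1} h^+ \in \cH_\alpha^+ h^{-1} \cH_\alpha^+$, which gives the inclusion $\cH_\alpha^+ \alpha(h)\cH_\alpha^+ \subset \cH_\alpha^+ h^{-1} \cH_\alpha^+$. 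The reverse inclusion is immediate from the same identity, since $h^{-1} = (h^+)^{-1}\alpha(h)(h^+)^{-1}$. This establishes the displayed identity.

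For the Gelfand pair conclusion, I would invoke Gelfand's lemma with the anti-involution $\sigma : \cH \to \cH$ defined by $\sigma(h) = \alpha(h)^{-1}$. Being the composition of an automorphism and inversion, $\sigma$ is an anti-homomorphism of order two whose restriction to $\cH_\alpha^+$ is inversion, so $\sigma(\cH_\alpha^+) = \cH_\alpha^+$. Taking inverses in the identity proven above yields
\[
\cH_\alpha^+ \sigma(h)\cH_\alpha^+ = \cH_\alpha^+ \alpha(h)^{-1}\cH_\alpha^+ = (\cH_\alpha^+ h^{-1}\cH_\alpha^+)^{-1} = \cH_\alpha^+ h \cH_\alpha^+,
\]
and therefore $\sigma$ preserves every $\cH_\alpha^+$-double coset. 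Gelfand's lemma then concludes that $(\cH, \cH_\alpha^+)$ is a Gelfand pair.

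There is no serious obstacle in this argument: the whole proof reduces to a short computation once the existence of the decomposition $\cH = \cH_\alpha^+ \widehat{\cH}_\alpha^-$ is noted, and that in turn is an immediate corollary of Theorem~\ref{Heisthm}. The only point requiring a moment's care is to verify that $\widehat{\cH}_\alpha^-$ is the full preimage of $W_\alpha^-$, but this follows from a cardinality count using $\widehat{\cH}_\alpha^- \cap \cZ = \cZ$.
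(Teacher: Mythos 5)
Your proof is correct and takes essentially the same route as the paper: you conjugate $\alpha(h)$ to $h^{-1}$ by the ``$+$-part'' of $h$ coming from the polarization $\cH=\cH_\alpha^+\widehat{\cH}_\alpha^-$, exactly as the paper does via the explicit identity $(-w_+,0)(w_+-w_-,-z)(-w_+,0)=(-w_+-w_-,-z)$ in the $W\boxtimes\F_p$ model. Yours is merely the coordinate-free version of the same computation, together with a (welcome) explicit identification of the anti-involution $\sigma(h)=\alpha(h)^{-1}$ used in the Gelfand--Kazhdan lemma.
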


\begin{proof}
The theorem follows from the identity
$$
(-w_+ ,0)(w_+-w_-,-z)(-w_+,0)=
(-w_+-w_-,-z),
$$ 
for $w_+\in W_\alpha^+$, $w_-\in
W_\alpha^-$ and $z\in \cZ$.
\end{proof}

For more details on Gelfand pairs and Gelfand's Lemma, we refer 
the reader to \cite{Gr}.


\section{${\rm SL}(2,3)$ (an appendix to Section 2.3)}
\label{sec:eselltwo}

In this appendix, we consider the special case of the theory from Section \ref{sec:Heis} in which $p=3$ and $\ell=1$.
We use the symbols $-1,0,1$ to ambiguously denote elements of $\F_3$ and real numbers.  With this abuse of notation, we define an additive character of $\F_3$ by $\zeta (t) = \omega^t$, where
$$\omega = e^{2\pi i/3} = -\frac{1}{2} + \frac{\sqrt{3}}{2} i.$$ Then $\widehat{\F_3} = \{\,1,\zeta , \zeta^{-1}\,\}$.  Similarly, we define $\chi\in \widehat{\F^\times_3}$ by $\chi(t)=t$ and we have $\widehat{\F^\times_3} = \{ 1,\chi\}$.

In the case at hand, the symplectic group ${\rm Sp}(2,\F_3)$ coincides with ${\rm SL}(2,\F_3)$, which we also denote by ${\rm SL}(2,3)$.  This is the only example of a symplectic group over a finite field of odd characteristic that is not a perfect group in the sense that it does not equal its commutator subgroup.  Indeed, a nontrivial character $\alpha$ of ${\rm SL}(2,3)$ is defined by the conditions
\begin{equation*}
\begin{split}
\alpha \left(\left(
\begin{array}{cc}
a&0\cr 0&a
\end{array}
\right)\right)
&= 1\cr
\alpha \left(\left(
\begin{array}{cc}
1&b\cr 0&1
\end{array}
\right)\right)
&=\zeta (-b)\cr
\alpha \left(\left(
\begin{array}{cc}
0&1\cr -1&0
\end{array}
\right)\right)
&=1,\cr
\end{split}
\end{equation*} 
where $a\in \F_3^\times$ and $b\in \F_3$.  (Note that the only diagonal matrices in 
${\rm SL}(2,3)$ are the scalars $\pm 1$.)  The complete set of one-dimensional representations
of ${\rm SL}(2,3)$ is the set $\{\, 1,\alpha,\alpha^{-1}\,\}$.

We also define a 2-dimensional representation $\beta$ of ${\rm SL}(2,3)$ by
\begin{equation*}
\begin{split}
\beta \left(\left(
\begin{array}{cc}
a&0\cr 0&a
\end{array}
\right)\right)
&= \left(
\begin{array}{cc}
\chi(a)&0\cr 0&\chi(a)
\end{array}
\right)\cr
\beta \left(\left(
\begin{array}{cc}
1&b\cr 0&1
\end{array}
\right)\right)
&=
\left(
\begin{array}{cc}
1&0\cr 0&\zeta (-b)
\end{array}
\right)\cr
\beta \left(\left(
\begin{array}{cc}
0&1\cr -1&0
\end{array}
\right)\right)
&=
\left(
\begin{array}{cc}
\frac{i}{\sqrt{3}}&\frac{2i}{\sqrt{3}}\cr 
\frac{i}{\sqrt{3}}&\frac{-i}{\sqrt{3}}
\end{array}
\right),\cr
\end{split}
\end{equation*} 
Note that $\det\circ \beta = \alpha$.  

Suppose we are given a symplectic space $W$ of dimension two over $\F_3$ and a polarization $W= W^++W^-$.  Then we can choose $e_1\in W^-$ and $e_2\in W^+$ such that $\langle e_1,e_2\rangle =1$.  For simplicity, we make the identifications 
$$e_1= \left(
\begin{array}{c}
1\cr 0
\end{array}\right),\quad 
e_2= \left(
\begin{array}{c}
0\cr 1
\end{array}
\right)$$ and $W= \F_3^2$.  The symplectic form is given by
$$\langle v,w\rangle = {}^t vjw,\quad
j= \left(
\begin{array}{cc}
0&1\cr 
-1&0
\end{array}
\right).$$
One model for the Heisenberg representation $\tau$ of $\cH = W\boxtimes \F_3$ associated to the character $\zeta$ (defined above) is the induced representation ${\rm Ind}_{W^-\boxtimes \F_3}^{\cH} (1\times\zeta)$.  Restriction of functions from $\cH$ to $W^+$ defines an isomorphism of the space of $\tau$ with the space $\C [W^+]$ of all complex-valued functions on $W^+$.  The map $te_2\mapsto t$ identifies $W^+$ with $\F_3$ and we use this to identify the space of $\tau$ with the space $\C [\F_3]$ of complex-valued functions on $\F_3$.
With this model for $\tau$, the Heisenberg representation acts according to:
\begin{equation*}
\begin{split}
\tau (xe_1)f(t)&=\zeta(-xt) f(t)\cr
\tau (ye_2)f(t)&=f(t+y)\cr
\tau (z)f(t)&=\zeta(z)f(t),
\end{split}
\end{equation*}
with $x,y,t\in \F_3$ and $z\in \F_3^\times$.

We can extend $\tau$ to a representation $\hat\tau$ of ${\rm SL}(2,3)\ltimes \cH$ on 
$\C [\F_3]$ by defining 
\begin{equation*}
\begin{split}
\hat\tau  \left(\left(
\begin{array}{cc}
a&0\cr 0&a
\end{array}
\right)\right) f(t)&=\chi(a) f(at)\cr
\hat\tau  \left(\left(
\begin{array}{cc}
1&b\cr 0&1
\end{array}
\right)\right) f(t)&=\zeta(-bt^2)f(t)\cr
\hat\tau  \left(\left(
\begin{array}{cc}
0&1\cr -1&0
\end{array}
\right)\right) f(t)&=i\hat f(t),
\end{split}
\end{equation*}
where $a\in\F_3^\times$, $b,t\in \F_3$ and the Fourier transform is given by
$$\hat f(t) = \frac{1}{\sqrt{3}} (f(0)+ f(1)\zeta (-t) + f(-1)\zeta (t)).$$
(The latter Weil representation coincides with the representation specified in Theorem 2.4 of \cite{Ge}.)  However, it should be emphasized that there are two other extensions of $\tau$ to 
${\rm SL}(2,3)\ltimes \cH$.  These are obtained by twisting $\hat\tau$ by the inflation of $\alpha$ or $\alpha^{-1}$ to a character of ${\rm SL}(2,3)\ltimes \cH$.

One can explicitly describe the Weil representation $\hat\tau$ on ${\rm SL}(2,3)$ in terms of matrices as follows.  
Define a basis $\xi_1,\xi_2,\xi_3$ of $\C[\F_3]$ by letting $\xi_1 (t) =t$ and letting $\xi_2$ and $\xi_3$ be the characteristic functions of $\{0\}$ and $\{ -1,1\}$, respectively.  The span of $\xi_1$ is just the space of odd functions in $\C[\F_3]$ and ${\rm SL}(2,3)$ acts on this space according to the character $\alpha$.  On the other hand, $\xi_2$ and $\xi_3$ span the space of even functions in $\C[\F_3]$.    We use the map 
$$u\xi_2+v\xi_3\mapsto 
\left(\begin{array}{c}
u\cr v
\end{array}\right)$$
 to identify the space of even functions with $\C^2$.  Then the Weil representation assigns to each $g\in {\rm SL}(2,3)$ a 2-by-2 unitary matrix that operates on $\C^2$.  This matrix is precisely the matrix $\beta (g)$ defined above.  In particular, we see that our Weil representation coincides with $\alpha \oplus\beta$.
 
We close this section by completing Definition \ref{HeisWeilrep} of ``Heisenberg-Weil lift'' in the case of $p=3$ and $\ell=1$.  Fix a Heisenberg representation $\tau$ of a Heisenberg group $\cH = W\boxtimes \F_3$, where $W$ has dimension 2.  Suppose $e_1,e_2\in W$ and $\langle e_1,e_2\rangle=1$.  Given $s\in \cS={\rm Sp}(W)$, define a matrix
$$\gamma (s) =
\left(\begin{array}{cc}
a&b\cr c&d
\end{array}\right),$$
where
\begin{equation*}
\begin{split}
s(e_1)&=ae_1+ce_2\cr
s(e_2)&=be_1+de_2.
\end{split}
\end{equation*}
It is routine to verify that $\gamma$ defines an isomorphism $\cS\cong {\rm SL}(2,3)$.  Moreover, if 
\begin{equation*}
\begin{split}
e'_1&=a_0e_1+c_0e_2\cr
e'_2&=b_0e_1+d_0e_2
\end{split}
\end{equation*}
is another basis of $W$ with $\langle e'_1,e'_2\rangle=1$ then the change of basis matrix
$$\xi =
\left(\begin{array}{cc}
a_0&b_0\cr c_0&d_0
\end{array}\right)$$
lies in ${\rm SL}(2,3)$ and the isomorphism $\gamma' : \cS\to {\rm SL}(2,3)$ associated to $e'_1,e'_2$ is just
$$\gamma'(s) = \xi^{-1}\gamma (s)\xi.$$  Thus the character of the Weil representation $\alpha \oplus \beta$ transfers to a character of a unique isomorphism class of representations of $\cS$.  We call representations in this class ``Weil representations associated to $\zeta$'' and representations in the contragredient class will be called ``Weil representations associated to $\zeta^{-1}$.''  The {\it Heisenberg-Weil lift}  of $\tau$ to $\cS\ltimes \cH$ is the unique lift $\hat\tau$ of $\tau$ to $\cS\ltimes \cH$ such that $\hat\tau\, |\,\cS$ is a Weil representation associated to the central character of $\tau$.  This completes Definition \ref{HeisWeilrep}.

\section{Buildings, Moy-Prasad filtrations, and twisted Levi sequences}
\label{sec:buildings}

Recall that $\bG$ is a connected reductive $F$-group that splits over a 
tamely ramified extension of $F$, $G=\bG(F)$, $\bfr{g}$ is the Lie
algebra of $\bG$, and $\g=\bfr{g}(F)$.

Bruhat-Tits theory plays a key role in aspects of Yu's construction 
of tame supercuspidal representations. In this section, we
recall some basic facts about buildings and Moy-Prasad filtrations.
We also state the definitions of some other subgroups of $G$ and 
lattices in $\g$
that are defined via Bruhat-Tits theory.

Let $\cB(\bG,F)$ be the (extended) Bruhat-Tits building
of $G$. For every extension $E$ of $F$ of finite
ramification degree, we have a building $\cB(\bG,E)$ for
$\bG(E)$. If $E$ is Galois over $F$, then $\mathrm{Gal}(E/F)$ acts
on $\cB(\bG,E)$, and the fixed point set contains $\cB(\bG,F)$,
with equality when $E$ is tamely ramified over $F$ (\cite{P},
\cite{Ro}).

Let $\bG_{\mathrm{der}}$ be the derived group of $\bG$.
The reduced building $\cB_{\mathrm{red}}(\bG,F)$ of $G$ is the
building $\cB(\bG_{\mathrm{der}},F)$ of $\bG_{\mathrm{der}}(F)$.
Let $X_*(\bZ,F)$ be the group of $F$-rational
cocharacters of the center $\bZ$ of $\bG$.
By definition (see Section~4.2.16 of \cite{BT2}),
$$
\cB(\bG,F)=\cB_{\mathrm{red}}(\bG,F)\times (X_*(\bZ,F)\otimes \R).
$$
When convenient, we view $\cB_{\mathrm{red}}(\bG,F)$ as the
space of $(X_*(\bZ,F)\otimes \R)$-orbits in
$\cB(\bG,F)$, under the action of $X_*(\bZ,F)\otimes
\R$ by affine translations. If $x\in \cB(\bG,F)$, let
$[x]$ denote the image of $x$ in $\cB_{\mathrm{red}}(\bG,F)$.

Every maximal $F$-split torus $\bS$ in $\bG$ has an associated
apartment $A(\bG,\bS,F)\subset \cB(\bG,F)$. Let $\bT$ be a
maximal $F$-torus in $\bG$ containing $\bS$. Then 
$\bT$ splits over some finite Galois extension $E$ of $F$, so
$\bT$ has an apartment $A(\bG,\bT,E)$ in $\cB(\bG,E)$.
Furthermore $A(\bG,\bS,F)$ is equal to $A(\bG,\bT,E)^{\mathrm{Gal}(E/F)}$.
More generally, if $\bT$ is a maximal $F$-torus in $\bG$
that splits over a finite tamely ramified Galois extension $E$
of $F$,
we set $A(\bG,\bT,F)=A(\bG,\bT,E)\cap \cB(\bG,F)$.
Note that if $\bT$ does not contain a maximal $F$-split
torus in $\bG$, then $A(\bG,\bT,F)$ is not an apartment
in $\cB(\bG,F)$, although we do have $A(\bG,\bT,F)=
A(\bG,\bT,E)^{\mathrm{Gal}(E/F)}$, and $A(\bG,\bT,F)$ is
independent of the choice of $E$, as shown in \cite{Y}.

\begin{definition}\label{deftwistedLevi}
Suppose that $\bG'$ is an $E$-Levi $F$-subgroup of
$\bG$ for some finite extension $E$ of $F$. Such a group
will be called a \textit{twisted Levi subgroup} of $\bG$.
If we can choose $E$ to be tamely ramified over
$F$, then we call $\bG'$ a \textit{tamely ramified
twisted Levi subgroup}. 
\end{definition}

If $\bG^\prime$ is a tamely ramified twisted Levi
subgroup of $\bG$, then, letting
$G^\prime=\bG^\prime(F)$,
there is a family of $G^\prime$-equivariant
embeddings of the building
$\cB(\bG^\prime,F)$ of $G^\prime$ into
$\cB(\bG,F)$.
All of these embeddings have the same image,
allowing us to identify $\cB(\bG^\prime,F)$
with a subset of $\cB(\bG,F)$.

In \cite{MP1}, Moy and Prasad associated
to any point $x$ in $\cB(\bG,F)$ a parahoric subgroup
$G_{x,0}$ of $G$, a filtration $\,\{\,G_{x,r}\,\}_{r\ge 0}$
of the parahoric, and a filtration $\{\, \g_{x,r}\,\}_{r\in \R}$
of the Lie algebra $\g$.
The indexing of these filtrations depends on a choice
of affine roots, hence on a choice of normalization of valuation on $F$.
In this paper, we take $v_F$ to be the valuation on $F$ 
such that $v_F(F^\times)=\Z$. When working over an
algebraic extension $E$ of $F$, we extend $v_F$
to a valuation (also called $v_F$) on $E$.

If $g\in G$, the notation $\Int(g)$ will
be used to denote the automorphism of $G$ given
by conjugation by $g$. The adjoint representation of $G$ on $\g$
will be denoted by $\Ad$.

We list a few properties of the Moy-Prasad filtrations.
Let $x\in \cB(\bG,F)$ and $r$, $s\in \R$. Then:
\begin{enumerate}
\item If $\theta$ is an automorphism of $\bG$ that is
defined over $F$, then
$\theta(G_{x,r})=G_{\theta(x),r}$, $r\ge 0$,
and $\theta(\g_{x,r})=\g_{\theta(x),r}$.
(Here the notation $\theta$ is also used for the differential
of $\theta$, and, if $x\in \cB(\bG,F)$, $\theta(x)$ denotes the 
image of $x$ under
the automorphism of $\cB(\bG,F)$ induced by $\theta$.)
\item 
$\Int (g)G_{x,r}=G_{gx,r}$, $g\in G$, 
$\Ad (g)\g_{x,r}=\g_{gx,r}$. 
\item
$[G_{x,r},G_{x,s}]\subset G_{x,r+s}$ and $[\g_{x,r},\g_{x,s}]
\subset\g_{x,r+s}$.
\item If $\varpi$ is a prime element in $F$, then
$\varpi\g_{x,r}=\g_{x,r+1}$.
\item If $\bG^\prime$ is a tamely ramified twisted  Levi subgroup
of $\bG$ and $x\in \cB(\bG^\prime,F)$,
then
$\g^\prime_{x,r}=\g_{x,r}\cap \g^\prime$, and if $r>0$,
$G^\prime_{x,r}=G_{x,r}\cap G^\prime$.
\end{enumerate}

\begin{remark}
The parahoric subgroup $G_{x,0}$ is a subgroup
of the stabilizer of $x$ in $G$. Hence it follows
from Property~(2) above that $\g_{x,r}$ is 
$\Ad G_{x,0}$-stable, $r\in \R$, and $G_{x,r}$
is a normal subgroup of $G_{x,0}$, $r\ge 0$.

\end{remark}

Let $\widetilde\R=\R\cup \{\, r^+\ | \ r\in \R\,\}\ \cup
\{\,\infty\,\}$. The ordering on $\R$ can be extended to
an ordering on $\widetilde\R$ by decreeing that for
all $r$ and $s\in \R$,
\begin{equation*}
\begin{split}
 r<s^+ \ \ \ \ &\Longleftrightarrow \ \ \ \ r\le s,\\
 r^+<s^+ \ \ \ &\Longleftrightarrow \ \ \ \ r<s,\\
r^+<s \ \ \ \ \ &\Longleftrightarrow \ \ \ \ r<s.
\end{split}
\end{equation*}

If $x\in \cB(\bG,F)$ and $r\in \R$, set
$\g_{x,r^+}=\bigcup_{s>r} \g_{x,s}$
and if $r\ge 0$,  $G_{x,r^+}=\bigcup_{s>r} G_{x,s}$.
Moy and Prasad defined filtration lattices in the dual 
$\g^*$ of $\g$ as follows:
$$
\g^*_{x,r}=\{\, \lambda\in \g^*\ | \ \lambda(\g_{x,(-r)^+})
\subset \gP_F\,\},\qquad x\in \cB(\bG,F),\ r\in \R,
$$
where $\gP_F$ is the maximal ideal in the ring of
integers of $F$. 
All but the third property in the above list
have obvious analogues for the filtrations
of $\g^*$.
Set $\g_{x,r^+}^*=\bigcup_{s>r}\g_{x,s}^*$.

From now on, we assume that $\bG$ splits over a tamely
ramified extension of $F$.
 Let $E$ be a finite
tamely ramified Galois extension of $F$ over
which $\bG$ splits. Let $\bT$ be a maximal
$E$-split $F$-torus in $\bG$. Then
we have filtration groups
$\bT(E)_r$, $r\in \widetilde\R$, $r\ge 0$,
and lattices ${\bfr t}(E)_r$, $r\in \widetilde\R$,
where ${\bfr t}$ is the Lie algebra of $\bT$. 
Let $\Phi=\Phi(\bG,\bT)$ be the set of roots
of $\bT$ in $\bG$. If $a\in \Phi$, let ${\bfr u}_a(E)$,
and
$\bU_a(E)$ 
be the corresponding root space and root subgroup
of ${\bfr g}(E)$ and $\bG(E)$, respectively.
If $x\in A(\bG,\bT,E)$ and $a\in \Phi$, 
then $x$ (together with the valuation $v_F$)
determines filtration lattices ${\bfr u}_a(E)_{x,r}$
in ${\bfr u}_a(E)$, 
and filtration subgroups $\bU_a(E)_{x,r}$
in $\bU_a(E)$, $r\in\widetilde\R$.
For $x\in A(\bG,\bT,E)$, and $r\in \widetilde \R$,
set ${\bfr g}(E)_{x,r}= {\bfr t}(E)_{r}
\oplus \bigoplus_{a\in \Phi} {\bfr u}_a(E)_{x,r}$
and, for $r\ge 0$, let $\bG(E)_{x,r}$ be the subgroup of $\bG(E)$
generated by $\bT(E)_r$ and the groups $\bU_a(E)_{x,r}$,
$a\in \Phi$. Then $\{\, {\bfr g}(E)_{x,r}\ | \ r\in \widetilde \R\,\}$
and $\{\, \bG(E)_{x,r}\ | \ r\in \widetilde \R,\, r\ge 0\,\}$
are the 
Moy-Prasad filtrations of ${\bfr g}(E)$ and $\bG(E)$ associated to
the point $x$. Furthermore, if 
$x\in A(\bG,\bT,F)$, then
\begin{equation*}\begin{split}
& \g_{x,r}={\bfr g}(E)_{x,r}^{\mathrm{Gal}(E/F)}={\bfr g}_{x,r}(E)\cap \g\\
& G_{x,r}=\bG(E)_{x,r}^{\mathrm{Gal}(E/F)}=\bG(E)_{x,r}\cap G,\ \ \ r>0
\end{split}
\end{equation*}
The parahoric subgroup $G_{x,0}$ might not
equal $\bG(E)_{x,0}\cap G$, though the
index of $G_{x,0}$ in $\bG(E)_{x,0}\cap G$ is
finite.

\begin{definition}\label{defconcave}
Let $\bT$, $E$ and $\Phi$ be as above. 
A function $f:\Phi\cup \{0\}
\rightarrow \widetilde\R$ is \textit{concave} if
for any nonempty finite sequence $\{a_i\}$ in $\Phi\cup \{0\}$
such that $\sum_i a_i\in \Phi\cup \{0\}$, $f\left(\sum_i a_i\right)
\le \sum_i f(a_i)$.
\end{definition}

Suppose that $f$ is a concave function on $\Phi\cup\{0\}$.
If $x\in A(\bG,\bT,E)$, let
$$
{\bfr g}(E)_{x,f}= {\bfr t}(E)_{f(0)} \oplus \bigoplus_{a\in
\Phi} {\bfr u}_a(E)_{x,f(a)},
$$
If $f$ is concave and takes nonnegative values,
let $\bG(E)_{x,f}$ be the subgroup of $\bG(E)$ generated
by $\bT(E)_{f(0)}$ and the groups $\bU_a(E)_{x,f(a)}$,
$a\in \Phi$. If $x\in A(\bG,\bT,F)$ and $f$ is $\mathrm{Gal}(E/F)$-invariant,
set
\begin{equation*}\begin{split}
& \g_{x,f}={\bfr g}(E)_{x,f}^{\mathrm{Gal}(E/F)}={\bfr g}(E)_{x,f}\cap \g, \\
& {\rm for\ }f \ {\rm nonnegative,} \ 
G_{x,f}=\bG(E)_{x,f}^{\mathrm{Gal}(E/F)}=\bG(E)_{x,f}\cap G.\end{split}
\end{equation*}
Various subgroups that appear in Yu's construction of
tame supercuspidal representations are of the form
$G_{x,f}$ for certain choices of $x$ and $f$.

\begin{definition}\label{deftwistedLeviseq}
A sequence $\vec\bG=(\bG^0,\dots,\bG^d)$ of connected
reductive $F$-groups is a \textit{twisted Levi sequence}
in $\bG$ if 
$$
\bG^0\subsetneq\bG^1\subsetneq \cdots\subsetneq 
\bG^d=\bG
$$
and there exists a finite extension $E$ of $F$ such
that $\bG^0\otimes E$ splits over $E$
and $\bG^i\otimes E$ is a Levi subgroup (that is,
an $E$-Levi $E$-subgroup) of $\bG^d\otimes E$,
for all $i\in \{\, 0,\dots,d\,\}$.
In this case, $\vec\bG$ is said to
\textit{split} over $E$. If $E$ can be chosen to be tamely
ramified over $F$, we say that $\vec\bG$ is
\textit{tamely ramified}.
\end{definition}

If $\vec\bG=(\bG^0,\dots,\bG^d)$ is a twisted Levi
sequence, let $\bZ^i$, $\bT^i$, $\bfr{z}^i$, and $\bfr{z}^{i,*}$
be the center of $\bG^i$, the identity component of
the center of $\bG^i$, the center of the Lie algebra 
$\bfr{g}^i$ of $\bG^i$, and the dual of $\bfr{z}^i$, for
$i\in \{\, 0,\dots,d\,\}$.
Set $\z^{i,*}=\bfr{z}^{i,*}(F)$, for $i\in \{\, 0,\dots,d\,\}$.
and $\bZ=\bZ^d$, $\bfr{z}=\bfr{z}^d$, $\bfr{z}^*=\bfr{z}^{d,*}$,
etc. As in Section~8 of \cite{Y}, we identify $\bfr{z}^{i,*}$
with the subspace of $\bfr{g}^{i,*}$ of elements that are
invariant under the co-adjoint action of $\bG^i$
on $\bfr{g}^{i,*}$.

Suppose that $\vec\bG$ is
a tamely ramified twisted Levi sequence that splits
over a tamely ramified finite Galois extension $E$ of
$F$.

\begin{definition}\label{defadmseq}
A sequence $\vec r=\{ r_i\}_{i=0}^d$ in $\widetilde\R$
is \textit{admissible} if there exists $v$ with
$0\le v\le d$ and 
$0\le r_0=r_1=\cdots=r_v$ and $r_v/2\le r_{v+1}
\le\cdots\le r_d$.
\end{definition}

\noindent Fix an admissible sequence
$\vec r=\{r_i\}_{i=0}^d$.
 Choose a maximal $E$-split $F$-torus $\bT\subset
\bG^0$, set $\Phi_i=\Phi(\bG^i,\bT)$,
for $i\in \{\, 0,\dots,d\,\}$, and define 
$$f_{\vec r}(a)=\begin{cases} r_0, & {\rm if}\  a\in\Phi_0\cup \{0\}\\
          r_i, & {\rm if}\  a\in\Phi_i\setminus\Phi_{i-1},\ i\in\{\, 1,\dots,d\,\}.
\end{cases}
$$
Then, as shown in \cite{Y}, $f_{\vec r}$ is a concave function on
$\Phi\cup \{0\}$.
For $x\in A(\bG,\bT,F)$, set $\vec G_{x,\vec r}=G_{x,f_{\vec r}}$
and $\vec\g_{x,\vec r}=\g_{x,f_{\vec r}}$. 
Both $\vec\bG_{x,\vec r}$ and $\vec\g_{x,\vec r}$ are independent
of the choice of $E$-split maximal $F$-torus $\bT\subset \bG^0$
such that $x\in A(\bG,\bT,F)$. 

If $\vec\bG=(\bG^0)$, $\bG^0=\bG$, and $\vec r=(r_0)$,
then
$\vec\g_{x,\vec r}=\g_{x,r_0}$, and if $r_0>0$,
$\vec G_{x,\vec r}=G_{x,r_0}$.
More generally, if $\vec\bG=(\bG^0,\dots,\bG^d)$
and $0<r_0\le r_1\cdots\le r_d$, then, as in \cite{Y},
$$
\vec G_{x,\vec r}=G_{x,r_0}^0G_{x,r_1}^1
\cdots G_{x,r_d}^d, \qquad x\in A(\bG,\bT,F),
$$
where Remark~2.11 of \cite{Y} explains how to
identify $x$ with an element of $\cB(\bG^i,F)$,
and $G_{x,r_i}^i$ is the associated
Moy-Prasad subgroup of $G^i$, for $i\in \{\, 1,\dots,d\,\}$.

\section{Quasicharacters}
\label{sec:hypotheses}

If $H$ is a totally disconnected group,
we refer to a smooth one-dimensional representation
of $H$ as a quasicharacter of $H$.
In this section, we prove some basic results
about quasicharacters that will be used
later 
in the paper
when proving our main results.

If $r\in \widetilde\R$, set 
$$\g_r=\bigcup_{x\in 
\cB(\bG,F)}\, \g_{x,r}, \qquad \g_r^*=\bigcup_{x\in 
\cB(\bG,F)}\, \g_{x,r}^*$$ and, if $r\ge 0$, $$G_r=\bigcup_{x\in \cB(\bG,F)}
G_{x,r}.$$ Then $\g_r$, $\g^*_r$, and $G_r$
are open and closed, and $\Ad \, G$, $\Ad^* G$
and $G$-invariant,
respectively. (Here, 
$\Ad^*$ is the 
representation dual to the adjoint representation
 $\Ad$.)
If $\bG^\prime$ is a twisted
Levi subgroup of $G$, the notations
$\g_r^\prime$, $\g^{\prime, *}_r$,
and $G^\prime_r$ will be used for the
analogous subsets of $\g^\prime$,
$\g^{\prime, *}$, and $G^\prime$, respectively.

\begin{lemma}\label{depthlemma}\label{qc1} Let $\phi$
be a quasicharacter of $G$ and suppose $x$, $y\in \cB(\bG,F)$ and $r\in \widetilde \R$  with $r>0$. Then
$\phi\,|\, G_{x,r}\cap \, G_{y,r}= 1$ if and only if $\phi\, |\, G_r=1$.
\end{lemma}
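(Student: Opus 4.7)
The implication $\phi\,|\,G_r=1 \Longrightarrow \phi\,|\,G_{x,r}\cap G_{y,r}=1$ is immediate, since $G_{x,r}\cap G_{y,r}\subset G_r$. The content lies in the converse, and my plan is to reduce it to a statement about the image of Moy-Prasad subgroups in the cocenter $\bS=\bG/\bG_{\mathrm{der}}$. Because $\phi$ is one-dimensional, it is trivial on $[G,G]$, and hence on the root subgroups $\bU_a(F)$ (which lie in $\bG_{\mathrm{der}}$ and are generated by commutators under the Chevalley commutator relations). Consequently $\phi$ factors through the natural homomorphism $G\to\bS(F)$.

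The key claim I would establish is: for every $r>0$ and every $z\in\cB(\bG,F)$, the image of $G_{z,r}$ in $\bS(F)$ equals the Moy-Prasad filtration subgroup $\bS(F)_r$, which in particular does not depend on $z$. To prove this, I would pick a maximal $F$-torus $\bT$ in $\bG$ that splits over a tamely ramified extension and satisfies $z\in A(\bG,\bT,F)$. The standard Bruhat-Tits decomposition for $r>0$ recalled in Section~\ref{sec:buildings} writes every element of $G_{z,r}$ as a product of an element of $\bT(F)_r$ and elements of the root-group filtrations $\bU_a(F)_{z,r}$; passing to $\bS(F)$ kills the root-group contributions, so this image coincides with that of $\bT(F)_r$. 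Using functoriality of the Moy-Prasad filtration under the isogeny $\bT\to\bS$, together with surjectivity on each graded piece $\bT(F)_r/\bT(F)_{r^+}\to \bS(F)_r/\bS(F)_{r^+}$ (which via the Moy-Prasad isomorphism reduces to surjectivity at the Lie algebra level), one concludes that the common image is precisely $\bS(F)_r$, a subgroup independent of the choice of $\bT$.

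Granted the claim, the converse follows quickly. By the axioms of the Bruhat-Tits building, $x$ and $y$ lie in a common apartment $A(\bG,\bT,F)$ for some maximal $F$-torus $\bT$ splitting over a tame extension. Since $\bT(F)_r$ sits inside every $G_{z,r}$ for $z\in A(\bG,\bT,F)$, we have $\bT(F)_r\subset G_{x,r}\cap G_{y,r}$. Hence the image of $G_{x,r}\cap G_{y,r}$ in $\bS(F)$ contains $\bS(F)_r$, while being contained in the image of $G_{x,r}$, which by the claim is also $\bS(F)_r$. The two images therefore coincide, and $\phi\,|\,G_{x,r}\cap G_{y,r}=1$ is equivalent to $\phi\,|\,\bS(F)_r=1$, which by the claim is in turn equivalent to $\phi\,|\,G_{z,r}=1$ for every $z\in \cB(\bG,F)$, i.e.\ to $\phi\,|\,G_r=1$.

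The main obstacle is the verification of the key claim, and specifically the identification of the image of $\bT(F)_r$ in $\bS(F)$ with $\bS(F)_r$ uniformly in $\bT$. Once this piece of Moy-Prasad functoriality is in hand, the rest of the argument is a routine invocation of the Bruhat-Tits decomposition and the axiom that any two points of $\cB(\bG,F)$ lie in a common apartment.
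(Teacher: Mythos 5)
Your reduction collapses at the very first step, and the failure is not a technicality in the write-up but a defect of the whole strategy. A quasicharacter $\phi$ of $G$ is indeed trivial on the abstract commutator subgroup $[G,G]$ (and on unipotent elements), but the kernel of the natural map $G\to\bS(F)$, where $\bS=\bG/\bG_{\mathrm{der}}$, is $\bG_{\mathrm{der}}(F)$, and this group is in general strictly larger than $[G,G]$ and than the subgroup generated by the rational points of the root subgroups (they coincide only when $\bG_{\mathrm{der}}$ is simply connected, by Kneser--Tits type results). So ``trivial on $[G,G]$'' does not give ``factors through $G\to\bS(F)$''. Concretely, take $\bG=\mathbf{PGL}_p$ with $p$ the residual characteristic (this group splits over $F$, so it is covered by the lemma). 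Since $(1+\gP_F)\not\subset(1+\gP_F^2)(F^\times)^p$, there is a smooth character $\chi$ of $F^\times$ of order $p$, trivial on $(F^\times)^p$ and on $1+\gP_F^2$ but nontrivial on $1+\gP_F$; then $\phi=\chi\circ\det$ descends to a quasicharacter of $G=\mathbf{PGL}_p(F)$ of positive depth. Here $\bS$ is trivial, so your argument would conclude that $\phi$ is trivial on $G_r$ for every $r>0$, i.e.\ that every quasicharacter of a semisimple group has depth zero --- which this example contradicts. The cocenter simply cannot see all quasicharacters of $G$, so no amount of work on your ``key claim'' (the identification of the image of $G_{z,r}$ in $\bS(F)$ with $\bS(F)_r$, which by the way involves a surjection of tori with positive-dimensional kernel, not an isogeny, and would itself need a Galois-cohomological descent argument) can rescue the proof.

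For comparison, the paper's proof uses only the weaker, correct fact that $\phi$ is trivial on unipotent elements: by a result of DeBacker, $G_{x,r}\subset \mathcal{U}\,G_{y,r}$ and $G_{y,r}\subset\mathcal{U}\,G_{x,r}$, so triviality of $\phi$ on any one $G_{z,r}$ propagates to all of $G_r$; and to get triviality on $G_{x,r}$ from the hypothesis on $G_{x,r}\cap G_{y,r}$ one chooses a maximal $F$-split torus whose apartment contains both $x$ and $y$, takes the minimal parabolic $\bP=\bM\bN$ with $\bM$ the centralizer of that torus (so that $M_{x,r}=M_{y,r}$, since $M/Z_M$ is compact), and uses the Iwahori decomposition $G_{x,r}=(\bar N\cap G_{x,r})M_{x,r}(N\cap G_{x,r})$ together with triviality of $\phi$ on $N$ and $\bar N$. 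If you want to salvage an ``abelianization'' argument, you would have to work with $G/\overline{[G,G]}$ rather than $\bS(F)$, and then the Moy--Prasad functoriality for tori that your plan relies on is no longer available; the unipotent/Iwahori route is the one that actually closes the argument.
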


\begin{proof} Clearly, $\phi\, |\, G_r=1$ implies $\phi\,|\, G_{x,r}\cap \, G_{y,r}=1$.  
So we will assume $\phi\,|\, G_{x,r}\cap \, G_{y,r}=1$ and address the proof of the 
converse assertion.

We begin by recalling some facts about unipotent elements from \cite{D}.  Let $\mathcal{U}$ denote the set of unipotent elements in $G$.  Then it is a standard fact that $\mathcal{U}$ is contained in the derived group of $G$.  (This fact is obvious if one formulates the definition of ``unipotent'' as in \cite{D}.)  Consequently, all quasicharacters of $G$ must be trivial on $\mathcal{U}$.  We also observe that Theorem 4.1.4 of \cite{D} implies that $G_{x,r}\subset {\mathcal U}G_{y,r}$
and $G_{y,r}\subset {\mathcal U}G_{x,r}$.  Thus the conditions $\phi\, |\, G_{x,r}=1$ and $\phi\, |\, G_{y,r}=1$ are equivalent and, moreover, they are both equivalent to the condition $\phi\, |\, G_r=1$.

Now choose a maximal
$F$-split torus ${\bf S}\subset \bG$
such that $x$, $y\in A(\bG,{\bf S},F)$.
Let ${\bf M}$ be the centralizer
of ${\bf S}$ in $\bG$. Let $Z_M$ be
the center of $M=\bM (F)$.
Then $M/Z_M$ is compact. Therefore,
since
$x$, $y\in A(\bG,{\bf S},F)=\cB(\bM ,F)$,
we have $M_{x,t}=M_{y,t}$ for all $t\ge 0$.

Let ${\bf P}$ be a (minimal) $F$-parabolic subgroup
of $\bG$ with Levi decomposition ${\bf P}=\bM \bN$.
Let $\bar\bN$ be the unipotent radical
of the parabolic subgroup of $\bG$ that is
opposite to ${\bf P}$. Applying Theorem~4.2 of
\cite{MP2}, we have an Iwahori
decomposition of $G_{x,t}$ for $t>0$,
$$
G_{x,t}=(\bar N\cap G_{x,t})M_{x,t}(N\cap G_{x,t}),
$$
with an analogous decomposition for $G_{y,t}$.
Now suppose
that $\phi$ is as in the statement of the
lemma. Then $\phi\,|\, M_{x,r}=
\phi\,|\, M_{y,r}=1$. Because
$\bar N$ and $N$ consist of unipotent
elements, $\phi$ must be trivial on both of these subgroups. In view
of the Iwahori decompositions of
$G_{x,r}$ and $G_{y,r}$, we now conclude
that $\phi$ is trivial on $G_{x,r}$ and $G_{y,r}$.
Hence, $\phi\, |\, G_r=1$.
\end{proof}

In \cite{MP1} and \cite{MP2}, Moy and Prasad
introduced the notion of depth of an
irreducible admissible representation of $G$.
For quasicharacters of $G$, ``depth'' may be defined as follows:

\begin{definition}\label{qchardepth}
If $\phi$ is a quasicharacter of $G$ then the {\it depth of $\phi$}, denoted $r(\phi)$, is the smallest nonnegative real number $r$ that satisfies any of the following equivalent conditions:
\begin{itemize}
\item $\phi \,| \, G_{x,r^+}=1$, for some $x\in \cB (\bG,F)$,
\item $\phi \,|\, G_{x,r^+}=1$, for all $x\in \cB (\bG,F)$,
\item $\phi \,| \, G_{x,r^+}\cap G_{y,r^+}=1$, for some $x,y\in \cB(\bG,F)$,
\item $\phi \, |\, G_{r^+}=1$.
\end{itemize}
\end{definition}

Suppose that $t$, $u\in \widetilde\R$
and $t<u$. If $x\in \cB(\bG,F)$,
set $\g_{x,t:u}=\g_{x,t}/\g_{x,u}$
and, if $t\ge 0$, set $G_{x,t:u}=
G_{x,t}/G_{x,u}$.
Now suppose that $r\in \R$, $r>0$ and
$s=r/2$.  We have an isomorphism 
$$e=e_{x,r}:\g_{x,s^+:r^+}\rightarrow G_{x,s^+:r^+}$$
of abelian groups that is a special case of the inverse of the isomorphism in 
 Corollary 2.4 of \cite{Y}.    (Though Yu does not explicitly state the definition of his isomorphism, the definition is implicit in the proof of Lemma 1.3 \cite{Y} and the remarks following the proof.  An explicit definition in the case at hand is given in Section 1.5 of \cite{A} using Adler's mock exponential maps.  To extend the definition to Yu's more general setting, one uses 6.4.48 of \cite{BT1}.)

\begin{lemma}\label{expequivariance}
Let $\theta$ be an involution of $G$, $x\in \cB(\bG,F)$, $r>0$
and $s=r/2$.
Suppose that $X\in \g_{x,s^+}\cap\theta(\g_{x,s^+})$.
Then there exists $k\in G_{x,s^+}\cap\theta(G_{x,s^+})$
such that $e(X+\g_{x,r^+})=k\,G_{x,r^+}$ and $e(\theta(X)+\g_{x,r^+})
=\theta(k)\,G_{x,r^+}$.
\end{lemma}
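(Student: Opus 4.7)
The plan is to exploit the canonicity of the Moy-Prasad isomorphism. Because $e_{x,r}$ of Corollary~2.4 of \cite{Y}---realized in Section~1.5 of \cite{A} via Adler's mock exponential and extended to the general tamely ramified setting by 6.4.48 of \cite{BT1}---is constructed purely from the intrinsic data of the pair $(\bG,x)$, any $F$-automorphism $\theta$ of $\bG$ will satisfy the naturality identity
$$
\theta\circ e_{x,r} \;=\; e_{\theta(x),r}\circ\theta
$$
as maps $\g_{x,s^+:r^+}\to G_{\theta(x),s^+:r^+}$. Establishing this equivariance explicitly from the construction of $e_{x,r}$ is the first step.

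The crux is then to produce a single element $k$ that represents the class of $X$ at both $x$ and $\theta(x)$ simultaneously. To this end I would choose a tamely ramified finite Galois extension $E/F$ over which $\bG$ splits, together with a maximal $E$-split $F$-torus $\bT$ whose apartment $A(\bG,\bT,E)$ contains both $x$ and $\theta(x)$; such a torus exists since any two points of the building lie in a common apartment. Decomposing $X = X_{\bfr{t}} + \sum_a X_a$ relative to the root-space decomposition of ${\bfr g}(E)$, the hypothesis $X\in\g_{x,s^+}\cap\g_{\theta(x),s^+}$ forces each $X_a$ into ${\bfr u}_a(E)_{x,s^+}\cap {\bfr u}_a(E)_{\theta(x),s^+}$ and $X_{\bfr{t}}$ into ${\bfr t}(E)_{s^+}$. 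Exponentiating each component inside its own root subgroup or inside $\bT(E)_{s^+}$ and multiplying the resulting factors in a $\mathrm{Gal}(E/F)$-equivariant fashion should yield an element of $\bG(E)_{x,s^+}\cap \bG(E)_{\theta(x),s^+}$ that descends to $k\in G_{x,s^+}\cap G_{\theta(x),s^+}$. The same root-subgroup formula that defines $e_{x,r}$ on $\g_{x,s^+:r^+}$ then gives both $kG_{x,r^+} = e_{x,r}(X+\g_{x,r^+})$ and $kG_{\theta(x),r^+} = e_{\theta(x),r}(X+\g_{\theta(x),r^+})$.

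The second required equality will drop out of the naturality from the first step. Using $\theta(X)\in\g_{x,s^+}$, $\theta(G_{\theta(x),r^+})=G_{x,r^+}$, and the fact that $k$ represents $e_{\theta(x),r}(X+\g_{\theta(x),r^+})$ in $G_{\theta(x),s^+}/G_{\theta(x),r^+}$, one computes
$$
e_{x,r}(\theta(X)+\g_{x,r^+}) \;=\; \theta\!\left(e_{\theta(x),r}(X+\g_{\theta(x),r^+})\right) \;=\; \theta(k)\,G_{x,r^+},
$$
which is exactly the second formula in the lemma.

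The main obstacle I expect is the middle step: producing a single $k$ lying in $G_{x,s^+}\cap G_{\theta(x),s^+}$ whose class modulo $G_{x,r^+}$ equals $e_{x,r}(X+\g_{x,r^+})$. The delicate point is the Galois descent, since the root-by-root lift is naturally carried out over the splitting field $E$; care is needed to arrange the ordering of root factors and the individual exponentials so that the result is $\mathrm{Gal}(E/F)$-fixed. This reduces, however, to the same tame-descent argument that underlies the canonical definition of $e_{x,r}$ itself, so once one has the explicit root-subgroup form of $e_{x,r}$ in hand, the construction of $k$ presents no fundamentally new difficulty beyond careful bookkeeping.
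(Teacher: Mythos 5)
Your outline is sound and arrives at the right conclusion, but it follows a noticeably different path from the paper. The paper's proof is essentially a two-sentence citation: it invokes Proposition~1.6.7 of Adler's paper \cite{A} with the automorphism $\Int(g)$ there replaced by $\theta$. Adler's result concerns the \emph{mock exponential}, which is a set-theoretic lift $\g_{x,s^+}\to G_{x,s^+}$ defined directly over $F$ (via a faithful representation of $\bG$, not via a splitting field and root subgroups), is point-independent, and is shown to be $\Ad$-equivariant; the generalization to $\theta$-equivariance is immediate. Taking $k$ to be the mock exponential of $X$ then gives everything at once: $k$ automatically lies in $G_{x,s^+}\cap G_{\theta(x),s^+}$, represents both cosets, and $\theta(k)$ is the mock exponential of $\theta(X)$. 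Your route instead assembles the lift from scratch by decomposing $X$ into root components relative to a torus $\bT$ with $x,\theta(x)\in A(\bG,\bT,E)$, exponentiating factor by factor, and then descending to $F$. This works and has the pedagogical virtue of being self-contained, but you underestimate what the descent actually costs. The root-by-root product over $E$ is generally not $\mathrm{Gal}(E/F)$-fixed; what is Galois-stable is the fiber $k_E\cdot\bigl(\bG(E)_{x,r^+}\cap\bG(E)_{\theta(x),r^+}\bigr)$, and one needs the vanishing of $H^1\bigl(\mathrm{Gal}(E/F),\bG(E)_{x,r^+}\cap\bG(E)_{\theta(x),r^+}\bigr)$ (which holds because the coefficients are pro-$p$ and $E/F$ is tame) to extract a Galois-fixed representative. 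Calling this ``careful bookkeeping'' undersells it; it is the same standard but nontrivial step that Adler's $F$-rational mock exponential is specifically designed to circumvent. With that step made explicit, your proof is correct; the paper's is shorter because Adler already did the hard part.
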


\begin{proof} 
A straightforward generalization of the proof 
of Proposition 1.6.7 \cite{A} with the 
automorphism $\Int(g)$ in Adler's proof replaced by
 the involution $\theta$ gives an analogue of
Adler's result relative to $\theta$. 
The lemma then follows from the fact that Adler's map
induces the canonical map $e_{x,r}$ 
at the level of cosets in $G_{x,s^+:r^+}$.
\end{proof}

The isomorphism $e$ restricts to
an isomorphism (that we will
also denote by $e=e_{x,r}$)
between $\g_{x,r:r^+}$ and $G_{x,r:r^+}$.
Another useful restriction of $e$ is given as follows.
Suppose that $r$ and $s$ are as above,
$\bG^\prime$ is a tamely ramified
twisted Levi subgroup of $\bG$, and
$x\in \cB(\bG^\prime,F)$.
Then $e$ induces an isomorphism
between $\g^\prime_{x,s^+:r^+}$
and $G^\prime_{x,s^+:r^+}$.
Let $(G^\prime,G)_{x,(r,s^+)}$
and $(\g^\prime,\g)_{x,(r,s^+)}$
be the subgroup of $G$
and the lattice in $\g$ (respectively)
associated to
the tamely ramified twisted Levi
sequence $\vec\bG=(\bG^\prime,\bG)$
and the admissible sequence $(r,s^+)$
(as defined in Section~\ref{sec:buildings}).
Then $e$ restricts to an 
isomorphism between $(\g^\prime,\g)_{x,(r,s^+)}
/\g_{x,r^+}$ and $(G^\prime,G)_{x,(r,s^+)}/G_{x,r^+}$,
and this restriction in turn
induces the same isomorphism between 
$\g^\prime_{x,r:r^+}$ and $G^\prime_{x,r:r^+}$
that is obtained as the restriction of
the above isomorphism
between $\g^\prime_{x,s^+:r^+}$
and $G^\prime_{x,s^+:r^+}$.

Fix a character $\psi$ of $F$ that
is nontrivial on the ring of 
integers $\gO_F$ of $F$
and trivial on the maximal
ideal $\gP_F$ of $\gO_F$.

\begin{definition}\label{defrealized}
If $r>0$, $s=r/2$ and $x\in \cB(\bG,F)$
and $S$ is a subgroup of $G_{x,s^+}$
that contains $G_{x,r}$, 
let ${\mathfrak s}$ be the lattice
in $\g_{x,s^+}$ such that
${\mathfrak s}\supset \g_{x,r}$
and
$e({\mathfrak s}/\g_{x,r^+})= S/G_{x,r^+}$.
An element $X^*\in \g_{x,-r}^*$
defines a character of $S$ that is
trivial on $G_{x,r^+}$ as follows:
$$
e(Y+\g_{x,r^+})\mapsto \psi(X^*(Y)),
\qquad Y\subset {\mathfrak s}.
$$
This character of $S$ is said to be
\textit{realized}
 by the element
$X^*$ of $\g_{x,-r}^*$, or
by the coset $X^*+ {\mathfrak s}^\bullet$,
where $${\mathfrak s}^\bullet=\{\, Y^*\in
\g_{x,-r}^*\ | \ Y^*({\mathfrak s})\subset \gP_F\,\}.$$
\end{definition}
\smallskip

\newtheorem*{hypC}{Hypothesis C($\bG$)}\label{defHypC}

\begin{hypC}
Let $\phi$ be a quasicharacter of $G$ of positive
depth. 
If $r=r(\phi)$
and $x\in \cB(\bG,F)$,
then  $\phi\,|\, G_{x,(r/2)^+}$
is realized by an element of $\z^{*}_{-r}$,
where $\z^{*}$ is the dual of the
center of $\g$.
\end{hypC}

\begin{remark}\label{genhyp}  We will
often need to assume that the hypothesis is satisfied
by all of the subgroups $\bG^i$ that occur
in a twisted Levi sequence  $\vec\bG=
(\bG^0,\dots,\bG^d)$ in $\bG$.
We will say that
Hypothesis~C($\vec\bG$) is satisfied
whenever Hypothesis~C($\bG^i$) is
satisfied for each group $\bG^i$ in the
sequence.
It is clear that Hypothesis~C($\vec\bG$)
holds if and only if Hypothesis~C($\mathrm{Int}\, g(\vec\bG)$)
holds for all $g\in G$.
\end{remark}

\begin{lemma}\label{hypgl} Let $n$ be an integer
such that $n\ge 2$. Then Hypothesis~\rm{C}($\bGL_n$)
is satisfied.
\end{lemma}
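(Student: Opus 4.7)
The plan is to exploit the fact that every quasicharacter of $GL_n(F)$ factors through the determinant, reducing the hypothesis to a routine computation with the trace map. First, I would verify that $SL_n(F) = [G,G]$, where $G = GL_n(F)$: the inclusion $[G,G]\subset SL_n(F)$ is immediate from multiplicativity of $\det$, while the reverse inclusion follows from the standard perfectness of $SL_n(F)$ for $n\ge 2$ (elementary unipotents can be written as commutators of diagonal matrices with other unipotents, using that $F^\times$ contains elements with $a^2\ne 1$ since $F$ is infinite). Consequently, any quasicharacter $\phi$ of $G$ factors as $\phi = \chi\circ\det$ for a unique quasicharacter $\chi$ of $F^\times$.

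Next, I would check that $\phi$ and $\chi$ have the same depth. For any $x\in\cB(\bG,F)$ and $t>0$, the determinant surjects $G_{x,t}$ (resp.\ $G_{x,t^+}$) onto the standard Moy-Prasad filtration group of $F^\times$ at level $t$ (resp.\ $t^+$), so $\phi\,|\,G_{x,t^+}=1$ iff $\chi$ vanishes at the corresponding level. Setting $r=r(\phi)=r(\chi)>0$, a standard duality argument produces an element $c\in F$ with $v_F(c)=-r$ such that $\chi(1+u)=\psi(cu)$ for all $u\in F$ with $v_F(u)>r/2$.

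Now define $X^*\in\g^*$ by $X^*(Y)=c\cdot\tr(Y)$. Since $\tr$ is $\Ad G$-invariant, $X^*$ lies in the subspace $\z^*\subset\g^*$ of co-adjoint invariants (for $\bG=\bGL_n$, $\z^*$ is exactly the line $F\cdot\tr$). Moreover, since $\tr(\g_{x,r^+})\subset \gP_F^{r^+}$ and $v_F(c)=-r$, we have $X^*(\g_{x,r^+})\subset \gP_F$, placing $X^*$ in $\z^*_{x,-r}\subset\z^*_{-r}$. For any $X\in\g_{x,s^+}$ with $s=r/2$, I claim the Moy-Prasad isomorphism $e=e_{x,r}$ satisfies
$$
\det(e(X))\equiv 1+\tr(X)\pmod{1+\gP_F^{r^+}},
$$
because $e(X)=I+X+Z$ for some $Z\in\g_{x,r^+}\subset M_n(F)$, and every nonlinear term in the expansion of $\det(I+X+Z)$ involves at least two factors from $\g_{x,s^+}$, hence lies in $\gP_F^{r^+}$. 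Granting this, $\phi(e(X))=\chi(1+\tr(X))=\psi(c\,\tr(X))=\psi(X^*(X))$, which is exactly the assertion that $X^*$ realizes $\phi\,|\,G_{x,(r/2)^+}$.

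The main obstacle is justifying the displayed congruence $\det(e(X))\equiv 1+\tr(X)\pmod{1+\gP_F^{r^+}}$ for Yu's intrinsic Moy-Prasad isomorphism. When $F$ has characteristic zero and $x$ is hyperspecial this is immediate from the formal identity $\det\circ\exp=\exp\circ\tr$, but in general one must unravel the construction of $e$ via Adler's mock exponential (\cite{A}) or Yu's prescription implicit in Lemma~1.3 of \cite{Y}, each of which yields the required approximation $e(X)\equiv I+X\pmod{\g_{x,r^+}}$ inside $M_n(F)$; the desired congruence then follows from expanding $\det$.
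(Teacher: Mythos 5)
Your overall route is the same as the paper's: write $\phi=\chi\circ\det$, match the depths, and realize $\phi\,|\,G_{x,(r/2)^+}$ by the element $X\mapsto c\,\tr(X)$ of $\z^*_{-r}$. The genuine gap is in your justification of the key congruence $\det(e(X))\equiv 1+\tr(X)\pmod{1+\gP_F^{r+1}}$. You argue that ``every nonlinear term in the expansion of $\det(I+X+Z)$ involves at least two factors from $\g_{x,s^+}$, hence lies in $\gP_F^{r^+}$.'' This tacitly assumes that the individual matrix entries of an element of $\g_{x,s^+}$ have valuation greater than $s$, which is true at hyperspecial points but false at general points $x\in\cB(\bG,F)$: for non-special $x$ the lattice $\g_{x,s^+}$ contains matrices whose off-diagonal entries have valuation as small as roughly $s-1$, so a product of two entries need not lie in $\gP_F^{r+1}$. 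Since Hypothesis C($\bGL_n$) must be verified at \emph{every} point of the building, the step as written does not go through. The conclusion is nevertheless correct, and it can be repaired in two ways: (i) the paper's argument, which first shows that the matrix product of two elements of $\g_{x,(r/2)^+}$ lies in $\g_{x,r^+}$ (this uses that $r$ is a positive integer, proved via hyperspecial points, together with a rescaling over the tame quadratic extension $F(\sqrt\varpi)$ when $r$ is odd), and then passes to eigenvalues: $X^2\in\varpi^r\g_{x,0^+}$ forces each eigenvalue of $X$ to have valuation $>r/2$, so products of two or more eigenvalues lie in $\varpi^r\gP_E$ and $\det(1+X)\in(1+\tr(X))(1+\gP_F^{r+1})$; or (ii) conjugate $x$ into the standard apartment (harmless, since $\det$, $\tr$ and the filtrations behave equivariantly) and observe that in the Leibniz expansion the entries are multiplied along permutations, so the affine-root contributions $x_{\sigma(i)}-x_i$ telescope and each term with at least two entries has valuation at least $2s^+>r$. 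Either repair supplies exactly the uniformity in $x$ that your entry-counting argument is missing.

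Two smaller points in the same vein: you never establish that $r$ is an integer, which the paper needs both for the rescaling $\g_{x,m^+}=\varpi^m\g_{x,0^+}$ in the multiplicativity step and (implicitly) for your statement $v_F(c)=-r$; and the identification $e(X)\equiv 1+X\pmod{G_{x,r^+}}$ is asserted by appeal to Adler's mock exponential rather than derived — in the paper it is deduced from the same multiplicativity statement $\g_{x,(r/2)^+}\cdot\g_{x,(r/2)^+}\subset\g_{x,r^+}$, so once you prove that lemma you get both the description of $e$ and (via eigenvalues or telescoping) the determinant congruence.
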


\begin{proof} Let $\bG=\bGL_n$.
Let $\phi$ be a quasicharacter of $G$ of 
positive depth. Let $r$ be the depth
of $\phi$. Then $G_{x,r}\not=G_{x,r^+}$
for all $x\in \cB(\bG,F)$. If $x\in \cB(\bG,F)$
is a hyperspecial point, then $G_{x,t}= G_{x,t^+}$
whenever $t>0$ is not an integer. Hence
$r$ is a positive integer.
Let $\det:G\rightarrow F^\times$ be the determinant
map. There exists a quasicharacter $\chi$ of $F^\times$
such that $\phi=\chi\circ\det$. 
It is easy to
verify that
if $m$ is a nonnegative integer,
then $\det(G_{m^+})=\det(G_{t})=\det(G_{m+1})
=1+\gP_F^{m+1}$ for all $t\in \widetilde{\R}$
such that $m^+\le t\le m+1$.
As $\phi\,|\, G_r\not=1$ and
$\phi\,|\, G_{r^+}=1$, we must
have $\chi\,|\, 1+\gP_F^r\not=1$
and $\chi\,|\, 1+\gP_F^{r+1}=1$.
That is, $\chi$ has depth $r$.

Let $\varpi$ be a prime element in $F$.
Since we are assuming that $p$ is odd,
the extension $L=F(\sqrt\varpi)$ of
$F$ is tamely ramified. Let $m$ be an
integer. Because $\sqrt\varpi$ is a prime
element in $L$, $\bfr{g}(L)_{x,(m+(1/2))^+}
=\varpi^m\sqrt\varpi\,{\bfr g}(L)_{x,0^+}$
for  $x\in \cB(\bG,L)$. 
The product of two matrices in $\bfr{g}(L)_{x,0^+}$ 
also lies in $\bfr{g}(L)_{x,0^+}$.
It follows that if $X$, $Y\in \bfr{g}(L)_{x,(m+(1/2))^+}$,
then $XY\in \bfr{g}(L)_{x,(2m+1)^+}$. 
If $x\in \cB(\bG,F)$ and $X$, $Y\in \g_{x,(m+(1/2))^+}$,
then, since $L$ is
tamely ramified over $F$, we have
$XY\in \bfr{g}_{x,(2m+1)^+}(L)\cap \g
=\g_{x,(2m+1)^+}$. This fact will be used
when
the depth $r$ of
$\phi$ is an odd integer.
When $r$ is even, we will use the
fact that if $m$ is an integer,
 $x\in \cB(\bG,F)$ and $X$, $Y\in \g_{x,m^+}=\varpi^m\g_{x,0^+}$,
then $XY\in \varpi^{2m}\g_{x,0^+}= \g_{x,(2m)^+}$.

Let $x\in \cB(\bG,F)$. As shown above, the matrix product
of two elements of $\g_{x,(r/2)^+}$ lies in $\g_{x,r^+}$.
This can be used to show that the isomorphism
$e=e_{x,r}:\g_{x,(r/2)^+:r^+}\rightarrow G_{x,(r/2)^+:r^+}$
satisfies $e(X+\g_{x,r^+})=(1+X)\,G_{x,r^+}$,
for all $X\in \g_{x,(r/2)^+}$.
Hence there exists $X^*\in \g_{x,-r}^\ast$ such that
$\chi(\det(1+X))=\psi(X^*(X))$, $X\in \g_{x,(r/2)^+}$.
To prove the lemma, we must show that $X^*$ can be chosen to lie
in $\z_{-r}^*$. Note that $\z^*$ is the
set of elements in $\g^*$ that are defined
by $X\mapsto \alpha\, \tr(X)$ for some
$\alpha\in F$, and the elements of $\z^*_{-r}$
are those for which $\alpha\in \gP_F^{-r}$.

Next, we show that if $x\in \cB(\bG,F)$,
then
$\det(1+X)\in (1+\tr(X))(1+\gP_F^{r+1})$ for 
$X\in \g_{x,(r/2)^+}$.
Let $X\in \g_{x,(r/2)^+}$.
Then $X^2
\in \g_{x,r^+}=\varpi^r\g_{x,0^+}$.
Let $E$ be an extension of $F$ that
contains the (not necessarily distinct)
 eigenvalues
$\lambda_1,\dots,\lambda_n$ of $X$.
Because $\varpi^{-r}X^2\in \g_{x,0^+}$,
each eigenvalue $\varpi^{-r}\lambda_j^2$
of $\varpi^{-r}X^2$ must lie in $\gP_E$.
It follows that, if $e$ is the ramification
degree of $E$ over $F$, then $\lambda_j\in \gP_E^{1+(er)/2}$
if $er$ is even, and $\lambda_j\in \gP_E^{(er+1)/2}$
if $er$ is odd, $j\in \{\,0,\cdots,n\,\}$.
Thus the product of two or more eigenvalues
of $X$ lies in $\varpi^r\gP_E$. Hence
\begin{equation*}
\det(1+X)=\prod_{j=1}^n (1+\lambda_j)
\in 1+\tr(X) + \varpi^{r}\gP_E=(1+\tr(X))(1+\varpi^r\gP_E).
\end{equation*}
Since $\det(1+X)$, $\tr(X)\in F$ and $\gP_E\cap F=\gP_F$,
we have $\det(1+X)\in (1+\tr(X))(1+\gP_F^{r+1})$.

Set $\ell=(r/2) + 1$ if $r$ is even and set $\ell=(r+1)/2$
if $r$ is odd. Because $\alpha+\gP_F^{r+1}
\mapsto (1+\alpha)(1+\gP_F^{r+1})$ defines an isomorphism
from $\gP_F^\ell/\gP_F^{r+1}$ to $(1+\gP_F^\ell)/(1+\gP_F^{r+1})$,
a character of the latter group has the form
$(1+\alpha)(1+\gP_F^{r+1})\mapsto \psi(\beta\alpha)$ for 
some fixed $\beta \in \gP_F^{-r}$. 

Recall that $\chi\,|\, 1+\gP_F^{r+1}=1$.
Let $\beta\in \gP_F^{-r}$ be an element that
realizes the restriction $\chi\,|\, 1+\gP_F^\ell$.
If $x\in \cB(\bG,F)$ and $X\in \g_{x,(r/2)^+}$, then
\begin{equation*}
\phi(1+X)=\chi(\det(1+X))=\chi(1+\tr(X))=\psi(\beta\,\tr(X)).
\end{equation*}
That is, the element $X\mapsto \beta\,\tr(X)$ of $\z^*_{-r}$
realizes $\phi\,|\, G_{x,(r/2)^+}$. 
\end{proof}

\begin{lemma} \label{centralrep}
Let $\phi$ be a quasicharacter of
$G$ of depth $r>0$ such that
there exist $x_j\in \cB(\bG,F)$
and $\Gamma_j\in \z_{-r}^*$
such that $\phi\,|\, G_{x_j,r}$
is realized by the coset 
$\Gamma_j+\g_{x_j,(-r)^+}^*$,
$j=1$, $2$. Then
$\Gamma_1-\Gamma_2\in \z_{(-r)^+}^*$.
\end{lemma}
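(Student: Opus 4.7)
The plan is to compare the characters that $\Gamma_1$ and $\Gamma_2$ define on the Moy--Prasad filtration of the torus $\bT$, the identity component of $\bZ$, and then to appeal to Moy--Prasad duality for $\bT$. Since $\bT$ is an $F$-torus that splits over a tamely ramified extension (inherited from $\bG$), it is a tamely ramified twisted Levi subgroup of $\bG$. By the compatibility properties recalled in Section~\ref{sec:buildings}, for any $x\in \cB(\bG,F)$ and $r>0$ the lattice $\z\cap\g_{x,r}$ and the group $T\cap G_{x,r}$ do not depend on $x$; write $\z_r$ and $T_r$ for their common values. Moreover, the restriction of the Moy--Prasad isomorphism $e_{x_j,r}:\g_{x_j,r:r^+}\to G_{x_j,r:r^+}$ to the subquotient $\z_r/\z_{r^+}$ coincides with the intrinsic Moy--Prasad isomorphism $\bar e:\z_{r:r^+}\to T_{r:r^+}$ of $\bT$, and in particular is independent of $x_j$.

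Next I would restrict the realization hypothesis $\phi(e_{x_j,r}(X))=\psi(\Gamma_j(X))$ (valid for all $X\in \g_{x_j,r}$) to $X\in \z_r$. Modulo $T_{r^+}$ one has $e_{x_j,r}(X)=\bar e(X)$ independently of $j$; and since $\phi$ has depth $r$, it is trivial on $T_{r^+}\subset G_{x_j,r^+}$, so $\phi(e_{x_j,r}(X))=\phi(\bar e(X))$ depends only on $X$. Equating the two expressions gives $\psi(\Gamma_1(X))=\psi(\Gamma_2(X))$ for every $X\in \z_r$, that is,
\[(\Gamma_1-\Gamma_2)(\z_r)\subset\ker\psi.\]
Because $\Gamma_1-\Gamma_2$ is $F$-linear and $\z_r$ is an $\gO_F$-lattice in $\z$, the image is a finitely generated $\gO_F$-submodule of $F$, hence a fractional ideal. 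The hypotheses $\psi\,|\,\gP_F=1$ and $\psi\,|\,\gO_F\neq 1$ force every fractional ideal contained in $\ker\psi$ to lie in $\gP_F$, so $(\Gamma_1-\Gamma_2)(\z_r)\subset\gP_F$.

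To conclude I would invoke Moy--Prasad duality for the torus $\bT$: the pairing
\[\z^*_{-r}/\z^*_{(-r)^+}\ \times\ \z_r/\z_{r^+}\ \to\ F/\gP_F,\qquad (\lambda,X)\mapsto \lambda(X)\bmod\gP_F,\]
is well defined and non-degenerate, and the kernel on the $\z^*_{-r}$ side is exactly $\z^*_{(-r)^+}$ (every fractional-ideal-valued $\lambda|\z_r$ that lies in $\gP_F$ is the restriction of some element of $\z^*_{s}$ for $s>-r$, thanks to the local discreteness of the jumps of the torus filtration). The previous step places $\Gamma_1-\Gamma_2$ in that kernel, so $\Gamma_1-\Gamma_2\in \z^*_{(-r)^+}$, as desired. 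The one point requiring care is the identification of $e_{x_j,r}\,|\,\z_r/\z_{r^+}$ with the intrinsic torus isomorphism $\bar e$; this is the functoriality of the Moy--Prasad isomorphism with respect to tamely ramified twisted Levi subgroups, and it is precisely what makes the comparison between $j=1$ and $j=2$ meaningful.
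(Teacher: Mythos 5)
Your strategy -- restrict both realizations to the centre and finish with lattice duality for the torus filtration -- is genuinely different from the paper's, and the duality half is fine: if $(\Gamma_1-\Gamma_2)(\z\cap\g_{x,r})\subset\gP_F$, then discreteness of the jumps of the filtration of $\z$ does give $\Gamma_1-\Gamma_2\in\z^*_{(-r)^+}$. The gap is in the step where you equate $\phi\bigl(e_{x_1,r}(X+\g_{x_1,r^+})\bigr)$ with $\phi\bigl(e_{x_2,r}(X+\g_{x_2,r^+})\bigr)$ for central $X$. This needs the two Moy--Prasad isomorphisms $e_{x_1,r}$ and $e_{x_2,r}$, taken at two \emph{different} points of $\cB(\bG,F)$, to induce the same map on the central piece $ (\z\cap\g_{x,r})/(\z\cap\g_{x,r^+})\rightarrow T_{r}/T_{r^+}$ (in your notation). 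You justify this by appealing to "functoriality of the Moy--Prasad isomorphism with respect to tamely ramified twisted Levi subgroups," but that is not the right statement: the functoriality recalled in the paper concerns restricting $e_{x,r}$ to a twisted Levi subgroup at one \emph{fixed} point $x$, and in any case $\bT=\bZ^{\circ}$ is not a twisted Levi subgroup of $\bG$ (twisted Levis contain maximal tori). Nothing stated in the paper compares $e_{x_1,r}$ with $e_{x_2,r}$; since the entire content of the lemma is a comparison across two points, your proof lodges exactly that content in an unproved identification. The fact you need is true, but establishing it requires opening up Yu's/Adler's construction of $e$ (choose a maximal $F$-split torus whose apartment contains both $x_1$ and $x_2$, pass to its centralizer $\bM$, for which $M_{x_1,t}=M_{x_2,t}$, and check that on the torus part the isomorphism is the canonical one, independent of the point) -- precisely the black box the paper avoids opening.

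For contrast, the paper handles the cross-point comparison by a representation-theoretic citation rather than by properties of $e$: since $\phi\,|\,G_{x_j,r}$, realized by $\Gamma_j+\g^*_{x_j,(-r)^+}$, is an unrefined minimal $K$-type of $\phi$ for $j=1,2$, the associativity of unrefined minimal $K$-types (Moy--Prasad) yields $\Ad^*G\,(\Gamma_1+\g^*_{x_1,(-r)^+})\cap(\Gamma_2+\g^*_{x_2,(-r)^+})\ne\emptyset$; then $\Ad^*$-invariance of the central element $\Gamma_1$ and the observation that a coset of $\g^*_{x,(-r)^+}$ meeting $\g^*_{(-r)^+}$ lies in it give $\Gamma_2-\Gamma_1\in\z^*\cap\g^*_{(-r)^+}=\z^*_{(-r)^+}$. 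To salvage your route you must either prove the point-independence of $e$ on the central filtration from the construction, or replace that step by an appeal of this kind.
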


\begin{proof}
Because $\phi\,|\, G_{x_j,r}$ is an
unrefined minimal $K$-type of $\phi$,
$j=1$, $2$, associativity properties of
unrefined minimal $K$-types (\cite{MP1,MP2})
imply that
$$
\Ad^* G\,(\Gamma_1+\g_{x_1,(-r)^+}^*)
\cap (\Gamma_2 + \g_{x_2,(-r)^+}^*)
\not=\emptyset.
$$
Since $\Ad^* g(\Gamma_1)=\Gamma_1$ for
all $g\in G$, we have
$$
\g_{(-r)^+}^*\cap (\Gamma_2-\Gamma_1 + \g_{x_2,(-r)^+}^*)
\not=\emptyset.
$$
Note that
$\Gamma_2-\Gamma_1\in \z_{-r}^*\subset \g_{x_2,-r}^*$.
If $x\in \cB(\bG,F)$ and $t\in \R$,
then any coset in $\g_{x,t}^*/\g_{x,t^+}^*$
that intersects $\g_{t^+}^*$ must lie
inside $\g_{t^+}^*$. 
Hence $\Gamma_2-\Gamma_1\in \z^*\cap \g_{(-r)^+}^*
=\z_{(-r)^+}^*$.
\end{proof}

\begin{lemma} \label{depth} 
Let $\bG'$
be a tamely ramified twisted Levi subgroup
of $\bG$ and let $\phi$ be a quasicharacter
of $G$ of positive depth $r>0$. Assume that
there exists $x\in \cB(\bG^\prime,F)$ such that $\phi\,|\, G_{x,r}$
is realized by a coset $\Gamma + \g_{x,(-r)^+}^*$
with $\Gamma\in \z_{-r}^*$. Then
$r(\phi\, |\, G') = r(\phi)$.
\end{lemma}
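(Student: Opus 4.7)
The plan is to establish both inequalities $r(\phi\,|\,G')\le r$ and $r(\phi\,|\,G')\ge r$.

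For the upper bound: since $\bG'$ is a tamely ramified twisted Levi subgroup and $r>0$, property~(5) of the Moy-Prasad filtrations gives $G'_{y,r^+}=G_{y,r^+}\cap G'\subset G_{y,r^+}$ for every $y\in \cB(\bG',F)$. As $\phi\,|\,G_{r^+}=1$ by definition of $r=r(\phi)$, it follows that $\phi\,|\,G'_{r^+}=1$, hence $r(\phi\,|\,G')\le r$.

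For the lower bound, I will show that already $\phi\,|\,G'_{x,r}$ is nontrivial at the specific point $x$ in the hypothesis. The compatibility of the Moy-Prasad isomorphism $e_{x,r}$ with twisted Levi structure (Section~\ref{sec:buildings}) restricts it to an isomorphism $\g'_{x,r:r^+}\cong G'_{x,r:r^+}$, so $\phi\,|\,G'_{x,r}$ is realized by the restriction of $\Gamma$ to $\g'_{x,r}$. Nontriviality thus reduces to showing $\Gamma(\g'_{x,r})\not\subset \gP_F$.

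To rule out $\Gamma(\g'_{x,r})\subset \gP_F$, the plan is to exploit that $\Gamma\in \z^*$ is $\Ad^* G$-invariant (under the identification of $\z^*$ with the $\Ad^* G$-invariants in $\g^*$) and hence annihilates $[\g,\g]$. Granting the decomposition $\g_{x,r}=\z_{x,r}\oplus(\g_{x,r}\cap [\g,\g])$ (which follows from the root-space decomposition at a maximal $E$-split $F$-torus of $\bG$ whose apartment contains $x$, splitting $\bfr t$ into its central and derived parts using the tame hypothesis), one obtains $\Gamma(\g_{x,r})=\Gamma(\z_{x,r})$. Since $\phi\,|\,G_{x,r}$ is nontrivial (as $r=r(\phi)$), $\Gamma(\g_{x,r})\not\subset \gP_F$, whence $\Gamma(\z_{x,r})\not\subset \gP_F$. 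Every twisted Levi subgroup contains $\bZ(\bG)$, so $\z\subset \z'\subset \g'$; consequently $\z_{x,r}\subset \g'_{x,r}$, and $\Gamma(\g'_{x,r})\supset \Gamma(\z_{x,r})\not\subset \gP_F$, as desired.

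The main obstacle is the filtration-compatible decomposition $\g_{x,r}=\z_{x,r}\oplus(\g_{x,r}\cap [\g,\g])$, equivalently the fact that the canonical projection $\g\to \g/[\g,\g]$ (identified with $\z$ via the reductive structure of $\bG$) carries $\g_{x,r}$ onto $\z_{x,r}$; this is a standard consequence of the tame structure of $\bG$ but merits careful verification via the root-space setup recalled in Section~\ref{sec:buildings}.
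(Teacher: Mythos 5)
Your upper bound is fine and matches the paper's. The lower bound, however, rests on a claim that is false in general, namely the lattice decomposition $\g_{x,r}=\z_{x,r}\oplus(\g_{x,r}\cap [\g,\g])$. The tameness of $\bG$ controls the splitting field but says nothing about the isogeny $\bZ^\circ\times(\bT\cap\bG_{\mathrm{der}})\to\bT$; when $p$ divides the order of its kernel, the filtration lattices do not decompose. Concretely, take $\bG=\bGL_n$ with $p\mid n$, $x$ hyperspecial, $r$ a positive integer. The $\z$-component of $\varpi^r E_{11}\in \g_{x,r}$ under the $F$-linear splitting $\g=\z\oplus[\g,\g]$ is $\tfrac{1}{n}\varpi^r I_n\notin\z_{x,r}=\gP_F^r I_n$. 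Worse, the intermediate conclusion you want is itself wrong here: writing $\Gamma=\alpha\,\mathrm{tr}$ with $v_F(\alpha)=-r$ (as in the proof of Lemma~\ref{hypgl}), one has $\Gamma(\g_{x,r})\not\subset\gP_F$ but $\Gamma(\z_{x,r})=n\alpha\,\gP_F^r\subset\gP_F^{v_F(n)}\subset\gP_F$. So the chain $\Gamma(\g_{x,r})=\Gamma(\z_{x,r})\not\subset\gP_F$ breaks at its first equality, and $\z_{x,r}\subset\g'_{x,r}$ no longer yields anything. (This is not an exotic case: Lemma~\ref{hypgl} verifies Hypothesis~C($\bGL_n$) with no coprimality assumption on $n$.)

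The paper's proof sidesteps projections onto the center entirely and argues on the dual side with \emph{intersections}, which are robust. It observes that $\phi\,|\,G'_{x,r}$ is realized by $\Gamma+\g^{\prime,*}_{x,(-r)^+}$, so triviality of $\phi\,|\,G'_{x,r}$ would force $\Gamma\in\g^{\prime,*}_{x,(-r)^+}\subset\g^{\prime,*}_{(-r)^+}$; but $\Gamma\in\z^*_{-r}\setminus\z^*_{(-r)^+}$ (nontriviality of $\phi\,|\,G_{x,r}$) and $\z^*\cap\g^{\prime,*}_{(-r)^+}\subset\z^*_{(-r)^+}$ (the same coset-compatibility fact invoked in the proof of Lemma~\ref{centralrep}), a contradiction. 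The moral is that $\z^*\cap\g^{\prime,*}_{x,t}=\z^*_t$ is true and usable, whereas the projection $\g_{x,r}\to\z$ need not land in $\z_{x,r}$. To repair your proof you should replace the projection step with a direct argument that $\Gamma\notin\g^{\prime,*}_{x,(-r)^+}$, which is essentially the paper's route.
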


\begin{proof} Note that $\Gamma\notin \z_{(-r)^+}^*$, because
$\phi\,|\, G_{x,r}$ is nontrivial. Now $\phi\,|\, G_{x,r}^\prime$
is realized by the coset $\Gamma+ \g_{x,(-r)^+}^{\prime, *}$.
Since $G_{x,r^+}^\prime=G_{x,r^+}\cap G^\prime$, we have
$\phi\,|\, G_{x,r^+}^\prime=1$. If $\phi\,|\, G_{x,r}^\prime
=1$, then $\Gamma\in \g_{x,(-r)^+}^{\prime, *}\subset
\g_{(-r)^+}^{\prime, *}$. But, as noted above,
$\Gamma\in \z_{-r}^*$ and $\Gamma\notin\z_{(-r)^+}^*$. Thus we have
$\Gamma\notin \g_{(-r)^+}^{\prime, *}$, which implies that
$\phi\,|\, G_{x,r}^\prime\not=1$.
\end{proof}

\chapter{Yu's construction of tame supercuspidal representations}

\section{Cuspidal $G$-data}
\label{sec:notations}

This section is largely a compendium of notations and definitions
from \cite{Y},
with some modifications. In particular, we state the
definitions of ``cuspidal $G$-datum,''
``generic element'' and ``generic quasicharacter,'' and we
also describe certain open subgroups of $G$ that occur in
the construction of tame supercuspidal representations.
The reader may find it necessary to refer back to Section~\ref{sec:buildings} 
for some of the notation
used here.
It is important to note the slight differences between our definitions
and notations and those in \cite{Y}. (See, for example,
Remarks~\ref{reminddata} and \ref{misstatement}.)
Otherwise, the reader is 
encouraged to advance to the next section as quickly as possible.
Yu's construction will be discussed in detail in 
Section~\ref{sec:construction}.

Yu's construction begins with triples of the form $(\vec\bG,\pi_{-1},\vec\phi)$, 
where $\vec\bG=(\bG^0,\dots,\bG^d)$ 
is a tamely
ramified twisted Levi sequence in $\bG$ (see Section~\ref{sec:buildings}
for the definition), $\pi_{-1}$ is an irreducible supercuspidal representation
of $G^0=\bG^0(F)$ of depth zero, and $\vec\phi=(\phi_0,\dots,\phi_d)$
is a sequence of quasicharacters (of $G^0,\dots,G^d$, respectively). 
The main result of \cite{Y} shows that
if the triple $(\vec\bG,\pi_{-1},\vec\phi)$ satisfies certain conditions,
then a sequence $\vec\pi=(\pi_0,\dots,\pi_d)$
of irreducible supercuspidal representations of $G^0,\dots,G^d$, respectively,
can be constructed from the triple.
(The reason we use the notation $\pi_{-1}$, rather than
Yu's notation $\pi_0$, is explained in Remark \ref{misstatement}
below.)

In fact, the actual construction starts 
with  5-tuples
$(\vec\bG, y,\vec r,\rho,\vec\phi)$ (as defined below).
As explained in \cite{Y},
any such $5$-tuple determines a triple, although a triple
can be determined by different $5$-tuples, due to the
fact that there are usually several possible 
choices for $y$ and $\rho$.
If two $5$-tuples determine the same triple,
then the $i$th representations in the two sequences
of supercuspidal representations are equivalent
representations of $G^i$,
for all $i\in \{\, 0,\dots,d\,\}$.

A $5$-tuple $(\vec\bG, y, \vec r,\rho,\vec\phi)$ satisfying
the following conditions will be called a \textit{cuspidal
$G$-datum}:\label{Dconditions}

\begin{itemize}
\item[\textbf{D1.}] $\vec{\bG}$ is a tamely ramified twisted Levi sequence $\vec{\bG}
= (\bG^0, \ldots, \bG^d)$ in $\bG$  and $\bZ^0/\bZ$
is $F$-anisotropic, where $\bZ^0$ and $\bZ$ are the centers of 
$\bG^0$ and $\bG=\bG^d$, respectively.

\item[\textbf{D2.}] $y$ is a point in $A(\bG,\bT,F)$, where $\bT$ is
a tame maximal $F$-torus of $\bG^0$ and $E$ is a Galois tamely
ramified extension of $F$ over which $\bT$ (hence $\vec\bG$)
splits.
(Recall from Section~\ref{sec:buildings}
that $A(\bG,\bT,E)$ denotes the apartment in $\cB(\bG,E)$
corresponding to $\bT$ and $A(\bG,\bT,F)=A(\bG,\bT,E)\cap \cB(\bG,F)$.)

\item[\textbf{D3.}] $\vec{r} = (r_0, \ldots, r_d)$ is a sequence
of real numbers satisfying
$0 < r_0 < r_1 < \ldots < r_{d-1} \leq r_d$, if $d > 0$, and $0 \leq r_0$
if $d = 0$.

\item[\textbf{D4.}] $\rho$ is an irreducible representation of the stabilizer
$K^0 = G^0_{[y]}$ of $[y]$ in $G^0$ such that
$\rho\,|\, {G^0_{y,0^+}}$ is $1$-isotypic and
the compactly induced representation $\pi_{-1} = \ind_{K^0}^{G^0} \rho$
is irreducible (hence supercuspidal). Here, [y] denotes the
image of y in the reduced building of $G$.

\item[\textbf{D5.}] $\vec\phi = (\phi_0,\dots ,\phi_d)$ is a 
sequence of quasicharacters, where $\phi_i$ is a quasicharacter of $G^i$.  
We assume that $\phi_d =1$ if $r_d=r_{d-1}$ (with $r_{-1}$ defined to be 0),
 and in all other cases if $i\in \{\, 0,\dots,d\,\}$ 
then $\phi_i$ is trivial on $G^i_{y,r_i^+}$ 
but nontrivial on $G^i_{y,r_i}$.
\end{itemize}

\begin{remark}\label{reminddata}
Conditions \textbf{D1}--\textbf{D4} are identical to the corresponding
 conditions in \cite{Y}, 
except that what we call $\pi_{-1}$ is called $\pi_0$ in \cite{Y}.  
It is unclear from the statement of Yu's Condition \textbf{D5} how the condition 
should be interpreted when $d=0$. This is why we have modified his statement.
Note that Yu often suppresses the subscript $y$, and, for example, writes
$G_{r}$ in place of $G_{y,r}$. Since it has lately become the
convention to reserve the notation $G_r$ for $\bigcup_{x\in \cB(\bG,F)} G_{x,r}$,
we do not suppress the subscript $y$ in our notation.
\end{remark}

\begin{remark}\label{misstatement}
The reason for our use of the notation $\pi_{-1}$ involves a misstatement in \cite{Y}
 which is easily fixed.  Looking in \cite{Y} at the statements of Conditions 
\textbf{D3} and \textbf{D4} as well as Remark 3.6, one finds that:
\begin{enumerate}
\item $\pi_0 = \ind_{K^0}^{G^0}(\rho)$,\item for general $i$, the number $r_i$
 is the depth, in the sense of
Moy and G.\ Prasad (\cite{MP1}), of the representation $\pi_i$,
\item when $d=0$, the constant $r_0$ is restricted by the condition $0\le r_0$.
\end{enumerate}

\noindent Conditions (1) and (2) imply that $r_0=0$.  Of course, this is 
consistent with Condition (3), but there is a good reason why Yu requires 
$r_0\ge 0$ in his Condition \textbf{D3}.  It seems that Yu intended to define 
$$\pi_0 = \ind_{K^0}^{G^0}(\rho \otimes (\phi_0\,|\,K^0)),$$ since then if $r_0$ 
is the depth of $\pi_0$ we have the possibility of positive values of $r_0$.  
More importantly, the latter definition of $\pi_0$ is convenient in inductive arguments.  
By letting $\pi_{-1} = \ind_{K^0}^{G^0}(\rho)$ and $r_{-1}=0$, the number $r_i$ coincides 
with the depth of $\pi_i$, even when $i=-1$. 
\end{remark}

\begin{remark}\label{reduced} 
As in Section~\ref{sec:buildings}, $\bZ^i$ denotes 
the center of $\bG^i$, for $i\in \{\, 0,\dots,d\,\}$,
and $\bZ=\bZ^d$. Note that the Condition \textbf{D1}
guarantees that $X_*(\bZ^i,F)\otimes \R=X_*(\bZ,F)\otimes \R$,
for $i\in \{\, 0,\dots,d\,\}$
and $\bZ=\bZ^d$. As mentioned in Section~\ref{sec:buildings},
we may regard $\cB(\bG^i,F)$ as a subset of $\cB(\bG,F)$.
Hence, viewing the reduced building $\cB_{\text{red}}(\bG,F)$ as the 
set of $(X_*(\bZ,F)\otimes \R)$-orbits
in $\cB(\bG,F)$, we see that we can embed  the
reduced building $\cB_{\text{red}}(\bG^i,F)$ in $\cB_{\text{red}}(\bG,F)$,
thus identifying the images of $y$ in $\cB_{\text{red}}(\bG^i,F)$
and  $\cB_{\text{red}}(\bG,F)$. (Note that we do not have 
analogous embeddings for the reduced buildings over $E$.)
\end{remark}

\begin{remark}\label{vertex} 
The point $[y]$, viewed as a point in $\cB_{\text{red}}(\bG^0,F)$ is a vertex, 
according to Proposition~6.8 of \cite{MP2}, and thus $G^0_{y,0}$ is a maximal parahoric 
subgroup of $G^0$.  The group $G^0_{[y]}$ is the normalizer of  $G^0_{y,0}$ 
in $G^0$, according to Lemma 3.3 (i) in \cite{Y}. Note also that $G^0_{y,0}$ has 
finite index in the isotropy group $G^0_y$ of $y$.
\end{remark}

As indicated in Section~\ref{sec:hypotheses},
it follows from results of DeBacker \cite{D} that
if $i\in \{\,0,\dots,d-1\,\}$, or if $i=d$ and
$\phi_d$ is nontrivial, then
$r_i$ is the depth
of $\phi_i$, in the sense of Moy and Prasad.
When $\phi_d$ is trivial, $r_d=r_{d-1}$
is the depth of $\phi_{d-1}$.
Consequently the vector $\vec r$ which appears in the
$5$-tuple $(\vec\bG,y,\vec r,\rho, \vec\phi)$ is redundant, 
since it is completely
 determined by $\vec\phi$.
For this reason, henceforth we will suppress
the notation $\vec r$, and work with $4$-tuples
$(\vec\bG,y,\rho,\vec\phi)$.

\begin{definition}\label{definddat}
A 4-tuple $\Psi=(\vec\bG,y,\rho,\vec{\phi})$ is an \textit{extended cuspidal $G$-datum}
 if it satisfies Conditions \textbf{D1}--\textbf{D5}.
Given an extended cuspidal $G$-datum $(\vec\bG,y,\rho,\vec\phi)$, if $\pi_{-1}$ is as in
Condition \textbf{D4}, the triple 
$(\vec\bG,\pi_{-1},\vec\phi)$ is called a \textit{reduced cuspidal $G$-datum}.  
The terminology \textit{cuspidal $G$-datum} is used to ambiguously refer to either 
an extended or reduced cuspidal $G$-datum.  The number $d$ is called the \textit{degree} 
of the $G$-datum.
\end{definition}

In \cite{Y}, a cuspidal datum is simply referred to as a datum.
However, if we drop the condition that $\bZ^0/\bZ$ be $F$-anisotropic
and make some modifications to Condition \textbf{D5}, we can
define more general data that can be used to construct
certain irreducible representations of open compact modulo
center subgroups of $G$. These representations 
are expected to play a role in parametrizing
nonsupercuspidal admissible representations of $G$.
The cuspidal $G$-data are precisely those
those $G$-data which
give rise to supercuspidal representations.

Yu gives some additional conditions on cuspidal
$G$-data that are sufficient for the data to
yield supercuspidal representations.
Before stating these conditions,
we define certain subgroups of $G$ that are mentioned in
those conditions and that are used in the actual construction.

Fix an extended cuspidal $G$-datum $\Psi=(\vec\bG,y,\rho,\vec\phi)$.
Set $s_i=r_i/2$, for $i\in \{\, 0,\dots,d-1\,\}$.
Let $\bT$ and $E$ be as in \textbf{D2}.
Define
\begin{equation*}
\begin{split}
K^0 &= G_{[y]}^0,\ \ \ \ K^0_+=G_{y,0^+}^0,\\
K^{i+1} &= K^0G^1_{y,s_0}\cdots G^{i+1}_{y,s_i},\ \ \ \ i\in \{\, 0,\dots,d-1\,\},\\
K^{i+1}_+ &= K^0_+G^1_{y,s_0^+}\cdots G^{i+1}_{y,s_i^+},
\ \ \ \ i\in \{\, 0,\dots,d-1\,\}.
\end{split}
\end{equation*}
Note that if $i\in \{\,0,\dots,d-1\,\}$, then $\vec\bG^{(i+1)}=(\bG^0,\dots,
\bG^{i+1})$ is a tamely ramified twisted Levi sequence
that splits over $E$, $(0^+,s_0,\dots,s_{i})$
and $(0^+,s_0^+,\dots,s_{i}^+)$ are admissible sequences
(as defined in Section~\ref{sec:buildings}),
and
\begin{equation*}
K^{i+1}=K^0\vec G^{(i+1)}_{y,(0^+,s_0,\dots,s_{i})}
\ \ \ \ \text{and}\ \ \ \ K^{i+1}_+=\vec G^{(i+1)}_{y,(0^+,s_0^+,
\dots,s_{i}^+)}.
\end{equation*}
Let $K=K^d$ and $K_+=K^d_+$. If we wish to emphasize
the dependence on $\Psi$, we write $K=K(\Psi)$
and $K_+=K_+(\Psi)$. 

Let $\Phi_i=\Phi(\bG^i,\bT)$ be the roots of $\bT$ in
$\bG^i$, for $i\in \{\, 0,\dots,d\,\}$. If $i\in \{\, 0,\dots,d-1\,\}$,
the group $J^{i+1}(E)$ is defined to be the
compact open subgroup of $\bG^{i+1}(E)$ generated by 
$\bT(E)_{r_i}$ and the subgroups $\bGU_a(E)_{y,r_i}$, with 
$a\in \Phi_i$,
 and
$\bGU_a(E)_{y,s_i}$, with $a\in
\Phi_{i+1}\setminus\Phi_i$.  
The
group $J^{i+1}_+(E)$ is defined similarly, with
$s_i$ replaced by $s_i^+$.  Let $J^{i+1}=J^{i+1}(E)\cap G^{i+1}$
and $J_+^{i+1}=J^{i+1}_+(E)\cap G^{i+1}$.
The pair $(\bG^i,\bG^{i+1})$ is a tamely ramified Levi
sequence that splits over $E$ and $(r_i,s_i)$ and
$(r_i,s_i^+)$ are admissible sequences. We have
\begin{equation*}
J^{i+1}=(G^i,G^{i+1})_{y,(r_i,s_i)}\ \ \ \ \text{and}
\ \ \ \ J^{i+1}_+=(G^i,G^{i+1})_{y,(r_i,s_i^+)}.
\end{equation*}
Because $G_{y,s_i}^iJ^{i+1}=G^{i+1}_{y,s_i}$
and $G_{y,s_i^+}^iJ_+^{i+1}=G^{i+1}_{y,s_i^+}$,
we have
\begin{equation*}
K^{i+1}= K^iJ^{i+1}= K^iG^{i+1}_{y,s_i}\ \ \ \ 
\text{and}\ \ \ \ 
K^{i+1}_+=K^i_+ J^{i+1}_+ = K^i_+ G^{i+1}_{y,s_i^+},
\ \ \ i\in \{\, 0,\dots,d-1\,\}.
\end{equation*}

In Proposition~4.6 of \cite{Y}, Yu shows that if $\Psi$ is such that a
set of three conditions \textbf{SC1}${}_i$--\textbf{SC3}${}_i$
is satisfied for all $i$ in $\{\, 0,\dots,d-1\,\}$, then $\Psi$ gives
rise to a sequence $\vec\pi=(\pi_0,\dots,\pi_d)$,
where $\pi_i$ is an irreducible supercuspidal representation
of $G^i$ that is compactly induced from a smooth representation
of $K^i$, for $i\in \{\, 0,\dots,d\,\}$.
Fix $i\in \{0,\dots ,d-1\}$. Let $G_{y,s_i^+:r_i^+}^i=G_{y,s_i^+}^i/G_{y,r_i^+}$
and $G_{y,s_i^+:r_i^+}=G_{y,s_i^+}/G_{y,r_i^+}$.
Because $\phi_i$ is trivial on $G_{y,r_i^+}^i$,
the restriction $\phi_i\,|\, G_{y,s_i^+}^i$ factors to
a character of $G_{y,s_i^+:r_i^+}^i$.
 In the beginning of Section~ 4 of \cite{Y},
Yu describes  a natural inflation process\label{inflationlabel}
which we use to define
$$\inf\nolimits^{G_{y,s_i^+}}_{G^i_{y,s_i^+}}(
\phi_i )=
\inf\nolimits_{G^i_{y,s_i^+:r_i^+}}^{
G_{y,s_i^+:r_i^+}}(\phi_i\,|\, G^i_{y,s_i^+}).$$  
Next, take $\hat\phi_i$ to be the quasicharacter
of $K^0G^i_{y,0}G_{y,s_i^+}$ that agrees with
$\phi_i$ on $K^0G^i_{y,0}$ and agrees with
$\inf\nolimits^{G_{y,s_i^+}}_{G^i_{y,s_i^+}}(
\phi_i )$ on $G_{y,s_i^+}$. The above inflation process
is defined in such a way that the restriction
$\hat\phi_i\,|\, (G^i,G)_{y,(r_i^+,s_i^+)}$ is
trivial. Since $G_{y,s_i^+}=G_{y,s_i^+}^i(G^i,G)_{y,(r_i^+,
s_i^+)}$, we see that $\hat\phi_i$ may also be
described as the
quasicharacter of $K^0G_{y,0}^iG_{y,s_i^+}$
that agrees with $\phi_i$ on $K^0 G_{y,0}^i$
and is trivial on $(G^i,G)_{y,(r_i^+,s_i^+)}$.

The following conditions on $\phi_i$  are equivalent to the corresponding 
conditions in \cite{Y} though they are stated slightly differently:

\begin{itemize}\label{defSCcond}
\item[\textbf{SC1}${}_i$.] If $g\in G^{i+1}$ and $\hat\phi_i (g^{-1} jg) = 
\hat\phi_i (j)$  for all $j\in gJ^{i+1}_+ g^{-1} \cap J^{i+1}_+$ (in other words,
 $g$ intertwines $\hat\phi_i\, |\,J^{i+1}_+$) then $g\in J^{i+1}G^i J^{i+1}$.

\item[\textbf{SC2}${}_i$.] There is an irreducible representation $\tilde\phi_i$ of 
$K^i\ltimes J^{i+1}$ such that (i) the restriction of $\tilde\phi_i$ to $J^{i+1}_+ =
 1\ltimes J^{i+1}_+$ is $(\hat\phi_i\, |\, J^{i+1}_+)$-isotypic; and (ii) the restriction 
of $\tilde\phi_i$ to $K^i_+\ltimes 1$ is 1-isotypic.

\item[\textbf{SC3}${}_i$.] Given $\tilde\phi_i$ as in \textbf{SC2}${}_i$, we define a 
representation $\phi'_i$ of $K^{i+1}$ on the same space as $\tilde\phi_i$ by 
$$
\phi'_i (kj)= \phi_i (k) \tilde\phi_i (k,j),
 \quad k\in K^i,\quad j\in J^{i+1}.
$$
Let $V_i$ denote the space of $\phi'_i$ and let $\tau_i$ denote the restriction 
of $\phi'_i$ to $J^{i+1}$.    
 For all $g\in G^0$, there is a unique up to scalar multiples nonzero linear 
endomorphism $\Lambda  : V_i\to V_i$ such that 
 $$
\Lambda (\tau_i (g^{-1} jg) \varphi)
 = \tau_i (j) \Lambda (\varphi ),
$$ 
for all $j\in gJ^{i+1} g^{-1} \cap J^{i+1}$.  In addition, 
$\Lambda$ has the property that
$$
\Lambda (\tilde\phi_i (g^{-1} kg,1 ) \varphi) = \tilde\phi_i (k,1 ) 
\Lambda (\varphi ),
$$ 
for all $k\in gK^i g^{-1} \cap K^i$.
\end{itemize}

Yu defined a notion of genericity for quasicharacters
(see Definition~\ref{defgenchar})
and proved that if $\Psi$ is a cuspidal $G$-datum
having the property that for all $i\in \{\,0,\dots,d-1\,\}$,
 $\phi_i\,|\, G_{y,r_i}^i$
 is $G^{i+1}$-generic, then 
Conditions \textbf{SC1}${}_i$--\textbf{SC3}${}_i$ 
are satisfied for all $i\in \{\,0,\dots,d-1\,\}$.
The genericity conditions, as well as some parts of the
construction of the inducing data for supercuspidal
representations, are relative to Levi sequences
of the form $(\bG^i,\bG^{i+1})$. This amounts
to specializing to the case $d=1$. As in \cite{Y},
we use the notation $(\bG^\prime,\bG)$ in the case $d=1$.
We refer to this as the $(\bG^\prime,\bG)$
case. 
When working in this setting $\bT$ denotes a tame maximal
$F$-torus of $\bG^\prime$,  $E$ is a Galois tamely ramified
finite extension of $F$ over which $\bT$ (hence $(\bG^\prime,\bG)$)
splits, and $y\in A(\bG,\bT,F)$.
We use the notation $\bZ^\prime$, $\bfr{z}^\prime$,
$\bfr{z}^{\prime, *}$, $\z^{\prime}$ and $\z^{\prime, *}$
for the center of $\bG^\prime$,
the center of the Lie algebra $\bfr{g}^\prime$,
the dual of $\bfr{z}^\prime$, $\bfr{z}^\prime(F)$,
and $\bfr{z}^{\prime, *}(F)$, respectively.
Since we are working with cuspidal $G$-data, $\bZ^\prime/\bZ$ is $F$-anisotropic.

Recall from Section~\ref{sec:buildings} that we have fixed a valuation $v_F$ on $F$,
and we also denote its extension to any algebraic field extension of $F$ by
$v_F$.
The following conditions apply to an element $X^*\in \z^{\prime,*}_{-r}$, $r\in \R$,
 and  are used in the definition of genericity as it is stated in the 
$(\bG^\prime ,\bG)$ case: 

\begin{itemize}\label{GEconditions}
\item[\textbf{GE1.}] ${v_F}(X^*(H_a))=-r$, for all $a\in 
\Phi (\bG,\bT)\setminus\Phi (\bG^\prime,\bT)$, where $H_a = d\check a(1)$,
$\check a$ is the coroot associated to $a$.
\item[\textbf{GE2.}] Suppose $\varpi_r$ is an element of the algebraic
closure $\overline{F}$ of $F$ of valuation $r$ 
and $\widetilde{X}^*$ is the residue class of $\varpi_r X^*$ in the residue 
field of $\overline{F}$.  Then the isotropy subgroup of $\widetilde{X}^*$ 
in the Weyl group of $\Phi (\bG,\bT)$ coincides with the Weyl group 
of $\Phi (\bG^\prime,\bT)$.
\end{itemize}

\begin{remark}\label{gemotivation} For information on the action of
the Weyl group of $\Phi(\bG,\bT)$ on elements of the form
$\widetilde{X}^*$, the reader may refer to Section~8 of \cite{Y}.
Condition \textbf{GE2} is especially technical.  Fortunately, it can usually 
be ignored since, according to Lemma 8.1 in \cite{Y}, it is almost always 
implied by \textbf{GE1}.
  To appreciate its application, the reader should examine the proof of Lemma 8.3 
in \cite{Y}.  We also note that the condition does not depend on the choice of $\varpi_r$.
\end{remark}

\begin{definition}\label{defgenelt}
An element $X^*\in \z^{\prime,*}_{-r}$ is \textit{$G$-generic of depth $-r$} if it satisfies 
Conditions  \textbf{GE1} and \textbf{GE2}.
\end{definition}

\begin{remark}\label{geneltrem}
In the previous definition, our notion of depth on the Lie algebra dual is the 
opposite of Yu's.  In other words, what is depth $-r$ for us corresponds to depth 
$r$ in \cite{Y}.  Our convention appears to be more standard in the literature.
\end{remark}

As in Section~\ref{sec:hypotheses}, fix a character $\psi$ of $F$ 
that is nontrivial on the ring of integers $\gO_F$ of
$F$
and trivial on the maximal ideal $\gP_F$ of $\gO_F$.
If $y\in A(\bG,\bT,F)$ and $r>0$, the restriction
to $G^\prime_{y,r}$ of a quasicharacter $\phi$ of $G^\prime$ of depth $r$
is realized by an element $X^*\in \z^{\prime,*}_{-r}$ if
$$\phi(e(Y +\g^\prime_{y,r^+}))=\psi (X^*(Y)),
\qquad Y\in \g^\prime_{y,r}.
$$
Here $e$ is the isomorphism between
$\g_{y,r:r^+}^\prime$ and $G_{y,r:r^+}^\prime$
discussed in Section \ref{sec:hypotheses}.
We remind the reader that we are using colons, as in \cite{Y}, 
to abbreviate quotients. For example,
$G^\prime_{y,r:r^+}$ and $\g^\prime_{y,r:r^+}$ are shorthands for $G^\prime_{y,r}
/G^\prime_{y,r^+}$ and $\g^\prime_{y,r}/\g^\prime_{y,r^+}$, respectively.
Also, wherever it is convenient, we view $\z^{\prime, *}$ as the set of
$\Ad G^\prime$-fixed elements of $\g^{\prime, *}$.

\begin{definition}\label{defgenchar} Let $r\in \R$, $r>0$.
A quasicharacter $\phi$ of $G^\prime$ is said to be \textit{ $G$-generic (relative to $y$) 
of depth $r$} if $\phi$ is trivial on $G^\prime_{y,r^+}$, and nontrivial on 
$G^\prime_{y,r}$,
and there exists a $G$-generic element $X^*\in \z^{\prime,*}_{-r}$ of depth $r$ 
that realizes the restriction of $\phi$ to $G^\prime_{y,r}$.
\end{definition}

\begin{remark}\label{gencharrem}
In Remark 9.1 of \cite{Y}, it is observed that the notion of $G$-genericity for 
a quasicharacter $\phi$ of $G^\prime$ often does not depend on the choice of the point $y$.  
For example, let $\bG^\prime_{\text{der}}$ denote the derived group of $\bG^\prime$ and suppose 
$G^\prime_{y,r}= (Z^\prime)^\circ_r \bG^\prime_{\text{der}}(F)_{y,r}$.  
In this case, if $\phi$ is trivial on 
$\bG^\prime_{\text{der}}(F)$ (which is not necessarily the same as the derived group of 
$G^\prime$) 
then the notion of $G^\prime$-genericity for $\phi$ (with depth $r$) is independent of $y$.
\end{remark}

\begin{definition}\label{defgendatum}
If $\Psi= (\vec\bG, y,\rho, \vec\phi)$ satisfies Conditions \textbf{D1}--\textbf{D5}
(that is, $\Psi$ is a cuspidal $G$-datum), and if $\phi_i$ is $G^{i+1}$-generic of depth 
$r_i$ relative to $y$ for all $i\in \{ 0,\dots ,d-1\}$, then the $G$-datum $\Psi$ is 
said to be \textit{generic}.  In this case, the reduced $G$-datum $(\vec\bG, \pi_{-1},\vec\phi)$
is called \textit{generic}.
\end{definition}

It is shown in \cite{Y} that if $\Psi$ is a generic cuspidal $G$-datum,
then Conditions \textbf{SC1}${}_i$--\textbf{SC3}${}_i$ are satisfied for
all $i\in \{\, 0,\dots,d-1\,\}$. (See the beginning of Section~15 in \cite{Y}.)
The details of the construction of the
inducing data for the representations $\pi_0,\dots,\pi_d$
will be discussed in Section~\ref{sec:construction}.

\section{Compatibility with involutions}
\label{sec:compatinv}

In our applications to distinguished representations, we are usually 
provided with some involution $\theta$ of $G$, in the sense of \ref{definv}.  To 
have a working theory, the first step is to show that we can assume we are dealing 
with objects, such as cuspidal $G$-data, that are compatible with $\theta$ 
in a suitable sense.  This section gives some indication of what 
``compatibility with $\theta$'' means for certain objects involved in Yu's construction.
 
If $x\in \cB(\bG,F)$ and $\theta$ is an involution of $G$,
let $\theta([x])$ be the image of $\theta(x)$
in the reduced building $\cB_{\text{red}}(\bG,F)$. Note that this is well defined
because $\theta(\bZ)=\bZ$.
If $\vec\bG=(\bG^0,\dots,\bG^d)$ is a twisted Levi sequence in $\bG$,
then $\theta(\vec\bG)=(\theta(\bG^0),\dots,\theta(\bG^d))$
is also a twisted Levi sequence in $\bG$, and is tamely ramified
if and only if $\vec\bG$ is tamely ramified.

The following elementary fact is essentially Remark 3.5 in \cite{Y}.

\begin{lemma}\label{threefive}
Suppose $\vec\bG = (\bG^0,\dots ,\bG^d)$ is a tamely ramified twisted Levi 
sequence in $\bG$ satisfying Condition {\bf D1} and suppose 
$\vec t = (t_0,\dots ,t_d)$ is an admissible sequence.  If $y,y' \in \cB (\bG^0,F)$ 
and $[y]=[y']$ then $\vec G_{y,\vec t}= \vec G_{y',\vec t}$.  
In particular, if $\theta$ is an involution of $G$ such that $\theta (\bG^i)= \bG^i$,
 for all $i$, and $\theta ([y]) = [y]$ then the group $\vec G_{y,\vec t}$ is $\theta$-stable. 
\end{lemma}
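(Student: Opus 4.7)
The plan is to reduce the first assertion to the elementary fact that the Moy--Prasad filtration subgroups of $G^i$ depend on $y\in \cB(\bG^i,F)$ only through its image in the reduced building $\cB_{\mathrm{red}}(\bG^i,F)$, and then use Condition \textbf{D1} to transport this from each $\bG^i$ up to $\bG$. Concretely, recall from Section~\ref{sec:buildings} that $\cB(\bG^i,F)=\cB_{\mathrm{red}}(\bG^i,F)\times (X_*(\bZ^i,F)\otimes\R)$, and that by \textbf{D1} we have $X_*(\bZ^i,F)\otimes\R=X_*(\bZ,F)\otimes\R$ for every $i\in\{0,\dots,d\}$. Consequently, two points of $\cB(\bG^0,F)$ have the same image $[y]=[y']$ in $\cB_{\mathrm{red}}(\bG,F)$ if and only if $y'=y+v$ for some $v\in X_*(\bZ^i,F)\otimes\R$, and this $v$ serves simultaneously for all $i$.

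First I would establish the key technical statement: for each $i$ and each $r\ge 0$, $G^i_{y+v,r}=G^i_{y,r}$ whenever $v\in X_*(\bZ^i,F)\otimes\R$. Choose a tame maximal $F$-torus $\bT\subset \bG^0$ and a finite tamely ramified Galois extension $E/F$ over which $\vec\bG$ splits, with $y\in A(\bG,\bT,F)$. By the description of the Moy--Prasad filtrations as generated by the torus filtration $\bT(E)_r$ and the affine root subgroup filtrations $\bU_a(E)_{y,r}$ for $a\in\Phi(\bG^i,\bT)$, it suffices to observe that each root $a\in\Phi(\bG^i,\bT)$ vanishes on $X_*(\bZ^i,F)\otimes\R$, and hence $\bU_a(E)_{y+v,r}=\bU_a(E)_{y,r}$, while $\bT(E)_r$ is independent of the point in the apartment. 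The same observation shows invariance at the level of a concave function $f_{\vec r}$ or $f_{\vec t}$ built from $\vec t$, because the value of such a function on any root of $\bG^i$ does not depend on $y$ or $v$.

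Applying this for each $i$ yields $G^i_{y,t_i}=G^i_{y',t_i}$, and since $\vec G_{y,\vec t}$ is constructed from these filtration subgroups via the concave function $f_{\vec t}$ (equivalently, via the product description $G^0_{y,t_0}G^1_{y,t_1}\cdots G^d_{y,t_d}$ when the $t_i$ are strictly positive), we conclude $\vec G_{y,\vec t}=\vec G_{y',\vec t}$. The ``in particular'' assertion is then immediate: property~(1) of the Moy--Prasad filtrations in Section~\ref{sec:buildings}, applied to the restriction of $\theta$ to each $\theta$-stable group $\bG^i$, gives $\theta(G^i_{y,r})=G^i_{\theta(y),r}$, whence $\theta(\vec G_{y,\vec t})=\vec G_{\theta(y),\vec t}$. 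Since $\theta([y])=[y]$, the first part of the lemma (applied with $y'=\theta(y)$) shows $\vec G_{\theta(y),\vec t}=\vec G_{y,\vec t}$, so $\vec G_{y,\vec t}$ is $\theta$-stable.

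There is no real obstacle; the only point that demands care is verifying that \textbf{D1} is used in the right place, namely to identify the fibers of $\cB(\bG^i,F)\to\cB_{\mathrm{red}}(\bG^i,F)$ with those of $\cB(\bG,F)\to\cB_{\mathrm{red}}(\bG,F)$ uniformly in $i$, so that a single vector $v$ works across the twisted Levi sequence. Everything else is a routine unwinding of the definition of $\vec G_{y,\vec t}$ in terms of root datum filtrations.
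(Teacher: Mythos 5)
Your proof is correct and takes essentially the same approach as the paper's: the paper simply cites Remark~3.5 of \cite{Y} for the assertion that $\vec G_{y,\vec t}$ depends only on the image of $y$ in the reduced building, and then applies the $\theta$-equivariance of Moy--Prasad filtrations, whereas you unwind the definition and reprove that cited remark directly (via the vanishing of roots on $X_*(\bZ,F)\otimes\R$ and the identification of all the translation spaces via Condition \textbf{D1}). The underlying mechanism is identical, so the two proofs coincide modulo the decision to cite versus reprove.
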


\begin{proof} Our claim follows directly from Remark 3.5 in \cite{Y} and the fact 
that $\theta (\vec G_{y,\vec t}) = \theta(\vec G)_{\theta(y),\vec t}$.
\end{proof}

\begin{definition}\label{defweaksymdatum}
If $\theta$ is an involution of $G$ and   $\Psi= (\vec\bG,y,\rho ,\vec\phi)$ is a cuspidal $G$-datum then $\Psi$ is 
\textit{weakly $\theta$-symmetric} if $\theta (\vec\bG)= \vec\bG$,
 and $\phi_i\circ\theta = \phi_i^{-1}$, for all $i\in \{ 0,\dots ,d\}$,
that is, each $\phi_i$ is $\theta$-symmetric.  
If $\Psi$ is weakly $\theta$-symmetric and $\theta([y])=[y]$ then we say $\Psi$ is 
\textit{$\theta$-symmetric}.
\end{definition}

Note that in the above definition there are no conditions imposed on
the representation $\rho$.

\begin{proposition}\label{KJfactor} If $\Psi = (\vec\bG,y,\rho,\vec\phi)$ is a 
$\theta$-symmetric cuspidal $G$-datum then all of the subgroups 
of the forms $K^i$, $J^i$, $K^i_+$, $J^i_+$ and $G^i_{y,t}$ are 
$\theta$-stable,  and we have the relations 
\begin{equation*}
\begin{split}
K^{i+1,\theta} &= K^{i,\theta}J^{i+1,\theta}= 
K^{i,\theta} G^{i+1,\theta}_{y,s_i}\cr
&= K^{0,\theta}J^{1,\theta}\cdots J^{i+1,\theta}\cr
&= G^{0,\theta}_{[y]}G^{1,\theta}_{y,s_0}\cdots G^{i+1,\theta}_{y,s_i}\cr
K^{i+1,\theta}_+&= K^{i,\theta}_+ J^{i+1,\theta}_+ = K^{i,\theta}_+ G^{i+1,\theta}_{y,s_i^+}\cr
&= K^{0,\theta}_+ J^{1,\theta}_+\cdots J^{i+1,\theta}_+\cr
&= G^{0,\theta}_{y,0^+}G^{1,\theta}_{y,s_0^+}\cdots G^{i+1,\theta}_{y,s_i^+}.
\end{split}
\end{equation*}
\end{proposition}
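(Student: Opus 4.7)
The plan is to separate the argument into two independent stages: first, establishing $\theta$-stability of each subgroup in the list, and second, deducing the $\theta$-fixed-point factorizations by iterated applications of Lemma~\ref{alphafactor}.

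For $\theta$-stability, I would invoke Lemma~\ref{threefive}, which gives that each $G^i_{y,t}$ (and $G^i_{y,t^+}$) is $\theta$-stable once $\theta(\bG^i)=\bG^i$ and $\theta([y])=[y]$ are in hand. The group $K^0=G^0_{[y]}$ is $\theta$-stable because $\theta$ fixes both $\bG^0$ and $[y]$. The groups $J^{i+1}=(G^i,G^{i+1})_{y,(r_i,s_i)}$ and $J^{i+1}_+=(G^i,G^{i+1})_{y,(r_i,s_i^+)}$ are themselves of the form $\vec G_{y,\vec t}$ for the twisted Levi sequence $(\bG^i,\bG^{i+1})$ and admissible sequences $(r_i,s_i)$ and $(r_i,s_i^+)$, so Lemma~\ref{threefive} applies to them as well. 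The composite groups $K^{i+1}$ and $K^{i+1}_+$ are then $\theta$-stable as products of $\theta$-stable subgroups.

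For the factorizations, I would induct on $i$. The base case is immediate from the definitions: $K^{0,\theta}=G^{0,\theta}_{[y]}$ and $K^{0,\theta}_+=G^{0,\theta}_{y,0^+}$. For the inductive step, I would apply Lemma~\ref{alphafactor} twice, first to the decomposition $K^{i+1}=K^iJ^{i+1}$ and then to $K^{i+1}=K^iG^{i+1}_{y,s_i}$, both of which were recorded in Section~\ref{sec:notations}. Each application requires the vanishing of $H^1_\theta$ of the relevant intersection. Since $J^{i+1}\subset G^{i+1}_{y,s_i}$, both intersections $K^i\cap J^{i+1}$ and $K^i\cap G^{i+1}_{y,s_i}$ are contained in $G^i\cap G^{i+1}_{y,s_i}$, which equals $G^i_{y,s_i}$ by property~(5) of the Moy--Prasad filtrations (since $s_i>0$). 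This places the intersections inside $G^{i+1}_{y,0^+}$, and Proposition~\ref{twodivprop} supplies the required cohomology vanishing. The output of Lemma~\ref{alphafactor} is then
$$K^{i+1,\theta}=K^{i,\theta}J^{i+1,\theta}=K^{i,\theta}G^{i+1,\theta}_{y,s_i}.$$
Iterating the first identity yields the telescoping form $K^{0,\theta}J^{1,\theta}\cdots J^{i+1,\theta}$, and iterating the second, together with $K^{0,\theta}=G^{0,\theta}_{[y]}$, yields $G^{0,\theta}_{[y]}G^{1,\theta}_{y,s_0}\cdots G^{i+1,\theta}_{y,s_i}$. The plus versions are handled identically, with base case $K^{0,\theta}_+=G^{0,\theta}_{y,0^+}$ and the intersections again landing inside the positive-depth Moy--Prasad group $G^{i+1}_{y,0^+}$.

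The main obstacle I anticipate is the bookkeeping in verifying that the intersections genuinely sit inside a positive-depth Moy--Prasad group, where Proposition~\ref{twodivprop} can act. A priori $K^i$ contains $K^0=G^0_{[y]}$, which is not inside any positive-depth filtration subgroup; only after intersecting with $G^{i+1}_{y,s_i}$ (or with $J^{i+1}\subset G^{i+1}_{y,s_i}$) and applying the Moy--Prasad identity $G^i\cap G^{i+1}_{y,t}=G^i_{y,t}$ for $t>0$ is the positive-depth containment recovered. This is where the hypothesis $\theta([y])=[y]$ does essential work, since without it Lemma~\ref{threefive} would fail to supply the $\theta$-stability on which the entire argument rests.
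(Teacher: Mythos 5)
Your proof is correct and takes essentially the same approach as the paper, which simply cites Lemmas~\ref{alphafactor} and \ref{threefive} and Proposition~\ref{twodivprop}; you have supplied the details that the paper leaves to the reader. The one small point you could tighten is that the paper records $K^i\cap J^{i+1}=G^i_{y,r_i}$ explicitly in Section~\ref{sec:notations}, so the intersection is not merely contained in a positive-depth Moy--Prasad group but is one, though your weaker containment argument suffices for Proposition~\ref{twodivprop}.
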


\begin{proof} This follows from Lemmas \ref{alphafactor} and \ref{threefive} 
and Proposition \ref{twodivprop}.
\end{proof}

\section{Heisenberg $p$-groups associated to generic characters}
\label{sec:genHeis}

Our objective in this section is to apply the theory from
Section~\ref{sec:Heis} to the context that is relevant
for Yu's construction.
In this context, the Heisenberg groups are realized as quotients 
of compact open subgroups of $G$.  As in \cite{Y},  we have certain canonical special 
isomorphisms.  Involutions that stabilize the relevant compact
open subgroups of $G$ also give rise to canonical special isomorphisms 
which are especially convenient for our purposes.  
One of the main results of this section, Proposition \ref{stJpol}, says 
that the two types of special isomorphisms turn out to be identical.  
(Earlier results concerning distinguishedness that appeared in the papers 
\cite{HM2} and \cite{HM3} were obtained
without this fact, which could now be used to simplify
some parts of the proofs.)

We will work in the $(\bG^\prime,\bG)$ setting described in
Section~\ref{sec:notations}, and we will continue to
use the notation defined there. In particular, 
$\bT$ is a tame maximal $F$-torus in $\bG^\prime$, $E$ is a finite
Galois tamely ramified extension of $F$ over which $\bG^\prime$
splits, and $y$ is a fixed element of $A(\bG^\prime,\bT,F)$.
Let $\phi$ be a character of $G^\prime_{y,r:r^+}$ that is
$G$-generic (relative to $y$) of depth $r>0$. Choose a
$G$-generic element $X^*\in \z^{\prime, *}_{-r}$ that
realizes $\phi\,|\, G_{y,r}^\prime$ in the sense of 
Section~\ref{sec:hypotheses}.
If $e:\bfr{g}^\prime(E)_{y,r:r^+}\rightarrow \bG^\prime(E)_{y,r:r^+}$
is an isomorphism as in Section~\ref{sec:hypotheses} (except
here we are working over the $E$-rational points), then,
if $\psi^E$ is a character of $E$ that coincides with
$\psi$ on $F$, 
$$
\phi^E(e(Y+\bfr{g}^\prime(E)_{y,r^+}))=\psi^E(X^*(Y)),
\qquad Y\in \bfr{g}^\prime(E)_{y,r},
$$
defines a $\bG(E)$-generic character of $\bG^\prime(E)_{y,r:r^+}$
that agrees with $\phi$ on $G^\prime_{y,r:r^+}$, and which
we also view as a character of $\bG^\prime(E)_{y,r}$.

Let
$s = r/2$. Let $J(E)=(\bG^\prime,\bG)(E)_{y,(r,s)}$
and $J_+(E)=(\bG^\prime,\bG)(E)_{y,(r,s^+)}$.
These are the subgroups of $\bG(E)$ associated
to the tamely ramified twisted Levi sequence
$(\bG^\prime,\bG)$ and to the admissible
sequences $(r,s)$ and $(r,s^+)$ respectively.
The group $J(E)$ is the compact open
subgroup of $\bG(E)$ generated by $\bG^\prime(E)_{y,r}$ and 
the subgroups 
$\bGU_a(E)_{y,s}$, with $a\in \Phi\setminus\Phi^\prime$,
where $\Phi=\Phi(\bG,\bT)$ and $\Phi' = \Phi (\bG^\prime,\bT)$.
The group $J_+(E)$ is the normal subgroup of
$J(E)$ generated by $\bG^\prime_{y,r}(E)$
and the subgroups $\bGU_a(E)_{y,s^+}$, $a\in \Phi\setminus
\Phi^\prime$.

Let $\zeta^E$ be the character of $J_+(E)$ that agrees with 
$\phi^E$ on $\bG^\prime(E)_{y,r}$ and is trivial on all of the groups 
$\bGU_a(E)_{y,s^+}$ with $a\in \Phi\setminus \Phi'$.  
Then, since $J_+(E)=\bG^\prime(E)_{y,r}(\bG^\prime,\bG)(E)_{y,(r^+,s^+)}$
we see that $\zeta^E$ is determined by the property that $\zeta^E$
agrees with $\phi^E$ on $\bG^\prime(E)_{y,r}$ and is trivial
on $(\bG^\prime,\bG)(E)_{y,(r^+,s^+)}$.
Let $N(E)$ be the kernel of $\zeta^E$.
  Then $\cH(E) = J(E)/N(E)$ is a Heisenberg $p$-group with center $\cZ(E) = J_+(E)/N(E)$. 
We let $W(E) = \cH(E)/\cZ(E) = J(E)/J_+(E)$.  We also regard $\zeta^E$ as a character 
of $\cZ(E)$.  The pairing defined by 
$$
\langle u,v\rangle = \zeta^E ([u,v]) = \zeta^E(uvu^{-1}v^{-1}),\qquad u,v\in J(E),
$$
factors to a symplectic form on $W(E)$.

Next, we turn to Yu's Proposition 11.4 and its proof to construct a canonical 
split polarization on $\cH(E)$.  Choose an ordering on $\Phi$ and define $J(E)(+)$ 
to be the subgroup of $J(E)$ generated by $\bGU_a(E)_{y,s}$ for all positive roots 
$a\in \Phi\setminus\Phi'$.  
Define $J(E)(-)$ similarly using negative roots instead of positive roots.  
The group $\cH (E)(+)= J(E)(+)N(E) /N(E)$ maps injectively into $W(E)=J(E)/J_+(E)$ 
and we let $W(E)(+)$ denote the image of $\cH(E)(+)$ in $W(E)$.  
The spaces $\cH (E)(-)$ and $W (E)(-)$ are defined similarly.   Then Yu shows 
that $(\cH (E)(+), \cH (E)(-))$ is a split polarization of $\cH (E)$.   
Let $\nu_E^\bullet : \cH(E)\to W(E)^\sharp$ be the special isomorphism 
associated to this split polarization and the character 
$\zeta^E$ by Lemma \ref{sppolarspeciso}.

Now let
\begin{equation*}
\begin{split}
J&= J(E)\cap G\cr 
J_+&= J_+(E)\cap G\cr 
\zeta&=\zeta^E\,|\, J_+\cr
N&=N(E)\cap G= \ker \zeta\cr
\cH&= J/N\cr
\cZ&= J_+/N= \cZ(E)\cr
W&=J/J_+= \cH/\cZ.
\end{split}
\end{equation*}
We remark that it follows immediately from
properties of $\zeta^E$ that the character $\zeta$ of $J_+$
is determined by the two properties
$\zeta\,|\, G^\prime_{y,r}=\phi$ and $\zeta\,|\,
(G^\prime,G)_{y,(r^+,s^+)}=1$.
The symplectic form on $W(E)$ restricts to a symplectic form on $W$.  
We observe that $\cH$ is a Heisenberg $p$-subgroup of $\cH(E)$ and,
according to Lemma \ref{specisores}, the special isomorphism $\nu_E^\bullet$ 
restricts to a special isomorphism $\nu^\bullet : \cH \to W^\sharp$.
\
\begin{definition}\label{defYuspecialiso} The special isomorphism $\nu^\bullet$ will be 
referred to as \textit{Yu's special isomorphism}.
\end{definition}

\begin{remark}\label{rsiYurem}
Yu's special isomorphism $\nu^\bullet$  does not depend on the choice of 
tamely ramified maximal torus $\bT$ in $\bG^\prime$,
the splitting field $E$, or the ordering of the 
root system $\Phi (\bG,\bT)$, as long as $\bT$ is chosen so that 
$y\in A(\bG,\bT,F)$.  (See Proposition 11.4 \cite{Y}.)
\end{remark}

We are only interested in special isomorphisms that have an additional
  property which we now explain.  Let $[y]$ be the point in the reduced building of 
$G^\prime$ corresponding to $y$ and let $K^\prime$ denote the stabilizer $G^\prime_{[y]}$ 
of $[y]$ in $G^\prime$.  
Note that the fact that $Z^\prime/Z$ is compact guarantees that $K^\prime$
normalizes $J$ and $J_+$.
Let ${\rm Sp}(\cH)$ be the group of automorphisms of $\cH$ that restrict to the 
identity map on $\cZ$.
Let $f: K^\prime\to {\rm Sp}(\cH)$ be the map which comes from the action of $K^\prime$ on 
$J$ by conjugation.  

Recall from Section~\ref{sec:Heis} that $W^\sharp=W\boxtimes \cZ$.
Let $\alpha : {\rm Sp}(W) \to {\rm Sp}(W^\sharp)$ be the usual map 
$\alpha (\gamma) (w,z) = (\gamma w,z)$.

\begin{definition}\label{defrel}
A  special isomorphism $\nu$ on $\cH$ is \textit{relevant} if the mapping 
$K^\prime\to {\rm Sp}(W^\sharp): k\mapsto \nu\circ f(k)\circ \nu^{-1}$ has image in ${\rm Sp}(W)$.
\end{definition}

Now let $f' :K^\prime\to {\rm Sp}(W)$ be the map which comes from the action of $K^\prime$
 on $J$ by conjugation.  In other words, $f'(k) (h\cZ) = (f(k)(h))\cZ$, for all $k\in 
K^\prime$ and $h\in\cH$.

\begin{lemma}\label{relspeciso}
If $\nu$ is a relevant special isomorphism on $\cH$ then 
$$\nu\circ f(k)\circ \nu^{-1} (w,z) = (f'(k)w,z),$$ for all $k\in K^\prime$, 
$w\in W$ and $z\in \cZ$.  In particular, $\nu\circ f(k)\circ \nu^{-1}$ is independent 
of the choice of $\nu$.
\end{lemma}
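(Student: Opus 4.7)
The plan is to unpack the definitions and exploit the two structural properties that every special isomorphism enjoys (from the commutative diagram in Definition~\ref{defspeciso}): $\nu$ restricts to the identity map on $\cZ$, and $\nu$ induces the identity map on the quotient $W = \cH/\cZ$. Combined with the fact that $f(k) \in \text{Sp}(\cH)$ is by definition trivial on $\cZ$, these properties reduce the lemma to a short diagram chase.

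First I would verify that the automorphism $\nu\circ f(k)\circ\nu^{-1}$ of $W^\sharp$ is trivial on the central subgroup $\{0\}\boxtimes\cZ$. This is immediate, because on $\cZ$ both $\nu$ and $\nu^{-1}$ act as the identity and $f(k)$ fixes $\cZ$ pointwise. Next I would observe that passing to the quotient $W^\sharp/\cZ = W$, the automorphism $\nu\circ f(k)\circ\nu^{-1}$ induces the same map as $f(k)$ does on $\cH/\cZ = W$, since the vertical arrows in the commutative diagram defining special isomorphisms are the identity on $W$; that induced map is precisely $f'(k)$ by definition of $f'$.

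Now I would use the hypothesis that $\nu$ is relevant. By Definition~\ref{defrel}, $\nu\circ f(k)\circ\nu^{-1}$ lies in the image of $\alpha$, so it has the form $(w,z)\mapsto (\gamma w, z)$ for some $\gamma\in\text{Sp}(W)$. The map induced on the quotient $W$ is exactly $\gamma$, so combining with the previous step forces $\gamma = f'(k)$. This gives the claimed formula
\[
\nu\circ f(k)\circ\nu^{-1}(w,z) = (f'(k)w,\, z).
\]
The independence of $\nu$ is then a free consequence: the right-hand side involves only $f'(k)$, which is defined directly from the conjugation action of $K'$ on $\cH$ and makes no reference to the choice of special isomorphism.

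There is really no genuine obstacle in this proof; it is essentially a tautology once one notes that a special isomorphism is the identity on both $\cZ$ and $W$ by construction. The only point that requires a moment's care is keeping straight the distinction between $\text{Sp}(\cH)$ (automorphisms of $\cH$ that are trivial on the center) and $\text{Sp}(W)$ (symplectic automorphisms of the quotient), and the embedding $\alpha$ of the latter into $\text{Aut}(W^\sharp)$ used to formulate relevance.
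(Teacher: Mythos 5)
Your proof is correct and is essentially the paper's argument: both use relevance to see that $\nu\circ f(k)\circ\nu^{-1}$ has the form $(w,z)\mapsto(\gamma w,z)$, and both use the fact that $\nu$ induces the identity on $W=\cH/\cZ$ to identify $\gamma$ with $f'(k)$ (the paper phrases this as the direct computation $\nu(f(k)\nu^{-1}(w,z))=(f(k)h\cZ,z')=(f'(k)w,z)$). Your preliminary check that the map is trivial on $\cZ$ is harmless but already contained in the relevance hypothesis.
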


\begin{proof}
For $k\in K^\prime$, let $\xi (k)$ be the map from $W^\sharp$ onto itself given by $\xi (k) = \nu\circ f(k)\circ \nu^{-1}$.  Since $\nu$ is relevant, $\xi(k)$ lies in ${\rm Sp}(W)$.  Fix $(w,z)\in W^\sharp$ and let $h= \nu^{-1} (w,z)$.  Note that $h\cZ = w$.  Now $\xi(k) (w,z) = \nu (f(k)h) = (f(k)h \cZ, z')$ for some $z'\in \cZ$.  However, since $\xi (k)$ lies in ${\rm Sp}(W)$, it must be the case that $z' = z$.  Our claim follows.
\end{proof}

\begin{lemma}\label{rsihom}
Suppose $\nu_1$ and $\nu_2$ are relevant special isomorphisms on $\cH$ and let $\chi : W\to \cZ$ be the homomorphism defined by $\nu_2(h) = \chi(h\cZ) \nu_1(h)$ for all $h\in \cH$.  Then $$\chi(f'(k)w ) = \chi(w),$$ for all $k\in K^\prime$ and $w\in W$.
\end{lemma}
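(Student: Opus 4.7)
The plan is to exploit Lemma \ref{relspeciso}, which gives a description of $\nu_i \circ f(k) \circ \nu_i^{-1}$ that is the \emph{same} for every relevant special isomorphism, namely $(w,z) \mapsto (f'(k)w, z)$. Applying this fact once through $\nu_1$ and once through $\nu_2$, and matching the two resulting expressions for $\nu_2(f(k)h)$ via the cocycle relation $\nu_2 = \chi(\cdot\,\cZ)\,\nu_1$, should yield the desired $K'$-invariance of $\chi$.

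In more detail: fix $h \in \cH$ and $k \in K'$, and set $w = h\cZ$. Write $\nu_1(h) = (w, z_1)$ for some $z_1 \in \cZ$, so by hypothesis $\nu_2(h) = \chi(w)\nu_1(h) = (w, \chi(w) z_1)$ in $W^\sharp = W \boxtimes \cZ$. Applying Lemma \ref{relspeciso} to $\nu_2$ gives
$$
\nu_2(f(k)h) \;=\; \bigl(f'(k)w,\; \chi(w) z_1\bigr).
$$
On the other hand, again by the defining property of $\chi$ together with Lemma \ref{relspeciso} applied to $\nu_1$,
$$
\nu_2(f(k)h) \;=\; \chi\bigl(f(k)h \cdot \cZ\bigr)\,\nu_1(f(k)h) \;=\; \chi(f'(k)w)\,(f'(k)w, z_1) \;=\; \bigl(f'(k)w,\; \chi(f'(k)w) z_1\bigr).
$$
Comparing the second coordinates of these two expressions (and cancelling $z_1$) gives $\chi(f'(k)w) = \chi(w)$, which is the asserted identity. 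Since every $w \in W$ is of the form $h\cZ$ for some $h \in \cH$, this completes the argument.

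There is no real obstacle here; the only subtle point is ensuring consistent use of the group law in $W^\sharp$ so that left multiplication by the central element $\chi(w) \in \cZ$ acts as expected on pairs $(w', z')$ — i.e., that $\chi(w)\cdot(w',z') = (w', \chi(w)z')$ — which follows from the multiplication rule in $W \boxtimes \cZ$ since $\chi(w)$ sits inside the center. Once that bookkeeping is in place, the lemma drops out in a single comparison.
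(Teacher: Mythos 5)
Your proof is correct and rests on the same pivot as the paper's: apply Lemma~\ref{relspeciso} through each of $\nu_1$ and $\nu_2$ and compare via the cocycle relation $\nu_2 = \chi(\cdot\,\cZ)\nu_1$. The bookkeeping differs slightly — the paper fixes $(w,z)\in W^\sharp$ and works with two preimages $h_i=\nu_i^{-1}(w,z)$, introducing the auxiliary central element $z'=h_1^{-1}h_2$ and observing that it is independent of $k$, whereas you fix a single $h\in\cH$ and track both $\nu_1(h)$ and $\nu_2(h)$, which makes the cancellation in the second coordinate immediate. Your version is a bit cleaner, but it is the same argument in substance, not a genuinely different route.
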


\begin{proof}Fix $(w,z)\in W^\sharp$ and let $h_i= \nu_i^{-1} (w,z)$.  Note that $h_i\cZ = w$ and  $f'(k) (w,z) = \nu_i (f(k)h_i)$.
We have 
\begin{equation*}
\begin{split}
f'(k) (w,z)&= \nu_2(f(k)h_2) = \chi (f'(k)w)  \nu_1(f(k) h_2)\cr  &= \chi (f'(k) w)\ \nu_1(f(k) h_1)(0,z')\cr &= f'(k)(w,z) \chi (f'(k)w) (0,z'),
\end{split}
\end{equation*}
where $z'= h_1^{-1}h_2$.  Hence, $\chi(f'(k)w) (0,z')=1$.  Since $z'$ is independent of $k$, our assertion follows.
\end{proof}

\begin{remark}\label{rsisymp}
  For each special isomorphism $\nu$ on $\cH$, there is an isomorphism $\beta_\nu : {\rm Sp}(W^\sharp) \to {\rm Sp}(\cH)$ given by $\beta_\nu (\gamma) = 
\nu^{-1}\circ \gamma \circ \nu$.  If we take $f_\nu = \beta_\nu \circ \alpha 
\circ f'$, then $(f_\nu,\nu)$ is a symplectic action in the sense of \cite{Y}.
  To say that $\nu$ is relevant is equivalent to saying $f= f_\nu$.  
It is also the same as saying that $(f,\nu)$ is a symplectic action.
\end{remark}

Now fix a Heisenberg representation $\tau^\sharp$ of $W^\sharp$ with central character $\zeta$. 
 (Of course, the equivalence class of $\tau^\sharp$ is determined by $\zeta$).  
Let $\hat\tau^\sharp$ be the Heisenberg-Weil lift of $\tau^\sharp$ to a representation 
of ${\rm Sp}(W)\ltimes W^\sharp$ with the same representation space as that of $\tau^\sharp$.
Assume $\nu$ is  a relevant special isomorphism.  Then the following map is well defined 
and it is a homomorphism:
\begin{equation*}
\begin{split}
K^\prime\ltimes \cH &\to  {\rm Sp}(W)\ltimes W^\sharp\cr
(k,h)&\mapsto (f'(k) ,\nu(h)).
\end{split}
\end{equation*}
Pulling back $\hat\tau^\sharp$ via this homomorphism gives the representation $\omega^\nu$ of 
$K^\prime\ltimes \cH$ which Yu refers to as the \textit{Weil representation of
 $K^\prime\ltimes \cH$}.  \label{YuWeil}
Note that
$$\omega^\nu (k,h) = \hat\tau^\sharp (f'(k))\ \tau^\nu (h),$$ for all $k\in K^\prime$
 and $h\in \cH$, where $\tau^\nu (h) = \tau^\sharp (\nu(h))$.
Now define a representation $\phi'^\nu$ of $K^\prime J$ by 
$$\phi'^\nu(kj) =  \phi (k)\  \omega^\nu (k,jN),$$ for all $k\in K^\prime$ and $j\in J$.

\begin{lemma}\label{rsiequiv}
If $\nu_1$ and $\nu_2$ are relevant special isomorphisms on $\cH$ then  $\omega^{\nu_1} \simeq\omega^{\nu_2}$ and thus $\phi'^{\nu_1}\simeq\phi'^{\nu_2}$.
\end{lemma}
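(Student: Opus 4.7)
The plan is to exhibit an explicit intertwining operator between $\omega^{\nu_1}$ and $\omega^{\nu_2}$ on the common Heisenberg-Weil representation space, and then observe that the same operator intertwines $\phi'^{\nu_1}$ and $\phi'^{\nu_2}$. By Remark~\ref{specisomu}, there is a homomorphism $\chi\colon W \to \cZ$ with $\nu_2(h) = \chi(h\cZ)\,\nu_1(h)$, together with a unique $w_0 \in W$ such that $\chi(w) = \langle w, w_0\rangle$. The crucial input from the relevance of $\nu_1$ and $\nu_2$ is Lemma~\ref{rsihom}, which yields $\chi(f'(k)w) = \chi(w)$ for all $k \in K'$ and $w \in W$. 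Since the symplectic form on $W$ is nondegenerate and $f'(k)$ is symplectic, this condition translates into the single identity $f'(k)w_0 = w_0$ for every $k \in K'$.

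My candidate for the intertwiner is $A = \hat\tau^\sharp\bigl(1, (-w_0, 1_\cZ)\bigr) = \tau^\sharp\bigl((-w_0, 1_\cZ)\bigr)$, acting on the common representation space of $\tau^\sharp$ and its Heisenberg-Weil lift. A direct computation inside $W^\sharp = W \boxtimes \cZ$ shows that conjugation by $(-w_0, 1_\cZ)$ sends $(w, z)$ to $(w, \langle -w_0, w\rangle \cdot z) = (w, \chi(w)\,z)$; applied to $\nu_1(h) = (h\cZ, z_h)$ this yields precisely $\nu_2(h) = (h\cZ, \chi(h\cZ)\,z_h)$, so $A\,\tau^\sharp(\nu_1(h))\,A^{-1} = \tau^\sharp(\nu_2(h))$ for all $h \in \cH$. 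The relevance-derived identity $f'(k)w_0 = w_0$ then makes the elements $(1, (-w_0, 1_\cZ))$ and $(f'(k), 1)$ commute inside $\mathrm{Sp}(W) \ltimes W^\sharp$: expanding both products, the $W^\sharp$-component becomes $f'(k) \cdot (-w_0, 1_\cZ) = (-f'(k)w_0, 1_\cZ) = (-w_0, 1_\cZ)$ in each case. Applying $\hat\tau^\sharp$, I conclude that $A$ commutes with every Weil operator $\hat\tau^\sharp(f'(k), 1)$.

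Combining these two ingredients via the factorization $(f'(k), \nu_i(h)) = (1, \nu_i(h))(f'(k), 1)$ in the semidirect product gives $A\,\omega^{\nu_1}(k, h)\,A^{-1} = \omega^{\nu_2}(k, h)$ on all of $K' \ltimes \cH$, which is the desired equivalence of Weil representations. The second equivalence $\phi'^{\nu_1} \simeq \phi'^{\nu_2}$ then follows immediately from the definition $\phi'^{\nu}(kj) = \phi(k)\,\omega^{\nu}(k, jN)$, since the scalar factor $\phi(k)$ is the same for both values of $\nu$ and the same operator $A$ does the intertwining on $K'J$. The main (and really only) obstacle in this argument is locating the right intertwiner; once the relevance hypothesis is brought in through Lemma~\ref{rsihom} to pin $w_0$ down as a fixed point of $f'(K')$, the remainder is routine bookkeeping in the semidirect product $\mathrm{Sp}(W) \ltimes W^\sharp$.
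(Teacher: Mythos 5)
Your proof is correct and takes a genuinely different route from the paper's. The paper proves the equivalence by showing the \emph{characters} of $\omega^{\nu_1}$ and $\omega^{\nu_2}$ agree: it first records the twist $\omega^{\nu_2}(k,h) = \zeta(\chi(h\cZ))\,\omega^{\nu_1}(k,h)$, then appeals to the support properties of characters of Weil representations to reduce to elements $(f'(k),\nu_1(h))$ whose conjugacy class in $\cS\ltimes W^\sharp$ meets $\cS\times\cZ$, and finally uses Lemma~\ref{rsihom} to show $\chi(h\cZ)=1$ on such pairs. You instead exhibit an explicit intertwining operator $A = \tau^\sharp((-w_0,1_\cZ))$ on the common representation space, using Lemma~\ref{rsihom} and nondegeneracy of the form to pin down $w_0$ as a fixed vector of $f'(K')$, so that $A$ commutes with the Weil operators $\hat\tau^\sharp(f'(k))$ while conjugating $\tau^\sharp\circ\nu_1$ to $\tau^\sharp\circ\nu_2$. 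Both arguments hinge on Lemma~\ref{rsihom}, but your construction is more elementary — it avoids the character-theoretic input about Weil representations — and it yields a concrete unitary intertwiner rather than a mere existence statement, which could be useful downstream (for instance in tracking the intertwiners through the inflation and tensor constructions that produce $\kappa$). The cost is that you must carefully navigate the semidirect-product bookkeeping; your derivation of $f'(k)w_0=w_0$ from $\chi(f'(k)w)=\chi(w)$ via symplectic invariance and nondegeneracy, and the commutation of $(1,(-w_0,1_\cZ))$ with $(f'(k),1)$, are both done correctly.
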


\begin{proof}
It suffices to show that $\omega^{\nu_1}$ and $\omega^{\nu_2}$ have the same character.
  Let $\chi :W\to \cZ$ be such that $\nu_2(h)= \chi(h\cZ) \nu_1(h)$, for all $h\in \cH$.  
Then we have 
\begin{equation*}
\begin{split}
\omega^{\nu_2}(k,h)&= \hat\tau^\sharp (f'(k)) \tau^{\nu_2}(h)= \zeta (\chi(h\cZ)) \ \hat\tau^\sharp (f'(k))\  \tau^{\nu_1}(h)\cr
&= \zeta (\chi(h\cZ))\ \omega^{\nu_1}(k,h).
\end{split}
\end{equation*}
By properties of characters of Weil representations, 
if the element\break $(f'(k) ,\nu_1 (h))$ is not conjugate
in ${\rm Sp} (W)\ltimes W^\sharp$ to an element of ${\rm Sp}(W)\times \cZ$,
then the characters of $\omega^{\nu_1}$ and $\omega^{\nu_2}$ both vanish on the element
$(k,h)$.

We suppose, without loss of generality, that there exist $s_1\in 
{\rm Sp}(W)$ and $w_1\in W$, such that 
$$
(s_1,w_1)(f'(k),\nu_1 (h))(s_1,w_1)^{-1}\in {\rm Sp}(W)\times \cZ.
$$
Write $w_1=\nu_1 (h_1)$, $h_1\in \cH$. Then
$$
(s_1f^\prime(k)s_1^{-1}, s_1\cdot (f^\prime(k)^{-1}(\nu_1 (h_1))\nu_1 (h)
\nu_1 (h_1)^{-1})\in {\rm Sp}(W)\times \cZ.
$$
Equivalently, as ${\rm Sp}(W)$ acts via the identity on $\cZ$,
$$
f'(k)^{-1}(\nu_1 (h_1))\nu_1(h)\nu_1 (h_1)^{-1}=\nu_1 (
f'( k)^{-1}(h_1)hh_1^{-1})\in \cZ.
$$
Hence $\chi ( f'( k)^{-1}(h_1)hh_1^{-1}\cZ)=1$,
or, using Lemma~\ref{rsihom}  ,
$$
1=\chi(f'(k)^{-1}(h_1)\cZ)\chi(h\cZ)\chi(h_1\cZ)^{-1}
=\chi(h\cZ).
$$
Therefore $\omega^{\nu_2} (k,h)=\zeta(\chi (h\cZ))\omega^{\nu_1} (k,h)
=\omega^{\nu_1} (k,h)$
for all pairs $(k,h)$ such that the conjugacy class of
$(f'(k),\nu_1 (h))$ in ${\rm Sp}(W)\ltimes W^\sharp$ intersects
${\rm Sp}(W)\times \cZ$.
\end{proof}

The following result is contained in Yu's Proposition 11.4.

\begin{lemma}\label{rsiYu}
Yu's special isomorphism $\nu^\bullet$ is relevant.
\end{lemma}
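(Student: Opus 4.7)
The plan is to deduce this from the fact that Yu's construction in Proposition~11.4 of \cite{Y} simultaneously produces a symplectic action of $K^\prime$ on $\cH$ with associated special isomorphism $\nu^\bullet$, in the sense of Remark~\ref{rsisymp}. By that remark, the existence of such a symplectic action based on $\nu^\bullet$ with $K^\prime$ acting via conjugation is equivalent to $\nu^\bullet$ being relevant. So at the conceptual level the lemma is just an unpacking of Yu's symplectic-action formalism.

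To give a self-contained argument using the tools of this chapter, I would proceed as follows. By Lemma~\ref{relspeciso}, relevance is equivalent to the invariance condition $\mu(khk^{-1}) = \mu(h)$ for all $k\in K^\prime$ and $h\in\cH$, where $\mu$ is the cocycle determined by $\nu^\bullet(h) = (h\cZ,\mu(h))$. Lemma~\ref{sppolarspeciso} characterizes this cocycle intrinsically from the split polarization $(\cH(+),\cH(-))$ by the formula
$$
\mu(w_+ w_- z) \;=\; z\,[w_+,w_-]^{(p+1)/2},\qquad w_+\in \cH(+),\ w_-\in \cH(-),\ z\in\cZ,
$$
and asserts that any special isomorphism mapping both $\cH(+)$ and $\cH(-)$ into $W\times 1$ must coincide with $\nu^\bullet$. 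Since $K^\prime$ normalizes $J$, $J_+$, and $N$, conjugation by $k\in K^\prime$ gives a well-defined automorphism of $\cH$, and carries the pair $(\cH(+),\cH(-))$ to a new split polarization $(k\cH(+)k^{-1},k\cH(-)k^{-1})$. The key claim is then that this conjugate polarization is the split polarization arising from Yu's recipe applied to the tamely ramified maximal $F$-torus $k\bT k^{-1}$ of $\bG^\prime$ (with the ordering on $\Phi(\bG,k\bT k^{-1})$ transported from the one on $\Phi(\bG,\bT)$ via $k$). Granted this, the special isomorphism associated to the conjugate polarization by Lemma~\ref{sppolarspeciso} is on the one hand obtained from $\nu^\bullet$ by transport of structure, and on the other hand is again $\nu^\bullet$ itself by the independence assertion in Remark~\ref{rsiYurem}. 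Comparing these two descriptions yields $\mu(khk^{-1})=\mu(h)$, and relevance follows.

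The main obstacle — the step that requires real work rather than formal manipulation — is the identification of $k\cH(\pm)k^{-1}$ with the groups produced by Yu's recipe applied to $k\bT k^{-1}$. This boils down to verifying that conjugation by $k\in K^\prime=G^\prime_{[y]}$ carries the affine root subgroups $\bU_a(E)_{y,s}$, $a\in \Phi(\bG,\bT)\setminus\Phi(\bG^\prime,\bT)$, to the corresponding affine root subgroups for $(\bG,k\bT k^{-1})$ based at $y$ (equivalently, at $[y]$), preserving positivity under the transported ordering. This is precisely the computation that underlies Yu's proof of Proposition~11.4, and invoking it completes the argument.
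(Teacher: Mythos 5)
Your proposal is correct and takes essentially the same approach as the paper, which simply cites Yu's Proposition~11.4 without further argument. Your additional unpacking — deducing relevance as the $K^\prime$-invariance of the cocycle $\mu$, and obtaining that invariance from the independence statement in Remark~\ref{rsiYurem} applied to the $k$-conjugated torus and ordering — is sound, but as you note it ultimately rests on the same proposition of Yu, so it is a more detailed account of the same citation rather than a genuinely different route.
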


In the special case in which $\nu$ is Yu's special isomorphism $\nu^\bullet$, 
we use the notations $\tau^\bullet$, $\omega^\bullet$ and $\phi'^\bullet$ 
for $\tau^\nu$, $\omega^\nu$ and $\phi'^\nu$, respectively.  When $\nu$ 
is an arbitrary relevant special isomorphism, we let $\chi^\nu : W\to \cZ$ be 
the homomorphism such that $\nu(h)= \chi^\nu(h\cZ)\nu^\bullet (h)$, for all 
$h\in \cH$.  Then we have the relations
\begin{equation*}
\begin{split}
\tau^\nu (h) &= \zeta (\chi^\nu (h\cZ))\  \tau^\bullet (h) \cr
\omega^\nu(k,h)&=\zeta (\chi^\nu(h\cZ))\ \omega^\bullet (k,h),
\end{split}
\end{equation*}
for all $k\in K^\prime$ and $h\in \cH$.
The first relation says that $\tau^\nu$ is a twist of $\tau^\bullet$ 
by the character $\zeta \circ\chi^\nu$.  Note that twisting a Heisenberg 
representation of $\cH$ by a character of $W$ gives another Heisenberg 
representation with the same central character; hence it gives an 
equivalent representation.

Our next objective is to show that, in a certain sense, Yu's special 
isomorphism $\nu^\bullet$ is compatible with certain involutions of $G$.
Fix an involution $\theta$ of $G$ (in the sense of Definition \ref{definv}).
In order to apply the results of the previous
section, we assume, first of all,
that $J$ and $N$ are $\theta$-stable. 
These conditions ensure that $\theta$ reduces to
an automorphism $\alpha$ of the Heisenberg
group $\cH$. Since $\alpha$ is an
automorphism of $\cH$ it must preserve the
center $\cZ$.  In other words, $J_+$ must
be $\theta$-stable. Additionally, we assume
that
$\alpha$ is nontrivial on $\cZ$ in
order to get nonzero $\cH^\alpha$-invariant 
linear forms.  The
next lemma reformulates and clarifies these
conditions.  

\begin{lemma}\label{stableJ}
If $J$ is
$\theta$-stable then the following are equivalent:
\begin{enumerate}
\item $N$ is $\theta$-stable and
$\alpha$ is nontrivial on $\cZ$,
\item $J_+$ is $\theta$-stable and
$\zeta\circ\theta = \zeta^{-1}$.
\end{enumerate}
 If $J$, $N$ and $G^\prime_{y,r}$ are $\theta$-stable then the condition  
$\zeta\circ\theta =\zeta^{-1}$ is equivalent to  $\phi\circ\theta =
\phi^{-1}$ on $G^\prime_{y,r}$.
\end{lemma}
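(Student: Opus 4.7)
The plan is to handle the equivalence $(1)\Leftrightarrow(2)$ by unwinding the definitions of $\cH=J/N$, $\cZ=J_+/N$ and the character $\zeta:J_+\to \C^\times$, using only that $N=\ker\zeta$, that $J_+$ is the preimage of $\cZ$ in $J$, and that $\cZ$ has odd prime order $p$. The final clause is then a separate, nearly formal, matter of tracking where $\zeta$ vanishes.

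For $(1)\Rightarrow(2)$: assume $N$ is $\theta$-stable, so $\theta$ descends to an automorphism $\alpha$ of $\cH$ with $\alpha^2=1$. Every automorphism of a Heisenberg group preserves its center, so $\alpha(\cZ)=\cZ$; pulling back through $J\to J/N$ gives $\theta(J_+)=J_+$. By Lemma~\ref{ordtwoauto}, the only nontrivial automorphism of the cyclic group $\cZ$ of odd prime order with $\alpha^2=1$ is $z\mapsto z^{-1}$, so the nontriviality hypothesis on $\alpha|_\cZ$ forces $\alpha(z)=z^{-1}$ for all $z\in\cZ$. Translating this back through $\zeta$ (which identifies $\cZ$ with its image in $\C^\times$ and has kernel $N$) yields $\zeta(\theta(j))=\zeta(j)^{-1}$ for all $j\in J_+$.

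For $(2)\Rightarrow(1)$: assume $J_+$ is $\theta$-stable and $\zeta\circ\theta=\zeta^{-1}$. For $n\in N=\ker\zeta$ one has $\zeta(\theta(n))=\zeta(n)^{-1}=1$, and $\theta(n)\in J_+$ by stability of $J_+$, so $\theta(n)\in N$; applying the same argument to $\theta^{-1}=\theta$ shows $N$ is $\theta$-stable. To verify that $\alpha|_\cZ$ is nontrivial, suppose for contradiction $\alpha|_\cZ=\mathrm{id}$. Then $\theta(j)\equiv j\pmod N$ for all $j\in J_+$, giving $\zeta\circ\theta=\zeta$; combined with $\zeta\circ\theta=\zeta^{-1}$ this forces $\zeta^2=1$. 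But $\zeta$ takes values in the group of $p$-th roots of unity with $p$ odd, so $\zeta\equiv 1$, contradicting the fact that $\zeta|_{G^\prime_{y,r}}=\phi$ is nontrivial (since $\phi$ has positive depth $r$).

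For the final equivalence, assume that $J$, $N$, and $G^\prime_{y,r}$ are all $\theta$-stable. Recall $J_+=G^\prime_{y,r}\cdot (G^\prime,G)_{y,(r^+,s^+)}$, that $\zeta$ restricts to $\phi$ on $G^\prime_{y,r}$, and that $(G^\prime,G)_{y,(r^+,s^+)}\subset N$ since $\zeta$ is trivial there. The character $\zeta^{-1}$ is trivial on $(G^\prime,G)_{y,(r^+,s^+)}$, and $\zeta\circ\theta$ is too: if $h\in (G^\prime,G)_{y,(r^+,s^+)}\subset N$, then $\theta(h)\in \theta(N)=N$, so $\zeta(\theta(h))=1$. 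Hence the identity $\zeta\circ\theta=\zeta^{-1}$ on $J_+$ is equivalent to its restriction to the factor $G^\prime_{y,r}$, which is precisely $\phi\circ\theta=\phi^{-1}$ on $G^\prime_{y,r}$. The main (minor) obstacle throughout is notational: one must carefully distinguish the subgroup on which $\zeta$ is forced to be trivial (namely $N$) from those on which $\theta$-stability is actually assumed, since we have no direct control over $\theta$ on $(G^\prime,G)_{y,(r^+,s^+)}$ and must deduce what we need from $\theta(N)=N$.
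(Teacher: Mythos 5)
Your proof is correct and follows essentially the same route as the paper's: Lemma~\ref{ordtwoauto} for $(1)\Rightarrow(2)$, the observation that $\cZ$ has odd order (so $\zeta^2=1$ forces $\zeta$ trivial, a contradiction) for $(2)\Rightarrow(1)$, and the final clause via $\zeta\,|\,G^\prime_{y,r}=\phi$, $J_+=G^\prime_{y,r}(G^\prime,G)_{y,(r^+,s^+)}$, $(G^\prime,G)_{y,(r^+,s^+)}\subset N$ and $\theta(N)=N$. Your explicit checks (that $\theta(J_+)=J_+$ follows from $\alpha$ preserving the center, and that nontriviality of $\zeta$ comes from $\phi$ being nontrivial on $G^\prime_{y,r}$) simply spell out steps the paper leaves implicit.
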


\begin{proof}
 Assume $J$ is
$\theta$-stable.   Suppose first
that condition
(1) holds.  Then $\alpha (z)= z^{-1}$, for all
$z\in \cZ$ according to Lemma \ref{ordtwoauto}.  We
therefore obtain $\zeta\circ \alpha =
\zeta^{-1}$ and  hence $\zeta\circ\theta =
\zeta^{-1}$.

Now suppose condition (2) holds.  Then clearly
$N$ must be $\theta$-stable.  Suppose that
$\alpha\,|\,\cZ$ is the identity map.  Then
$\zeta = \zeta^{-1}$.  But since $\cZ$ has
odd order, it does not admit any characters of
order two.  This implies $\zeta$ must be
trivial, which is absurd.  Therefore, it must be
the case that $\alpha\, |\,\cZ$ is nontrivial.

Assume that $J$, $N$ and $G_{y,r}^\prime$ are
$\theta$-stable.
The asserted equivalence follows from the properties
$\zeta\,|\, G_{y,r}^\prime=\phi\,|\, G_{y,r}^\prime$,
$J_+=G_{y,r}^\prime G_{y,(r^+,s^+)}$, $G_{y,(r^+,s^+)}\subset N$
and $\theta(N)=N$. 
\end{proof}
 
 The next result summarizes various results we have obtained so far.  It shows that the special isomorphisms associated to those involutions that stabilize $\bG^\prime$ 
and $[y]$  and map $\zeta$ to $\zeta^{-1}$ are, in fact, identical to Yu's special isomorphism.
 This provides us with concrete realizations of $\nu^\bullet$ whose compatibility with such involutions is useful in determining certain signs that arise
in our computations involving distinguished representations.
 
\begin{proposition}\label{stJpol}
 Assume $\theta (\bG^\prime) = \bG^\prime$, $\theta ([y]) = [y]$.
 Then $\theta$ stabilizes $J$ and $J_+$. Assume also that $\zeta\circ\theta=\zeta^{-1}$.
Then $\theta$ gives rise to an automorphism of $\cH$ that we also denote by $\theta$.  The sets
\begin{equation*}
 \begin{split}
 \cH_\theta^+&= \{\, h\in \cH\ |\  \theta
(h) = h\,\},\cr
\widehat{\cH}_\theta^- &= \{\, h\in \cH\ | \ \theta
(h) = h^{-1} \,\},
 \end{split}
\end{equation*}
 form a polarization of $\cH$ that splits canonically to give a split polarization $(\cH_\theta^+,\cH_\theta^-)$ via 
 Lemma \ref{polsplitting} using Yu's special isomorphism $\nu^\bullet$.  
 Let $\nu^\theta$ denote the special isomorphism associated to this split polarization by Lemma \ref{sppolarspeciso}.  Then $\nu^\theta=\nu^\bullet$.
\end{proposition}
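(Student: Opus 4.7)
The plan is to peel off the easy parts of the statement using results already at hand, then isolate the single nontrivial content: a $\theta$-equivariance for Yu's special isomorphism.

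First, I would verify that $\theta$ stabilizes $J$ and $J_+$. Since $\theta$ preserves the twisted Levi sequence $(\bG^\prime,\bG)$ and fixes $[y]$, applying Lemma \ref{threefive} to the admissible sequences $(r,s)$ and $(r,s^+)$ shows that $J=(G^\prime,G)_{y,(r,s)}$ and $J_+=(G^\prime,G)_{y,(r,s^+)}$ are $\theta$-stable. Combined with the hypothesis $\zeta\circ\theta=\zeta^{-1}$, Lemma \ref{stableJ} then forces $N$ to be $\theta$-stable and the induced automorphism $\alpha$ of $\cH$ to restrict nontrivially to $\cZ$ (hence, by Lemma \ref{ordtwoauto}, by inversion).

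Second, with $\alpha$ in hand, Theorem \ref{Heisthm} gives directly that $(\cH_\theta^+,\widehat{\cH}_\theta^-)$ is a polarization of $\cH$. Applying Lemma \ref{polsplitting} with the special isomorphism $\nu^\bullet$ produces the canonical subgroup $\cH_\theta^-=\widehat{\cH}_\theta^-\cap(\nu^\bullet)^{-1}(W\times 1)$ splitting this polarization, and Lemma \ref{sppolarspeciso} then attaches to the split polarization $(\cH_\theta^+,\cH_\theta^-)$ its associated special isomorphism $\nu^\theta$.

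The heart of the proof is the identity $\nu^\theta=\nu^\bullet$. By the uniqueness clause at the end of Lemma \ref{sppolarspeciso}, it is enough to show that $\nu^\bullet$ carries both $\cH_\theta^+$ and $\cH_\theta^-$ into $W\times 1$. The inclusion $\nu^\bullet(\cH_\theta^-)\subset W\times 1$ is tautological, so the whole problem collapses to
\begin{equation*}
\nu^\bullet(\cH_\theta^+)\subset W\times 1.
\end{equation*}
I would deduce this from the $\theta$-equivariance
\begin{equation*}
\nu^\bullet\circ\alpha=\alpha^\sharp\circ\nu^\bullet,
\end{equation*}
where $\alpha^\sharp(w,z)=(\bar\alpha(w),z^{-1})$ on $W^\sharp=W\boxtimes\cZ$ (a short check using $\bar\alpha\in\cS^-$ from Theorem \ref{Heisthm} shows $\alpha^\sharp$ is a group automorphism). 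Granting the equivariance, if $h\in\cH_\theta^+$ and $\nu^\bullet(h)=(h\cZ,z)$, then $\nu^\bullet(\alpha(h))=(h\cZ,z)$ must coincide with $\alpha^\sharp(\nu^\bullet(h))=(h\cZ,z^{-1})$, forcing $z=z^{-1}$; since $|\cZ|=p$ is odd, $z=1$.

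The main obstacle is establishing the equivariance itself. I would set $\tilde\nu:=(\alpha^\sharp)^{-1}\circ\nu^\bullet\circ\alpha$ and verify directly that $\tilde\nu$ is again a special isomorphism on $\cH$; the desired equality $\tilde\nu=\nu^\bullet$ then reduces, via Lemma \ref{specisoeq}, to showing that $(\nu^\bullet)^{-1}(W\times 1)$ is $\alpha$-stable. For this I would exploit the canonicity of Yu's construction (Remark \ref{rsiYurem}): $\nu^\bullet$ is produced over $E$ from the split polarization $(\cH(E)(+),\cH(E)(-))$ attached to a choice of tame maximal $F$-torus $\bT\subset\bG^\prime$ with $y\in A(\bG,\bT,F)$ and a choice of ordering on $\Phi(\bG,\bT)$, and the resulting special isomorphism is independent of both choices. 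Choosing $\bT$ to be $\theta$-stable (which is possible since $\theta$ fixes $[y]$ and preserves $\bG^\prime$) makes $\theta$ permute the root subgroups; comparing the split polarization with its $\theta$-image and invoking the independence of ordering then identifies $\alpha\!\cdot\!(\nu^\bullet)^{-1}(W\times 1)$ with $(\nu^\bullet)^{-1}(W\times 1)$, completing the argument.
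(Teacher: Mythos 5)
Your overall architecture is sound, and in fact it funnels into exactly the crux the paper isolates. The routine assertions do follow from Lemmas \ref{threefive} and \ref{stableJ} together with Theorem \ref{Heisthm} and Lemmas \ref{polsplitting} and \ref{sppolarspeciso}; the containment $\nu^\bullet(\cH_\theta^-)\subset W\times 1$ is tautological; and your reduction of $\nu^\theta=\nu^\bullet$ to $\nu^\bullet(\cH_\theta^+)\subset W\times 1$ via the uniqueness clause of Lemma \ref{sppolarspeciso}, and then (through the equivariance $\nu^\bullet\circ\alpha=\alpha^\sharp\circ\nu^\bullet$ and Lemma \ref{specisoeq}) to the $\theta$-stability of $W^\bullet=(\nu^\bullet)^{-1}(W\times 1)$, is a clean variant of the paper's own argument, which reaches the same key claim and then concludes by a counting argument that $(\nu^\bullet)^{-1}(W_\theta^+)=\cH_\theta^+$ (compare also Corollary \ref{corlemtwogap}).

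The gap is in your last step: you assert that $\bT$ may be chosen $\theta$-stable with $y\in A(\bG,\bT,F)$ ``since $\theta$ fixes $[y]$ and preserves $\bG^\prime$.'' That existence claim does not follow from those hypotheses in any obvious way: $\theta(y)$ agrees with $y$ only modulo $X_*(\bZ,F)\otimes\R$, and producing a $\theta$-stable apartment of $\cB(\bG^\prime,F)$ through $y$ arising from a tame maximal $F$-torus of $\bG^\prime$ is a nontrivial building-theoretic statement that would itself need proof; note the paper never invokes such a torus, here or in the harder setting of Section \ref{sec:maxiso}. It is also unnecessary. Since $\theta(y)-y\in X_*(\bZ,F)\otimes\R$, the point $y$ lies in $A(\bG,\theta(\bT),F)$ as well, so the pair consisting of $\theta(\bT)$ and the ordering of $\Phi(\bG,\theta(\bT))$ transferred by $a\mapsto a\circ\theta$ is a legitimate alternative input to Yu's construction. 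The automorphism $\theta$ carries the split polarization built from $(\bT,\Phi^+)$ to the one built from $(\theta(\bT),\Phi_\theta^+)$, and Remark \ref{rsiYurem} says both inputs produce the same $\nu^\bullet_E$; writing $h=h_+h_-z$ and using the explicit formula of Lemma \ref{sppolarspeciso}, the $\cZ$-component of $\nu^\bullet_E(\theta(h))$ is $\theta$ applied to that of $\nu^\bullet_E(h)$, whence $\theta(h)\in(\nu_E^\bullet)^{-1}(W(E)\times 1)$ if and only if $h$ is, and restriction to $G$ gives the $\theta$-stability of $W^\bullet$ (after the same reduction to the splitting field that the paper performs). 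With that substitution for your ``$\theta$-stable torus'' step, your proof is complete and agrees with the paper's.
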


\begin{proof}
The first several assertions follow directly from Theorem \ref{Heisthm}, as well as Lemmas \ref{sppolarspeciso}, \ref{polsplitting} and \ref{stableJ}.  
It remains to prove that $\nu^\theta = \nu^\bullet$.  Let  $$W^\bullet = (\nu^{\bullet})^{-1}(W\times 1).$$  We claim that
 $W^\bullet$ must be $\theta$-stable.  Before proving this, we will show how it implies what we need.
Abbreviate the groups $\cH^+_\theta$ and $\widehat{\cH}^-_\theta$ in the statement above as $\cH^+$ and $\widehat{\cH}^-$ and let
\begin{equation*}
 \begin{split}
W^+&= \{\, w\in W\ | \theta (w)= w\,\}\cr
W^{-}&= \{\, w\in W\ | \ \theta (w) = w^{-1}\,\}\cr
\cH^-&= \widehat{\cH}^-\cap W^\bullet.
 \end{split}
\end{equation*}
We observe that 
\begin{equation*}
 \begin{split}
\cH^+\cZ &= \{\, h\in \cH\ | \ \theta (h)\in h\cZ\,\}\cr
\widehat{\cH}^- &= \cH^-\cZ= \{\, h\in \cH \ | \ \theta (h) \in h^{-1}\cZ\,\}
 \end{split}
\end{equation*}
and $\nu^{-1} (W^+)\subset \cH^+\cZ$ for every special isomorphism $\nu$.

We now show that $(\nu^{\bullet})^{-1}(W^+)= \cH^+$.  First of all, note that the sizes of the two sets are the same.  Now suppose $h\in (\nu^{\bullet})^{-1}(W^+)$.   Since, by assumption, $W^\bullet$ is $\theta$-stable $\theta (h)$ must  lie in $W^\bullet$.  On the other hand, if two elements, such as $h$
 and $\theta (h)$ lie in $W^\bullet$ and are congruent mod $\cZ$ then they 
must be equal.  
So $(\nu^{\bullet})^{-1}(W^+)\subset \cH^+$ and since both sets have the same size they must be equal.

It follows now that if $w_+\in \cH^+$ then $\nu^\bullet (w_+) = (w_+\cZ,1)=\nu^\theta (w_+)$.
Similarly, one shows that $(\nu^{\bullet})^{-1} (W^-) = \cH^-$ and thus 
$\nu^\bullet$ and $\nu^\theta$ agree on $\cH^-$.
Consequently,  we have established that $\nu^\bullet$ and $\nu^\theta$ are identical if it is indeed true that $W^\bullet$ is $\theta$-stable.

Recall that $\nu^\bullet$ was defined by restricting a special isomorphism $\nu^\bullet_E$ over the splitting field $E$.  To show that $W^\bullet$ is $\theta$-stable, it suffices to show that $(\nu^{\bullet}_E)^{-1} (W(E)\times 1)$ is $\theta$-stable.  Equivalently, it suffices to show $W^\bullet$ is $\theta$-stable when $E=F$.  So let us now assume $E=F$.  Fix an ordering of $\Phi = \Phi (\bG,\bT,F)$ and let
 $(\cH(+),\cH(-))$ be the split polarization of $\cH$ associated to the given ordering of $\Phi$ in the construction of $\nu^\bullet$.  Thus $\cH(+)$ (respectively, $\cH(-)$) is generated by certain subgroups of the root subgroups $U_a$ associated to positive (respectively, negative) roots $a\in \Phi$.  Suppose $h\in \cH$.  Then $h = h_+h_-z$, for unique $h_+\in \cH(+)$, $h_-\in \cH(-)$ and $z\in \cZ$, and we have
 $$\nu^\bullet (h) = (h_+h_-\cZ, [h_+,h_-]^{(p+1)/2}z).$$
On the other hand, Remark \ref{rsiYurem} says that we may replace $\bT$ by any other 
maximal $F$-torus $\bT^\prime$ in $\bG^\prime$ such that $y\in A(\bG,\bT^\prime,F)$ 
and we may use any ordering of $\Phi (\bG,\bT^\prime,F)$ without affecting the resulting 
special isomorphism.  In particular, we may take $\bT^\prime=\theta (\bT)$ and 
transfer the ordering on $\Phi$ to an ordering on $\Phi_\theta = \Phi (\bG,\theta (\bT),F)$ by declaring that $a\circ\theta\in \Phi_\theta$ is positive exactly when $a\in \Phi$ is positive.  The split polarization in this case is $(\theta (\cH(+)),\theta(\cH(-)))$.  Using $\theta(\bT)$ and the latter ordering of roots, we see that
\begin{equation*}
 \begin{split}
\nu^\bullet (\theta(h)) &= (\theta(h_+)\theta(h_-)\cZ, [\theta(h_+),\theta(h_-)]^{(p+1)/2}\theta(z))\cr
&= (\theta(h_+)\theta(h_-)\cZ, [h_+,h_-]^{-(p+1)/2}z^{-1}).
\end{split}
\end{equation*}
It follows that $\nu^\bullet (h)\in W\times 1$ exactly when $\nu^\bullet (\theta(h))\in W\times 1$.  This completes the proof.
\end{proof}

\section{Yu's construction via tensor products and inflation}
\label{sec:construction}

Yu's construction starts with an extended generic cuspidal $G$-datum $\Psi$.  
Then an inducing group $K= K(\Psi)$ and a representation $\kappa= \kappa (\Psi)$ of $K$ 
are constructed such that the representation $\pi$ of $G$ induced from $\kappa$ (via 
compactly supported smooth induction) is irreducible and supercuspidal.  To realize 
the full power of the construction, it is helpful to make explicit various aspects 
and properties of the construction which were not needed in \cite{Y}.

For example, Yu says that his construction ``has a nice inductive structure''  resulting 
from the fact that he actually constructs from a cuspidal $G$-datum a sequence 
$\vec\pi = (\pi_0,\dots ,\pi_d)$ of supercuspidal representations of $G^0,\dots ,G^d$, 
respectively.  To be more precise, this means we can attack problems regarding 
tame supercuspidal representations by induction on $d$.  This idea will be 
made explicit in the next section and elsewhere in this \paperbook.
We will also show later in this \paperbook\ that, in fact, the number $d$ is an 
invariant of the equivalence class of $\pi=\pi_d$.  

In this section, we recapitulate Yu's construction, but our presentation is 
different  in several ways.  First of all, we focus on 
the fact that the inducing representation $\kappa$ is naturally expressed as a 
tensor product $\kappa_{-1}\otd\kappa_d$, where $\kappa_{-1}$ depends 
only on $\rho$ and, otherwise, $\kappa_i$ only depends on $\phi_i$.  This tensor 
product structure is the starting point for a strategy for studying tame supercuspidal 
representations.  As we will see later in the \paperbook, certain facts about the 
supercuspidal representation $\pi$ induced from $\kappa$ can be effectively reduced 
to a collection of facts regarding the individual factors $\kappa_i$.  In other 
words, we frequently can study the separate contributions of the various quasicharacters 
$\phi_i$ without having to study any interplay between different quasicharacters.  
This is analogous to studying an automorphic representation by analyzing its local 
factors.  A key issue in the latter context, and for us as well, is certain 
so-called ``multiplicity one'' properties of the representations being studied.  

We also highlight the fact that, for $i\in \{\, 0,\dots,d-1\,\}$, each of the factors $\kappa_i$
is the inflation of a certain Weil-Heisenberg representation of the type discussed earlier.
The inflation operation and its basic properties are made much more explicit than in \cite{Y}.
The philosophy here is that the Weil-Heisenberg representations, together with the finite 
field representation attached to $\rho$, can be regarded as elementary particles in Yu's 
construction.  These elementary  objects are defined over finite fields and, at least 
in the case of the Weil-Heisenberg representations, their structure is remarkably 
tractable in many ways.  (See Section~\ref{sec:Heis}.)
From our point of view, Yu's construction takes these elementary 
objects and creates supercuspidal representations by applying the very simple functorial 
operations of inflation, tensor multiplication, and induction.

The most substantive result in this section is Proposition \ref{rsiisoclass}, which 
is an easy corollary of the theory of ``relevant special isomorphisms''  already developed. 
 To explain this, we recall first that Yu introduced the notions of ``special isomorphism'' 
and ``symplectic action'' to precisely describe how his symplectic groups were acting on 
his Heisenberg groups.  As Yu notes, the failure to address the issue of how the symplectic 
groups are acting has been the source of some confusion in the literature of supercuspidal 
representation theory.  With some effort, Yu is able to give canonical constructions 
of special isomorphisms and symplectic actions and then he uses these in his construction.  
If one varies the choices of the special isomorphisms and symplectic actions, one still 
obtains an irreducible supercuspidal representation, but there is no obvious reason 
to expect that one obtains an equivalent representation.
We consolidate the notions of special isomorphism and symplectic action into the notion 
of a relevant special isomorphism and
Proposition \ref{rsiisoclass} then says that the equivalence class of 
the supercuspidal representation constructed does not depend on the choices of the 
relevant special isomorphisms.  

Let us now begin our description of Yu's construction. Assume we are given an
extended generic cuspidal $G$-datum $\Psi=(\vec\bG,y,\rho,\vec\phi)$.
For the rest of this section,  fix $i\in \{\, 0,\dots ,d-1\,\}$.   

\begin{remark}\label{hatprop}
Recall from Section~\ref{sec:notations} that
$\hat\phi_i$ is the quasicharacter of $K^0G_{y,0^+}^iG_{y,s_i^+}$
that agrees with $\phi_i$ on $K^0G_{y,0^+}^i$ and
is trivial on $(G^i,G)_{y,(r_i^+,s_i^+)}$.
Because $J_+^{i+1}
=G_{y,r_i}^i (G^i,G^{i+1})_{y,(r_i^+,s_i^+)}$, we see,
referring back to Section~\ref{sec:genHeis}, that
$\hat\phi_i\,|\, J_+^{i+1}$ is the character of 
$J_+^{i+1}$ that in the $d=1$ setting would have been
denoted by $\zeta$. 
\end{remark}

Let 
\begin{equation*}
\begin{split}
W_i&=J^{i+1}/J^{i+1}_+\cr
N_i &= \ker (\hat\phi_i\,|\,
J^{i+1}_+)\cr
\cH_i&= J^{i+1}/N_i\cr
\cZ_i&= J^{i+1}_+/N_i\cr
\cS_i&=\hbox{Sp}(W_i)\cr
\zeta_i&=\hat\phi_i \,|\, J_+^{i+1}.
\end{split}
\end{equation*}

Whenever convenient, we will treat $\zeta_i$ as a character of $\cZ_i$.

In cases where $J^{i+1}=J_+^{i+1}$, $W_i$ is trivial and a
Weil-Heisenberg construction is not needed to define $\kappa_i$.
When $J^{i+1}=J_+^{i+1}$, we have $K^{i+1}=K^iJ_+^{i+1}$,
and we define a quasicharacter $\phi_i^\prime$ of $K^{i+1}$
by setting $\phi_i^\prime(kj)=\phi_i(k)\hat\phi_i(j)$, for
$k\in K^i$ and $j\in J_+^{i+1}$.

The next three paragraphs describe how the Weil-Heisenberg
theory is applied to define a representation $\phi_i^\prime$
of $K^{i+1}$ in cases where $J^{i+1}\not=J_+^{i+1}$.

At this point, we need to fix (arbitrarily) a relevant special isomorphism 
$\nu_i : \cH_i \to W_i^\sharp$ and a Heisenberg
representation
$(\tau_i , V_i) $ of $\cH_i$ with central character $\zeta_i$.  
Though the equivalence class of $\tau_i$ is determined by $\zeta_i$, in 
applications some models for the representation are more convenient to work 
with than others.  (It is a common abuse of terminology for one to use the 
term ``representation'' when one really is referring to an equivalence class 
of representations.  We use the term ``model'' for emphasis when we are 
speaking of a specific representation rather than its equivalence class.)

The Heisenberg representation $\tau^\sharp_i= \tau_i \circ (\nu_i)^{-1}$ 
of $W^\sharp_i = W_i\boxtimes \cZ_i$ extends uniquely to a representation 
$\hat\tau^\sharp_i$ of $\cS_i\ltimes W^\sharp_i$ on the space $V_i$.  
Letting $f'_i : K^i \to\cS_i$ be the map given by conjugation, we obtain a homomorphism
\begin{equation*}
\begin{split}
K^i\ltimes \cH_i&\to \cS_i \ltimes W^\sharp_i\cr
(k,h)&\mapsto (f'_i(k),\nu_i(h)).
\end{split}
\end{equation*}
Pulling back $\hat\tau^\sharp_i$ via this homomorphism yields the Weil 
representation $$\omega_i(k,h) = \hat\tau^\sharp_i (f'_i(k),1)\ \tau_i(h)$$ of 
$K^i\ltimes \cH_i$.  Note that condition \textbf{SC2}${}_i$ is satisfied with
$\tilde\phi_i = \omega_i$.

Now define a representation   $$\phi'_i : K^{i+1}\to
{\bGL}(V_i)$$  by
$$\phi'_i (kj) = \phi_i (k) \ \omega_i (k,j)= 
\phi_i(k)\ \hat\tau^\sharp_i( f'_i(k))\
\tau_i(j),$$ with $k\in K^i$ and $j\in
J^{i+1}$.

We have defined a representation $\phi_i^\prime$ of
$K^{i+1}$ that is attached to the quasicharacter $\phi_i$
and now we indicate how to obtain the
representation $\kappa_i$ of $K$ from $\phi_i^\prime$. If $\mu$
is a representation of $K^i$ which is 1-isotypic on
$K^i\cap J^{i+1}= G^i_{y,r_i}$ then there is a unique extension
of $\mu$ to a representation, denoted
$\inf\nolimits_{K^i}^{K^{i+1}}(\mu)$, of $K^{i+1}$ which is
1-isotypic on $J^{i+1}$.  This inflated representation is 1-isotypic on $K^{i+1}\cap J^{i+2}$, since
$$K^{i+1}\cap J^{i+2}= G^{i+1}_{y,r_{i+1}} \subset G^{i+1}_{y,r_i}\subset J^{i+1},$$
and, consequently, we may repeatedly inflate $\mu$.
More precisely, if $0\le
i\le j\le d$ then we may define \label{anotherinflation}
$$\inf\nolimits_{K^i}^{K^j}(\mu) =
\inf\nolimits^{K^j}_{K^{j-1}}\circ \cdots \circ
\inf\nolimits^{K^{i+1}}_{K^i}(\mu).$$
The factor $\kappa_i$  is given by $\kappa_d =\phi_d \,|\,K$ and, otherwise,
$$\kappa_i =
\inf\nolimits_{K^{i+1}}^{K}(\phi'_i),$$
where $\phi'_{-1}= \rho$.    In order to inflate from $K^{i+1}$ in the definition of $\kappa_i$  when $i<d-1$, we must use the fact that $\phi'_i$ is 1-isotypic on $K^{i+1}\cap J^{i+2}$.

\begin{proposition}\label{rsiisoclass}
For each $i\in \{\, 0,\dots ,d\,\}$, the equivalence class of the representation $\kappa_i$ 
only depends on the quasicharacter $\phi_i$ (except that the groups $K$, $K^i$ and $J^{i+1}$
depend on the full sequence $\vec r = (r_0,\dots , r_d)$ which is derived from 
the depths of the components of $\vec\phi$).  In particular, when $i<d$ this equivalence 
class does not depend on the choice of model for the Heisenberg representation $\tau_i$ 
or the choice of relevant special isomorphism $\nu_i$.
\end{proposition}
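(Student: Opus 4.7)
The proposition decomposes naturally along the cases of $i$. The case $i=d$ is immediate since $\kappa_d=\phi_d\,|\,K$ depends literally only on $\phi_d$ and the group $K$. For $i<d$ the case $J^{i+1}=J_+^{i+1}$ is also immediate, since then $W_i$ is trivial, no Heisenberg construction enters, and $\phi_i'(kj)=\phi_i(k)\hat\phi_i(j)$ manifestly involves no auxiliary choices. So the substance of the proof is the case $i<d$ with $J^{i+1}\ne J_+^{i+1}$, and my plan is to reduce independence of $\kappa_i$ to independence of $\phi_i'$ via the inflation functor, and then to separate out the two kinds of choices (the relevant special isomorphism $\nu_i$ and the model for $\tau_i$) that go into the construction of $\phi_i'$.

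First I would dispose of the dependence on the choice of relevant special isomorphism. The construction of $\phi_i'$ in the present setting is exactly the $(\bG',\bG)$-case construction of Section~\ref{sec:genHeis} applied to the tamely ramified twisted Levi pair $(\bG^i,\bG^{i+1})$ in place of $(\bG',\bG)$, with $K^i$ playing the role of $K'$, $J^{i+1}$ playing the role of $J$, $\hat\phi_i\,|\,J_+^{i+1}$ playing the role of $\zeta$, and $\phi_i$ playing the role of $\phi$. In that setting Lemma~\ref{rsiequiv} shows that for any two relevant special isomorphisms $\nu_i^{(1)}$ and $\nu_i^{(2)}$ on $\cH_i$ the resulting representations $\phi_i'^{\,\nu_i^{(1)}}$ and $\phi_i'^{\,\nu_i^{(2)}}$ of $K^{i+1}$ are equivalent. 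So I may fix a relevant special isomorphism $\nu_i$ once and for all.

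Next I would handle the choice of model for $\tau_i$. Any two Heisenberg representations $(\tau_i,V_i)$ and $(\tau_i^\ast,V_i^\ast)$ of $\cH_i$ with central character $\zeta_i$ are equivalent, so there exists an isomorphism $T:V_i\to V_i^\ast$ with $T\tau_i(h)=\tau_i^\ast(h)T$ for all $h\in\cH_i$. Transporting to $W_i^\sharp$ via $\nu_i$, the same $T$ intertwines the representations $\tau_i^\sharp=\tau_i\circ\nu_i^{-1}$ and $\tau_i^{\ast,\sharp}$ of $W_i^\sharp$. The key point is that $T$ then intertwines the Heisenberg–Weil lifts $\hat\tau_i^\sharp$ and $\hat\tau_i^{\ast,\sharp}$ as representations of $\cS_i\ltimes W_i^\sharp$: outside the case $p=3$, $\dim W_i=2$ this is immediate from the uniqueness clause in Definition~\ref{HeisWeilrep}, since $T\hat\tau_i^\sharp T^{-1}$ is another extension of $\tau_i^{\ast,\sharp}$ to $\cS_i\ltimes W_i^\sharp$ and hence must coincide with $\hat\tau_i^{\ast,\sharp}$; in the exceptional case one reads the same conclusion off the explicit formulas of Section~\ref{sec:eselltwo}, since the Heisenberg–Weil lift is singled out there by a condition (agreement of $\hat\tau\,|\,\cS$ with the Weil representation attached to the given central character) that is preserved by conjugation by $T$. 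It follows that $T$ intertwines the Weil representations $\omega_i$ and $\omega_i^\ast$ of $K^i\ltimes\cH_i$, and then, since $\phi_i'(kj)=\phi_i(k)\omega_i(k,j)$ with $\phi_i$ scalar-valued, that $T$ intertwines $\phi_i'$ and $\phi_i^{\prime\,\ast}$ as representations of $K^{i+1}$.

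Finally I would invoke the inflation functor. Inflation $\inf_{K^{i+1}}^K$ from $K^{i+1}$ to $K$ carries equivalent representations to equivalent representations, so the equivalence class of $\kappa_i=\inf_{K^{i+1}}^K(\phi_i')$ depends only on the equivalence class of $\phi_i'$, which by the previous two paragraphs depends only on $\phi_i$ (together with the subgroups $K$, $K^i$, $J^{i+1}$, $J_+^{i+1}$ whose definition requires only the depths appearing in $\vec\phi$ and the auxiliary geometric data). The only step that is not purely formal is the lifting of the intertwiner $T$ from $\tau_i^\sharp$ to $\hat\tau_i^\sharp$; this is the main obstacle, and it is settled by the uniqueness statement in Definition~\ref{HeisWeilrep} in the generic case and by a direct check against the formulas of Section~\ref{sec:eselltwo} in the exceptional case.
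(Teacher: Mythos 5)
Your proposal is correct and follows essentially the same route as the paper's proof: fix the choices one at a time, use an intertwiner between two Heisenberg models together with the uniqueness of the Heisenberg--Weil lift to see that it also intertwines the representations $\phi_i'$ (hence $\kappa_i$ after inflation), and invoke Lemma~\ref{rsiequiv} for independence of the relevant special isomorphism. Your explicit treatment of the exceptional case $p=3$, $\dim W_i=2$ via the characterization in Section~\ref{sec:eselltwo} is a welcome extra precision, but it does not change the argument.
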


\begin{proof}
Fix a relevant special isomorphism $\nu_i: \cH_i\to W^\#_i = W_i\boxtimes \cZ_i$.  
Let $(\tau_i,V_i)$ and $(\dot\tau_i, \dot V_i)$ be two (equivalent) Heisenberg representations with central character $\zeta_i$ and let $I: V_i\to \dot V_i$ be a nonzero intertwining operator.  Thus $\dot\tau_i (h) = I\tau_i(h)I^{-1}$ for all $h\in \cH_i$.  The Heisenberg representations $\tau^\#_i= \tau_i\circ \nu_i^{-1}$ and $\dot\tau{}^\#_i = \dot\tau_i\circ \nu_i^{-1}$ of $W^\#_i$ are also intertwined by $I$.  The representations $\tau^\#_i$ and $\dot\tau{}^\sharp_i$ lift uniquely to representations of $\cS_i\ltimes W^\#_i$ which we denote by $\hat\tau{}^\sharp_i$ and $\hat{\dot\tau}{}^\sharp_i$, respectively.  The uniqueness property implies that $\hat{\dot\tau}{}^\sharp_i$ must be equivalent to the representation $(s,w)\mapsto I\hat\tau{}^\#_i(s,w) I^{-1}$.  It now follows that $I$ intertwines the representations $\kappa_i$ and $\dot\kappa_i$ of $K$ associated to $\tau_i$ and $\dot\tau_i$, respectively. 

The invariance of the equivalence class of $\kappa_i$ under the choice of $\nu_i$  follows directly from Lemma \ref{rsiequiv}.
\end{proof}

We define representations
\begin{equation*}
\begin{split}
\kappa &=
\kappa_{-1}\otimes \kappa_0\otd
\kappa_{d-1}\otimes \kappa_d\cr
\pi&=\pi_d=
\hbox{ind}_{K}^G(\kappa ).
\end{split}
\end{equation*}
From now on, the notation $V_i$ will be used for the
space of $\kappa_i$, for $i\in \{\, -1,\dots,d-1\,\}$.

To cleanly state our next result, we extend the definition of $\hat\phi_i$ by letting $\hat\phi_{-1}= 1$ and $\hat\phi_d = \phi_d$.  
We also set $K_+^{d+1}=K_+$.

\begin{lemma}\label{hatphikplus}
If $i\in \{\, -1,\dots,d\,\}$ then 
$$
\hat\phi_i \,|\, K_+ = 
\inf\nolimits_{K^{i+1}_+}^{K_+}(\hat\phi_i \,|\, K^{i+1}_+)
$$ 
and 
$\kappa_i\,|\, K_+$ is
$\hat\phi_i\,|\,K_+$-isotypic.
\end{lemma}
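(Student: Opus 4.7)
The plan is to handle the extreme cases $i=-1$ and $i=d$ separately, then focus on the range $0\le i\le d-1$. For $i=-1$, since $\hat\phi_{-1}=1$ by convention the inflation identity reduces to $1=1$; moreover $\kappa_{-1}=\inf_{K^0}^{K}\rho$ is trivial on each $J^l$ with $l\ge 1$ by construction of the iterated inflation, while $\rho\,|\,K^0_+=\rho\,|\,G^0_{y,0^+}$ is $1$-isotypic by \textbf{D4}, so the factorization $K_+=K^0_+J^1_+\cdots J^d_+$ from Section~\ref{sec:notations} forces $\kappa_{-1}|K_+$ to be $1$-isotypic. For $i=d$ the claims are immediate from $K_+^{d+1}=K_+$ and $\kappa_d=\phi_d\,|\,K$. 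In the substantive case $0\le i\le d-1$, the geometric input I need is the containment
$$
J^l_+\subset (G^i,G)_{y,(r_i^+,s_i^+)}\qquad\text{for all }l\ge i+2.
$$
This is verified by comparing generators: since $J^l_+=(G^{l-1},G^l)_{y,(r_{l-1},s_{l-1}^+)}$ is generated by $G^{l-1}_{y,r_{l-1}}$ and the root groups $\bU_a(E)_{y,s_{l-1}^+}$ with $a\in\Phi(\bG^l,\bT)\setminus\Phi(\bG^{l-1},\bT)$, the inequalities $r_{l-1}\ge r_{i+1}>r_i$ and $s_{l-1}^+\ge s_i^+$ together with the inclusion $\Phi(\bG^l,\bT)\setminus\Phi(\bG^{l-1},\bT)\subset\Phi(\bG,\bT)\setminus\Phi(\bG^i,\bT)$ place all generators inside $(G^i,G)_{y,(r_i^+,s_i^+)}$.

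For Part 1, I would first check $K_+\subset K^0G^i_{y,0}G_{y,s_i^+}$, the domain of $\hat\phi_i$: each factor $G^j_{y,s_{j-1}^+}$ of $K_+$ with $j\le i+1$ lies in $G^i_{y,0^+}\subset G^i_{y,0}$, while those with $j\ge i+2$ lie in $G_{y,s_i^+}$. Because $\hat\phi_i$ is trivial on $(G^i,G)_{y,(r_i^+,s_i^+)}$ (see Remark~\ref{hatprop}), the containment above makes $\hat\phi_i$ trivial on each $J_+^l$ with $l\ge i+2$. The iterated factorization $K_+=K^{i+1}_+J^{i+2}_+\cdots J^d_+$ from Section~\ref{sec:notations} therefore identifies $\hat\phi_i\,|\,K_+$ with the extension of $\hat\phi_i\,|\,K^{i+1}_+$ that is trivial on each $J_+^l$, $l\ge i+2$, which by definition is $\inf\nolimits^{K_+}_{K^{i+1}_+}(\hat\phi_i\,|\,K^{i+1}_+)$; the consistency needed for the iterated inflation to be well-defined is automatic from this same triviality.

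For Part 2, the representation $\kappa_i=\inf^{K}_{K^{i+1}}(\phi'_i)$ is by construction trivial on each $J^l$ with $l\ge i+2$, hence on each $J_+^l$. On $K^{i+1}_+=K^i_+J^{i+1}_+$, condition \textbf{SC2}${}_i$ gives $\omega_i(k,1)=\mathrm{Id}$ for $k\in K^i_+$ and $\tau_i(j)=\hat\phi_i(j)\mathrm{Id}$ for $j\in J^{i+1}_+$ (as the image of $j$ in $\cH_i$ lies in the center $\cZ_i$, where $\tau_i$ acts via the central character $\zeta_i=\hat\phi_i\,|\,J^{i+1}_+$). Combined with $\phi_i\,|\,K^i_+=\hat\phi_i\,|\,K^i_+$ (valid since $K^i_+\subset G^i_{y,0^+}\subset G^i_{y,0}$, where $\hat\phi_i$ agrees with $\phi_i$), the formula $\phi'_i(kj)=\phi_i(k)\omega_i(k,j)$ yields $\phi'_i\,|\,K^{i+1}_+=(\hat\phi_i\,|\,K^{i+1}_+)\mathrm{Id}$. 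Combined with the triviality of $\kappa_i$ on higher $J_+^l$ and the factorization of $K_+$, this proves $\kappa_i\,|\,K_+$ is $(\hat\phi_i\,|\,K_+)$-isotypic, in agreement with Part 1.

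The main obstacle is the generator-level verification of the containment $J^l_+\subset (G^i,G)_{y,(r_i^+,s_i^+)}$; once that geometric input is in hand, both parts reduce to a direct unwinding of the definitions and of conditions \textbf{D4} and \textbf{SC2}${}_i$.
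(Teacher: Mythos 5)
Your proof follows essentially the same route as the paper's: reduce to $0\le i\le d-1$, verify triviality of $\hat\phi_i$ on $J_+^{i+2}\cdots J_+^d$ by a depth-comparison containment into a group on which $\hat\phi_i$ is already known to vanish (via Remark~\ref{hatprop}), then treat $J_+^{i+1}$ through the central character $\zeta_i$ and $K^i_+$ through triviality of $\hat\tau^\sharp_i\circ f'_i$. The differences are cosmetic: the paper proves the sharper containment $J_+^{i+2}\cdots J^d_+\subset (G^{i+1},G)_{y,(r_i^+,s_i^+)}$ rather than your $(G^i,G)_{y,(r_i^+,s_i^+)}$ (both suffice since $\hat\phi_i$ is trivial on the larger group), and for the $K^i_+$ step the paper performs the commutator computation $[K^i_+,J^{i+1}]\subset (G^i,G^{i+1})_{y,(r_i^+,s_i^+)}\subset J^{i+1}_+$ directly, which is precisely the content of \textbf{SC2}${}_i$(ii) that you cite (the construction section asserts \textbf{SC2}${}_i$ holds with $\tilde\phi_i=\omega_i$, so either route is legitimate). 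Your explicit handling of the $i=-1$ and $i=d$ cases is a modest gain in completeness, since the paper dismisses them as trivial. One small slip in your domain check: $G^{i+1}_{y,s_i^+}$ is \emph{not} contained in $G^i_{y,0^+}$, as it contains root subgroups for roots outside $\Phi(\bG^i,\bT)$; the $j=i+1$ factor of $K_+$ should instead be absorbed into $G_{y,s_i^+}$, so the split is $j\le i$ versus $j\ge i+1$ rather than $j\le i+1$ versus $j\ge i+2$. This does not affect the rest of the argument.
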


\begin{proof}
If $i=-1$ or $d$ then our
assertion is trivial, so we assume
$i\in \{\, 0,\dots,d-1\,\}$.  We start by considering the
restriction of
$\kappa_i$ to
$J^{i+2}_+\cdots J^d_+$.  Here, since
$\kappa_i$ is an inflation from
$K^{i+1}$ to $K$, this restriction is 1-isotypic.  So we
need to show that the restriction of
$\hat\phi_i$ to this subgroup is trivial. 
It is immediate from the definitions 
and the fact that $r_i<r_{i+1}<\cdots<r_{d-1}$
that 
$J_+^{i+2}\cdots J^d_+\subset
 (G^{i+1},G)_{y,(r_i^+,s_i^+)}$.
As noted in Remark \ref{hatprop}
(see also Section~4 of \cite{Y}),
$\hat\phi_i\,|\, (G^{i+1},G)_{y,(r_i^+,s_i^+)}$
is trivial.

On $J^{i+1}_+$, we have $\kappa_i=
\phi'_i =\tau_i$ and
$\tau_i\, |\, J^{i+1}_+$ is a multiple of
$\zeta_i = \hat\phi_i\,|\,J^{i+1}_+$.  On $K^i_+$,
we have $\kappa_i = \phi'_i =
\phi_i \otimes \hat\tau_i^\sharp$
and $\hat\phi_i = \phi_i$.  Since $\hat\tau_i^\sharp$ 
factors through the map 
$f_i^\prime :  K^i\to {\rm Sp}(\cH_i)$,  it suffices to
show that $f_i^\prime\, |\, K^i_+ =1$.  This is
equivalent to showing that $[K^i_+, J^{i+1}]
\subset J^{i+1}_+$.  In fact, we have
$[K^i_+,J^{i+1}] \subset [G^{i+1}_{y,0^+},
J^{i+1}]\subset (G^i,G^{i+1})_{y,
(r_i^+,s_i^+)}\subset J^{i+1}_+$.  
\end{proof}

Now let $\vartheta=\vartheta(\Psi)$ be the character 
of $K_+$ defined by 
$\vartheta = \prod_{i=0}^d (\hat\phi_i\, |\, K_+)$.
As we will see in Section~\ref{sec:symmetrizing}, 
certain properties of $\kappa$ are determined
by properties of the character $\vartheta$.

\begin{corollary}\label{varthetaisotypy}
The restriction $\kappa\,|\, K_+$ is $\vartheta$-isotypic.
\end{corollary}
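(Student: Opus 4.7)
The plan is to simply unpack Lemma \ref{hatphikplus} and apply the definition of the tensor product representation. The corollary is essentially an immediate consequence, so I will sketch it in one substantive paragraph.

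First I would recall that, by Lemma \ref{hatphikplus}, for each $i \in \{-1, 0, \ldots, d\}$ the restriction $\kappa_i\,|\, K_+$ is $\hat\phi_i\,|\, K_+$-isotypic. Since $\hat\phi_i\,|\, K_+$ is a one-dimensional character (a quasicharacter pulled back to $K_+$ via the inflation described in the lemma), ``$\hat\phi_i\,|\, K_+$-isotypic'' means precisely that $\kappa_i(k)$ acts on $V_i$ as the scalar $\hat\phi_i(k)\cdot \mathrm{Id}_{V_i}$ for every $k \in K_+$.

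Next, since $\kappa = \kappa_{-1}\otimes \kappa_0\otimes \cdots \otimes \kappa_d$ acts on $V_{-1}\otimes V_0\otimes\cdots\otimes V_d$ via
\[
\kappa(k) \;=\; \kappa_{-1}(k)\otimes \kappa_0(k) \otimes \cdots \otimes \kappa_d(k),
\]
the scalar action on each tensor factor combines multiplicatively: for $k \in K_+$,
\[
\kappa(k) \;=\; \left(\prod_{i=-1}^{d} \hat\phi_i(k)\right) \cdot \mathrm{Id}_{V_{-1}\otimes\cdots\otimes V_d}.
\]
Finally, using the convention $\hat\phi_{-1}=1$ introduced just before Lemma \ref{hatphikplus}, we have
\[
\prod_{i=-1}^{d} \hat\phi_i(k) \;=\; \prod_{i=0}^{d} \hat\phi_i(k) \;=\; \vartheta(k),
\]
so $\kappa\,|\, K_+$ acts by the scalar $\vartheta(k)$, i.e.\ it is $\vartheta$-isotypic. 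There is no real obstacle here; the only thing to be careful about is the indexing convention with $\hat\phi_{-1}=1$ so that the tensor factor $\kappa_{-1}=\rho$ contributes trivially to the product character, which is exactly why the product in the definition of $\vartheta$ runs from $i=0$ rather than $i=-1$.
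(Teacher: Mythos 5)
Your proof is correct and is exactly the argument the paper intends: combine Lemma \ref{hatphikplus} (each $\kappa_i\,|\,K_+$ is $\hat\phi_i\,|\,K_+$-isotypic) with the fact that a tensor product of scalar actions acts by the product of the characters, which equals $\vartheta$ after using the convention $\hat\phi_{-1}=1$. The only nitpick is notational: the paper reserves $V_i$ for $i\in\{-1,\dots,d-1\}$ and treats $\kappa_d=\phi_d\,|\,K$ as a one-dimensional character rather than introducing $V_d$, but this does not affect the argument.
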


\section{The connection with Howe's construction}
\label{sec:Howeconstruction}

Throughout this section, we assume that $G=\bGL_n(F)$.
We discuss relations between
Howe's method of constructing tame supercuspidal
representations of general linear groups
and Yu's construction. 
Although Bushnell and Kutzko \cite{BK} have
a general construction of the admissible dual
of $\bGL_n(F)$ that includes all supercuspidal representations,
we focus on the tame supercuspidal representations
constructed by Howe because these are the ones
that can also be obtained from Yu's construction.
There are no new results
in this section. It is intended as a guide for
the reader, especially in connection with
later sections where we comment on results
for the case $G=\bGL_n(F)$. 

The Howe construction of tame supercuspidal
representations attaches supercuspidal representations
of $\bGL_n(F)$ to $F$-admissible quasicharacters
of the multiplicative groups of tamely ramified
degree $n$ extensions of $F$ \cite{Ho}. 
If $L$ is an extension of $F$, we denote the
ring of integers of $L$ and the maximal ideal in the
ring of integers by $\gO_L$ and $\gP_L$, respectively.

\begin{definition}\label{admissible}
Suppose that $E$ is a tamely ramified extension of $F$ of
degree $n$ and $\varphi$ is a quasicharacter of $E^\times$.
As defined in \cite{Ho}, $\varphi$ is $F$-\textit{admissible}
 (or \textit
{admissible} over $F$) if 
\begin{itemize}
\item there does not exist a proper subfield $L$ of
$E$ containing $F$ such that $\varphi$ factors through
the norm map $N_{E/L}:E^\times\rightarrow L^\times$;
\item if $L$ is a subfield of $E$ containing
$F$ and $\varphi\,|\, (1+\gP_E)$ factors through
$N_{E/L}$, then $E$ is unramified over $L$.
\end{itemize}
\end{definition}

\begin{definition}\label{conjugate}
Suppose that $E$ and $E^\prime$ are tamely ramified
extensions of $F$ of degree $n$, and
$\varphi$ and $\varphi^\prime$ are $F$-admissible quasicharacters
of $E^\times$ and $E^{\prime\times}$, respectively.
Then $\varphi$ and
$\varphi^\prime$ are said to be $F$-\textit{conjugate}
if there exists an $F$-isomorphism of $E$ with $E^\prime$
that takes $\varphi$ to $\varphi^\prime$.
\end{definition}

In \cite{Ho}, given an $F$-admissible
quasicharacter of the multiplicative group of
a degree $n$ tamely ramified extension of $F$,
Howe constructed an equivalence class of
irreducible supercuspidal
representations of $G$. 
Howe also proved that two $F$-admissible quasicharacters
$\varphi$ and $\varphi^\prime$ are $F$-conjugate
if and only if they give rise to the same equivalence class
of supercuspidal representations.
In \cite{Moy}, Moy proved that if $p$ is odd
and does not
divide $n$, then every irreducible supercuspidal 
representation of $G$ arises via Howe's construction.

Given an $F$-admissible quasicharacter $\varphi$, the first step
in producing
an open compact modulo center subgroup and a representation
of the subgroup that induces a representation of $G$ 
that belongs to the 
equivalence class associated to $\varphi$
involves factoring the quasicharacter $\varphi$
in a nice way. Such factorizations are called Howe
factorizations (see Lemma~2.2.4 \cite{Moy}). Any two Howe 
factorizations of a given
$F$-admissible quasicharacter give 
rise to the same equivalence class of representations of $G$.
We will outline the connections between Howe factorizations
of $F$-admissible quasicharacters and
generic cuspidal $G$-data.

\begin{definition}\label{condexp}
If $F^\prime$ is a finite tamely ramified extension
of $F$
and $\varphi$ is a quasicharacter of $F^{\prime\times}$,
the \textit{conductoral exponent} $f(\varphi)$ of
$\varphi$ is the smallest positive integer such that
$\varphi\,|\, 1+\gP_{F^\prime}^{f(\varphi)}=1$.
\end{definition}

Let $F^\prime$ be a finite tamely ramified extension of
$F$.
Choose a prime element $\varpi_{F^\prime}$ in $F^\prime$
having the property that $\varpi_{F^\prime}^e$ belongs
to $F$,
where $e$ is the ramification index of $F^\prime$
over $F$.
Let $C_{F^{\prime}}$ be the subgroup of
$F^{\prime\times}$ generated by  $\varpi_{F^\prime}$
and the roots of unity in $\gO_{F^\prime}^\times$
that have order relatively prime to $p$. Let $\psi^\prime$ be
 a character of $F^\prime$ that is trivial on $\gP_{F^\prime}$ 
and
nontrivial on $\gO_{F^\prime}$.
 If $f(\varphi)>1$, then there exists a unique
$$
\gamma_\varphi\in C_{F^\prime}\cap
(\gP_{F^\prime}^{1-f(\varphi)}-\gP_{F^\prime}^{2-
f(\varphi)})
$$
such that $\varphi(1+t)=\psi^\prime(\gamma_\varphi t)$, 
$t\in \gP_{F^\prime}^{f(\varphi)-1}$. 

\begin{definition}\label{generic}
Let $F^\prime$ be a tamely ramified extension of $F$ and
let $\varphi$ be a quasicharacter of $F^{\prime\times}$.
If $f(\varphi)>1$, we say that
$\varphi$ is \textit{generic} over $F$ if $F[\gamma_\varphi]=F^\prime$.
If
 $f(\varphi)=1$, then we say that $\varphi$ is \textit{generic} over $F$ 
if $\varphi$ is $F$-admissible.
\end{definition}

It is easy to see from the definition of
$F$-admissible that if $f(\varphi)=1$, then $\varphi$
is generic over $F$ if and only if 
$F^\prime$ is unramified over $F$ and $\varphi$ is not fixed by
any nontrivial element of the Galois group $\gal(F^\prime/F)$.
Note that genericity implies admissibility in all cases.

Let $E$ be a tamely ramified extension of $F$ of degree
$n$, and let $\varphi$ be an $F$-admissible quasicharacter
of $E^\times$.

\begin{definition}\label{howefact}
 A \textit{Howe factorization} of $\varphi$
may be defined as follows.
It consists of a tower of fields
$F=E_d\subsetneq E_{d-1}\subsetneq\cdots\subsetneq E_0\subset E$,
$d\ge 0$, 
together with a collection of quasicharacters $\varphi_i$, $i=-1,
\dots, d$, having certain properties. 
Let $N_{E/E_i}$ denote the norm map from $E^\times$ to
$E_i^\times$, for $i\in \{\, 0,\dots,d\,\}$.
For each $i\in \{\, 0,\dots,d\,\}$,
 $\varphi_i$
is a quasicharacter of $E_i^\times$ such that the conductoral
exponent $f_i=f(\varphi_i\circ N_{E/E_i})$ of $\varphi_i\circ N_{E/E_i}$
 is greater than $1$, and such that
$\varphi_i$ is generic over $E_{i+1}$
if $i\not=d$. The condition $f_0<f_1<\cdots <f_{d-1}$ must also
be satisfied. In addition, if $\varphi_d$ is nontrivial,
then $f_d>f_{d-1}$.
If $E_0=E$, then $\varphi_{-1}$ is the trivial
character of $E^\times$.
If $E_0\subsetneq E$, then 
$\varphi_{-1}$ is a quasicharacter
of $E^\times$ such that $f(\varphi_{-1})=1$
 and $\varphi_{-1}$ is generic over $E_0$.
The final requirement is that
 $\varphi=\varphi_{-1}\prod_{i=0}^d\varphi_i\circ N_{E/E_i}$.
\end{definition}

In order to attach a generic cuspidal $G$-datum
to a Howe factorization of $\varphi$, we begin
with a choice of basis of $E$ over $F$. This
gives an injective homomorphism from $E^\times$
to $G$. That is, 
an element of $E^\times$
is mapped to the matrix (relative to the given basis)
of the invertible operator on the $F$-vector space
 $E$ defined by left multiplication by the element
of $E^\times$.
We identify $E^\times$ with
its image in $G$. 
For each $i\in \{\, 0,\dots,d-1\,\}$, let $G^i$ be the 
centralizer
in $G$ of (the image of) $E_i^\times$. Let $G^d=G$.
Then $G^i\cong (R_{E_i/F}\bGL_{n_i})(F)
\cong \bGL_{n_i}(E_i)$, where $n_i=n[E_i:F]^{-1}$
and $R_{E_i/F}$ denotes restriction of scalars,
and $\vec\bG
=(\bG^0,\dots,\bG^d)$ is a tamely ramified
twisted Levi sequence. For each $i\in \{\, 0,\dots,d\,\}$, let
$\phi_i=\varphi_i\circ\det_i$, where $\det_i:G^i\rightarrow E_i^\times$
is the usual determinant homomorphism.
It can easily be checked that if $i\in \{\, 0,\dots,d\,\}$,
Howe's genericity condition on $\varphi_i$ implies
that $\phi_i$ is $G^{i+1}$-generic.
Now $E^\times\cong (R_{E/F}\bGL_1)(F)$ is
a tamely ramified twisted Levi subgroup of
$G$, and the Bruhat-Tits building of this group 
embeds in $\cB(\bG,F)$ and the image has the form $[y]=
y+ X_*(\bZ,F)$ for some $y\in \cB(\bG,F)$.
If $E=E_0$, let $\rho$ be the trivial representation
of $G^0=E^\times$. 

Suppose that $E\not=E_0$. Then, because $f(\varphi_{-1})=1$
and $\varphi_{-1}$ is $E_0$-admissible, 
$\varphi_{-1}$ is
not fixed by any nontrivial element of $\gal(E/E_0)$.
Let $q_0$ be the cardinality of the residue class field
of $E_0$. Then $G^0_{y,0}$ is conjugate to 
$\bGL_{n_0}(\gO_{E_0})$ and $G_{y,0:0^+}^0
\cong\bGL_{n_0}(\F_{q_0})$. 
It is well known that 
there is a bijection (induced by the construction
of Deligne and Lusztig) between the set of equivalence
classes of irreducible cuspidal representations
of $\bGL_{n_0}(\F_{q_0})$ and
the $\gal(\F_{{q_0}^{n_0}}/\F_{q_0})$-orbits
of characters of $\F_{{q_0}^{n_0}}^\times$ that are in
general position. (Recall that a character of
$\F_{{q_0}^{n_0}}^\times$ is in general position
whenever it is not fixed by any nontrivial element
of $\gal(\F_{{q_0}^{n_0}}/\F_{q_0})$.)
The restriction $\varphi_{-1}\,|\, \gO_E^\times$
factors to a character of $\F_{{q_0}^{n_0}}^\times$
that is in general position, and hence determines
an equivalence class of irreducible cuspidal
representations of $\bGL_{n_0}(\F_{q_0})$.
Let $\rho^\circ$ be an irreducible smooth representation
of $G_{y,0}^0$ whose restriction to $G_{y,0^+}^0$
is a multiple of the trivial representation
and that factors to an irreducible cuspidal representation of
$G_{y,0:0^+}^0$ belonging to the above
equivalence class of cuspidal representations.
Note that $G_{[y]}^0=E_0^\times G_{y,0}^0
\cong \langle \varpi_{E_0}\rangle \times
G_{y,0}^0$, for any choice of prime element
$\varpi_{E_0}$ in $E_0$. Let $\rho$ be the representation
of $G_{[y]}^0$ that restricts to $\rho^\circ$ on 
$G_{y,0}^0$, and such that $\rho(\varpi_{E_0})$
is equal to $\varphi_{-1}(\varpi_{E_0})$ times the
identity operator on the space of $\rho^\circ$.

With the above definitions, $\Psi=(\vec\bG,y,\rho,\vec\phi)$ is
an extended generic cuspidal $G$-datum. Note that in the
case $E\not=E_0$, we have the freedom to vary
the choice of $\rho$ somewhat (subject to the condition
that $\rho\,|\, G_{y,0}^0$ factors to
an element of the appropriate equivalence class
of cuspidal representations). We remark that
if we choose a different basis of $E$ over $F$,
we will get a $G$-datum that can be obtained
from $\Psi$ by conjugating by the appropriate
change of basis matrix.

Now suppose that $(\vec\bG,y,\rho,\vec\phi)$ is
a extended generic cuspidal $G$-datum. 
A  tamely ramified twisted Levi subgroup
of $G$ is isomorphic to a direct product
of general linear groups over tamely ramified
extensions of $F$. The center of such a group
is isomorphic to the direct product
of the multiplicative groups of those extensions
and hence is compact modulo the center $F^\times$
of $G$ if and only if only one extension occurs, that
is if and only if the twisted Levi subgroup is isomorphic
to $\bGL_m(F^\prime)$, where $F^\prime$ is a tamely
ramified extension of $F$, and $m=n[F^\prime:F]^{-1}$.
It follows that there exist tamely ramified field extensions
$F=E_d\subsetneq E_{d-1}\subsetneq \cdots\subsetneq E_0$
such that $[E_0:F]$ divides $n$ and $\bG^i\cong
R_{E_i/F}\bGL_{n_i}$, $n_i=n[E_i:F]^{-1}$, for 
$i\in \{\,0,\dots,d\,\}$.

If $i\in \{\,0,\dots,d\,\}$, there exists a unique
quasicharacter $\varphi_i$ of $E_i^\times$ such
that $\phi_i=\varphi_i\circ\det_i$, where
$\det_i:G^i\rightarrow E_i^\times$ is the determinant map.
If $i\not=d$, Yu's condition that $\phi_i$ be
$G^{i+1}$-generic translates into Howe's 
condition that  $\varphi_i$ be generic over $E_{i+1}$.
Also,  Yu's condition on the depths of the quasicharacters
$\phi_i$ translates into the above conditions
on the conductoral exponents of the quasicharacters
$\varphi_i\circ N_{E/E_i}$.

Because $G_{y,0}^0$ is a maximal parahoric subgroup
of $G^0\cong \bGL_{n_0}(E_0)$, $G_{y,0}^0$
is conjugate to $\bGL_{n_0}(\gO_{E_0})$.
The restriction $\rho\,|\, G_{y,0}^0$ factors to
an irreducible cuspidal representation of
$G_{y,0:0^+}^0\cong \bGL_{n_0}(\F_{q_0})$,
where $q_0$ is the cardinality of the residue
class field of $E_0$. As remarked above, the
equivalence class of the given cuspidal
representation corresponds to a 
$\gal(\F_{{q_0}^{n_0}}/\F_{q_0})$-orbit of
characters of $\F_{{q_0}^{n_0}}^\times$
that are in general position.
Let $\tau^\circ$ be any one of these characters.
Let $E$ be an unramified extension of $E_0$
of degree $n_0$, and set $\tau$ equal to the
character of $\gO_E^\times$ that
is trivial on $1+\gP_E$ and
factors to the character $\tau^\circ$
of $\F_{{q_0}^{n_0}}^\times$. 
By Schur's Lemma, the operator $\rho(\varpi_{E_0})$ is
scalar. Extend $\tau$ to a character of
$E^\times$ by setting $\tau(\varpi_{E_0})$ equal
to that scalar.

If $E=E_0$, then $G^0\cong E_0^\times=E^\times$
is an elliptic maximal torus and
$\rho$ is simply the
character $\tau$ 
of  $E^\times$.
Let $\varphi_{-1}$ be the trivial character
of $E^\times$. Note that, because $\rho
\,|\, 1+\gP_{E}=1$, $\rho\varphi_0$
is generic over $E_1$, and has the same conductoral
exponent as $\varphi_0$.
Set $\varphi=\prod_{i=0}^d\varphi_i\circ N_{E/E_i}$.
Then $\varphi$ is $F$-admissible and the
 collection of extensions $E_0,\dots,E_d$,
together with the quasicharacters $\varphi_{-1}$, 
$\rho\varphi_0$, $\varphi_1,\dots,\varphi_{d}$,
is a Howe factorization of the quasicharacter
$\varphi$ of $E^\times=E_0^\times$. Note that
in this case, $\varphi=\rho\prod_{i=0}^d \phi_i\,|\, G^0$.

If $E\not=E_0$, let $\varphi_{-1}=\tau$ be the quasicharacter
of $E^\times$ defined above. It is clear that
the condition that the character of $\F_{{q_0}^{n_0}}^\times$
corresponding to $\tau$ as above
be in general position translates into the condition
that $\varphi_{-1}$ be $E_0$-admissible.
Set $\varphi=\varphi_{-1}\prod_{i=0}^d \varphi_i\circ N_{E/E_i}$.
Then $\varphi$ is $F$-admissible, and the extensions
$E_0,\dots,E_d$, together with the quasicharacters
$\varphi_{-1},\varphi_0,\dots,\varphi_d$, forms a Howe factorization
of $\varphi$. Note that we have some freedom in choosing
the quasicharacter $\varphi_{-1}$. We can replace $\varphi_{-1}$ by
$\varphi_{-1}\circ\sigma$ for some $\sigma\in \gal(E/E_0)$
and produce another $F$-admissible quasicharacter
attached to $\Psi$. This quasicharacter is equal
to $\varphi\circ\sigma$, which is $F$-conjugate to
$\varphi$ and so must give rise to the same equivalence
class of supercuspidal representations.

We remark that it might seem more natural in
the case $[E_0:F]=n$ to set $\varphi_{-1}=\rho$.
However, this would not be consistent with
the definition of Howe factorization.
If we were to modify the original
cuspidal datum (in the case $[E_0:F]=n$) by
replacing $\phi_0$ by $\rho\phi_0$, and replacing
$\rho$ by the trivial representation of $G_{[y]}^0
=E_0^\times$, we would get the same $F$-admissible
quasicharacter $\varphi$, as well as the same Howe
factorization of $\varphi$. Furthermore, 
the supercuspidal
representation obtained via Yu's construction
from the modified $G$-datum is equivalent to
the one obtained from the original $G$-datum.

Let $\Psi=(\vec\bG,y,\rho,\vec\phi)$ be a 
generic cuspidal $G$-datum, with associated 
tamely ramified degree $n$ extension $E$ of $F$,
$F$-admissible quasicharacter $\varphi$, and
Howe factorization of $\varphi$, as discussed above.
Then, as discussed below, Howe's construction and Yu's construction
give rise to equivalent representations of
the group $K(\Psi)$, hence to equivalent
supercuspidal representations of $G$.

Without loss of generality (after conjugating $\Psi$
by some element of $G$), we may assume that
$G^i=(R_{E_i/F}\bGL_{n_i})(F)=\bGL_{n_i}(E_i)$,
$n_i=n[E_i:F]^{-1}$, for each $i\in \{\,0,\dots,d\,\}$,
and $G_{y,0}^0=\bGL_{n_0}(\gO_{E_0})$.
 Recall from Section~\ref{sec:buildings} that we have normalized the
valuation $v_F$ on $F$ so that  $v_F(F^\times)=\mathbb{Z}$,
and we have extended $v_F$ to tame extension fields of $F$.
Consequently
$\g^i_{y,t}=\g_{y,t}\cap \g^i$ for all
$t$, $G^i_{y,t}=G_{y,t}\cap G^i$ for $t>0$.
and $\varpi \g_{y,t}^i=\g_{y,t+1}^i$,
where $\varpi$ is any prime element in $F$.
Let $e$ be the ramification degree of $E$ over
$F$. As in earlier sections, let $r_i$ be the depth of $\phi_i$, 
for $i\in \{\,0,\dots,d-1\,\}$. With the above conventions, we
have $r_i=(f_i-1)/e$, where $f_i=f(\varphi_i\circ N_{E/E_i})$,
$i\in \{\, 0,\dots,d-1\,\}$.

The inducing datum in the Howe construction is defined
in terms of the Howe factorization, and it is a simple matter 
to check that the inducing
subgroup attached to the Howe factorization
is the same as $K=K(\Psi)$.
For $i\in \{\, 1,\dots,d\,\}$, the Howe construction associates
an irreducible representation $\kappa_i^H$ of $K$
to the  quasicharacter $\varphi_i$ of $E_i^\times$,
and if $E\not=E_0$, an irreducible representation
$\kappa_{-1}^H$ of $K$. When $E=E_0$, there is
a representation $\kappa_0^H$ attached to
$\rho\varphi_0$, and when $E\not=E_0$, $\kappa_0^H$ is
attached to $\varphi_0$. The associated supercuspidal
representation is induced from the representation
$\kappa^H$ obtained as the (internal) tensor product
of $\kappa_i^H$, $i\in \{\,0,\dots,d\,\}$, when $E=E_0$ and
of $\kappa_i^H$, $i\in \{\, -1,\dots,d\,\}$, when $E\not=E_0$.
Let $\kappa_{i}^Y$, $i\in\{\, -1,\dots,d-1\,\}$ be
the representations attached by Yu's construction
to $\rho$, $\phi_0,\dots,\phi_d$ (as described
in Section~\ref{sec:construction}), and let $\kappa^Y=\kappa(\Psi)$.
Assuming that a relevant special isomorphism is
used in the construction of $\kappa_i^H$ for
$i\in \{\, 0,\dots,d\,\}$,
whenever there is a (nontrivial) Heisenberg construction,
then $\kappa_i^H\simeq \kappa_i^Y$ for $i\in \{\, 1,\dots,d\,\}$.
If $E\not=E_0$, then $\kappa_0^H\simeq\kappa_0^Y$.

In the case $E=E_0$, $\kappa_0^H$, being associated
to $\rho\varphi_0$, rather than to $\varphi_0$, is equivalent
to $\rho^\sharp\kappa_0^Y$, where $\rho^\sharp$
is the inflation of
the quasicharacter $\rho$ of $E^\times$ to the group
$K$. Note that $\kappa_{-1}^Y=\rho^\sharp$ when
$E=E_0$. Hence, when $E=E_0$,
$$\kappa^Y=\rho^\sharp\otimes \bigotimes_{i=0}^d \kappa_i^Y
\simeq \bigotimes_{i=0}^d\kappa_i^H = \kappa^H.
$$

Now suppose that $E\not=E_0$. Then $\kappa_i^H
\simeq \kappa_i^Y$ for $i\in \{\, 0,\dots, d\,\}$.
Hence to show that $\kappa^H\simeq\kappa^Y$, we
need only show that $\kappa^H_{-1}
\simeq \kappa^Y_{-1}$.
Referring to comments above, the quasicharacter
$\varphi_{-1}$ of $E^\times$ was chosen so that the
irreducible representation of $E_0^\times
\bGL_{n_0}(\gO_{E_0})$ is equivalent to
$\rho$. Hence $\kappa_{-1}^H\simeq \kappa_{-1}^Y$.

\chapter{Further properties of cuspidal $G$-data}

\section{Polarizations associated to involutions}
\label{sec:maxiso}

Many of the notations in this section are the same as in Section~\ref{sec:genHeis}, 
except that we only make the following assumptions regarding the involution 
$\theta$ of $G$ and the quasicharacter $\phi$ of $G^\prime$:
\begin{itemize}
\item $\theta (G^\prime) = G^\prime$ \ \ \ \ (Equivalently, $\theta(\bG^\prime)
=\bG^\prime$.)
\item $\phi$ is $G$-generic of depth $r>0$ and $\phi \circ \theta = \phi^{-1}$.
\end{itemize}
Under these assumptions, there is no involution of the symplectic space $W= J/J_+$ 
that is obviously associated to $\theta$, since  $J$ and $J_+$ might not be $\theta$-stable.  
Nevertheless, one of the two main results in this section, Proposition \ref{canpolar}, 
asserts that there is indeed a canonical involution of $W$ associated to $\theta$ and an 
associated polarization of $W$.  

The existence of this polarization is  used to show  that the space $\Hom_{J^\theta}(\tau,1)$ must have dimension one, where $\tau$ is the Heisenberg representation of $\cH = J/N$ associated to $\zeta$.    This fact and the behavior of the latter Hom-space with respect to the Weil representation are treated in Proposition \ref{corcanpolar}.  These things are also used later in the proof of Proposition \ref{quaddistprop}.

Stating the main results requires some additional notations, beyond the notations from 
Section~\ref{sec:genHeis}.   Recall that  $J(E)=(\bG^\prime,\bG)(E)_{y,(r.s)}$ 
is the group generated by the subgroups 
$\bU_a(E)_{y,r}$, with $a\in \Phi^\prime$, the group $\bT(E)_r$, and the groups $\bU_a(E)_{y,s}$,
with $a\in \Phi\setminus\Phi^\prime$. (Here, $\Phi = \Phi (\bG,\bT,E)$ and 
$\Phi^\prime = \Phi (\bG^\prime,\bT,E)$
are the $E$-roots of $\bT$ in $\bG$ and $\bG^\prime$, respectively.)
Because $y$ and $\theta(y)$ belong to $\cB(\bG^\prime ,E)$
and we can always find an
apartment in $\cB(\bG^\prime ,E)$ containing
any two points of $\cB(\bG^\prime,E)$, we may (and do) assume that the apartment
$A(\bG^\prime,\bT,E)$ contains both $y$ and $\theta(y)$.
If we modify the definition of $J(E)$ and we only allow 
the groups $\bU_a(E)_{y,r}$, with $a\in \Phi^\prime$ that satisfy $a(y-\theta(y))<0$,
 and the groups $\bU_a(E)_{y,s}$, with $a\in \Phi\setminus\Phi^\prime$ that
satisfy $a(y-\theta(y))<0$, 
then we obtain a 
subgroup $J_1(E)$ of $J(E)$.  Similarly, the 
condition $a(y-\theta (y))>0$ yields a subgroup $J_3(E)$.
Next, let $J_2(E)$ denote the group generated by $\bT(E)_r$ and
the groups $U_a(E)_{y,r}$ with $a\in \Phi^\prime$ such that
$a(y-\theta(y))=0$, and
$U_a(E)_{y,s}$ with $a\in \Phi\setminus\Phi^\prime$ such that $a(y-\theta(y))=0$.
Now let
\begin{equation*}
 \begin{split}
J_4(E)&= J(E)\cap \theta (J(E))\cr
J_5(E)&= J(E)\cap \theta (J_+(E))\cr
 \end{split}
\end{equation*}
and let $J_i= J_i(E)\cap G$, for $i=1,\dots,5$ and $W_i = J_iJ_+/J_+$, for $i=1,2,3$.  
Let $J_{4+}= J_5\theta(J_5)$, $W_4= J_4/J_{4+}$ and $W^+ = J^\theta J_+/J_+$.  

Recall from Section~\ref{sec:genHeis} that we use the 
notation $K^\prime$ for the stabilizer 
$G^\prime_{[y]}$ of $[y]$ in $G^\prime$.

\begin{proposition}\label{canpolar}
The spaces $W_1$, $W_2$ and $W_3$ satisfy:
\begin{itemize}
\item $W= W_1\oplus W_2\oplus W_3$,
\item $W_1$ and $W_3$ are totally isotropic,
\item $W_2$ is nondegenerate.
\end{itemize}
The inclusion $J_2\hookrightarrow J_4$ induces an isomorphism $W_2\cong W_4$ of symplectic spaces.  The natural involution of $W_4$ associated to $\theta$ transfers to an involution $\vartheta$ of $W_2$ and there is an associated polarization $W_2= W_2^++W_2^-$, with
\begin{equation*}
 \begin{split}
W_2^+&= \{\, w\in W_2 \ | \ \vartheta
(w) = w\,\},\cr
W_2^- &= \{\, w\in W_2 \ | \ \vartheta
(w) = w^{-1} \,\}.
 \end{split}
\end{equation*}
The space $W^+ = J^\theta J_+/J_+$ is identical to $W_1\oplus W_2^+$ and letting $W^-= W^-_2 \oplus W_3$ gives  a polarization $W= W^++ W^-$, where $W^+$ and $W^-$ are both stable 
under $f'(K^{\prime,\theta})$. Here, $f' : K^\prime\to {\rm Sp}(W)$ is the map arising from the 
action of $K^\prime$ on $J$ by conjugation.
 The involution $\vartheta$ of $W_2$ extends to an involution $\vartheta$ of $W$ which is defined by $\vartheta (w_+ w_-) = w_+w_-^{-1}$, with $w_+\in W^+$ and $w_-\in W^-$.
Similarly, if $$Z^1_\theta (J) = \{\,k\in J\ | \ \theta (k)= k^{-1}\,\}$$ and  $W^-_* = Z^1_\theta (J) J_+/J_+$ then $W^-_* = W_1\oplus W_2^-$ and letting $W^+_*= W^+_2 \oplus W_3$ gives  a polarization $W= W^+_*+ W^-_*$, where $W^+_*$ and $W^-_*$ both stable under $f'(K^{\prime,\theta})$.  The involution $\vartheta$ of $W_2$ extends to an involution $\vartheta_*$ of $W$ which is defined by $\vartheta (w_+ w_-) = w_+w_-^{-1}$, with $w_+\in W^+_*$ and $w_-\in W^-_*$.
\end{proposition}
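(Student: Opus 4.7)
The plan is to work over a splitting field $E$ (chosen so that the apartment $A(\bG^\prime,\bT,E)$ contains both $y$ and $\theta(y)$, which is already arranged in the setup), decompose the symplectic space $W(E)=J(E)/J_+(E)$ into root-graded pieces, then Galois-descend to $F$. Let $H=y-\theta(y)$ in the underlying real vector space of the apartment. Since $\zeta^E$ is trivial on $\bU_c(E)_{y,r}$ for $c\in\Phi^\prime$ (because the generic dual $X^*\in\z^{\prime,*}_{-r}$ is $\bT$-invariant and hence vanishes on all nonzero-weight spaces of $\bfr{g}^\prime$), and $\zeta^E$ is trivial on root groups in $\Phi\setminus\Phi^\prime$ at depth $s^+$, the only nontrivial contributions to $W(E)$ come from $\bU_c(E)_{y,s}/\bU_c(E)_{y,s^+}$ with $c\in\Phi\setminus\Phi^\prime$. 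Partition these by the sign of $c(H)$ to obtain $W_1(E)\oplus W_2(E)\oplus W_3(E)$; since $\theta(H)=-H$, this partition is preserved under $\theta$ with a sign swap $1\leftrightarrow 3$ and $2\mapsto 2$. It is also $\mathrm{Gal}(E/F)$-invariant because $H\in A(\bG^\prime,\bT,F)$, hence descends to $W=W_1\oplus W_2\oplus W_3$.

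Isotropy and nondegeneracy will follow from a direct commutator analysis: the pairing $\zeta([u_c,u_{c^\prime}])$ vanishes unless $c^\prime=-c$ (because all other combinations produce commutators either in $\bG^\prime$ root groups at depth $r$, which the $X^*$-argument kills, or in strictly higher-depth groups outside the support of $\zeta$). Hence the form pairs $u_c$ only with $u_{-c}$, and since $c(H)$ and $-c(H)$ have opposite signs, all pairings live in $W_1\times W_3$. This simultaneously yields isotropy of $W_1$ and $W_3$, a nondegenerate pairing $W_1\times W_3$, and by orthogonal complementation nondegeneracy of $W_2$.

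For $W_2\cong W_4$ and the polarization of $W_2$, note that the condition $c(H)=0$ defining $W_2$-pieces is exactly the condition that $\bU_c(E)_{y,s}=\bU_c(E)_{\theta(y),s}$, so every $J_2$-contribution lies also in $\theta(J)$; a root-by-root check then gives $J_4=J_2J_{4+}$ with $J_2\cap J_{4+}$ mapping to zero in $W_4$, producing the symplectic isomorphism. Because $\zeta\circ\theta=\zeta^{-1}$, the induced involution $\vartheta$ on $W_4\cong W_2$ is anti-symplectic of order two, so Lemma~\ref{seigdec} yields the polarization $W_2=W_2^+\oplus W_2^-$. Extending $\vartheta$ to $W$ by the prescribed rule with $W^+=W_1\oplus W_2^+$ and $W^-=W_2^-\oplus W_3$ gives polarizations of $W$: $W_1$ and $W_3$ are totally isotropic and $W_2=W_2^+\oplus W_2^-$ is a polarization of $W_2$. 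The $K^{\prime,\theta}$-stability of $W^\pm$ and $W^\pm_*$ is immediate from $K^{\prime,\theta}$-invariance of the partition of $\Phi\setminus\Phi^\prime$ by $\mathrm{sign}\,c(H)$ and of the involution $\vartheta$ on $W_2$. The parallel statement for $W^-_*=Z^1_\theta(J)J_+/J_+$ is proved by the same method with $\theta(j)=j^{-1}$ replacing $\theta(j)=j$.

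The step I expect to be the main obstacle is the identification $W^+=J^\theta J_+/J_+=W_1\oplus W_2^+$ (and its analogue for $W^-_*$): the containment $\supseteq$ requires, given an arbitrary element of $W_1\oplus W_2^+$, producing a genuinely $\theta$-fixed lift in $J$, which is a non-abelian splitting problem. For the $W_1$ part one uses a symmetrization $j\mapsto j\cdot\theta(j)$ (valid because $\theta$ sends $J_1$-roots into $J_3$-roots without cancelling modulo $J_+$); for the $W_2^+$ part one invokes $W_2\cong W_4$ and factors $J_4=J_4^\theta\cdot Z^1_\theta(J_4)\cdot J_{4+}$ by combining Lemma~\ref{alphafactor} with the vanishing $H^1_\theta(J_4)=\{1\}$ provided by Proposition~\ref{twodivprop} (since $J\subset G_{y,0^+}$). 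This cohomological input --- that $\theta$-cocycles split inside positive-depth subgroups --- is the essential ingredient that makes the non-abelian factorization descend cleanly to the symplectic quotient.
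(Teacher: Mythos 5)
Your overall strategy---work over the splitting field, decompose $W(E)$ by the sign of $c(y-\theta(y))$ on the root groups in $\Phi\setminus\Phi^\prime$, descend to $F$, use the commutator support to get isotropy and the $W_1\times W_3$ duality, then transport the $\theta$-involution on $W_4=J_4/J_{4+}$ to $W_2$---is essentially the route the paper takes (via Lemmas \ref{zetasym}--\ref{radlem}, which are adaptations of Yu's Lemmas 13.5 and 13.6 with $\Int(g)$ replaced by $\theta$). You also correctly identify the hard step: showing $W^+=J^\theta J_+/J_+$ actually coincides with $W_1\oplus W_2^+$.

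However, the mechanism you propose for that hard step does not go through. To show $W_2^+\subset W^+$, you invoke a factorization $J_4 = J_4^\theta\cdot Z^1_\theta(J_4)\cdot J_{4+}$ coming from $H^1_\theta(J_4)=\{1\}$. Such a factorization indeed exists (via unique square roots, so $J_4 = Z^1_\theta(J_4)\cdot J_4^\theta$), and for $k\in J_2$ with $kJ_{4+}\in W_4^+$ one gets $k = v\cdot u$ with $v=\sqrt{k\theta(k)^{-1}}\in Z^1_\theta(J_4)\cap J_{4+}$ and $u\in J_4^\theta$. But to conclude $kJ_+=uJ_+\in W^+$ you need $v\in J_+$, and $J_{4+}=J_5\theta(J_5)$ is \emph{not} contained in $J_+$ (only the factor $\theta(J_5)=\theta(J)\cap J_+$ is; the factor $J_5=J\cap\theta(J_+)$ is not). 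This is precisely where the paper invests the bulk of its effort: Lemma~\ref{wtwopluswplus} establishes the finer identity $Z^1_\theta(J_{4+})=B^1_\theta(\theta(J_5))$, which lets one rewrite $k^{-1}\theta(k)=k_1\theta(k_1)^{-1}$ with $k_1$ specifically in $\theta(J_5)\subset J_+$, so that $kk_1\in J^\theta$ \emph{and} $kk_1J_+=kJ_+$. Your Lemma~\ref{alphafactor}-plus-$H^1$-vanishing recipe gives the coarser factorization but loses exactly the positional information ($J_+$ versus $J_{4+}$) that makes the argument close. Similarly, the ``symmetrization $j\mapsto j\theta(j)$'' for the $W_1$ part cannot be applied to $J$-elements directly since $J$ is non-abelian ($j\theta(j)$ is not $\theta$-fixed in general); the paper instead defines a \emph{homomorphism} $J_5/J_{4++}\to J_{4+}/J_{4++}$ by this rule, working in abelian subquotients, and couples it with the cardinality count $\lvert W_1\rvert=\lvert W_3\rvert$ (Lemma~\ref{WoneWthree}, an analogue of Yu's Lemma 12.8) to force the equality $\lvert W^+\rvert=\lvert W_1\oplus W_2^+\rvert$. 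That counting step does not appear anywhere in your sketch, and without it you would still need to prove the reverse containment $W^+\subset W_1\oplus W_2^+$ separately.

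A smaller imprecision: ``all pairings live in $W_1\times W_3$'' is not literally true. The commutator pairing is supported on pairs $(c,-c)$, and when $c(y-\theta(y))=0$ both $c$ and $-c$ lie in the $W_2$-block, so there are nonzero pairings internal to $W_2$. This is harmless for your conclusions (it is exactly these internal pairings that make $W_2$ nondegenerate), but as stated the sentence would read as if $W_2$ were totally isotropic.
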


The previous result is a key ingredient in the proof of the following:

\begin{proposition}\label{corcanpolar} The space $\Hom_{J^\theta}(\tau,1)$ has dimension one.  Let $\nu^\bullet : \cH\to W^\sharp$ be Yu's special isomorphism.  Then $\nu^\bullet (\cH^+) = W^+\times 1$, where $\cH^+ = J^\theta N/N$ and $W^+ = J^\theta J_+/J_+$.  Let $\hat\tau$ be the Heisenberg-Weil lift of $\tau$ to $\cS\ltimes_{\nu^\bullet}\cH$ and let $\chi^\cP$ be the unique character of order two of the group
$$\cP = \{\, s\in {\rm Sp}(W)\ | \ sW^+ \subset W^+\,\}.$$  Then $f'(K^{\prime,\theta})\subset \cP$ and $$\Hom_{J^\theta} (\tau ,1) \subset \Hom_{f'(K^{\prime,\theta})} (\hat\tau ,\chi^\cP).$$ 
\end{proposition}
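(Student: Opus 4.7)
The plan is to derive Proposition \ref{corcanpolar} from Proposition \ref{canpolar} together with the abstract Heisenberg theory of Section \ref{sec:Heis}, in two main steps: first invoke Lemma \ref{hplusfixed} to obtain the one-dimensionality, and then adapt the argument of Proposition \ref{stJpol} to identify Yu's special isomorphism $\nu^\bullet$ with the one naturally adapted to the $\theta$-polarization. The final assertion will then be a direct application of Lemma \ref{realtrace}.

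For the one-dimensionality, I would take $\cH^+ = J^\theta N/N$ as the candidate abelian subgroup in Lemma \ref{hplusfixed}. Its image in $W=\cH/\cZ$ is exactly $W^+ = J^\theta J_+/J_+$, which by Proposition \ref{canpolar} is a maximal isotropic subspace. To apply the lemma I need $\cH^+$ abelian and $\cH^+ \cap \cZ = \{1\}$, and both reduce to showing that $\zeta$ is trivial on $J^\theta\cap J_+$: the triviality gives the second condition directly, and abelianness follows because $[J^\theta,J^\theta]\subset J_+$ (by isotropy of $W^+$) while also lying in $J^\theta$. This required vanishing of $\zeta$ should come from the $J_4=J\cap\theta(J)$ portion of Proposition \ref{canpolar}: such an element lies in $J_+\cap\theta(J_+)\subset J_{4+}$ and is $\theta$-fixed, so in the genuine Heisenberg quotient $J_4/J_{4+}$ (where $\theta$ acts as a legitimate involution) the hypothesis $\phi\circ\theta=\phi^{-1}$ translates into $\zeta\circ\theta=\zeta^{-1}$, forcing $\zeta(j)^2=1$ and hence $\zeta(j)=1$ by oddness of $|\cZ|$. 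With the hypotheses verified, Lemma \ref{hplusfixed} yields $\dim\Hom_{\cH^+}(\tau,1)=1$, which is the same as $\dim\Hom_{J^\theta}(\tau,1)$ since $N$ acts trivially on $\tau$.

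For the identification $\nu^\bullet(\cH^+)=W^+\times\{1\}$, I would follow the template of Proposition \ref{stJpol}. By Remark \ref{rsiYurem}, $\nu^\bullet$ is independent of the choice of tame maximal torus $\bT\subset\bG^\prime$ with $y\in A(\bG,\bT,F)$ and of the ordering on $\Phi(\bG,\bT)$. Since $\theta(\bT)$ is also such a torus and $\theta$ transfers an ordering of $\Phi$ to one of $\Phi(\bG,\theta\bT)$, one can compute $\nu^\bullet(\theta(h))$ symmetrically in terms of the $J_1,J_2,J_3$-decomposition of Proposition \ref{canpolar}. For $h$ coming from $J^\theta$, the contributions from $J_1$ and $J_3$ will be interchanged by $\theta$ and recombine in $W\times\{1\}$, while the $J_2$-piece lies in the $\theta$-fixed part of $W_2$, which Proposition \ref{stJpol} (applied in the genuine-$\theta$-stable context of $J_4/J_{4+}$) already places in $W\times\{1\}$. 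Equality of cardinalities $|\cH^+|=|W^+|$ promotes the containment to the asserted equality.

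The final assertion then follows immediately from Lemma \ref{realtrace}: the identity $\nu^\bullet(\cH^+)=W^+\times\{1\}$ puts us precisely in the hypotheses of that lemma, giving $\Hom_{J^\theta}(\tau,1)\subset\Hom_\cP(\hat\tau,\chi^\cP)$. The inclusion $f'(K^{\prime,\theta})\subset\cP$ is routine: an element of $K^{\prime,\theta}$ normalizes $J$ and commutes with $\theta$, hence normalizes $J^\theta$ and so preserves $W^+$ under $f'$. The main obstacle I anticipate is the identification step for $\nu^\bullet$, which genuinely extends Proposition \ref{stJpol} to the present setting where $J$ and $J_+$ themselves need not be $\theta$-stable; handling it requires careful bookkeeping of the $J_1\oplus J_2\oplus J_3$-decomposition under the $\theta$-induced change of torus and root ordering, and this is the technical heart of the proof.
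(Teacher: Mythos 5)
Your overall architecture is parallel to the paper's: deduce one‑dimensionality from Lemma~\ref{hplusfixed} (or, as the paper does, construct an auxiliary involution $\alpha$ whose fixed points are $\cH^+$ and invoke Theorem~\ref{Heisthm}), establish $\nu^\bullet(\cH^+)=W^+\times\{1\}$ by exploiting the invariance of $\nu^\bullet$ under choice of torus and root ordering, and finish with Lemma~\ref{realtrace}. Your step~1 is a legitimate alternate route (once $\zeta\,|\,J^\theta\cap J_+=1$ is obtained from Lemma~\ref{zetasym} together with oddness of $p$), and your step~4 is correct.

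The gap is in step~2, which you yourself flag as the technical heart, but for which your sketch is not viable. You propose that for $h\in J^\theta$ ``the contributions from $J_1$ and $J_3$ will be interchanged by $\theta$ and recombine in $W\times\{1\}$.'' This is not the right picture: every $h\in J^\theta$ lies in $J_4=J\cap\theta(J)$, and by Lemma~\ref{WonetwothreeB} the image of $J_4$ in $W$ is $W_1\oplus W_2$, with \emph{no} $W_3$ component. There is no $J_1\leftrightarrow J_3$ swap available for a $\theta$‑fixed element; the decomposition by sign of $a(y-\theta(y))$ simply does not produce cancelling $J_1$ and $J_3$ pieces. What the paper actually does is quite different: it realizes $J(E)\cap\theta(J(E))$ as $\bG(E)_{y,h}$ for the concave function $h=\max(f_0,f_\theta)$, applies the Bruhat--Tits direct product decomposition of Proposition~6.4.48~\cite{BT1} to factor any $k\in J(E)\cap\theta(J(E))$ as $k_+ k' k_-$ along positive roots, the torus part, and negative roots (for a fixed ordering of $\Phi(\bG,\bT,E)$), and then compares $\nu^\bullet_E(kN(E))$ with $\nu^\bullet_E(\theta(k)N(E))$ using both the $\bT$‑ordering and the transported $\theta(\bT)$‑ordering (Lemmas~\ref{lemonegap} and~\ref{lemtwogap}). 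The conclusion $\mu(\theta(k)N(E))=\mu(kN(E))^{-1}$ together with $p$ odd gives $\mu(kN(E))=1$ for $k$ fixed by $\theta$ (Corollary~\ref{corlemtwogap}), which is the identity you need. This is a genuine extension of the idea behind Proposition~\ref{stJpol}, and the extension requires the Iwahori‑style factorization and Lemma~\ref{zetasym}; none of this appears in your sketch, and the framing you offer would not produce it. Without Corollary~\ref{corlemtwogap} (or an equivalent), the crucial identity $\nu^\bullet(\cH^+)=W^+\times\{1\}$ — and therefore the sign statement involving $\chi^\cP$ — does not follow.
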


The proofs of Proposition \ref{canpolar} and Proposition \ref{corcanpolar} are lengthy technical exercises.  Briefly stated, our approach is to closely follow the techniques of Sections 12 and 13 in \cite{Y} with the automorphism $\Int(g)$ replaced by $\theta$.  
We modify Yu's definitions of the sets $\Phi_1$, $\Phi_2$ and $\Phi_3$ as follows:
\begin{equation*}
\begin{split}
\Phi_1&= \{\, a\in \Phi  \ | \ a(y-\theta(y))<0\,\}\cr
\Phi_2&= \{\, a\in \Phi  \ | \ a(y-\theta(y))=0\,\}\cup \{0\} \cr
\Phi_3&= \{\, a\in \Phi  \ | \ a(y-\theta(y))>0\,\} .
\end{split}
\end{equation*}
Then we define, for $i=1,2,3$,
\begin{equation*}
\begin{split}
\Phi^{\prime}_i&= \Phi_i \cap (\Phi^\prime \cup\{0\})\cr
\Phi^{\prime\prime}_i&= \Phi_i\setminus\Phi^\prime_i,
\end{split}
\end{equation*}
and concave functions
$$f_i(a)=
\begin{cases}
r,&\text{if $a\in \Phi^\prime_i$,}\cr
s,&\text{if $a\in \Phi^{\prime\prime}_i$,}\cr
\infty,&\text{if $a\not\in \Phi_i$,}\cr
\end{cases}$$
on $\Phi  \cup\{0\}$.  Though the meanings of these objects has changed, it turns out that Yu's proofs carry over with very few modifications.  Note that the fact that $f_1$, $f_2$ and $f_3$ are concave follows from Lemma 13.1 (iv) of \cite{Y}.  Also, the groups $J_i(E)$, with $i=1,2,3$, coincide with the groups $\bG (E)_{y,f_i}$ associated to the 
latter concave functions. (See Section~\ref{sec:buildings} for the definitions
of groups associated to concave functions.)

Recall that in the previous section, we defined the symplectic form on $W(E)$  by
$$\langle u,v\rangle = \zeta_E ([u,v]),$$ with $u,v\in J(E)$, for a 
certain character $\zeta_E$ of $J_+(E)$.

\begin{lemma}\label{zetasym}
The characters $\zeta^{-1}_E$ and $\zeta_E\circ\theta$ agree on $J_+(E)\cap \theta (J_+(E))$. 
Consequently, if $u,v\in J_4(E)$ then $\langle u,v\rangle = \langle \theta (v),\theta (u)\rangle$.
\end{lemma}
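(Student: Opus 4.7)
The second assertion is a formal consequence of the first. For $u, v \in J_4(E) = J(E) \cap \theta(J(E))$, the commutator $[u, v]$ lies in $J_+(E) \cap \theta(J_+(E))$: since $J(E)/J_+(E)$ is abelian (being the underlying additive group of the symplectic space $W(E)$), we have $[J(E), J(E)] \subset J_+(E)$, and $[u, v] = \theta([\theta u, \theta v]) \in \theta(J_+(E))$. Then
\[
\langle \theta v, \theta u\rangle = \zeta_E([\theta v, \theta u]) = \zeta_E(\theta([v, u])) = \zeta_E([v, u])^{-1} = \zeta_E([u, v]) = \langle u, v\rangle,
\]
where the third equality applies the first assertion to $[v, u] \in J_+(E) \cap \theta(J_+(E))$ and the fourth uses $[v, u] = [u, v]^{-1}$ together with multiplicativity of $\zeta_E$.

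For the first assertion, the plan is to show that the character $\chi(x) := \zeta_E(x) \cdot \zeta_E(\theta x)$ on $J_+(E) \cap \theta(J_+(E))$ is trivial, via the realization of $\zeta_E$ by the $G$-generic element $X^* \in \mathfrak{z}^{\prime, *}_{-r}$. Viewing $X^*$ as an element of $\mathfrak{g}(E)^*_{y, -r}$ (extended by zero on the root subspaces for roots in $\Phi \setminus \Phi^\prime$), and using that $\bG(E)_{y, r^+}$ sits inside $J_+(E)$ and in the kernel of $\zeta_E$, the Moy--Prasad isomorphism $e = e_{y, r}$ yields
\[
\zeta_E\bigl(e(Y + \mathfrak{g}(E)_{y, r^+})\bigr) = \psi_E(X^*(Y)), \qquad Y \in \mathfrak{j}_+(E) := (\mathfrak{g}^\prime, \mathfrak{g})(E)_{y, (r, s^+)}.
\]
The assumption $\phi \circ \theta = \phi^{-1}$ then translates, via Lemma~\ref{centralrep}, to the key depth estimate $X^* + \theta^*(X^*) \in \mathfrak{z}^{\prime, *}_{(-r)^+}$. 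Indeed, both $-X^*$ and $\theta^*(X^*)$ lie in $\mathfrak{z}^{\prime, *}_{-r}$ and both realize the restriction of $\phi^{-1} = \phi \circ \theta$ to $\bG^\prime(E)_{\theta y, r}$ (the latter via the identity $\phi(\theta(e_y(Y))) = \phi(e_{\theta y}(\theta Y)) = \psi_E((\theta^* X^*)(Y))$, coming from the natural $\theta$-equivariance of the Moy--Prasad isomorphisms), so Lemma~\ref{centralrep} applies.

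To finish, for $x \in J_+(E) \cap \theta(J_+(E))$ I select a representative $Y \in \mathfrak{j}_+(E) \cap \theta(\mathfrak{j}_+(E))$ with $e(Y) \equiv x \pmod{\bG(E)_{y, r^+}}$, and apply Lemma~\ref{expequivariance} over $E$ to produce a single element $k$ satisfying $e(Y) = k\,\bG(E)_{y, r^+}$ and $e(\theta Y) = \theta(k)\,\bG(E)_{y, r^+}$. Then
\[
\chi(x) = \zeta_E(k)\, \zeta_E(\theta k) = \psi_E\bigl((X^* + \theta^*(X^*))(Y)\bigr) = 1,
\]
the last equality holding because $(X^* + \theta^* X^*)(Y) \in \mathfrak{P}_E$: on the $\mathfrak{g}^\prime$-component of $Y$ (at depth $\ge r$), the pairing with an element of $\mathfrak{z}^{\prime, *}_{(-r)^+}$ lands in $\mathfrak{P}_E$, while on the root-subspace components outside $\Phi^\prime$ both $X^*$ and $\theta^* X^*$ pair to zero. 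The main obstacle is the preparatory step of producing a simultaneous lift $Y \in \mathfrak{j}_+(E) \cap \theta(\mathfrak{j}_+(E))$: this requires comparing the Moy--Prasad parametrizations of $J_+(E)$ at $y$ and at $\theta y$ and using that $\bG(E)_{y, r^+} \subset J_+(E)$ lies in the kernel of $\zeta_E$, so that adjusting an arbitrary lift by elements of $\mathfrak{g}(E)_{y, r^+}$ in order to land in $\theta(\mathfrak{j}_+(E))$ does not affect $\chi(x)$.
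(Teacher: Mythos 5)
Your reduction of the second assertion to the first is correct. For the first assertion, the paper simply cites Yu's Lemma~9.3 together with the three facts $\theta(\bG^\prime)=\bG^\prime$, $\phi^E\circ\theta=(\phi^E)^{-1}$ on $\bG^\prime(E)_{y,r}\cap\bG^\prime(E)_{\theta(y),r}$, and $\zeta_E\,|\,\bG^\prime(E)_{y,r}=\phi^E$; your reconstruction via the realization of $\zeta_E$ by $X^*$, the depth estimate from Lemma~\ref{centralrep}, and the $\theta$-equivariance of Lemma~\ref{expequivariance} uses exactly the toolkit the paper applies in the proof of Lemma~\ref{secondhalf}, so your approach is consistent with the paper's. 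One small imprecision: in the Lemma~\ref{centralrep} step, $-X^*$ realizes $\phi^{-1}$ at the point $y$, not at $\theta(y)$; what you should invoke is the two-point comparison with $(\Gamma_1,x_1)=(-X^*,y)$ and $(\Gamma_2,x_2)=(\theta(X^*),\theta(y))$, which is precisely the form that Lemma~\ref{centralrep} provides.

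A more substantive concern is your treatment of the simultaneous lift. The proposed resolution, adjusting a lift of $x$ by elements of $\g(E)_{y,r^+}$ to force it into the $\theta$-translate of the Lie-algebra lattice, does not work as stated, because the indeterminacy on the $\theta$-side lives in $\g(E)_{\theta(y),r^+}$, a different lattice. The correct device is the inverse form of Lemma~\ref{expequivariance} that the paper itself asserts in the proof of Lemma~\ref{secondhalf}: given $k\in J_+^{i+1}\cap\theta(J_+^{i+1})$, there exists $X\in\gJ_+^{i+1}\cap\theta(\gJ_+^{i+1})$ satisfying both equivariance relations simultaneously (with the same $k$). Applying this over $E$ directly to $x\in J_+(E)\cap\theta(J_+(E))$ produces the required $Y$ with $e(Y+\g(E)_{y,r^+})=x\,\bG(E)_{y,r^+}$ and $e(\theta(Y)+\g(E)_{y,r^+})=\theta(x)\,\bG(E)_{y,r^+}$, after which your pairing estimate gives $\zeta_E(x)\,\zeta_E(\theta(x))=\psi^E\bigl((X^*+\theta(X^*))(Y)\bigr)=1$ exactly as you compute.
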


\begin{proof} Let $\phi^E$ be the character of $\bG^\prime(E)_{y,r}$
defined in Section~\ref{sec:genHeis}.
The proof of the lemma
uses $\theta(\bG^\prime)=\bG^\prime$, $\phi^E\circ\theta=(\phi^E)^{-1}$
on $\bG^\prime(E)_{y,r}\cap\bG^\prime(E)_{\theta(y),r}$
and $\zeta_E\,|\, \bG^\prime(E)_{y,r}=\phi^E$, and
is a straightforward generalization of the proof of Lemma 9.3 of \cite{Y}.
\end{proof}

\begin{lemma}\label{WonetwothreeA}
The spaces $W_1$ and $W_3$ are totally isotropic and each of these spaces is orthogonal to $W_2$.
\end{lemma}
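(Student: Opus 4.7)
The plan is to translate the problem into commutator computations and then apply the concave function machinery from \cite{Y}, adapting the proof of Yu's Lemma 13.2 by replacing the automorphism $\mathrm{Int}(g)$ with the involution $\theta$. The symplectic form on $W(E)$ is $\langle u,v\rangle=\zeta^E([u,v])$, so to establish total isotropy of $W_1$ and $W_3$, and orthogonality of $W_2$ to each, it suffices to show that the corresponding commutators $[J_i(E),J_j(E)]$ land in $\ker\zeta^E$, for the appropriate pairs $(i,j)\in\{(1,1),(3,3),(1,2),(2,3)\}$.

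The first step is a root-theoretic observation. For $a,b\in\Phi$, the Chevalley-type commutator relations in Bruhat--Tits theory express $[\bU_a(E)_{y,\alpha},\bU_b(E)_{y,\beta}]$ as a product of root subgroups $\bU_{ia+jb}(E)_{y,i\alpha+j\beta}$ taken over positive integers $i,j$ with $ia+jb\in\Phi$. The key observation is that if $a(y-\theta(y))<0$ and $b(y-\theta(y))\le 0$ (or vice versa), then for any positive integers $i,j$ with $ia+jb\in\Phi$, we have $(ia+jb)(y-\theta(y))<0$; similarly if both signs are strictly positive. Consequently the root subgroups appearing in $[u,v]$ for $u\in J_1(E)$, $v\in J_1(E)\cup J_2(E)$ involve only roots in $\Phi_1$, and analogously for $J_3$.

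The second step is a filtration bound. I would define concave functions $g_{11},g_{33},g_{12},g_{23}$ on $\Phi\cup\{0\}$ such that, for the appropriate pair $(i,j)$, $[J_i(E),J_j(E)]\subset \bG(E)_{y,g_{ij}}$. Concretely, for $g_{11}$ take
\[
g_{11}(c)=\begin{cases} r^+, & c\in\Phi'_1,\\ s^+, & c\in\Phi''_1,\\ \infty, & c\notin\Phi_1\cup\{0\},\end{cases}
\]
and similarly for $g_{33}$; for the mixed pairs take $g_{12}$ and $g_{23}$ supported only on $\Phi_1$ (resp.\ $\Phi_3$) with the same values. Concavity of each $g_{ij}$ is established as in Lemma 13.1 of \cite{Y}, using the step-1 observation that any $c=ia+jb$ with $a,b$ in the relevant index sets still lies in the correct $\Phi_i$, together with the inequality $i f_i(a)+j f_j(b) \ge g_{ij}(c)$ (the strict inequalities $r^+$ and $s^+$ arise because either two level-$r$ terms produce depth $\ge 2r>r$, two level-$s$ terms produce depth $\ge 2s=r>s$, or a level-$r$ and a level-$s$ term produce depth $\ge r+s>r$). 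Then $\bG(E)_{y,g_{ij}}\subset G'(E)_{y,r^+}\cdot (G',G)(E)_{y,(r^+,s^+)}$, and since $\zeta^E$ is trivial on both factors (on the first because $\phi^E$ has depth $r$, on the second by construction of $\zeta^E$), we conclude $\bG(E)_{y,g_{ij}}\subset\ker\zeta^E$.

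Combining these two steps yields $\zeta^E([u,v])=1$ for the required pairs, proving that $W_1$ and $W_3$ are totally isotropic and that each is orthogonal to $W_2$. The main obstacle is a bookkeeping one: verifying concavity of the $g_{ij}$ and checking that every commutator term $\bU_{ia+jb}(E)_{y,i f_i(a)+j f_j(b)}$ lies in $\bG(E)_{y,g_{ij}}$ requires case analysis on whether $a,b$ lie in $\Phi'$ or $\Phi\setminus\Phi'$ and on whether the resulting combination lies in $\Phi'$ or not. This is exactly the analysis carried out in \cite{Y}, and it transfers to the present setting because the defining condition $a(y-\theta(y))\lessgtr 0$ used to build $J_1,J_2,J_3$ behaves under addition of roots in precisely the same way as the condition $a(y-g\cdot y)\lessgtr 0$ used by Yu.
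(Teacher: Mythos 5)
Your proposal is correct and follows essentially the same route as the paper: the paper's proof reduces the claim to showing that the commutator groups $[J_1(E),J_1(E)]$, $[J_1(E),J_2(E)]$, $[J_2(E),J_3(E)]$, $[J_3(E),J_3(E)]$ lie in $N(E)$ and then cites the analogue of Yu's Lemma 13.5, obtained by replacing $\Int(g)$ with $\theta$ — exactly the adaptation you carry out. Your write-up simply makes explicit the concave-function and root-sign bookkeeping that the paper leaves to the reference.
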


\begin{proof}
It suffices to work over the splitting field $E$ and to show that the following commutator subgroups are contained in $N(E)$:
$$[J_1(E),J_1(E)], [J_1(E), J_2(E)], [J_2(E), J_3(E)], [J_3(E),J_3(E)].$$  The latter assertion resembles the statement of Yu's Lemma 13.5 and it can be proved analogously.
\end{proof}

\begin{lemma}\label{WonetwothreeB}
The spaces $W_1$, $W_2$ and $W_3$ satisfy the following relations:
\begin{enumerate}
\item $W_1= J_5J_+/J_+ = J_{4+}J_+/J_+$.
\item $W_1\oplus W_2= J_4 J_+/J_+$.
\item $W_1\oplus W_2\oplus W_3 =W$.
\end{enumerate}
\end{lemma}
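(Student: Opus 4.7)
The strategy is to work over the splitting field $E$, exploit the root space decomposition of $W(E)=J(E)/J_+(E)$, and then descend to $F$. Since $\bG$ splits over the tamely ramified extension $E$ and $p$ is odd, the groups $J(E), J_+(E), J_i(E)$ are all $\gal(E/F)$-stable and their $\gal(E/F)$-fixed points recover the corresponding $F$-rational objects $J, J_+, J_i$, while the identification $W=J/J_+$ is compatible with the Galois-fixed point functor by standard Moy--Prasad tameness arguments; hence it suffices to establish the three assertions at the level of $W(E)$.

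First I would record the root-space decomposition: the natural map
$$
\bigoplus_{a\in \Phi'}\bU_a(E)_{y,r}/\bU_a(E)_{y,r^+}\ \oplus\ \bigoplus_{a\in \Phi\setminus\Phi'}\bU_a(E)_{y,s}/\bU_a(E)_{y,s^+}\ \oplus\ \bT(E)_r/\bT(E)_{r^+}\ \longrightarrow\ W(E)
$$
is an isomorphism of abelian groups. The partition $\Phi\cup\{0\}=\Phi_1\sqcup\Phi_2\sqcup\Phi_3$ groups the summands into exactly $W_1(E)$, $W_2(E)$, $W_3(E)$, which proves (3).

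Next I would prove (2) by a root-by-root analysis of $J_4(E)=J(E)\cap\theta(J(E))$. Since $\theta(\bG')=\bG'$ and $\theta(\bU_a(E)_{y,t})=\bU_{a\circ\theta^{-1}}(E)_{\theta(y),t}$, the group $\theta(J(E))$ is obtained from $J(E)$ by replacing $y$ with $\theta(y)$ in each filtration index. Comparing $\bU_a(E)_{y,t}$ and $\bU_a(E)_{\theta(y),t}$ (with $t=r$ for $a\in\Phi'$ and $t=s$ otherwise) and using that the filtration shifts by $a(y-\theta(y))$: for $a\in\Phi_2$ the two filtration groups are equal, so the intersection contributes fully; for $a\in\Phi_1$ we have $\bU_a(E)_{y,t}\subsetneq\bU_a(E)_{\theta(y),t}$, so the intersection is again the full $\bU_a(E)_{y,t}$; for $a\in\Phi_3$ the intersection equals $\bU_a(E)_{\theta(y),t}$, which by the shift $a(y-\theta(y))>0$ is contained in $\bU_a(E)_{y,t^+}\subset J_+(E)$ and thus vanishes modulo $J_+(E)$. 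Therefore $J_4(E)J_+(E)/J_+(E)=W_1(E)\oplus W_2(E)$, proving (2).

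Finally, for (1), the same root-by-root computation for $J_5(E)=J(E)\cap\theta(J_+(E))$ shows: for $a\in\Phi_1$ the extra $^+$ on the $\theta(y)$-side does not constrain the $y$-side, because the $\theta(y)$-filtration at level $t^+$ still contains all of $\bU_a(E)_{y,t}$; for $a\in\Phi_2$ and $\Phi_3$ the intersection is forced into $\bU_a(E)_{y,t^+}\subset J_+(E)$. Hence $J_5(E)J_+(E)/J_+(E)=W_1(E)$. Moreover $\theta(J_5(E))=\theta(J(E))\cap J_+(E)\subset J_+(E)$, so $\theta(J_5(E))$ contributes nothing modulo $J_+(E)$, giving $J_{4+}(E)J_+(E)/J_+(E)=J_5(E)J_+(E)/J_+(E)=W_1(E)$, which is (1). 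The main obstacle, as in several of Yu's analogous lemmas, is the bookkeeping for the shifted filtration exponents $a(y-\theta(y))$ in combination with the two different depths $r$ and $s$ on $\Phi'$ versus $\Phi\setminus\Phi'$; once this is laid out carefully the descent from $E$ to $F$ via Galois invariance is routine.
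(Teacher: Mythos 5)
Your strategy coincides with the paper's: the paper's proof of this lemma simply invokes the argument of Conditions (a) and (d) of Lemma 13.6 of \cite{Y}, which is exactly the splitting-field, root-by-root analysis of the concave-function groups $J(E)$, $\theta(J(E))$, $\theta(J_+(E))$ followed by intersection with $G$, so there is no divergence of method. However, two points in your write-up need repair. First, your displayed decomposition of $W(E)$ is not an isomorphism: $J_+(E)$ contains $\bG^\prime(E)_{y,r}$, hence all the groups $\bU_a(E)_{y,r}$ with $a\in\Phi^\prime$ and $\bT(E)_r$, so the summands indexed by $\Phi^\prime$ and by $\{0\}$ map to zero in $W(E)=J(E)/J_+(E)$ (and they need not be trivial as abstract quotients). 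The correct statement is $W(E)\cong\bigoplus_{a\in\Phi\setminus\Phi^\prime}\bU_a(E)_{y,s}/\bU_a(E)_{y,s^+}$, and accordingly $W_i(E)$ is spanned by the images of the $\bU_a(E)_{y,s}$ with $a\in\Phi_i\setminus\Phi^\prime$ only; with that correction your proof of (3) is fine.

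Second, the ``root-by-root'' computation of $J_4(E)$ and $J_5(E)$ rests on two facts you leave implicit. Since $\theta$ carries $\bT$-root groups to $\theta(\bT)$-root groups, one cannot literally compare $\bU_a(E)_{y,t}$ with $\bU_a(E)_{\theta(y),t}$ inside a single root datum without first invoking the independence of $\bG(E)_{y,f}$ of the choice of maximal torus whose apartment contains $y$ and $\theta(y)$, which lets one write $\theta(J(E))=\bG(E)_{y,f_\theta}$ and $\theta(J_+(E))$ analogously, with $f_\theta(a)=f_0(a)+a(y-\theta(y))$ relative to $\bT$ (this is how the paper proceeds before Lemma \ref{lemtwogap}); and the identification of the intersection of two concave-function groups with the group attached to the pointwise maximum is Lemma 13.2(ii) of \cite{Y}, not a formal statement about generators. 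Finally, the passage from $E$ to $F$ is not mere Galois stability: identities such as $(J_5(E)J_+(E))\cap G=J_5J_+$ require the product decompositions and vanishing statements of the tame theory, and that is precisely the content of Yu's Lemma 13.6 argument that the paper cites. With those references made explicit, your argument is the paper's.
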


\begin{proof}  Our claim follows from the same approach used in the proof of Conditions (a) and (d) in Lemma 13.6 of \cite{Y}.
\end{proof}

Let $W_4 = J_4/J_{4+}$. (Note that $W_4$ is not the same as $J_4J_+ /J_+ = W_1\oplus W_2$.)

\begin{lemma}\label{WtwoWfour}
The inclusion of $J_2$ in $J_4$ determines a symplectic isomorphism of $W_2$ with $W_4$.
\end{lemma}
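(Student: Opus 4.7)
The plan is to mimic the proof of Yu's Lemma 13.6 in \cite{Y}, with $\theta$ playing the role of his $\Int(g)$, and to work primarily over $E$ so that the root group decompositions relative to $\bT$ are available. Before verifying that the map $W_2\to W_4$ induced by the inclusion $J_2\hookrightarrow J_4$ is a symplectic isomorphism, I would first check that $W_4=J_4/J_{4+}$ carries a natural symplectic form given by $\langle u,v\rangle_4=\zeta([u,v])$. For this, I need $[J_4,J_4]\subset J_+$ (so that $\zeta$ is defined on the commutators) and $[J_4,J_{4+}]\subset N$ (so that the pairing descends to cosets). The first containment follows from $[J,J]\subset J_+$ and symmetrization under $\theta$, and the second from $[J,J_+]\subset N$ combined with the inclusions $J_5,\theta(J_5)\subset J_+$ and $J_4\subset J\cap\theta(J)$.

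Once the form on $W_4$ is set up, the symplectic compatibility of $J_2\hookrightarrow J_4$ is automatic since both pairings are given by $u,v\mapsto \zeta([u,v])$. The real work is to show the induced map $W_2\to W_4$ is bijective, which splits into three claims handled root-by-root over $E$:
\begin{itemize}
\item[(a)] $J_2\cap J_+\subset J_{4+}$: an element of $J_2\cap J_+$ is built (via Lemma~\ref{WonetwothreeB} and the concave-function machinery of Section~\ref{sec:buildings}) from pieces living in the root groups $\bU_a(E)_{y,t_a^+}$ with $a(y-\theta(y))=0$; for such $a$ the filtration at $y$ and at $\theta(y)$ coincide, so the element lies in $J_+\cap \theta(J_+)\subset J_5\cap\theta(J_5)\subset J_{4+}$.
\item[(b)] $J_2\cap J_{4+}\subset J_+$: here I would produce an Iwahori-style factorization of $J_{4+}=J_5\theta(J_5)$ respecting the tripartition $\Phi=\Phi_1\sqcup\Phi_2\sqcup\Phi_3$, and observe that its $\Phi_2$-piece sits inside $J_+$.
\item[(c)] $J_4=J_2\cdot J_{4+}$: factor $J_4(E)$ analogously; for $a\in\Phi_1$ the inequality $a(y-\theta(y))<0$ forces $\bU_a(E)_{\theta(y),t_a}\subset\bU_a(E)_{y,t_a^+}$, so the $\Phi_1$-piece lies in $\theta(J)\cap J_+$, hence in $\theta(J_5)\subset J_{4+}$, and symmetrically for $\Phi_3$.
\end{itemize}

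The main obstacle is precisely establishing these Iwahori-type product decompositions of $J_4$ and $J_{4+}$ along the partition $\Phi=\Phi_1\sqcup\Phi_2\sqcup\Phi_3$. Because $\theta$ need not stabilize $\bT$, one cannot just intersect with root subgroups naively; instead I would exploit the concavity of the functions $f_1,f_2,f_3$ (as in Lemma~13.1(iv) of \cite{Y}) together with the identification $\theta(\bG(E)_{y,f})=\bG(E)_{\theta(y),f\circ\theta^{-1}}$ and the fact, built into our choice of apartment, that both $y$ and $\theta(y)$ lie in $A(\bG',\bT,E)$. Once this decomposition is in place, descending to $F$-points and noting (via Lemma~\ref{zetasym}) that $\zeta$ behaves compatibly on the overlap $J_+\cap\theta(J_+)$ finishes the proof.
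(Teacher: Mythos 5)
There is a genuine gap here, together with two concrete errors. First, your justification that the pairing $\zeta([\cdot,\cdot])$ descends to $W_4=J_4/J_{4+}$ rests on the inclusion $J_5\subset J_+$, which is false: $J_5=J\cap\theta(J_+)$ contains, for roots $a$ with $a(y-\theta(y))<0$, root-group elements at the shallower $J$-levels (e.g.\ all of $\bU_a(E)_{y,s}\cap G$ for $a\in\Phi_1\setminus\Phi^\prime$), and indeed Lemma~\ref{WonetwothreeB}(1) identifies $J_5J_+/J_+=J_{4+}J_+/J_+$ with $W_1$, which is nontrivial in general. So the containment $[J_4,J_{4+}]\subset[J,J_+]$ is unavailable; the triviality of $\zeta$ on $[J_{4+},J_4]$ is true, but it needs Lemma~\ref{zetasym} (for $u\in J_5$, $v\in J_4$ one has $[u,v]\in J_+\cap\theta(N)$, and $\zeta\circ\theta=\zeta^{-1}$ on $J_+\cap\theta(J_+)$ forces $\zeta([u,v])=1$), or it can be read off a posteriori from Lemma~\ref{radlem} --- which the paper deduces \emph{from} the present lemma, so your order of argument would need care. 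Second, in (c) the roles of $\Phi_1$ and $\Phi_3$ are swapped: for $a(y-\theta(y))<0$ one has $\bU_a(E)_{\theta(y),t}=\bU_a(E)_{y,\,t+a(y-\theta(y))}\supset\bU_a(E)_{y,t}$, so the $\Phi_1$-piece of $J_4$ sits at the $J$-levels and lies in $J\cap\theta(J_+)=J_5$, not in $\theta(J)\cap J_+=\theta(J_5)$; it is the $\Phi_3$-piece that lands in $\theta(J_5)$. Third, and most seriously, the Iwahori-type factorizations of $J_4$ and of the product set $J_{4+}=J_5\theta(J_5)$ along $\Phi_1\sqcup\Phi_2\sqcup\Phi_3$, which you correctly identify as the crux of (a)--(c), are exactly what you do not prove, so (b) and (c) remain unsupported as written.

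None of this root-by-root work is needed. Since $\theta(J_5)=\theta(J)\cap J_+=J_4\cap J_+$, one has $J_{4+}=J_5(J_4\cap J_+)$, and Lemma~\ref{WonetwothreeB}(1),(2) yields the purely formal chain
$$W_2\;\cong\;(W_1\oplus W_2)/W_1\;\cong\;(J_4J_+)/(J_5J_+)\;\cong\;J_4/\bigl(J_5(J_4\cap J_+)\bigr)\;=\;J_4/J_{4+}\;=\;W_4,$$
whose composite is visibly induced by the inclusion $J_2\hookrightarrow J_4$; compatibility of the symplectic structures then follows because $W_1$ is isotropic and orthogonal to $W_2$ (Lemma~\ref{WonetwothreeA}) and every pairing in sight is $\zeta([\cdot,\cdot])$. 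This is the paper's proof, and it pushes all the root-theoretic work into Lemma~\ref{WonetwothreeB}, which is already available. If you prefer your direct route, you must actually establish the factorization statements (in effect reproving Lemma~\ref{WonetwothreeB} by the method of Lemma~13.6 of \cite{Y}) and repair the two errors noted above.
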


\begin{proof}  According to Parts (1) and (2) of Lemma \ref{WonetwothreeB}, we have natural isomorphisms $W_2\cong (W_1\oplus W_2)/W_1 \cong (J_4J_+)/(J_5J_+) \cong J_4/(J_5(J_4\cap J_+)) = J_4/J_{4+} = W_4$.  Clearly, the resulting isomorphism $W_2\cong W_4$ comes from the inclusion of $J_2$ in $J_4$.
\end{proof}

The next result establishes that the space $W_4$  is nondegenerate.

\begin{lemma}\label{radlem}
$J_{4+}= \{\, k\in J_4 \ | \ \langle k,J_4\rangle =1\,\}$.
\end{lemma}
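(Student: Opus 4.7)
The plan is to prove both containments separately, establishing along the way that the symplectic form on $W$ descends to a nondegenerate form on $W_4 = J_4/J_{4+}$.

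First, I will show $J_{4+} \subseteq \{k \in J_4 \mid \langle k, J_4 \rangle = 1\}$. Because $\zeta$ is $J$-invariant---since $\cZ = J_+/N$ is the center of the Heisenberg group $\cH = J/N$---this ``radical'' set is a subgroup of $J_4$, so it suffices to prove that $J_5$ and $\theta(J_5)$ each pair trivially with $J_4$. For $u \in \theta(J_5) = \theta(J) \cap J_+$ and $v \in J_4 \subset J$, the commutator $[u,v]$ lies in $[J_+, J] \subset N$ by the Heisenberg relation, so $\langle u, v \rangle = \zeta([u,v]) = 1$. For $u \in J_5 = J \cap \theta(J_+)$ and $v \in J_4$, I verify that $[u,v] \in J_+ \cap \theta(J_+) \subset J_+(E) \cap \theta(J_+(E))$; Lemma~\ref{zetasym} then gives $\zeta_E([u,v])^{-1} = \zeta_E(\theta([u,v])) = \zeta_E([\theta(u), \theta(v)])$, and since $\theta(u) \in J_+$ and $\theta(v) \in J$, the right side equals $1$ by the argument used in the first case.

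Next, I will use this to define a form on $W_4$ and show it is nondegenerate. From the first step, the symplectic pairing on $W$ descends (via the inclusion $J_4 \hookrightarrow J$) to a well-defined bilinear form on $W_4 = J_4/J_{4+}$. By Lemma~\ref{WtwoWfour}, the inclusion $J_2 \hookrightarrow J_4$ induces a group isomorphism $W_2 \cong W_4$, and this isomorphism is symplectic because for $u, v \in J_2$ the form value is $\zeta([u,v])$ computed in the same way on both sides. So nondegeneracy of $W_4$ reduces to nondegeneracy of $W_2$. By Lemma~\ref{WonetwothreeB}(3) we have $W = W_1 \oplus W_2 \oplus W_3$, and by Lemma~\ref{WonetwothreeA} the spaces $W_1$ and $W_3$ are totally isotropic and each orthogonal to $W_2$. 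Since the ambient form on $W$ is nondegenerate (as $\cH$ is a Heisenberg group), any $w \in W_2$ satisfying $w \perp W_2$ must be orthogonal to all of $W$ and hence zero; this proves $W_2$ is nondegenerate. Consequently, any $k \in J_4$ with $\langle k, J_4 \rangle = 1$ has trivial image in $W_4$, i.e., $k \in J_{4+}$.

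The main technical obstacle will be the first containment in the case $u \in J_5$. There, one must carefully check that $[u,v]$ lives in the intersection $J_+(E) \cap \theta(J_+(E))$ so that Lemma~\ref{zetasym} applies, and then convert $\zeta_E \circ \theta$ into a pairing involving $[J_+, J]$ via the identity $\theta([u,v]) = [\theta(u), \theta(v)]$. Once this Heisenberg-plus-$\theta$-symmetry interplay is handled, the rest of the argument is a clean assembly of the decomposition lemmas already established.
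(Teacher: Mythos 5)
Your proof is correct and follows the same strategy as the paper's: reduce the lemma to nondegeneracy of $W_4$, deduce nondegeneracy of $W_2$ from the decomposition $W = W_1 \oplus W_2 \oplus W_3$ together with nondegeneracy of $W$ (via Lemmas~\ref{WonetwothreeA} and \ref{WonetwothreeB}), and transfer to $W_4$ by the symplectic isomorphism of Lemma~\ref{WtwoWfour}. The only divergence is that the paper treats the easy containment $J_{4+}\subseteq\{\,k\in J_4\mid\langle k,J_4\rangle=1\,\}$ as implicit in the phrase ``equivalent to nondegeneracy of $W_4$,'' whereas you verify it by a direct commutator computation combined with Lemma~\ref{zetasym}. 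Your computation is sound, but it duplicates information already packaged in the earlier lemmas: by Lemma~\ref{WonetwothreeB}, $W_1 = J_{4+}J_+/J_+$ and $J_4J_+/J_+ = W_1\oplus W_2$, and by Lemma~\ref{WonetwothreeA}, $W_1$ is totally isotropic and orthogonal to $W_2$, so $W_1\perp W_1\oplus W_2$, which is precisely $\langle J_{4+},J_4\rangle=1$. So your route is a more explicit rendering of the same argument rather than a genuinely different one.
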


\begin{proof} Notice that the statement of the result is 
equivalent to nondegeneracy of $W_4$. By Lemmas~\ref{WonetwothreeA} and 
\ref{WonetwothreeB},
and the nondegeneracy of $W$, we have that $W_2$ is
nondegenerate. By Lemma~\ref{WtwoWfour}, $W_4$ is nondegenerate.
\end{proof}

Since $J_4$ and $J_{4+}$ are $\theta$-stable, the (nondegenerate)
 symplectic space $W_4 = J_4/J_{4+}$ 
inherits an involution from $\theta$ and we use the notation $\theta$ for this involution of $W_4$.  There is an associated polarization $W_4 = W_4^+ + W_4^-$ defined by
\begin{equation*}
\begin{split}
W_4^+&= \{\, w\in W_4\ | \ \theta (w) = w\,\}\cr
W_4^-&=\{\, w\in W_4\ | \ \theta (w)= w^{-1}\,\}.
\end{split}
\end{equation*}

\begin{lemma}\label{lemnine} $W_4^+= J^\theta J_{4+}/J_{4+}$.
\end{lemma}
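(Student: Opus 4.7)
The inclusion $J^\theta J_{4+}/J_{4+}\subseteq W_4^+$ is trivial: any representative in $J^\theta$ is fixed by $\theta$, and $J_{4+}$ is killed in the quotient. The content of the lemma is the reverse inclusion, which I would prove by a cohomological descent argument using Proposition \ref{twodivprop}.

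Start with $k\in J_4$ such that $kJ_{4+}\in W_4^+$, which is to say $u:=k^{-1}\theta(k)\in J_{4+}$. A direct calculation gives
$$
\theta(u)=\theta(k^{-1}\theta(k))=\theta(k)^{-1}k=u^{-1},
$$
so $u\in Z^1_\theta(J_{4+})$. The task is to write $u$ as a coboundary, i.e.\ to produce $j\in J_{4+}$ with $u=j\theta(j)^{-1}$; for then $\theta(kj)=\theta(k)\theta(j)=kj$, so $kj\in J^\theta$ and $kJ_{4+}=(kj)J_{4+}$ lies in $J^\theta J_{4+}/J_{4+}$ as desired.

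The existence of such a $j$ is exactly the assertion $H^1_\theta(J_{4+})=\{1\}$, which I would deduce from Proposition \ref{twodivprop} once I verify $J_{4+}\subset G_{y,0^+}$. This in turn follows from two easy containments: $J_5=J\cap\theta(J_+)\subset J\subset G_{y,0^+}$, since $J=(G',G)_{y,(r,s)}$ is generated by subgroups of positive depth at $y$; and $\theta(J_5)=\theta(J)\cap J_+\subset J_+\subset G_{y,0^+}$. Hence $J_{4+}=J_5\,\theta(J_5)\subset G_{y,0^+}$, and Proposition \ref{twodivprop} applies to the $\theta$-stable subgroup $J_{4+}$ (noting $\theta(J_{4+})=\theta(J_5)J_5=J_{4+}$ since $J_{4+}$ is a subgroup).

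There is no real obstacle here beyond those containments: once $J_{4+}$ is placed inside a Moy--Prasad group of positive depth, Proposition \ref{twodivprop} immediately yields the vanishing of $H^1_\theta(J_{4+})$, and the cohomological descent closes the proof. The only thing that requires any care is the bookkeeping that ensures $\theta(J_5)\subset J_+$, which relies on the definition $J_5=J\cap\theta(J_+)$ and $\theta^2=1$.
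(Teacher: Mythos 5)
Your proof is correct and follows essentially the same route as the paper: reverse inclusion is trivial, and the forward inclusion is a $1$-cocycle/coboundary argument that reduces to the vanishing of $H^1_\theta(J_{4+})$, supplied by Proposition \ref{twodivprop}. The only difference is cosmetic — you phrase the coboundary relation as $u=j\theta(j)^{-1}$ with $h'=kj$, the paper writes $h^{-1}\theta(h)=k^{-1}\theta(k)$ with $h'=hk^{-1}$, which is the same set equality $\{k^{-1}\theta(k)\}=B^1_\theta(J_{4+})$ in disguise — and you are a bit more explicit in checking that $J_{4+}$ is a $\theta$-stable subgroup of $G_{y,0^+}$ (a hypothesis the paper invokes without comment), which is a small but genuine improvement in rigor.
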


\begin{proof}  Let $W_4^{++} = J^\theta J_{4+}/J_{4+}$.  Clearly, $W_4^{++}$ is contained in  $W_4^+$.
Now suppose $w\in W_4^+$ and choose $h\in J_4$ such that $w=hJ_{4+}$.
  The condition $\theta (w) = w$ means that $h^{-1}\theta (h)\in J_{4+}$.  Thus $h^{-1}\theta (h)\in Z^1_{\theta} (J_{4+})$, in the notation of Section~\ref{sec:stablesubgroups}.  According to Proposition \ref{twodivprop}, there exists $k\in J_{4+}$ such that $h^{-1}\theta (h) = k^{-1}\theta (k)$.  The element $h^\prime = hk^{-1}$ lies in $J^\theta$ and is such that $h^\prime J_{4+} = hJ_{4+}$.  It follows that $W_4^{++} = W_4^+$.
  \end{proof}

Now define $W^+_2$, $W^-_2$, $W^+$ and $W^-$ as in the statement of Proposition \ref{canpolar}.  In particular, $W^+ = J^\theta J_+/J_+$ and $W^- = W^-_2\oplus W_3$.  Let $W^+_0 = W_1\oplus W_2^+$.

\begin{lemma}\label{polaryes}
$W= W^+_0+W^-$ is a polarization of $W$.
\end{lemma}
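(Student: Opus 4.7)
The plan is to combine the three decompositions and isotropy properties already established to verify directly the two defining properties of a polarization: that $W$ is the direct sum $W_0^+ \oplus W^-$, and that each summand is totally isotropic.

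First I would establish the direct sum decomposition. By Lemma~\ref{WonetwothreeB}(3), $W = W_1 \oplus W_2 \oplus W_3$, and by construction $W_2 = W_2^+ \oplus W_2^-$ is a polarization of $W_2$ (this is part of Proposition~\ref{canpolar} already obtained from the involution $\vartheta$ of $W_2$ inherited from $\theta$ acting on $W_4 \cong W_2$). Rearranging summands gives
\[
W = (W_1 \oplus W_2^+) \oplus (W_2^- \oplus W_3) = W_0^+ \oplus W^-,
\]
so in particular $\dim W_0^+ + \dim W^- = \dim W$ and the two spaces meet trivially.

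Next I would check that $W_0^+$ is totally isotropic. Since $W_1$ is totally isotropic by Lemma~\ref{WonetwothreeA}, and $W_2^+$ is totally isotropic as a Lagrangian in the polarization $W_2 = W_2^+ \oplus W_2^-$, it suffices to show $W_1$ is orthogonal to $W_2^+$; but this is immediate from Lemma~\ref{WonetwothreeA}, which gives $\langle W_1, W_2\rangle = 1$. Hence $W_0^+$ is totally isotropic. The argument that $W^- = W_2^- \oplus W_3$ is totally isotropic is identical, using instead that $W_3$ is totally isotropic and orthogonal to $W_2$.

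Combining these two facts yields that $W = W_0^+ \oplus W^-$ is a polarization of $W$, as required. No obstacles are anticipated: everything reduces to bookkeeping on the orthogonality and isotropy relations already assembled in Lemmas~\ref{WonetwothreeA}--\ref{WtwoWfour} together with the $\vartheta$-eigenspace decomposition of $W_2$.
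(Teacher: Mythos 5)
Your proof is correct and takes essentially the same approach as the paper's, which is quite terse: the paper simply cites Lemma~\ref{WtwoWfour} (giving the polarization $W_2=W_2^++W_2^-$ via the isomorphism with $W_4$), then says the claim ``follows from Lemma~\ref{WonetwothreeA} and Lemma~\ref{WonetwothreeB}(3).'' You have filled in exactly the implicit bookkeeping: the rearrangement $W=(W_1\oplus W_2^+)\oplus(W_2^-\oplus W_3)$ from Lemma~\ref{WonetwothreeB}(3), and the verification that each summand is totally isotropic using the isotropy of $W_1$, $W_3$, their orthogonality to $W_2$, and the Lagrangian property of $W_2^\pm$ in $W_2$, whence both summands are forced to be Lagrangian by the dimension count in a nondegenerate symplectic space.
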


\begin{proof}
We first note that $W_2 = W_2^++W_2^-$ is a polarization, according to Lemma \ref{WtwoWfour} and the fact that $W_4=W_4^++ W_4^-$ is a polarization of $W_4$.  
Our claim now follows from Lemma \ref{WonetwothreeA} and Lemma \ref{WonetwothreeB} (3).
\end{proof}

Let $J_{4++}=J_5\cap\theta(J_5)$.

\begin{lemma}\label{wtwopluswplus}
$W^+_2\subset W^+$.
\end{lemma}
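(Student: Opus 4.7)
The plan is to take $w \in W_2^+$, lift to some $h \in J_2$, and adjust $h$ by an element of $J_+$ to produce a representative in $J^\theta$. Since $J_2 \subset J_4$ induces the isomorphism $W_2 \cong W_4$ of Lemma \ref{WtwoWfour}, and $\vartheta$ on $W_2$ is defined as the transport of the natural involution of $W_4$, the condition $\vartheta(w)=w$ translates directly to $h^{-1}\theta(h) \in J_{4+}$.

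The key geometric observation needed is that $J_2$ itself is $\theta$-stable, which follows from the symmetric-in-$\{y,\theta(y)\}$ nature of its defining conditions: choosing $\bT$ to be $\theta$-stable (possible because $\theta(\bG^\prime)=\bG^\prime$ and $y,\theta(y)$ lie in a common apartment of $\bG^\prime$), the induced permutation $\theta_*$ of $\Phi$ preserves $\Phi^\prime$ and the set $\{a\in\Phi\cup\{0\} : a(y-\theta(y))=0\}$ (since $a(\theta(y)-y)=-a(y-\theta(y))$), and for such $a$ the filtration groups $\bU_a(E)_{y,t}$ and $\bU_a(E)_{\theta(y),t}$ coincide. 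Hence $\theta(h)\in J_2$, and so $h^{-1}\theta(h)\in J_2\cap J_{4+}$. A brief observation forced by Lemma \ref{WtwoWfour} then identifies this intersection with $J_2\cap J_+$: the isomorphism $W_2\cong W_4$ is induced by the inclusion $J_2\hookrightarrow J_4$, so the kernels $J_2\cap J_+$ (of $J_2\twoheadrightarrow W_2$) and $J_2\cap J_{4+}$ (of $J_2\twoheadrightarrow W_4$) must agree.

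With $h^{-1}\theta(h)\in J_2\cap J_+$ in hand, I would invoke Proposition \ref{twodivprop} applied to the $\theta$-stable subgroup $J_2\cap J_+\subset G_{y,0^+}$. The element $h^{-1}\theta(h)$ satisfies $\theta(h^{-1}\theta(h))=(h^{-1}\theta(h))^{-1}$, so it belongs to $Z^1_\theta(J_2\cap J_+)$; the vanishing $H^1_\theta(J_2\cap J_+)=\{1\}$ then produces $j\in J_2\cap J_+$ with $h^{-1}\theta(h)=j\theta(j)^{-1}$. Setting $h^\star=hj$ gives
$$
\theta(h^\star)=\theta(h)\theta(j)=h\cdot(h^{-1}\theta(h))\cdot\theta(j)=h\cdot j\theta(j)^{-1}\cdot\theta(j)=hj=h^\star,
$$
so $h^\star\in J^\theta$; and because $j\in J_+$, $h^\star J_+=hJ_+=w$, placing $w$ in $W^+=J^\theta J_+/J_+$. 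The step that requires the most care is establishing $\theta$-stability of $J_2$ (which, although ultimately routine, is where one must arrange a compatible choice of tame maximal torus); the remainder of the argument --- the identification of the two intersections and the cohomological vanishing --- is essentially formal.
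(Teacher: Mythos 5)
Your reduction of $\vartheta(w)=w$ to $h^{-1}\theta(h)\in J_{4+}$ is correct, and so is the identification $J_2\cap J_{4+}=J_2\cap J_+$ (one containment because $J_4\cap J_+=\theta(J_5)\subset J_{4+}$, the other from injectivity of the map $W_2\to W_4$ of Lemma \ref{WtwoWfour}); the final coboundary adjustment is also fine. The gap is the step you yourself flag: $\theta$-stability of $J_2$, and with it of $J_2\cap J_+$. Your justification --- that $\bT$ may be chosen $\theta$-stable because $\theta(\bG^\prime)=\bG^\prime$ and $y,\theta(y)$ lie in a common apartment of $\cB(\bG^\prime,E)$ --- does not work: $\theta$ carries $A(\bG^\prime,\bT,E)$ to $A(\bG^\prime,\theta(\bT),E)$, and two apartments through the same pair of points need not coincide, so the existence of \emph{some} apartment containing $y$ and $\theta(y)$ gives no $\theta$-stable one. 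The paper deliberately avoids any such assumption: in Section \ref{sec:maxiso} only $y,\theta(y)\in A(\bG^\prime,\bT,E)$ is arranged, and in the proof of Proposition \ref{stJpol} the torus $\theta(\bT)$ is treated as possibly distinct from $\bT$. The existence of a $\theta$-stable apartment through $\{y,\theta(y)\}$ is a nontrivial building-theoretic input that is neither proved nor cited here, so as written your argument rests on an unestablished claim. (Even granting it, note that $J_+$ itself is not $\theta$-stable when $\theta([y])\ne[y]$, so $\theta$-stability of $J_2\cap J_+$ is a further verification, not formal from stability of $J_2$.)

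The paper's proof gets the cocycle into $J_+$ with no torus compatibility at all: using $J_{4+}=J_5\theta(J_5)$, $J_{4++}=J_5\cap\theta(J_5)$, and Proposition \ref{twodivprop} applied to the manifestly $\theta$-stable group $J_{4++}$, it proves the identity $Z^1_\theta(J_{4+})=B^1_\theta(\theta(J_5))$. Then for $k\in J_2$ with $kJ_+\in W_2^+$ one writes $k^{-1}\theta(k)=k_1\theta(k_1)^{-1}$ with $k_1\in\theta(J_5)\subset J_+$, so $kk_1\in J^\theta$ and $kJ_+=kk_1J_+\in W^+$. This refinement is the whole content of the lemma: applying Proposition \ref{twodivprop} to $J_{4+}$ directly only yields a coboundary representative in $J_{4+}$, which is not contained in $J_+$ (its image in $W$ is $W_1$), so one must force the representative into $J_+$ either via the identity $Z^1_\theta(J_{4+})=B^1_\theta(\theta(J_5))$ or via your (unproved) stability of $J_2\cap J_+$. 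If you can supply a proof that a $\theta$-stable apartment containing $y$ and $\theta(y)$ exists, your route would go through, but that input is substantially harder than the lemma itself.
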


\begin{proof}
We first show, in the notation of Section~\ref{sec:stablesubgroups}, that
$$Z^1_\theta (J_{4+}) = B^1_\theta (J_5) = B^1_\theta (\theta (J_5)).$$  
Suppose $u\in J_5$, $v\in \theta (J_5)$ and $uv \in Z^1_\theta (J_{4+})$.  
Then $v\theta (u) = u^{-1}\theta (v)^{-1} \in  J_{4++}$.  
Let $w= v\theta (u)$.  Then $\theta (w) = w^{-1}$ and so, by Proposition \ref{twodivprop}, there exists $w_1\in J_{4++}$ such that $w = w_1 \theta (w_1)^{-1}$.  We now observe that $uv = uw_1\theta (w_1)^{-1} \theta (u)^{-1} = uw_1 \theta (uw_1)^{-1}$ and that $u\in J_5$, $w_1\in J_{4++}$ implies $uw_1\in J_5$.  We deduce that $Z^1_\theta (J_{4+}) = B^1_\theta (J_5)$.  One similarly shows that $Z^1_\theta (J_{4+}) = B^1_\theta (\theta (J_5))$.

We now apply the identities just obtained to show $W^+_2\subset W^+$.  
Suppose $k\in J_2$ and $kJ_+\in W^+_2$.  Then $kJ_{4+}\in W^+_4$ and so 
$k^{-1}\theta (k) \in Z^1(J_{4+}) = B^1_\theta (\theta (J_5))$.  
There exists $k_1\in \theta (J_5)$ such that $k^{-1}\theta (k) = k_1 \theta (k_1)^{-1}$.
  Since $kk_1\in J^\theta$ and $k_1\in \theta (J_5)\subset J_+$, 
we have $kJ_+ = kk_1J_+ \subset J^\theta J_+$. Hence $Z_\theta^1(J_{4+})\subset
B_\theta^1(J_5)$.
\end{proof}

\begin{lemma}\label{WoneWthree}
$|W_1| = |W_3|$.
\end{lemma}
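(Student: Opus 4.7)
The plan is to deduce the equality of cardinalities purely from the structural facts already proved about the decomposition $W = W_1 \oplus W_2 \oplus W_3$, without invoking $\theta$ directly as an explicit bijection between $W_1$ and $W_3$. The reason is that applying $\theta$ to the root groups that generate $J_1$ requires working with the torus $\theta(\bT)$ rather than $\bT$, which is awkward; the symplectic-geometric argument sidesteps this entirely.

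First I would observe that by Lemma \ref{WonetwothreeA}, each of $W_1$ and $W_3$ is orthogonal to $W_2$, so $W_1 \oplus W_3$ is contained in the orthogonal complement $W_2^\perp$ of $W_2$ in $W$. Since $W$ is nondegenerate and $W_2$ is nondegenerate (as shown in Lemma \ref{radlem} via the identification of $W_2$ with $W_4$ in Lemma \ref{WtwoWfour}), we have $\dim W_2^\perp = \dim W - \dim W_2$. The decomposition $W = W_1 \oplus W_2 \oplus W_3$ from Lemma \ref{WonetwothreeB}(3) then forces $W_1 \oplus W_3 = W_2^\perp$, and moreover this subspace inherits a nondegenerate symplectic form from $W$.

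Next I would apply the standard fact that in a nondegenerate symplectic space, any totally isotropic subspace has dimension at most half of the ambient dimension. Since $W_1$ and $W_3$ are both totally isotropic (again Lemma \ref{WonetwothreeA}) inside the nondegenerate symplectic space $W_1 \oplus W_3$, both satisfy
\[
\dim_{\F_p} W_i \le \tfrac{1}{2}\dim_{\F_p}(W_1 \oplus W_3), \qquad i = 1, 3.
\]
But $\dim W_1 + \dim W_3 = \dim(W_1 \oplus W_3)$, which forces equality in both inequalities and hence $\dim W_1 = \dim W_3$. Therefore $|W_1| = |W_3|$.

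I do not anticipate any obstacle in this argument; the only nontrivial ingredients are Lemmas \ref{WonetwothreeA}, \ref{WonetwothreeB}, \ref{WtwoWfour} and \ref{radlem}, all of which are already in hand, together with a standard linear-algebra observation about maximal isotropic subspaces. If a more explicit bijection between $W_1$ and $W_3$ were desired for later use, the induced perfect pairing $W_1 \times W_3 \to \F_p$ coming from the symplectic form on $W_2^\perp$ identifies $W_3$ with the $\F_p$-linear dual of $W_1$, and this canonical duality could be recorded as an addendum.
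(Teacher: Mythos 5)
Your argument is correct, and it is genuinely different from the paper's. The paper reduces $|W_1| = |W_3|$ to the group-index identity $[J_4J_+:J_+]\,[J_5J_+:J_+] = [J:J_+]$ (using parts (1)--(3) of Lemma~\ref{WonetwothreeB} to translate the cardinalities of the $W_i$'s into such indices) and then defers to ``the same argument used to prove Lemma~12.8 of \cite{Y}'' to establish that identity. Your route stays entirely inside symplectic linear algebra over $\F_p$: from Lemmas~\ref{WonetwothreeA} and \ref{radlem} you observe that $W_2$ is nondegenerate, so $W_1 \oplus W_3 = W_2^\perp$ is itself a nondegenerate symplectic space; since $W_1$ and $W_3$ are totally isotropic and span $W_1 \oplus W_3$ directly, the bound $\dim W_i \le \tfrac12 \dim(W_1\oplus W_3)$ for $i=1,3$ forces $\dim W_1 = \dim W_3$. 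This buys you a self-contained proof that avoids the index manipulations and the appeal to an external result of Yu, at the cost of mild repackaging into vector-space language (harmless here, since $W$ is a finite $\F_p$-space and $|W_i| = p^{\dim W_i}$). The paper's proof is more in keeping with the surrounding group-theoretic framework, which closely tracks Yu's Sections~12--13; yours is shorter and arguably more illuminating, and the addendum you note (that the form induces a perfect pairing $W_1 \times W_3 \to \F_p$, identifying $W_3$ with the dual of $W_1$) is a genuine refinement that the paper's proof does not exhibit.
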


\begin{proof}
Our assertion is equivalent to the assertion that
$$[J_4J_+ : J_+] [J_5J_+:J_+] = [J:J_+],$$ since Lemma \ref{WonetwothreeB}(1)
 implies $[J_5J_+:J_+] = |W_1|$ and parts (2) and (3) of
 Lemma \ref{WonetwothreeB} imply that
$|W_3|  = [J:J_+]/[J_4J_+:J_+]$.  The same argument used to prove 
Lemma 12.8 of \cite{Y} now finishes the proof.
\end{proof}

\begin{lemma}\label{lemten} $W^+ = W^+_0$.
\end{lemma}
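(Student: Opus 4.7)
The plan is to prove $W^+ = W_0^+$ by establishing both inclusions. For $W^+ \subset W_0^+$, given $w = jJ_+$ with $j \in J^\theta$, the identity $\theta(j) = j \in J$ forces $j \in \theta(J)$, so $j \in J \cap \theta(J) = J_4$. Hence $w \in J_4J_+/J_+ = W_1 \oplus W_2$ by Lemma~\ref{WonetwothreeB}(2), so the $W_3$-component of $w$ vanishes. Passing to $W_4 = J_4/J_{4+}$, Lemma~\ref{lemnine} places $jJ_{4+}$ in $W_4^+$. Transferring via the symplectic isomorphism $W_2 \cong W_4$ of Lemma~\ref{WtwoWfour} (under which $W_2^+$ corresponds by definition to $W_4^+$), the $W_2$-component of $w$ lies in $W_2^+$, so $w \in W_1 \oplus W_2^+ = W_0^+$.

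For the reverse inclusion, I would combine Lemma~\ref{wtwopluswplus} (which gives $W_2^+ \subset W^+$) with the new claim $W_1 \subset W^+$. Since $W_1 = J_{4+}J_+/J_+$ by Lemma~\ref{WonetwothreeB}(1), it suffices to show each $j \in J_{4+}$ satisfies $jJ_+ \in J^\theta J_+/J_+$. Because $J_{4+}$ is $\theta$-stable, $j^{-1}\theta(j)$ lies in $Z^1_\theta(J_{4+})$. The identity $Z^1_\theta(J_{4+}) = B^1_\theta(\theta(J_5))$, already established during the proof of Lemma~\ref{wtwopluswplus} by applying Proposition~\ref{twodivprop} to $J_{4++}$, allows one to write $j^{-1}\theta(j) = k\theta(k)^{-1}$ for some $k \in \theta(J_5)$. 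The critical observation is that $J_5 \subset \theta(J_+)$ by definition, so $\theta(J_5) \subset J_+$; in particular $k \in J_+$. Setting $j' = jk$ yields $\theta(j') = \theta(j)\theta(k) = j\cdot k\theta(k)^{-1}\cdot\theta(k) = jk = j' \in J^\theta$, while $jJ_+ = j'k^{-1}J_+ = j'J_+ \in W^+$.

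The main obstacle is not the vanishing of $H^1_\theta(J_{4+})$ given directly by Proposition~\ref{twodivprop}, but the stronger identity $Z^1_\theta(J_{4+}) = B^1_\theta(\theta(J_5))$ with coboundaries realized by elements of the smaller group $\theta(J_5) \subset J_+$. This refinement is precisely what permits the adjustment of $j$ by an element of $J_+$ while preserving its class modulo $J_+$, and it is already encoded in the proof of Lemma~\ref{wtwopluswplus}.
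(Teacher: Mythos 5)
Your proof is correct, but it is organized quite differently from the paper's. The paper never proves the inclusion $W^+\subset W^+_0$ directly: it introduces the homomorphism $f(kJ_{4++})=k\theta(k)J_{4++}$ on $J_5/J_{4++}$, identifies its image with $J^\theta_{4+}J_{4++}/J_{4++}$ and shows it is injective, and then uses Lemma~\ref{lemnine}, Lemma~\ref{WonetwothreeB} and Lemma~\ref{WoneWthree} to compute $|W^+|=|W_1|^{1/2}|W_2|^{1/2}|W_3|^{1/2}=|W^+_0|$; after that, only the single inclusion $W^+_0\subset W^+$ is needed, and $W_1\subset W^+$ is extracted from the description of the image of $f$. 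You instead prove both inclusions directly: for $W^+\subset W^+_0$ you observe $J^\theta\subset J_4$, so a class $jJ_+$ has no $W_3$-component, and under the identification $W_2\cong(W_1\oplus W_2)/W_1\cong J_4/J_{4+}=W_4$ its $W_2$-component goes to $jJ_{4+}\in W_4^+$, hence lies in $W_2^+$ by the very definition of $W_2^+$ as the transfer of $W_4^+$; for $W_1\subset W^+$ you bypass the map $f$ and use $W_1=J_{4+}J_+/J_+$ together with the identity $Z^1_\theta(J_{4+})=B^1_\theta(\theta(J_5))$ already proved in Lemma~\ref{wtwopluswplus} (ultimately Proposition~\ref{twodivprop}), noting $\theta(J_5)\subset J_+$ so the correcting element does not change the class mod $J_+$. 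Your route is a bit more economical — it avoids the counting argument and Lemma~\ref{WoneWthree} altogether and isolates exactly where $\theta(J_5)\subset J_+$ is used — while the paper's cardinality computation has the side benefit of exhibiting the index identities $[J^\theta:J^\theta_{4+}]=|W_2|^{1/2}$ and $[J^\theta_{4+}:J^\theta_+]=|W_1|=|W_3|$ explicitly. The only place where you are terse is the assertion that the $W_2$-component of $jJ_+$ corresponds to $jJ_{4+}$ under the isomorphism of Lemma~\ref{WtwoWfour}; this is exactly what the chain of isomorphisms in the proof of that lemma gives (the kernel of $J_4J_+/J_+\to J_4/J_{4+}$ is $W_1$), so it would be worth a sentence, but it is not a gap.
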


\begin{proof}
 We have a homomorphism of abelian groups 
$$f: J_5/J_{4++}\to J_{4+}/J_{4++}$$ given by $$f(kJ_{4++}) = k\theta (k) J_{4++}.$$

We claim, first of all,  that the image of $f$ is $J^\theta_{4+}J_{4++}/J_{4++}$.  
Suppose $uv\in J_{4+}^\theta$, with $u\in J_5$ and $v\in \theta(J_5)$.  We observe  that $\theta (v) (v^{-1}u^{-1}v)= \theta (u)^{-1}v$ and, moreover, $\theta (v) (v^{-1}u^{-1}v)\in J_5$ and $\theta (u)^{-1} v\in \theta (J_5)$.  It follows that $\theta (u)^{-1} v\in J_5\cap \theta (J_5) = J_{4++}$.   Hence $uv = u\theta(u)\theta(u)^{-1} v\in u\theta (u)J_{4++}$.  
It follows that 
$$
J^\theta_{4+}\subset \{\, k\theta (k) J_{4++} \ | \ k\in J_5\,\}J_{4++}.
$$

Next, we observe that if $k\in J_5$ and $\ell= k\theta (k)$ then $\ell^{-1}\theta (\ell)\in Z^1_\theta (J_{4++})$, in the notation of Section \ref{sec:stablesubgroups}.  Thus, according to Proposition \ref{twodivprop}, there exists $m\in J_{4++}$ such that $\ell^{-1}\theta (\ell)= m\theta (m)^{-1}$.  Since $\ell m\in J^\theta_{4+}$, we have shown that the image of $f$ is contained in $J^\theta_{4+}J_{4++}/J_{4++}$ and thus  the image of $f$ is indeed $J^\theta_{4+}J_{4++}/J_{4++}$.

We now observe that $f$ is injective.  
Indeed, if $k\in J_5$ and $k\theta (k)\in J_{4++}$ then both of these elements lie in $\theta (J_+)$.  It follows that $k$ lies in $J_+\cap \theta (J_+) = J_{4++}$.

Therefore, $f$ defines an isomorphism
$$J_5/J_{4++}\cong J^\theta_{4+}J_{4++}/J_{4++}.$$ Hence, we have an isomorphism 
$$J_5/J_{4++}\cong J^\theta_{4+} /J^\theta_+.$$

We now compute the size of $W^+$:
\begin{equation*}
\begin{split}
|W^+| &= [J^\theta J_+:J_+] = [J^\theta : J^\theta_+]\cr
&=[J^\theta: J^\theta_{4+}] [J^\theta_{4+}: J^\theta_+].
\end{split}
\end{equation*}
Now Lemma \ref{lemnine} implies that $$[J^\theta: J^\theta_{4+}] = |W_2|^{1/2}.$$  
On the other hand, as shown above, $J_5/J_{4++}\cong J_{4+}^\theta/J_+^\theta$.
Putting this together with Lemma~\ref{WonetwothreeB}(1), 
Lemma~\ref{WoneWthree}, and the isomorphisms 
$J_{4+}/J_5\cong \theta (J_5)/J_{4++}\cong J_5/ J_{4++}$, we obtain
\begin{equation*}
\begin{split}
[J^\theta_{4+}:J^\theta_+]  &= [J_5:J_{4++}] = [J_{4+}:J_5]\cr
&=|W_1| = |W_3|.
\end{split}
\end{equation*}
Hence $$|W^+| = |W_1|^{1/2} |W_2|^{1/2} |W_3|^{1/2} = | W|^{1/2}= |W^+_0|.$$  

To show that $W^+_0 = W^+$, it now suffices to show that $W^+_0\subset W^+$ since we have finite sets of the same size.
But $W^+_0 = W_1 \oplus W^+_2$ and we have already shown, in Lemma \ref{wtwopluswplus}, that $W^+_2\subset W^+$.  So it is enough to show that $W_1\subset W^+$.
Suppose $w\in W_1$.  Then, according to 
Lemma \ref{WonetwothreeB}(1), we can find $k\in J_5$ such that $w=kJ_+$.  Our determination of the image of $f$ implies that there exists $\ell\in J^\theta_{4+}$ such that $k\theta (k)J_{4++} = \ell J_{4++}$.  It follows that $w=kJ_+= k\theta (k) J_+ =\ell J_+\in W^+$.
\end{proof}

Now define $W^-_*$ as in the statement of Proposition \ref{canpolar} and let $W^-_0 = W_1\oplus W^-_2$.  The next result is analogous to Lemma \ref{lemten}.  The proof is also formally similar in some ways, but there are some key differences.

\begin{lemma}\label{lemtenminus} $W^-_0 = W^-_*$.
\end{lemma}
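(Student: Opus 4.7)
My plan is to establish $W^-_0 \subset W^-_*$ by explicit construction and then deduce equality by showing $W^-_*$ is a totally isotropic subgroup of $W$ of size $|W|^{1/2} = |W^-_0|$.

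The key construction rests on the following observation: whenever $k \in J$ satisfies $\theta(k) \in J$, the element $k' = \theta(k)^{-1}k$ automatically lies in $Z^1_\theta(J)$, since $\theta(k') = k^{-1}\theta(k) = (k')^{-1}$. For $k \in J_5 \subset J \cap \theta(J_+)$, we get $\theta(k) \in J_+$, and normality of $J_+$ in $J$ gives $k'J_+ = kJ_+$; this handles $W_1 \subset W^-_*$. For $k \in J_2$ with $kJ_{4+} \in W_4^-$, the isomorphism $W_2 \cong W_4$ of Lemma~\ref{WtwoWfour} yields $[\theta(k)] = [k]^{-1}$ in $W$, so the image of $k'$ in $W$ is $[k]^2$. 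Passing to $k'' = (k')^{(p+1)/2}$ still gives an element of $Z^1_\theta(J)$ (since $\theta$ commutes with taking integer powers), and $[k''] = [k]^{p+1} = [k]$ because $W$ has exponent $p$; this handles $W^-_2 \subset W^-_*$, and combined we have $W^-_0 = W_1 \oplus W^-_2 \subset W^-_*$.

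Next I would verify that $W^-_*$ is a subgroup of $W$ and is totally isotropic. Closure under products uses the same $(p+1)/2$-power device applied to $k_1 k_2$ where $k_i \in Z^1_\theta(J)$ represent $w_i \in W^-_*$; closure under inversion is immediate. For isotropy, $\theta([k_1,k_2]) = [k_1^{-1},k_2^{-1}]$, and bilinearity of the symplectic form gives $\zeta([k_1^{-1},k_2^{-1}]) = \langle w_1,w_2\rangle$, while $\zeta\circ\theta = \zeta^{-1}$ on $J_+$ forces $\langle w_1,w_2\rangle^2 = 1$; oddness of $|\cZ|$ then yields $\langle w_1,w_2\rangle = 1$.

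A totally isotropic subgroup of the nondegenerate symplectic space $W$ has cardinality at most $|W|^{1/2}$, and combining Lemma~\ref{polaryes}, Lemma~\ref{WoneWthree}, and the fact that $W^+_2$ and $W^-_2$ are Lagrangians in $W_2$ yields $|W^-_0| = |W_1||W^-_2| = |W_1||W^+_2| = |W^+| = |W|^{1/2}$, forcing $W^-_0 = W^-_*$. The main obstacle is the $W^-_2$ inclusion, where $\theta(k) \notin J_+$ in general, so the naive modification of $k$ by an element of $J_+$ does not directly produce an element of $Z^1_\theta(J)$; the $(p+1)/2$-power trick is what circumvents this, and the same trick is reused to show closure under products.
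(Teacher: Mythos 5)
Your argument is correct, but it takes a genuinely different route from the paper's on both inclusions. For $W^-_0\subset W^-_*$, the paper treats $W_1$ via the injective map $f:J_5/J_{4++}\to J_{4+}/J_{4++}$, $f(kJ_{4++})=k\theta(k)^{-1}J_{4++}$, and treats $W^-_2$ by invoking the unique square root $\ell$ of $h\in J_2$ (Proposition~\ref{twodivprop}) and reading off $hJ_{4+}=\ell\theta(\ell)^{-1}J_{4+}$ from the decomposition $W_4=W_4^+\oplus W_4^-$; your conjugate-shift $k\mapsto\theta(k)^{-1}k$ followed by the $(p+1)/2$-power renormalization achieves the same thing by exploiting that $W$ has exponent $p$, and so sidesteps the square-root argument entirely. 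For the reverse inclusion, the paper proves $W^-_*\perp W^-_0$ directly (orthogonality to $W_1$ via the fact that $W_1$ is the radical of $W_1\oplus W_2\supset W^-_*$, and orthogonality to $W^-_2$ by a computation using Lemma~\ref{zetasym}), concluding $W^-_*\subset W_0^{-\perp}=W^-_0$; you instead prove total isotropy of $W^-_*$ (same Lemma~\ref{zetasym} input) and close with the cardinality count against $|W^-_0|=|W|^{1/2}$. Your route buys a slicker $W^-_2$ step; the cardinality count is slightly more machinery than necessary, since once you have total isotropy and $W^-_0\subset W^-_*$ you already get $W^-_*\subset W_*^{-\perp}\subset W_0^{-\perp}=W^-_0$ without counting. (The subgroup-closure argument is likewise dispensable: the span of a totally isotropic subset is still totally isotropic.)

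Two small precision points: the identity $\zeta\circ\theta=\zeta^{-1}$ is only valid on $J_+\cap\theta(J_+)$, not on all of $J_+$ (since $J_+$ need not be $\theta$-stable here); your commutators $[k_1,k_2]$ do land there because $k_i,\theta(k_i)=k_i^{-1}\in J$ forces $[k_1,k_2]$ and $\theta([k_1,k_2])=[k_1^{-1},k_2^{-1}]$ both into $J_+$. And the closure-under-products step, as phrased, is compressed: $k_1k_2$ is generally not in $Z^1_\theta(J)$ (the elements $\theta(k_1k_2)=k_1^{-1}k_2^{-1}$ and $(k_1k_2)^{-1}=k_2^{-1}k_1^{-1}$ differ by a commutator), so one must first pass to $m=\theta(k_1k_2)^{-1}(k_1k_2)\in Z^1_\theta(J)$, note $[m]=([k_1][k_2])^2$, and then take $m^{(p+1)/2}$. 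Both points are easily fixed and the proof stands.
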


\begin{proof}
To show $W^-_*\subset W^-_0$, it suffices to show that $W^-_* $ is orthogonal to $W^-_0$, since $W^{-\perp}_0 = W^-_0$.  Equivalently, we need to show that $W^-_*$ is orthogonal to both $W_1$ and $W^-_2$.  But $W_1$ is orthogonal to $W^-_*$, since, according to Lemma \ref{WonetwothreeB} and Lemma \ref{radlem}, $W_1 = J_{4+}J_+/J_+$ is the radical of $W_1\oplus W_2$.  Moreover, $W^-_*$ is orthogonal to $W^-_2$ since if $u\in Z^1_\theta (J)$ and $v\in J_2$ is such that $vJ_+\in W^-_2$ then 
$\langle u,v\rangle = \langle \theta (v),\theta (u)\rangle  = 
\langle v^{-1},u^{-1}\rangle = \langle v,u\rangle$, according to Lemma \ref{zetasym},
and the fact that $Z_\theta^1(J)\subset J_4$.

It remains to show $W^-_0\subset W^-_*$.  We first show that $W_1\subset W^-_*$.  Imitating the proof of Lemma \ref{lemten}, we define a homomorphism of abelian groups $$f: J_5/J_{4++}\to J_{4+}/J_{4++}$$ by
$$f(kJ_{4++}) = k\theta (k)^{-1}J_{4++}.$$  In the proof of Lemma \ref{wtwopluswplus}, we showed that $B^1_\theta (J_5) = Z^1_\theta (J_{4+})$.  It follows that the image of $f$ is $Z^1_\theta (J_{4+})J_{4++}/J_{4++}$.  Just as in the proof of Lemma \ref{lemten}, we can show that $f$ is injective and thus we deduce that $f$ yields an isomorphism
$$J_5/J_{4++}\cong Z^1_\theta (J_{4+})J_{4++}/J_{4++}.$$
Turning to the proof that $W_1\subset W^-_*$, we assume $w\in W_1$ and choose $k\in J_5$ such that $w=kJ_+$.  Then $f(kJ_{4++}) = k\theta (k)^{-1}J_{4++}$.
 So $w= kJ_+ = k\theta (k)^{-1}J_+$.  We have therefore shown that $W_1\subset W^-_*$.

To complete the proof, we show that $W^-_2\subset W^-_*$.  Let $w\in W^-_2$ and choose $h\in J_2$ such that $w=hJ_+$.  Using Proposition~\ref{twodivprop}, we see that there exists a unique element $\ell\in J_2$ such that $\ell^2 =h$.  Now let $w_0 = hJ_{4+}$, $w_- = \ell\theta (\ell)^{-1}J_{4+}$ and $w_+ = \theta (\ell )\ell J_{4+}$.  We then observe that $w_0 = w_-w_+$, with $w_0,w_-\in W^-_4$ and $w_+\in W^+_4$.  It follows that $w_+=1$ and $w_0 = w_-$.  Hence $w= \ell\theta (\ell)^{-1}J_+ \in W^-_*$.
\end{proof}

\begin{proof}[Proof of Proposition \ref{canpolar}] It only remains to show that $W^+$, $W^-$, $W^+_*$ and $W^-_*$ are stable under $f^\prime(K^{\prime,\theta})$.  For $W^+$ and $W^-_*$, this follows immediately from their definitions.  We will show that $f^\prime(K^{\prime,\theta})$ stabilizes each of the spaces $W_1$, $W_2$ and $W_3$.  Then it will follow that $W_2^+ = W^+\cap W_2$ and $W_2^-= W^-_*\cap W_2$ are $f^\prime(K^{\prime,\theta})$-stable.  In addition, the $f^\prime(K^{\prime,\theta})$-stability of $W^- = W_2^-\oplus W_3$  and $W^+_*= W_2^+ \oplus W_3$ will also follow.

Let $k\in K^{\prime\theta}$. We want to show that $\Int(k)(J_iJ_+)\subset J_iJ_+$,
$i=1$, $2$, $3$. It is enough to prove that $\Int k$ takes
a set of generators of $J_i$ into $J_iJ_+$.
Observe that $\Int(k)\bT(E)_r\subset \Int (k)\bG^\prime(E)_{y,r}
=\bG^\prime(E)_{y,r}\subset J_+$, and, if $a\in \Phi^\prime$,
$\Int (k)\bU_a(E)_{y,r}\subset \bG^\prime(E)_{y,r}\subset J_+$.
Hence, to prove that $W_i$ is $f^\prime(K^{\prime\theta})$-stable,
it suffices to show that if $k\in K^{\prime\theta}$
and $a\in \Phi_i^{\prime\prime}$, then
$\Int (k)\bU_a(E)_{y,s}\subset J_iJ_+$, $i=1$, $2$, $3$.
This is a consequence of the following.
Since $k\in K^{\prime\theta}$, we have $k\cdot y=y+ z$
for some $z\in X_*(\bZ,F)\otimes \R$. Thus
$a(k\cdot y - k\cdot\theta(y))=a(k\cdot y -\theta(k\cdot y))
=a(y-\theta(y))$, as $z$, $\theta(z)\in X_*(\bZ,F)\otimes \R$.
\end{proof}

Let $f_0=f_{r,s}:\Phi\cup\{0\}\rightarrow \R$ be the concave function on
$\Phi\cup \{ 0\}$ defined by
$f_0(a)=r$ for $a\in \Phi^\prime\cup \{0\}$
and $f_0(a)=s$ for $a\in \Phi\setminus\Phi^\prime$. Then $J(E)=\bG (E)_{y,f_0}$.
Recall that the construction of $\bG(E)_{y,f_0}$ involves
a choice of maximal $F$-torus $\bT\subset\bG^\prime$ such
that $y\in A(\bG^\prime,\bT,E)$. 
However, as shown in \cite{Y}, the group $\bG(E)_{y,f_0}$
is independent of the choice of $\bT$.
This fact will be used below.

Now let 
\begin{equation*}
\begin{split}
\Phi_\theta &= \Phi (\bG ,\theta (\bT))= \{\, a\circ\theta \ | \ a\in \Phi\,\}\cr
\Phi^\prime_\theta &= \Phi (\bG^\prime,\theta (\bT))= \{\, a\circ\theta \ | \ 
a\in \Phi^\prime\,\}
\end{split}
\end{equation*}
Recall that we have
chosen $\bT$ so that $y$ belongs to
$A(\bG^\prime,\bT,E)\cap A(\bG^\prime ,\theta(\bT),E)$.  
This allows us to realize $J(E)$ via a concave function
on $\Phi_\theta\cup \{0\}$.
Define $f_0^\theta$ on $\Phi_\theta
\cup \{0\}$
taking the value $r$ on $\Phi^\prime_\theta
\cup \{0\}$ and $s$ on $\Phi_\theta \setminus
\Phi^\prime_\theta$. Then
$\bG(E)_{y,f_0^\theta}=\bG(E)_{y,f_0}=J(E)$.

Choose a system $\Phi^+$ of positive roots in $\Phi$
and put $\Phi^-=\Phi\setminus\Phi^+$.
Then the set $\Phi_\theta^+ =\{\, a\circ\theta\ | \ a\in \Phi^+\,\}$
is a system of positive roots in $\Phi_\theta$.
Let $J(E)(+)$ and $J(E)(-)$
be the subgroups of $J(E)$ generated by 
the groups $\bU_a(E)_{y,s}$, for $a\in \Phi^+$, $a\notin
\Phi^\prime$, and for $a\in \Phi^-$, $a\notin \Phi^\prime$,
respectively.
Similarly, let $J(E)_\theta(+)$ and $J(E)_\theta(-)$
be generated by the groups $\bU_{a\circ\theta}(E)_{y,s}$
where $a\circ\theta\in\Phi_\theta^+$, 
$a\circ\theta\notin
\Phi^\prime_\theta$,  and
$a\circ\theta\in \Phi_\theta\setminus\Phi_\theta^+$,
$a\circ\theta\notin \Phi^\prime_\theta$, respectively.

Recall 
 that we are denoting the special isomorphism
associated to the split polarization of $\cH(E)$
defined in Section~\ref{sec:genHeis} by $\nu^\bullet_E$.
Let $\mu:J(E)/N(E)\rightarrow J_+(E)/N(E)$ be the function
such that $$\nu^\bullet_E (kN(E))=(kJ_+(E),\mu(kN(E))),$$
for all $k\in J(E)$. 

\begin{lemma}\label{lemonegap}   If $k$ lies in $J(E)(+)\cup J(E)(-)\cup J(E)_\theta(+)\cup
J(E)_\theta(-)$ then $\mu(kN(E))=1$.
\end{lemma}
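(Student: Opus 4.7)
The plan is to leverage the independence of Yu's special isomorphism from auxiliary choices, as recorded in Remark \ref{rsiYurem}. Recall that $\nu^\bullet_E$ was built from the split polarization $(\cH(E)(+),\cH(E)(-))$ using the torus $\bT$ and the ordering $\Phi^+$, via Lemma~\ref{sppolarspeciso}. That lemma gives the explicit formula
$$
\nu^\bullet_E(w_+w_-z) = \bigl(\bar w_+\bar w_-,\; z\,[w_+,w_-]^{(p+1)/2}\bigr),
$$
for $w_\pm\in \cH(E)(\pm)$ and $z\in \cZ(E)$. In particular, if $k\in J(E)(+)$ then $kN(E)$ corresponds to the case $w_-=z=1$, so $\nu^\bullet_E(kN(E))=(kJ_+(E),1)$ and thus $\mu(kN(E))=1$; the identical argument handles $k\in J(E)(-)$. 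This disposes of two of the four cases with essentially no work.

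For the remaining two cases, the idea is to repeat the construction of $\nu^\bullet_E$ using the torus $\theta(\bT)$ and the system of positive roots $\Phi_\theta^+$ in place of $\bT$ and $\Phi^+$. This choice is legitimate: we have arranged that $y\in A(\bG^\prime,\theta(\bT),E)$, so the concave function $f_0^\theta$ on $\Phi_\theta\cup\{0\}$ yields $\bG(E)_{y,f_0^\theta}=J(E)$, and the subgroups of $J(E)$ generated by the $\bU_{a\circ\theta}(E)_{y,s}$ for $a\circ\theta\in \Phi_\theta^+\setminus\Phi^\prime_\theta$ (respectively, $a\circ\theta\in(\Phi_\theta\setminus\Phi_\theta^+)\setminus\Phi^\prime_\theta$) are by definition $J(E)_\theta(+)$ (respectively, $J(E)_\theta(-)$). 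The same arguments used in Section~\ref{sec:genHeis} to produce the split polarization $(\cH(E)(+),\cH(E)(-))$ from $\bT$ and $\Phi^+$ show that $(\cH(E)_\theta(+),\cH(E)_\theta(-))$ is a split polarization of $\cH(E)$.

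Now Remark~\ref{rsiYurem} (i.e.\ Proposition~11.4 of \cite{Y}) guarantees that the special isomorphism obtained from this alternative data coincides with $\nu^\bullet_E$. Applying Lemma~\ref{sppolarspeciso} to the split polarization $(\cH(E)_\theta(+),\cH(E)_\theta(-))$, we conclude that for $k\in J(E)_\theta(+)$, writing $w_+=kN(E)$, $w_-=z=1$,
$$
\nu^\bullet_E(kN(E)) = (kJ_+(E),1),
$$
so $\mu(kN(E))=1$; and likewise for $k\in J(E)_\theta(-)$. The four cases together give the lemma.

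The main obstacle (and really the only nontrivial point) is the invocation of Remark~\ref{rsiYurem}: one must be confident that Yu's independence statement truly applies to the torus $\theta(\bT)$ and the $\theta$-transported ordering. This in turn depends on having chosen $\bT$ so that $y\in A(\bG^\prime,\bT,E)\cap A(\bG^\prime,\theta(\bT),E)$, which was arranged at the outset of the section. Once that is in hand, the proof is a direct transcription of the explicit formula from Lemma~\ref{sppolarspeciso} applied to each of the two split polarizations in turn.
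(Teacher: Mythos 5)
Your proof is correct and follows the same route as the paper's: handle $k\in J(E)(+)\cup J(E)(-)$ by the explicit formula from Lemma~\ref{sppolarspeciso} applied to the split polarization coming from $\bT$ and $\Phi^+$, and handle $k\in J(E)_\theta(+)\cup J(E)_\theta(-)$ by redefining $\nu^\bullet_E$ relative to $\theta(\bT)$ and $\Phi_\theta^+$, which is licensed by Remark~\ref{rsiYurem} (Proposition~11.4 of \cite{Y}). The paper states this more tersely, but the content is identical.
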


\begin{proof}
Suppose that $k\in J(E)(+)\cup J(E)(-)$. Then,
defining $\nu^\bullet_E$ relative to $\bT$
and $\Phi^+$, we have $\nu^\bullet_E (kN(E))=(kJ_+(E),1)$.
For $k\in J(E)_\theta(+)\cup J(E)_\theta(-)$,
defining $\nu^\bullet_E$ relative to $\theta(\bT)$ and
$\Phi_\theta^+$, we have $\nu^\bullet_E (kN(E))
=(kJ_+(E),1)$.
\end{proof}

From the description of $J(E)$ as $\bG(E)_{y,f_0^\theta}$,
for the concave function $f_0^\theta$ on $\Phi_\theta$,
it follows that 
the group $\theta(J(E))$ is generated by
$\theta(\theta(\bT)(E)_r)=\bT(E)_r$ and by 
the groups $\theta(\bU_{a\circ\theta}(E)_{y,r})$, with
$a\in \Phi^\prime$, and $\theta(\bU_{a\circ\theta}(E)_{y,s})$, with 
$a\in \Phi\setminus\Phi^\prime$. 
Note that
$$
\theta(\bU_{a\circ\theta}(E)_{y,t})=\bU_a(E)_{\theta(y),t}
=\bU_a(E)_{y,t+a(y-\theta(y))},
\qquad t\in \R,\ a\in \Phi.
$$
Hence, defining a concave function $f_\theta$ on
$\Phi\cup \{0\}$ by $f_\theta(a)=f_0(a)+a(y-\theta(y))$,
$a\in \Phi$, and $f_\theta(0)=f_0(0)=r$, we see
that $\theta(J(E))=\bG(E)_{y,f_\theta}$.

It now follows from part (ii) of Lemma~13.2 \cite{Y} that
$J(E)\cap \theta(J(E))=\bG(E)_{y,h}$,
where $h(0)=r$ and 
$$h(a)={\rm max}(f_0(a),f_\theta(a))=
\begin{cases}
r, &\text{if $a\in \Phi_1^\prime\cup \Phi_2^\prime$,}\cr
r+a(y-\theta(y)), &\text{if $a\in \Phi_3^\prime$,}\cr
s, &\text{if $a\in \Phi_1^{\prime\prime}\cup 
           \Phi_2^{\prime\prime}$,}\cr
s+ a(y-\theta(y)), &\text{if $a\in \Phi_3^{\prime\prime}$.}
\end{cases}$$

\begin{lemma}\label{lemtwogap}
If $k\in J(E)\cap\theta(J(E))$ then $\mu(kN(E))=\mu(\theta(k)N(E))^{-1}$.
\end{lemma}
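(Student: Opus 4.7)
The plan is to imitate the approach used in Yu's paper (where $\theta$ plays the role that $\Int(g)$ plays in Yu's comparable statement) by forming the ``defect''
$$
\chi:(J(E)\cap\theta(J(E)))\to\cZ(E),\qquad
\chi(k)=\mu(kN(E))\,\mu(\theta(k)N(E)),
$$
and then showing that $\chi\equiv 1$. The statement of the lemma is equivalent to $\chi\equiv 1$, since $\cZ(E)$ is abelian. The argument has two parts: first show $\chi$ is a homomorphism, and then show it vanishes on a convenient set of generators.

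For the homomorphism property, I would use the cocycle relation of Remark~\ref{specisomu} for $\mu$, namely
$\mu(h_1h_2)=\mu(h_1)\mu(h_2)[h_1,h_2]^{(p+1)/2}$, applied both to $k_1k_2$ and to $\theta(k_1)\theta(k_2)=\theta(k_1k_2)$. Since $\cH(E)$ is Heisenberg, the commutator $[k_1,k_2]$ lies in $J_+(E)$, and similarly $[\theta(k_1),\theta(k_2)]=\theta([k_1,k_2])$ lies in $\theta(J_+(E))$; thus both lie in $J_+(E)\cap\theta(J_+(E))$. Lemma~\ref{zetasym} then says $\zeta^E\circ\theta=(\zeta^E)^{-1}$ on $J_+(E)\cap\theta(J_+(E))$, so in $\cZ(E)$ we have $\theta([k_1,k_2])=[k_1,k_2]^{-1}$. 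Consequently the two commutator contributions to $\mu(k_1k_2)\mu(\theta(k_1k_2))$ cancel, and using commutativity of $\cZ(E)$ we obtain $\chi(k_1k_2)=\chi(k_1)\chi(k_2)$.

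For the vanishing of $\chi$ on generators, I would use the description $J(E)\cap\theta(J(E))=\bG(E)_{y,h}$, which is generated by $\bT(E)_r$ together with the root subgroups $\bU_a(E)_{y,h(a)}$ for $a\in\Phi$. The plan is a case analysis:
\begin{itemize}
\item For $k\in\bT(E)_r$, both $k$ and $\theta(k)$ lie in $\bG^\prime(E)_{y,r}\subset J_+(E)$, so $\mu(k)=\zeta^E(k)$ and $\mu(\theta(k))=\zeta^E(\theta(k))=\zeta^E(k)^{-1}$ by Lemma~\ref{zetasym}; hence $\chi(k)=1$.
\item For $a\in\Phi^\prime$ (i.e.\ $a\in\Phi_1^\prime\cup\Phi_2^\prime\cup\Phi_3^\prime$), the subgroup $\bU_a(E)_{y,h(a)}$ sits in $\bG^\prime(E)_{y,r}\subset J_+(E)$, and using $\theta(\bU_a(E)_{y,t})=\bU_{a\circ\theta}(E)_{y,t-a(y-\theta(y))}$ one checks that $\theta(k)$ also lies in $\bG^\prime(E)_{y,r}$; Lemma~\ref{zetasym} again forces $\chi(k)=1$.
\item For $a\in\Phi_1^{\prime\prime}$, the element $k$ lies in $J(E)(+)$ or $J(E)(-)$ according to the sign of $a$, so $\mu(k)=1$ by Lemma~\ref{lemonegap}; meanwhile $\theta(k)\in\bU_{a\circ\theta}(E)_{y,s-a(y-\theta(y))}\subset\bU_{a\circ\theta}(E)_{y,s^+}\subset(\bG^\prime,\bG)(E)_{y,(r^+,s^+)}\subset N(E)$, so $\mu(\theta(k))=1$.
\item For $a\in\Phi_2^{\prime\prime}$, $k$ lies in $J(E)(\pm)$ and $\theta(k)$ lies in $J(E)_\theta(\pm)$ (because the filtration level is preserved), so both $\mu$-values are $1$ by Lemma~\ref{lemonegap}.
\item For $a\in\Phi_3^{\prime\prime}$, $k$ lies in $N(E)$ (the level exceeds $s$), while $\theta(k)$ lies in $\bU_{a\circ\theta}(E)_{y,s}\subset J(E)_\theta(\pm)$; Lemma~\ref{lemonegap} gives $\mu(\theta(k))=1$ and obviously $\mu(k)=1$.
\end{itemize}
In every case $\chi$ is trivial on the generator, so $\chi\equiv 1$.

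The two principal inputs here are the cocycle identity from Remark~\ref{specisomu} and the inversion behavior of $\theta$ on the center, Lemma~\ref{zetasym}; the main technical care is in the case analysis, especially in verifying, via the translation formula $\theta(\bU_a(E)_{y,t})=\bU_{a\circ\theta}(E)_{y,t-a(y-\theta(y))}$, that $\theta$ carries each generating root subgroup into one of the four ``easy'' subgroups $J(E)(\pm)$, $J(E)_\theta(\pm)$ or into $N(E)$ so that Lemma~\ref{lemonegap} applies. This bookkeeping is the main obstacle, but it is exactly the content already packaged in the definitions of $\Phi_i^\prime$ and $\Phi_i^{\prime\prime}$ and the concave function $h$, so it should go through case-by-case.
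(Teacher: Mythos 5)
Your proposal is correct. Both the homomorphism step and the generator check go through: the defect $\chi(k)=\mu(kN(E))\,\mu(\theta(k)N(E))$ is multiplicative because for $k_1,k_2\in J(E)\cap\theta(J(E))$ the commutator $[k_1,k_2]$ lies in $J_+(E)\cap\theta(J_+(E))$ (as $[\theta(k_1),\theta(k_2)]=\theta([k_1,k_2])\in J_+(E)$), so the two $(p+1)/2$-power commutator terms coming from the cocycle identity of Remark~\ref{specisomu} cancel by Lemma~\ref{zetasym}; and your six-case verification on $\bT(E)_r$ and the root subgroups $\bU_a(E)_{y,h(a)}$ is accurate, with the cases $a\in\Phi^\prime$ and the torus case landing in $J_+(E)\cap\theta(J_+(E))$ (where Lemma~\ref{zetasym} gives triviality) and the cases $a\in\Phi\setminus\Phi^\prime$ handled by Lemma~\ref{lemonegap} or by containment in $N(E)$. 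This is, however, organized differently from the paper's proof: there, one does not reduce to generators at all, but instead applies Proposition~6.4.48 of \cite{BT1} to the concave function $h$ to write an arbitrary $k\in J(E)\cap\theta(J(E))$ as $k_+k^\prime k_-$ with $k_\pm\in J(E)(\pm)$ and $k^\prime\in J_+(E)$, computes $\nu^\bullet_E(kN(E))$ and $\nu^\bullet_E(\theta(k)N(E))$ explicitly (using Lemma~\ref{lemonegap} and the fact that $\theta$ carries $k_\pm$ into $J(E)_\theta(\pm)$ and $k^\prime$ into $J_+(E)$), and then concludes with Lemma~\ref{zetasym} applied to $[k_+,k_-]$ and $k^\prime$. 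The inputs are identical in both arguments (Lemma~\ref{lemonegap}, Lemma~\ref{zetasym}, the translation formula $\theta(\bU_a(E)_{y,t})=\bU_{a\circ\theta}(E)_{y,t-a(y-\theta(y))}$, and the torus-independence of the groups attached to concave functions, which you use tacitly when placing $\theta(k)$ in $J_+(E)$, $N(E)$ or $J(E)_\theta(\pm)$ via roots of $\theta(\bT)$); what your route buys is that the Bruhat--Tits product decomposition is not invoked in this lemma, at the cost of proving the homomorphism property and running the case analysis over $\Phi_i^\prime$, $\Phi_i^{\prime\prime}$, whereas the paper's single factorization of a general element dispenses with both.
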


\begin{proof}
Since $h(0)=r>0$ and $h$ is concave,
Proposition  6.4.48 \cite{BT1} provides a bijective map
$$
\prod_{a\in \Phi^+} \bU_a(E)_{y,h(a)} \times \bT(E)_r
\times \prod_{a\in \Phi^-} \bU_a(E)_{y,h(a)}
\rightarrow \bG(E)_{y,h}=J(E)\cap\theta(J(E))
$$
once one specifies an ordering of the factors in the products. Let $k\in J(E)\cap\theta(J(E))$.
With a suitable ordering of the products, we obtain an expression of $k$ in the form
$k=k_+k^\prime k_-$, where
\begin{equation*}
\begin{split}
& k_+\in \prod_{a\in \Phi^+,\,a\notin \Phi^\prime} \bU_a(E)_{y,h(a)}\cr
& k_-\in \prod_{a\in \Phi^-,\,a\notin \Phi^\prime} \bU_a(E)_{y,h(a)}\cr
& k^\prime\in \prod_{a\in \Phi^+\cap \Phi^\prime} \bU_a(E)_{y,h(a)}
\times \bT(E)_r\times \prod_{a\in \Phi^-\cap \Phi^\prime} \bU_a(E)_{y,h(a)}.\cr
\end{split}
\end{equation*}
Since $h(a)\ge s$ for all $a\in \Phi\setminus \Phi^\prime$,
it follows that $k_+\in J(E)(+)$ and $k_-\in J(E)(-)$. 
Because $h(a)\ge r$ for all $a\in \Phi^\prime$ and
$\bT(E)_r\subset \bG^\prime (E)_{y,r}\subset J_+(E)$, we have
$k^\prime \in J_+(E)$. Applying Lemma \ref{lemonegap}, we obtain
\begin{equation*}
\begin{split}
\nu^\bullet_E (kN(E))&=(k_+ J_+(E),1)(1, k^\prime N(E))(k_-J_+(E),1)\cr
&
=(k_+ J_+(E) + k_-J_+(E),
[k_+,k_-]^{(p+1)/2} k^\prime N(E)).
\end{split}
\end{equation*}
Because $k^\prime\in J(E)\cap \theta(J(E))\cap \bG^\prime (E)_{y,r}
=\bG^\prime (E)_{y,r}\cap\theta(\bG^\prime (E)_{y,r})$,
we have $\theta(k^\prime)\in \bG^\prime (E)_{y,r}\subset J_+(E)$.
Hence,
$\nu^\bullet_E (\theta(k^\prime))=(1,\theta(k^\prime)N(E))$.

Suppose that $a\in \Phi\setminus\Phi^\prime$. Then
\begin{equation*}
\begin{split}
\theta(\bU_a(E)_{y,h(a)})&=\bU_{a\circ\theta}(E)_{\theta(y),h(a)}
=\bU_{a\circ\theta}(E)_{y, h(a)+ (a\circ\theta)(y-\theta(y))}\cr
&=\bU_{a\circ\theta}(E)_{y, h(a)- a(y-\theta(y))}.
\end{split}
\end{equation*}
By definition of $h(a)$, $h(a)- a(y-\theta(y))\ge s$
for all $a\in \Phi\setminus\Phi^\prime$.
Therefore $\theta(\bU_a(E)_{y,h(a)})\subset
\bU_{a\circ\theta}(E)_{y,s}$ whenever
$a\in \Phi\setminus\Phi^\prime$.
Hence $\theta(k_+)\in J(E)_\theta(+)$ and
$\theta(k^-)\in J(E)_\theta(-)$. 

Applying Lemma~\ref{lemonegap}, we have
\begin{equation*}
\begin{split}
\nu^\bullet_E (\theta(k)N(E))&=(\theta(k_+)J_+(E),1)(1, \theta(k^\prime)N(E))
(\theta(k_-)J_+(E),1)\cr
&=(\theta(k_+)J_+(E) +\theta(k_-)J_+(E),
\theta([k_+,k_-])^{(p+1)/2}\theta(k^\prime)N(E)).
\end{split}
\end{equation*}
Hence $$\mu(\theta(k)N(E))= \theta ([k_+,k_-])^{(p+1)/2}
\theta(k^\prime)N(E).$$
Because $k_+$, $k_-\in J(E)\cap\theta(J(E))$ and $k^\prime
\in \bG^\prime (E)_{y,r}\cap\theta(\bG^\prime (E)_{y,r})$, both
$[k_+,k_-]$ and $k^\prime$ lie in $J_+(E)\cap\theta(J_+(E))$.
As is shown in Lemma~\ref{zetasym}, the characters $\zeta_E\circ\theta$
and $\zeta_E^{-1}$ agree on $J_+(E)\cap\theta(J_+(E))$.
Thus $\mu(\theta(k)N(E))=\mu(kN(E))^{-1}$.
\end{proof}

\begin{corollary}\label{corlemtwogap}
$\nu^\bullet (J^\theta N/N) = 
(J^\theta J_+/J_+ )\times \{1\}$.
\end{corollary}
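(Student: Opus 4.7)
The plan is to derive the corollary directly from Lemma \ref{lemtwogap} and Lemma \ref{zetasym}, together with an elementary cardinality comparison. Write $\nu^\bullet(kN)=(kJ_+,\mu(kN))$ for the function $\mu:J/N\to\cZ$ obtained as the restriction of the analogous function defined in the discussion preceding Lemma~\ref{lemonegap}; note that $\cZ=J_+/N$ has odd order $p$ because $\zeta$ takes values in the $p$-th roots of unity.

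First I will establish the containment $\nu^\bullet(J^\theta N/N)\subset (J^\theta J_+/J_+)\times\{1\}$. Let $k\in J^\theta$. Since $J\subset J(E)$ and $\theta(k)=k\in J(E)$, we have $k\in J(E)\cap\theta(J(E))$, so Lemma~\ref{lemtwogap} applies and yields
\[
\mu(kN)=\mu(\theta(k)N)^{-1}=\mu(kN)^{-1},
\]
whence $\mu(kN)^2=1$. As $\cZ$ has odd order, $\mu(kN)=1$. The first coordinate $kJ_+$ plainly belongs to $J^\theta J_+/J_+$, establishing the inclusion.

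Second, I will upgrade the inclusion to an equality by comparing cardinalities. Since $\nu^\bullet$ is a bijection $\cH\to W^\sharp$, we have $|\nu^\bullet(J^\theta N/N)|=|J^\theta N/N|=[J^\theta:J^\theta\cap N]$, while $|(J^\theta J_+/J_+)\times\{1\}|=[J^\theta:J^\theta\cap J_+]$. Since $N\subset J_+$, we have $J^\theta\cap N\subset J^\theta\cap J_+$; the reverse containment is the only point requiring work. If $k\in J^\theta\cap J_+$, then $k=\theta(k)\in\theta(J_+)$, so $k\in J_+\cap\theta(J_+)$. By Lemma~\ref{zetasym}, $\zeta\circ\theta=\zeta^{-1}$ on $J_+\cap\theta(J_+)$, so $\zeta(k)=\zeta(\theta(k))^{-1}=\zeta(k)^{-1}$, giving $\zeta(k)^2=1$. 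Oddness of $p$ forces $\zeta(k)=1$, hence $k\in\ker\zeta=N$. Thus $J^\theta\cap J_+=J^\theta\cap N$, the two indices match, and the inclusion from the first step is an equality.

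The hard work has already been carried out in Lemmas~\ref{zetasym} and \ref{lemtwogap}; the only subtlety in the plan is verifying that we are entitled to apply Lemma~\ref{lemtwogap} over $E$ to elements of $J^\theta$, which reduces to the observation that $J^\theta\subset J\cap\theta(J)\subset J(E)\cap\theta(J(E))$. Aside from this bookkeeping, the argument is essentially a two-line deduction from the two cited lemmas.
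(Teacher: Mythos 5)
Your proof is correct and takes essentially the same approach as the paper: apply Lemma~\ref{lemtwogap} to elements fixed by $\theta$ and use oddness of $p$ to force $\mu(kN)=1$. The cardinality comparison in your second step (via $J^\theta\cap N=J^\theta\cap J_+$, itself a nice consequence of Lemma~\ref{zetasym}) is sound but redundant: once $\mu(kN)=1$ for every $k\in J^\theta$, the image of $J^\theta N/N$ is precisely $\{(kJ_+,1): k\in J^\theta\}=(J^\theta J_+/J_+)\times\{1\}$, so equality rather than mere containment is immediate from the first step.
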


\begin{proof} Let $k\in J(E)^\theta$.
Applying Lemma~\ref{lemtwogap}, we
see that $\mu(kN(E))=\mu(kN(E))^{-1}$.
Because $p$ is odd, this forces
$\mu(kN(E))=1$.  This implies that $\nu^\bullet (J^\theta N/N)
=(J^\theta J_+/J_+ )\times \{1\}$.
\end{proof}

\begin{proof}[Proof of Proposition \ref{corcanpolar}] 
Let $\cH^+=J^\theta N/N$ and $W^+=J^\theta J_+/J_+$.
Using Lemma~\ref{polarauto} and the polarization $W= W^+ +W^-$ from 
Proposition~\ref{canpolar}, we may define an automorphism 
$\alpha$ of $\cH$ by 
$$
\alpha ((\nu^{\bullet})^{-1}(w_++w_-,z))= (\nu^\bullet)^{-1}(w_+-w_-,-z),
\qquad w_+\in W^+,\ w_-\in W^-,\ z\in \cZ.
$$
Corollary~\ref{corlemtwogap} tells us that $\nu^\bullet(\cH^+)=W^+\times 1$.
Hence we have $\cH^+=\{\, h\in \cH\ | \ \alpha(h)=h\,\}$
and the hypotheses
of Theorem~\ref{Heisthm} are satisfied (relative to the involution
$\alpha$ and Yu's special isomorphism $\nu^\bullet$).
The fact that $f^\prime(K^{\prime \theta})$ is contained in $\cP$ is a consequence of 
Proposition~\ref{canpolar}.
Our claims now follow directly from  Theorem~\ref{Heisthm}.
\end{proof}

\section{The inductive structure}
\label{sec:inductive}

Fix an extended generic cuspidal $G$-datum $\Psi = (\vec\bG ,y,\rho, \vec\phi)$.  We will say that $\Psi$ has \textit{degree} $d$ if $\vec\bG$ and $\vec\phi$ each have $d+1$ components.  Yu's construction associates to $\Psi$ a tame supercuspidal representation $\pi = \pi (\Psi)$ of $G =\bG(F)= \bG^d (F)$.  When $d>0$, we can define a cuspidal $G^{d-1}$-datum $\partial \Psi = (\partial\vec\bG, y,\rho, \partial\vec\phi)$ of degree $d-1$ by letting
\begin{equation*}
\begin{split}
\partial\vec\bG &= (\bG^0,\dots , \bG^{d-1})\cr
\partial\vec\phi&= (\phi_0,\dots , \phi_{d-1}).
\end{split}
\end{equation*}
Similarly, given a reduced cuspidal  $G$-datum $(\vec\bG ,\pi_{-1},\vec\phi)$, there is an analogous notion of degree and if this degree is positive, we let $\partial (\vec\bG,\pi_{-1}, \vec\phi) = (\partial\vec\bG, \pi_{-1}, \partial\phi)$.

Associated to $\partial\Psi$ is a supercuspidal representation $\partial\pi = \pi_{d-1}$ of the group $G^{d-1}$.  Continuing in the manner, we obtain a sequence of representations $\vec\pi = (\pi_0,\dots , \pi_d)$ such that $\pi_{d-i} = \partial^i \pi$.

Consider now how the $\partial$ operation affects the inducing representations.  As discussed in the Section \ref{sec:construction}, the inducing representation $\kappa$ for $\pi$ has a tensor product decomposition $\kappa = \kappa_{-1}\otd \kappa_d$.  The inducing representation $\partial\kappa$ for $\partial\pi$ has a similar decomposition.  If $d>0$ then
$$\partial\kappa = \partial\kappa_{-1}\otd \partial\kappa_{d-1},$$
where $\partial \kappa_{d-1} = \phi_{d-1}\,|\,K^{d-1}$ and, otherwise,
$$\partial\kappa_i = \inf\nolimits_{K^{i+1}}^{K^{d-1}}(\phi^\prime_i) = \kappa_i\,|\,K^{d-1},$$  where $\phi^\prime_{-1}=\rho$.

To complete the definition of the $\partial$ operation, we now treat the case of degree zero.  If $\Psi$ has degree zero then we define $\partial\Psi$ to be identical to $\Psi$ except that $\vec\phi$ is replaced by $\partial\vec\phi = (1)$, the sequence consisting of the trivial character of $G=G^0$.  Therefore, when $r_0=0$ we have $\partial\Psi = \Psi$ and $\partial\pi=\pi$.

Note that with our definitions if one repeatedly applies $\partial$ to an inducing representation $\kappa$, one eventually obtains $\rho$.  Moreover, $\partial \rho = \rho$.  Regarding the image of the $\partial$ operation, we remark that if $\Psi$ has positive degree and $r_d= r_{d-1}$ then $\Psi$ cannot have the form $\partial \Xi$ for some other cuspidal  $G$-datum $\Xi$.

The purpose of the $\partial$ formalism is to provide a tool for proofs involving induction on the degree of a cuspidal $G$-datum.  In the remainder of this section, we illustrate this principle with a specific example that is relevant to the main theme of this \paperbook.

\begin{definition}\label{defquaddist}
The representation $\kappa$ is \textit{quadratically distinguished (with respect to
$\theta$)} if there exists a character $\xi$ of $K^\theta = K\cap G^\theta$ such that 
$\xi^2=1$ and $\Hom_{K^\theta}(\kappa,\xi)\ne 0$.
\end{definition} 

Recall from Definition \ref{defweaksymdatum} that a cuspidal $G$-datum $\Psi$  is said to be ``weakly $\theta$-symmetric'' if $\theta(\bG^i)=\bG^i$ and $\phi_i\circ\theta = \phi_i^{-1}$, for all $i$.

\begin{proposition}\label{quaddistprop} Assume $\Psi$ is a weakly $\theta$-symmetric cuspidal $G$-datum and $\kappa = \kappa (\Psi)$.
If  $\kappa$ is quadratically distinguished then $\partial\kappa$ is quadratically distinguished.  Consequently, $\rho$ is quadratically distinguished.
\end{proposition}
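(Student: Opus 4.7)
The plan is to peel off the top layer of the tensor product defining $\kappa$ by using $J^{d,\theta}$-invariants on the Heisenberg space to reduce a distinguished form on $V_\kappa$ to one on $V_{\partial\kappa}$, and then read off the resulting character from the Weil factor.

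I would first dispose of the trivial case $d=0$: here $\kappa=\rho\otimes(\phi_0|_K)$ and $\partial\kappa=\rho$, so any nonzero $\Lambda\in\Hom_{K^{0,\theta}}(\kappa,\xi)$ with $\xi^2=1$ automatically lies in $\Hom_{K^{0,\theta}}(\rho,\xi\phi_0^{-1}|_{K^{0,\theta}})$, and weak $\theta$-symmetry of $\phi_0$ forces the twisting character to be quadratic. Assume henceforth $d>0$. From $\kappa_{d-1}|_{K^{d-1}}(k)=\phi_{d-1}(k)\,\hat\tau_{d-1}^\sharp(f'_{d-1}(k))$ and $\partial\kappa_{d-1}=\phi_{d-1}|_{K^{d-1}}$ one obtains, on $V_{\partial\kappa}\otimes V_{d-1}$,
\begin{equation*}
\kappa|_{K^{d-1}}\ =\ \partial\kappa\,\otimes\,\omega\,\otimes\,\phi_d|_{K^{d-1}},
\end{equation*}
where $\omega:=\hat\tau_{d-1}^\sharp\circ f'_{d-1}$ is the Weil representation of $K^{d-1}$ on $V_{d-1}$. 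Viewing a nonzero $\Lambda\in\Hom_{K^\theta}(\kappa,\xi)$ as a bilinear form and restricting its equivariance to $J^{d,\theta}\subset K^\theta$—where $\kappa_i|_{J^d}$ is $1$-isotypic for $i<d-1$—each partial evaluation $w\mapsto\Lambda(v\otimes w)$ lands in $\Hom_{J^{d,\theta}}(\tau_{d-1},\chi)$ with $\chi:=(\xi\phi_d^{-1})|_{J^{d,\theta}}$.

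The decisive ingredient is a Heisenberg multiplicity-one assertion: $\dim\Hom_{J^{d,\theta}}(\tau_{d-1},\chi)=1$. By Propositions~\ref{canpolar} and~\ref{corcanpolar} applied to $(\bG^{d-1},\bG^d)$, Yu's special isomorphism $\nu^\bullet$ sends the image $\cH^+$ of $J^{d,\theta}$ in $\cH_{d-1}$ onto $W^+\times\{1\}$, so $\cH^+$ is abelian, meets $\cZ_{d-1}$ only trivially, and has maximal isotropic image $W^+$ in $W_{d-1}$. A direct Mackey computation on the induced model of $\tau_{d-1}$, in the spirit of Lemma~\ref{hplusfixed} and Example~\ref{exwplus}, then produces one-dimensionality for every character of $\cH^+$. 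Hence $\Lambda$ factors uniquely as $c\otimes\mu$ with $c\in V_{\partial\kappa}^*$ nonzero and $\mu$ spanning the Hom space.

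To finish, $\chi$ is $K^{d-1,\theta}$-conjugation invariant since $\xi$ and $\phi_d$ are class functions, so $\mu\circ\omega(h)$ stays in the same $1$-dimensional Hom space as $\mu$. Proposition~\ref{canpolar} places $f'_{d-1}(K^{d-1,\theta})$ inside the stabilizer $\cP$ of $W^+$, and the Lemma~\ref{realtrace} argument used in Proposition~\ref{corcanpolar} yields $\mu\circ\omega(h)=\xi_{d-1}(h)\mu$ with $\xi_{d-1}:=\chi^\cP\circ f'_{d-1}$ quadratic. Substituting into the $K^{d-1,\theta}$-equivariance of $\Lambda$ gives
\begin{equation*}
c(\partial\kappa(h)v)\ =\ (\xi\,\xi_{d-1}\,\phi_d^{-1})(h)\,c(v),\qquad h\in K^{d-1,\theta},
\end{equation*}
a product of three quadratic characters (the last quadratic because $\phi_d|_{K^{d-1,\theta}}$ is quadratic by weak $\theta$-symmetry), whence the twist is quadratic and $\partial\kappa$ is quadratically distinguished. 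Iterating $\partial$ yields the statement for $\rho$. The hard part is the multiplicity-one step: it rests entirely on controlling the subgroup $\cH^+$, which is easy when $\theta[y]=[y]$ but requires the full force of Section~\ref{sec:maxiso}, and especially Proposition~\ref{canpolar}, in general.
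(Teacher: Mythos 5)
Your proposal is correct and follows essentially the same route as the paper: handle $d=0$ by twisting by $\phi_0|_{K^{0,\theta}}$, then for $d>0$ restrict a $\xi$-equivariant form to $J^{d,\theta}$, invoke Propositions~\ref{canpolar} and \ref{corcanpolar} (resting on Section~\ref{sec:maxiso}) for one-dimensionality of the Heisenberg invariants and the $\chi^{\cP}\circ f'_{d-1}$ transformation law, factor $\lambda$ as $c\otimes\mu$, and conclude that $c$ transforms under $K^{d-1,\theta}$ by a product of quadratic characters. One small point to make explicit: your character $\chi=(\xi\phi_d^{-1})|_{J^{d,\theta}}$ is a quadratic character of a pro-$p$ group with $p$ odd and hence trivial, which is what puts you exactly in the setting $\Hom_{J^{d,\theta}}(\tau_{d-1},1)$ of Proposition~\ref{corcanpolar}; as written, your appeal to the Lemma~\ref{realtrace} argument for a general character of $\cH^+$ is not licensed, whereas with $\chi=1$ (as the paper observes) it applies verbatim.
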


\begin{proof}
Fix a character $\xi$ of $K^\theta$ such that $\xi^2=1$ and a nonzero linear form $\lambda\in \Hom_{K^{\theta}} (\kappa,\xi)$.  
Assume first that $d=0$.  Then since $\phi_0^{-1} = \phi_0\circ \theta$, it follows that 
$(\phi_0\,|\,K^\theta)^2=1$.  Letting $\xi^\prime = \xi(\phi_0\, |\, K^\theta)$, we obtain a character $\xi^\prime$ of $K^\theta$ such that $(\xi^\prime)^2=1$ and $\lambda\in \Hom_{K^\theta}(\rho, \xi')$.  In other words, $\rho = \partial\kappa$ is quadratically distinguished.

Now assume $d>0$. 
Recall that we are denoting the space of $\kappa_i$ by $V_i$ for $i<d$.
 Given $v_{-1}\in V_{-1},\dots , v_{d-2}\in V_{d-2}$, we may define $\Lambda\in \Hom (V_{d-1},\C)$ by
$$\Lambda(v_{d-1})= \lambda (v_{-1}\otd v_{d-1}).$$  Note that $\xi\,|\, J^{d,\theta}=1$, since if $\alpha$ is a character of a pro-$p$-group with $p$ odd and $\alpha^2=1$, it must be the case that $\alpha=1$.  
We claim that $\Lambda$ must lie in $\Hom_{J^{d,\theta}} (\tau_{d-1} ,1)$.  
Indeed, using the fact that  $\kappa_i\, |\,J^d=1$ when $i<d-1$ 
(since $\kappa_i = \inf\nolimits_{K^{i+1}}^K(\phi'_i)$) and $\kappa_{d-1}
\,|\, J^d=\tau_{d-1}$, we have for all $h\in J^{d,\theta}$
\begin{equation*}
\begin{split}
\Lambda(v_{d-1})&=\lambda (v_{-1}\otd v_{d-1})\cr
&= \lambda (\kappa(h)(v_{-1}\otd v_{d-1})) \cr
&= \lambda( \kappa_{-1}(h)v_{-1}\otd \kappa_{d-1}(h) v_{d-1})\, \phi_d(h)\cr
&= \lambda (v_{-1}\otd v_{d-2}\otimes \kappa_{d-1}(h) v_{d-1})\, \phi_d(h)\cr
&= \Lambda (\tau_{d-1}(h)v_{d-1}).
\end{split}
\end{equation*}

Let $W_{d-1}^+=J^{d,\theta}J_+^d/J_+^d$ and let
 $$
\cP_{d-1} = \{\, s\in \cS_{d-1}\ | \ s\cdot W^+_{d-1}\subset W^+_{d-1}\,\}.
$$
Note that Proposition~\ref{corcanpolar} tells us that $f^\prime_{d-1}(K^{\prime,
\theta})\subset \cP_{d-1}$.
Define a character $\alpha$ of $K^{d-1,\theta}$ by
$$
\alpha (k) = \phi_{d-1}(k) \chi^{\cP_{d-1}}(f^\prime_{d-1}(k)), \qquad k\in K^{d-1,\theta},
$$ 
where $\chi^{\cP_{d-1}}$ is the unique character of $\cP_{d-1}$ of order
$2$. We observe that $\alpha^2=1$.

According to Proposition~\ref{corcanpolar}, the space 
$\Hom_{J^{d,\theta}}(\tau_{d-1},1)$ has dimension one and lies
inside $\Hom_{f^\prime_{d-1}(K^{\prime,\theta})}(\hat\tau_{d-1},\chi^{\cP_{d-1}})$.
Hence, fixing a nonzero linear form $\lambda_{d-1}$ in $\Hom_{J^{d,\theta}}(\tau_{d-1},1)$,
we have 
 $$
\lambda_{d-1}(\phi^\prime_{d-1}(k)v_{d-1}) = \alpha (k)\lambda_{d-1}(v_{d-1}),
$$ 
for all $k\in K^{d-1,\theta}$ and $v_{d-1}\in V_{d-1}$.  
 
Since $\Hom_{J^{d,\theta}}(\tau_{d-1},1)$ has dimension one, there exists a complex number\break $\partial\lambda (v_{-1}\otd v_{d-2})$ such that $$\lambda( v_{-1}\otd v_{d-1}) = \partial \lambda (v_{-1}\otd v_{d-2}) \  \lambda_{d-1}(v_{d-1}),$$ for all $v_{d-1}\in V_{d-1}$.  Clearly, $\partial \lambda$ defines a nonzero linear form on $\partial V = V_{-1}\otd V_{d-2}$. 
 Hence, for $k\in K^{d-1,\theta}$, we have
 \begin{equation*}
 \begin{split}
 \partial \lambda&(\partial\kappa (k)(v_{-1}\otd v_{d-2})) \ \alpha(k) \lambda_{d-1}(v_{d-1}) \cr
&=\partial \lambda (\kappa_{-1}(k)v_{-1}\otd \kappa_{d-2}(k) v_{d-2}) 
       \lambda_{d-1}(\phi^\prime_{d-1}(k)v_{d-1}) \cr
&=\lambda (\kappa_{-1}(k)v_{-1}\otd \kappa_{d-1}(k) v_{d-1})  \phi_{d-1}(k)\cr
&=\lambda(\kappa(k)(v_{-1}\otd v_{d-1}))\cr
&=\xi(k) \lambda(v_{-1}\otd v_{d-1})\cr
&=\partial\lambda(v_{-1}\otd v_{d-2}) \xi(k)\lambda_{d-1}(v_{d-1}).
\end{split}
\end{equation*}
 Hence, $\partial\lambda\in \Hom_{K^{d-1,\theta}}(\partial\kappa, \alpha\xi)$.  Since 
$(\alpha \xi\,|\,K^{d-1,\theta})^2=1$, we have shown that $\partial\kappa$ is quadratically distinguished.  Applying the $\partial$ operator repeatedly, we deduce that $\rho$ must also be quadratically distinguished.
\end{proof}

\section{Refactorization of cuspidal $G$-data}
\label{sec:refactorization}

As discussed in Section~\ref{sec:Howeconstruction},
 Howe's construction  associates a tame
supercuspidal representation of ${\bf GL}_n(F)$ to each
$F$-admissible quasicharacter of the multiplicative
group of a tamely ramified degree $n$ extension
of $F$. An essential technical element of Howe's
theory is that $F$-admissible quasicharacters
have certain useful factorizations, known as
Howe factorizations (Definition \ref{howefact}).
A fixed $F$-admissible quasicharacter has
different Howe factorizations, all of which give rise to equivalent supercuspidal
representations. There are standard
procedures for adusting a Howe factorization
to obtain another Howe factorization that is more 
convenient for use in a given application.
In this section, we explain how to analogously alter
a generic cuspidal $G$-datum without changing
the equivalence class of the associated tame supercuspidal
representation.

\subsection*{Definitions} Assume $\Psi= (\vec\bG, y,\rho, \vec\phi)$ is a generic cuspidal $G$-datum.   Throughout this section, we assume that $\vec\bG$ satisfies Hypothesis C($\vec\bG$).  Define a quasicharacter $\phi = \phi (\Psi)$ of $G^0$ by
$$\phi (g) = \prod_{i=0}^d \phi_i (g),\quad g\in G^0.$$
As usual, we take $\pi_{-1} = \ind_{K^0}^{G^0} (\rho)$ and, in addition, we let 
\begin{eqnarray*}
\rho^\prime &=& \rho \otimes (\phi\, |\, K^0)\cr
\pi^\prime_{-1} &=& \pi_{-1}\otimes \phi.
\end{eqnarray*}
Note that $\pi^\prime_{-1}\simeq \ind_{K^0}^{G^0} (\rho^\prime)$.  We also use the notation $\vec r$ for the depth sequence
in Condition \textbf{D3} of Definition \ref{definddat}. (We remark that
the notation $\rho_i^\prime$ is used in Section~4 of \cite{Y},
and should not be confused with our $\rho^\prime$ notation.)

Suppose now that we have another sequence $\vec{\dot\phi} = (\dot\phi_0,\dots ,\dot\phi_d)$  and another representation $\dot\rho$ of $K^0 = K^0 (\Psi)$ associated to the same $\vec\bG$ and $y$.   Define $\dot\phi$, $\dot\pi_{-1}$, $\dot\rho^\prime$ and $\dot\pi^\prime_{-1}$  by analogy with $\phi$, $\pi_{-1}$, $\rho^\prime$ and $\pi^\prime_{-1}$.  We do not explicitly assume that the 4-tuple $\dot\Psi = (\vec\bG , y,\dot\rho, \vec{\dot\phi})$ is a generic cuspidal $G$-datum, however, this will be a consequence of the conditions we impose on $\dot\Psi$ below.

For each $i\in \{\, 0,\dots , d\,\}$, we define a quasicharacter $\chi_i = \chi_i (\Psi, \dot\Psi)$ of $G^i$ by
$$\chi_i (g) = \prod_{j=i}^d\phi_j(g)\dot\phi_j(g)^{-1},\quad g\in G^i.$$
The notion of refactorization is now defined as follows:

\begin{definition}\label{defrefactor}
If $(\dot\rho,\vec{\dot\phi})$ satisfies the conditions
\begin{itemize}
\item[\textbf{F0.}]  if $\phi_d =1$ then $\dot\phi_d =1$,

\item[\textbf{F1.}]  
$\dot\phi_i\,|\, G_{y,r_{i-1}^+}^i=\phi_i\chi_{i+1}\,|\,
G_{y,r_{i-1}^+}^i$ for all $i$, where $r_{-1}=0$ and $\chi_{d+1}=1$,\hfill\break(in other words, $\chi_i\, |\, G^i_{y,r^+_{i-1}} =1$)

\item[\textbf{F2.}] $\dot\rho = \rho \otimes (\chi_0\, |\, K^0)$,\quad
(in other words, $\dot\rho^\prime = \rho^\prime$)
\end{itemize}
 then we say it is a \textit{refactorization of} $(\rho,\vec\phi)$.  We also say $\dot\Psi= (\vec\bG,y,\dot\rho,\vec{\dot\phi})$ is a 
\textit{refactorization of} $\Psi= (\vec\bG, y,\rho, \vec\phi)$.  For reduced data, a similar 
definition of ``refactorization'' applies with \textbf{F2} replaced by the condition $\dot\pi_{-1} = \pi_{-1} \otimes \chi_0 $ or, equivalently, the condition
 $\dot\pi^\prime_{-1} = \pi^\prime_{-1}$.
 \end{definition}

\begin{remark}\label{oldeffthree} Recall that 
$$
J_+^{i+1}=(G^i,G^{i+1})_{y,(r_i,s_i^+)}=
              G_{y,r_i}^i(G^i,G^{i+1})_{y,(r_i^+,s_i^+)}.
$$
If $\eta$ is a character of
$G_{y,r_i}^i$ that is trivial on $G_{y,r_i^+}^i$, let 
$\inf\nolimits_{G^i_{y,r_i}}^{J^{i+1}_+}\eta$ denote the character 
of $J_+^{i+1}$ that agrees with $\eta$ on $G_{y,r_i}^i$
and is trivial on $(G^i,G^{i+1})_{y,(r_i^+,s_i^+)}$.
Because $\chi_{i+1}\,|\, G_{y,r_i^+}^{i+1}=1$ 
(see Condition~\textbf{F1}), the depth of $\chi_{i+1}$ is at most
$r_i$.
Hypothesis C($\vec\bG$) implies that $\chi_{i+1}\,|\,J^{i+1}_+$ 
is realized by an element of $\z^{i+1,*}_{-r_i}$, for all 
$i\in \{\, 0,\dots ,d-1\,\}$, and this implies that
$$\chi_{i+1}\, |\, J^{i+1}_+ = \inf\nolimits^{J^{i+1}_+}_{G^i_{y,r_i}} (\chi_{i+1}\, |\, G^i_{y,r_i}),$$ for all $i\in \{\, 0,\dots , d-1\,\}$.  This fact is used in our proofs.
\end{remark}

\medskip
\subsection*{Genericity}

In this subsection, we show that a refactorization of a generic cuspidal $G$-datum is also a generic cuspidal $G$-datum.  The first step is the following:

 \begin{lemma}\label{generictrivia}
 Let $(\bG^\prime,\bG)$ be a tamely ramified twisted Levi sequence.  Let $\z$ and $\z^\prime$
 be the centers of the Lie algebras of $G$ and $G^\prime$, respectively.  Suppose 
$\Gamma\in \z^{\prime,*}_{-r}$ is $G$-generic of depth $-r$ and $\gamma\in \z^*_{-r}$.  
Then $\dot\Gamma = \Gamma+\gamma$ is $G$-generic of depth $-r$.
 \end{lemma}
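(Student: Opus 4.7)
The plan is to verify Conditions \textbf{GE1} and \textbf{GE2} of Definition \ref{defgenelt} for $\dot\Gamma$, exploiting the fact that $\gamma\in\z^*$ is by definition $\Ad^*G$-invariant. Before starting, I would note that because $\bG'$ is (after base change) a Levi subgroup of $\bG$, we have $\bZ\subset \bZ'$, hence $\z\subset\z'$; restriction from $\g$ to $\g'$ therefore sends $\z^*_{-r}$ into $\z'^*_{-r}$, so $\dot\Gamma=\Gamma+\gamma\in\z'^*_{-r}$ is a legitimate element to test for genericity.

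For \textbf{GE1}, let $a\in\Phi(\bG,\bT)\setminus\Phi(\bG',\bT)$. The coroot vector $H_a=d\check a(1)$ lies in the derived subalgebra $[\g,\g]$, and $\gamma\in\z^*\subset\g^*$ is by definition $\Ad^*G$-invariant, hence vanishes on $[\g,\g]$; in particular $\gamma(H_a)=0$. Thus $\dot\Gamma(H_a)=\Gamma(H_a)$, and $v_F(\dot\Gamma(H_a))=v_F(\Gamma(H_a))=-r$ by \textbf{GE1} for $\Gamma$.

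For \textbf{GE2}, observe that the Weyl group $W(\Phi(\bG,\bT))=N_\bG(\bT)/\bT$ acts on $\g^*$ via representatives in $G$, so the $\Ad^*G$-invariance of $\gamma$ forces $\gamma$ to be $W(\Phi(\bG,\bT))$-invariant. Fix $\varpi_r\in\overline{F}$ of valuation $r$; then the residue class $\widetilde\gamma$ of $\varpi_r\gamma$ is also $W(\Phi(\bG,\bT))$-invariant, and $\widetilde{\dot\Gamma}=\widetilde\Gamma+\widetilde\gamma$. Consequently, for any $w$ in the Weyl group,
\[
w\cdot \widetilde{\dot\Gamma}=\widetilde{\dot\Gamma}\quad\Longleftrightarrow\quad w\cdot\widetilde\Gamma=\widetilde\Gamma,
\]
so the isotropy subgroup of $\widetilde{\dot\Gamma}$ in $W(\Phi(\bG,\bT))$ coincides with that of $\widetilde\Gamma$, which equals $W(\Phi(\bG',\bT))$ by \textbf{GE2} for $\Gamma$.

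No step is technically hard; the only thing that requires a moment of care is confirming that $\dot\Gamma$ has depth \emph{exactly} $-r$ (as opposed to some $(-r)^+$), but this is automatic from \textbf{GE1} just verified, since that condition produces a pairing $\dot\Gamma(H_a)$ of valuation exactly $-r$. The entire proof is really one observation: the ingredients used to test genericity (pairing with coroots, and the Weyl-group action on residues) both kill elements of $\z^*$, so adding such an element to a generic functional cannot destroy genericity.
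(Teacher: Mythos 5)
Your proof is correct and follows essentially the same route as the paper: Condition \textbf{GE1} is preserved because $\gamma\in\z^*$ kills the coroot vectors $H_a\in[\bfr{g},\bfr{g}]$, and Condition \textbf{GE2} is preserved because $\widetilde{\dot\Gamma}$ and $\widetilde\Gamma$ have the same isotropy group in the Weyl group (your explicit remark that $\widetilde\gamma$ is Weyl-invariant is exactly the reason the paper leaves implicit).
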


 \begin{proof}
 Let $\Phi= \Phi (\bG,\bT)$ and $\Phi^\prime = 
\Phi (\bG^\prime,\bT)$.  Assume $a\in \Phi\setminus\Phi^\prime$.  Since the element $H_a = d\check a (1)$ lies in $[{\bfr g},{\bfr g}]$ and $\gamma$ lies in $\z^*$, we have $\gamma (H_a)=0$.  Thus $\dot\Gamma (H_a) = \Gamma(H_a)$ and we deduce that Condition \textbf{GE1} of \cite{Y} is satisfied.  Next, we consider Condition \textbf{GE2} of \cite{Y}.  The statement of \textbf{GE2} is somewhat technical and involves a certain element denoted $\widetilde X^*$ in \cite{Y}.  Let $\widetilde\Gamma$ and $\widetilde{\dot\Gamma}$ denote the  analogues of $\widetilde X^*$ associated to $\Gamma$ and $\dot\Gamma$, respectively.  Condition \textbf{GE2} involves the isotropy subgroups of the Weyl group  $W(\Phi)$ associated to the elements $\widetilde\Gamma$ and $\widetilde{\dot\Gamma}$.  But for all $w\in W(\Phi)$, we have $w\widetilde\Gamma = \widetilde\Gamma$ if and only if $w\widetilde{\dot\Gamma} = \widetilde{\dot\Gamma}$.  Therefore, the isotropy subgroups associated to $\widetilde\Gamma$ and $\widetilde{\dot\Gamma}$ are the same and hence $\dot\Gamma$ must satisfy Condition \textbf{GE2}.
 \end{proof}

\begin{lemma}\label{effones}
If $\dot\Psi$ is a refactorization of the generic cuspidal $G$-datum $\Psi$ then
 $\dot\Psi$ must also be a generic cuspidal $G$-datum. Furthermore, $\vec{\dot r}
=\vec r$. \end{lemma}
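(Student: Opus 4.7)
The plan is to verify conditions \textbf{D1}--\textbf{D5} together with the genericity condition for $\dot\Psi$, and to simultaneously check that $\vec{\dot r}=\vec r$. Since the twisted Levi sequence $\vec\bG$ and the point $y$ are unchanged, \textbf{D1} and \textbf{D2} are immediate. For \textbf{D4}, the formula $\dot\rho=\rho\otimes(\chi_0\,|\, K^0)$ from \textbf{F2} identifies $\dot\rho$ as a character twist of $\rho$ and gives $\ind_{K^0}^{G^0}\dot\rho\simeq\pi_{-1}\otimes\chi_0$, which is irreducible because $\pi_{-1}$ is. The $1$-isotypy of $\dot\rho$ on $G^0_{y,0^+}$ follows from the $i=0$ instance of \textbf{F1} (with $r_{-1}=0$), which says $\chi_0\,|\, G^0_{y,0^+}=1$. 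So the substantive work concerns \textbf{D3}, \textbf{D5}, and genericity.

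For each $i\in\{0,\dots,d-1\}$ I would argue as follows. The parenthetical reformulation of \textbf{F1} at index $i+1$ gives $\chi_{i+1}\,|\, G^{i+1}_{y,r_i^+}=1$, hence $\chi_{i+1}\,|\, G^i_{y,r_i^+}=1$. Combined with the depth-$r_i$ vanishing of $\phi_i$, the equality $\dot\phi_i=\phi_i\chi_{i+1}$ on $G^i_{y,r_{i-1}^+}\supset G^i_{y,r_i^+}\cup G^i_{y,r_i}$ gives $\dot\phi_i\,|\, G^i_{y,r_i^+}=1$. On $G^i_{y,r_i}$, genericity of $\Psi$ supplies a $G^{i+1}$-generic element $X^*_i\in \z^{i,*}_{-r_i}$ realizing $\phi_i$, while Hypothesis~C($\bG^{i+1}$) applied to $\chi_{i+1}$ (which has depth at most $r_i$) produces $Y^*_{i+1}\in \z^{i+1,*}_{-r_i}$ realizing $\chi_{i+1}$ on $G^{i+1}_{y,(r_i/2)^+}$, hence on $G^i_{y,r_i}$ via the inclusion $\g^i\hookrightarrow\g^{i+1}$. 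Thus $\dot\phi_i\,|\, G^i_{y,r_i}$ is realized by $X^*_i+Y^*_{i+1}\in\z^{i,*}_{-r_i}$, which by Lemma~\ref{generictrivia} applied to the pair $(\bG^i,\bG^{i+1})$ is again $G^{i+1}$-generic of depth $-r_i$. In particular this element does not lie in $\z^{i,*}_{(-r_i)^+}$, so $\dot\phi_i\,|\, G^i_{y,r_i}$ is nontrivial; hence $\dot\phi_i$ has depth $r_i$ at $y$ and is $G^{i+1}$-generic relative to $y$.

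The top index splits into two cases. If $r_d=r_{d-1}$, then $\phi_d=1$ and \textbf{F0} forces $\dot\phi_d=1$, so the relevant portion of \textbf{D5} holds and $\dot r_d=\dot r_{d-1}=r_{d-1}=r_d$. If $r_d>r_{d-1}$, then since $\chi_{d+1}=1$ condition \textbf{F1} simply says $\dot\phi_d=\phi_d$ on $G^d_{y,r_{d-1}^+}$, which contains both $G^d_{y,r_d}$ and $G^d_{y,r_d^+}$, so $\dot\phi_d$ inherits its triviality/nontriviality behavior from $\phi_d$ and has depth $r_d$. Combining all indices yields $\vec{\dot r}=\vec r$, completing \textbf{D3} and \textbf{D5} and giving genericity.

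The main technical obstacle is the middle paragraph: one must argue that perturbing $X^*_i$ by $Y^*_{i+1}$ preserves $G^{i+1}$-genericity. This is precisely what Lemma~\ref{generictrivia} is designed to deliver, but invoking it requires first using Hypothesis~C($\bG^{i+1}$) to upgrade the coarse information $\chi_{i+1}\,|\, G^{i+1}_{y,r_i^+}=1$ to a precise realization by a central element $Y^*_{i+1}\in \z^{i+1,*}_{-r_i}$, and then identifying $Y^*_{i+1}$ with its restriction to $\g^i$ inside $\z^{i,*}_{-r_i}$ so that it can be added to $X^*_i$. This is why the standing assumption of Hypothesis~C($\vec\bG$) cannot be dropped; once the identification is in place the rest is bookkeeping of depths.
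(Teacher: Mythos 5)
Your proof is correct and follows essentially the same route as the paper: D1, D2 are automatic, D4 comes from \textbf{F2} together with the $i=0$ case of \textbf{F1}, and for $i<d$ one writes $\dot\phi_i=\phi_i\chi_{i+1}$ on $G^i_{y,r_i}$, uses Hypothesis~C to realize $\chi_{i+1}$ by an element of $\z^{i+1,*}_{-r_i}$ (the paper packages this step as Remark~\ref{oldeffthree}), and invokes Lemma~\ref{generictrivia} to preserve genericity and the depth $r_i$, with \textbf{F0}/\textbf{F1} handling the top index. The only difference is that you spell out the realization argument that the paper delegates to Remark~\ref{oldeffthree}.
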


\begin{proof} Conditions \textbf{D1} and \textbf{D2} in the definition of ``cuspidal $G$-datum'' 
are automatic for $\dot\Psi$.  

Suppose that $i\in \{\, 0,\dots,d-1\,\}$.
To prove that $\dot\phi_i$ is $G^{i+1}$-generic, we observe that, 
according to Condition \textbf{F1}, we have $\dot\phi_i\, |\,G^i_{y,r_i} = 
\phi_i \chi_{i+1}\,|\,G^{i}_{y,r_i}$ and, according to 
Remark~\ref{oldeffthree}, 
$\chi_{i+1}\,|\, G^i_{y,r_i}$ is realized by an element of $\z^{i+1,*}_{-r_i}$.
Genericity now follows from Lemma~\ref{generictrivia}. Note that it also
follows that $\dot r_i=r_i$.

Conditions~\textbf{F0} and \textbf{F1} in the definition of ``refactorization''
 imply that $\dot r_d=r_d$.
Thus $\vec{\dot r}=\dot r$. Therefore
Conditions~\textbf{D3} and \textbf{D5} hold.

To verify Condition~\textbf{D4}, we first note that Condition~\textbf{F1}
implies that $\chi_0\, |\, G^0_{y,0^+} =1$ and Condition~\textbf{F2} 
implies that $\dot\pi_{-1}\simeq \pi_{-1}\otimes \chi_0$.
Condition~\textbf{D4} for $\dot\Psi$ now follows from Condition~\textbf{D4} for $\Psi$.  
Hence $\dot\Psi$ is a generic cuspidal $G$-datum.
 \end{proof}

\medskip
\subsection*{Variants of the definition of ``refactorization''}

Assume we are given  $\Psi = (\vec\bG,y,\rho,\vec\phi)$ and $\dot\Psi = (\vec\bG , y,\dot\rho , \vec{\dot\phi})$, as before, except that we explicitly assume $\dot\Psi$ is a generic cuspidal $G$-datum.  Let $\vec{\dot r}$ denote the depth sequence associated to $\dot\Psi$. 

As in Section \ref{sec:construction}, we define a character $\vartheta = \prod_{i=0}^d 
(\hat\phi_i \,|\, K_+)$ of $K_+= K_+(\Psi)$. The analogous character of $\dot K_+ = K_+(\dot\Psi)$  relative to $\dot\Psi$ is denoted $\dot\vartheta$.

Consider the following variants of Condition \textbf{F1}.\label{alteffones}
\begin{itemize}
\item[]
\item[\textbf{F1}$'$.] $\vec{\dot r} = \vec r$ and $\dot\vartheta = \vartheta$.
\item[]
\item[\textbf{F1}$''$.]
\begin{enumerate}
\item $\vec{\dot r} = \vec r$.
\item  If $i\in \{\, 0,\dots , d-1\,\}$ then
$\hat{\dot\phi}_i \, |\, J_+^{i+1} = \hat\phi_i \chi_{i+1}\, |\, J^{i+1}_+$.
\item  $\dot\vartheta = \vartheta$ on $K^0_+ = G^0_{y,0^+}$ (or, equivalently, \textbf{F1} holds for $i=0$).
\end{enumerate}
\item[]
\item[\textbf{F1}$'''$.]
\begin{enumerate}
\item $\vec{\dot r} = \vec r$.
\item  If $i\in \{\, 0,\dots , d-1\,\}$ then
${\dot\phi}_i \, |\, G^i_{y,r_i} =\phi_i \chi_{i+1}\, |\,G^i_{y,r_i}$.  
\item  $\dot\vartheta = \vartheta$ on $K^0_+ = G^0_{y,0^+}$ (or, equivalently, \textbf{F1} holds for $i=0$).
\end{enumerate}
\item[]
\end{itemize}

\begin{lemma}\label{effequiv}
Conditions ${\bf F1}$, ${\bf F1}'$, ${\bf F1}''$ and 
${\bf F1}'''$ are equivalent (assuming $\Psi$ and $\dot\Psi$ are 
generic cuspidal $G$-data).
\end{lemma}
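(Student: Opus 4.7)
The plan is to establish the four conditions are equivalent by proving the cycle $\mathbf{F1}\Rightarrow \mathbf{F1}''' \Rightarrow \mathbf{F1}'' \Rightarrow \mathbf{F1}' \Rightarrow \mathbf{F1}$. It will be convenient to observe that $\chi_i\chi_{i+1}^{-1} = \phi_i\dot\phi_i^{-1}$, so $\mathbf{F1}$ is equivalent to the assertion that $\chi_i\,|\, G^i_{y,r_{i-1}^+} = 1$ for every $i\in \{0,\dots,d\}$ (with the convention $\chi_{d+1}=1$). The implication $\mathbf{F1}\Rightarrow\mathbf{F1}'''$ is immediate: $\vec{\dot r}=\vec r$ follows from Lemma~\ref{effones}, the second clause of $\mathbf{F1}'''$ is the restriction of $\mathbf{F1}$ to the smaller subgroup $G^i_{y,r_i}\subset G^i_{y,r_{i-1}^+}$, and the third follows by restricting $\mathbf{F1}(0)$ to $K^0_+=G^0_{y,0^+}$, after noting that each $\hat\phi_j$ agrees with $\phi_j$ on $K^0$, so that $\vartheta\,|\, K^0_+=\phi\,|\, K^0_+$.

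The heart of the argument is the reverse implication $\mathbf{F1}'''\Rightarrow\mathbf{F1}$, which I would prove by induction on $i$. The base case $i=0$ is just the rewriting of $\mathbf{F1}'''(3)$ described in the previous paragraph. For $i\in \{1,\dots,d\}$, $\mathbf{F1}'''(2)$ applied at index $i-1$ yields $\chi_{i-1}\,|\, G^{i-1}_{y,r_{i-1}}=1$, hence \emph{a fortiori} $\chi_{i-1}\,|\, G^{i-1}_{y,r_{i-1}^+}=1$. Writing $\chi_{i-1}=(\phi_{i-1}\dot\phi_{i-1}^{-1})\chi_i$ and using that both $\phi_{i-1}$ and $\dot\phi_{i-1}$ have depth $r_{i-1}$, I obtain $\chi_i\,|\, G^{i-1}_{y,r_{i-1}^+}=1$. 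The key step is then to promote this from the twisted Levi subgroup $G^{i-1}$ up to $G^i$: the depth of $\chi_i$ as a quasicharacter of $G^i$ equals the depth of its restriction to $G^{i-1}$ by Lemma~\ref{depth}, whose hypothesis is guaranteed by Hypothesis~C$(\bG^i)$. This yields the desired $\chi_i\,|\, G^i_{y,r_{i-1}^+}=1$, which is $\mathbf{F1}(i)$. This depth-promotion via Hypothesis~C is the only delicate point of the proof.

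The remaining equivalences are more computational. For $\mathbf{F1}\Rightarrow\mathbf{F1}''$, once $\mathbf{F1}$ is known the depth bound $\chi_{i+1}\,|\, G^{i+1}_{y,r_i^+}=1$ allows Remark~\ref{oldeffthree} to express $\chi_{i+1}\,|\, J^{i+1}_+$ as the inflation of its restriction to $G^i_{y,r_i}$. Matching this against the analogous inflation descriptions of $\hat\phi_i$ and $\hat{\dot\phi}_i$ on $J^{i+1}_+=G^i_{y,r_i}(G^i,G^{i+1})_{y,(r_i^+,s_i^+)}$ delivers $\mathbf{F1}''(2)$, while $\mathbf{F1}''(3)$ follows from $\mathbf{F1}(0)$ as in the first paragraph. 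Conversely, restricting $\mathbf{F1}''(2)$ to $G^i_{y,r_i}$, where $\hat\phi_i=\phi_i$ and $\hat{\dot\phi}_i=\dot\phi_i$, gives $\mathbf{F1}'''(2)$; combined with the other clauses of $\mathbf{F1}''$ this produces $\mathbf{F1}'''$, and hence $\mathbf{F1}$.

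Finally, $\mathbf{F1}''\Leftrightarrow\mathbf{F1}'$ is a direct character computation. Using Lemma~\ref{hatphikplus} together with elementary inclusions of filtration subgroups, one checks that on $J^{i+1}_+$ the factor $\hat\phi_j$ is trivial for $j<i$ (since $J^{i+1}_+\subset (G^j,G)_{y,(r_j^+,s_j^+)}$), equals $\hat\phi_i\,|\, J^{i+1}_+$ for $j=i$, and coincides with $\phi_j\,|\, J^{i+1}_+$ for $j>i$ (since $J^{i+1}_+\subset G^j_{y,0}$). Thus on each $J^{i+1}_+$, the equality $\dot\vartheta=\vartheta$ is equivalent to $\hat{\dot\phi}_i\hat\phi_i^{-1}=\chi_{i+1}$, that is, to $\mathbf{F1}''(2)$. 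Since $K_+=K^0_+J^1_+\cdots J^d_+$ and $\vartheta,\dot\vartheta$ are characters of $K_+$, agreement on these generating subgroups together with agreement on $K^0_+$ is equivalent to the global condition $\vartheta=\dot\vartheta$ on $K_+$, completing the cycle.
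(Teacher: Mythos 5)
Your proof is correct, and it differs from the paper's in the one genuinely nontrivial step. The paper closes the circle by proving $\mathbf{F1}'\Rightarrow\mathbf{F1}$ directly: assuming $\vartheta=\dot\vartheta$, it evaluates both sides on $G^i_{y,r_{i-1}^+}$ where the factors $\hat\phi_j$ with $j<i$ drop out, leaving $\prod_{j\ge i}\phi_j=\prod_{j\ge i}\dot\phi_j$ there, which is $\mathbf{F1}(i)$; it then uses Remark~\ref{oldeffthree} to match $\mathbf{F1}''$ against $\mathbf{F1}'''$. You instead prove $\mathbf{F1}'''\Rightarrow\mathbf{F1}$: from $\mathbf{F1}'''(2)$ at index $i-1$ you get $\chi_i\,|\,G^{i-1}_{y,r_{i-1}^+}=1$, and you promote this from the twisted Levi subgroup $G^{i-1}$ to $G^i$ via the depth-preservation statement of Lemma~\ref{depth}, whose hypothesis is supplied by Hypothesis~C($\bG^i$). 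Both routes ultimately lean on Hypothesis~C, but through different lemmas (Lemma~\ref{depth} for you, Remark~\ref{oldeffthree} for the paper). Your route is slightly more self-contained in one respect: the paper's final appeal to Remark~\ref{oldeffthree} to settle $\mathbf{F1}''\Leftrightarrow\mathbf{F1}'''$ is phrased in a way that presupposes the depth bound $\chi_{i+1}\,|\,G^{i+1}_{y,r_i^+}=1$ (i.e.\ $\mathbf{F1}$), which is fine after the chain $\mathbf{F1}'\Rightarrow\mathbf{F1}$ but makes the bookkeeping a little delicate; your structure (establish $\mathbf{F1}\Leftrightarrow\mathbf{F1}'''$ first, then derive $\mathbf{F1}''$ from $\mathbf{F1}$, then $\mathbf{F1}''\Leftrightarrow\mathbf{F1}'$) sidesteps that. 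The character computation in your final paragraph showing $\mathbf{F1}'\Leftrightarrow\mathbf{F1}''$ is the same identity the paper invokes via Lemma~\ref{hatphikplus}, just unpacked. One small caveat worth noting explicitly: when you invoke Lemma~\ref{depth} you should remark that the lemma requires $\chi_i$ to have positive depth; when $r(\chi_i)=0$ the conclusion $\chi_i\,|\,G^i_{y,r_{i-1}^+}=1$ is immediate since $G^i_{y,r_{i-1}^+}\subset G^i_{y,0^+}$.
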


\begin{proof}
Examining the proof of genericity in the proof of Lemma \ref{effones}, one sees that Condition 
\textbf{F1} implies $\vec{\dot r} = \vec r$.  It follows that $K_+ = \dot K_+$ in all cases.

The equivalence of \textbf{F1}$'$ and \textbf{F1}$''$ follows directly from the identity
$$\vartheta (g)= \hat\phi_i (g) \prod_{j=i+1}^d \phi_j(g),$$ for $g\in J^{i+1}_+$, and the analogous identity for $\dot\vartheta$.
(The identity for $\vartheta$ follows from Lemma \ref{hatphikplus}.  The proof of Lemma \ref{hatphikplus} also applies to $\dot\vartheta$.)

Now assume \textbf{F1}$'$.  We have $\vartheta = \prod_{j=i}^d \phi_j$ on $G^i_{y,r^+_{i-1}}$ and similarly for $\dot\vartheta$.  Since we assume $\dot r_{i-1} = r_{i-1}$, we can equate these expressions to obtain \textbf{F1}.  
So \textbf{F1}$'$ implies \textbf{F1}.

Next, we assume \textbf{F1}  holds and  we use the fact that $\chi_{i+1}$ is represented on $G^i_{y,r_i}$ by an element $\gamma_i\in \z^{i+1,*}_{-r_i}$.  Parts (2) and (3) of \textbf{F1}$'''$
 follow directly from \textbf{F1}.  So we now have shown 
$\bf{F1}''\Leftrightarrow {\bf F1}' \Rightarrow {\bf F1} \Rightarrow {\bf F1}'''$.
  We now note $\hat{\dot\phi}_i\,|\, J_+^{i+1}=
\inf\nolimits^{J_+^{i+1}}_{G_{y,r_i}^i}\phi_i$ and
$\hat{\phi}_i\,|\, J_+^{i+1}=
\inf\nolimits^{J_+^{i+1}}_{G_{y,r_i}^i}\phi_i$. Hence
Remark~\ref{oldeffthree} implies that Conditions \textbf{F1}$''$ and 
\textbf{F1}$'''$ are equivalent and thus Conditions \textbf{F1}, 
\textbf{F1}$'$, \textbf{F1}$''$ and \textbf{F1}$'''$ are equivalent.  
\end{proof}

\subsection*{The main result} Suppose $\dot\Psi$ is a refactorization of a generic cuspidal $G$-datum $\Psi$.  The main result of this section says that the representations $\kappa (\Psi)$ and $\kappa (\dot\Psi)$ must be equivalent.

\begin{proposition}\label{refactorequiv}
Assume $\dot\Psi = (\vec\bG, y,\dot\rho,\vec{\dot\phi})$ is a  refactorization of a generic cuspidal $G$-datum $\Psi = (\vec\bG, y,\rho,\vec\phi)$.
Then $\dot\Psi$ is a generic cuspidal $G$-datum such that $\kappa( \dot\Psi) \simeq \kappa (\Psi)$.  In addition, $\vec{\dot r} = \vec r$.
\end{proposition}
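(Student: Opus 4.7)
Paragraph 1: Lemma \ref{effones} already establishes that $\dot\Psi$ is a generic cuspidal $G$-datum with $\vec{\dot r}=\vec r$; in particular, the groups $K$, $K^i$, $J^i$, $K_+$, $K^i_+$, $J^i_+$ coincide for $\Psi$ and $\dot\Psi$, and both $\kappa(\Psi)$ and $\kappa(\dot\Psi)$ are representations of the same group $K$. What remains is to construct an isomorphism $\kappa(\Psi)\simeq\kappa(\dot\Psi)$. My plan is to reduce a general refactorization to a composition of ``elementary refactorizations'' that touch only one pair of adjacent components, and then to verify preservation of $\kappa$ for each elementary step.

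Paragraph 2: A short chase of Condition \textbf{F1} (using $\chi_i\chi_{i+1}^{-1}=\phi_i\dot\phi_i^{-1}$ on $G^i$ together with \textbf{F1} itself) shows that $\chi_i\,|\, G^i_{y,r_{i-1}^+}=1$ for every $i\ge 0$. For $1\le k\le d$, I call an \emph{elementary refactorization at level $k$} with character $\chi$ the operation that replaces $(\phi_{k-1},\phi_k)$ by $(\phi_{k-1}\chi,\phi_k\chi^{-1})$ while leaving $\rho$ and the other $\phi_j$ unchanged, where $\chi$ is a quasicharacter of $G^k$ satisfying $\chi\,|\, G^k_{y,r_{k-1}^+}=1$; at level $k=0$, the elementary refactorization with character $\chi$ replaces $(\rho,\phi_0)$ by $(\rho\otimes(\chi\,|\, K^0),\phi_0\chi^{-1})$, where $\chi$ is a quasicharacter of $G^0$ trivial on $G^0_{y,0^+}$. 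A direct verification against Definition~\ref{defrefactor} shows each is a bona fide refactorization. Given $\Psi\to\dot\Psi$, I would interpolate through $d+1$ intermediate data $\Psi^{(0)}=\Psi,\Psi^{(1)},\dots,\Psi^{(d+1)}=\dot\Psi$, obtained by successively performing elementary refactorizations at levels $d,d-1,\dots,1,0$ with characters $\chi_d,\chi_{d-1},\dots,\chi_1,\chi_0$; the depth bound just noted guarantees that each step is admissible.

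Paragraph 3: It then suffices to verify that each elementary refactorization preserves $\kappa$ up to isomorphism. Fix level $k\ge 1$ with character $\chi$; only the factors $\kappa_{k-1}$ and $\kappa_k$ are affected. Using the equivalent form \textbf{F1}$''$ (Lemma~\ref{effequiv}), $\dot\zeta_{k-1}=\zeta_{k-1}\cdot(\chi\,|\, J^k_+)$, so $\tau_{k-1}\otimes(\chi\,|\, J^k)$ descends to a Heisenberg representation of $\dot\cH_{k-1}=J^k/\ker\dot\zeta_{k-1}$ with central character $\dot\zeta_{k-1}$; uniqueness identifies it with $\dot\tau_{k-1}$. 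Passing to Weil-Heisenberg representations on $K^k$, I would verify that $\phi'_{k-1}\otimes(\chi\,|\, K^k)$ satisfies the defining properties (the analogues of Conditions \textbf{SC2}${}_{k-1}$ and \textbf{SC3}${}_{k-1}$) of the Weil-Heisenberg representation associated to $\dot\phi_{k-1}$, then invoke Proposition~\ref{rsiisoclass} to conclude $\dot\phi'_{k-1}\simeq\phi'_{k-1}\otimes(\chi\,|\, K^k)$. Inflating to $K$ gives $\dot\kappa_{k-1}\simeq\kappa_{k-1}\otimes(\chi\,|\, K^k)^\sharp$, and the factor $\kappa_k$ acquires the reciprocal twist either by the parallel argument (if $k<d$) or immediately from $\dot\phi_d=\phi_d\chi^{-1}$ (if $k=d$); the twists cancel in the tensor product $\kappa=\bigotimes_i\kappa_i$. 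The level-$0$ case runs analogously, with $\kappa_{-1}=\inf\rho$ absorbing the factor $\chi\,|\, K^0$ supplied by \textbf{F2} and $\kappa_0$ absorbing its inverse.

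Paragraph 4: The principal obstacle is the Weil-level step in Paragraph~3. The Heisenberg-level twist is automatic from the uniqueness of Heisenberg representations, but promoting it to a twist of the Weil-Heisenberg representation on $K^k$ requires controlling the Weil representation of $\cS_{k-1}$, whose isomorphism class genuinely depends on the central character $\zeta_{k-1}$ vs.\ $\dot\zeta_{k-1}$. Concretely, a Schur-type intertwiner $I$ implementing the Heisenberg twist will conjugate $\omega_{k-1}(\cdot,1)$ into $\dot\omega_{k-1}(\cdot,1)$ only up to an a priori unknown character of $K^{k-1}$, and the key task is to show that this character matches the discrepancy $(\chi\,|\, K^{k-1})\cdot(\dot\phi_{k-1}/\phi_{k-1}\,|\, K^{k-1})^{-1}$ exactly. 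Proposition~\ref{rsiisoclass} (via Lemma~\ref{rsiequiv}) is what allows this discrepancy to be controlled: the invariance of the equivalence class of $\phi'_i$ under changes in the choice of relevant special isomorphism means I can transport one Weil-Heisenberg lift to the other through the twist without picking up unwanted cocycles.
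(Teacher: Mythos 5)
Your reduction to elementary refactorizations and your Heisenberg-level computation ($\dot\zeta_{k-1}=\zeta_{k-1}\cdot(\chi\,|\,J^k_+)$, so $\tau_{k-1}\otimes(\chi\,|\,J^k)$ is a model of $\dot\tau_{k-1}$) are fine, but the crux of Paragraph 3 is not justified, and the tools you cite do not supply it. Proposition \ref{rsiisoclass} and Lemma \ref{rsiequiv} compare the Weil--Heisenberg data attached to two \emph{relevant special isomorphisms for the same central character} $\zeta_{k-1}$; what you need is a comparison of the Heisenberg--Weil lifts attached to the two \emph{different} central characters $\zeta_{k-1}$ and $\dot\zeta_{k-1}$. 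Concretely, if $I$ intertwines $\tau_{k-1}\otimes(\chi\,|\,J^k)$ with $\dot\tau_{k-1}$, then $I\,\omega_{k-1}(\cdot,1)\,I^{-1}$ and $\dot\omega_{k-1}(\cdot,1)$ differ by a character of $K^{k-1}$ that only factors through the image $f'_{k-1}(K^{k-1})\subset\cS_{k-1}$, so perfectness of $\cS_{k-1}$ does not kill it; and Conditions \textbf{SC2}/\textbf{SC3} do not characterize $\phi'_{k-1}$ up to equivalence (any twist by a character of $K^{k-1}$ trivial on $K^{k-1}_+$ satisfies them again), so ``verifying the defining properties'' cannot identify $\phi'_{k-1}\otimes(\chi\,|\,K^k)$ with $\dot\phi'_{k-1}$. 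This is exactly the obstacle you name in Paragraph 4, and it is left unresolved.

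The paper removes this ambiguity by a different device and then needs a further argument that your plan also omits. It realizes $\tau^\sharp_i$ and $\dot\tau^\sharp_i$ as pullbacks of one fixed Heisenberg representation $\tau^\natural_i$ of $W_i\boxtimes\mu_p$ (legitimate because, by \textbf{F1}$''$, $\hat{\dot\phi}_i\hat\phi_i^{-1}\,|\,J^{i+1}_+$ extends to a quasicharacter of $G^{i+1}$, so the two symplectic forms on $W_i$ coincide); then the Weil operators are literally identical for both data and one obtains the exact identity $\dot\phi'_i=\phi'_i\otimes\eta^\sharp_i$, hence $\dot\kappa=\kappa\otimes\xi$ for an explicit character $\xi$ of $K$. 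Crucially, $\xi$ is \emph{not} trivial on all of $K$: with the correct twist characters your ``the twists cancel'' claim fails on $J^k$, where the residual character is $\eta_{k-1}\,(\chi\,|\,J^k)^{-1}$, trivial only on $J^k_+$ and dependent on the choices of special isomorphisms. The paper shows $\xi$ is trivial on $K^0J^1_+\cdots J^d_+$ and then proves a support statement for the character of $\kappa$ (it is supported on elements whose $K$-conjugacy class meets $K^0J^1_+\cdots J^d_+$), whence $\chi_{\kappa\otimes\xi}=\chi_\kappa$ and $\dot\kappa\simeq\kappa$. Either this support argument, or a new lemma in the spirit of Lemma \ref{rsiequiv} showing that twisting $\omega_{k-1}$ by a $K^{k-1}$-invariant character of $W_{k-1}$ does not change its equivalence class, would be needed to complete your route; as written, the proposal has a genuine gap at its central step.
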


\begin{proof}
The fact that $\dot\Psi$ is a generic cuspidal $G$-datum with $\vec{\dot r} = \vec r$ was established in
 Lemma \ref{effones}.
Now let $\kappa = \kappa (\Psi)$ and $\dot\kappa = \kappa (\dot\Psi)$.  We have
$$\kappa = \kappa_{-1}\otimes\cdots \otimes \kappa_d,$$ where $\kappa_{-1} = {\rm inf}_{K^0}^K(\rho)$, $\kappa_d = \phi_d\, | \,K$ and otherwise
 $\kappa_{i}= {\rm inf}_{K^{i+1}}^K (\phi^\prime_i)$.  Similarly, for $\dot\kappa$.

Fix $i\in \{\, 0,\dots ,d-1\,\}$.  The models for our representations $\kappa_i$ and $\dot\kappa_i$ are  as follows.   Fix  a Heisenberg representation $\tau^\natural_i$ of the Heisenberg group $\cH_i^\natural = W_i\boxtimes \mu_p$ whose central character is the identity map on $\mu_p$.  The latter Heisenberg group is formed with respect to the $\mu_p$-valued symplectic form 
$$\langle aJ^{i+1}_+,bJ^{i+1}_+\rangle = \hat\phi_i ([a,b]) = \hat{\dot\phi}_i([a,b])$$ on $W_i= J^{i+1}/J^{i+1}_+$.  
Here, we are using the fact that Condition \textbf{F1}$''(2)$  implies that $\hat{\dot\phi}_i \hat\phi_i^{-1}\, |\, J^{i+1}_+$ extends to a quasicharacter of $G^{i+1}$ and hence is 
trivial on commutators such as $[a,b]$.
 
Let $V_i$ denote the representation space of $\tau^\natural_i$.  
 Let $\hat\tau^\natural_i$ be the Weil representation of $\cS_i = {\rm Sp}(W_i)$ 
associated to the standard action of $\cS_i$ on $\cH^\natural_i$.
We derive from $\tau^\natural_i$ a Heisenberg representation $\tau^\sharp_i$ of $\cH^\sharp_i = W_i\boxtimes \cZ_i$ by pulling back via the map $1\times \hat\phi_i : \cH^\sharp_i \to \cH^\natural_i$.  Similarly, we have a Heisenberg representation $\dot\tau^\sharp_i$ of $\dot\cH^\sharp_i = W_i\boxtimes \dot\cZ_i$.  All three Heisenberg representations $\tau^\natural_i$, $\tau^\sharp_i$ and $\dot\tau^\sharp_i$ act on the same space $V_i$ and yield the identical Weil representation $\hat\tau^\natural_i$ of $\cS_i$ with respect to the standard action of $\cS_i$ on the Heisenberg group.  

Now pull back $\tau^\sharp_i$ via the special isomorphism $\nu_i : \cH_i \to \cH^\sharp_i$ to get a Heisenberg representation $\tau_i$ of $\cH_i = J^{i+1}/N_i$.  Let $\tau_i$ denote the corresponding representation of $J^{i+1}$.  Similarly, one defines another Heisenberg representation $\dot\tau_i$ of $J^{i+1}$ or $\dot\cH_i = J^{i+1}/\dot N_i$.

 Define $\mu_i : J^{i+1}\to \cZ_i$ by $\nu_i (hN_i) = (hJ^{i+1}_+,\mu_i(h))$ 
and let $\mu^\prime_i :J^{i+1}\to \mu_p$ be given by 
$\mu^\prime_i (h) = \hat\phi_i (\mu_i(h))$. 
Define $\dot\mu_i$ and $\dot\mu_i^\prime$ similarly, and let
$\eta_i$ be the character of $J^{i+1}$ 
given by by $\eta_i (h) = \dot\mu^\prime_i(h) \mu^\prime_i (h)^{-1}$.
 Let $\nu^\prime_i : J^{i+1}\to \cH^\natural_i$ be defined by 
$\nu^\prime_i (h) = (1\times\hat\phi_i)(\nu_i(hN_i))$ and define 
$\dot\nu^\prime_i$ similarly.  Then one has
\begin{eqnarray*}
\dot\nu^\prime_i (h) &=& \eta_i (h)\ \nu^\prime_i(h)\cr
\eta_i \,|\,J^{i+1}_+&=& \hat\phi_i^{-1}\ \hat{\dot\phi}_i\  |\, J^{i+1}_+\cr
\dot\tau_i(h)&=&\eta_i(h)\ \tau_i (h),
\end{eqnarray*}
 for $h\in J^{i+1}$.  These  facts follow directly from the definitions. 

Define $\omega_i : K^i\ltimes J^{i+1}\to GL(V_i)$ by $$\omega_i (k,h) = \hat\tau^\natural_i (f'_i(k))\ \tau_i (h),$$ where $f'_i :K^i\to \cS_i$ comes from conjugation.  Defining $\dot\omega_i$ similarly, we have
$$\dot\omega_i = \omega_i \otimes \eta^\flat_i,$$ where $\eta^\flat_i (k,h) =\eta_i (h)$.

Define $\phi^\prime_i : K^{i+1} \to GL(V_i)$ by
$$\phi^\prime_i (kh) = \phi_i (k) \ \omega_i (k,h) = \phi_i(k)\ \hat\tau^\natural_i (f'_i(k))\ \tau_i (h),$$ with $k\in K^i$ and $h\in J^{i+1}$.  Defining $\dot\phi^\prime_i$ similarly, we have
$$\dot\phi^\prime_i = \phi^\prime_i \otimes \eta^\sharp_i,$$ where
$\eta^\sharp_i (kh) = (\dot\phi_i \phi^{-1}_i)(k)\ \eta_i (h)$, when $k\in K^i$ and $h\in J^{i+1}$.

We have 
$$\dot\kappa_i = {\rm inf}_{K^{i+1}}^K (\phi^\prime_i) = 
\kappa_i \otimes {\rm inf}_{K^{i+1}}^K (\eta^\sharp_i).$$
We also have
$$\dot\kappa_{-1} = \kappa_{-1} \otimes {\rm inf}_{K^0}^K \left(\chi_0\right)$$  and $$\dot\kappa_d = \kappa_d\otimes (\dot\phi_d\phi_d^{-1}\,|\,K).$$

It follows that
$$\dot\kappa = \kappa\otimes\xi,$$ where
$$\xi =  {\rm inf}_{K^0}^K \left( \chi_0 \right) \otimes \left(\prod_{i=0}^{d-1} {\rm inf}_{K^{i+1}}^K (\eta^\sharp_i)\right)\otimes  (\dot\phi_d\phi_d^{-1}\,|\,K).$$

Recall that
$$\eta^\sharp_i (kh) = (\dot\phi_i\phi_i^{-1})(k)\ \eta_i (h),$$ with $k\in K^i$ and $h\in J^{i+1}$.  It follows that $\xi$ is trivial on $K^0$.
Now suppose $h\in J^{j+1}$, where $j\in \{\, 0,\dots ,d-1\,\}$.  Then $h$ is annihilated by inflations from $K^0,\dots, K^j$. So
$$\xi (h) = \eta_j(h) \prod_{i=j+1}^d (\dot\phi_i\phi_i^{-1})(h).$$
Recall
$$\eta_j\,|\,J_+^{j+1} = \hat{\dot\phi}_j \hat\phi^{-1}_j  \,|\,J_+^{j+1}$$ and from 
\textbf{F1}$''$ we have
$$\hat{\dot\phi}_j \hat\phi_j^{-1} \,|\, J^{j+1}_+ =
\dot\phi_j\phi_j^{-1} \,|\, J^{j+1}_+= \prod_{i=j+1}^d \phi_i\dot\phi_i^{-1}\, |\,J^{j+1}_+.$$ 
So $\xi$ is trivial on the set $K^0 J^1_+\cdots J^d_+ = K^0 K_+$.

The character $\chi_\kappa$ of $\kappa$ is a product $\prod_{i=-1}^d \chi_{\kappa_i}$ of the 
characters of the $\kappa_i$'s.  
If $d>0$, let $C^d$ be the set of elements in $K$ whose conjugacy class in $K=K^{d-1}J^d$ 
intersects
the subgroup $K^{d-1}J^d_+$.
We observe that if $d>0$ then $\chi_{\kappa_{d-1}}$ has support in the set $C^d$.

Now suppose that $d>1$ and consider $\chi_{\kappa_{d-2}}\,|\, K^{d-1}J^d_+$.  
This is an inflation from $K^{d-1}$, so it is right $J^d_+$-invariant.  On the other hand, 
denoting the set of elements in $K^{d-1}$ whose $K^{d-1}$-conjugacy class intersects
$K^{d-2}J_+^{d-1}$ by $C^{d-1}$, the restriction $\chi_{\kappa_{d-2}}\,|\, K^{d-1}$ has 
support in $C^{d-1}$.

Let $k\in K$ be in the support of $\chi_{\kappa_{d-2}}\otimes \chi_{\kappa_{d-1}}$.
Then there exist $k_1\in K$ and $k_2\in K^{d-1}J^d_+$ such that $k=k_1k_2k^{-1}$.
Since $\chi_{\kappa_{d-2}}(k_1k_2k^{-1})=\chi_{\kappa_{d-2}}(k_2)\not=0$, we
have $k_2=k_3k_4$, with $k_3\in C^{d-1}$ and $k_4\in J_+^d$.
Writing $k_3=k_6k_5k_6^{-1}$ with $k_5\in K^{d-2}J_+^{d-1}$ and $k_6\in K^{d-1}$,
we see that $k=k_1k_6(k_5(k_6^{-1}k_4k_6))(k_1k_6)^{-1}$. Note
that $k_6^{-1}k_4k_6\in J_+^d$, since $K^{d-1}$ normalizes $J_+^d$.
Hence $k\in k_1k_6(K^{d-2}J_+^{d-1}J_+^d)(k_1k_6)^{-1}$.
Thus $\chi_{\kappa_{d-2}}\otimes\chi_{\kappa_{d-1}}$ has support inside the set of
elements in $K$ whose $K$-conjugacy class
intersects the set $K^{d-2}J^{d-1}_+J^d_+$.

Continuing in this manner, one may show that the support of $\chi_\kappa$ is contained 
in the set of elements of $K$ whose $K$-conjugacy class intersects $K^0J^1_+\cdots J^d_+$.
We then have $\chi_{\kappa\otimes\xi} = \chi_\kappa \xi = \chi_\kappa$, 
since this holds on the support of $\chi_\kappa$.  This implies that
 $\kappa \otimes\xi$ and $\kappa$ are equivalent. Hence $\dot\kappa \simeq\kappa$.
\end{proof}

\section{Contragredients}
\label{sec:contragredients}

Fix an involution $\theta$ of $G$ (in the sense of Definition~\ref{definv}).
Given a sequence $\vec\phi = (\phi_0,\dots ,\phi_d)$, it will be convenient to use the notations $\vec\phi^{-1} = (\phi_0^{-1},\dots
,\phi_d^{-1})$ and $\vec\phi^{\theta} = (\phi_0^{\theta},\dots
,\phi_d^{\theta})$, where $\phi^\theta_i$ is the quasicharacter $\phi_i\circ \theta$ of $\theta (G^i)$.  
The notation $\rho^\theta$ denotes the representation $\rho\circ\theta$ of $\theta(K^0)$,
and we will write $\tilde\rho$ for the contragredient of $\rho$.
If $\Psi = (\vec\bG , y,\rho, \vec\phi)$ is an extended generic cuspidal $G$-datum then we use 
the notations 
\begin{equation*}
\begin{split}
\widetilde\Psi&= (\vec\bG ,y, \tilde\rho, \vec\phi^{-1})\cr
\Psi^\theta&=(\theta(\vec\bG),\theta (y),\rho^\theta, \vec\phi^\theta).
\end{split}
\end{equation*}

\label{contralabel} The purpose of this section is to prove:

\begin{theorem}\label{contrathm}
Let $\vec\pi = (\pi_0,\dots,\pi_d)$ be the sequence of tame supercuspidal representations associated to the cuspidal $G$-datum 
$\Psi = (\vec\bG , y,\rho, \vec\phi)$ and let
 $\vec{\check\pi} = (\check\pi_0,\dots, \check\pi_d)$  be the sequence of representations associated to $\widetilde\Psi$.  Then $\pi_i$ and $\check\pi_i$ are contragredient, for all $i$.  Moreover, $\kappa (\Psi)$ and $\kappa (\widetilde\Psi)$ are contragredient.
\end{theorem}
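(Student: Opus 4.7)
The plan is to reduce the theorem to showing that $\kappa(\Psi)$ and $\kappa(\widetilde\Psi)$ are contragredient, and then to prove the latter by exploiting the tensor product decomposition of the inducing representation from Section~\ref{sec:construction}. For the reduction, let $\Psi^{(i)}$ denote the degree-$i$ truncation of $\Psi$, namely $((\bG^0,\dots,\bG^i), y, \rho, (\phi_0,\dots,\phi_i))$; then $\pi_i=\ind_{K^i}^{G^i}(\kappa(\Psi^{(i)}))$ and the degree-$i$ truncation of $\widetilde\Psi$ is precisely $\widetilde{\Psi^{(i)}}$. The matrix coefficient pairing from Section~\ref{sec:Mackey} gives $\widetilde{\ind_{K^i}^{G^i}(\kappa)}\simeq\ind_{K^i}^{G^i}(\tilde\kappa)$ whenever $\kappa$ is finite-dimensional and the induced representation is supercuspidal, so each $\pi_i$ statement follows from the corresponding statement for $\kappa(\Psi^{(i)})$. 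Before the main computation I would verify that $\widetilde\Psi$ is itself a generic cuspidal $G$-datum: \textbf{D1}--\textbf{D3} are immediate, \textbf{D4} holds because $\tilde\rho$ is irreducible and $\ind_{K^0}^{G^0}(\tilde\rho)\simeq\widetilde{\pi_{-1}}$ is irreducible, \textbf{D5} holds because $\phi_i^{-1}$ has the same depth as $\phi_i$, and genericity is preserved because $-X^*\in\z^{i+1,*}_{-r_i}$ realizes $\phi_i^{-1}\,|\,G^i_{y,r_i}$ whenever $X^*$ realizes $\phi_i\,|\,G^i_{y,r_i}$ (Conditions \textbf{GE1}--\textbf{GE2} being invariant under $X^*\mapsto -X^*$). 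In particular the depth sequence $\vec r$ for $\widetilde\Psi$ coincides with that of $\Psi$, so the open subgroups $K^i$, $J^{i+1}$, $J^{i+1}_+$ and the Heisenberg quotients $\cH_i=J^{i+1}/N_i$ (noting $\ker(\hat\phi_i\,|\,J^{i+1}_+)=\ker(\hat\phi_i^{-1}\,|\,J^{i+1}_+)$) are literally identical for the two data.

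Next I would compute the contragredient of each tensor factor of $\kappa(\Psi)=\kappa_{-1}\otimes\kappa_0\otd\kappa_d$ and identify it with the corresponding factor $\check\kappa_i$ of $\kappa(\widetilde\Psi)$. The factors $\kappa_{-1}=\inf_{K^0}^K(\rho)$ and $\kappa_d=\phi_d\,|\,K$ dualize as expected, since taking contragredients commutes both with restriction and with the inflation operation $\inf_{K^i}^{K^{i+1}}$ (the extension across $J^{i+1}$ by triviality is preserved under dualization of the underlying finite-dimensional representation). For $0\le i\le d-1$ the key input is Section~\ref{sec:genHeis}. I would pick the same relevant special isomorphism $\nu_i:\cH_i\to W_i^\sharp$ for both $\Psi$ and $\widetilde\Psi$, which is legitimate because Yu's canonical $\nu^\bullet$ depends only on the Heisenberg group and a choice of polarization, neither of which sees the central character. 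I would also choose the Heisenberg representation for $\widetilde\Psi$ to act on the dual space of $\tau_i$ so that $\check\tau_i=\tilde\tau_i$; both are Heisenberg representations with central character $\zeta_i^{-1}$, and by Proposition~\ref{rsiisoclass} this choice does not affect the equivalence class of $\kappa_i$. The central computation is then
\begin{equation*}
\widetilde{\phi'_i}(kj)\;=\;\phi_i^{-1}(k)\,\widetilde{\hat\tau_i^\sharp}(f'_i(k),1)\,\tilde\tau_i(j)\;=\;\phi_i^{-1}(k)\,\hat{\tilde\tau_i^\sharp}(f'_i(k),1)\,\check\tau_i(j)\;=\;\check\phi'_i(kj),
\end{equation*}
which, together with compatibility of inflation with contragredient, yields $\widetilde{\kappa_i}=\check\kappa_i$. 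Tensoring the identities for $i\in\{-1,0,\dots,d\}$ gives $\widetilde{\kappa(\Psi)}\simeq\kappa(\widetilde\Psi)$.

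The hard part is justifying the middle equality in the central display, namely that $\widetilde{\hat\tau_i^\sharp}=\hat{\tilde\tau_i^\sharp}$ as representations of $\cS_i\ltimes W_i^\sharp$. Outside the case $p=3$, $\ell=1$ this is immediate from Definition~\ref{HeisWeilrep}: both are extensions of $\tilde\tau_i^\sharp$ from $W_i^\sharp$ to $\cS_i\ltimes W_i^\sharp$, and such an extension is unique. In the exceptional case one must appeal to the explicit construction in Section~\ref{sec:eselltwo}: the Weil representation class associated to $\zeta_i^{-1}$ was defined there precisely as the contragredient of the Weil representation class associated to $\zeta_i$, so the convention is consistent with taking contragredients, and the analogous identification for the Heisenberg-Weil lift follows. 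This is the only point at which the distinction among the three possible extensions in the exceptional case must be tracked carefully, but once one has Section~\ref{sec:eselltwo} in hand the verification is essentially formal.
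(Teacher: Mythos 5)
Your proposal is correct and follows essentially the same route as the paper, which reduces the theorem to the statement that the natural tensor product pairing is $K$-invariant (Proposition~\ref{linforminvariance}) and then proves this via Lemma~\ref{contramu} (inflation commutes with contragredient) and Lemma~\ref{contraphiprime} (the dual of $\phi'_i$ is $(\phi_i^{-1})'$), the latter resting on exactly your uniqueness-of-the-extension argument for the Heisenberg-Weil lift. One small point in your favor: the paper's proof of Lemma~\ref{contraphiprime} simply asserts that $\tau_i^\sharp$ ``has a unique extension'' without flagging the $p=3$, $\ell=1$ caveat, whereas you correctly observe that in that exceptional case uniqueness fails as a matter of abstract representation theory and one must instead invoke the convention in Section~\ref{sec:eselltwo} that the Weil representation attached to $\zeta^{-1}$ is by definition the contragredient of the one attached to $\zeta$.
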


This result should be useful in the general theory of tame supercuspidal representations.  An application of it to the theory of distinguished representations is:

\begin{corollary}\label{distcontra}
Suppose $\vec\pi = (\pi_0,\dots ,\pi_d)$ is the sequence of tame supercuspidal representation associated to the cuspidal $G$-datum $\Psi = (\vec\bG , y, \rho, \vec\phi )$ and $\theta$ is an involution of $G$.   Then the condition $\widetilde\Psi = \Psi^\theta$ implies that each representation
$\pi_i$  must satisfy $\tilde\pi_i\simeq \pi_i\circ \theta$.
\end{corollary}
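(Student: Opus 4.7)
The plan is to combine Theorem \ref{contrathm} with a basic functoriality statement for Yu's construction under $F$-automorphisms, specialized to the involution $\theta$. First I would establish that if $\Psi^\theta=(\theta(\vec\bG),\theta(y),\rho^\theta,\vec\phi^\theta)$ is the datum transported by $\theta$, then the tame supercuspidal representations attached to $\Psi^\theta$ are given by
\[
\pi_i(\Psi^\theta) \simeq \pi_i(\Psi)\circ\theta, \qquad i\in\{\,0,\dots,d\,\},
\]
using $\theta^{-1}=\theta$. Granting this, the corollary is immediate: Theorem \ref{contrathm} gives $\pi_i(\widetilde\Psi) \simeq \tilde\pi_i$, and the hypothesis $\widetilde\Psi=\Psi^\theta$ identifies $\pi_i(\widetilde\Psi)$ with $\pi_i(\Psi^\theta)$, so $\tilde\pi_i \simeq \pi_i\circ\theta$.

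For the functoriality claim itself, I would trace through each step of Yu's construction and verify its naturality under $\theta$. The Moy--Prasad filtration subgroups satisfy $\theta(G^i_{y,r})=G^i_{\theta(y),r}$ by property (1) of Section \ref{sec:buildings}, and analogous relations hold for $J^{i+1}$, $J^{i+1}_+$, $K^i$, and the kernel $N_i$ of $\hat\phi_i\,|\,J^{i+1}_+$; in particular $K(\Psi^\theta)=\theta(K(\Psi))$. Yu's canonical special isomorphism $\nu_i^\bullet$ transports under $\theta$ to the canonical special isomorphism attached to $\Psi^\theta$; this is a consequence of Remark \ref{rsiYurem}, which records that $\nu_i^\bullet$ is independent of the auxiliary choices (splitting torus, tame Galois extension, ordering of roots) used to construct it. Consequently the Heisenberg--Weil lift $\hat\tau_i^\sharp$, the Weil representation $\omega_i$, and the representation $\phi_i^\prime$ of $K^{i+1}$ all transport correctly, and inflation to $K$ together with the tensor product decomposition from Section \ref{sec:construction} yields $\kappa(\Psi^\theta) \simeq \kappa(\Psi)\circ\theta$ as representations of $K(\Psi^\theta)=\theta(K(\Psi))$. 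A routine computation with compact induction then produces $\pi_i(\Psi^\theta) \simeq \pi_i(\Psi)\circ\theta$, and applying the same argument to each truncation of $\Psi$ handles the full sequence $\vec\pi$.

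The main obstacle is essentially bookkeeping: verifying that the canonical choices in Yu's construction (especially the special isomorphism $\nu_i^\bullet$ and the associated symplectic action used to form the Weil representation) commute with $\theta$ on the nose. This rests on the independence statements in Remark \ref{rsiYurem} and Proposition~11.4 of \cite{Y}: because these objects are constructed without reference to any data not already intrinsic to the $G$-datum, the entire construction automatically commutes with any $F$-automorphism of $\bG$, after which the passage to representations of $K$ and to compact induction is formal.
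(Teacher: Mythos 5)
Your argument is correct and is essentially the route the paper intends: the corollary is stated without a separate proof precisely because it follows from Theorem \ref{contrathm} combined with the equivariance of Yu's construction under $F$-automorphisms (the action of automorphisms on $G$-data noted in Section \ref{sec:overview}), which is exactly what you verify. Your explicit check that the canonical special isomorphisms transport under $\theta$ mirrors the ordering-transfer argument already used in Proposition \ref{stJpol} (and is in any case covered by Proposition \ref{rsiisoclass}), so no gap remains.
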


Let us explain how the proof of Theorem \ref{contrathm} reduces to establishing a simpler proposition.  Fix an extended generic cuspidal $G$-datum 
$\Psi = (\vec\bG , y,\rho, \vec\phi)$ and let $\pi = \ind_K^G(\kappa)$ be the associated tame supercuspidal representation of $G$.
We have a
factorization
$$\kappa =
\kappa_{-1}\otd \kappa_d$$ and a
corresponding
 factorization 
$$V= V_{-1}\otd V_{d-1}$$ of the representation space of
$\kappa$.    Recall that the central character of the
contragredient of a Heisenberg representation is
the inverse of the central character of the
original Heisenberg representation.
This implies that the
 space of the
 tame supercuspidal representation  associated to
$\widetilde\Psi$ is
$$\widetilde V= \widetilde V_{-1}\otd
\widetilde V_{d-1},$$  where $\widetilde V_{-1}$ is the space of $\tilde\rho$ and
$\widetilde V_i$ is the
space of  $\tilde\tau_i$, when $i\in \{\, 0,\dots,d-1\,\}$.  Let
$\lambda_{-1} :
V_{-1}\otimes \widetilde V_{-1}\to \C$ be the natural 
$K^0$-invariant pairing between $\rho$ and $\tilde\rho$.
Similarly, for
each $i$, we take the natural $J^{i+1}$-invariant pairing
$\lambda_i : V_i\otimes \widetilde
V_i\to \C$, when $i\in \{\, 0,\dots,d-1\,\}$.  
Up to scalar multiples, these are the unique nonzero pairings
having the indicated invariance properties.  Now define $\lambda : V\otimes
\widetilde V\to \C$ by $\lambda = \lambda_{-1}
\otd \lambda_{d-1}$.  Theorem \ref{contrathm} now reduces to:

\begin{proposition}\label{linforminvariance}
The linear forms
$\lambda_{-1},\dots,\lambda_{d-1}$ and $\lambda$
are $K$-invariant.
\end{proposition}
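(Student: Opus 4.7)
The plan is to reduce the $K$-invariance of $\lambda$ to the separate $K$-invariance of each factor $\lambda_i$, and then to verify each piece by exploiting the inflation structure of $\kappa_i$ together with the uniqueness properties of Heisenberg--Weil lifts. Since $\lambda = \lambda_{-1}\otd \lambda_{d-1}$ and $\kappa = \kappa_{-1}\otd \kappa_d$ and similarly for $\widetilde\kappa$ (with the one-dimensional factor $\kappa_d \otimes \tilde\kappa_d$ contributing trivially), it suffices to show that each $\lambda_i$ intertwines $\kappa_i\otimes \tilde\kappa_i$ with the trivial character of $K$. Throughout I will use the decomposition $K = K^0 J^1 \cdots J^d$ coming from $K^{i+1}=K^i J^{i+1}$.

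For $\lambda_{-1}$, the representation $\kappa_{-1}=\inf\nolimits_{K^0}^K(\rho)$ is, by the inflation process recalled in Section~\ref{sec:construction}, trivial on $J^1\cdots J^d$ and agrees with $\rho$ on $K^0$, with the analogous statement for $\tilde\kappa_{-1}$. Writing an arbitrary $k\in K$ as $k=k_0 j$ with $k_0\in K^0$ and $j\in J^1\cdots J^d$, one computes $\lambda_{-1}(\kappa_{-1}(k)v,\tilde\kappa_{-1}(k)\tilde v) = \lambda_{-1}(\rho(k_0)v,\tilde\rho(k_0)\tilde v) = \lambda_{-1}(v,\tilde v)$ by the defining $K^0$-invariance of $\lambda_{-1}$.

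For $\lambda_i$ with $0\le i\le d-1$, the same inflation argument reduces $K$-invariance to $K^{i+1}$-invariance, since $\kappa_i=\inf\nolimits_{K^{i+1}}^K(\phi'_i)$ is trivial on $J^{i+2}\cdots J^d$. On $J^{i+1}$, the representation $\phi'_i$ restricts to $\tau_i$ and $\tilde\phi'_i$ restricts to $\tilde\tau_i$, so invariance on that factor is the defining property of $\lambda_i$. On $K^i$, recall $\phi'_i(k)=\phi_i(k)\,\hat\tau_i^\sharp(f'_i(k))$, so the $\phi_i$ factors cancel against $\phi_i^{-1}$ for $\tilde\phi'_i$ and the remaining task is to show that $\lambda_i$ intertwines $\hat\tau_i^\sharp\circ f'_i$ with its contragredient on $K^i$. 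Here I would invoke the uniqueness of the Heisenberg--Weil lift discussed right after Definition~\ref{HeisWeilrep}: the contragredient of $\hat\tau_i^\sharp$ is an extension of $\tilde\tau_i^\sharp$ to $\cS_i\ltimes W_i^\sharp$, hence by uniqueness equals the Heisenberg--Weil lift $\widehat{\tilde\tau}_i^\sharp$. Consequently, the (unique up to scalar) $W_i^\sharp$-invariant pairing between $\tau_i^\sharp$ and $\tilde\tau_i^\sharp$ is automatically $(\cS_i\ltimes W_i^\sharp)$-invariant between the Weil lifts, which via $f'_i$ gives the desired $K^i$-invariance of $\lambda_i$. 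Combining with the $J^{i+1}$-invariance yields $K^{i+1}$-invariance, and then $K$-invariance.

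I expect the main subtlety to be bookkeeping at the $p=3$, $\ell=1$ corner case treated in Section~\ref{sec:eselltwo}, where the Weil lift is not determined by uniqueness and one must verify directly from the explicit formulas in that appendix that the chosen Heisenberg--Weil lifts of $\tau_i^\sharp$ and $\tilde\tau_i^\sharp$ are genuinely contragredient to each other. A secondary, but routine, obstacle is verifying that the inflation operation behaves as claimed on contragredients, i.e.\ that $\widetilde{\inf\nolimits_{K^{i+1}}^K \phi'_i}\simeq \inf\nolimits_{K^{i+1}}^K\widetilde{\phi'_i}$, which follows because inflation respects duality when the kernel is normal and the representations involved are finite-dimensional. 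Once these checks are in hand, Theorem~\ref{contrathm} is immediate from the compatibility of compact induction with contragredients applied to $\pi=\ind_K^G\kappa$.
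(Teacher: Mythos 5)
Your argument is correct and is essentially the paper's own proof: the reduction of $K$-invariance to $K^{i+1}$-invariance via the inflation structure is Lemma \ref{contramu}, and your use of the uniqueness of the Heisenberg--Weil lift to show that the canonical $J^{i+1}$-invariant pairing between $\tau_i$ and $\tilde\tau_i$ is automatically $(\cS_i\ltimes W_i^\sharp)$-invariant is exactly Lemma \ref{contraphiprime}. The $p=3$, $\ell=1$ caveat you flag is handled by the conventions of Section \ref{sec:eselltwo} (by definition the contragredient of a Heisenberg--Weil lift is again a Heisenberg--Weil lift, since the Weil representations attached to $\zeta$ and $\zeta^{-1}$ are contragredient classes), just as you anticipated.
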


The first ingredient in the proof is:

\begin{lemma}\label{contramu}
If $i\in \{\, 0,\dots,d-1\,\}$ and $\mu$ is a
representation of
$K^i$ which is trivial on $K^i\cap J^{i+1}$ then
$\inf\nolimits_{K^i}^{K}(\tilde\mu)$ is the
contragredient of $\inf\nolimits_{K^i}^{K}(\mu)$.
\end{lemma}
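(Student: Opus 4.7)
The plan is to reduce the lemma to the one-step inflation $K^i\hookrightarrow K^{i+1}$, verify it there by an explicit computation using the natural $\mu$-$\tilde\mu$ pairing, and then iterate. There is nothing deep here; the hypothesis ``$\mu$ trivial on $K^i\cap J^{i+1}$'' is exactly what is needed both to define the one-step inflation and to propagate the hypothesis one stage up the tower.

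First I would recall from Section~\ref{sec:construction} that under the assumption $\mu\,|\,(K^i\cap J^{i+1})=1$, since $K^{i+1}=K^iJ^{i+1}$ one defines
$$\inf\nolimits_{K^i}^{K^{i+1}}(\mu)(kj)=\mu(k),\qquad k\in K^i,\ j\in J^{i+1},$$
on the same vector space $V$ as $\mu$. Well-definedness is immediate from triviality on $K^i\cap J^{i+1}$. Exactly the same formula, applied to $\tilde\mu$ on the dual space $\widetilde V=V^*$, defines $\inf\nolimits_{K^i}^{K^{i+1}}(\tilde\mu)$; note $\tilde\mu$ is also trivial on $K^i\cap J^{i+1}$ since $\mu$ is. For $g=kj\in K^{i+1}$, $v\in V$ and $\tilde v\in \widetilde V$, the natural duality pairing gives
$$\langle \inf\nolimits_{K^i}^{K^{i+1}}(\mu)(g)v,\ \inf\nolimits_{K^i}^{K^{i+1}}(\tilde\mu)(g)\tilde v\rangle=\langle \mu(k)v,\tilde\mu(k)\tilde v\rangle=\langle v,\tilde v\rangle,$$
so $\inf\nolimits_{K^i}^{K^{i+1}}(\tilde\mu)$ is the contragredient of $\inf\nolimits_{K^i}^{K^{i+1}}(\mu)$, with the pairing on the inflated representations literally the same as the original pairing.

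Next I would check that the hypothesis propagates so that the iterated inflation in Section~\ref{sec:construction} applies to $\tilde\mu$ as well. Using the chain of containments $K^{i+1}\cap J^{i+2}=G^{i+1}_{y,r_{i+1}}\subset G^{i+1}_{y,r_i}\subset J^{i+1}$ noted in Section~\ref{sec:construction}, the representation $\inf\nolimits_{K^i}^{K^{i+1}}(\mu)$ is $1$-isotypic on $J^{i+1}$ by construction, hence on $K^{i+1}\cap J^{i+2}$, so it satisfies the hypothesis for the next inflation, and similarly for $\tilde\mu$. Therefore the composition
$$\inf\nolimits_{K^i}^{K}=\inf\nolimits^{K}_{K^{d-1}}\circ\cdots\circ\inf\nolimits^{K^{i+1}}_{K^i}$$
may be applied to both $\mu$ and $\tilde\mu$.

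Finally I would iterate: applying the one-step argument at each stage $j=i,i+1,\dots,d-1$ shows that the pair $(\inf\nolimits^{K^{j+1}}_{K^i}(\mu),\inf\nolimits^{K^{j+1}}_{K^i}(\tilde\mu))$ is contragredient relative to the natural pairing on $V\otimes \widetilde V$, which does not change under any of these inflations (both sides still act on $V$ and $\widetilde V$ respectively). The conclusion that $\inf\nolimits_{K^i}^{K}(\tilde\mu)$ is the contragredient of $\inf\nolimits_{K^i}^{K}(\mu)$ then falls out. The only subtle point, if any, is to observe that in the iteration the same underlying pairing $\langle\cdot,\cdot\rangle$ between $V$ and $\widetilde V$ continues to exhibit the contragredient relationship; but this is automatic since inflation changes neither the representation spaces nor the action of $K^i$ itself, and each extension is trivial on the corresponding $J^{j+1}$ on both sides.
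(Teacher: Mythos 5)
Your proof is correct and follows essentially the same route as the paper, which states the key fact in a single sentence: a $K^i$-invariant pairing between $V_\mu$ and $V_{\tilde\mu}$ is automatically a $K$-invariant pairing between the inflated representations. You have simply unwound that sentence into a one-step verification plus an iteration, together with the (correct) check that the triviality hypothesis propagates up the tower $K^{i+1}\cap J^{i+2}\subset J^{i+1}$ so that each stage of inflation is defined.
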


\begin{proof}
It is elementary to see that
a $K^i$-invariant pairing $\lambda_\mu$ between
$V_\mu$ and $V_{\tilde\mu}$ is automatically also a
$K$-invariant pairing between
$\inf\nolimits_{K^i}^{K}(\mu)$ and
$\inf\nolimits_{K^i}^{K}(\tilde\mu)$.
\end{proof}

The second ingredient is:

\begin{lemma}\label{contraphiprime}
If $i\in \{0,\dots,d-1\,\}$ then $(\phi_i^{-1})^\prime$ is the contragredient of
$\phi^\prime_i$.
\end{lemma}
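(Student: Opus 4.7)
The plan is to realize $(\phi_i^{-1})^\prime$ using compatible choices and exhibit an explicit $K^{i+1}$-invariant pairing between $\phi_i^\prime$ and $(\phi_i^{-1})^\prime$. By Proposition~\ref{rsiisoclass}, the equivalence class of the representation attached to a quasicharacter does not depend on the choice of Heisenberg model or relevant special isomorphism, so I am free to choose convenient ones. First I would observe that the Heisenberg $p$-group $\cH_i = J^{i+1}/N_i$ attached to $\phi_i$ is \emph{identical} to the one attached to $\phi_i^{-1}$: indeed $N_i = \ker(\hat\phi_i\,|\, J^{i+1}_+) = \ker(\hat\phi_i^{-1}\,|\, J^{i+1}_+)$, and the required central character for the $\phi_i^{-1}$-datum is precisely $\zeta_i^{-1}$. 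Consequently Yu's special isomorphism $\nu_i : \cH_i \to W_i^\sharp$ may be used for both data, and we may (and do) use the Heisenberg representation $(\tilde\tau_i, \widetilde V_i)$, the contragredient of $\tau_i$, to build $(\phi_i^{-1})^\prime$.

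Next I would unwind the definitions. With the above choices,
\begin{equation*}
\phi_i^\prime(kj) = \phi_i(k)\,\hat\tau_i^\sharp(f_i^\prime(k))\,\tau_i(j), \qquad
(\phi_i^{-1})^\prime(kj) = \phi_i(k)^{-1}\,\widehat{\tilde\tau}_i^\sharp(f_i^\prime(k))\,\tilde\tau_i(j),
\end{equation*}
for $k\in K^i$ and $j \in J^{i+1}$, where $\widehat{\tilde\tau}_i^\sharp$ is the Heisenberg--Weil lift of $\tilde\tau_i^\sharp$ to $\cS_i \ltimes W_i^\sharp$. Let $\lambda_i : V_i \otimes \widetilde V_i \to \C$ be the canonical pairing. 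By construction it is $\cH_i$-invariant, hence $J^{i+1}$-invariant under $(\tau_i,\tilde\tau_i)$, which takes care of the factor $j \in J^{i+1}$.

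The decisive input is the identification from Section~\ref{sec:Heis} (the discussion immediately following Definition~\ref{HeisWeilrep}) that the contragredient of the Heisenberg--Weil lift $\hat\tau_i^\sharp$ restricts to $\tilde\tau_i^\sharp$ and is therefore \emph{equal} to $\widehat{\tilde\tau}_i^\sharp$, by uniqueness of the lift. This upgrades the $J^{i+1}$-invariance of $\lambda_i$ to an $\cS_i \ltimes_{\nu_i}\cH_i$-invariance:
\begin{equation*}
\lambda_i\bigl(\hat\tau_i^\sharp(s)v,\,\widehat{\tilde\tau}_i^\sharp(s)\tilde v\bigr) = \lambda_i(v,\tilde v), \qquad s \in \cS_i,\ v\in V_i,\ \tilde v \in \widetilde V_i.
\end{equation*}
Combining this with the obvious cancellation $\phi_i(k)\cdot \phi_i(k)^{-1}=1$ shows that $\lambda_i$ is invariant under the action of $K^i$ via $(\phi_i^\prime,(\phi_i^{-1})^\prime)$, and together with the $J^{i+1}$-invariance this gives the desired $K^{i+1}$-invariance, whence $(\phi_i^{-1})^\prime$ is the contragredient of $\phi_i^\prime$.

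The one delicate point will be justifying that the \emph{same} relevant special isomorphism $\nu_i$ may be used in both constructions, so that $(\phi_i^{-1})^\prime$ is literally built from $\widehat{\tilde\tau}_i^\sharp$ rather than from some twist; this is exactly what Proposition~\ref{rsiisoclass} (together with the observation that $\cH_i$ and $N_i$ are intrinsic to the pair $\{\phi_i,\phi_i^{-1}\}$) provides, so no further work is required. Everything else is a bookkeeping exercise with the definitions.
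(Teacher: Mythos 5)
Your proof is correct and takes essentially the same route as the paper's: both arguments start from the canonical $\cH_i$-invariant pairing $\lambda_i$ between $\tau_i$ and $\tilde\tau_i$, and upgrade it to $\cS_i\ltimes_{\nu_i}\cH_i$-invariance by observing that the uniqueness of the Heisenberg--Weil lift forces $\widetilde{\hat\tau}_i^\sharp = \hat{\tilde\tau}_i^\sharp$. The only cosmetic difference is that you spend a paragraph justifying that $\cH_i$, $N_i$, $\zeta_i$ and $\nu_i$ can be shared between the $\phi_i$-construction and the $\phi_i^{-1}$-construction; the paper treats this as implicit in the setup of Proposition~\ref{linforminvariance}, where the models $(\tau_i,V_i)$ and $(\tilde\tau_i,\widetilde V_i)$ are already fixed.
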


\begin{proof}
Throughout the proof, we assume we have fixed our choice of the relevant special isomorphism $\nu_i :\cH_i\to W_i^\sharp$ used to define $\phi^\prime_i$.  Recall that $$\phi^\prime_i(kj) =
\phi_i(k)\,\omega_i
(k, j) =\phi_i (k)\,
\hat\tau_i^\sharp (f_i( k))\  \tau_i (j),$$
for
$k\in K^i$ and $j\in J^{i+1}$.  If $v\in V_i$, $\tilde v\in
\widetilde V_i$ then we need to show
$$\lambda_i (\phi^\prime_i(kj)v\otimes
(\phi_i^{-1})^\prime(kj)\tilde v) = \lambda_i
(v\otimes \tilde v),$$ for all $k\in K^{i}$ and
$j\in J^{i+1}$.  Clearly, this reduces to showing
$$\lambda_i (\hat\tau_i^\sharp (f_i( k))v\otimes
\hat{\tilde\tau}_i^\sharp (f_i( k))\tilde v) = \lambda_i
(v\otimes \tilde v),$$ for all $k\in K^i$.

The representation $\tau_i^\sharp$ has a unique extension to a representation $\hat\tau_i^\sharp$ of $\cS_i\ltimes W_i^\sharp$.  Similarly, $\tilde\tau_i^\sharp$ extends uniquely to some representation $\hat{\tilde\tau}_i^\sharp$.  But the contragredient $\tilde{\hat\tau}_i^\sharp$ of $\hat\tau_i^\sharp$ is another extension of $\tilde\tau_i^\sharp$ and thus $\tilde{\hat\tau}_i^\sharp = \hat{\tilde\tau}_i^\sharp$.
So there must exist a nonzero $(\cS_i\ltimes W_i^\sharp)$-invariant pairing $\hat\lambda_i$ on $V_i\times \widetilde V_i$, viewed as the space of $\hat\tau_i^\sharp \times \hat{\tilde\tau}_i^\sharp$.  Since $\hat\lambda_i$ happens to also be an invariant linear form for $\tau_i^\sharp\times \tilde\tau_i^\sharp$, it must be a nonzero multiple of $\lambda_i$.  In other words, we have shown that $\lambda_i$ is automatically invariant with respect to $\hat\tau_i^\sharp \times \hat{\tilde\tau}_i^\sharp$.   Our assertion now follows.
\end{proof}

Proposition \ref{linforminvariance} and Theorem \ref{contrathm} now follow immediately from Lemmas \ref{contramu} and \ref{contraphiprime}.  

\section{Products of cuspidal $G$-data}
\label{sec:Yuproducts}

Throughout this section, we assume we are given an $F$-group $\bG$ that is a product $\bG = 
\bG^{(1)}\times\bG^{(2)}$ of connected, reductive $F$-groups.  
We will describe how to pass from generic cuspidal $G$-data to pairs consisting
of generic cuspidal $G^{(j)}$-data for $j=1$, $2$, and vice versa. 
More precisely, given a generic cuspidal $G$-datum $\Psi$, we
will produce generic cuspidal $G^{(j)}$-data $\Psi^{(j)}$ for $j=1$, $2$.
 In addition, we define a product operation on cuspidal
data that attaches a generic cuspidal $G$-datum $\Psi^{(1)}\times \Psi^{(2)}$
to generic cuspidal $G^{(j)}$-data $\Psi^{(j)}$ for $j=1$, $2$.
The product operation has the property that if $\Psi^{(1)}$ and $\Psi^{(2)}$ 
arise from a generic cuspidal $G$-datum $\Psi$, then the product $G$-datum
$\Psi^{(1)}\times \Psi^{(2)}$ is a refactorization of $\Psi$.
Hence, according to Proposition~\ref{refactorequiv}, $K(\Psi)=K(\Psi^{(1)}
\times \Psi^{(2)})$ and $\kappa(\Psi)\simeq \kappa(\Psi^{(1)}\times \Psi^{(2)})$.
Furthermore, the product operation is defined in such a way that
$K(\Psi^{(1)}\times \Psi^{(2)})=K(\Psi^{(1)})\times K(\Psi^{(2)})$
and $\kappa(\Psi^{(1)}\times\Psi^{(2)})\simeq \kappa(\Psi^{(1)})\times\kappa(\Psi^{(2)})$.
Thus $\kappa(\Psi)\simeq \kappa(\Psi^{(1)})\times\kappa(\Psi^{(2)})$.
Here we are using the notation $\kappa(\Psi^{(1)})\times\kappa(\Psi^{(2)})$ 
for the tensor product of $\kappa(\Psi^{(1)})$ and $\kappa(\Psi^{(2)})$,
as a representation of $K(\Psi^{(1)})\times K(\Psi^{(2)})$.
(Our conventions regarding notation for tensor products of representations
are as indicated in Section~\ref{sec:Mackey}.)

The above facts are summarized in Theorem~\ref{prodthm}.
The proof is fairly routine, but it is lengthy mainly because it requires a 
certain amount of case-by-case analysis.  A typical reader should find it adequate to 
simply read the statement of Theorem~\ref{prodthm} as well as the definition of the 
factors $\Psi^{(1)}$ and $\Psi^{(2)}$ and the definition of the product operation.

In Section~15 of \cite{Y}, Yu broadens the notion of cuspidal $G$-datum and
defines a notion of ``generalized datum'',
in which twisted Levi sequences are allowed to have repetitions.
In Proposition~15.8, he sketches the theory of generic cuspidal data on product 
groups. However, neither the product operation
nor the factorization process is developed in detail. It is suggested in
\cite{Y} that it most convenient to study generic cuspidal data for
product groups via generalized data. By contrast, we find it more convenient
to regard generic cuspidal data as the primary players in our discussion,
rather than their generalized counterparts.

For this paragraph only, we are not assuming that $\bG$ is the direct
product of groups $\bG^{(1)}$ and $\bG^{(2)}$.
In Chapter~6, we use the product and contragredient operations on cuspidal $G$-data
(together with results from Chapter~5)
in determining when two cuspidal $G$-data $\Psi^{(1)}$ and $\Psi^{(2)}$ 
(for the same group $G$)
 determine equivalent tame supercuspidal representations of $G$.  
The first step in the proof is the observation that $\pi(\Psi^{(1)})\simeq
\pi(\Psi^{(2)})$ exactly when the representation $\pi(\Psi^{(1)})
\times\tilde\pi(\Psi^{(2)})$ of $G\times G$ is distinguished
with respect to the involution $(u,v)\mapsto (v,u)$ of $G\times G$.
The results of Section~\ref{sec:contragredients} 
and of this section tell us that $\pi(\Psi^{(1)})
\times \tilde{\pi}(\Psi^{(2)})$ is equivalent to $\pi(\Psi^{(1)}
\times \widetilde\Psi^{(2)})$, thereby  allowing us to relate the
$G$-data $\Psi^{(1)}$ and $\Psi^{(2)}$ to $G\times G$-data that determine
the tensor product $\pi(\Psi^{(1)})\times\tilde\pi(\Psi^{(2)})$.

\subsection*{Constructing factors} 
We begin by defining the factors $\Psi^{(j)}$ associated to a given extended 
generic cuspidal $G\times G$-datum $\Psi = (\vec\bG, y,\rho,\vec\phi)$.  
Recall that $y\in A(\bG,\bT,F) = A(\bG,\bT,E)\cap \cB (\bG,F)$, where $\bT$ is a 
maximal $F$-torus in $\bG^0$ and $E$ is a tamely ramified finite extension of
$F$ over which $\bT$ splits.  The torus $\bT$ must decompose 
as a direct product $\bT=\bT^{(1)}\times \bT^{(2)}$ of maximal tori 
of $\bG^{(1)}$ and $\bG^{(2)}$.  

Each $\bG^i$ in $\vec\bG$ is a direct product $\bG^i=\bH^{(1),i}\times \bH^{(2),i}$,
where $\bH^{(j)}$ is a tamely ramified twisted Levi subgroup of $\bG^{(j)}$ for
$j=1$, $2$.
The tamely ramified twisted Levi sequence 
$$
\vec\bG^{(j)}= (\bG^{(j),0},\dots, \bG^{(j),d_j})
$$ 
is simply the sequence obtained from $(\bH^{(j),0},\dots, \bH^{(j),d})$ by 
eliminating all repetitions of terms.

We next observe that $A(\bG,\bT,F) = A(\bG^{(1)},\bT^{(1)},F)\times A(\bG^{(2)} , \bT^{(2)},F)$
and thus $y= (y^{(1)},y^{(2)})$, where $y^{(j)}\in A(\bG^{(j)},\bT^{(j)},F)$.  We also 
note that the representation $\rho$ of $K^0= G^0_{[y]}$ is a tensor product 
$\dot\rho^{(1)}\times \dot\rho^{(2)}$ of representations $\dot\rho^{(j)}$ of the groups
$K^{(j),0}= G^{(j),0}_{[y^{(j)}]}$ with $j=1,2$.  
We have $\vec\phi = (\phi_0,\dots, \phi_d)$, where $\phi_i = \varphi_i^{(1)}\times 
\varphi_i^{(2)}$ and $\varphi_i^{(j)}$ is a quasicharacter of $H^{(j),i}$.  
As indicated in Section~\ref{sec:notations}, when $\phi_i$ is nontrivial,
$r_i$ is equal to the depth (in the sense of Moy and Prasad) of the quasicharacter
$\phi_i$ of $G^i$. If $\phi_d$ is trivial, then $r_d=r_{d-1}$. (Recall our convention
that $r_{-1}=0$.) Let $t_i^{(j)}$ be the depth of the quasicharacter
$\varphi_i^{(j)}$ of $H^{(j),i}$. Then, when $r_i>r_{i-1}$ we have
$$
r_i = \max (t_i^{(1)}, t_i^{(2)}).
$$

When $i\in \{\, 0,\dots,d-1\,\}$, the assumption that $\phi_i$ is $G^{i+1}$-generic of depth 
$r_i$ means that there exists a $G^{i+1}$-generic element 
$$
\Gamma_i = (\Gamma^{(1)}_i,\Gamma^{(2)}_i)\in \z^{i,*}_{-r_i}= 
\z^{(1),i,*}_{-r_i}\times \z^{(2),i,*}_{-r_i}
$$ 
of depth $-r_i$ such that $\Gamma_i$ realizes $\phi_i\, |\, G^i_{y,r_i}$.
(Here, $\z^{(j),i,*}$ denotes the dual of the center $\z^{(j),i}$ of
the Lie algebra of $H^{(j),i}$, for $j=1$, $2$.)
Thus $\Gamma^{(j)}_i$ realizes $\varphi_i^{(j)}\,| \,H^{(j),i}_{y,r_i}$, for $j=1,2$.
Now, using the fact that $\Phi (\bG^{i+1},\bT)$ is a disjoint union 
of $\Phi (\bH^{(1),i+1},\bT)$ and 
$\Phi (\bH^{(2),i+1},\bT)$, we see that the genericity condition
\begin{itemize}
\item[\textbf{GE1.}] $v_F(\Gamma_i (H_a))=-r_i$, for all $a\in \Phi (\bG^{i+1},\bT)
\setminus\Phi (\bG^i,\bT)$.
\end{itemize}
is equivalent to the following pair of conditions
\begin{itemize}
\item[\textbf{GE1}${}^1$.] $v_F(\Gamma_i^{(1)} (H_a))=-r_i$, for all 
$a\in \Phi (\bH^{(1),i+1},\bT)\setminus \Phi (\bH^{(1),i},\bT)$.
\item[\textbf{GE1}${}^{2}$.] $v_F(\Gamma_i^{(2)} (H_a))=-r_i$, for all 
$a\in \Phi (\bH^{(2),i+1},\bT)\setminus\Phi (\bH^{(2),i},\bT)$.
\end{itemize}
It should be noted that Condition~\textbf{GE1}${}^{j}$ is vacuous when 
$\bH^{(j),i+1} = \bH^{(j),i}$.

We now consider the genericity  Condition~\textbf{GE2}.  Let $\overline F$ be
an algebraic closure of $F$ and let $\varpi_r$ be an 
element of $\overline{F}^\times$ of valuation $r$.  Let $\t$ be the Lie algebra
of $T$. Then $\varpi_r\Gamma_i$ lies in 
$$
\z^{i ,*}_0 = \z^{(1),i ,*}_0\times \z^{(2),i ,*}_0\subset \t^*\otimes\overline{F}
\cong X^*(\bT)\otimes_{\mathbb{Z}} \overline{F}.
$$
In fact, $\varpi_r\Gamma_i$ lies in $\t^*\otimes \gO_{\overline{F}}
\cong X^*(\bT)\otimes_{\mathbb{Z}} \gO_{\overline{F}}$.
Let $\widetilde\Gamma_i$ be the image of $\varpi_r\Gamma_i$ in 
$X^*(\bT)\otimes_{\mathbb{Z}} \bar\kappa$, where $\bar\kappa$ is the residue field 
of $\overline{F}$.
Let $W= N_{\bG^{i +1}}(\bT)/\bT$ be the Weyl group of $\Phi (\bG^{i +1},\bT)$.
  Then $W$ acts  on $X^*(\bT)\otimes_{\mathbb{Z}}\bar\kappa$.  Condition \textbf{GE2} says that 
the isotropy group of $\widetilde\Gamma_i$ in $W$ is the Weyl group 
$N_{\bG^i }(\bT)/\bT$ of $\Phi (\bG^{i},\bT)$.  It is evident  now that 
Condition \textbf{GE2} is satisfied for both $\Gamma^{(1)}_i$ and $\Gamma^{(2)}_i$.
 
The previous discussion extends to yield:

\begin{lemma}\label{geeee} 
Suppose $i\in \{\, 0,\dots,d-1\,\}$ and $X_i = (X^{(1)}_i, X^{(2)}_i)\in \z^{i,*}_{-r_i}= \z^{(1),i,*}_{-r_i}\times \z^{(2),i,*}_{-r_i}$.  Then $X_i$ is $G^{i+1}$-generic of depth $-r_i$ if and only if the following conditions are satisfied for $j=1,2$:
\begin{enumerate}
\item if $\bH^{(j),i+1}\ne \bH^{(j),i}$ then $X^{(j)}_i$ is $H^{(j),i+1}$-generic 
of depth $-r_i$;
\item if $\bH^{(j),i +1}= \bH^{(j),i }$ then $X^{(j)}_i\in \z^{(j),i ,*}_{-r_i }$.
\end{enumerate}
\end{lemma}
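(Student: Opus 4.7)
The plan is to verify the genericity conditions \textbf{GE1} and \textbf{GE2} of Definition~\ref{defgenelt} for $X_i$ by reducing them to analogous conditions on each factor $X_i^{(j)}$, in direct parallel with the argument that the excerpt already carries out for the specific element $\Gamma_i$ attached to $\phi_i$. Choose the torus $\bT$ decomposed as $\bT=\bT^{(1)}\times\bT^{(2)}$, so that
\[
\Phi(\bG^{i+1},\bT)\setminus\Phi(\bG^i,\bT)
=\bigl(\Phi(\bH^{(1),i+1},\bT^{(1)})\setminus\Phi(\bH^{(1),i},\bT^{(1)})\bigr)\,\sqcup\,\bigl(\Phi(\bH^{(2),i+1},\bT^{(2)})\setminus\Phi(\bH^{(2),i},\bT^{(2)})\bigr),
\]
and note that for $a$ in the $j$th piece the coroot $H_a=d\check a(1)$ lies entirely in $\mathfrak{h}^{(j),i+1}$, so $X_i(H_a)=X_i^{(j)}(H_a)$.

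First I would handle \textbf{GE1}. From the displayed disjoint-union decomposition together with the identity $X_i(H_a)=X_i^{(j)}(H_a)$ on the $j$th piece, the condition $v_F(X_i(H_a))=-r_i$ for all $a\in\Phi(\bG^{i+1},\bT)\setminus\Phi(\bG^i,\bT)$ splits into the conjunction, for $j=1,2$, of $v_F(X_i^{(j)}(H_a))=-r_i$ for all $a\in\Phi(\bH^{(j),i+1},\bT^{(j)})\setminus\Phi(\bH^{(j),i},\bT^{(j)})$. When $\bH^{(j),i+1}=\bH^{(j),i}$ the indexing set is empty and the condition is vacuous, matching case~(2); when $\bH^{(j),i+1}\neq\bH^{(j),i}$ it is exactly \textbf{GE1} for $X_i^{(j)}$, matching case~(1).

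Next I would handle \textbf{GE2}. Fix $\varpi_{r_i}\in\overline F^\times$ of valuation $r_i$ and let $\widetilde X_i$ be the image of $\varpi_{r_i}X_i$ in $X^*(\bT)\otimes_{\mathbb{Z}}\bar\kappa$. Since $\bT=\bT^{(1)}\times\bT^{(2)}$, we have a canonical identification
\[
X^*(\bT)\otimes_{\mathbb{Z}}\bar\kappa\;\cong\;\bigl(X^*(\bT^{(1)})\otimes_{\mathbb{Z}}\bar\kappa\bigr)\,\oplus\,\bigl(X^*(\bT^{(2)})\otimes_{\mathbb{Z}}\bar\kappa\bigr),
\]
under which $\widetilde X_i=(\widetilde X_i^{(1)},\widetilde X_i^{(2)})$. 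The Weyl group $W(\Phi(\bG^{i+1},\bT))$ is the direct product $W(\Phi(\bH^{(1),i+1},\bT^{(1)}))\times W(\Phi(\bH^{(2),i+1},\bT^{(2)}))$, and likewise for $\bG^i$; each factor acts only on the corresponding summand. Hence the isotropy subgroup of $\widetilde X_i$ in $W(\Phi(\bG^{i+1},\bT))$ is the product of the isotropy subgroups of $\widetilde X_i^{(j)}$ in $W(\Phi(\bH^{(j),i+1},\bT^{(j)}))$, and \textbf{GE2} for $X_i$ decouples into the analogous statements on each factor. In case~(2) both sides of the \textbf{GE2} equality are the full Weyl group $W(\Phi(\bH^{(j),i},\bT^{(j)}))$, so the condition is automatic.

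The main (and only) obstacle is purely bookkeeping: keeping straight that coroots and the $\Ad^*$-action respect the product decomposition and that the hypothesis $X_i^{(j)}\in\z^{(j),i,*}_{-r_i}$ in case~(2) is imposed from the outset by the ambient assumption $X_i\in\z^{i,*}_{-r_i}=\z^{(1),i,*}_{-r_i}\times\z^{(2),i,*}_{-r_i}$. Combining the two directions of each of \textbf{GE1} and \textbf{GE2} yields the claimed biconditional.
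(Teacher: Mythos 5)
Your proof is correct and follows essentially the same route as the paper: the paper's proof of this lemma consists of the single phrase "The previous discussion extends to yield:", referring to the preceding analysis for $\Gamma_i$ which decouples \textbf{GE1} via the disjoint-union decomposition of the roots and \textbf{GE2} via the factorization of the Weyl group and of $X^*(\bT)\otimes_{\mathbb{Z}}\bar\kappa$, exactly as you have written out.
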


\begin{corollary}\label{corgeeee}
Suppose $i\in \{\, 0,\dots ,d-1\,\}$, $j\in \{ 1,2\}$ and define $j^\prime\in \{1,2\}$ by 
$1' =2$ and $2'=1$.  Then the following conditions hold:
\begin{enumerate}
\item If $\bH^{(j),i+1}\ne \bH^{(j),i}$ then $\varphi^{(j)}_i$ is $H^{(j),i+1}$-generic of 
depth $r_i$ and $r_i = t_i^{(j)}$.
\item If $\bH^{(j),i +1}= \bH^{(j),i }$ then $t^{(j)}_i\le r_i$ and 
$\bH^{(j'),i+1}\ne \bH^{(j'),i}$.
\end{enumerate}
\end{corollary}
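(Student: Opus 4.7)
The plan is to derive both assertions as immediate consequences of Lemma \ref{geeee} together with the depth/genericity conditions in the definition of a generic cuspidal $G$-datum and the fact that the twisted Levi sequence $\vec\bG$ has strict inclusions at every stage. Because $\Psi$ is generic, Condition \textbf{D5} and the genericity hypothesis produce an element $\Gamma_i=(\Gamma_i^{(1)},\Gamma_i^{(2)})\in \z^{i,*}_{-r_i}$ that realizes $\phi_i\,|\,G^i_{y,r_i}$ and is $G^{i+1}$-generic of depth $-r_i$; by construction each component $\Gamma_i^{(j)}$ realizes $\varphi_i^{(j)}\,|\,H^{(j),i}_{y,r_i}$. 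Lemma \ref{geeee} then gives the dichotomy for the components.

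For part (1), assume $\bH^{(j),i+1}\neq \bH^{(j),i}$. Then Lemma \ref{geeee} tells us $\Gamma_i^{(j)}$ is $H^{(j),i+1}$-generic of depth $-r_i$. To conclude that $\varphi_i^{(j)}$ is $H^{(j),i+1}$-generic of depth $r_i$ (and hence that $t_i^{(j)}=r_i$), I would argue the upper and lower bounds on the depth of $\varphi_i^{(j)}$ separately. The upper bound $t_i^{(j)}\le r_i$ follows from the containment $H^{(j),i}_{y,r_i^+}\subset G^i_{y,r_i^+}$ together with the fact that $\phi_i\,|\,G^i_{y,r_i^+}=1$ (from Condition \textbf{D5}), so $\varphi_i^{(j)}\,|\,H^{(j),i}_{y,r_i^+}=1$. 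For the lower bound, note that Condition \textbf{GE1} applied to $\Gamma_i^{(j)}$ with respect to the pair $(\bH^{(j),i},\bH^{(j),i+1})$ picks a root $a\in\Phi(\bH^{(j),i+1},\bT)\setminus \Phi(\bH^{(j),i},\bT)$, which is nonempty by hypothesis, with $v_F(\Gamma_i^{(j)}(H_a))=-r_i$; this forces $\Gamma_i^{(j)}\notin \z^{(j),i,*}_{(-r_i)^+}$, so $\varphi_i^{(j)}$ is nontrivial on $H^{(j),i}_{y,r_i}$ and has depth at least $r_i$. Combined with the matching realization statement from Lemma \ref{geeee}, this gives the $H^{(j),i+1}$-genericity of depth $r_i$.

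For part (2), assume $\bH^{(j),i+1}=\bH^{(j),i}$. The inequality $t_i^{(j)}\le r_i$ is again the same upper-bound argument as above (it does not require anything beyond $\phi_i\,|\,G^i_{y,r_i^+}=1$). The remaining assertion $\bH^{(j'),i+1}\neq \bH^{(j'),i}$ follows from the definition of a twisted Levi sequence (Definition \ref{deftwistedLeviseq}), which imposes $\bG^i\subsetneq \bG^{i+1}$; under the product decomposition $\bG^k=\bH^{(1),k}\times \bH^{(2),k}$, strict containment forces at least one factor to grow, so if the $j$-th factor is stationary then the $j'$-th factor must strictly grow.

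I do not anticipate a genuine obstacle here: once Lemma \ref{geeee} is in place, the only work is careful bookkeeping between the depth of $\phi_i$ (which is $r_i$) and the a priori possibly smaller depth $t_i^{(j)}$ of $\varphi_i^{(j)}$, and the routine observation that strict inclusion $\bG^i\subsetneq \bG^{i+1}$ forces at least one of the two product factors to strictly grow. The mildly delicate point is simply remembering that the depth of $\varphi_i^{(j)}$ is not declared in the datum but must be extracted from the genericity of $\Gamma_i^{(j)}$, and that is handled by the \textbf{GE1} argument above.
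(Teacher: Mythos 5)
Your proof is correct and follows essentially the route the paper intends: the corollary carries no separate proof precisely because it is meant to be read off from the preceding construction of the realizing generic element $\Gamma_i=(\Gamma_i^{(1)},\Gamma_i^{(2)})$ together with Lemma \ref{geeee}, which is exactly what you do. Your depth bookkeeping (triviality of $\phi_i$ on $G^i_{y,r_i^+}$ for the upper bound $t_i^{(j)}\le r_i$, Condition \textbf{GE1} to see that $\Gamma_i^{(j)}$ has depth exactly $-r_i$ and hence that $\varphi_i^{(j)}$ is nontrivial on $H^{(j),i}_{y,r_i}$, and strictness of the twisted Levi sequence for part (2)) matches the paper's implicit argument.
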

Define 
\begin{equation*}
\begin{split}
m(i,j)&=\min \{\, \ell \ | \ \bH^{(j),\ell} = \bG^{(j),i}\,\}\cr
M(i,j)&=\max \{\, \ell \ | \ \bH^{(j),\ell} = \bG^{(j),i}\,\},
\end{split}
\end{equation*}
when $j\in \{1,2\}$ and $i\in \{\,0,\dots, d_j\,\}$.
If $M(i,j)<d$ or, equivalently, $\bG^{(j),i}\ne \bG^{(j)}$, then 
Corollary \ref{corgeeee} implies that $\varphi^{(j)}_{M(i,j)}$ is $H^{(j),M(i,j)+1}$-generic 
of depth $r_{M(i,j)}= t_{M(i,j)}^{(j)}$ with respect to $y^{(j)}$.

We provisionally define 
$$
\phi_i^{(j)} = \prod_{\ell = m(i,j)}^{M(i,j)} \varphi^{(j)}_\ell,
\qquad \rho^{(j)}=\dot\rho^{(j)}.
$$ 
Let $r_i^{(j)}$ be the depth of $\phi_i^{(j)}$.
However, if this  definition results in the inequality $r^{(j)}_{d_j}\le r^{(j)}_{d_j-1}$, 
where $r^{(j)}_{-1}=0$, then:
\begin{enumerate}
\item if $d_j>0$ we let $\phi^{(j)}_{d_j}$ be the trivial character of
 $\bG^{(j)}$ and
$$
\phi_{d_j-1}^{(j)} = \left( \prod_{\ell = m(d_j-1,j)}^{M(d_j-1,j)} \varphi^{(j)}_\ell\right)
 \left( \prod_{\ell= m(d_j,j)}^{M(d_j,j)} \varphi^{(j)}_\ell \, |\, G^{(j),d_j-1}\right),
$$ 
and in this case we put $r^{(j)}_{d_j}=r^{(j)}_{d_j-1}$, where $r^{(j)}_{d_j-1}$ is
 the depth of $\phi_{d_j-1}^{(j)}$;
 \item if $d_j=0$ we let $\phi_0^{(j)}$ be the trivial character of $G^{(j)}$ and we let 
$$
\rho^{(j)}= \dot\rho^{(j)}\otimes \left( \prod_{\ell=0}^d\varphi_\ell^{(j)} \,|\, 
G^{(j)}_{[y^{(j)}]}\right).
$$
\end{enumerate}
 The latter adjustments are similar to applying a refactorization and they are 
needed in order for the sequences $\vec r^{(j)}= (r^{(j)}_0,\dots ,r^{(j)}_{r_{d_j}})$
 and $\vec\phi^{(j)}= (\phi^{(j)}_0,\dots ,\phi^{(j)}_{r_{d_j}})$ and the 
representation $\rho^{(j)}$ to satisfy 
Conditions~\textbf{D3}--\textbf{D5} (from Section \ref{sec:notations}).
 
\begin{lemma}\label{factorgenericity}
$\Psi^{(j)}= (\vec\bG^{(j)}, y^{(j)},\rho^{(j)}, \vec\phi^{(j)})$ is a 
generic cuspidal $G^{(j)}$-datum for $j=1,2$.
\end{lemma}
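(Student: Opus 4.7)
The plan is to verify Conditions \textbf{D1}--\textbf{D5} together with the genericity condition for each $\Psi^{(j)}$, working through them in order and isolating the tricky case at the top of the filtration.

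I would begin with \textbf{D1} and \textbf{D2}, which are essentially formal. The sequence $\vec\bG^{(j)}$ is tamely ramified twisted Levi because it is built by deleting repetitions from $(\bH^{(j),0},\dots,\bH^{(j),d})$, whose members are factors of the tame twisted Levi subgroups $\bG^i = \bH^{(1),i}\times\bH^{(2),i}$. The $F$-anisotropy of $\bZ^{(j),0}/\bZ^{(j)}$ follows from the product decomposition $\bZ^0/\bZ \cong (\bZ^{(1),0}/\bZ^{(1)})\times (\bZ^{(2),0}/\bZ^{(2)})$. Condition \textbf{D2} is immediate from $A(\bG,\bT,F)=A(\bG^{(1)},\bT^{(1)},F)\times A(\bG^{(2)},\bT^{(2)},F)$ and the split $y=(y^{(1)},y^{(2)})$.

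Next I would handle the depths and genericity simultaneously, since they feed each other. The basic bookkeeping observation is that for $\ell<d-1$ we have the strict inequality $r_\ell<r_{\ell+1}$, and by Corollary~\ref{corgeeee} $t^{(j)}_\ell\le r_\ell$ with equality when $\bH^{(j),\ell+1}\ne\bH^{(j),\ell}$. Consequently, in the provisional product
\[
\phi_i^{(j)} \;=\; \prod_{\ell=m(i,j)}^{M(i,j)} \varphi^{(j)}_\ell,
\]
every factor with $\ell<M(i,j)$ has depth strictly less than $r_{M(i,j)}$. Only the last factor $\varphi^{(j)}_{M(i,j)}$ attains depth $r_{M(i,j)}=t^{(j)}_{M(i,j)}$, and in the case $M(i,j)<d$ it is $H^{(j),M(i,j)+1}$-generic $=\bG^{(j),i+1}$-generic of that depth. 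Restricting to $G^{(j),i}_{y^{(j)},r_{M(i,j)}}$ kills all factors but the last, so $\phi^{(j)}_i\,|\,G^{(j),i}_{y^{(j)},r_{M(i,j)}}$ is realized by the $\bG^{(j),i+1}$-generic element $\Gamma^{(j)}_{M(i,j)}$ realizing $\varphi^{(j)}_{M(i,j)}\,|\,H^{(j),M(i,j)}_{y^{(j)},r_{M(i,j)}}$. This gives $r^{(j)}_i = r_{M(i,j)}$ and, via Lemma~\ref{generictrivia}, the desired $\bG^{(j),i+1}$-genericity. The same argument shows $r^{(j)}_0<r^{(j)}_1<\cdots<r^{(j)}_{d_j-1}$ (Condition \textbf{D3} in its interior), since $M(\cdot,j)$ is strictly increasing in $i$.

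The hard part is the adjustment at the top of the tower. Two subcases arise: if the provisional $\phi^{(j)}_{d_j}$ has depth $\le r^{(j)}_{d_j-1}$ with $d_j>0$, the construction absorbs the tail $\prod_{\ell=m(d_j,j)}^{d}\varphi^{(j)}_\ell\,|\,G^{(j),d_j-1}$ into $\phi^{(j)}_{d_j-1}$ and sets $\phi^{(j)}_{d_j}=1$; if $d_j=0$, the tail is absorbed into $\rho^{(j)}$. In the first subcase I would verify that the absorbed character has depth at most $r^{(j)}_{d_j-1}=r_{M(d_j-1,j)}$ on $G^{(j),d_j-1}$---this follows because the tail character, pulled back to a character of $\bG^{(j)}(F)$, has a depth upper bound controlled by the hypothesis that the original $r^{(j)}_{d_j}\le r^{(j)}_{d_j-1}$. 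Hence its restriction to the smaller group $\bG^{(j),d_j-1}(F)$ cannot raise the depth past $r^{(j)}_{d_j-1}$, and Lemma~\ref{generictrivia} applies once more to conclude that the new $\phi^{(j)}_{d_j-1}$ remains $\bG^{(j),d_j}$-generic of depth $r^{(j)}_{d_j-1}$. In the second subcase $d_j=0$, there is no genericity to check at higher levels and one only needs \textbf{D4}: twisting $\dot\rho^{(j)}$ by a quasicharacter of $K^{(j),0}$ preserves the $1$-isotypicity on $G^{(j),0}_{y^{(j)},0^+}$ and the irreducibility of the induced representation, hence of the new $\rho^{(j)}$. For general $d_j$, Condition \textbf{D4} for $\rho^{(j)}=\dot\rho^{(j)}$ follows from the product structure of $\rho=\dot\rho^{(1)}\times\dot\rho^{(2)}$ together with the elementary fact that an external tensor product $\ind_{K^{(1),0}}^{G^{(1),0}}\dot\rho^{(1)}\times\ind_{K^{(2),0}}^{G^{(2),0}}\dot\rho^{(2)}$ is irreducible iff both factors are. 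This completes the verification of \textbf{D1}--\textbf{D5} and genericity; the main obstacle is the case analysis in the two absorption subcases, and once those are dispatched everything else is formal.
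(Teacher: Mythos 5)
Your proposal is correct and follows essentially the same route as the paper's proof: verify Conditions \textbf{D1}--\textbf{D5}, track the depths via the monotonicity of $M(i,j)$ and the strict inequalities $r_\ell<r_{M(i,j)}$ for $\ell<M(i,j)$, observe that only the top factor $\varphi^{(j)}_{M(i,j)}$ survives restriction to the level-$r_{M(i,j)}$ Moy--Prasad subgroup, and invoke Lemma~\ref{generictrivia} for the genericity of the adjusted $\phi^{(j)}_{d_j-1}$. You fill in some details the paper leaves implicit (the product decomposition for \textbf{D1}, the explicit treatment of \textbf{D4}), while the paper is slightly more granular in one spot: it separates the absorption case into $\dot r^{(j)}_{d_j}<\dot r^{(j)}_{d_j-1}$ (where the tail character is trivial on the relevant filtration subgroup and no lemma is needed) and $\dot r^{(j)}_{d_j}=\dot r^{(j)}_{d_j-1}$ (where Lemma~\ref{generictrivia} is genuinely required). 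Your uniform appeal to Lemma~\ref{generictrivia} works but masks that the strict-inequality subcase is simpler; it would also help to say explicitly why the element realizing the restricted tail character lies in $\z^{(j),d_j,*}_{-r}$, which is what makes Lemma~\ref{generictrivia} applicable.
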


\begin{proof} Fix $j\in \{ 1,2\}$.  It is clear that Conditions~\textbf{D1} and 
\textbf{D2} are satisfied by $\Psi^{(j)}$.
To see that Conditions \textbf{D3}--\textbf{D5} are satisfied, we first consider 
the case in which $d_j=0$.  If $r^{(j)}_0>0$ then $\phi^{(j)}_0= \prod_{\ell=0}^d \
\varphi_\ell^{(j)}$ has depth $r^{(j)}_0$ with respect to $y^{(j)}$, which is 
consistent with the requirements of Conditions~\textbf{D3}--\textbf{D5} in the 
case of a datum of degree 0.  If $d_j=0$ and $r^{(j)}_0=0$ then the adjustment (2) 
in the definitions of $\phi^{(j)}_0$ and $\rho^{(j)}$ forces 
Conditions~\textbf{D3}--\textbf{D5} to hold.

Next, we assume $d_j>0$.  If $i\in \{\, 0,\dots, d_j-1\,\}$ then $M(i,j)<d$ and 
the depth $r^{(j)}_i$ of $\phi_i^{(j)}$ is equal to $r_{M(i,j)}$.  The 
inequalities $0<r_0^{(j)}<\dots < r^{(j)}_{d_j-1}$ follow from the corresponding 
inequalities of the $r_{M(i,j)}$'s.  Adjustment (1) in the definition of 
$\phi^{(j)}_0$ now forces Conditions~\textbf{D3}--\textbf{D5} to hold.

Finally, we must demonstrate that $\Psi^{(j)}$ is generic.  In general, 
if $i\in \{\, 0,\dots , d_j\,\}$ then we let
$$
\dot\phi^{(j)}_i = \prod_{\ell=m(i,j)}^{M(i,j)} \varphi^{(j)}_\ell,
$$ and we let $\dot r^{(j)}_i$ be the depth of $\dot\phi^{(j)}_i$.
Now assume $i< d_j$.  Then Corollary \ref{corgeeee} implies that 
$\varphi^{(j)}_{M(i,j)}$ is $H^{(j), M(i,j)+1}$-generic of depth $r_{M(i,j)}$.
On the other hand, if $m(i,j)\le \ell < M(i,j)$ then the depth $t^{(j)}_\ell$ 
of $\varphi^{(j)}_\ell$ satisfies
$$
t^{(j)}_\ell \le \max (t^{(1)}_\ell , t^{(2)}_\ell ) = r_\ell < r_{M(i,j)}.
$$ 
 It follows that
$\dot\phi^{(j)}_i$ is $G^{(j),i+1}$-generic of depth $r_{M(i,j)}$ and
$$
\dot\phi^{(j)}_i\, |\, G^{(j),i}_{y^{(j)}, r_{M(i,j)}} = \varphi^{(j)}_{M(i,j)} 
\,| \, G^{(j),i}_{y^{(j)}, r_{M(i,j)}}.
$$ 
 Therefore, we have shown that $\phi^{(j)}_i$ has the necessary genericity 
property in those cases in which $\phi^{(j)}_i = \dot\phi^{(j)}_i$.

Assume that $\phi^{(j)}_i \ne \dot\phi^{(j)}_i$.
Then $d_j>0$, $i=d_j-1$,
$\dot r^{(j)}_{i+1}\le \dot r^{(j)}_{i}$ and
$$
\phi^{(j)}_{i}= \dot\phi^{(j)}_{i} (\dot\phi^{(j)}_{i+1}\, |\, G^{(j),i}).
$$ 
If $\dot r^{(j)}_{i+1}< \dot r^{(j)}_{i}$ then the genericity of $\phi^{(j)}_i$ 
follows from the fact that $\phi^{(j)}_i$ agrees with $\dot\phi^{(j)}_i$ on 
$G^{(j),i}_{y^{(j)}, r_{M(i,j)}}$.  In the case $\dot r^{(j)}_{i+1}= \dot r^{(j)}_{i}$,
 genericity follows from Lemma \ref{generictrivia}. 
\end{proof}

\subsection*{Constructing products}\label{productlabel}
Assume we are given generic cuspidal $G^{(j)}$-data 
$$
\Psi^{(j)}= (\vec\bG^{(j)} , y^{(j)}, \rho^{(j)}, \vec\phi^{(j)}),
\qquad j=1,\, 2.
$$   
Let $d_j$ be the degree of $\Psi^{(j)}$.  (In other words, $\vec\bG^{(j)}$, $\vec r^{(j)}$
and $\vec\phi^{(j)}$ are $(d_j+1)$-tuples.)  Our present objective is to construct a
 ``product datum'', that is, a generic cuspidal $G$-datum
$$
\Psi^{(1)}\times \Psi^{(2)}= (\vec\bG , y,\rho,\vec\phi)
$$ 
such that
$$
\kappa (\Psi^{(1)}\times \Psi^{(2)}) \simeq \kappa (\Psi^{(1)}) \times \kappa (\Psi^{(2)}),
$$ 
as described at the beginning of this section.

There is one particularly simple class of examples which we treat first.  If $\Psi^{(2)}$ 
parametrizes a depth zero representation then  we take $d=d_1$ and for all 
$i\in \{\, 0,\dots, d\,\}$ we take  $\bG^i= \bG^{(1),i}\times \bG^{(2)}$, $y = (y^{(1)},y^{(2)})$, 
$r_i = r^{(1)}_i$, $\rho = \rho^{(1)}\times \rho^{(2)}$ and 
 $\phi_i = \phi^{(1)}_i\times 1$. Of course, the case in which $\pi(\Psi^{(1)})$ has 
depth zero is similar.

The general definition of the product datum is as follows.
The first step is to construct the sequence $\vec r = (r_0,\dots, r_d)$.  
If  $d_1 = d_2 =0$ then $\vec r = (r_0)$, where $r_0 = \max(r^{(1)}_0,r^{(2)}_0)$.  
Otherwise, the numbers $r_0,\dots , r_{d-1}$ are just the nonzero numbers in 
$\{\, r^{(j)}_i \ | \ 1\le j\le 2,\ 0\le i\le d_j-1\,\}$ listed without 
repetitions in increasing order and 
$r_d = \max(r^{(1)}_{d_1},r^{(2)}_{d_2})$.
Note that $d=0$ exactly when $d_1=d_2=0$.

The Levi sequence 
$$
\vec\bG = (\bG^0,\dots,\bG^d) = (\bH^{(1),0}\times \bH^{(2),0},\dots , 
                  \bH^{(1),d}\times \bH^{(2),d})
$$ 
is defined in the following manner.  First, we set $\bH^{(j),0} = \bG^{(j),0}$ 
and $\bH^{(j),d} = \bG^{(j)}$, for $j=1,2$.  Next, suppose $i\in \{\, 0,\dots,d-2\,\}$
 and $j\in \{\, 1,2\,\}$ 
and $\bH^{(j),i}$ has been defined and equals $\bG^{(j),\ell}$ for some $\ell$.  Define
$$\bH^{(j),i+1} = 
\begin{cases}
\bG^{(j),\ell+1},&\text{if $\phi^{(j)}_\ell$ is of depth $r_i$,}\\
\bH^{(j),i},&\text{otherwise.}
\end{cases}
$$

By assumption, $y^{(j)}\in A(\bG^{(j)},\bT^{(j)},F) = A(\bG^{(j)},\bT^{(j)},E_j)
\cap \cB (\bG^{(j)},F)$, where $\bT^{(j)}$ is a tamely ramified maximal torus in $\bG^{(j),0}$
and $E_j$ is a tamely ramified splitting field of $\bT^{(j)}$.
  Let $E=E_1E_2$, $\bT = \bT^{(1)}\times \bT^{(2)}$ and $y= (y^{(1)},y^{(2)})$.  
Then $y\in A(\bG,\bT,F) = A(\bG,\bT,E)\cap \cB(\bG, F)$.  
Let $\rho$ be the representation $\rho^{(1)}\times \rho^{(2)}$ of $K^0 = 
K^{(1),0}\times K^{(2),0}$.
 
Define 
$$
\vec\phi = (\phi_0,\dots ,\phi_d) = (\varphi^{(1)}_0\times 
\varphi^{(2)}_0,\dots , \varphi^{(1)}_d\times \varphi^{(2)}_d)
$$ 
as follows.  
If $d=0$ then we put $\varphi_0^{(j)}=\phi_0^{(j)}$ for $j=1$, $2$.
Otherwise, if $i\in \{\, 0,\dots,d-1\,\}$ and $r_{d-1} <r_d$ then define
$$
\varphi^{(j)}_i =
\begin{cases}
1,&\text{if $\bH^{(j),i}= \bH^{(j),i+1}$,}\\
\phi^{(j)}_\ell,&\text{if $\bG^{(j),\ell}= \bH^{(j),i}\ne \bH^{(j),i+1}$}
\end{cases}
$$
and $$\varphi^{(j)}_d = \phi_{d_j}^{(j)}.$$  
If $d>0$ and $r_{d-1}=r_d$, we use the same definitions except that we take 
$\varphi_d^{(j)}$ to be trivial and
$$\varphi^{(j)}_{d-1} =
\begin{cases}
\phi_{d_j}^{(j)},&\text{if $\bH^{(j),d-1}= \bG^{(j)}$,}\\
\phi^{(j)}_{d_j-1},&\text{if $\bG^{(j),d_j-1}= \bH^{(j),d-1}\ne \bG^{(j)}$.}
\end{cases}
$$
Note that in the latter case, when $\bH^{(j),d-1}\ne \bG^{(j)}$, it must be the 
case that $\phi^{(j)}_{d_j}$ is trivial.  Indeed, $r^{(j)}_{d_j-1} = r^{(j)}_{d_j}$ 
since 
$$
r_{d-1} = r^{(j)}_{d_j-1} \le r^{(j)}_{d_j} \le \max (r^{(1)}_{d_1},r^{(2)}_{d_2})
 \le r_{d-1}.
$$

This completes the definition of the product datum
$$\Psi^{(1)}\times \Psi^{(2)} = (\vec\bG, y,\rho,\vec\phi).$$
Next, we verify that the product datum is indeed a generic cuspidal $G$-datum,
and that it has the other properties mentioned previously in this section.

\begin{lemma}\label{productgenericity}
If $\Psi^{(1)}$ and $\Psi^{(2)}$ are extended generic cuspidal $G^{(j)}$-data 
then $\Psi^{(1)}\times \Psi^{(2)}$ is a extended generic cuspidal $G$-datum,
$K(\Psi^{(1)}\times \Psi^{(2)})=K(\Psi^{(1)})\times K(\Psi^{(2)})$
and 
$$
\kappa (\Psi^{(1)}\times\Psi^{(2)}) \simeq \kappa (\Psi^{(1)})\times \kappa (\Psi^{(2)}).
$$
\end{lemma}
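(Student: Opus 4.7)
The plan is to verify in order each assertion of the lemma, using the careful case‑by‑case construction of $\Psi^{(1)}\times\Psi^{(2)}$. Conditions \textbf{D1}--\textbf{D4} come essentially for free: \textbf{D1} holds because $\bZ^0/\bZ = (\bZ^{(1),0}/\bZ^{(1)})\times (\bZ^{(2),0}/\bZ^{(2)})$ is $F$-anisotropic as a product of two $F$-anisotropic tori; \textbf{D2} is immediate from $\bT=\bT^{(1)}\times\bT^{(2)}$ and $y=(y^{(1)},y^{(2)})\in A(\bG,\bT,F)$; \textbf{D3} holds by the very definition of $\vec r$; and \textbf{D4} follows because $K^0=K^{(1),0}\times K^{(2),0}$, $\rho=\rho^{(1)}\times\rho^{(2)}$, and $\ind_{K^0}^{G^0}(\rho)\simeq \pi^{(1)}_{-1}\times\pi^{(2)}_{-1}$ is irreducible as the external tensor product of two irreducibles.

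The real content is \textbf{D5} together with the genericity requirement. The strategy is to invoke Lemma~\ref{geeee} in reverse. For each $i\in\{0,\dots,d-1\}$, split into two cases indexed by $j\in\{1,2\}$: if $\bH^{(j),i+1}\ne\bH^{(j),i}$, then by construction $\varphi_i^{(j)}=\phi_\ell^{(j)}$ for the unique $\ell$ with $\bG^{(j),\ell}=\bH^{(j),i}$ and $\bG^{(j),\ell+1}=\bH^{(j),i+1}$, so by hypothesis on $\Psi^{(j)}$ this character is $H^{(j),i+1}$-generic of depth $r_i^{(j)}=r_i$ (the second equality coming from the way the $r_i$'s were built as the strictly ordered list of the nonzero depths), and is thus realized by an element $\Gamma_i^{(j)}\in\z^{(j),i,*}_{-r_i}$ satisfying \textbf{GE1}${}^j$ and \textbf{GE2}. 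If instead $\bH^{(j),i+1}=\bH^{(j),i}$, then $\varphi_i^{(j)}=1$ has depth $-\infty$ and one takes $\Gamma_i^{(j)}=0\in\z^{(j),i,*}_{-r_i}$. The pair $(\Gamma_i^{(1)},\Gamma_i^{(2)})$ realizes $\phi_i\,|\,G_{y,r_i}^i$, and Lemma~\ref{geeee} applied to this pair yields $G^{i+1}$-genericity of $\phi_i$ at depth $r_i$. For $i=d$ one checks directly from the construction, using the two subcases $r_{d-1}<r_d$ and $r_{d-1}=r_d$, that $\phi_d$ has the correct depth property demanded by \textbf{D5}; the only subtlety is the adjustment made in the definition of $\varphi_{d-1}^{(j)}$ when $\phi_{d_j}^{(j)}$ is trivial, which was precisely designed to enforce \textbf{D5} while leaving the depth of $\phi_{d-1}$ unchanged.

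Next I would verify $K(\Psi^{(1)}\times\Psi^{(2)})=K(\Psi^{(1)})\times K(\Psi^{(2)})$. Since $G_{y,t}^i=H^{(1),i}_{y^{(1)},t}\times H^{(2),i}_{y^{(2)},t}$ for every $t$, and $K^0$ factors as $K^{(1),0}\times K^{(2),0}$, the recursive formula $K^{i+1}=K^iG^{i+1}_{y,s_i}$ shows that at each step the group built from $\vec\bG$ factors as a product of the corresponding groups built from $\vec\bG^{(1)}$ and $\vec\bG^{(2)}$. The extra indices $i$ at which $\bH^{(j),i+1}=\bH^{(j),i}$ contribute nothing new on the $j$-th factor because $H^{(j),i}_{y^{(j)},s_i}\subset H^{(j),i}_{y^{(j)},s_{i-1}}$ is already contained in the group built at step $i-1$. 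Telescoping yields $K=K(\Psi^{(1)})\times K(\Psi^{(2)})$.

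The main obstacle, and the step requiring most care, is the equivalence $\kappa(\Psi^{(1)}\times\Psi^{(2)})\simeq\kappa(\Psi^{(1)})\times\kappa(\Psi^{(2)})$. The plan is to compare the tensor product factorizations $\kappa=\kappa_{-1}\otd \kappa_d$ of the three representations and to identify factors by indexing them according to which $\bH^{(j)}$ advances. Using Proposition~\ref{rsiisoclass} we are free to choose any convenient models and relevant special isomorphisms. For each $i\in\{0,\dots,d-1\}$, the subgroup $J^{i+1}$ factors as $J^{(1),i+1}\times J^{(2),i+1}$ where each $J^{(j),i+1}$ is either the $(r_i,s_i)$-group for $(\bH^{(j),i},\bH^{(j),i+1})$ or simply reduces to $H^{(j),i}_{y^{(j)},r_i}$ (hence lies in $J^{i+1}_+$) when $\bH^{(j),i}=\bH^{(j),i+1}$, and $\hat\phi_i=\widehat{\varphi^{(1)}_i}\times\widehat{\varphi^{(2)}_i}$; the corresponding Heisenberg quotient therefore factors as a product of Heisenberg $p$-groups (one factor being trivial whenever $\varphi_i^{(j)}=1$), and the associated Heisenberg representation $\tau_i$ is the external tensor product of the factor Heisenberg representations. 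Because the conjugation action of $K^i=K^{(1),i}\times K^{(2),i}$ on $\cH_i$ preserves this product structure, we may choose the relevant special isomorphism of $\cH_i$ as the product of Yu's special isomorphisms on the two factors, and the resulting Weil representation $\hat\tau_i^\sharp$ is the external tensor product of the Weil representations on the two factors. Consequently $\phi_i'$ matches the external tensor product of the corresponding $\phi_i'$-representations of $\Psi^{(1)}$ and $\Psi^{(2)}$, indexed by the factor whose Levi advances at step $i$; after inflating to $K=K(\Psi^{(1)})\times K(\Psi^{(2)})$ and grouping indices by source, the tensor product $\kappa_{-1}\otd\kappa_d$ reassembles precisely into $\kappa(\Psi^{(1)})\times \kappa(\Psi^{(2)})$. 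The bookkeeping at $i=d$ and at the depth-zero factor $\rho$ handles the adjustments (1) and (2) that were made in the construction of the factors in the previous subsection, and mirrors the refactorization argument of Proposition~\ref{refactorequiv}; this is where the most attention is needed, but the calculation is routine once the factorization of the Heisenberg data is in place.
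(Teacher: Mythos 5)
Your proof follows essentially the same route as the paper's: verify \textbf{D1}--\textbf{D5} and invoke Lemma~\ref{geeee} for genericity, factor the groups $K^i$ and $J^{i+1}$ coordinate-wise, and then sketch the equivalence $\kappa(\Psi^{(1)}\times\Psi^{(2)})\simeq\kappa(\Psi^{(1)})\times\kappa(\Psi^{(2)})$ by tracking how each $\phi^{(j)}_i$ contributes a tensor factor. Two small points are worth flagging. First, for \textbf{D1} you only check $F$-anisotropy of $\bZ^0/\bZ$; the definition of twisted Levi sequence also requires the inclusions $\bG^0\subsetneq\cdots\subsetneq\bG^d$ to be strict. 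This is handled in the paper by observing that at each step $i$ at least one factor $\varphi_i^{(j)}$ is nontrivial, so that $\bH^{(j),i}\ne\bH^{(j),i+1}$ for that $j$; this follows from the way $\vec r$ was assembled, and your argument essentially contains the ingredients, but the conclusion should be stated. Second, when both $\bH^{(1)}$ and $\bH^{(2)}$ advance at step $i$ (which does happen whenever $r_i$ occurs as a depth on both sides), the Heisenberg quotient $\cH_i$ is not literally the direct product of two Heisenberg $p$-groups: a direct product would have center $\F_p\times\F_p$, whereas $\cZ_i\cong\F_p$. Rather, $\cH_i$ is a quotient of the direct product with the two central $\F_p$'s identified, exactly as in the $\v{\cH}$ construction of Section~\ref{sec:groupHeis}. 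The representation-theoretic consequences you draw (the Heisenberg representation and the Weil representation factor as external tensor products) are nevertheless correct, and they are precisely what the analogue of Lemma~\ref{tauhatsharp} provides, so this is a matter of phrasing rather than a genuine gap.
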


\begin{proof} Let $\Psi = \Psi^{(1)}\times \Psi^{(2)}$.  To show that $\Psi$ is a 
cuspidal $G$-datum, we first need to verify that Conditions~\textbf{D1}--\textbf{D5} hold.  
Condition~\textbf{D1} easily follows once we show that we have 
$\bG^0\subsetneq \dots \subsetneq\bG^d$.  Suppose $i\in \{\, 0,\dots,d-1\,\}$.  Then it is clear 
that $\bG^i\subset \bG^{i+1}$.  We observe that the quasicharacter $\phi_i$ has a 
factorization $\varphi_i^{(1)}\times \varphi_i^{(2)}$ in which at least one of the factors 
is nontrivial.  Moreover, if $\varphi^{(j)}_i$ is nontrivial then 
$\bH^{(j),i}\ne \bH^{(j),i+1}$ and hence, in general, $\bG^i\subsetneq \bG^{i+1}$.  
This implies Condition~\textbf{D1} holds.  Clearly Conditions~\textbf{D2}--\textbf{D4}
 also hold.  

To verify Condition~\textbf{D5}, one uses a straightforward induction to show that 
$\phi_i$ has depth $r_i$, except in the case where $i=d$ and $r_{d-1} = r_d$.  
In the latter case, $\phi_i$ is trivial, as required by Condition~\textbf{D5}.
Conditions~\textbf{D1}--\textbf{D5} are satisfied, the fact that $\Psi$ is a generic 
cuspidal $G$-datum follows from Lemma~\ref{geeee}.

We now sketch the proof that 
$\kappa (\Psi^{(1)}\times\Psi^{(2)}) \simeq \kappa (\Psi^{(1)})\times \kappa (\Psi^{(2)})$.
  If $d=0$ then
\begin{equation*}
\begin{split}
\kappa (\Psi^{(1)}\times \Psi^{(2)}) &= \rho \otimes (\phi_0\,|\, G_{[y]})\cr
&= (\rho^{(1)}\otimes (\phi^{(1)}_0\,|\, G^{(1)}_{[y^{(1)}]}) )
\times(\rho^{(2)}\otimes (\phi^{(2)}_0\,|\, G^{(2)}_{[y^{(2)}]}) )\cr
&= \kappa(\Psi^{(1)})\times \kappa (\Psi^{(2)}).
\end{split}
\end{equation*}
Now assume $d>0$.  Each quasicharacter $\phi^{(j)}_i$ occurs exactly once as a factor 
of some quasicharacter $\phi_k$.  All other factors of the $\phi_k$'s are trivial 
characters.  When $\phi^{(j)}_i$ is a factor of $\phi_k$ it is routine to check that 
the group $K^{(j),i+1}$ on which $\phi^{(j),\prime}_i$ is defined is a factor of the 
group $K^{k+1}$ on which $\phi^\prime_k$ is defined and, moreover, $\phi^{(j),\prime}_i$ is 
a factor of $\phi^\prime_k$. The representation $\phi^{(j),\prime}_i$ inflates to a 
representation $\kappa^{(j)}_i$ of the group $K^{(j)}$ on which $\kappa (\Psi^{(j)})$ 
is defined.  One can check that $K^{(j)}$ is a factor of the inducing group $K$ of 
$\kappa (\Psi^{(1)}\times \Psi^{(2)})$ and $\kappa^{(j)}_i$ is a factor of $\kappa_k$.  
The various trivial characters inflate to give trivial characters of the groups $K^{(j)}$ 
and they make no net contribution to the $\kappa (\Psi^{(j)})$'s.  
 In this way, one shows that each side of the desired isomorphism 
has a factorization with factors equivalent to those on the other side.  
The details are left to the reader.
\end{proof}
     
 \begin{theorem}\label{prodthm}
 Suppose $\Psi$ is a extended generic cuspidal $G$-datum with $\bG= \bG^{(1)}\times\bG^{(2)}$.
 Let $\Psi^{(1)}$ and $\Psi^{(2)}$ be the factors associated to $\Psi$, as above. Then 
$\Psi^{(1)}\times \Psi^{(2)}$ is a refactorization of $\Psi$ and, consequently, 
$$
\kappa (\Psi)\simeq \kappa (\Psi^{(1)}\times\Psi^{(2)}) \simeq 
\kappa (\Psi^{(1)})\times \kappa (\Psi^{(2)}).
$$
 \end{theorem}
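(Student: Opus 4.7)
\medskip

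\noindent The second isomorphism $\kappa(\Psi^{(1)}\times\Psi^{(2)})\simeq\kappa(\Psi^{(1)})\times\kappa(\Psi^{(2)})$ is already established in Lemma~\ref{productgenericity}, so the theorem reduces to showing that $\Psi^{(1)}\times\Psi^{(2)}$ is a refactorization of $\Psi$, whereupon Proposition~\ref{refactorequiv} yields $\kappa(\Psi)\simeq\kappa(\Psi^{(1)}\times\Psi^{(2)})$. My plan is to verify the conditions \textbf{F0}, \textbf{F1}, \textbf{F2} of Definition~\ref{defrefactor} by carefully tracking how the factorization construction and the product construction interact. Write $\Psi=(\vec\bG,y,\rho,\vec\phi)$ with $\phi_i=\varphi_i^{(1)}\times\varphi_i^{(2)}$ and $\Psi^{(1)}\times\Psi^{(2)}=(\vec{\dot\bG},\dot y,\dot\rho,\vec{\dot\phi})$ with $\dot\phi_i=\dot\varphi_i^{(1)}\times\dot\varphi_i^{(2)}$ (the $\dot\varphi_i^{(j)}$ being the quasicharacters produced by the product recipe).

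The first step is to confirm that $\vec{\dot\bG}=\vec\bG$ and $\dot y=y$. Equality of the points is immediate since both equal $(y^{(1)},y^{(2)})$. For the Levi sequences, I would trace through the functions $m(i,j)$ and $M(i,j)$ used in the factorization: the factor $\vec\bG^{(j)}$ is the sequence obtained from $(\bH^{(j),0},\dots,\bH^{(j),d})$ by deleting repetitions, and the product construction reintroduces repetitions precisely at those levels $i$ where $\phi_\ell^{(j)}$ fails to have depth $r_i$. Combined with Corollary~\ref{corgeeee} (which states that $\phi^{(j)}_\ell$ has depth exactly $r_{M(\ell,j)}$ whenever $\bH^{(j),\ell+1}\ne\bH^{(j),\ell}$), this identifies the jumps in $\vec{\dot\bG}$ with those in $\vec\bG$, giving $\vec{\dot r}=\vec r$.

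Next, by Lemma~\ref{effequiv} it suffices to replace \textbf{F1} by the equivalent condition \textbf{F1}$'''$, so the task becomes verification of
\[
\dot\phi_i\,|\,G^i_{y,r_i}=\phi_i\chi_{i+1}\,|\,G^i_{y,r_i},\qquad i\in\{0,\dots,d-1\},
\]
together with $\dot\vartheta=\vartheta$ on $K^0_+$, and Condition \textbf{F2}, i.e.\ $\dot\rho=\rho\otimes(\chi_0|_{K^0})$. Unwinding the definitions, one has $\dot\varphi_i^{(j)}=\phi_\ell^{(j)}=\prod_{k=m(\ell,j)}^{M(\ell,j)}\varphi_k^{(j)}$ when $\bG^{(j),\ell}=\bH^{(j),i}\ne\bH^{(j),i+1}$, and $\dot\varphi_i^{(j)}=1$ otherwise; this yields an explicit expression for $\dot\phi_i$ as a product of the original $\phi_k$'s, and direct computation shows that the ratio $\dot\phi_i\phi_i^{-1}$ agrees on $G^i_{y,r_i}$ with the restriction of $\chi_{i+1}^{-1}=\prod_{j=i+1}^d\dot\phi_j^{-1}\phi_j$ coming from the later indices. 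The condition on $\vartheta$ and $K^0_+$ follows similarly from the fact that on $K^0_+$ the characters $\phi_i$ and $\dot\phi_i$ only differ through the initial adjustments made in the factorization, and these adjustments are bundled into $\chi_0|_{K^0}$, which is precisely what \textbf{F2} records.

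The main obstacle is bookkeeping through the edge cases introduced by the adjustments in the factorization recipe: namely the case $r_{d-1}=r_d$ (forcing $\dot\phi_d=1$ and modifying $\dot\phi_{d-1}$) and, when $d_j=0$, the case where the factorization absorbs a character into $\rho^{(j)}$ rather than into $\phi_0^{(j)}$. In each of these cases one must check that the adjustment is exactly compensated when the product is formed; the discrepancy between $\dot\rho$ and $\rho$ ends up being the character $\chi_0|_{K^0}$ required by \textbf{F2}, and the corresponding rearrangement at the top level yields \textbf{F0}. Once these special cases are handled, the bulk of the verification is a routine, if somewhat tedious, comparison of the two constructions index by index.
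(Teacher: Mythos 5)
Your proposal is correct and follows essentially the same route as the paper: reduce via Lemma~\ref{productgenericity} and Proposition~\ref{refactorequiv} to checking that the product datum shares $\vec\bG$, $y$, $\vec r$ with $\Psi$ and satisfies \textbf{F0}--\textbf{F2}, with the index-by-index comparison of the $\varphi_k^{(j)}$'s and the special handling of the $r_{d-1}=r_d$ and $d_j=0$ adjustments. The only difference — verifying \textbf{F1}$'''$ via Lemma~\ref{effequiv} instead of \textbf{F1} on $G^i_{y,r_{i-1}^+}$ directly — is a cosmetic variant that the paper's own equivalence lemma justifies, not a genuinely different argument.
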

 
\begin{proof}
Fix a generic cuspidal $G$-datum $\Psi = (\vec\bG, y,\rho,\vec\phi)$.  
Let 
$$
\Psi^{(j)} = (\vec\bG^{(j)}, y^{(j)},\rho^{(j)}, \vec\phi^{(j)}),\qquad j=1,2,
$$ 
be the factors of $\Psi$ and let $\dot\Psi$ be the product datum $\Psi^{(1)}\times \Psi^{(2)}$.  

We begin by sketching why the analogues of $\vec\bG$ and $\vec r$ for $\dot\Psi$ are 
precisely $\vec\bG$ and $\vec r$.  
We assume $d>0$, since there is nothing to prove when $d=0$.  Given $\Psi$, let 
$u^{(j)}$ be the depth of the quasicharacter
$$
\prod_{\ell = m(d_j,j)}^{M(d_j , j)} \varphi^{(j)}_\ell.
$$  
Then 
$$
r^{(j)}_{d_j} = \max (u^{(j)}, r^{(j)}_{d_j-1}).
$$  
Recall that if $\bG^{(j),i}\ne \bG^{(j)}$ then $r^{(j)}_i = r_{M(i,j)}$.  
It follows that $(\vec\bG, \vec r)$ determines $(\vec\bG^{(1)},\vec r^{(1)})$ and 
$(\vec\bG^{(2)}, \vec r^{(2)})$, except that we also need $u^{(1)}$ and $u^{(2)}$ 
to determine $r^{(1)}_{d_1}$ and $r^{(2)}_{d_2}$.  Let $\vec{\dot r} = 
(\dot r_0,\dots, \dot r_{\dot d})$ be the analogue of $\vec r$ for $\dot\Psi$.  
Then $\dot r_0,\dots ,\dot r_{\dot d -1}$ are just the nonzero numbers of the form 
$r^{(j)}_i$, with $i<d_j$, listed in ascending order.  But this is the same as the 
sequence $r_0,\dots , r_{d-1}$.  In particular, $\dot d = d$.
We also have
$$
\dot r_d = \max (r^{(1)}_{d_1}, r^{(2)}_{d_2}) = \max (u^{(1)},u^{(2)}, r_{d-1}) = 
r_d
$$ 
and hence $\vec{\dot r} = \vec r$.

Given $(\vec\bG^{(1)},\vec r^{(1)})$ and $(\vec\bG^{(2)},\vec r^{(2)})$ one 
can reconstruct $\vec r$ in the manner just described and one can then reconstruct 
$\vec\bG$ as follows.  First, one takes 
$$
\bG^0 =\bG^{(1),0}\times \bG^{(2),0}.
$$ 
Then, if $i\in \{\,0,\dots,d-1\,\}$ and we have indices $a_{i1}$ and $a_{i2}$ such that 
$$
\bG^i = \bG^{(1),a_{i1}}\times \bG^{(2),a_{i2}},
$$ 
we use $r_i$ to construct $\bG^{i+1}$ according to the formula
$$a_{i+1,j} =
\begin{cases}
a_{ij}+1,&\text{if  $r_i\in\vec r^{(j)}$,}\\
a_{ij},&\text{otherwise.}
\end{cases}
$$
But the latter recursion is the same recursion that constructs the Levi sequence 
for $\dot\Psi$.  Since it is also clear that the point $y$ in $\Psi$ coincides 
with the analogous point for $\dot\Psi$, we now deduce that 
$$
\dot\Psi = (\vec\bG ,y, \dot\rho, \vec{\dot\phi}),
$$ 
where it remains to consider the relation between $(\dot\rho,\vec{\dot\phi})$ and 
$(\rho,\vec\phi)$.

To show that $\dot\Psi$ is a refactorization of $\Psi$, it suffices to verify that 
Conditions~\textbf{F0}--\textbf{F2} in the definition of ``refactorization'' hold. 
We will only consider the case in which $d>0$, since the case of $d=0$ is elementary.  
Condition~\textbf{F0} holds since $\Psi$ and $\dot\Psi$ are cuspidal $G$-data that 
share the same objects $\vec\bG$, $y$ and $\vec r$. More precisely, 
$\phi_d= 1$ exactly when $r_{d-1}=r_d$ and the latter condition holds exactly 
when $\dot\phi_d=1$.

We next consider Condition \textbf{F1}.  Assume $d>0$.  Given
$\Psi = (\vec\bG,y,\rho,\vec\phi)$ with factors $\Psi^{(j)} = (\vec 
\bG^{(j)},y^{(j)}, \rho^{(j)}, \vec\phi^{(j)})$,
we now know that the product datum $\Psi^{(1)}\times \Psi^{(2)}$ has  
the same $\vec\bG$, $y$ and $\vec r$ as $\Psi$ and thus
$$\Psi^{(1)}\times \Psi^{(2)} = (\vec\bG , y,\dot\rho, \vec{\dot 
\phi}),$$ for suitable $\dot\rho$ and $\vec{\dot\phi}$.  We associate  
to $\vec{\dot\phi}$ some auxiliary notations:
$$\vec{\dot\phi} = (\dot\phi_0,\dots , \dot\phi_d) =
(\dot\varphi^{(1)}_0\times \dot\varphi^{(2)}_0,\dots ,
\dot\varphi^{(1)}_d\times \dot\varphi^{(2)}_d).$$
We need to show
$$\prod_{\ell =i}^d \phi_\ell | G^i_{y,r^+_{i-1}} =
\prod_{\ell =i}^d \dot\phi_\ell | G^i_{y,r^+_{i-1}} .$$

Let $$\Phi^{(j)}_k = \prod_{\ell = m(k,j)}^{M(k,j)} \varphi^{(j)}_\ell 
\qquad\mbox{ and }\qquad \dot\Phi^{(j)}_k = \prod_{\ell = m(k,j)}^{M 
(k,j)} \dot\varphi^{(j)}_\ell.$$
Fix $i\in \{ 0,\dots , d\}$ and $j\in \{ 1,2\}$.  Then there exists a  
(unique) number $k(i,j)\in \{ 0,\dots d_j\}$ such that $$m(k(i,j),j) 
\le i\le M(k(i,j),j).$$
It is straightforward to verify the following identities:
\begin{eqnarray*}
\prod_{\ell =i}^{M(k(i,j),j)} \varphi_\ell^{(j)}|H^{(j),i}_{y,r^+_ 
{i-1}} &=&
\prod_{\ell =m(k,j)}^{M(k(i,j),j)} \varphi_\ell^{(j)}|H^{(j),i}_{y,r^ 
+_{i-1}},\\
\prod_{\ell =i}^{M(k(i,j),j)} \dot\varphi_\ell^{(j)}|H^{(j),i}_{y,r^+_ 
{i-1}} &=&
\prod_{\ell =m(k,j)}^{M(k(i,j),j)} \dot\varphi_\ell^{(j)}|H^{(j),i}_ 
{y,r^+_{i-1}}.
\end{eqnarray*}
It follows that
\begin{eqnarray*}
\prod_{\ell =i}^d \phi_\ell | G^i_{y, r^+_{i-1}} &=& \prod_{j=1}^2  
\quad\prod_{k= k(i,j)}^{d_j} \Phi_k^{(j)} | H^{(j),i}_{y,r^+_{i-1}},\\
\prod_{\ell =i}^d \dot\phi_\ell | G^i_{y, r^+_{i-1}} &=& \prod_{j=1} 
^2 \quad\prod_{k= k(i,j)}^{d_j} \dot\Phi_k^{(j)} | H^{(j),i}_{y,r^+_ 
{i-1}}.
\end{eqnarray*}

Now it is routine to verify that, in fact,  $\dot\Phi_k^{(j)} =  
\phi_k^{(j)}$, for all $k\in \{ 0,\dots , d_j\}$.
Therefore, we are reduced to showing that for fixed $i\in \{ 0, 
\dots ,d\}$ and $j\in \{ 1,2\}$, we have
$$\prod_{k= k(i,j)}^{d_j} \Phi_k^{(j)} | H^{(j),i}_{y,r^+_{i-1}} =
\prod_{k= k(i,j)}^{d_j} \phi_k^{(j)} | H^{(j),i}_{y,r^+_{i-1}}.
$$

Let $\delta^{(j)}_k$ be the depth of $\Phi^{(j)}_k$ and let
$\dot\delta^{(j)}_k$ be the depth of $\dot\Phi^{(j)}_k$.  We have $ 
\Phi_k^{(j)} = \phi_k^{(j)}$, unless $\delta^{(j)}_{d_j} \le \delta^ 
{(j)}_{d_j -1}$ and $k= d_j-1$ or $k=d_j$.  In the latter cases, $ 
\phi_{d_j}^{(j)} =1$ and, if $d_j>0$, then $$\phi^{(j)}_{d_j-1} =  
\Phi^{(j)}_{d_j-1} (\Phi^{(j)}_{d_j}| G^{(j),d_j-1}).$$
Recall that $$\Phi^{(j)}_k = \prod_{\ell = m(k,j)}^{M(k,j)} \varphi^ 
{(j)}_\ell$$ and that, by definition, $\phi_k^{(j)} = \Phi^{(j)}_k$,  
with the following exceptions.  If $d_j>0$ and $\delta^{(j)}_{d_j}\le  
\delta^{(j)}_{d_j-1}$ then $\phi^{(j)}_{d_j}=1$ and $\phi^{(j)}_ 
{d_j-1} = \Phi^{(j)}_{d_j-1} (\Phi^{(j)}_{d_j}| G^{(j),d_j-1})$.  If  
$d_j=0$ and $\delta^{(j)}_0=1$ then $\phi^{(j)}_0 =1$.

Assume that $d_j>0$ and $\delta_{d_j}^{(j)}\le \delta_{d_j-1}^{(j)} 
$.   If $k(i,j)<d_j$ then both sides of the desired inequality are  
the same.  Now suppose $k(i,j) = d_j$.  So $m(d_j,j) \le i\le d$.  We  
need to show
$$\Phi^{(j)}_{d_j} | H^{(j),i}_{y,r^+_{i-1}} = \phi^{(j)}_{d_j} | H^ 
{(j),i}_{y,r^+_{i-1}}=1.$$
Now $\Phi^{(j)}_{d_j}| H^{(j),i}_{y,(\delta^{(j)}_{d_j})^+}=1$.  But  
$H^{(j),i}_{y,(\delta^{(j)}_{d_j})^+}\supset
H^{(j),i}_{y,(\delta^{(j)}_{d_j-1})^+}$.  So $\Phi^{(j)}_{d_j}| H^ 
{(j),i}_{y,(\delta^{(j)}_{d_j-1})^+}=1$.
   But $(\delta^{(j)}_{d_j -1})^+ \le r_{M(d_j -1,j)}^+\le r_{i-1}^+$.
Therefore, $\Phi^{(j)}_{d_j} | H^{(j),i}_{y,r^+_{i-1}}  =1$.

Finally, if $d_j=0$ and $\delta^{(j)}_0=1$ then
$\Phi^{(j)}_{0} | H^{(j),i}_{y,r^+_{i-1}} =1$ since $\Phi^{(j)}_0 | H^ 
{(j),i}_{y,0^+} =1$.  This completes the verification of Condition  
\textbf{F1}.

It remains to consider Condition~\textbf{F2}.  
If $d_1$ and $d_2$ are positive then $\rho = \dot\rho$ and, arguing as above, we see that
$$
\prod_{\ell=0}^d \phi_\ell \,|\, K^0 = \prod_{\ell=0}^d \dot\phi_\ell \,|\, K^0.
$$ 
Hence, in this case, Condition~\textbf{F2} holds.  The remaining case, in which 
either $d_1=0$ or $d_2=0$ is left as an exercise.
\end{proof}

\chapter{Distinguished tame supercuspidal representations}

\section{Weak and moderate compatibility}
\label{sec:symmetrizing}

\subsection*{Equivalence of $(G,K)$-data}
Given an extended generic cuspidal $G$-datum 
$\Psi = (\vec\bG,y,\rho,\vec\phi)$, we have defined a subgroup 
$K= K(\Psi)$ from which the tame supercuspidal representation 
$\pi(\Psi)$ is induced.  Throughout this chapter, we assume that 
$G$ and this subgroup $K$ have been fixed and we consider those 
extended generic 
cuspidal $G$-data associated to the fixed pair $(G,K)$.

\begin{definition}\label{GKdatumdef}
A {\it $(G,K)$-datum} is an extended generic cuspidal $G$-datum $\Psi$ whose inducing subgroup $K(\Psi)$ is $K$.
\end{definition}

In this chapter we assume that Hypothesis~C($\vec\bG$) holds for the
twisted Levi sequences 
$\vec\bG$ that occur in the $(G,K)$-data we are considering.

We now discuss transformations of a $(G,K)$-datum $\Psi$  that do not affect the equivalence class of the representation $\kappa(\Psi)$.  The most basic such transformations are given in the following definition.

\begin{definition}\label{eltransf}
If $\Psi = (\vec\bG,y,\rho,\vec\phi)$ is a  $(G,K)$-datum and $(y,\rho)$ is replaced by $(\dot y,\dot\rho)$, with $[y] = [\dot y]$ and $\rho\simeq \dot\rho$, then we say $\Psi$ has undergone an {\it elementary transformation}.
\end{definition}

Let $\Psi = (\vec\bG,y,\rho, \vec\phi)$ be a $(G,K)$-datum. If $g\in G$, let
${}^g\vec\bG$ be the tamely ramified twisted Levi sequence
such that the $i$th group in the sequence is ${}^g \bG^i=\Int(g)\bG^i$, for 
$i\in \{\, 0,\dots,d\,\}$.
As in Section~\ref{sec:Mackey}, the notation ${}^g\rho$ is used for the 
representation $\rho\circ\Int (g^{-1})$ of $\Int(g) K^0$. 
Let ${}^g \vec\phi=({}^g\phi_0,\dots,{}^g\phi_d)$.
Then the generic cuspidal $G$-datum 
$$
{}^g \Psi = 
({}^g\vec\bG,\, g\cdot y,\, {}^g \rho, \,  {}^g\vec\phi)
$$  
has the property that $\pi ({}^g\Psi) \simeq \pi (\Psi)$.  
Note that if $g\notin K$, then ${}^g \Psi$ might not be a $(G,K)$-datum,
since $\Int(g)K$ might not be equal to $K$.
This action of $G$ on the set of generic cuspidal $G$-data
will be called $G$-\textit{conjugation}. \label{Gconjlabel} Although we will only
be considering conjugation by elements of $K$ in this chapter,
$G$-conjugation will be used in Chapter~6.

In addition to the elementary transformations, another basic operation on the 
set of $(G,K)$-data is described as follows.
If $g\in K$ then $K({}^g\Psi)=K$, so ${}^g\Psi$ is a $(G,K)$-datum,
Also, $\kappa ({}^g \Psi)\simeq \kappa (\Psi)$ (in fact, $\kappa({}^g\Psi)
={}^g\kappa(\Psi)$).
This action of $K$ on the set of $(G,K)$-data will be called the action of $K$ 
by  \textit{conjugation}.\label{Kconjlabel}

The third basic operation on the set of $(G,K)$-data is refactorization.  If $\dot\Psi$ is a refactorization of  a $(G,K)$-datum $\Psi$  then $\dot\Psi$ is also a $(G,K)$-datum and $\kappa (\dot\Psi)\simeq\kappa (\Psi)$, according to Proposition \ref{refactorequiv}.

\begin{definition}\label{GKdatumeqdef}
Two $(G,K)$-data $\Psi$ and $\dot\Psi$ are said to be \textit{$K$-equivalent} if $\dot\Psi$ can be obtained from $\Psi$ by a finite sequence of refactorizations, $K$-conjugations and elementary transformations.
\end{definition}

The latter definition defines an equivalence relation on the set of $(G,K)$-data.
The discussion above implies that $K$-equivalent $(G,K)$-data $\Psi$ and $\dot\Psi$ give rise to equivalent representations $\kappa (\Psi)$ and $\kappa (\dot\Psi)$.  Theorem \ref{partialeqprob} provides a converse result, as well as a result that describes when generic cuspidal $G$-data yield equivalent representations of $G$. Note that the twisted Levi sequences occurring in
$K$-equivalent $(G,K)$-data are $K$-conjugate.
Therefore, Hypothesis~C($\vec\bG$) holds for the twisted Levi
sequence $\vec\bG$ occuring in one element of
an equivalence class of $(G,K)$-data if and only if it holds for
the twisted Levi sequences that occur in all $(G,K)$-data
in the given class.

The next result establishes that, in a weak sense, elementary transformations, $K$-conjugations and refactorizations commute with each other.

\begin{lemma}\label{GKeqlemone}
Let $\Psi$ and $\dot\Psi$
be $(G,K)$-data. Then the following are equivalent:
\begin{enumerate}
\item $\Psi$ and $\dot\Psi$ are $K$-equivalent.
\item $\dot\Psi$ is an elementary transformation of a $K$-conjugate of a refactorization of $\Psi$.
\item The previous statement remains valid when the terms ``elementary transformation,'' ``$K$-con\-ju\-gate'' and ``refactorization''  are permuted arbitrarily.
\end{enumerate}
\end{lemma}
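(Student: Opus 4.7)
The implications $(3)\Rightarrow(2)\Rightarrow(1)$ are immediate, so I focus on $(1)\Rightarrow(3)$. The plan is to establish two structural facts about the three basic operations (refactorization, $K$-conjugation, elementary transformation): first, each operation type is closed under composition; second, any operation of one type followed by an operation of a different type can be rewritten in the opposite order, possibly with adjusted parameters. Granting these, a straightforward bubble-sort on the length of a $K$-equivalence chain from~(1) moves all operations of each type into contiguous blocks in any prescribed order, and then collapses each block to a single operation, yielding~(3) with identity operations filling in as needed.

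The easy verifications are the composition closures and the commutations involving $K$-conjugation. Closure of $K$-conjugation follows from $\Int (k_1)\circ\Int (k_2)=\Int (k_1k_2)$; closure of elementary transformations follows from transitivity of $[y]=[\dot y]$ and $\rho\simeq\dot\rho$; closure of refactorizations follows directly from Conditions~\textbf{F0}--\textbf{F2} by multiplying the twisting characters $\chi_i$ for the two successive refactorizations. Conjugation by $k\in K$ transports $[y]$ to $k\cdot[y]$ equivariantly and carries the twisting characters $\chi_i$ to ${}^k\chi_i$, so $K$-conjugation commutes, in both orders, with elementary transformations and refactorizations up to relabeling.

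The main obstacle will be commuting a refactorization past an elementary transformation. Suppose $\dot\Psi=(\vec\bG,\dot y,\dot\rho,\vec\phi)$ is an elementary transformation of $\Psi=(\vec\bG,y,\rho,\vec\phi)$ and $\ddot\Psi=(\vec\bG,\dot y,\ddot\rho,\vec{\ddot\phi})$ is a refactorization of $\dot\Psi$ witnessed by twisting characters $\chi_i$. The obstruction is that Conditions~\textbf{F1}--\textbf{F2} are phrased using the subgroups $G^i_{y,r_{i-1}^+}$ and $K^0=G^0_{[y]}$, which a priori depend on $y$. I will invoke Lemma~\ref{threefive}, which yields $G^i_{y,r_{i-1}^+}=G^i_{\dot y,r_{i-1}^+}$ for every $i$, while $K^0$ is visibly the same for $y$ and $\dot y$ since $[y]=[\dot y]$. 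It will follow that the same characters $\chi_i$ witness that $\Psi^\prime:=(\vec\bG,y,\rho\otimes(\chi_0\,|\,K^0),\vec{\ddot\phi})$ is a refactorization of $\Psi$, and $\ddot\Psi$ differs from $\Psi^\prime$ only in having $\dot y$ in place of $y$ and in its $K^0$-representation, which satisfies $\ddot\rho\simeq\dot\rho\otimes(\chi_0\,|\,K^0)\simeq\rho\otimes(\chi_0\,|\,K^0)$; this exhibits $\ddot\Psi$ as an elementary transformation of $\Psi^\prime$. The reverse commutation direction (refactorization first, elementary transformation second) is handled symmetrically by the same identifications.

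With all commutation lemmas and composition closures in place, an induction on the length of a chain realizing~(1) reduces it to length at most three with one operation of each type in the chosen order, proving~(3).
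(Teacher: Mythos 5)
Your proposal is correct and takes essentially the same route as the paper: establish that $K$-conjugation, refactorization, and elementary transformations pairwise commute (up to relabeling) and that each type is closed under composition, then reorder and collapse an arbitrary chain. The paper's own proof is terser — it states the commutation of refactorization with elementary transformation is ``obvious'' — whereas you correctly flag the only non-trivial point (Conditions \textbf{F1}--\textbf{F2} are phrased via the $y$-dependent groups $G^i_{y,r_{i-1}^+}$ and $K^0=G^0_{[y]}$) and resolve it cleanly by invoking Lemma~\ref{threefive} together with $[y]=[\dot y]$.
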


\begin{proof}  Each of the 
Conditions (2) and (3) clearly implies Condition (1). 
Suppose now that $\dot\Psi = {}^k \Psi^\prime$, where $k\in K$ and $\Psi^\prime$ is a refactorization of $\Psi$.  Then $\dot\Psi = {}^k \Psi^\prime$ is a refactorization of ${}^k \Psi$.  It follows that a $K$-conjugate of a refactorization of $\Psi$ is the same as a refactorization of a $K$-conjugate of $\Psi$.
It is obvious that a $K$-conjugate (respectively, refactorization) of an elementary transformation  of $\Psi$ is the same as an elementary transformation of a $K$-conjugate (respectively, refactorization) of $\Psi$.
Our claim follows.
\end{proof}

\subsection*{Compatibility}
Fix a $K$-equivalence class $\xi$ of $(G,K)$-data.
If $\Psi,\dot\Psi\in \xi$ then $K_+(\Psi) = K_+(\dot\Psi)$ and therefore it makes sense to denote the latter groups by $K_+(\xi)$.  (This is a straightforward consequence of the definition of ``$K$-equivalence''  and the fact that the subgroups $K_+(\Psi)$ and $K_+(\dot\Psi)$ are normal subgroups of $K$.)    For simplicity, however, we will abbreviate $K_+(\xi)$ as $K_+$ in the following discussion.

We have defined a character
$\vartheta(\Psi)$ of $K_+$ and shown 
 that the restriction of $\kappa(\Psi)$
to $K_+$ is a multiple of $\vartheta(\Psi)$.  (See Corollary \ref{varthetaisotypy}.)   
The character of the restriction
of $\kappa(\Psi)$ to $K_+$ is equal to the degree of
$\kappa(\Psi)$ times the character $\vartheta(\Psi)$.
A similar statement applies  to the character of the restriction
of $\kappa(\dot\Psi)$ to $K_+$. As the two representations
are equivalent, they have the same character (and degree).
Hence $\vartheta(\Psi)=\vartheta(\dot\Psi)$ and we therefore are justified in denoting the latter characters by $\vartheta (\xi)$.

If $\theta$ is an involution of $G$ then a necessary condition for $\kappa (\Psi)$ to be $\theta$-distinguished is that $\vartheta (\xi)$ must be trivial on $K^\theta_+$.  According to the next result, the condition $\vartheta (\xi)\, |\, K^\theta_+=1$ only depends on the $K$-orbit of $\theta$.

\begin{lemma}\label{varthetaorbitinv}
If $\theta$ and $\theta^\prime$ are involutions of $G$ in the same $K$-orbit then 
$\vartheta (\xi)\, |\, K^\theta_+=1$ if and only if $\vartheta (\xi)\, |\, K^{\theta^\prime}_+=1$.
\end{lemma}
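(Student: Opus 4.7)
The plan is to show that $\vartheta(\xi)$ is invariant under the conjugation action of $K$ on $K_+$, and then transfer between $K^\theta_+$ and $K^{\theta'}_+$ using this invariance. First I would verify that $K_+$ is a normal subgroup of $K$. From the description $K_+ = G_{y,0^+}^0 G_{y,s_0^+}^1 \cdots G_{y,s_{d-1}^+}^d$ and $K = G_{[y]}^0 G_{y,s_0}^1 \cdots G_{y,s_{d-1}}^d$, each Moy--Prasad factor of $K_+$ is normal in the corresponding factor of $K$, so $kK_+k^{-1}=K_+$ for all $k\in K$. In particular, for $\Psi\in\xi$ and $k\in K$, $K_+({}^k\Psi)=\Int(k)\,K_+(\Psi)=K_+$.

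Next I would compute $\vartheta({}^k\Psi)$ directly and compare it to $\vartheta(\xi)$. Since $k\in K$, we have $K({}^k\Psi)=K$, and $K$-conjugation is one of the three basic moves defining $K$-equivalence, so ${}^k\Psi\in\xi$ and consequently $\vartheta({}^k\Psi)=\vartheta(\xi)$. On the other hand, the character analogous to $\hat\phi_i$ for the datum ${}^k\Psi$ must be $\hat\phi_i\circ\Int(k^{-1})$: this is the unique character of $\Int(k)(K^0G^i_{y,0}G_{y,s_i^+})$ that agrees with ${}^k\phi_i=\phi_i\circ\Int(k^{-1})$ on $\Int(k)(K^0G^i_{y,0})$ and is trivial on $\Int(k)(G^i,G)_{y,(r_i^+,s_i^+)}$, which are exactly the characterizing properties of $\hat\phi_i$ applied to ${}^k\Psi$. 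Therefore, for $h\in K_+$,
$$
\vartheta({}^k\Psi)(h)=\prod_{i=0}^d \hat\phi_i(k^{-1}hk)=\vartheta(\Psi)(k^{-1}hk).
$$
Combined with $\vartheta({}^k\Psi)=\vartheta(\Psi)=\vartheta(\xi)$, this yields
$$
\vartheta(\xi)(khk^{-1})=\vartheta(\xi)(h),\qquad k\in K,\ h\in K_+,
$$
i.e., $\vartheta(\xi)$ is $K$-conjugation invariant on $K_+$.

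Finally, writing $\theta'=k\cdot\theta$ with $k\in K$, we have $G^{\theta'}=kG^\theta k^{-1}$ and hence
$$
K^{\theta'}_+=K_+\cap kG^\theta k^{-1}=k(K_+\cap G^\theta)k^{-1}=kK^\theta_+k^{-1},
$$
using the normality of $K_+$ in $K$. Every $h'\in K^{\theta'}_+$ has the form $khk^{-1}$ with $h\in K^\theta_+$, and $\vartheta(\xi)(h')=\vartheta(\xi)(h)$ by the invariance just established. Thus $\vartheta(\xi)\,|\,K^{\theta'}_+=1$ if and only if $\vartheta(\xi)\,|\,K^\theta_+=1$, proving the lemma. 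The only mildly delicate point is the identification of the $\hat\phi_i$ for ${}^k\Psi$ with $\hat\phi_i\circ\Int(k^{-1})$, but this is immediate from the defining conditions of $\hat\phi_i$ recalled in Remark~\ref{hatprop} and the $K$-equivariance of the subgroups involved.
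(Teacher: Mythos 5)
Your proof is correct, but it takes a different route than the paper. The paper's proof is shorter and more direct: it invokes Corollary~\ref{varthetaisotypy} (that $\kappa\,|\,K_+$ is $\vartheta$-isotypic) and, fixing $v$ in the space of $\kappa$, computes $\kappa(hkh^{-1})v=\kappa(h)\vartheta(k)\kappa(h)^{-1}v=\vartheta(k)v$ for $k\in K^\theta_+$; since $\vartheta(k)=1$ by hypothesis, this forces $\vartheta(hkh^{-1})=1$, which is the conclusion. You instead establish the stronger intermediate fact that $\vartheta(\xi)$ is $K$-conjugation-invariant on all of $K_+$, and do so not via $\kappa$ directly but via the identity $\vartheta({}^k\Psi)=\vartheta(\Psi)\circ\Int(k^{-1})$ (read off from the defining properties of $\hat\phi_i$) combined with the well-definedness of $\vartheta$ on the $K$-equivalence class $\xi$. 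Note, though, that this well-definedness is itself proved in the paragraph preceding the lemma by exactly the representation-theoretic comparison of the characters of $\kappa(\Psi)\,|\,K_+$ and $\kappa(\dot\Psi)\,|\,K_+$, so your argument ultimately rests on the same input; it is not more elementary, just routed through the $\hat\phi_i$ rather than through $\kappa$. What your version buys is the reusable observation that $\vartheta(\xi)$ is $K$-invariant as a character of $K_+$; what the paper's version buys is brevity and a single clean appeal to the isotypy. Your treatment of the auxiliary points — normality of $K_+$ in $K$, the identification $K^{\theta'}_+=kK^\theta_+k^{-1}$, and the claim that the $\hat\phi_i$ for ${}^k\Psi$ is $\hat\phi_i\circ\Int(k^{-1})$ — is sound.
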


\begin{proof}
Let $V$ denote the space of $\kappa$ and let $\vartheta = \vartheta(\xi)$.  
Suppose $\theta^\prime= h\cdot \theta$, where $h\in K$. Since $K$ normalizes $K_+$, it is easy to check that $K^{\theta^\prime}_+ = K_+\cap G^{\theta^\prime} = K_+\cap hG^\theta h^{-1} 
= h(K_+\cap G^\theta) h^{-1} = hK^\theta_+ h^{-1}$.  The condition $\vartheta \,|\, K^\theta_+=1$ is equivalent to the condition that $\kappa (k) v=v$ when $v\in V$ and $k\in K^\theta_+$.   Assume $\vartheta\, |\, K^\theta_+ =1$.  If $v\in V$ and $k^\prime = hkh^{-1}\in K^{\theta^\prime}_+$
 then $\kappa (k^\prime) v = \kappa (hkh^{-1})v = \kappa (h)(\vartheta(k) \kappa(h)^{-1}v)= \vartheta (k)v$.  Thus $\vartheta\, |\,K^{\theta^\prime}_+=1$.  Similarly, the latter condition implies $\vartheta\, |\, K^\theta_+=1$.
\end{proof}

Let $\Xi$ denote the set of all $(G,K)$-data and let $\Xi^K$ be the set of all $K$-equivalence classes in $\Xi$.  Fix a $G$-orbit $\Theta$ of involutions of $G$ and recall that $\Theta^K$ denotes the set of $K$-orbits in $\Theta$.
Define two pairings between $\Theta^K$ and $\Xi^K$ by
\begin{equation*}
\begin{split}
\langle \Theta^\prime , \xi\rangle_K &= {\rm dim}\ {\rm Hom}_{K^\theta}(\kappa (\Psi),1),\cr
\langle \Theta^\prime , \xi\rangle_{K_+} &= {\rm dim}\ {\rm Hom}_{K^\theta_+}
(\vartheta (\xi),1) = 
\begin{cases}
1&\text{if $\vartheta(\xi)\, |\, K^\theta_+=1$,}\\
0&\text{otherwise,}
\end{cases}
\end{split}
\end{equation*}
 where $\theta\in \Theta^\prime\in \Theta^K$ and $\Psi\in \xi\in \Xi^K$. 
 
\begin{definition}\label{compatibilitydefs}
 If $\langle \Theta^\prime ,\xi\rangle_K$ is nonzero, we say $\Theta^\prime$ and $\xi$ are \textit{strongly compatible}.  If $\langle \Theta^\prime ,\xi\rangle_{K_+}$ is nonzero, we say
 $\Theta^\prime$ and $\xi$ are \textit{weakly compatible}.  If $\Theta^\prime$ and 
$\xi$ are weakly compatible and $\theta [y] = [y]$ for any (hence all) $\theta\in \Theta^\prime$
 and for any (hence all) $\Psi = (\vec\bG,y,\rho,\vec\phi)\in \xi$ then we say  $\Theta^\prime$
 and $\xi$ are \textit{moderately compatible}.
 \end{definition}

Each type of compatibility defines a correspondence 
 between $\Theta^K$ and $\Xi^K$.   The notion of strong compatibility is most pertinent to our main problem of computing the dimensions of the spaces
 $\Hom_{G^\theta}(\pi(\Psi),1)$,
 where $\theta\in \Theta$ and $\Psi\in\xi\in \Xi^K$.   Since the latter dimension only depends on $\Theta$ and $\xi$, we denote it by $\langle \Theta,\xi\rangle_G$.   Lemma \ref{orbmult} yields the formula
\begin{equation}\label{basicmulteq}
\langle \Theta,\xi\rangle_G = m_K(\Theta)\sum_{\Theta^\prime\in\Theta^K} \langle \Theta^\prime,
\xi\rangle_K,
\end{equation}
  which exhibits the connection between $\langle \Theta,\xi\rangle_G$ and the notion of strong compatibility.
 
 In the remainder of this section, we obtain information about strong compatibility by studying the auxiliary notions of weak and moderate compatibility.
 By definition, moderate compatibility implies weak compatibility.  One of the main technical results of this \paperbook, Proposition \ref{compatorbsum}, says that strong compatibility implies moderate compatibility.
 
\subsection*{Main results on compatibility}
We now state some results about weak, moderate and strong compatibility.  Most of the proofs are deferred until after all of the statements.  We will use many of our standard notations without recapitulating them.  In general, it is safe to assume that the notation 
$\Psi$ designates $(\vec\bG ,y,\rho,\vec\phi)$.  If $\Psi$ is a $(G,K)$-datum then we define
$$\rho^\prime (\Psi) = \rho (\Psi)\otimes \prod_{i=0}^d (\phi_i\, |\, K^0(\Psi)).$$
This is a representation of $K^0(\Psi)$.  A key property of $\rho^\prime(\Psi)$ is that 
$\rho^\prime(\Psi) = \rho^\prime(\dot\Psi)$, for all refactorizations $\dot\Psi$ of $\Psi$.

The next result connects the notions of weak and moderate compatibility with the notions of weak $\theta$-symmetry and $\theta$-symmetry introduced in Section \ref{sec:compatinv}.

\begin{proposition}\label{TFAErefactor} Let $\Theta^\prime\in 
\Theta^K$ and $\xi\in \Xi^K$. 
\begin{enumerate}
\item If $\Theta^\prime$ and $\xi$ are weakly compatible then:  for every
 $\theta \in\Theta^\prime$ there exists a weakly $\theta$-symmetric element $\Psi$ in $\xi$.
\item $\Theta^\prime$ and $\xi$ are moderately compatible precisely when:  for every
 $\theta \in\Theta^\prime$ there exists a $\theta$-symmetric $\Psi$ in $\xi$.
\end{enumerate}
\end{proposition}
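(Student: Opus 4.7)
The plan is to treat the straightforward ``symmetric $\Rightarrow$ compatible'' directions of both parts first and then focus on the harder direction of (1), which contains the substantive content. For the easy directions, suppose $\Psi=(\vec\bG,y,\rho,\vec\phi)\in\xi$ is weakly $\theta$-symmetric. Using $\theta(\vec\bG)=\vec\bG$ together with the explicit construction of $\hat\phi_i$ recalled in Remark~\ref{hatprop}, the hypothesis $\phi_i\circ\theta=\phi_i^{-1}$ propagates to $\hat\phi_i\circ\theta=\hat\phi_i^{-1}$ on $K_+\cap\theta(K_+)$, and hence $\vartheta(k)^2=1$ for every $k\in K_+^\theta$. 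Since $K_+$ is a pro-$p$ group with $p$ odd and $\vartheta$ takes values in $\C^\times$, this forces $\vartheta|K_+^\theta=1$, proving weak compatibility. If $\Psi$ is in addition $\theta$-symmetric, we have $\theta[y]=[y]$, and moderate compatibility follows.

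For the hard direction of (1), fix $\theta\in\Theta'$, choose an arbitrary $\Psi_0\in\xi$, and construct the desired $\Psi\in\xi$ in two stages. Stage~A arranges, by a $K$-conjugation, that $\theta(\vec\bG)=\vec\bG$. Stage~B then adjusts the $\phi_i$'s by refactorization so that $\phi_i\circ\theta=\phi_i^{-1}$, proceeding by downward induction on $i$. In Stage~B, Condition~\textbf{F1}$'''$ of Lemma~\ref{effequiv} allows us to replace $\phi_i$ by $\phi_i\beta$ (with a compensating adjustment in $\phi_{i+1}$ or in $\rho'$) for any quasicharacter $\beta$ of $G^{i+1}$ trivial on $G^{i+1}_{y,r_i^+}$. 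Symmetrization at level $i$ reduces to solving $\beta\cdot(\beta\circ\theta)=\phi_i\cdot(\phi_i\circ\theta)$ for such $\beta$; via Hypothesis~C($\vec\bG$) this descends to an analogous statement for linear characters on Moy--Prasad subquotients, where Proposition~\ref{twodivprop} supplies the $H^1_\theta(\,\cdot\,)=\{1\}$ vanishing and the hypothesis $\vartheta|K_+^\theta=1$ guarantees the global consistency needed to patch the solutions obtained at successive levels. Stage~A is the chief obstacle: since Theorem~\ref{equivtheorem} is not yet available in this chapter, one cannot simply invoke equivalence of the associated supercuspidals. Instead, one exploits the Weyl-group rigidity of Condition~\textbf{GE2} to show that, once the upper terms of the sequence (starting from $\bG^d=\bG$) have been stabilized, each $\theta(\bG^i)$ must already be $K$-conjugate to $\bG^i$; the weak compatibility hypothesis $\vartheta|K_+^\theta=1$ is used precisely to ensure that the conjugating element can be chosen inside $K$ rather than merely inside $G$.

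The hard direction of (2) is a short consequence of (1). Apply (1) to produce a weakly $\theta$-symmetric $\dot\Psi\in\xi$. Refactorizations fix $y$, $K$-conjugations preserve $[y]$ (every element of $K$ stabilizes $[y]$), and elementary transformations preserve $[y]$ by definition, so the image of $y$ in $\cB_{\mathrm{red}}(\bG,F)$ is an invariant of the class $\xi$. Moderate compatibility supplies $\theta[y_0]=[y_0]$ for the initial representative $\Psi_0$, and this property transfers to $\dot\Psi$; hence $\theta[\dot y]=[\dot y]$ and $\dot\Psi$ is $\theta$-symmetric, completing the proof.
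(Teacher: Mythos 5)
The easy verifications (weak/strong symmetry $\Rightarrow$ weak/moderate compatibility) and your reduction of the hard direction of (2) to (1) plus the invariance of $[y]$ under the three moves defining $K$-equivalence are essentially correct and agree with the paper. However, there is a genuine gap in your treatment of the hard direction of~(1), and it is not a small one: the proposed separation into Stage~A (stabilize $\vec\bG$ by $K$-conjugation) followed by Stage~B (symmetrize the $\phi_i$ by refactorization) cannot work as stated, because in the actual argument these two operations are \emph{logically interleaved}, and the order of dependence runs opposite to what you propose. The paper's proof proceeds by downward induction on $i$, and at each level $i$ it does two things \emph{at once}: (a) it modifies the involution (replacing the current $\theta_{i+1}$ by some $\theta_i=j_{i+1}\cdot\theta_{i+1}$ with $j_{i+1}\in J^{i+1}$), and (b) it constructs a refactored quasicharacter $\dot\phi_i$ that is $\theta_i$-symmetric. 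The crucial point, supplied by Lemma~\ref{secondhalf} and Lemma~\ref{auxlemtwo}, is that the relation $\theta_i(\bG^i)=\bG^i$ is not imposed in advance and then handed to Stage~B; it is \emph{deduced} from the existence of a $G^{i+1}$-generic element $\dot\Gamma_i\in\z^{i,*}_{-r_i}$ with $\theta_i(\dot\Gamma_i)=-\dot\Gamma_i$. That generic element only exists once the characters $\dot\phi_d,\dots,\dot\phi_{i+1}$ at higher levels have already been symmetrized, because $\dot\Gamma_i$ realizes $\dot\phi_i^\bullet|G^i_{y,r_i}$ where $\dot\phi_i^\bullet$ is defined in terms of the quotient $\chi_{i+1}=\prod_{\ell>i}\phi_\ell\dot\phi_\ell^{-1}$. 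Thus the chain of logical dependence is: refactor at levels $>i$ $\Rightarrow$ find $\theta_i$-anti-invariant generic element $\Rightarrow$ conclude $\theta_i(\bG^i)=\bG^i$, and the conjugating element $j_{i+1}$ is produced as part of the refactorization step, not a priori.

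Your Stage~A, by contrast, asserts that all the stabilizations $\theta(\bG^i)=\bG^i$ (up to $K$-conjugation) can be achieved \emph{before} touching the characters, by appealing to ``Weyl-group rigidity of Condition~\textbf{GE2}.'' This is not substantiated. Condition~\textbf{GE2} constrains the isotropy group of a residue element $\widetilde{X}^*$ inside the Weyl group, but by itself it gives no mechanism for producing a $K$-conjugation carrying $\theta(\bG^i)$ onto $\bG^i$; and in fact the hypothesis $\vartheta|K_+^\theta=1$ enters the paper's argument not as an abstract consistency condition on patching but quite concretely at each level: it guarantees $\hat{\dot\phi}_i^\bullet|J_+^{i+1,\theta_{i+1}}=1$, which (via the Moy--Prasad isomorphism, Lemma~\ref{expequivariance}, and the decomposition $\Gamma_i\in\gJ_+^{i+1\bullet}+(\g^{i+1,\theta_{i+1}})^\bullet$) is what permits finding the $J^{i+1}$-conjugate of $\dot\Gamma_i'$ that is $\theta_i$-anti-invariant. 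You would also need to explain why the conjugating element at each step lies in $J^{i+1}\subset K$; this is automatic in the paper's construction (Lemma~\ref{secondhalf} produces $j_{i+1}\in J^{i+1}$ explicitly) but is precisely the content you say weak compatibility supplies ``to ensure that the conjugating element can be chosen inside $K$,'' without providing a derivation. In short, the two-stage plan would need a genuinely new idea to make Stage~A go through independently, and the interleaving is not a convenience but a structural necessity of the argument.
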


\begin{corollary}\label{quaddistnecessary}
If $\Theta^\prime\in \Theta^K$ and $\xi\in \Xi^K$ are strongly compatible,
then for all $\theta\in \Theta^\prime$ there exists $\Psi\in \xi$ such that 
$\Psi$ is weakly $\theta$-symmetric.  
For such $\theta$ and $\Psi$, there exists a quadratic character $\chi$ of $K^0(\Psi)^\theta$ such that\break $\Hom_{K^{0}(\Psi)^\theta}(\rho^\prime(\Psi),\chi)$ is nonzero.  Furthermore, the latter space is nonzero whenever $\Psi$ is replaced by a refactorization.
\end{corollary}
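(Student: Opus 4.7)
The plan is to assemble the corollary from four ingredients already established in the excerpt: (i) the $\vartheta(\xi)$-isotypy of $\kappa(\Psi)\,|\,K_+$ (Corollary~\ref{varthetaisotypy}), which converts strong compatibility into weak compatibility; (ii) Proposition~\ref{TFAErefactor}(1), which converts weak compatibility into the existence of a weakly $\theta$-symmetric representative of $\xi$; (iii) Proposition~\ref{quaddistprop}, which moves quadratic distinguishedness from $\kappa$ down to $\rho$; and (iv) the refactorization-invariance of $\rho^\prime$, which is part of its very definition.

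Concretely, I would begin by fixing $\theta\in\Theta^\prime$ and any $\Psi_0\in\xi$, and choose a nonzero $\lambda\in\Hom_{K^\theta}(\kappa(\Psi_0),1)$. Evaluating $\lambda$ on vectors moved by $K_+^\theta$ and using Corollary~\ref{varthetaisotypy} forces $\vartheta(\xi)\,|\,K_+^\theta=1$, so $\Theta^\prime$ and $\xi$ are weakly compatible. Proposition~\ref{TFAErefactor}(1) then produces a weakly $\theta$-symmetric $\Psi\in\xi$, which is the assertion (1). Since $K$-equivalent data yield equivalent $\kappa$'s, we still have $\Hom_{K^\theta}(\kappa(\Psi),1)\ne 0$, that is, $\kappa(\Psi)$ is quadratically distinguished (with trivial quadratic character).

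Next, I would invoke Proposition~\ref{quaddistprop}: weak $\theta$-symmetry of $\Psi$ permits iteration of the $\partial$-operator, yielding a quadratic character $\mu$ of $K^0(\Psi)^\theta$ with $\Hom_{K^0(\Psi)^\theta}(\rho(\Psi),\mu)\ne 0$. To convert this into a statement about $\rho^\prime(\Psi)=\rho(\Psi)\otimes\prod_{i=0}^d(\phi_i\,|\,K^0(\Psi))$, I would use that each $\phi_i\circ\theta=\phi_i^{-1}$, so each restriction $\phi_i\,|\,K^0(\Psi)^\theta$ squares to $1$ and is therefore a quadratic character. Setting
$$\chi \;=\; \mu\cdot\prod_{i=0}^{d}\bigl(\phi_i\,|\,K^0(\Psi)^\theta\bigr),$$
one obtains a quadratic character of $K^0(\Psi)^\theta$, and the tautological identification
$$\Hom_{K^0(\Psi)^\theta}\!\bigl(\rho^\prime(\Psi),\chi\bigr)\;=\;\Hom_{K^0(\Psi)^\theta}\!\bigl(\rho(\Psi),\mu\bigr)$$
gives the desired nonvanishing.

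Finally, for the refactorization statement: by Condition~\textbf{F2} (and the elementary computation showing $\dot\rho\otimes\prod_i(\dot\phi_i\,|\,K^0)=\rho\otimes\prod_i(\phi_i\,|\,K^0)$), the representation $\rho^\prime$ is unchanged by refactorization, as is the subgroup $K^0$, so the same $\chi$ continues to witness nonvanishing for any refactorization $\dot\Psi$ of $\Psi$. I anticipate no serious obstacle here; the entire argument is bookkeeping on top of earlier results. The only point demanding care is verifying that the product $\mu\cdot\prod_i(\phi_i\,|\,K^0(\Psi)^\theta)$ remains quadratic, which follows immediately from weak $\theta$-symmetry of each $\phi_i$.
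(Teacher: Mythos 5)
Your proposal is correct and takes essentially the same route as the paper: Proposition~\ref{TFAErefactor}(1) to obtain a weakly $\theta$-symmetric representative, Proposition~\ref{quaddistprop} to get a quadratic character against $\rho(\Psi)$, the observation that each $\phi_i\,|\,K^0(\Psi)^\theta$ is quadratic by weak $\theta$-symmetry so that twisting by $\prod_i(\phi_i\,|\,K^0(\Psi)^\theta)$ preserves quadraticity and identifies the Hom-space for $\rho$ with the one for $\rho^\prime$, and refactorization-invariance of $\rho^\prime$ to conclude. Your extra steps (unwinding Corollary~\ref{varthetaisotypy} to see that strong compatibility forces $\vartheta(\xi)\,|\,K_+^\theta=1$, and noting that $\kappa(\Psi)\simeq\kappa(\Psi_0)$ across $K$-equivalent data) are just making explicit what the paper leaves implicit.
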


\begin{proof}[Proof of Corollary \ref{quaddistnecessary}]
Fix $\theta\in \Theta^\prime$.  Since $\Theta^\prime$ and $\xi$ are strongly compatible they must be weakly compatible and we may use Proposition \ref{TFAErefactor}(1) to choose a weakly $\theta$-symmetric element $\Psi\in \xi$.  Strong compatibility and Proposition \ref{quaddistprop} imply that there exists a quadratic character $\chi$ of $K^0(\Psi)^\theta$ such that $\Hom_{K^0(\Psi)^\theta} (\rho (\Psi),\chi)$ is nonzero.  Since $\Psi$ is weakly $\theta$-symmetric, the 
character $\chi^\prime = \chi\otimes (\prod_{i=0}^d (\phi_i\,|\,K^0(\Psi)^\theta ))$ is also quadratic and 
$\Hom_{K^0(\Psi)^\theta}( \rho^\prime(\Psi),\chi^\prime)$ is nonzero.  
Hence our claim is true for $\Psi$.  It is also true for all refactorizations of $\Psi$ 
since $\rho^\prime(\Psi)$ is invariant under refactorizations.  
\end{proof}

Recall that  $\Theta^\prime$ and $\xi$ are weakly compatible when $\vartheta (\xi)\,|\, K^\theta_+=1$
 for some (hence all) $\theta\in \Theta^\prime$.   If $\Psi\in \xi$ and  $g\in K^0_+(\Psi)$, we have the simplified formula
$$
\vartheta (\xi) (g)= \prod_{i=0}^d \phi_i (g)
$$
 for the value of $\vartheta (\xi)(g)$.  We now give an alternate 
characterization of moderate compatibility. 

\begin{proposition}\label{oldLemmaA} 
Let $\Theta^\prime\in \Theta^K$ and  $\Psi =(\vec\bG , y, \rho,\vec\phi) \in \xi\in \Xi^K$. 
The following are equivalent.
\begin{enumerate}
\item $\Theta^\prime$ and $\xi$ are moderately compatible.
\item There exists $\theta\in \Theta^\prime$ such that 
$\theta (\bG^0) = \bG^0$, $\theta [y] = [y]$, and $\vartheta (\xi ) 
\,|\,K_+^0(\Psi)^\theta =1$.
\item There exists $\theta\in \Theta^\prime$ such that 
$\theta (\vec\bG) = \vec\bG$, $\theta [y] = [y]$, and $\vartheta (\xi ) 
\,|\,K_+^0(\Psi)^\theta =1$.
\end{enumerate}
\end{proposition}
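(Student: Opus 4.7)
The plan is to establish the cycle $(3) \Rightarrow (2) \Rightarrow (1) \Rightarrow (3)$. The implication $(3) \Rightarrow (2)$ is immediate, since $\bG^0$ is a term of $\vec\bG$ and thus $\theta$-stability of $\vec\bG$ forces $\theta$-stability of $\bG^0$.

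For $(1) \Rightarrow (3)$, I would invoke Proposition~\ref{TFAErefactor}(2) to produce, for some $\theta_0 \in \Theta'$, a $\theta_0$-symmetric datum $\Psi^* = (\vec\bG^*, y^*, \rho^*, \vec\phi^{\,*}) \in \xi$; in particular $\theta_0(\vec\bG^*) = \vec\bG^*$ and $\theta_0[y^*] = [y^*]$. Because refactorizations and elementary transformations leave the twisted Levi sequence and the image of $y$ in the reduced building alone (Lemma~\ref{GKeqlemone}), the only operation along a chain of $K$-equivalences that alters these is $K$-conjugation, so $\vec\bG^* = {}^k\vec\bG$ and $[y^*] = k \cdot [y]$ for some $k \in K$. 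The involution $\theta = k^{-1} \cdot \theta_0$ then lies in $\Theta'$, satisfies $\theta(\vec\bG) = \vec\bG$ and $\theta[y] = [y]$, and inherits the weak compatibility condition $\vartheta(\xi) \,|\, K_+^\theta = 1$ via Lemma~\ref{varthetaorbitinv}, which restricts to give $\vartheta(\xi) \,|\, K_+^0(\Psi)^\theta = 1$.

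The bulk of the work is $(2) \Rightarrow (1)$, which I would carry out by constructing a single $\theta^*$-symmetric datum in $\xi$ for some $\theta^* \in \Theta'$; this suffices for moderate compatibility, as one sees from the backward direction of Proposition~\ref{TFAErefactor}(2) combined with the orbit invariance of weak compatibility (Lemma~\ref{varthetaorbitinv}). The construction splits into two stages. First, I would produce $k \in K^0(\Psi) = G^0_{[y]}$ such that $\theta^* = k \cdot \theta$ stabilizes the entire sequence $\vec\bG$; because $K^0(\Psi)$ normalizes $\bG^0$ and fixes $[y]$, this $K^0$-conjugation preserves all three hypotheses of (2). Second, with $\theta^*(\vec\bG) = \vec\bG$ in hand, I would invoke the refactorization machinery of Section~\ref{sec:refactorization} to refactor the $k$-conjugate of $\Psi$ into a weakly $\theta^*$-symmetric datum $\dot\Psi \in \xi$; since refactorization preserves $y$ and $\theta^*[y] = [y]$, this $\dot\Psi$ is in fact $\theta^*$-symmetric.

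The main obstacle is the first stage of $(2) \Rightarrow (1)$: upgrading the single-level stability $\theta(\bG^0) = \bG^0$ to the full stability $\theta^*(\vec\bG) = \vec\bG$ for some $K^0(\Psi)$-conjugate $\theta^*$ of $\theta$, using only the aggregated character hypothesis $\vartheta(\xi) \,|\, K_+^0(\Psi)^\theta = 1$. The crucial inputs should be Hypothesis~C($\vec\bG$), which realizes each $\phi_i \,|\, G^i_{y,r_i}$ by a generic element $X_i^* \in \z^{i,*}_{-r_i}$, together with Yu's genericity conditions \textbf{GE1} and \textbf{GE2}, which identify $\bG^i$ as the connected centralizer of the image of $X_i^*$. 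Transferring the character condition on $K_+^0(\Psi)^\theta$ to the Lie algebra via the Moy-Prasad isomorphism and decomposing along the depth filtration of $\g^0_{y,0^+}$, I would aim to construct the required $K^0(\Psi)$-conjugation inductively on $i$, at each stage using the $\theta$-stability of $\bG^i$ already obtained to produce an element of $G^0_{[y]}$ that conjugates $\theta(\bG^{i+1})$ to $\bG^{i+1}$.
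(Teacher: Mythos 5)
Your implications $(3)\Rightarrow(2)$ and $(1)\Rightarrow(3)$ are fine and essentially the paper's argument (Proposition~\ref{TFAErefactor}(2) plus Lemma~\ref{GKeqlemone}, with Lemma~\ref{varthetaorbitinv} and invariance of the conditions in (3) under refactorization). The gap is in $(2)\Rightarrow(1)$, which is the real content of the proposition. The paper proves it by Lemma~\ref{extra}: under the hypotheses of (2), the involution $\theta$ \emph{itself} (no $K^0$-conjugation is needed) stabilizes all of $\vec\bG$, and $\Psi$ admits a weakly $\theta$-symmetric refactorization. The induction there runs \emph{top-down}: one first replaces $\phi_d$ by a $\theta$-symmetric $\dot\phi_d$ (Remark~\ref{extendibility}), forms the corrected character $\xi=\phi_{d-1}(\phi_d\dot\phi_d^{-1})\,|\,G^{d-1}$, which is $G$-generic by Lemma~\ref{generictrivia}, and then invokes the rigidity statement Lemma~\ref{eqlevi} --- a $G$-generic quasicharacter of a twisted Levi subgroup, known on small subgroups of $G^0$, determines that subgroup as the stabilizer of its realizing central generic element --- to conclude $\theta(\bG^{d-1})=\bG^{d-1}$, and so on downward; Hypothesis~C enters through Lemmas~\ref{depthlemma}, \ref{depth} and \ref{centralrep}. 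Your proposed induction runs in the opposite direction: you want to use $\theta$-stability of $\bG^i$ to conjugate $\theta(\bG^{i+1})$ onto $\bG^{i+1}$. There is no mechanism for this: the generic element realizing $\phi_i\,|\,G^i_{y,r_i}$ lies in $\z^{i,*}$ and identifies the \emph{smaller} member $\bG^i$ of the pair (as a stabilizer inside $\bG^{i+1}$), not the larger one, and every rigidity result available in the paper (Lemmas~\ref{auxlemone}, \ref{auxlemtwo}, \ref{secondhalf}, \ref{eqlevi}) presupposes that the ambient, larger group is already $\theta$-stable. So your stage one, as sketched, cannot be carried out.

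Your stage two also hides the hard point rather than resolving it. You cannot simply ``invoke the refactorization machinery'' once $\theta^*(\vec\bG)=\vec\bG$: Lemma~\ref{firsthalf} (and the proof of Proposition~\ref{TFAErefactor}) requires $\vartheta$-triviality on $G^{i,\theta}_{y,r_{i-1}^+}$, respectively on $J^{i+1,\theta}_+$, for \emph{every} $i$ --- that is, essentially the full weak-compatibility condition $\vartheta(\xi)\,|\,K_+^\theta=1$ --- whereas hypothesis (2) supplies only the $G^0$-part $\vartheta(\xi)\,|\,K_+^0(\Psi)^\theta=1$. Upgrading the $G^0$-level condition to the conditions needed at each level (via Proposition~\ref{twodivprop}, Lemma~\ref{depthlemma} and Lemma~\ref{depth}, together with the successive $\theta$-symmetric corrections $\dot\phi_j$ that strip the tail of the product $\vartheta$) is exactly what makes the statement nontrivial --- this is the sense in which ``moderate compatibility can be detected at the level of $G^0$'' --- and it is precisely what Lemma~\ref{extra} accomplishes. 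Without an argument of this kind, both stages of your $(2)\Rightarrow(1)$ remain unproved.
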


We remark that 
the equivalence of (1) and
(2) is saying that moderate
compatibility may be detected at the level of
$G^0$: there must be a $\theta\in \Theta^\prime$
that stabilizes both $G^0$ and $[y]$, and
furthermore the restrictions of the quasicharacters
$\prod_{i=0}^d \phi_i\circ\theta\,|\, G^0$ and
$\prod_{i=0}^d \phi_i^{-1}\,|\, G^0$
to the group $G_{y,0^+}^0$ must agree. (To see this,
we use Proposition~\ref{twodivprop}.)

The first two conditions in Proposition~\ref{oldLemmaA}(3) imply 
that all of the groups  in Yu's construction (for example, the $J^i$'s and $K^i$'s) 
are $\theta$-stable.
We also remark that the conditions in (3) are invariant under 
refactorizations and elementary transformations.  In other words,  
Proposition~\ref{oldLemmaA} defines a relation between involutions 
$\theta$ in $\Theta^\prime$ and the equivalence classes $[\Psi]$ in $\xi$ 
under the equivalence relation defined by refactorizations and elementary 
transformations (but not $K$-conjugation).  If $\theta$ and $[\Psi]$ are 
related and $k\in K$ then $k\cdot\theta$ and $[{}^k\Psi]$ are also related.

\begin{proposition}\label{oldLemmasBandC} 
Assume  $\Theta^\prime,\Theta^{\prime\prime}\in \Theta^K$.  
Suppose $\theta\in\Theta^\prime$, $\Psi = (\vec\bG,y,\rho,\vec\phi)\in \xi\in \Xi^K$ 
and some refactorization of $\Psi$ is weakly $\theta$-symmetric.  
\begin{enumerate}
\item If $\Theta^\prime$ and $\xi$ are weakly compatible and  
$\Theta^{\prime\prime}$ and $\xi$ are also weakly compatible then 
there exists $g\in G$ such that $g\theta(g)^{-1}\in G^0$ and $g\cdot \theta \in 
\Theta^{\prime\prime}$.
\item  If $\Theta^\prime$ and $\xi$ are moderately compatible then  
$\Theta^{\prime\prime}$ and $\xi$ are also moderately compatible precisely when there exists $g\in G$ such that $g\theta(g)^{-1}\in K^0$ and $g\cdot \theta \in \Theta^{\prime\prime}$.
\end{enumerate}
\end{proposition}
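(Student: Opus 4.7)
The plan is to derive both parts from Proposition~\ref{TFAErefactor} and the $K$-equivalence calculus of Lemma~\ref{GKeqlemone}, with the main additional work being the passage from a normalizer condition to membership in $G^0$.

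For part~(1), I would fix any $\theta^{\prime\prime}\in \Theta^{\prime\prime}$ and invoke Proposition~\ref{TFAErefactor}(1), applied to the weakly compatible pair $(\Theta^{\prime\prime},\xi)$, to obtain a weakly $\theta^{\prime\prime}$-symmetric $\dot\Psi\in \xi$. Since $\dot\Psi$ and $\Psi$ are $K$-equivalent, Lemma~\ref{GKeqlemone} writes $\dot\Psi$ as an elementary transformation of ${}^k\Psi^*$ for some $k\in K$ and refactorization $\Psi^*$ of $\Psi$. Refactorizations and elementary transformations leave $\vec\bG$ untouched, so the $\vec\bG$-component of $\dot\Psi$ is ${}^k\vec\bG$, and weak $\theta^{\prime\prime}$-symmetry then reads as the fact that $\theta_1:=k^{-1}\cdot\theta^{\prime\prime}\in \Theta^{\prime\prime}$ stabilizes $\vec\bG$. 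Combined with $\theta(\vec\bG)=\vec\bG$ from the hypothesis on $\Psi$, picking $g\in G$ with $g\cdot\theta=\theta_1$ and computing $\theta_1=\Int(g\theta(g)^{-1})\circ\theta$ places $h:=g\theta(g)^{-1}$ in $N_G(\vec\bG)$. The hard part will be replacing $g$ so as to land $h$ inside $G^0\subseteq N_G(\vec\bG)$; my plan is to use the involution relation $h\theta(h)\in Z$ together with the $\theta$-stability of $\bG^0$, the finiteness of $N_G(\bG^0)/G^0$, and the constraint that $h$ normalize each $\bG^i$ in the sequence to realize $h$ modulo $Z$ in $G^0$.

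For part~(2), the forward direction refines the argument by invoking Proposition~\ref{TFAErefactor}(2) so that $\dot\Psi$ is in fact $\theta^{\prime\prime}$-symmetric; the extra condition $\theta^{\prime\prime}[\dot y]=[\dot y]$ transports to $\theta_1[y]=[y]$, so $h$ additionally fixes $[y]$ and, together with $h\in G^0$ from part~(1), lies in $G^0_{[y]}=K^0$. For the converse, I would set $\theta_1:=g\cdot\theta\in \Theta^{\prime\prime}$ and $h:=g\theta(g)^{-1}\in K^0$, observe that moderate compatibility of $(\Theta^\prime,\xi)$ gives $\theta[y]=[y]$ so that the weakly $\theta$-symmetric refactorization $\Psi^*$ of $\Psi$ supplied by the hypothesis is in fact fully $\theta$-symmetric, and verify that ${}^h\Psi^*\in \xi$ is $\theta_1$-symmetric. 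The three checks are: ${}^h\vec\bG=\vec\bG$ since $h\in K^0\subset G^0\subset\bigcap_i N_G(\bG^i)$; $\theta_1[h\cdot y]=[h\cdot y]$ because $h\cdot[y]=[y]$ and $\theta[y]=[y]$, with the involution relation $h\theta(h)\in Z$ killing the central contribution in the reduced building; and ${}^h\phi_i^*=\phi_i^*$ since quasicharacters of $G^i$ factor through the abelianization, so weak $\theta_1$-symmetry of the ${}^h\phi_i^*$ follows from $\theta_1=\Int(h)\circ\theta$ and the weak $\theta$-symmetry of the original $\phi_i^*$. Proposition~\ref{TFAErefactor}(2) then concludes moderate compatibility of $(\Theta^{\prime\prime},\xi)$.
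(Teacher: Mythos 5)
Your reduction to the case where $\Psi$ itself is weakly $\theta$-symmetric, the use of Proposition~\ref{TFAErefactor} and Lemma~\ref{GKeqlemone} to extract a second weakly symmetric involution $\theta_1\in\Theta^{\prime\prime}$, and the observation that $h:=g\theta(g)^{-1}$ lands in $N_G(\vec\bG)$ with $h\theta(h)\in Z$, are all correct and match the setup of the paper's proof. Your part~(2) converse (checking $\theta_1$-symmetry of ${}^h\Psi^*$) is also essentially the paper's argument.

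The genuine gap is exactly where you flag it: passing from $h\in N_G(\vec\bG)$ to $h\in G^0$. Your proposed plan — combining $h\theta(h)\in Z$ with the $\theta$-stability of $\bG^0$, the finiteness of $N_G(\bG^0)/G^0$, and the requirement that $h$ normalize each $\bG^i$ — will not do it, because none of those facts sees the quasicharacters at all, and mere normalizer membership plus the twisted-involution relation simply does not force $h$ into $G^0$ (or into $G^0$ modulo anything). The content of part~(1) is that the $G^{i+1}$-\emph{genericity} of the $\phi_i$ rigidifies the situation: if both $\theta$ and $h\cdot\theta$ stabilize the data through level $i+1$, then $h$ cannot move the generic coset $\Gamma_i+\g^{i,*}_{(-r_i)^+}$, and Lemma~\ref{auxlemone} (built on Yu's Lemma~8.3) forces the relevant element into $G^i$. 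The paper's proof therefore descends through the twisted Levi sequence $\bG^0\subsetneq\cdots\subsetneq\bG^d$ one step at a time, at each stage modifying $g$ by an element $j_{i+1}\in J^{i+1}$; the input at stage $i$ is weak compatibility (giving $\vartheta\,|\,K_+^{\theta_i}=1$, hence $\hat\phi_i\,|\,J^{i+1,\theta_i}_+=1$), translated via Lemma~\ref{expequivariance} and Lemma~19.1 of \cite{HC} into the statement that $\hat\beta_i=\Gamma_i+\gJ_+^{i+1\,\bullet}$ meets $(\g^{i+1,\theta_i})^\bullet$, and then Lemma~8.6 of \cite{Y} rewrites $\hat\beta_i$ as $\Ad^*(J^{i+1})(\Gamma_i+\g^{i,*}_{y,(-r_i)^+})$ so that a suitable $j_{i+1}$ can be extracted and Lemma~\ref{auxlemone} applied. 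Your sketch contains none of this machinery, so as written the key step is missing and the abstract normalizer argument you gesture at would fail. Because part~(2) (forward direction) rests on part~(1), the gap propagates there as well, even though the extra step (using $\theta_1[y]=[y]$ to promote $h\in G^0$ to $h\in G^0_{[y]}=K^0$) is fine.
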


\begin{remark}\label{remoldLemBandC}
 If $H^1_\theta(K^0)$ is trivial then Proposition \ref{oldLemmasBandC} implies that if
  $\Theta^\prime$ and $\xi$ are moderately compatible then $\Theta^\prime$ is the only 
element of $\Theta^K$ that is moderately compatible with $\xi$.
\end{remark}

\subsection*{Auxiliary lemmas}
In general, if $\cG$ is a topological group, we will let $\cG_{{\rm der}}$ denote
its derived group, that is, the closed subgroup of $\cG$ generated by all commutators.

\begin{definition}\label{defalphasymmetry}
If $\phi$ is a quasicharacter of a topological group $\cG$ and $\theta$ is an automorphism of $\cG$ of exponent two, then we say $\phi$ is  $\theta$-\textit{symmetric} if $\phi  (\theta (g)) = \phi (g)^{-1}$, for all $g\in \cG$.
\end{definition}

To prove Proposition \ref{TFAErefactor}, we assume we are given an involution $\theta$ of $G$ and a weakly compatible $(G,K)$-datum $\Psi$ and we show that we can replace the given $(\rho,\vec\phi)$ with a weakly $\theta^\prime$-symmetric refactorization $(\dot\rho,\vec{\dot\phi})$ for some $\theta^\prime$ in the $K$-orbit of $\theta$.  We construct $\dot\phi_d,\dots ,\dot\phi_0,\dot\rho$ recursively  by repeatedly applying the next two lemmas in sequence.

\begin{lemma}\label{firsthalf}
Fix a $(G,K)$-datum $\Psi$ and an integer $i\in \{\,0,\dots ,d\,\}$, where $d$ is the 
degree of $\Psi$.  
 Suppose that $\theta$ is an involution of $G$ such that $G^i,\dots , G^d$ are 
$\theta$-stable and $\vartheta (\Psi)\, |\, G_{y,r_{i-1}^+}^{i,\theta}=1$.  
Assume that a quasicharacter  $\dot\phi_j$ of $G^j$ has already been defined, satisfies 
Conditions {\bf F0} and {\bf F1} and is $\theta$-symmetric  for all 
$j\in \{\, i+1,\dots , d\,\}$.
Then there exists a quasicharacter $\dot\phi_i$ of $G^i$ that satisfies 
Conditions {\bf{F0}} and {\bf{F1}} and is $\theta$-symmetric.
\end{lemma}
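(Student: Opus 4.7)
The plan is to start with the natural candidate $\psi_0 := \phi_i\chi_{i+1}$, viewed as a quasicharacter of $G^i$ by pulling back $\chi_{i+1}$ from $G^{i+1}$. This $\psi_0$ satisfies Condition \textbf{F1} by construction. Condition \textbf{F0} is vacuous for $i<d$, and for $i=d$ with $\phi_d = 1$ one simply takes $\dot\phi_d = 1$ (this is trivially $\theta$-symmetric and satisfies \textbf{F1}); otherwise $\chi_{d+1} = 1$ and the argument below applies. The remaining task is to modify $\psi_0$ by multiplication with a quasicharacter $\eta$ of $G^i$ trivial on $H_+ := G^i_{y,r_{i-1}^+}$, so that $\dot\phi_i := \psi_0 \eta$ becomes $\theta$-symmetric.

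First I would exploit the induction hypothesis $\dot\phi_j \circ \theta = \dot\phi_j^{-1}$ for $j > i$. The $\dot\phi_j$'s cancel pairwise in the product $\chi_{i+1}\cdot(\chi_{i+1}\circ\theta)$, yielding
\[
\psi_0 \cdot (\psi_0\circ\theta) \;=\; \lambda\cdot (\lambda\circ\theta)
\qquad\text{on } G^i,
\quad\text{where } \lambda := \prod_{j=i}^d \phi_j\bigm|_{G^i}.
\]
Next, for each $j<i$ the concave-function description of $(G^j,G)_{y,(r_j^+,s_j^+)}$ together with the inequalities $r_j \le r_{i-1}$ and $s_j < r_{i-1}$ gives $G^i_{y,r_{i-1}^+} \subset (G^j,G)_{y,(r_j^+,s_j^+)}$, on which $\hat\phi_j$ is trivial by construction. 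Combined with the fact that $\hat\phi_j = \phi_j$ on $G^i \subset G^j$ for $j \ge i$, this implies $\vartheta(\Psi)|_{H_+} = \lambda|_{H_+}$, so the standing hypothesis becomes $\lambda|_{H_+^\theta} = 1$, where $H_+^\theta = G^{i,\theta}_{y,r_{i-1}^+}$.

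The $\theta$-symmetry of $\dot\phi_i = \psi_0\eta$ is equivalent to $\eta\cdot(\eta\circ\theta) = (\lambda\lambda^\theta)^{-1}$. The naive choice $\eta = \lambda^{-1}$ satisfies this equation but typically fails triviality on $H_+$. The remedy is to write $\eta = \lambda^{-1}\kappa$ where $\kappa$ is a $\theta$-symmetric quasicharacter of $G^i$ with $\kappa|_{H_+} = \lambda|_{H_+}$; then $\kappa\cdot\kappa^\theta = 1$ preserves the equation, and the matching on $H_+$ ensures $\eta|_{H_+}=1$. So everything boils down to constructing such a $\kappa$.

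The main obstacle is producing $\kappa$, and my plan is to descend to the abelianization $T^i$ of $G^i$, through which every quasicharacter of $G^i$ factors. On $T^i$, the $\theta$-symmetric characters are precisely those trivial on the subgroup $B^i := \{\,t\,\theta(t) \mid t\in T^i\,\}$. The image $\bar H_+$ of $H_+$ in $T^i$ is a pro-$p$ closed subgroup, hence strongly $2$-divisible by Proposition \ref{twodivprop}; unique square roots give $\bar H_+ \cap B^i = (\bar H_+)^\theta$, and a cocycle-splitting argument (using that $H^1_\theta$ vanishes on the relevant $\theta$-stable Moy-Prasad subgroup containing $H_+\cdot\theta(H_+)$, again by Proposition \ref{twodivprop}) shows that the image of $H_+^\theta$ surjects onto $(\bar H_+)^\theta$. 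Thus $\lambda|_{H_+^\theta}=1$ forces the character $\bar\lambda|_{\bar H_+}$ to agree with the trivial character of $B^i$ on their intersection, so the two glue to a single character of $\bar H_+\cdot B^i$, which by Pontryagin duality extends to a character of $T^i$; pulling back to $G^i$ gives the required $\kappa$ and completes the construction of $\dot\phi_i$.
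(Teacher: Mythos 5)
Your overall strategy matches the paper's: take the natural candidate $\psi_0 = \phi_i\chi_{i+1}$ (the paper calls it $\dot\phi_i^\circ$), check $\vartheta(\Psi)$ agrees with $\lambda = \prod_{j\geq i}\phi_j$ on $H_+ := G^i_{y,r_{i-1}^+}$, pass to the abelianization $\cG = G^{i,ab}$, and glue a character that is $\theta$-symmetric (trivial on the image $\cA$ of $\{g\theta(g)\}$) with one prescribed on the image $\cB$ of $H_+$, then extend by Pontryagin duality. Your identity $\cA\cap\cB = \cB^\theta$ via unique square roots is correct and is exactly what the paper uses.

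The gap is in the step where you assert that the image of $H_+^\theta$ in $\cG$ is all of $\cB^\theta$, and appeal to a cocycle-splitting argument using Proposition~\ref{twodivprop}. That cohomological argument requires the group one is working in to be $\theta$-stable, but $H_+ = G^i_{y,r_{i-1}^+}$ is $\theta$-stable only when $\theta[y]=[y]$, which is \emph{not} among the hypotheses here. Concretely, $H_+^\theta = (H_+\cap\theta(H_+))^\theta$, so its image lies in the (generally proper) subgroup $\bar D$ of $\cB$, where $D = H_+\cap\theta(H_+)$; and the correction element you produce from vanishing of $H^1_\theta$ on some larger $\theta$-stable Moy--Prasad group need not land back inside $H_+$, so there is no reason $H_+^\theta$ should surject onto $\cB^\theta$. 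The paper avoids this entirely: it first uses Proposition~\ref{twodivprop} (applied to the genuinely $\theta$-stable group $D\cap G^i_{\mathrm{der}}$) to show $\psi_0\cdot(\psi_0\circ\theta)$ is trivial on $D$, then invokes Lemma~\ref{depthlemma} to spread this vanishing from $D = G^i_{y,r_{i-1}^+}\cap G^i_{\theta(y),r_{i-1}^+}$ to all of $G^i_{r_{i-1}^+}$ — in particular to all of $H_+$. With $\psi_0(\theta(k)) = \psi_0(k)^{-1}$ now known for \emph{every} $k\in H_+$ (not just $k\in H_+^\theta$), one picks $k\in H_+$ with $\bar k\in\cA$ and gets $\psi_0(k) = \psi_0(\theta(k)) = \psi_0(k)^{-1}$, hence $\psi_0(k)^2 = 1$, and then $\psi_0(k)=1$ because $k$ lies in a pro-$p$ group with $p$ odd. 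That depth-lemma step is the key ingredient your proposal is missing; without it, the gluing compatibility $\bar\lambda|_{\cA\cap\cB}=1$ does not follow from $\lambda|_{H_+^\theta}=1$.
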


\begin{proof}
Our claim is trivial if $i=d$ and $\phi_d=1$, so we assume we are not in this case. 
  Equivalently, we assume $r_{i-1}<r_i$.

If $i<d$, define a quasicharacter $\chi_{i+1}$ of $G^{i+1}$ by
$\chi_{i+1}=\prod_{j=i+1}^d \phi_j\dot\phi_j^{-1}\,|\, G^{i+1}$.
If $i=d$, let  $\chi_{d+1}$ be the trivial character of $G$.
Define a quasicharacter $\dot\phi_i^\circ$  of $G^i$ by 
$\dot\phi_i^\circ(g)=\phi_i(g)\chi_{i+1}(g)$, $g\in G^i$.

Note that $\prod_{j=i+1}^d \dot\phi_j\,|\, 
G^{i,\theta}_{y,r_{i-1}^+}=1$, because
of $\theta$-symmetry of each $\dot\phi_j$, together with the fact that
$p$ is odd. Hence $\dot\phi_i^\circ\,|\, G_{y,r_{i-1}^+}^{i,\theta}=
\vartheta(\Psi)\,|\, G_{y,r_{i-1}^+}^{i,\theta}=1$.
Using Proposition~\ref{twodivprop}, this implies that
$\dot\phi_i^\circ\circ\theta$ and $(\dot\phi_i^{\circ})^{-1}$
agree on $G_{y,r_{i-1}^+}^i\cap G_{\theta(y),r_{i-1}^+}^i$.
That is, $\dot\phi_i^\circ(\dot\phi_i^\circ\circ\theta)$
is trivial on $G_{y,r_{i-1}^+}^i\cap G_{\theta(y),r_{i-1}^+}^i$.
Applying Lemma~\ref{depthlemma}, we see that $\dot\phi_i^\circ\circ
\theta$ and $(\dot\phi_i^{\circ})^{-1}$ agree on $G^i_{r_{i-1}^+}$.
In particular, $\dot\phi_i^\circ(\theta(k))=\dot\phi_i^\circ(k)^{-1}$
for all $k\in G_{y,r_{i-1}^+}$.

Let $\cG=G^{i,ab}=G^i/G_{{\rm der}}^i$. 
Note that $\theta$ determines an involution of $\cG$.
Let $\cA$ be the image of $\{\, g\theta(g)\ | \ g\in G^i\,\}$
in $\cG$. This is a closed  subgroup of $\cG$ that
lies inside $\cG^\theta$. Let $\cB$ be the image
of $G_{y,r_{i-1}^+}^i$ in $\cG$. Because $\cA$ is
a closed subgroup and $\cB$ is a compact subgroup
of the locally compact abelian group $\cG$,
the subgroup $\cA\cB$ is closed.

The restriction $\dot\phi_i^\circ\,|\, G_{y,r_{i-1}^+}^i$
factors to a character $\dot\varphi_i$ of $\cB$.
Suppose that $k\in G_{y,r_{i-1}^+}^i$ is such that
$k\in g\theta(g)G_{{\rm der}}^i$ for some $g\in G$,
that is, the image of $k$ in $\cG$ lies in $\cA$.
Then $\theta(k)\in \theta(g)gG_{{\rm der}}^i=kG_{{\rm der}}^i$.
Hence $\dot\phi^\circ_i(\theta(k))=\dot\phi^\circ_i(k)$.
However, as observed above, $\dot\phi^\circ_i(\theta(k))
=\dot\phi^\circ_i(k)^{-1}$.
Since $k\in G^i_{0^+}$ (and $p$ is odd),
we have $\dot\phi^\circ_i(k)=1$. Thus
$\dot\varphi_i\,|\, \cB\cap \cA$ is trivial,
and we may extend $\dot\varphi_i$ to
a character of $\cA\cB$ by setting
$\dot\varphi_i(ab)=\dot\phi_i(b)$ for all $a\in \cA$
and $b\in \cB$.

Next we use the fact that any quasicharacter of a closed
subgroup of a locally compact abelian group  extends
to the full group. 
Let $\dot\phi_i$ be the quasicharacter of
$G^i$ corresponding to an extension of $\dot\varphi_i$
to $\cG$. The fact that $\dot\varphi_i$ is trivial
on $\cA$ is equivalent to $\theta$-symmetry of
$\dot\phi_i$. The fact that 
$\dot\phi^\circ_i\,|\, G_{y,r_{i-1}^+}^i=\phi_i\chi_{i+1}
\,|\, G_{y,r_{i-1}^+}^i$
factors to $\dot\varphi_i$ on $\cB$ implies that
$\dot\phi_i\,|\, G_{y,r_{i-1}^+}^i=\phi_i\chi_{i+1}
\,|\, G_{y,r_{i-1}^+}^i$. Hence
$\dot\phi_i$ satisfies Condition~\textbf{F1}.
\end{proof}

\begin{remark}\label{extendibility}
 The arguments used in the proof of Lemma~5.13
show that if $\bG^\prime$ is a connected reductive $F$-group,
$\theta$ is an involution of $G^\prime$, $t>0$,
$x\in \cB(\bG^\prime,F)$,
and $\phi$ is a quasicharacter of $G^\prime$ such
that $\phi\,|\, G_{x,t}^{\prime\theta}=1$, then
there exists a $\theta$-symmetric quasicharacter $\dot\phi$ of
$G^\prime$ such that $\dot\phi\,|\, G_{x,t}^\prime=
\phi\,|\, G_{x,t}^\prime$.
\end{remark}

\begin{lemma}\label{secondhalf}
Fix a $(G,K)$-datum $\Psi$ and an integer $i\in \{\, 0,\dots ,d-1\,\}$, where $d$ is 
the degree of $\Psi$.  Suppose that $\theta_{i+1}$ is an involution of $G$ such 
that $G^{i+1},\dots , G^d$ are $\theta_{i+1}$-stable and 
$\vartheta (\Psi)\, |\, J^{i+1,\theta_{i+1}}_+=1$.  Assume  a quasicharacter 
$\dot\phi_j$ of $G^j$ has already been defined, satisfies Conditions {\bf F0} and {\bf F1} 
and is $\theta_{i+1}$-symmetric  for all $j\in \{\, i+1,\dots , d\,\}$.   
 Let $\dot\phi_{i}^\bullet$ be the character of $G^{i}_{y,r_{i-1}^+}$ given by 
$\dot\phi^\bullet_i (g) = \phi_i (g)\ \chi_{i+1}(g)$, $g\in G^i_{y,r_{i-1}^+}$,
where $\chi_{i+1}=\prod_{\ell=i+1}^d\phi_\ell \dot\phi_\ell^{-1}\,|\, G^{i+1}$.
 Assume that $\chi_{i+1}\,|\,J_+^{i+1}$ is realized by an element of $\z^{i+1,*}_{-r_i}$.
  Then there exists a $G^{i+1}$-generic element $\dot\Gamma_{i}\in \z^{i\,*}_{-r_{i}}$ of 
depth $-r_i$ that realizes $\dot\phi^\bullet_{i}\,|\, G^{i}_{y,r_{i}}$ and  an involution 
$\theta_{i}$ in the $J^{i+1}$-orbit of $\theta_{i+1}$ such that 
$\theta_{i} (\dot\Gamma_i) = -\dot\Gamma_i$ and $\theta_{i}(G^i) = G^i$.
\end{lemma}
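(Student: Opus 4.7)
The proof has two main tasks: constructing the $G^{i+1}$-generic element $\dot\Gamma_i$ and then finding an involution $\theta_i$ in the $J^{i+1}$-orbit of $\theta_{i+1}$ with $\theta_i(\dot\Gamma_i)=-\dot\Gamma_i$. The invariance $\theta_i(G^i)=G^i$ will then be automatic, since Conditions \textbf{GE1} and \textbf{GE2} identify the centralizer of $\dot\Gamma_i$ in $\bG^{i+1}$ with $\bG^i$, and the involution $\theta_i$ necessarily preserves this centralizer.

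For the first task, since $\Psi$ is generic, $\phi_i$ is $G^{i+1}$-generic of depth $r_i$, so I would pick a $G^{i+1}$-generic $\Gamma_i\in \z^{i,*}_{-r_i}$ realizing $\phi_i\,|\, G^i_{y,r_i}$. By the hypothesis of the lemma, $\chi_{i+1}\,|\, J^{i+1}_+$ is realized by some $\gamma_i\in \z^{i+1,*}_{-r_i}\subset \z^{i,*}_{-r_i}$. Setting $\dot\Gamma_i = \Gamma_i + \gamma_i$, Lemma~\ref{generictrivia} gives that $\dot\Gamma_i$ is still $G^{i+1}$-generic of depth $-r_i$; and since $\dot\phi_i^\bullet = \phi_i\chi_{i+1}$ on $G^i_{y,r_{i-1}^+}\supset G^i_{y,r_i}$, the element $\dot\Gamma_i$ realizes $\dot\phi_i^\bullet\,|\, G^i_{y,r_i}$.

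For the second task, I first unpack the hypothesis $\vartheta(\Psi)\,|\, J^{i+1,\theta_{i+1}}_+=1$. Using the containments $J^{i+1}_+ \subset (G^\ell,G)_{y,(r_\ell^+, s_\ell^+)}$ for $\ell<i$ (where $\hat\phi_\ell$ is trivial by definition of $\hat\phi_\ell$) and $J^{i+1}_+ \subset G^\ell_{y,0}$ for $\ell>i$ (where $\hat\phi_\ell = \phi_\ell$), I compute $\vartheta(\Psi)\,|\, J^{i+1}_+ = \hat\phi_i\cdot\prod_{\ell>i}\phi_\ell\,|\, J^{i+1}_+$. The $\theta_{i+1}$-symmetry of each $\dot\phi_\ell$ for $\ell>i$, together with the fact that $J^{i+1,\theta_{i+1}}_+$ is a pro-$p$-group with $p$ odd, gives $\prod_{\ell>i}\dot\phi_\ell\,|\, J^{i+1,\theta_{i+1}}_+ = 1$, so the hypothesis becomes $\eta\,|\, J^{i+1,\theta_{i+1}}_+ = 1$, where $\eta := \hat\phi_i\cdot\chi_{i+1}\,|\, J^{i+1}_+$. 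Note that $\eta$ is precisely the character of $J^{i+1}_+$ realized by $\dot\Gamma_i$ via the Moy-Prasad isomorphism.

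To locate $\theta_i$, consider the $\theta_{i+1}$-stable subgroup $\cG := J^{i+1}_+ \cap \theta_{i+1}(J^{i+1}_+)$, which lies in $G_{y,0^+}$. Proposition~\ref{twodivprop} applied to $\cG$ and to $[\cG,\cG]$ yields $H^1_{\theta_{i+1}}(\cG)=H^1_{\theta_{i+1}}([\cG,\cG])=1$; combined with the 2-divisibility of the abelian pro-$p$-group $\cG^{\mathrm{ab}}$, this upgrades the triviality of $\eta$ on $\cG^{\theta_{i+1}}$ to the antisymmetry $\eta\circ\theta_{i+1}=\eta^{-1}$ on $\cG$. Via the equivariant Moy-Prasad isomorphism (Lemma~\ref{expequivariance}), this antisymmetry translates into the congruence that $\theta_{i+1}(\dot\Gamma_i)+\dot\Gamma_i$ annihilates the image of $\cG$ in the quotient $\g^{i+1,*}_{-r_i}/\g^{i+1,*}_{(-r_i)^+}$. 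A final adjustment then exploits the coadjoint action of $J^{i+1}$ on generic cosets in $\g^{i+1,*}_{-r_i}/\g^{i+1,*}_{(-r_i)^+}$, together with the freedom to shift $\dot\Gamma_i$ within $\z^{i,*}_{(-r_i)^+}$ (which preserves genericity by Lemma~\ref{generictrivia} and does not alter the character realized on $G^i_{y,r_i}$), to produce $k\in J^{i+1}$ and an adjusted $\dot\Gamma_i$ such that $\theta_i := \Int(k)\circ\theta_{i+1}\circ\Int(k)^{-1}$ satisfies $\theta_i(\dot\Gamma_i) = -\dot\Gamma_i$. The hard part is exactly this last adjustment: the character-level antisymmetry only gives $\theta_{i+1}(\dot\Gamma_i)\equiv -\dot\Gamma_i$ modulo $\g^{i+1,*}_{(-r_i)^+}$, and converting it into an exact equality after $J^{i+1}$-conjugation requires a delicate matching of the coadjoint $J^{i+1}/J^{i+1}_+$-action with the permitted shifts of $\dot\Gamma_i$ in its $\z^{i,*}_{(-r_i)^+}$-coset.
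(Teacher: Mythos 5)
Your construction of $\dot\Gamma_i$ is correct and matches the paper: start from a generic element $\Gamma_i$ realizing $\phi_i\,|\,G^i_{y,r_i}$, add the element $\gamma_i\in\z^{i+1,*}_{-r_i}$ realizing $\chi_{i+1}\,|\,J^{i+1}_+$, and invoke Lemma~\ref{generictrivia} to preserve genericity. Your reduction of the hypothesis $\vartheta(\Psi)\,|\,J^{i+1,\theta_{i+1}}_+=1$ to the statement $\hat{\dot\phi}_i^\bullet\,|\,J^{i+1,\theta_{i+1}}_+=1$ (your $\eta$) via Lemma~\ref{hatphikplus}, $\theta_{i+1}$-symmetry of the $\dot\phi_\ell$, and oddness of $p$ is also in agreement with the paper. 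However, your detour via the antisymmetry $\eta\circ\theta_{i+1}=\eta^{-1}$ on the intersection $\cG=J^{i+1}_+\cap\theta_{i+1}(J^{i+1}_+)$ is unnecessary: the paper works directly with the $\theta_{i+1}$-fixed points $\gJ^{i+1,\theta_{i+1}}_+$ and observes simply that $\dot\Gamma_i$ lies in the annihilator $(\gJ^{i+1,\theta_{i+1}}_+)^\bullet$ inside $\g^{i+1,*}$.

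The genuine gap is exactly the step you flag as ``the hard part,'' and you are right that it does not resolve without additional tools. Two specific lemmas are missing from your argument. First, Lemma~19.1 of Harish-Chandra (cited as \cite{HC} in the paper) gives the annihilator decomposition
$(\gJ^{i+1}_+\cap\g^{i+1,\theta_{i+1}})^\bullet=\gJ^{i+1\bullet}_+ + (\g^{i+1,\theta_{i+1}})^\bullet$, which immediately produces $Y\in\gJ^{i+1\bullet}_+$ with $\theta_{i+1}(\dot\Gamma_i+Y)=-(\dot\Gamma_i+Y)$; without this you only have a congruence on the quotient, not an explicit corrector $Y$. Second, Lemma~8.6 of Yu (cited as \cite{Y}) gives the orbit identification
$\dot\Gamma_i+\gJ^{i+1\bullet}_+=\Ad^*(J^{i+1})\bigl(\dot\Gamma_i+\g^{i,*}_{y,(-r_i)^+}\bigr)$,
which is precisely the ``delicate matching of the coadjoint $J^{i+1}/J^{i+1}_+$-action with the permitted shifts'' that you wave at but cannot carry out: it lets you write $\dot\Gamma_i+Y=\Ad^*(k)(\dot\Gamma_i+Z)$ with $k\in J^{i+1}$, $Z\in\g^{i,*}_{y,(-r_i)^+}$, from which $\theta_i:=k\cdot\theta_{i+1}$ satisfies $\theta_i(\dot\Gamma_i+Z)=-(\dot\Gamma_i+Z)$. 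Finally, your claim that $\theta_i(G^i)=G^i$ ``will then be automatic'' is too quick: because you only obtain exact anti-invariance of $\dot\Gamma_i+Z$ (where $Z$ need not be central), you still need Lemma~\ref{auxlemtwo} of the paper, which from the nonempty intersection $\theta_i(\dot\Gamma_i+\g^{i,*}_{(-r_i)^+})\cap(-\dot\Gamma_i+\g^{i,*}_{(-r_i)^+})$ concludes both $\theta_i(G^i)=G^i$ and $\dot\Gamma_i+\theta_i(\dot\Gamma_i)\in\z^{i,*}_{(-r_i)^+}$; a final symmetrization (replacing $\dot\Gamma_i$ by $\tfrac12(\dot\Gamma_i-\theta_i(\dot\Gamma_i))$, which stays in the same $\z^{i,*}_{(-r_i)^+}$-coset) then yields the exact identity $\theta_i(\dot\Gamma_i)=-\dot\Gamma_i$ stated in the lemma.
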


In order to prove Lemma \ref{secondhalf}, we need the following two lemmas.  

\begin{lemma}\label{auxlemone}
Let $(\bG^\prime,\bG)$ be a tamely ramified twisted Levi sequence.  
Suppose $t\in \R$ and let 
$$\g^{\prime,*}_{t^+}= \bigcup_{x\in \cB(\bG^\prime,F)} \g^{\prime ,*}_{x,t^+}.$$  
Suppose $g\in G$ and $\Gamma$ is a $G$-generic element of depth $t$ in 
$\z^{\prime,*}_{t}$, where $\z^\prime$ is the center of the Lie algebra of $G^\prime$.  If
$$\Ad^* (g) (\Gamma + \g^{\prime,*}_{t^+})\cap (\Gamma + 
\g^{\prime,*}_{t^+})\ne \emptyset$$ then $g\in G^\prime$.
\end{lemma}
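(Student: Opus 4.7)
The plan is to reduce the hypothesis to a stabilizer statement for $\Gamma$ modulo a deep lattice in $\g^*$, and then to invoke the $G$-genericity of $\Gamma$, in essentially the same spirit as Yu's analysis in Section~8 of \cite{Y} and the proof of Lemma~\ref{centralrep}.

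First I would unpack the intersection hypothesis. Pick $x \in \cB(\bG^\prime,F)$ together with $X_1, X_2 \in \g^{\prime,*}_{x,t^+}$ for which $\Ad^*(g)(\Gamma + X_1) = \Gamma + X_2$, so that $\Ad^*(g)\Gamma - \Gamma = X_2 - \Ad^*(g)X_1$. Using the identification of $\z^{\prime,*}$ with the $\Ad^*G^\prime$-fixed elements of $\g^{\prime,*}$ (hence as a subspace of $\g^*$ in the standard way adopted in Section~\ref{sec:buildings}), together with the compatibility of Moy--Prasad filtrations for the tamely ramified twisted Levi subgroup $\bG^\prime$, the right-hand side lies inside $\g^*_{t^+}$, because $\g^*_{t^+}$ is $\Ad^*G$-stable. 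Thus $\Ad^*(g)\Gamma$ and $\Gamma$ have the same image in $\g^*_t/\g^*_{t^+}$.

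Next I would apply the genericity of $\Gamma$. Condition~\textbf{GE2} says that the isotropy of $\widetilde{\Gamma}$ in the Weyl group $W(\Phi(\bG,\bT))$ equals $W(\Phi(\bG^\prime,\bT))$, and Condition~\textbf{GE1} ensures that $v_F(\Gamma(H_a))=t$ for every $a \in \Phi(\bG,\bT)\setminus\Phi(\bG^\prime,\bT)$. These are precisely the inputs used by Yu in Section~8 of \cite{Y} to determine the stabilizer of a generic coset of the form $\Gamma + \g^*_{x,t^+}$ in $G$: such a stabilizer is contained in $G^\prime\cdot G_{x,0}$. To upgrade to $g \in G^\prime$ outright, one exploits the extra rigidity in our hypothesis, namely that the witnesses $X_1$, $X_2$ already live in the smaller space $\g^{\prime,*}_{x,t^+}$, not just in $\g^*_{x,t^+}$. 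Writing $g = g^\prime k$ with $g^\prime \in G^\prime$ and $k \in G_{x,0}$, the intertwining equation becomes $\Ad^*(k)(\Gamma + X_1) \in \Gamma + \g^{\prime,*}_{x,t^+}$, and a residual analysis at $x$, applying \textbf{GE2} once more on the reductive quotient $G_{x,0:0^+}$, forces $k \in G^\prime$.

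The principal obstacle I anticipate is this last upgrade from $g \in G^\prime G_{x,0}$ to $g \in G^\prime$. It will require careful bookkeeping to verify that the natural inclusions between the various Moy--Prasad lattices in $\g^*$ and in $\g^{\prime,*}$ behave as expected, and that the residual descent on $G_{x,0:0^+}$ can indeed be driven by the ambient genericity data. The rest of the argument is essentially standard Yu-theoretic manipulation via unrefined minimal $K$-types.
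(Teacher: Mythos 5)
Your proposal is pointed in the right general direction (invoke genericity and Yu's Section~8 machinery), but as written it has concrete gaps that the actual argument must address and that your two-stage ``$G^\prime G_{x,0}$, then refine'' plan does not close.

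First, when you ``pick $x\in\cB(\bG^\prime,F)$ together with $X_1,X_2\in\g^{\prime,*}_{x,t^+}$,'' you are implicitly assuming that the witnesses to the nonempty intersection can be taken over the same base point. That is not given: $\g^{\prime,*}_{t^+}$ is a union over $x$, so a priori $X_1\in\g^{\prime,*}_{x_1,t^+}$ and $X_2\in\g^{\prime,*}_{x_2,t^+}$ for distinct $x_1,x_2$. The paper handles this by conjugating by a suitable $m\in G^\prime$ carrying one chamber to the other, replacing $g$ by $mg$, and then arguing for $mg$; this reduction is not optional and you should include it. Second, and more seriously, your central claim that ``such a stabilizer is contained in $G^\prime\cdot G_{x,0}$'' is not what Yu's Lemma~8.3 gives, nor is it justified by anything you say. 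Lemma~8.3, applied to the coset $\Gamma+\g^{\prime,*}_{x,t^+}$ after one has first reduced to the split case $E=F$ and arranged (using openness of the intersection) that a \emph{regular} element lies in the intersection, yields $g\in G^\prime$ outright in one step; there is no intermediate $G^\prime G_{x,0}$ to pass through, and trying to manufacture one creates precisely the ``careful bookkeeping'' obstacle you flag but do not resolve. Third, you never mention regularity at all, yet the hypothesis of Yu's Lemma~8.3 demands a regular element in the intersection; the openness-plus-density argument to produce one is an essential step, not a cosmetic one. Finally, the reduction to the split case $E=F$ (passing to $\bG^\prime(E)$ and using $G^\prime=\bG^\prime(E)\cap G$) is also missing. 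In short, rather than re-deriving a stabilizer bound from \textbf{GE1}/\textbf{GE2}, the clean route is: reduce to $E=F$; pick a regular element of the intersection; conjugate both sides into a single $\g^{\prime,*}_{x,t^+}$ by some $m\in G^\prime$; then quote Yu's Lemma~8.3.
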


\begin{proof}
Our proof is based on the proof of Lemma 2.2.4 in \cite{KMu} and, in fact, our result is a direct analogue of the latter result for the Lie algebra duals.  

Assume, as usual, that $E$ is a tamely ramified finite Galois extension of $F$ which splits 
$(\bG^\prime,\bG)$. Suppose our assertion holds with $F$ replaced by $E$, that is, with 
$(G^\prime,G)$ being replaced by $(\bG^\prime(E),\bG(E))$.  Then it follows that our claim also holds in general.  Indeed, suppose $g\in G$ and $\Gamma$ is a $G$-generic element of depth $t$ in $\z^{\prime,*}_{t}$ and 
$$\Ad^* (g) (\Gamma + \g^{\prime,*}_{t^+})\cap (\Gamma + \g^{\prime,*}_{t^+})\ne \emptyset.$$
Then
$$\Ad^* (g) (\Gamma + \bfr{g}^\prime(E)^{*}_{t^+})\cap (\Gamma + \bfr{g}^\prime(E)^{*}_{t^+})\ne \emptyset$$  and hence $g\in G^\prime = \bG^\prime(E)\cap G$.  

We now assume $E=F$.
The intersection in the statement of the lemma is an intersection of open sets in $\g^{\prime,*}$.  We assume that it is nonempty and consequently it contains a (semisimple) regular element $\Gamma + Y$ with $Y\in \g^{\prime,*}_{t^+}$.  Let $X\in \g^{\prime,*}_{t^+}$ be defined by $\Ad^*(g) (\Gamma + X ) =\Gamma+Y$.   There exist chambers $C_1$ and $C_2$ in $\cB (\bG^\prime,F)$ such that $X\in \g^*_{x,t^+}$, for all $x\in C_1$, and $Y\in \g^*_{x,t^+}$, for all $x\in C_2$.  Choosing $x\in C_1$ and $m\in G^\prime$ such that $m\cdot C_2 = C_1$, we have $X,\Ad^*(m)Y\in \g^{\prime,*}_{x,t^+}$.  Since $\Ad^*(mg)(\Gamma +X) = \Gamma + \Ad^*(m) Y$, we have
$$\Ad^*(mg) (\Gamma + \g^{\prime,*}_{x,t^+})\cap (\Gamma + \g^{\prime,*}_{x,t^+})\cap 
\g^{\prime,*}_{\rm reg}\ne \emptyset,$$ where $\g^{\prime,*}_{\rm reg}$ is the set of regular elements in $\g^{\prime,*}$.  According to Lemma 8.3 of \cite{Y}, the latter condition implies $mg\in G^\prime$ and hence $g\in G^\prime$.  This proves our assertion.
\end{proof}

\begin{lemma}\label{auxlemtwo}
Let $(\bG^\prime,\bG)$ be a tamely ramified twisted Levi sequence.  
Suppose $t\in \R$ and let 
$\g^{\prime,*}_{t_+}$ be defined as in Lemma \ref{auxlemone}.  
Suppose $\alpha$ is an involution of $G$ and $\Gamma$ is a $G$-generic element of depth $t$ in 
$\z^{\prime,*}_{t}$, where $\z^\prime$ is the center of the Lie algebra of $G^\prime$.  If
$$
\alpha (\Gamma + \g^{\prime,*}_{t^+})\cap (-\Gamma + \g^{\prime,*}_{t^+})\ne \emptyset
$$ then $\Gamma +\alpha(\Gamma)\in \z^{\prime,*}_{t^+}$ and $\alpha (G^\prime) = G^\prime$.
\end{lemma}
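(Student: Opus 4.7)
The plan is to first establish $\alpha(\bG^\prime) = \bG^\prime$ by mirroring the strategy of the preceding Lemma~\ref{auxlemone}; once this is in hand, the containment $\Gamma + \alpha(\Gamma) \in \z^{\prime,*}_{t^+}$ will follow essentially by inspection.

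To extract a preliminary containment, I would rewrite the hypothesis as $\alpha(\Gamma + X_1) = -\Gamma + Y_1$ for some $X_1, Y_1 \in \g^{\prime,*}_{t^+}$. $\R$-linearity of the coadjoint action then yields $\Gamma + \alpha(\Gamma) = Y_1 - \alpha(X_1)$. Since $\alpha$ permutes the points of $\cB(\bG,F)$ and transforms Moy--Prasad lattices by $\alpha(\g^*_{x,t^+}) = \g^*_{\alpha(x),t^+}$, we have $\alpha(\g^*_{t^+}) = \g^*_{t^+}$, and thus $\Gamma + \alpha(\Gamma) \in \g^*_{t^+}$. Once we know $\alpha(\bG^\prime) = \bG^\prime$, the subspace $\z^{\prime,*}$ is preserved by the coadjoint action of $\alpha$ (since $\alpha(\z^\prime) = \z^\prime$), so $\alpha(\Gamma) \in \z^{\prime,*}$, and the displayed identity forces $\Gamma + \alpha(\Gamma) \in \z^{\prime,*} \cap \g^*_{t^+} = \z^{\prime,*}_{t^+}$.

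To prove $\alpha(\bG^\prime) = \bG^\prime$, I would follow the skeleton of the proof of Lemma~\ref{auxlemone}. First reduce to the case $E = F$: the hypothesis persists under base change to a tamely ramified Galois splitting field $E$ of $\bG^\prime$, and if the conclusion holds over $E$ then $F$-rationality of $\alpha$ and $\bG^\prime$ gives the descent. Assuming $\bG^\prime$ splits over $F$, the intersection $\alpha(\Gamma + \g^{\prime,*}_{t^+}) \cap (-\Gamma + \g^{\prime,*}_{t^+})$ is open and nonempty in the affine space $-\Gamma + \g^{\prime,*}$, so it meets the regular semisimple locus; choose a regular $-\Gamma + Y$ with $Y \in \g^{\prime,*}_{t^+}$ there and pull back via $\alpha$ to obtain a regular semisimple $\Gamma + X$ with $X \in \g^{\prime,*}_{t^+}$ and $\alpha(\Gamma + X) = -\Gamma + Y$. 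Pick chambers $C_1, C_2$ in $\cB(\bG^\prime,F)$ with $X \in \g^*_{x,t^+}$ for $x \in C_1$ and $Y \in \g^*_{x^\prime,t^+}$ for $x^\prime \in C_2$. Using transitivity of $G^\prime$ on its chambers, find $m \in G^\prime$ with $m\cdot C_2 = C_1$, so that $X$ and $\Ad^*(m)Y$ both lie in $\g^*_{x,t^+}$ for some fixed $x \in C_1$. Since $\Ad^*(m)\Gamma = \Gamma$, the $F$-automorphism $\beta = \Int(m) \circ \alpha$ of $\bG$ sends the regular element $\Gamma + X \in \Gamma + \g^*_{x,t^+}$ to $-\Gamma + \Ad^*(m)Y \in -\Gamma + \g^*_{x,t^+}$.

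The main obstacle is the final step, which requires extending Yu's Lemma~8.3 from inner automorphisms to an arbitrary $F$-automorphism $\beta$ of $\bG$. Yu's argument depends only on the regular semisimplicity of $\Gamma + X$, the root-space decomposition relative to a tame maximal $F$-torus containing the centralizer of $\Gamma + X$, and the genericity condition \textbf{GE2} applied to the reduction of $\Gamma$ (equivalently of $-\Gamma$, which is also $G$-generic of depth $t$ in $\z^{\prime,*}$); these ingredients transfer without modification when $\Ad^*(g)$ is replaced by the coadjoint action of a general $F$-automorphism. The conclusion is that $\beta(\bG^\prime) = \bG^\prime$, and because $m \in G^\prime$ this yields $\alpha(\bG^\prime) = \bG^\prime$, completing the argument.
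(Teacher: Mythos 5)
There are two genuine gaps here, and they are precisely the points the paper's own argument is engineered to avoid. First, your membership claim $\Gamma+\alpha(\Gamma)=Y_1-\alpha(X_1)\in\g^*_{t^+}$ does not follow from $Y_1,\alpha(X_1)\in\g^*_{t^+}$: the set $\g^*_{t^+}=\bigcup_x\g^*_{x,t^+}$ is a union of lattices attached to different points of the building and is not closed under addition (already in $\bGL_2$ the sum of two elements of $\g_{0^+}$ can have depth $0$), so the final containment $\Gamma+\alpha(\Gamma)\in\z^{\prime,*}\cap\g^*_{t^+}$ is unproved as written. The paper gets around exactly this by producing a single tame maximal torus: a regular element $\alpha(\Gamma+X)=-\Gamma+Y$ of the intersection has centralizer $S=Z_G(-\Gamma+Y)$, which Lemma~\ref{auxlemone} forces to equal $Z_{G^\prime}(Y)\subset G^\prime$, and likewise $\alpha(S)=Z_G(\Gamma+X)=Z_{G^\prime}(X)\subset G^\prime$; then $\Gamma,\alpha(\Gamma),Y,\alpha(X)$ all lie in $\s^*$, so the difference lies in the honest filtration lattice $\s^*_{t^+}$, where additivity is legitimate. (Your argument can be repaired along these lines, or by projecting to $\z^{\prime,*}$ along the $G^\prime$-invariant decomposition, but some such device is needed and is missing.)

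Second, and more seriously, your proof of $\alpha(\bG^\prime)=\bG^\prime$ rests entirely on an asserted extension of Lemma~8.3 of \cite{Y} from $\Ad^*(g)$ to an arbitrary $F$-automorphism $\beta$, claimed to transfer ``without modification.'' That is not a small remark: Yu's statement concerns an element $g$ whose coadjoint action nearly stabilizes a fixed coset $X^*+\g^*_{x,(-r)^+}$, and its conclusion $g\in G^\prime$ has no direct meaning for an abstract automorphism carrying the coset of $\Gamma$ to the coset of $-\Gamma$; both the statement and the proof would have to be reformulated and re-verified, and doing so is essentially the content of the present lemma. The paper deliberately avoids any such generalization: it only invokes Lemma~\ref{auxlemone} (the already-established rational, inner case), three times. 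Having produced $S$ and $\alpha(S)$ inside $G^\prime$ as above and shown $\Gamma+\alpha(\Gamma)\in\s^*_{t^+}\subset\g^{\prime,*}_{t^+}$, it writes $\alpha(G^\prime)=Z_G(\alpha(\Gamma))=Z_G\bigl(-\Gamma+(\Gamma+\alpha(\Gamma))\bigr)$ and applies Lemma~\ref{auxlemone} to the generic element $-\Gamma$ to conclude $\alpha(G^\prime)\subset G^\prime$, hence equality, and then $\Gamma+\alpha(\Gamma)\in\z^{\prime,*}_{t^+}$. Your chamber-transitivity reduction and the observation that $-\Gamma$ is again $G$-generic are fine, but as it stands the proposal defers the crux of the lemma to an unproved strengthening of Yu's result rather than proving it.
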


\begin{proof} If $X\in \g^*$, we denote the isotropy group of $X$ in $G$ by $Z_G(X)$.
Assume the intersection in the statement of the lemma is non\-empty.  Since this is a nonempty intersection of open sets in $\g^{\prime,*}$, it must contain a regular element $\alpha (\Gamma+X) = -\Gamma + Y$, where $X,Y\in \g^{\prime,*}_{t^+}$.  
  Regularity implies that $Z_G (-\Gamma +Y)$ is a maximal torus $S$ in $G$.  
Lemma~\ref{auxlemone} now implies $Z_G(-\Gamma + Y) = Z_{G^\prime}(-\Gamma +Y) = 
Z_{G^\prime}(Y)$ or, in other words, $S= Z_{G^\prime}(Y)$.  In particular, 
$S\subset G^\prime$. We show that $\alpha (S)\subset G^\prime$ using a similar argument:
 $\alpha (S) = Z_G(\alpha(-\Gamma+Y)) = Z_G (\Gamma +X) = Z_{G^\prime}(\Gamma + X) 
= Z_{G^\prime}(X)$.  

Since $S$ and $\alpha (S)$ are maximal tori in $G^\prime$, the center $Z^\prime$ 
of $G^\prime$ must be contained in $S\cap \alpha (S)$.  So $\Gamma \in \z^{\prime,*}\subset \s^* \cap \alpha (\s^*)$.   Thus  $\Gamma + \alpha (\Gamma) \in \s^*_{t^+}$.
Applying Lemma \ref{auxlemone} implies $\alpha (G^\prime) = Z_G (\alpha (\Gamma)) = Z_G (-\Gamma + (\Gamma +\alpha (\Gamma)))= Z_{G^\prime}(-\Gamma +(\Gamma +\alpha (\Gamma))) = 
Z_{G^\prime}(\alpha (\Gamma))$.  In particular, $\alpha (G^\prime)\subset G^\prime$.
  Applying $\alpha$, this becomes $G^\prime\subset \alpha (G^\prime)$.  
Hence, $\alpha (G^\prime) = G^\prime$.   It follows that 
$\Gamma + \alpha (\Gamma ) \in  \z^{\prime,*}_{t^+}$.
\end{proof}

\begin{proof}[Proof of Lemma \ref{secondhalf}]
Recall that $J_+^{i+1}=(G^i,G^{i+1})_{y,(r_i,s_i^+)}$. The
lattice $\gJ^{i+1}=(\g^i,\g^{i+1})_{y,(r_i,s_i^+)}$ in
$\g^{i+1}$ has the property that $\gJ_+^{i+1}/\g_{y,r_i^+}^{i+1}$
is isomorphic to $J_+^{i+1}/G_{y,r_i^+}^{i+1}$ via restriction
of the isomorphism between $\g^{i+1}_{y,s_i^+:r_i^+}$ and
$G_{y, s_i^+:r_i^+}^{i+1}$. 

Since $\chi_{i+1}\,|\, J_+^{i+1}$ is realized by an element of $\z^{i+1,*}_{-r_i}$,
the restriction $\chi_{i+1}\,|\, G_{y,r_i}^i$ is realized by
the same element.
Because $\phi_i$ is $G^{i+1}$-generic of depth $r_i$ with respect to $y$, 
Lemma~\ref{generictrivia} implies that $\dot\phi^\bullet_i$ is $G^{i+1}$-generic 
of depth $r_i$ with respect to $y$.
That is, there exists a $G^{i+1}$-generic element $\dot\Gamma^\prime_i\in \z^{i,*}_{-r_i}$ 
that realizes $\dot\phi^\bullet_i\, |\, G^i_{y,r_i}$.
 
The element $\dot\Gamma^\prime_i$ also realizes a character $\hat{\dot\phi}^\bullet_i$ of 
$J^{i+1}_+$.  This character is associated by duality with the coset $\dot\Gamma^\prime_{i} +
\gJ^{i+1\bullet}_+$,
where 
$$\gJ_+^{i+1\bullet}=
\{\, X\in \g^{i+1, *} \ | \ X(\gJ_+^{i+1})
\subset \gP_F\,\}.
$$
Throughout this proof, if ${\frak s}\subset \g^{i+1}$, the
notation ${\frak s}^\bullet$ will be used for the set of
elements $X$ in $\g^{i+1, *}$ such that $X({\frak s})
\subset \gP_F$.

 Lemma 8.6 of \cite{Y}  implies that 
$$\dot\Gamma^\prime_{i} +
\gJ^{i+1 \bullet}_+=  \Ad^*
(G^{i+1}_{y,s_{i}})(\dot\Gamma^\prime_{i} + \g^{i,*}_{y,(-r_{i})^+}).$$ 
However, since $G^{i+1}_{y,s_{i}}= J^{i+1} G^{i}_{y,s_{i}}$ and
$\Ad^* (G^{i}_{y,s_{i}})(\dot\Gamma^\prime_{i} + \g^{i,*}_{y,(-r_{i})^+})=
\dot\Gamma^\prime_{i} + \g^{i,*}_{y,(-r_{i})^+}$, we have
$$\dot\Gamma^\prime_{i} +
\gJ^{i+1 \bullet}_+ = \Ad^*
(J^{i+1})(\dot\Gamma^\prime_{i} + \g^{i,*}_{y,(-r_{i})^+}).$$ 

We now show that $\hat{\dot\phi}^\bullet_{i}\, |\,J^{i+1,\theta_{i+1}}_+=1$.  
First, we note that, as a consequence of our assumption regarding $\chi_{i+1}\,|\, J_+^{i+1}$,
we have $\hat{\dot\phi}_i^\bullet (g) = \chi_{i+1} (g)\hat\phi_i(g)$, 
for all $g\in J^{i+1}_+$.   Using Lemma \ref{hatphikplus}, this becomes 
$$\hat{\dot\phi}_i^\bullet (g) = \vartheta (g) \prod_{\ell = i+1}^d \dot\phi_\ell (g)^{-1},$$ 
for all $g\in J^{i+1}_+$, where $\vartheta = \vartheta (\Psi)$. Since we are assuming that $\vartheta\, |\, J^{i+1,\theta_{i+1}}_+=1$, it follows that $\hat{\dot\phi}^\bullet_i = \prod_{\ell = i+1}^d \dot\phi_\ell^{-1}$ on $J^{i+1,\theta_{i+1}}_+$.
Therefore, to show that $\hat{\dot\phi}^\bullet_{i} \,|\, J^{i+1,\theta_{i+1}}_+=1$, it suffices to show that $\dot\phi_\ell (g)=1$ for all $g\in G^{i+1,\theta_{i+1}}_{y,s_i^+}$ and all $\ell \in \{\, i+1,\dots ,d\,\}$.

By assumption, $\theta_{i+1}(G^{i+1})=G^{i+1}$ and the restriction $\dot\phi_\ell\,|\, G^{i+1}$
is $\theta_{i+1}$-symmetric. Hence if $g\in G^{i+1,\theta_{i+1}}_{y,s_i^+}$,
we have $\dot\phi_\ell(g)=\dot\phi_\ell(\theta_{i+1}(g))=\dot\phi_\ell(g)^{-1}$.
Hence $\dot\phi_\ell(g)=\pm 1$. However, since $G_{y,s_i^+}^{i+1}$
is a pro-$p$-group and $p$ is odd, the character $\dot\phi_\ell\,|\, G_{y,s_i^+}^{i+1}$
does not assume the value $-1$. As indicated above, this forces
$\hat{\dot\phi}^\bullet_{i}\,|\, J^{i+1,\theta_{i+1}}_+=1$.

Using properties of (the restriction 
to $\gJ^{i+1}_+/\g^{i+1}_{y,r_i^+}$) of
 the canonical isomorphism $e=e_{y,r_i}:\g^{i+1}_{y,s_i^+:r_i^+}
\rightarrow G^{i+1}_{y,s_i^+:r_i^+}$,
the fact that $\hat{\dot\phi}^\bullet_{i}\,|\,
J^{i+1,\theta_{i+1}}_+=1$ translates into properties of
the coset that realizes $\hat{\dot\phi}^\bullet_i$.
Indeed, from Lemma~\ref{expequivariance},
if $X\in \gJ^{i+1}_+\cap\theta_{i+1}(\gJ^{i+1}_+)$, then
there exists $k\in J^{i+1}_+\cap\theta_{i+1}(J^{i+1}_+)$ such
that $e(X+\g^{i+1}_{y,r_i^+})=k\,G^{i+1}_{y,r_i^+}$
and $e(\theta_{i+1}(X)+\g^{i+1}_{y,r_i^+})=\theta_{i+1}(k)G^{i+1}_{y,r_i^+}$.
Similarly, if $k\in J_+^{i+1}\cap \theta_{i+1}(J_+^{i+1})$, 
there exists $X\in \gJ_+^{i+1}\cap\theta_{i+1}(\gJ_+^{i+1})$ such that
the above relations hold. 

It now follows that
$\hat{\dot\phi}^\bullet_i(e(X+\g_{y,r_i^+}^{i+1}))=\psi(\dot\Gamma_i^\prime(X))=1$
for all $X\in \gJ_+^{i+1,\theta_{i+1}}$. This
implies that $\dot\Gamma^\prime{i}$ lies in
$$
(\gJ_+^{i+1}\cap \g^{i+1,\theta_{i+1}})^\bullet = \gJ_+^{i+1 \bullet} + 
(\g^{i+1,\theta_{i+1}})^\bullet,
$$ 
with the above equality following from Lemma~19.1 of \cite{HC}.
Note that $(\g^{i+1,\theta_{i+1}})^\bullet=\{\, X\in \g^{i+1, *}\ | \
\theta_{i+1}(X)=-X\,\}$. Here, we transfer the action of (the differential
of) $\theta_{i+1}$ on $\g^{i+1}$ to $\g^{i+1, *}$ in the obvious way.
It follows that there exists $Y\in \gJ^{i+1 \bullet}_+$ such that 
$\theta_{i+1} (\dot\Gamma^\prime_{i}  +Y)= -\dot\Gamma^\prime_{i} -Y$.  As we saw earlier
in the proof,
 $\dot\Gamma^\prime_{i} + \gJ^{i+1 \bullet}_+=
\Ad^* (J^{i+1})(\dot\Gamma^\prime_{i} + \g^{i,*}_{y,(-r_{i})^+})$.  Thus we can choose 
$k\in J^{i+1}$ and $Z\in \g^{i,*}_{y,(-r_{i})^+}$ such that $\dot\Gamma^\prime_{i} 
+Y = \Ad^* (k) (\dot\Gamma^\prime_{i} + Z)$.  Hence, there exists $\theta_{i}$ in the 
$J^{i+1}$-orbit of $\theta_{i+1}$ such that $\theta_{i} (\dot\Gamma^\prime_{i}+Z) =
 -\dot\Gamma_{i}^\prime- Z$. 
Our assertion now follows from Lemma \ref{auxlemtwo}.
\end{proof}

\begin{lemma}\label{eqlevi}
Let $(\bG^0,\bG^\nat,\bG)$ and $(\bG^0,\bG^\flat,\bG)$ be
tamely ramified twisted Levi sequences in $\bG$.
Let $\phi^\nat$ and $\phi^\flat$ be quasicharacters
of $G^\nat$ and $G^\flat$, respectively. Suppose
that there exist 
$x^\nat$ and $x^\flat\in \cB(\bG^0,F)$ 
and a real number $r>0$
such that $\phi^\nat$
and $\phi^\flat$
are $G$-generic of depth $r$,
relative to $x^\nat$ and to $x^\flat$,
respectively. 
If $\phi^\nat$ and
$\phi^\flat$ agree on $G_{x^\nat,r}^0\cap  G_{x^\flat,r}^0$,
then $G^\nat=G^\flat$.
\end{lemma}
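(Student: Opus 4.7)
The plan is to bootstrap the hypothesis on $G^0_{x^\nat,r}\cap G^0_{x^\flat,r}$ into a $\z^{0,*}$-comparison of the $G$-generic elements realizing $\phi^\nat$ and $\phi^\flat$, and then to read off the coincidence of the root systems of $\bG^\nat$ and $\bG^\flat$ from GE1. First I would observe that $G^0_{x^\nat,r^+}\subset G^\nat_{x^\nat,r^+}$ (and similarly with $\nat$ replaced by $\flat$), so both $\phi^\nat\,|\, G^0$ and $\phi^\flat\,|\, G^0$ are trivial on $G^0_{x^\nat,r^+}$ and on $G^0_{x^\flat,r^+}$. Since by hypothesis $(\phi^\nat)(\phi^\flat)^{-1}$ is trivial on $G^0_{x^\nat,r}\cap G^0_{x^\flat,r}$, applying Lemma~\ref{depthlemma} to the quasicharacter $(\phi^\nat)(\phi^\flat)^{-1}$ of $G^0$ yields
\begin{equation*}
\phi^\nat\,|\, G^0_r=\phi^\flat\,|\, G^0_r.
\end{equation*}

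Next, choose $G$-generic elements $\Gamma^\nat\in\z^{\nat,*}_{-r}$ and $\Gamma^\flat\in \z^{\flat,*}_{-r}$ realizing $\phi^\nat\,|\, G^\nat_{x^\nat,r}$ and $\phi^\flat\,|\, G^\flat_{x^\flat,r}$ respectively. Since each $\Gamma^\ast$ is $\Ad G^\ast$-invariant and hence $\Ad G^0$-invariant, its restriction along $\g^0\hookrightarrow \g^\ast$ defines an element $\Gamma^\ast_0\in \z^{0,*}_{-r}$ that realizes $\phi^\ast\,|\, G^0_{x^\ast,r}$ (here we use compatibility of the Moy--Prasad isomorphism $e_{y,r}$ with the twisted Levi inclusion, as recorded in Section~\ref{sec:hypotheses}). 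By Lemma~\ref{depth} the quasicharacter $\phi^0:=\phi^\nat\,|\, G^0$ has depth $r$, and in view of the equality displayed above, $\phi^0\,|\, G^0_{x^\flat,r}$ is realized by $\Gamma^\flat_0$. Two invocations of Lemma~\ref{centralrep}, with $x_1=x^\nat$ and $x_2=x^\flat$, then give
\begin{equation*}
\Gamma^\nat_0-\Gamma^\flat_0\in \z^{0,*}_{(-r)^+}.
\end{equation*}

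Now I would pass to a common reference torus. By Bruhat--Tits theory, there is a maximal $F$-split torus $\bS\subset \bG^0$ whose apartment in $\cB(\bG^0,F)$ contains both $x^\nat$ and $x^\flat$; any maximal $F$-torus $\bT$ of $\bG^0$ containing $\bS$ then also satisfies $x^\nat, x^\flat\in A(\bG^0,\bT,F)$, and is automatically $E$-split (hence tame) since $\bG^0$ splits over $E$. The torus $\bT$ is maximal in $\bG^\nat$, $\bG^\flat$, and $\bG$ (all having the same rank as $\bG^0$), and $\Ad G^\nat$-invariance of $\Gamma^\nat$ lets us replace, without loss of generality, the original torus used in verifying GE1 for $\Gamma^\nat$ by $\bT$; similarly for $\Gamma^\flat$. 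For any $a\in \Phi(\bG,\bT)$ the coroot vector $H_a=d\check a(1)$ lies in $\t\subset \g^0$, so
\begin{equation*}
(\Gamma^\nat_0-\Gamma^\flat_0)(H_a)=(\Gamma^\nat-\Gamma^\flat)(H_a),
\end{equation*}
whose $v_F$-valuation exceeds $-r$ by the previous step. If we had $a\in \Phi(\bG^\nat,\bT)\setminus\Phi(\bG^\flat,\bT)$, then $\Gamma^\nat(H_a)=0$ because $H_a\in [\g^\nat,\g^\nat]$ and $\Gamma^\nat\in\z^{\nat,*}$, while $v_F(\Gamma^\flat(H_a))=-r$ by GE1 for $\Gamma^\flat$, forcing $v_F((\Gamma^\nat-\Gamma^\flat)(H_a))=-r$ and contradicting the comparison above. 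Hence $\Phi(\bG^\nat,\bT)\subset\Phi(\bG^\flat,\bT)$, and the reverse inclusion follows by symmetry; since $\bG^\nat$ and $\bG^\flat$ are $E$-Levi $F$-subgroups of $\bG$ containing the maximal torus $\bT$, equality of root systems forces $\bG^\nat=\bG^\flat$, whence $G^\nat=G^\flat$.

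The main obstacle I anticipate is the bookkeeping around the choice of common torus: more precisely, verifying that after replacing the reference torus used in GE1 for $\Gamma^\nat$ (respectively $\Gamma^\flat$) by the common $\bT$, the genericity conditions are preserved. This is ultimately a consequence of $\Ad G^\ast$-invariance of $\Gamma^\ast$ and the conjugacy of maximal tori, but it has to be spelled out carefully (along the lines of Remark~\ref{gemotivation} and the torus-independence alluded to in Remark~\ref{gencharrem}); the rest of the argument is a direct application of Lemmas~\ref{depthlemma}, \ref{depth} and \ref{centralrep}.
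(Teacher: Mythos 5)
Your reduction to $\Gamma^\nat_0-\Gamma^\flat_0\in\z^{0,*}_{(-r)^+}$ is identical to the paper's first half (same invocations of Lemmas~\ref{depthlemma}, \ref{depth} and \ref{centralrep}), but the conclusion you draw from it is genuinely different. The paper observes that $\z^{0,*}_{(-r)^+}\subset\g^{\nat,*}_{(-r)^+}$, so $\Gamma^\flat\in\Gamma^\nat+\g^{\nat,*}_{(-r)^+}$, and then applies Lemma~\ref{auxlemone} to conclude that anything fixing $\Gamma^\flat$ (i.e.\ anything in $G^\flat$) must lie in $G^\nat$; by symmetry $G^\nat=G^\flat$. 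You instead pass to a common maximal torus $\bT$ of $\bG^0$ and read off $\Phi(\bG^\nat,\bT)=\Phi(\bG^\flat,\bT)$ from \textbf{GE1}. Both routes are valid, but the paper's stabilizer argument buys you exactly what you flag as your main obstacle: it never requires choosing a torus at all, and hence completely sidesteps the torus-independence of \textbf{GE1} that your approach must explicitly invoke.

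One small error in your torus bookkeeping that you should fix: the claim that any maximal $F$-torus $\bT\subset\bG^0$ containing a maximal $F$-split torus $\bS$ is ``automatically $E$-split since $\bG^0$ splits over $E$'' is false. Maximal tori of a split group need not be split (e.g.\ an elliptic torus in $\mathbf{GL}_2$ over a field over which $\mathbf{GL}_2$ is already split), so $\bT_E$ need not be split even though $\bG^0_E$ is. What you actually need is the existence of a \emph{tamely ramified} maximal $F$-torus of $\bG^0$ containing $\bS$; such tori do exist because $\bG^0$ splits over a tamely ramified extension, but the argument should cite that fact rather than the incorrect automatic-splitting claim. With that repair your proof goes through, and the GE1 torus-independence you appeal to is correctly justified by $\Ad^*\bG^\nat$-invariance of $\Gamma^\nat$ (resp.\ $\Ad^*\bG^\flat$-invariance of $\Gamma^\flat$), as in Section~8 of \cite{Y}.
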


\begin{proof}
Choose $G$-generic elements $\Gamma^\nat\in \z_{-r}^{\nat,*}$
and $\Gamma^\flat\in \z_{-r}^{\flat,*}$
such that $\Gamma^\nat +\g_{x^\nat,(-r)^+}^{\nat,*}$
and $\Gamma^\flat + \g_{x^\flat,(-r)^+}^{\flat,*}$
realize $\phi^\nat\,|\, G^\nat_{x^\nat,r}$
and $\phi^\flat\,|\, G^{\flat}_{x^\flat,r}$, respectively.

According to Lemma~\ref{depth}, the restrictions
of $\phi^\nat$ and $\phi^\flat$ to $G^0$ have depth $r$.
The elements $\Gamma^\nat$ and $\Gamma^{\flat}$ belong to
$\z^{0,\ast}$ and the corresponding cosets in
$\g^{0,*}_{x^\nat,r:r^+}$ and $\g^{0,*}_{x^\flat,r:r^+}$
realize the restrictions $\phi^\nat\,|\, G_{x^\nat,r}^0$
and $\phi^\flat\,|\, G_{x^\flat,r}^0$. Since
$\phi^\nat$ and $\phi^\flat$ agree on the
intersection of $G_{x^\nat,r}^0$
with $G_{x^\flat,r}^0$, Lemma~\ref{depthlemma}
shows that $\phi^\nat$ and $\phi^\flat$
agree on $G_{r}^0$. 
Applying Lemma~\ref{centralrep}, we conclude
that $\Gamma^\nat-\Gamma^\flat\in \z_{(-r)^+}^{0,*}$.
Since $\z_{(-r)^+}^{0,*}\subset \g^{\nat,*}_{(-r)^+}$,
we have $\Gamma^\flat\in \Gamma^\nat + \g_{(-r)^+}^{\nat,*}$.
It now follows from Lemma~\ref{auxlemone} that
if $g\in G$ and $\Ad^*(g)(\Gamma^\flat)=\Gamma^\flat$ 
then $g\in G^\nat$. Because $\Gamma^\flat$ is
a $G$-generic element of $\z^{\flat,*}_{-r}$,
$G^\flat$ is equal to the set of $g\in G$
such that $\Ad^*(g)(\Gamma^\flat)=\Gamma^\flat$.
Hence we have $G^\flat\subset G^\nat$.
Reversing the roles of $\Gamma^\flat$ and
$\Gamma^\nat$, we obtain $G^\nat\subset G^\flat$.
Thus $G^\flat=G^\nat$.
\end{proof}

\begin{lemma}\label{extra} Let $\Psi=(\vec\bG,y,\rho,\vec\phi)$
be a $(G,K)$-datum.
Suppose that $\theta$ is an involution of $G$ such that
$\theta(\bG^0)=\bG^0$ and 
$\vartheta(\Psi)\,|\, K_+^0(\Psi)^\theta
=1$. 
Then $\theta(\vec\bG)=\vec\bG$ 
and
there exists a weakly $\theta$-symmetric
refactorization of $\Psi$.
\end{lemma}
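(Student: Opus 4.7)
I would prove Lemma \ref{extra} by a downward induction on $i$ from $d$ down to $0$, establishing simultaneously that $\theta(\bG^i)=\bG^i$ and constructing a $\theta$-symmetric quasicharacter $\dot\phi_i$ satisfying Conditions \textbf{F0} and \textbf{F1}. The base case $i=d$ is essentially trivial since $\bG^d=\bG$ is automatically $\theta$-stable, so the recursion can be initiated with Lemma \ref{firsthalf} once the hypothesis on $\vartheta$ has been propagated appropriately.

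For the inductive step at level $i<d$, I would assume $\theta(\bG^j)=\bG^j$ for all $j>i$ and that $\dot\phi_j$ has already been constructed to be $\theta$-symmetric and to satisfy \textbf{F0}, \textbf{F1} for $j>i$. The step would proceed in four parts. First, I would propagate the hypothesis $\vartheta(\Psi)|K_+^{0,\theta}=1$ to show that $\vartheta(\Psi)|J_+^{i+1,\theta}=1$ and $\vartheta(\Psi)|G^{i,\theta}_{y,r_{i-1}^+}=1$. This uses Proposition \ref{twodivprop} (strong $2$-divisibility of pro-$p$ Moy--Prasad subgroups), the inductively established $\theta$-symmetries of the higher $\dot\phi_j$'s, and the explicit description of $\hat\phi_j\,|\, J_+^{i+1}$ from Remark \ref{hatprop} to separate the contributions of the various $\phi_j$'s. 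Second, I would apply Lemma \ref{secondhalf} with $\theta_{i+1}=\theta$ to produce a $G^{i+1}$-generic element $\dot\Gamma_i\in\z^{i,*}_{-r_i}$ realizing the required character $\dot\phi_i^\bullet\,|\, G^i_{y,r_i}$, together with an involution $\theta_i$ in the $J^{i+1}$-orbit of $\theta$ satisfying $\theta_i(\bG^i)=\bG^i$ and $\theta_i(\dot\Gamma_i)=-\dot\Gamma_i$.

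Third (the decisive step), I would show $\theta(\bG^i)=\bG^i$. Both $\bG^i$ and $\theta(\bG^i)$ are tamely ramified twisted Levi subgroups of the $\theta$-stable $\bG^{i+1}$, and each contains $\bG^0=\theta(\bG^0)$, so $(\bG^0,\bG^i,\bG^{i+1})$ and $(\bG^0,\theta(\bG^i),\bG^{i+1})$ are two parallel twisted Levi sequences. The quasicharacter $\phi_i^{-1}$ is $G^{i+1}$-generic of depth $r_i$ on $\bG^i$ at $y$, and $\phi_i\circ\theta$ is $G^{i+1}$-generic of depth $r_i$ on $\theta(\bG^i)$ at $\theta(y)$. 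Combining the propagated relation from the first part with the $\theta$-symmetries for $j>i$ (which control the contributions of the higher $\phi_j$'s on the intersection), I would verify that these two characters agree on $G^0_{y,r_i}\cap G^0_{\theta(y),r_i}$. Lemma \ref{eqlevi}, applied inside $\bG^{i+1}$, then forces $\theta(\bG^i)=\bG^i$. Fourth, with $\theta$ itself now known to stabilize $\bG^i$, I would apply Lemma \ref{firsthalf} relative to the original $\theta$ to obtain a $\theta$-symmetric $\dot\phi_i$ satisfying \textbf{F0} and \textbf{F1}. Once the induction concludes at $i=0$, I would define $\dot\rho$ by Condition \textbf{F2}, yielding a weakly $\theta$-symmetric refactorization $\dot\Psi$ of $\Psi$ with $\theta(\vec\bG)=\vec\bG$.

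The hard part will be the third sub-step: passing from the $J^{i+1}$-conjugate involution $\theta_i$ produced by Lemma \ref{secondhalf} back to the original $\theta$, or equivalently, upgrading the conclusion ``some $K$-conjugate of $\theta$ stabilizes $\bG^i$'' to ``$\theta$ itself stabilizes $\bG^i$.'' This is precisely what distinguishes Lemma \ref{extra} from the weaker statement in Proposition \ref{TFAErefactor}(1), and it relies on the hypothesis $\vartheta(\Psi)|K_+^{0,\theta}=1$ supplying exactly the character-agreement on the intersection $G^0_{y,r_i}\cap G^0_{\theta(y),r_i}$ needed to trigger Lemma \ref{eqlevi}. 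The delicate point is verifying that agreement on the intersection without any assumption on $\theta[y]$, which is why the $2$-divisibility of $K_+^0$ and the inductive symmetries of the higher $\dot\phi_j$'s must be exploited together.
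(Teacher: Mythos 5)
Your strategy is essentially the paper's: a downward induction in which at each level you establish $\theta(\bG^i)=\bG^i$ by invoking Lemma~\ref{eqlevi}, and then extend a $\theta$-symmetric restriction to a $\theta$-symmetric quasicharacter $\dot\phi_i$. You correctly identify Lemma~\ref{eqlevi} as the decisive ingredient, and correctly observe that the hypothesis $\vartheta(\Psi)\,|\,K_+^{0,\theta}=1$ is what supplies the character agreement on $G^0_{y,r_i}\cap G^0_{\theta(y),r_i}$ that triggers it.

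Two points differ from what the paper actually does, and one is a small gap you should notice. First, your Step~2 (invoking Lemma~\ref{secondhalf}) is superfluous, and in fact works against the grain of the lemma: Lemma~\ref{secondhalf} delivers an involution $\theta_i$ in the $J^{i+1}$-orbit of $\theta$ stabilizing $\bG^i$, which is precisely the conclusion you cannot afford here because you want $\theta$ itself, not a $K$-conjugate. Its output is never used in your Steps~3 and~4, so it can simply be dropped; the genericity needed for Lemma~\ref{eqlevi} comes instead from Lemma~\ref{generictrivia}, which shows that a $G$-generic quasicharacter corrected by a central character is still $G$-generic. Second, your Step~3 as stated compares $\phi_i^{-1}$ and $\phi_i\circ\theta$ directly, but this is not quite the right pair. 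The propagated triviality $\vartheta\,|\,K_+^{0,\theta}=1$ controls the product $\prod_{j\ge i}\phi_j$ on $G^{0,\theta}_{y,r_{i-1}^+}$, not $\phi_i$ alone, and the higher $\phi_j$ need not be $\theta$-symmetric until they are replaced by the already-constructed $\dot\phi_j$. The right object is the accumulated correction $\xi_i=\phi_i\prod_{j>i}(\phi_j\dot\phi_j^{-1})\,|\,G^i$. One then writes $\vartheta(\Psi)\,|\,G^0_{y,r_{i-1}^+}=\xi_i\dot\phi_{i+1}\cdots\dot\phi_d\,|\,G^0_{y,r_{i-1}^+}$, uses $\theta$-symmetry of the $\dot\phi_j$ (and the pro-$p$ fact that a quadratic character of $G^{0,\theta}_{y,r_{i-1}^+}$ is trivial) to kill the $\dot\phi_j$ factors, deduces $\xi_i\,|\,G^{0,\theta}_{y,r_{i-1}^+}=1$, and finally uses Proposition~\ref{twodivprop} to get agreement of $\xi_i\circ\theta$ and $\xi_i^{-1}$ on $G^0_{y,r_{i-1}^+}\cap G^0_{\theta(y),r_{i-1}^+}$, whence Lemma~\ref{eqlevi}. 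You gesture at all of this when you say ``the $\theta$-symmetries for $j>i$ control the contributions of the higher $\phi_j$'s,'' but the precise bookkeeping matters because feeding the wrong quasicharacter into Lemma~\ref{eqlevi} would not yield the required agreement on the intersection.
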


\begin{proof}
Suppose that $\phi_d$ is nontrivial. Because
$\vartheta(\Psi)\,|\, G_{y,r_{d-1}^+}^0=\phi_d\,|\,
G_{y,r_{d-1}^+}^0$, we have $\phi_d\,|\, G_{y,r_{d-1}^+}^{0,
\theta}=1$. Applying Proposition~\ref{twodivprop},
we have that $\phi_d\circ\theta$
and $\phi_d^{-1}$ agree on $G_{y,r_{d-1}^+}^0
\cap G_{\theta(y),r_{d-1}^+}^0$. Applying
Lemma~\ref{depthlemma}, we see that
$\phi_d\circ\theta$ and $\phi_d^{-1}$ agree
on $G_{r_{d-1}^+}^0$. This means that
the depth of the quasicharacter
$(\phi_d\circ\theta\,|\, G^0)\phi_d\,|\, G^0$
of $G^0$
is at most $r_{d-1}$. Applying 
Lemma~\ref{depth} to conclude that
the depth of $(\phi_d\circ\theta)\phi_d$
is at most $r_{d-1}$, we see that
$\phi_d\,|\, G_{y,r_{d-1}^+}^\theta
=1$. According to Remark~\ref{extendibility},
there exists
a $\theta$-symmetric quasicharacter $\dot\phi_d$
of $G$ such that $\dot\phi_d$ and $\phi_d$
agree on $G_{y, r_{d-1}^+}$.

If $\phi_d$ is trivial, let $\dot\phi_d=\phi_d$.

If $d=0$, setting $\dot\rho=\rho(\phi_0\dot\phi_0^{-1})$
and $\dot\Psi=(\dot\rho,(\dot\phi_0))$, we obtain
the required weakly $\theta$-symmetric refactorization
of $\Psi$.

Suppose that $d>0$. 
Let $\xi=\phi_{d-1}(\phi_d\dot\phi_d^{-1})\,|\,
G^{d-1}$. 
Noting that $\phi_d\dot\phi_d^{-1}=1$ or the depth of 
$\phi_d\dot\phi_d^{-1}$ is at most $r_{d-1}$,
we apply Lemma~\ref{generictrivia} (together with
the definition of $G$-generic quasicharacter of $G^{d-1}$)
and conclude that $\xi$ is $G$-generic (of depth $r_{d-1}$)
relative to $y$. It is immediate from this that
$\xi\circ\theta$ is a $G$-generic quasicharacter of
$\theta(G^{d-1})$ (of depth $r_{d-1}$) relative to
$\theta(y)$. 
From the equality
$$
\vartheta(\Psi)\,|\, G_{y,r_{d-2}^+}^0=\phi_{d-1}\phi_d\,|\,
G_{y,r_{d-2}^+}^0=\xi\dot\phi_d\,|\, G_{y,r_{d-2}^+}^0,
$$
together with the fact that $\dot\phi_d$ is $\theta$-symmetric,
and the assumption 
$\vartheta(\Psi)\,|\, K_+(\Psi)^{0,\theta}=1$,
it follows that $\xi\,|\, G_{y,r_{d-2}^+}^{0,\theta}
=1$. Applying Proposition~\ref{twodivprop}, we
see that $\xi\circ\theta$ and $\xi^{-1}$ 
agree on $G^0_{y,r_{d-2}^+}$.
Hence we may apply Lemma~\ref{eqlevi},
with $\bG^\prime=\theta(\bG^{d-1})$, $y^\prime=\theta(y)$,
$\phi^\prime=\xi\circ\theta$, $\bG^\flat=\bG^{d-1}$,
$y^\flat=y$, and $\phi^\flat=\xi^{-1}$, to show
that $\theta(\bG^{d-1})=\bG^{d-1}$.

 Applying Lemmas~\ref{depthlemma} and \ref{depth}
to $\xi$ (in the same manner as for
the case $\phi_d$ nontrivial), we can deduce 
from $\xi\,|\, G_{y,r_{d-2}^+}^{0,\theta}=1$
that $\xi\,|\, G_{y,r_{d-2}^+}^{d-1,\theta}
=1$. Then, according to Remark~\ref{extendibility},
there exists a $\theta$-symmetric quasicharacter
$\dot\phi_{d-1}$ of $G^{d-1}$ that agrees with
$\xi=\phi_{d-1}(\phi_d\phi_d^{-1}\,|\, G^{d-1})$
on $G_{y,r_{d-2}^+}^{d-1}$.

Continuing in this manner, we find that
$\theta(G^i)=G^i$ for all $i$, and construct
$\dot\phi_d,\dots,\dot\phi_0$ in sequence.
Finally, setting $\dot\rho=\rho\otimes(\prod_{i=0}^d
\phi_i\dot\phi_i^{-1}\,|\, K^0)$, we obtain
a weakly $\theta$-symmetric refactorization
of $\Psi$.
\end{proof}

\subsection*{The remaining proofs}

\begin{proof}[Proof of Proposition  \ref{TFAErefactor}]
We start by proving (1).  Assume that $\Theta^\prime$ and $\xi$ are weakly compatible and 
fix $\theta\in \Theta^\prime$ and   $\Psi\in \xi$.
The case $d=0$ follows immediately from Lemma \ref{firsthalf}.  

Therefore, we assume that $d>0$.  We want to show there exists $k\in K$ and a  
refactorization $\dot\Psi = (\vec\bG,y,\dot\rho,\vec{\dot\phi})$ of $\Psi$ such that 
${}^k\dot\Psi$ is weakly $\theta$-symmetric. We do this by constructing 
$\dot\phi_d,\dots \dot\phi_0,\dot\rho$ successively.

Choose a quasicharacter $\dot\phi_d$ of $G$, by applying Lemma \ref{firsthalf} with $i=d$, 
so that  $\dot\phi_d$  is 
$\theta$-symmetric and satisfies Conditions \textbf{F0} and \textbf{F1}.  
Next, noting that Hypothesis~C($\bG$), together with the fact that
$\phi_d\dot\phi_d^{-1}\,|\, G_{y,r_{d-1}^+}=1$ and $G_{y,s_{d-1}^+}
\supset J^d_+$, shows that the
assumptions of Lemma~\ref{secondhalf} are satisfied, with $i=d-1$,
$\chi_{d+1}=\phi_d\dot\phi_d^{-1}$, and $\theta_d = \theta$.
Hence, letting $\dot\phi_{d-1}^\bullet$ be as in the statement of
Lemma~\ref{secondhalf}, 
there exists a $G$-generic element $\dot\Gamma_{d-1}$ that realizes 
$\dot\phi^\bullet_{d-1}\, |\, G^{d-1}_{y,r_{d-1}}$, and an involution $\theta_{d-1}$ 
in the $J^d$-orbit of $\theta$ such that $\theta_{d-1}(\dot\Gamma_{d-1}) = 
-\dot\Gamma_{d-1}$ and $\theta_{d-1}(G^{d-1}) = G^{d-1}$.

Now we apply Lemma \ref{firsthalf} with $i=d-1$ and $\theta$ replaced by $\theta_{d-1}$.
Note that $\dot\phi_d$ is $\theta_{d-1}$-symmetric, since it is $\theta$-symmetric 
and $\theta_{d-1}$ is in the $J^d$-orbit of $\theta$.  
We obtain a quasicharacter $\dot\phi_{d-1}$ that is $\theta_{d-1}$-symmetric and 
satisfies Condition \textbf{F1}.  (Condition \textbf{F0} is vacuous.)  
If $d=1$ we define $\dot\rho$ by Condition \textbf{F2} and we are done.  
Otherwise, we repeatedly apply Lemmas \ref{secondhalf} and \ref{firsthalf} until we 
have defined $\dot\phi_0$. Finally, we define $\dot\rho$ as in Condition~\textbf{F2}.

In this way we can construct an involution $\theta^\prime\in \Theta^\prime$ 
and a refactorization 
$\dot\Psi$ of $\Psi$ such that $\dot\Psi$ is weakly $\theta^\prime$-symmetric.  
Now choose $k\in K$ such that $\theta^\prime = k^{-1}\cdot\theta$.  
Then ${}^k\dot\Psi$ is a weakly $\theta$-symmetric element of $\xi$, and this proves (1).

The previous argument also yields the more difficult part of (2).  It only remains to show that if $\theta$ is an involution of $G$ and $\Psi$ is a $\theta$-symmetric $(G,K)$-datum then the character $\vartheta = \vartheta (\Psi)$ has trivial restriction to $K_+^\theta$.  It suffices to show that for all $i\in \{\, 0,\dots ,d\,\}$ the character $\hat\phi_i$ has trivial restriction to $K_+^\theta$.  According to Proposition \ref{KJfactor}, we have
$$K^\theta_+ = K^{0,\theta}_+ J^{1,\theta}_+\cdots J^{d,\theta}_+$$
and so Lemma \ref{hatphikplus} implies $$\hat\phi_i\, |\,K^\theta_+ = \inf\nolimits_{K^{i+1,\theta}_+}^{K^\theta_+}(\hat\phi_i\, |\, K^{i+1,\theta}_+).$$  It therefore suffices to show that $\hat\phi_i$ is trivial on $K^{i+1,\theta}_+ = K^{i,\theta}_+J^{i+1,\theta}_+$.
If $k\in K^{i,\theta}_+$, we have $\hat\phi_i (k) = \phi_i (k)= \phi_i (\theta(k)) = \phi_i(k)^{-1} = \hat\phi_i(k)^{-1}$.  Now using the fact that a character of a pro-$p$-group cannot assume the value $-1$, we deduce that $\hat\phi_i$ is trivial on $K^i_+$.  A similar argument may be used to show that $\hat\phi_i$ is trivial on $J^{i+1,\theta}_+$ since, according to Lemma \ref{stableJ}, we have 
$\hat\phi_i\circ\theta\,|\, J_+^{i+1} = \hat\phi_i^{-1}\, |\, J_+^{i+1}$.
\end{proof}

\begin{proof}[Proof of Proposition  \ref{oldLemmaA}]
Assume $\Theta^\prime$ and $\xi$ are moderately compatible and $\Psi\in \xi$.  
Then $\theta[y]=[y]$ and $\vartheta(\xi)\,|\, K_+^0(\Psi)^\theta=1$
for all $\theta\in \Theta^\prime$.
Now fix $\theta^\prime\in \Theta$.  According to Proposition \ref{TFAErefactor}(2) 
and Lemma \ref{GKeqlemone}, we may choose 
a refactorization $\dot\Psi$ of $\Psi$ and $k\in K$ such that 
${}^k\dot\Psi$ is $\theta^\prime$-symmetric.  
Let $\theta = k^{-1}\cdot\theta^\prime$.  Then (see Lemma~\ref{varthetaorbitinv})
the conditions of (3) are all 
satisfied with $\Psi$ replaced by $\dot\Psi$.
But, as remarked after the statement of Proposition \ref{oldLemmaA}, 
these conditions are invariant under refactorizations.  
Hence (1) implies (3).

Next,  assume the conditions in (2) are satisfied.  
Then Lemma \ref{extra} implies that $\theta(\vec\bG)=\vec\bG$
and there exists a weakly $\theta$-symmetric refactorization 
$\dot\Psi$ of $\Psi$. Suppose $\theta^\prime$ is an arbitrary element of 
$\Theta^\prime$.  Choose $k\in K$ so that $\theta^\prime = k\cdot \theta$.  
Then ${}^k\dot\Psi$ is a weakly $\theta^\prime$-symmetric element of $\xi$. 
Therefore Proposition \ref{TFAErefactor} implies $\Theta'$ and $\xi$ 
are moderately compatible, and (1) holds.

Since (3) clearly implies (2), our assertion now follows.
\end{proof}

\begin{proof}[Proof of Proposition  \ref{oldLemmasBandC}]
Assume  $\Theta^\prime,\Theta^{\prime\prime}\in \Theta^K$.  
In the statement of Proposition \ref{oldLemmasBandC}, we fix  $\theta\in\Theta^\prime$
 and $\Psi = (\vec\bG,y,\rho,\vec\phi)\in \xi\in \Xi^K$ such that some refactorization of $\Psi$ is weakly $\theta$-symmetric.  
It is easy to see that there is no loss in generality in assuming that $\Psi$ is itself weakly $\theta$-symmetric and we will do this.

Assume $\Theta^\prime$ and $\Theta^{\prime\prime}$ are  weakly compatible with $\xi$. 
 Choose $\theta^\prime\in \Theta^{\prime\prime}$.
  Proposition \ref{TFAErefactor}(1) allows us to choose a weakly 
$\theta^\prime$-symmetric element  $\Psi^\prime\in \xi$.  Lemma \ref{GKeqlemone}
implies that there exists $k\in K$ and a refactorization $\dot\Psi$ of $\Psi$ such that 
$\Psi^\prime = {}^k\dot\Psi$.  Let $\theta_{d-1} = k^{-1}\cdot \theta^\prime$. 
 Then $\dot\Psi$ is $\theta_{d-1}$-symmetric.

Choose $j_{d+1}\in G$ so that  $\theta_{d-1} = j_{d+1}\cdot \theta$.
We will show that we may choose $j_d\in J^d,\dots, j_1\in J^1$ such that if $h_{i+1} = j_{i+1}\cdots j_{d+1}$ and $g_i = h_{i+1}\theta (h_{i+1})^{-1}$ then $g_i\in G^i$.

Fix $i\in \{\, 0,\dots , d-1\,\}$ and assume $j_d,\dots, j_{i+2}$ have been defined.  Let $\beta_i$ be the coset in $\g^{i,*}_{-r_i:(-r_i)^+}$ corresponding to $\phi_i\, |\, G^i_{y,r_i}$.  
As can be seen upon examining the proof of Proposition \ref{TFAErefactor},  the 
condition $\phi_i\circ\theta = \phi_i^{-1}$ guarantees the existence of a $G^{i+1}$-generic 
element $\Gamma_i\in \z^{i,*}_{-r_i}\cap\beta_i$ such that $\theta(\Gamma_i)=-\Gamma_i$ or, 
in other words, $\Gamma_i\in (\g^{i+1,\theta})^\bullet$.  

The coset that realizes
the character $\hat\phi_i\, |\,J^{i+1}_+$ is the coset $\hat\beta_i = \Gamma_i + 
\gJ^{i+1\bullet}_+$, where $\gJ^{i+1}$ and $\gJ^{i+1\bullet}$ are as
in the proof of Lemma~\ref{secondhalf}.
As shown in the proof of Lemma \ref{secondhalf}, we have 
$\hat\beta_i = \Ad^*(J^{i+1})\beta_i$.  Let $h_{i+2} = j_{i+2}\cdots j_{d+1}$ and $\theta_i = h_{i+2} \cdot\theta$.
Let $\vartheta = \vartheta (\xi)$.  Weak compatibility implies $\vartheta\, |\, K^{\theta_i}_+ 
= 1$ and then Lemma \ref{hatphikplus}
implies $$\left( \hat\phi_i \mid J^{i+1,\theta_i}_+ \right)\prod_{\ell= i+1}^d\left( \phi_\ell \mid J^{i+1,\theta_i}_+\right) =1.$$
On the other hand, if $\ell\in \{\, i+1,\dots, d\,\}$ then it is easy to see that $\phi_\ell\, |\,
J^{i+1,\theta_i}_+=1$.  Indeed, if $\gamma \in J^{i+1,\theta_i}_+$ then $\gamma = h_{i+2}\alpha h_{i+2}^{-1}$ for some $\alpha\in G^\theta$ and we have $\phi_\ell (\gamma) = \phi_\ell (g_{i+1}\theta (\gamma)g_{i+1}^{-1}) = \phi_\ell (\theta (\gamma)) = \phi_\ell (\gamma)^{-1}$.  So $\phi_\ell\, |\,J^{i+1,\theta_i}_+$ must be trivial since it is a quadratic character of a pro-$p$-group.  
We deduce that $\hat\phi_i\, | \, J^{i+1,\theta_i}_+=1$
and thus, arguing as in the proof of Lemma~\ref{secondhalf},
 $\Gamma_i \in (\gJ^{i+1}_+\cap\g^{i+1,\theta_i} )^\bullet = 
\gJ^{i+1\bullet}_+  + (\g^{i+1,\theta_i})^\bullet$ or, equivalently, 
$\hat\beta_i \cap (\g^{i+1,\theta_i})^\bullet\ne \emptyset$.

We may now choose $j_{i+1}\in J^{i+1}$ and $Z_i\in \g^{i,*}_{y,(-r_i)^+}$ 
such that 
$$
\theta_i (\Ad^*(j_{i+1})(\Gamma_i+ Z_i)) = - \Ad^* (j_{i+1})
(\Gamma_i + Z_i).
$$  
Hence, $-\theta_{i-1}(\Gamma_i + Z_i) = \Gamma_i+ Z_i$, where 
$\theta_{i-1} = j_{i+1}\cdot\theta_i$.  Therefore, 
$-\theta_{i-1}(\beta_i)\cap\beta_i \ne\emptyset$.  This implies 
$-\Ad^* (g_i)\theta (\beta_i)\cap\beta_i \ne \emptyset$, where $h_{i+1} = j_{i+1}h_{i+2}$ and $g_i = h_{i+1}\theta(h_{i+1})^{-1}$.  
Consequently,
$$\Ad^*(g_i)^{-1} (\Gamma_i + \g^{i,*}_{(-r_i)^+})\cap (\Gamma_i + \g^{i,*}_{(-r_i)^+})\ne \emptyset.$$  Lemma \ref{auxlemone} implies $g_i\in G^i$.

This completes the construction of the sequence $j_{d+1},\dots ,j_1$. Taking $g = h_1 = j_1\cdots j_{d+1}$, we have $g\cdot \theta = \theta_{-1} \in \Theta^{\prime\prime}$ 
and $g\theta(g)^{-1} = g_0 \in G^0$ which proves Part (1) of Proposition \ref{oldLemmasBandC}.

In the previous discussion, if $\Theta^\prime$ and and $\Theta^{\prime\prime}$ 
are both moderately compatible
 with $\xi$ then $g\theta(g)^{-1}[y] = (g\theta(g)^{-1})\cdot\theta [y] = 
(g\cdot\theta)[y] =[y]$.  Therefore, $g\theta(g)^{-1} \in G^0_{[y]} = K^0$.
  This proves one half of Part (2).

Finally, we assume $\Theta^\prime$ and $\xi$ are moderately 
compatible and $g\in G$ satisfies $g\theta (g)^{-1}\in K^0$ and $g\cdot\theta\in 
\Theta^{\prime\prime}$. 
Then $(g\cdot \theta)\vec\bG = \Int(g\theta(g)^{-1})(\theta (\vec\bG)) = \Int (g\theta(g)^{-1})
(\vec\bG) = \vec\bG$.  Similarly, one may verify that 
$(g\cdot\theta)\vec\phi = \vec\phi^{-1}$ and we also have $(g\cdot\theta)[y]=[y]$.  
Therefore, $\Psi$ is $(g\cdot\theta)$-symmetric.  Assume $\theta^\prime\in 
\Theta^{\prime\prime}$.  Choose $k\in K$ so that $\theta^\prime = kg\cdot \theta$.  
Then ${}^k\Psi\in \xi$ is $\theta'$-symmetric.
Therefore, Proposition \ref{TFAErefactor}(2) implies that $\Theta^{\prime\prime}$
 and $\xi$ are moderately compatible.  This completes the proof of Part (2).
\end{proof}

\section{Strong compatibility}
\label{sec:compatorbs}

Let $\Theta$ be a $G$-orbit of involutions of $G$ and
fix a $K$-equivalence class $\xi\in \Xi^K$ of $(G,K)$-data.  
{As in the previous section, we assume that
Hypothesis~\rm{C}($\vec\bG$) holds for some (hence all) tamely 
ramified twisted Levi 
sequences
$\vec\bG$ that occur in the $(G,K)$-data in $\xi$.}
Recall that $\langle \Theta,\xi\rangle_G$ denotes
the dimension of $\Hom_{G^\theta}(\pi (\Psi),1)$, where $\theta$ and $\Psi$ are arbitrary elements of $\Theta$ and $\xi$.  Equation \ref{basicmulteq} expresses this in terms of the constants $\langle \Theta^\prime,\xi\rangle_K$, where $\Theta^\prime\in \Theta^K$ is a $K$-orbit in $\Theta$.  When $\langle \Theta^\prime,\xi\rangle_K$ is nonzero, we say that $\Theta^\prime$ and $\xi$ are strongly compatible.  The following result provides the key information about strong compatibility that 
ultimately allows us to obtain a formula for  $\langle \Theta^\prime,\xi\rangle_K$ in Theorem \ref{maindimformula}.
 
\begin{proposition}\label{compatorbsum} 
Suppose that $\Theta^\prime\in \Theta^K$ and $\xi\in \Xi^K$
are strongly compatible.
Then $\Theta^\prime$
and $\xi$ are moderately compatible.
\end{proposition}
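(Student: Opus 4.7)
The plan is to decompose the conclusion into two assertions: weak compatibility ($\vartheta(\xi)|_{K_+^\theta}=1$) and the condition $\theta[y]=[y]$ for $\theta\in\Theta'$ and $\Psi=(\vec\bG,y,\rho,\vec\phi)\in\xi$. Weak compatibility is immediate: strong compatibility produces a nonzero $\lambda\in\Hom_{K^\theta}(\kappa(\Psi),1)$, and Corollary~\ref{varthetaisotypy} gives $\lambda(v)=\lambda(\kappa(\Psi)(k)v)=\vartheta(\xi)(k)\lambda(v)$ for all $k\in K_+^\theta$ and all $v$ in the space of $\kappa(\Psi)$, forcing $\vartheta(\xi)(k)=1$.

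For the condition $\theta[y]=[y]$, I plan to argue by contradiction, using Corollary~\ref{quaddistnecessary} as the main input. That corollary lets me assume $\Psi$ is weakly $\theta$-symmetric and supplies a quadratic character $\chi$ of $K^{0,\theta}:=K^0(\Psi)\cap G^\theta$ with $\Hom_{K^{0,\theta}}(\rho'(\Psi),\chi)\neq 0$. Since each $\phi_i$ is $\theta$-symmetric, $\phi_i(k)=\phi_i(k)^{-1}$ for $k\in K^{0,\theta}$, so $\phi_i|_{K^{0,\theta}}$ is quadratic; untwisting $\rho'(\Psi)=\rho\otimes\prod_{i=0}^d(\phi_i|_{K^0})$ yields a quadratic character $\chi''$ of $K^{0,\theta}$ with $\Hom_{K^{0,\theta}}(\rho,\chi'')\neq 0$. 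Suppose, toward a contradiction, that $\theta[y]\neq[y]$. By Condition~\textbf{D4}, $\rho|_{G^0_{y,0^+}}$ is trivial, so $\rho|_{G^0_{y,0}}$ descends to a representation $\bar\rho^\natural$ of $\bar G:=G^0_{y,0}/G^0_{y,0^+}$ each of whose irreducible constituents is cuspidal (being a $K^0/G^0_{y,0}$-conjugate of the cuspidal representation inflated in~\textbf{D4}). Set $H_y:=G^0_{y,0}\cap G^\theta$ and let $\bar H\subset\bar G$ be its image. Since $\chi''$ is quadratic and $H_y\cap G^0_{y,0^+}$ is pro-$p$ with $p$ odd, $\chi''$ is trivial on $H_y\cap G^0_{y,0^+}$ and descends to a character $\bar\chi''$ on $\bar H$, yielding a nonzero element of $\Hom_{\bar H}(\bar\rho^\natural,\bar\chi'')$.

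The contradiction with cuspidality will follow from the geometric claim: if $\theta[y]\neq[y]$, then $\bar H$ contains the unipotent radical $\bar U$ of a proper parabolic subgroup $\bar P\subset\bar G$. Granting this, $\bar\chi''|_{\bar U}=1$ (since $\bar U$ is a $p$-group and $\bar\chi''$ is quadratic), so $\Hom_{\bar U}(\bar\rho^\natural,1)\neq 0$, contradicting cuspidality of every constituent of $\bar\rho^\natural$. To establish the geometric claim, I would proceed as follows. Any $h\in H_y$ satisfies $h\theta(y)=\theta(hy)=\theta(y)$, so $H_y\subset G^0_{y,0}\cap G^0_{\theta y,0}$; standard Bruhat--Tits theory in the apartment containing $y$ and $\theta y$ identifies $(G^0_{y,0}\cap G^0_{\theta y,0})/G^0_{y,0^+}$ with a proper parabolic $\bar P$ of $\bar G$ (determined by the relative position of $[y]$ and $[\theta y]$ in $\cB_{\mathrm{red}}(\bG^0,F)$) whose unipotent radical $\bar U$ is the image of $G^0_{y,0}\cap G^0_{\theta y,0^+}$. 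For each $\bar u\in\bar U$, I lift to $u\in G^0_{y,0}\cap G^0_{\theta y,0^+}$ and seek $v$ in a $\theta$-stable pro-$p$ subgroup $\cG$---the pro-unipotent radical of $G^0_{y,0}\cap G^0_{\theta y,0}$, which lies inside a Moy--Prasad group $G^0_{z,0^+}$ for a $\theta$-fixed point $z$ on the geodesic $[y,\theta y]$---solving the cocycle equation $u^{-1}\theta(u)=v\,\theta(v)^{-1}$. Proposition~\ref{twodivprop} applied to $\cG$ provides such a $v$; then $uv\in H_y$ and its image in $\bar G$ lies in $\bar U$. As $\bar u$ ranges over $\bar U$, a root-by-root analysis---exploiting that squaring is a bijection on the $p$-group $\bar U$ since $p$ is odd---shows these images cover all of $\bar U$.

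The main obstacle is the geometric claim, specifically the careful choice of the point $z$ and the $\theta$-stable pro-$p$ subgroup $\cG$ so that Proposition~\ref{twodivprop} applies cleanly, together with the verification that the lifts $uv$ exhaust $\bar U$ as $\bar u$ varies (which requires tracking how $\theta$ permutes the root groups contributing to $\bar U$). The remaining ingredients---the parabolic description of $(G^0_{y,0}\cap G^0_{\theta y,0})/G^0_{y,0^+}$ and the failure of cuspidal representations to admit $\bar U$-invariant vectors---are standard, and the reduction to this geometric question is straightforward using Corollaries~\ref{varthetaisotypy} and~\ref{quaddistnecessary}.
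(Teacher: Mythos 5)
Your overall strategy is the same as the paper's: reduce to showing $\theta[y]=[y]$, argue by contradiction, invoke Corollary~\ref{quaddistnecessary} to get a quadratic character $\chi$ with $\Hom_{K^{0,\theta}}(\rho,\chi)\ne 0$, then show that the $\theta$-fixed points in $G^0_{y,0}$ surject onto the unipotent radical of a proper parabolic of $\bar G=G^0_{y,0:0^+}$, and derive a contradiction with cuspidality of $\bar\rho$. That much is correct, and the first part of your argument (weak compatibility from $\vartheta(\xi)$-isotypy, reduction to the cuspidality obstruction) is exactly what the paper does.

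The gap is in your mechanism for producing $\theta$-fixed representatives. You propose to solve the cocycle equation $u^{-1}\theta(u)=v\,\theta(v)^{-1}$ via Proposition~\ref{twodivprop}, with $u\in G^0_{y,0}\cap G^0_{\theta y,0^+}$ a lift of $\bar u$. But $u^{-1}\theta(u)$ is \emph{not} small: $\theta(u)$ lies in $G^0_{y,0^+}$, so the image of $u^{-1}\theta(u)$ in $\bar G$ is $\bar u^{-1}\ne 1$, hence $u^{-1}\theta(u)\notin G^0_{y,0^+}$ and in particular it does not lie in a subgroup of a positive-depth Moy--Prasad group where Proposition~\ref{twodivprop} is applicable. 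Your fallback — choosing $v$ in the pro-unipotent radical $\cG$ of $G^0_{y,0}\cap G^0_{\theta y,0}$ (via a $\theta$-fixed point $z$) and then arguing surjectivity of $\bar u\mapsto\overline{uv}$ by a root-by-root squaring argument — is where the real work lies, and you leave it as a sketch; moreover if $v\notin G^0_{y,0^+}$ the element $uv$ need no longer lift $\bar u$, so one has to establish surjectivity onto $\bar U$ rather than a coset-by-coset statement, which is a genuinely more delicate claim.

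The paper avoids all of this with a commutator trick. Given a coset of $\bar U$, pick a representative $g\in G^0_{y,0}\cap G^0_{\theta y,0^+}$. The element $g\theta(g)$ lies in the \emph{same} coset mod $G^0_{y,0^+}$ (since $\theta(g)\in G^0_{y,0^+}$) and is nearly $\theta$-fixed: the obstruction $(g\theta(g))^{-1}\theta(g\theta(g))$ is the commutator $[\theta(g)^{-1},g^{-1}]$, which \emph{does} lie in the pro-$p$ group $G^0_{y,0^+}\cap G^0_{\theta y,0^+}$, a $\theta$-stable subgroup of the Moy--Prasad group $G^0_{y,0^+}$. Proposition~\ref{twodivprop} now applies directly to produce $\alpha\in G^0_{y,0^+}\cap G^0_{\theta y,0^+}$ correcting the obstruction, and $g^\prime=g\theta(g)\theta(\alpha)$ is a $\theta$-fixed representative of the given coset. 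This yields $G^0_{x,0^+}=G^{0,\theta}_{x,0^+}G^0_{y,0^+}$ directly (the paper's $x$ being chosen so that $G^0_{x,0^+}=(G^0_{y,0}\cap G^0_{\theta(y),0^+})G^0_{y,0^+}$), with no surjectivity argument needed. The missing idea in your proposal is precisely this replacement of $g$ by $g\theta(g)$ before invoking the $2$-divisibility result.
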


\begin{proof} Assume $\Theta^\prime$ and $\xi$ are strongly compatible.  
Then they must also be weakly compatible.  Fix $\theta\in \Theta^\prime$.  
According to Proposition \ref{TFAErefactor}(1), we may also fix a weakly 
$\theta$-symmetric element $\Psi = (\vec\bG,y,\rho,\vec\phi)\in \xi$.  
It remains to show that $\theta [y] =[y]$.  
Therefore, we now suppose $\theta [y]\ne [y]$ and we proceed to arrive 
at a contradiction of the fact that $\langle \Theta^\prime,\xi\rangle_K$ is nonzero.

According to Corollary \ref{quaddistnecessary} there exists a character $\chi$ of $K^{0,\theta}$ such that $\chi^2=1$ and $\Hom_{K^{0,\theta}}(\rho,\chi)\ne 0$.

There exists an apartment $A$ in $\cB (\bG^0,F)$ that contains $y$ and $\theta(y)$.  There also exists $x\in \cB (\bG^0,F)$ such that $[y]$ lies on the boundary of the facet of 
$[x]$ in $A$ and $G^0_{x,0}/G^0_{y,0^+}$ is a proper parabolic subgroup of $G^0_{y,0:0^+}$ with unipotent radical $G^0_{x,0^+}/G^0_{y,0^+}$ and
$$G^0_{x,0^+}= (G^0_{y,0}\cap G^0_{\theta(y),0^+})G^0_{y,0^+}.$$ Here, we are viewing $G^0_{y,0:0^+}$ as the rational points of a reductive group over the residue field of $F$.

We claim that $G^0_{x,0^+}= G^{0,\theta}_{x,0^+}G^0_{y,0^+}$.  To see this, fix a coset in $G^0_{x,0^+}/G^0_{y,0^+}$.  We may choose a representative $g$ for this coset which lies in $G^0_{y,0}\cap G^0_{\theta(y),0^+}$.  Then the commutator $h= g^{-1}\theta (g)^{-1} g\theta (g)$ lies in $Z^1_\theta (G^0_{y,0^+}\cap G^0_{\theta(y),0^+})$, in the notation of
 Section~\ref{sec:stablesubgroups}.
  Using Proposition \ref{twodivprop}, we choose 
$\alpha\in G^0_{y,0^+}\cap G^0_{\theta(y),0^+}$ such that 
$h=\alpha\theta (\alpha)^{-1}$.  Now if $g'= g\theta (g)\theta (\alpha)$ 
then $g'\in G^{0,\theta}_{x,0^+}$ and $g'G^0_{y,0^+}= gG^0_{y,0^+}$. 
 This establishes that $G^0_{x,0^+}= (G^0_{x,0^+})^\theta G^0_{y,0^+}$.

The restriction of $\rho$ to $G_{y,0}^0$ factors to a cuspidal representation
$\bar{\rho}$ of $G^0_{y,0:0^+}$.
Let $H= G^0_{x,0^+}/G^0_{y,0^+}$.  Since $H$ is unipotent and $\chi^2=1$ and $p$ is odd, 
it must be the case that $\chi$ restricts to the trivial character of $H$.  Since 
$G^0_{x,0^+}= G^{0,\theta}_{x,0^+}G^0_{y,0^+}$, it follows that 
$\Hom_H (\bar{\rho},1)\ne 0$.  
But since this contradicts the cuspidality of $\bar{\rho}$, 
it follows that $\langle \Theta^\prime ,\xi\rangle_K =0$.
\end{proof}

\section{Finiteness results}\label{ledimestfin}

We show in this section that the constants $\langle \Theta,\xi\rangle_G$ are finite.    Recall that the expression for $\langle \Theta,\xi\rangle_G$ in equation \ref{basicmulteq} came directly from the Mackey theory formula
\begin{equation}\label{basicmulteqb}
\Hom_{G^\theta}(\pi (\Psi),1)\cong \bigoplus_{KgG^\theta\in K\bs G/G^\theta} \Hom_{K\cap gG^\theta g^{-1}} (\kappa (\Psi) ,1),
\end{equation}
 with $\theta\in \Theta$ and $\Psi\in \xi$.
The double coset space $K\bs G/G^\theta$ is discrete, since $K$ is open, but it is infinite when $G/G^\theta Z$ is noncompact.  Establishing  the finiteness of $\langle \Theta ,\xi\rangle_G$ is essentially equivalent to showing that only finitely many summands in equation \ref{basicmulteqb} can be nonzero.  

Consider the mapping from $\bG$ into itself defined by $g\mapsto g\theta (g)^{-1}$ and let $\bS_\theta$ denote the image of this map viewed as an affine variety on which $\bG$ acts by $g\cdot h= gh\,\theta(g)^{-1}$.   Note that the set of $F$-rational points in $\bS_\theta$ is $$\bS_\theta (F)= \{\, g\theta (g)^{-1} \ | \ g\in \bG\,\}\cap G$$ and this is not necessarily the same as the set
$$\cS_\theta = \{\, g\theta (g)^{-1}\ | \ g\in G\,\}.$$
Clearly, $\cS_\theta \subseteq \bS_\theta (F)$.

The following  lemma is standard, though its proof is not readily available in the literature:

\begin{lemma}\label{standardfact}
The map $g\mapsto g\theta(g)^{-1}$ gives a homeomorphism from $G/G^\theta$ to $\cS_\theta$, where $G$, $G^\theta$ and $\cS_\theta$ carry their natural $p$-adic topologies and $G/G^\theta$ has the quotient topology.
\end{lemma}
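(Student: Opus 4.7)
The plan is to show that $\bar\phi : G/G^\theta \to \cS_\theta$, $gG^\theta \mapsto g\theta(g)^{-1}$, is a homeomorphism. First I would verify the elementary properties: $\bar\phi$ is well-defined because $\theta$ restricts to the identity on $G^\theta$; injective because $g_1\theta(g_1)^{-1} = g_2\theta(g_2)^{-1}$ forces $g_2^{-1}g_1 = \theta(g_2^{-1}g_1)$, so $g_2^{-1}g_1 \in G^\theta$; and surjective by the definition of $\cS_\theta$. Continuity of $\bar\phi$ is immediate from the continuity of the map $\phi : G \to G$, $g \mapsto g\theta(g)^{-1}$, together with the universal property of the quotient topology on $G/G^\theta$.

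The heart of the argument is to show $\bar\phi$ is open, for which I would invoke the $F$-analytic implicit function theorem. Viewing $\bG$ and $\bG^\theta$ as $F$-analytic Lie groups, a direct computation shows that the differential $d\phi_e : \g \to \g$ of $\phi$ at the identity is $X \mapsto X - d\theta(X)$, with kernel $\{\, X \in \g \ | \ d\theta(X) = X\,\} = \mathrm{Lie}(G^\theta)$ and image the $(-1)$-eigenspace of $d\theta$ on $\g$ (the identification of the image uses that $p$ is odd, so that $2$ is invertible). Because $p \ne 2$, the fixed-point subgroup $\bG^\theta$ is smooth, so the map $\bG \to \bS_\theta$ factors as a smooth quotient $\bG \to \bG/\bG^\theta$ followed by an $F$-isomorphism $\bG/\bG^\theta \to \bS_\theta$, and in particular the tangent space to $\bS_\theta$ at $e$ is precisely the $(-1)$-eigenspace of $d\theta$. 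Thus $d\phi_e$ is surjective onto this tangent space, and the implicit function theorem yields an open neighborhood $U$ of $e$ in $G$ such that $\phi(U)$ is open in $\bS_\theta(F)$ and $\phi|_U$ admits a continuous local section.

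Finally, I would transport this local openness to all points using the equivariance relation $\phi(g_0 g) = g_0\,\phi(g)\,\theta(g_0)^{-1}$: for every $g_0 \in G$, the restriction $\phi|_{g_0 U}$ is open onto its image, and therefore $\cS_\theta = \phi(G) = \bigcup_{g_0 \in G} g_0\,\phi(U)\,\theta(g_0)^{-1}$ is open in $\bS_\theta(F)$ and $\bar\phi$ is a local homeomorphism. Combined with the bijectivity established at the outset, this shows $\bar\phi$ is a homeomorphism. The main obstacle is identifying $T_e\bS_\theta$ with the $(-1)$-eigenspace of $d\theta$, which is what permits invoking the implicit function theorem; this rests on the smoothness of $\bG^\theta$, a standard consequence of the assumption that $2$ is invertible in $F$.
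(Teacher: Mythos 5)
Your proof is correct and takes a genuinely different route from the paper's. The paper leans on Richardson's result that $g\mapsto g\theta(g)^{-1}$ induces an isomorphism $\bG/\bG^\theta\to\bS_\theta$ of affine $F$-varieties, observes that passing to $F$-points yields a $p$-adic homeomorphism $(\bG/\bG^\theta)(F)\to\bS_\theta(F)$, and then devotes the bulk of the argument to a separate topological verification: that the quotient topology on $G/G^\theta$ agrees with the subspace topology it inherits from $(\bG/\bG^\theta)(F)$, a point that requires care because $G/G^\theta\hookrightarrow(\bG/\bG^\theta)(F)$ and $\cS_\theta\subseteq\bS_\theta(F)$ may be proper inclusions. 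You instead prove outright that $\phi:G\to\cS_\theta$ is an open map — via the $F$-analytic implicit function theorem at $e$ (using smoothness of $\bG^\theta$, the eigenspace decomposition $\g=\g^+\oplus\g^-$ coming from $p\ne 2$, and the identification $T_e\bS_\theta=\g^-$) and then by the equivariance $\phi(g_0g)=g_0\phi(g)\theta(g_0)^{-1}$ to propagate openness. A continuous open bijection is a homeomorphism, so the quotient-versus-subspace topology comparison disappears. The two proofs share the essential geometric input (Richardson's isomorphism, or equivalently smoothness of $\bS_\theta$ with the right tangent space at $e$); the paper exploits it through functoriality of $F$-points and a topology-matching argument, while you exploit it through the submersion theorem. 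Your route is arguably shorter and cleaner once the submersion machinery is available; the paper's has the advantage of staying within algebraic geometry plus elementary point-set topology.
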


\begin{proof}
It is shown in \cite{R} that $\bS_\theta$ is a closed subvariety of $\bG$ and $g\mapsto g\theta(g)^{-1}$ gives an isomorphism $\bG/\bG^\theta \to \bS_\theta$ of affine $\bG$-varieties.  Since all of the objects involved are defined over $F$, this restricts to a Zariski  isomorphism $(\bG/\bG^\theta)(F)\to\bS_\theta (F)$ over $F$.  Since the latter map is an isomorphism of $F$-varieties, it must also be a homeomorphism with respect to the $p$-adic topology.  

The $p$-adic homeomorphism between $(\bG/\bG^\theta)(F)$ and $\bS_\theta (F)$ restricts to a $p$-adic homeomorphism $G/G^\theta \to \cS_\theta$, where $G/G^\theta$ has the subspace topology inherited from $(\bG/\bG^\theta)(F)$. 
We now  verify that the $p$-adic quotient topology on $G/G^\theta$ is in fact identical to the subspace topology obtained from $(\bG/\bG^\theta)(F)$.  We begin by observing that the open subsets of $\bS_\theta (F)$ have the form $\bS_\theta (F)\cap U$, as $U$ varies over the open subsets of $G$.    It follows that the open sets in $(\bG/\bG^\theta)(F)$ are the sets of the form
$$\{\, g\bG^\theta\ | \ g\in \bG,\ g\theta (g)^{-1}\in U\,\}.$$  Thus the open sets in $G/G^\theta$ in the subspace topology inherited from $(\bG/\bG^\theta)(F)$ are the sets
$$X_U = \{\, gG^\theta\ | \ g\in G, g\theta (g)^{-1}\in U\,\},$$ where $U$ is an open set in $\cS_\theta$.  Now each open set $U$ in $\cS_\theta$ pulls back via $g\mapsto g\theta (g)^{-1}$ to an open set $\widetilde U$ in $G$ such that $\widetilde U = \widetilde U G^\theta$.  We have
$$X_U = \{\, gG^\theta \ | \ g\in \widetilde U\,\}.$$ But the sets of the latter form are precisely the open sets in $G/G^\theta$ with respect to the quotient topology.  Thus we have established that we have a $p$-adic homeomorphism $G/G^\theta \to \cS_\theta$, where $G/G^\theta$ has its usual ($p$-adic) quotient topology.
\end{proof}

\begin{proposition}\label{finrelorbs}
Assume that $\Theta$ is a $G$-orbit of involutions of $G$ and $\xi\in \Xi^K$.
\begin{enumerate}
\item The set of $K$-orbits $\Theta^\prime\in \Theta^K$ that are 
moderately compatible with $\xi$ is finite.  
\item $\langle \Theta,\xi \rangle_G$ is finite.
\item The number of nonzero summands in equation \ref{basicmulteqb} is finite.
\end{enumerate}
\end{proposition}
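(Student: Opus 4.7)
The plan is to establish part (1) first; parts (2) and (3) will then follow easily. For (3), by Proposition~\ref{compatorbsum} a summand $\Hom_{K\cap gG^\theta g^{-1}}(\kappa(\Psi),1)$ in the Mackey decomposition can be nonzero only when the $K$-orbit of $g\cdot\theta$ is (strongly, hence) moderately compatible with $\xi$; by part~(1) only finitely many such $K$-orbits exist, and by Lemmas~\ref{sthetaTheta} and \ref{mThetabound} each corresponds to exactly $m_K(\Theta)<\infty$ double cosets in $K\bs G/G^\theta$. Part~(2) follows from (3) together with the finite-dimensionality of $\kappa(\Psi)$, since each nonzero summand embeds in $\Hom(V_{\kappa(\Psi)},\mathbb{C})$.

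For part~(1), if no moderately compatible orbit exists the claim is vacuous, so fix $\theta\in \Theta'$ in some moderately compatible orbit. By Proposition~\ref{oldLemmasBandC}(2), every moderately compatible $K$-orbit $\Theta''$ contains a representative $g\cdot\theta$ with $g\theta(g)^{-1}\in K^0$. Via the $G$-equivariant homeomorphism $G/G^\theta\cong \cS_\theta$ of Lemma~\ref{standardfact}, under which left translation corresponds to $\theta$-twisted conjugation, these orbits correspond to $K$-orbits (under twisted conjugation) on the closed subset $\cS_\theta\cap K^0$ of $\cS_\theta$. Since $K$ is open in $G$, the double cosets $KgG^\theta$ are open in $G$, so the $K$-orbits on $\cS_\theta$ are open subsets.

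Finiteness then follows by descent to $\bar G := G/Z$. The $Z$-action on $\cS_\theta$ by twisted conjugation is left multiplication by $B^1_\Theta\subset Z$, so $K$-orbits on $\cS_\theta$ are in bijection with $\bar K := K/Z$-orbits on $\cS_\theta/B^1_\Theta$. The natural continuous map $\cS_\theta/B^1_\Theta\to \bar G$ is finite-to-one with fibers of cardinality at most $|H^1_\Theta|$: indeed, if $s_1,s_2\in \cS_\theta$ have the same image in $\bar G$, then $s_1=zs_2$ for some $z\in Z$, and applying $\theta$ together with the identity $\theta(s)=s^{-1}$ for $s\in \cS_\theta$ forces $z\in Z^1_\Theta$, while $z\in B^1_\Theta$ identifies $s_1$ and $s_2$ already in $\cS_\theta/B^1_\Theta$; hence the fiber is indexed by $Z^1_\Theta/B^1_\Theta=H^1_\Theta$, which is finite by Lemma~\ref{mThetabound}. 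The image of $\cS_\theta\cap K^0$ in $\bar G$ lies in $\bar K^0 = K^0/Z$, which is compact in Yu's construction (where $Z\subset K^0$ and $K^0/Z$ is compact), hence is itself compact; the images of the open $K$-orbits remain open in $\bar G$ (the quotient $G\to \bar G$ is open), so only finitely many of them cover this compact set. Combined with the $|H^1_\Theta|$-fiber bound, this gives finitely many moderately compatible $K$-orbits. The principal technical obstacle will be to carefully verify the descent of openness and the fiber computation — particularly handling the fact that $\cS_\theta$ is only $B^1_\Theta$-stable, not $Z$-stable, under left multiplication by $Z$.
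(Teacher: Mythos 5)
Your reduction of parts~(2) and (3) to part~(1), and your framing of part~(1) via Lemma~\ref{standardfact} and Proposition~\ref{oldLemmasBandC}(2), matches the paper. The difficulty is in the descent to $\bar G=G/Z$, where there are two genuine gaps that the paper avoids by staying inside $\cS_\theta$ and its $B^1_\Theta$-quotient.

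First, you assert that the image of $\cS_\theta\cap K^0$ in $\bar G$ ``lies in $\bar K^0$, which is compact, hence is itself compact.'' A subset of a compact space is compact only if it is closed, and you do not establish closedness. This is precisely where the paper's observation that $\cS_\theta$ is a \emph{closed} subset of $G$ enters. Even with that observation in hand, the step from closedness of $\cS_\theta\cap K$ (or $\cS_\theta\cap K^0$) to compactness of its image modulo $Z$ is nontrivial because $Z$ is noncompact and does not preserve $\cS_\theta$; the paper handles it by passing through the tower $B^1_\Theta\subset Z^1_\Theta\subset Z$: the map $(\cS_\theta\cap K)/Z^1_\Theta\to K/Z$ is injective with compact image, giving compactness of $(\cS_\theta\cap K)/Z^1_\Theta$, and then the finite-fibered, closed quotient map $(\cS_\theta\cap K)/B^1_\Theta\to(\cS_\theta\cap K)/Z^1_\Theta$ is proper. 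You never isolate a set that is manifestly compact.

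Second, you claim ``the images of the open $K$-orbits remain open in $\bar G$ (the quotient $G\to\bar G$ is open).'' The $K$-orbits are open in $\cS_\theta$, which is a closed subset of $G$; they are not open in $G$. Openness of $G\to\bar G$ says nothing about the restriction to the closed subset $\cS_\theta$, and the composite $\cS_\theta\hookrightarrow G\to\bar G$ is not an open map. (Note also that $\cS_\theta$ is $B^1_\Theta$-stable but not $Z^1_\Theta$-stable under left multiplication, so one cannot sidestep this by working in a quotient of $\cS_\theta$ by $Z^1_\Theta$.) The openness you have is of the $\bar K$-orbits on $\cS_\theta/B^1_\Theta$, which is exactly the space the paper works in: the quotient $\cS_\theta\to\cS_\theta/B^1_\Theta$ is open, so the $\bar K$-orbits there are open, and the orbit space $\cS_\theta^K$ is discrete. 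Combining that discreteness with compactness of $(\cS_\theta\cap K)/B^1_\Theta$ finishes the paper's proof cleanly without ever needing open sets in $\bar G$. If you reframe your covering argument in $\cS_\theta/B^1_\Theta$ rather than $\bar G$ and supply the compactness argument via the $Z^1_\Theta$-tower, the proof closes; as written, both steps of the descent fail.
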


\begin{proof}
It is easy to see from the discussion in Section~\ref{sec:Mackey} that all of our assertions follow from (1).
We may as well assume that there exists at least one element $\Theta^\prime\in \Theta^K$ that is moderately compatible with $\xi$.  Fix an involution $\theta\in \Theta^\prime$.
  Then, according to Proposition \ref{oldLemmasBandC}(2), the set of elements of $\Theta^K$ that are moderately compatible with $\xi$ is identical to the set
$$\{\, (Kg)\cdot \theta\ | \ g\in G^0,\ g\theta (g)^{-1}\in K^0\,\}.$$
The cardinality of the latter set is less than or equal to the cardinality of
$$S_1 =\{\, (Kg)\cdot \theta\ | \ g\in G,\ g\theta (g)^{-1}\in K\,\}.$$
Therefore it suffices to show that $S_1$ is finite.

We now use the theory from Section~\ref{sec:Mackey} to give another reformulation of the problem.  
From Lemma \ref{standardfact}, we see that $g\mapsto g\theta (g)^{-1}$ determines a homeomorphism of $G/G^\theta$ with $\cS_\theta$.  This gives a homeomorphism of the discrete space $K\bs G/G^\theta$ with the space $\cS^K_\theta$ of $K$-orbits in $\cS_\theta$.
Let $S_2$ be the set of $K$-orbits in $\cS_\theta$ with a representative in $\cS_\theta\cap K$.  In other words, $S_2$ is the preimage of $S_1$ in $\cS^K_\theta$ under the map $\cS^K_\theta\to \Theta^K$ of Section~\ref{sec:Mackey}.  So it suffices to show that $S_2$ is finite or, equivalently, that it is compact.

Instead of studying $K$-orbits, it is more convenient to study $Z$-orbits.  Note that  the set  of $Z$-orbits in $\cS_\theta$ is  identical to $\cS_\theta / B^1_\Theta$, where $B^1_\Theta$ is defined as in Section~\ref{sec:Mackey} and it acts on $\cS_\theta$ by translations.  The set of elements of $\cS_\theta / B^1_\Theta$ with a representative in $\cS_\theta\cap K$ is just $(\cS_\theta\cap K)/B^1_\Theta$.  It suffices to show that $(\cS_\theta\cap K)/B^1_\Theta$ is compact, since the quotient map $\cS_\theta /B^1_\Theta \to \cS^K_\theta$ from $Z$-orbits to $K$-orbits maps compact sets to compact sets and it maps $(\cS_\theta\cap K)/B^1_\Theta$ to $S_2$.

Next, we observe that $\cS_\theta$ is closed in $G$ and thus $\cS_\theta\cap K$ has compact image in $G/Z$.  This may be seen as follows.  Since $G$ is Hausdorff, the diagonal $\Delta G$ of $G\times G$ is closed.  It follows that $\{ (g,\theta (g))\ |\ g\in G\}$ is also closed, since $(g_1,g_2)\mapsto (g_1,\theta (g_2))$ gives a homeomorphism $G\times G\to G\times G$.  Now we use the fact that the map $(G\times G)/\Delta G\to G$ given by $(g,1)\Delta G\mapsto g$ is a homeomorphism to deduce that $\cS_\theta$ is closed in $G$.

Let
 $Z^1_\Theta$   be defined as in Section~\ref{sec:Mackey}.  The quotient map $K/Z^1_\Theta\to K/Z$ is injective on $(\cS_\theta\cap K)/Z^1_\Theta$.  Since $\cS_\theta\cap K$ has compact image in $K/Z$, it follows that $(\cS_\theta \cap K)/Z^1_\Theta$ is compact.
 
Showing that $(\cS_\theta \cap K)/B^1_\Theta$ is compact now reduces to showing that the quotient map $q_\theta : (\cS_\theta \cap K)/B^1_\Theta \to (\cS_\theta \cap K)/Z^1_\Theta$ is proper, that is, the preimage of a compact set is compact.

Lemma \ref{mThetabound} implies that
the quotient group $H^1_\Theta = Z^1_\Theta /B^1_\Theta$ is a finite abelian group.  Let $\{ z_1,\dots, z_m\}$ be a set of coset representatives.  To say that $q_\theta$ is proper is equivalent to saying that it is a closed map and the preimage of every point is compact.  Since the preimage of every point is finite, we only need to show that $q_\theta$ is closed.  But the image of a closed set $C$ under $q_\theta$ is the same as the image of $q_\theta^{-1} (q_\theta (C))$.  The latter set is closed since it is a finite union of the translates $z_iC$ of $C$.  But since $q_\theta$ is a quotient map, if
$S$ is a subset of the codomain and $q_\theta^{-1} (S)$ is closed then $S$ must itself be closed.  It follows that $q_\theta (C)$ is closed and hence $q_\theta$ is a closed map.
This completes the proof.
\end{proof}

\section{Application of the Heisenberg theory}
\label{sec:appliedHeis}

We have seen that the computation of the constants $\langle \Theta , \xi\rangle_G$ reduces 
to the computation of the constants $\langle \Theta^\prime,\xi\rangle_K$, 
with $\Theta^\prime\in \Theta^K$,
and we have shown that when $\langle \Theta^\prime,\xi\rangle_K$ is nonzero then we can 
choose $\theta\in \Theta^\prime$ and $\Psi\in \xi$ such that $\Psi$ is $\theta$-symmetric. 
 Recall that
$$
\langle \Theta^\prime,\xi\rangle_K = \dim \Hom_{K^\theta} (\kappa (\Psi),1).
$$ 
Computing the right hand side is greatly simplified by the fact that $\Psi$ is 
$\theta$-symmetric.

Indeed, we will see in the next section that when $\Psi$ is $\theta$-symmetric the space $\Hom_{K^\theta}(\kappa (\Psi),1)$ has a tensor product decomposition with one factor attached to each representation $\kappa_i$, except $\kappa_d$, in the usual tensor product decomposition $\kappa (\Psi) = \kappa_{-1}\otd \kappa_d$.
In this section, we define a space of linear forms on the space  of $\kappa_i$ that serves as the factor of $\Hom_{K^\theta}(\kappa(\Psi),1)$ associated to $\kappa_i$, when 
$i\in \{\, 0,\dots,d-1\,\}$.  It will turn out that these factors have dimension one and hence they do not affect the dimension of $\Hom_{K^\theta}(\kappa (\Psi),1)$.  
 Thus, we will see that $\langle \Theta^\prime,\xi\rangle_K$ is the dimension of the 
factor associated to a twist of $\kappa_{-1}$.

Fix an involution $\theta$ of $G$ and a $\theta$-symmetric generic cuspidal $G$-datum $\Psi = (\vec\bG, y,\rho,\vec\phi)$.
Fix also $i\in \{\,0,\dots , d-1\,\}$.  We now adopt our standard notations from 
Section \ref{sec:genHeis} with subscripts added to reflect the dependence on $i$.
According to Lemma \ref{stableJ}, the subgroup $N_i$ must be $\theta$-stable and the automorphism  $\alpha_i$ of $\cH_i$ induced by $\theta$ has order two and is nontrivial on $\cZ_i$.  As in Proposition \ref{stJpol}, the automorphism $\alpha_i$ yields the polarization
\begin{equation*}
\begin{split}
\cH^+_i&=\{\, h\in \cH_i\ | \ \alpha_i(h)= h\,\}\\
\widehat{\cH}^-_i&=\{\, h\in \cH_i\ | \ \alpha_i (h)= h^{-1}\,\}
\end{split}
\end{equation*}
and Yu's special isomorphism gives a subgroup $\cH^-_i$ which splits the polarization.  Restricting $\hat\phi_i$ to $J^{i+1}_+$ gives a character 
$\zeta_i$ of $\cZ_i$ and we let $(\tau_i,V_i)$ denote a Heisenberg representation of $\cH_i$ with central character $\zeta_i$.

Let $W^+_i$ and $W^-_i$ denote the images of $\cH^+_i$ and $\cH^-_i$ in $W_i$ and let 
$$
\cM_i = \{\, s\in \cS_i \ | \ s\cdot W^+_i\subset W^+_i\hbox{ and }s\cdot W^-_i\subset W^-_i\,\}.
$$  
Let $\chi^{\cM_i}$ be the unique character of $\cM_i$ of order two.
 Conjugation gives a homomorphism $f'_i : K^i\to \cS_i$ such that the 
image of $K^{i,\theta}$ is contained in $\cM_i$.
The character $\hat\eta_i(k)=  \phi_i(k) \chi^{\cM_i} (f'_i(k))$ of $K^{i,\theta}$ is quadratic.  Let $\eta_i$ denote the restriction of $\hat\eta_i$ to $K^{0,\theta}$.  Proposition \ref{KJfactor} says that $K^{i,\theta} = K^{0,\theta} J^{1,\theta}\cdots J^{i,\theta}$.  Since the groups $J^{1,\theta},\dots,J^{i,\theta}$ are pro-$p$-groups, it must be the case that
$\hat\eta_i$ is trivial on $J^{1,\theta}\cdots J^{i,\theta}$ and
$\hat\eta_i = \inf_{K^{0,\theta}}^{K^{i,\theta}}(\eta_i)$.

According to Theorem \ref{Heisthm}, the space $\Hom_{\cH_i^+}(\tau_i,1)$ has dimension one.  Fix a nonzero element $\lambda_i$ in this space.
  It follows from Theorem \ref{Heisthm} that
$$\lambda_i (\hat\tau^\sharp_i(f'_i(k))\varphi) = \chi^{\cM_i}(f'_i(k))\ \lambda_i (\varphi),$$ for all $k\in K^{i,\theta}$ and $\varphi\in V_i$.
Therefore, $\lambda_i$ lies in $\Hom_{K^{i+1,\theta}}(\phi'_i , \inf\nolimits_{K^{0,\theta}}^{K^{i+1,\theta}}(\eta_i))$ and hence $$\Hom_{K^{i+1,\theta}}(\phi'_i , \inf\nolimits_{K^{0,\theta}}^{K^{i+1,\theta}}(\eta_i))
= \Hom_{\cH^+_i}(\tau_i,1)= \C \lambda_i .$$
Inflating from $K^{i+1}$ to $K$ yields the identity
$$\Hom_{K^\theta}(\kappa_i, \inf\nolimits_{K^{0,\theta}}^{K^\theta}(\eta_i)) = 
 \Hom_{K^{i+1,\theta}}(\phi'_i , \inf\nolimits_{K^{0,\theta}}^{K^{i+1,\theta}}(\eta_i)).$$
The above discussion is summarized in the following result:
 
\begin{proposition}\label{multonekappai}
Suppose $\theta$ is an involution of $G$ and
$\Psi =(\vec\bG,
y,\rho,\vec\phi)$ is a $\theta$-symmetric generic  cuspidal $G$-datum.  If 
$i\in \{\,0,\dots ,d-1\,\}$  then
\begin{equation*}
\begin{split}
\Hom_{K^\theta}(\kappa_i,\inf\nolimits_{K^{0,\theta}}^{K^\theta} (\eta_i))&= \Hom_{K^{i+1,\theta}}(\phi'_i, \inf\nolimits_{K^{0,\theta}}^{K^{i+1,\theta}}(\eta_i))\cr
&=\Hom_{\cH_i^+} (\tau_i,1)
\end{split}
\end{equation*}
and the latter space is one-dimensional.
\end{proposition}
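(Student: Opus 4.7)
The preceding discussion in this section has essentially laid out the argument; my plan is to package those observations into three coherent steps.

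First, I will set up the Heisenberg machinery. Because $\Psi$ is $\theta$-symmetric, Lemma~\ref{stableJ} ensures that $N_i$ is $\theta$-stable and that the induced involution $\alpha_i$ of $\cH_i$ is nontrivial on $\cZ_i$ (since $\zeta_i \circ \alpha_i = \zeta_i^{-1}$). Proposition~\ref{stJpol} then tells us that $(\cH_i^+, \widehat{\cH}_i^-)$ is a polarization that splits canonically through Yu's special isomorphism $\nu_i^\bullet$, and we may form the Heisenberg--Weil lift $\hat\tau_i$ relative to this special isomorphism. Theorem~\ref{Heisthm} then yields that $\Hom_{\cH_i^+}(\tau_i, 1)$ is one-dimensional, spanned by a vector $\lambda_i$ satisfying
$$
\lambda_i(\hat\tau_i^\sharp(s)\varphi) = \chi^{\cP_i}(s)\, \lambda_i(\varphi)
$$
for all $s \in \cP_i = \{\, s \in \cS_i \ | \ s \cdot W_i^+ \subset W_i^+\,\}$.

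Second, I would show that $f_i'(K^{i,\theta}) \subset \cM_i$ and interpret the transformation law. The key observation is that any $k \in K^{i,\theta}$ commutes with $\theta$, so conjugation by $k$ preserves each of the eigenspaces $\cH_i^+$ and $\widehat{\cH}_i^-$; by Proposition~\ref{stJpol} and the canonicity of the splitting, it also preserves $\cH_i^-$. Passing to the symplectic quotient, $f_i'(k)$ preserves both $W_i^+$ and $W_i^-$, so it lies in $\cM_i \subset \cP_i$. Since $\chi^{\cP_i}\,|\,\cM_i = \chi^{\cM_i}$ (both are the unique order-two characters), the transformation rule becomes
$$
\lambda_i(\hat\tau_i^\sharp(f_i'(k))\varphi) = \chi^{\cM_i}(f_i'(k))\, \lambda_i(\varphi), \qquad k \in K^{i,\theta}.
$$
Combining this with the definition $\phi_i'(kj) = \phi_i(k)\, \hat\tau_i^\sharp(f_i'(k))\, \tau_i(j)$ for $k \in K^i$, $j \in J^{i+1}$, and restricting to $K^{i+1,\theta} = K^{i,\theta} J^{i+1,\theta}$ (via Proposition~\ref{KJfactor}), I conclude $\lambda_i \in \Hom_{K^{i+1,\theta}}(\phi_i', \hat\eta_i)$, where $\hat\eta_i(k) = \phi_i(k)\chi^{\cM_i}(f_i'(k))$.

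Third, I will identify $\hat\eta_i$ as an inflation from $K^{0,\theta}$ and propagate the identification up to $K^\theta$. By Proposition~\ref{KJfactor}, $K^{i,\theta} = K^{0,\theta} J^{1,\theta} \cdots J^{i,\theta}$ and each $J^{j,\theta}$ is a pro-$p$-group on which the quadratic character $\hat\eta_i$ must be trivial; hence $\hat\eta_i = \inf_{K^{0,\theta}}^{K^{i,\theta}}(\eta_i)$. Since $\kappa_i = \inf_{K^{i+1}}^{K}(\phi_i')$ and inflation from $K^{i+1}$ to $K$ is an equivalence between representations of $K^{i+1}$ trivial on $K^{i+1} \cap J^{i+2}$ and representations of $K$ trivial on $J^{i+2} \cdots J^d$, the Hom spaces for $\kappa_i$ over $K^\theta$ and $\phi_i'$ over $K^{i+1,\theta}$ (with respect to the corresponding inflated characters) are canonically identified. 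This gives the first equality; the second equality, together with one-dimensionality, follows because the nonzero form $\lambda_i$ constructed above already exhibits a nonzero element of all three spaces, and the smallest of them, $\Hom_{\cH_i^+}(\tau_i, 1)$, already has dimension one by Theorem~\ref{Heisthm}.

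The main obstacle I anticipate is the second step, specifically the verification that $f_i'(K^{i,\theta})$ lands in $\cM_i$ rather than merely in $\cP_i$. This is where the compatibility of Yu's canonical special isomorphism with the involution $\theta$ becomes essential: without Proposition~\ref{stJpol} (which identifies Yu's special isomorphism with the $\theta$-induced one and thereby pins down $\cH_i^-$ canonically), we would only have $\theta$-stability of $\cH_i^+$ and $\widehat{\cH}_i^-$, not of $\cH_i^-$ itself, and the argument for preservation of $W_i^-$ would break down.
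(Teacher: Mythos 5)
Your argument is correct and follows essentially the same route as the paper's: Lemma~\ref{stableJ} produces the induced involution $\alpha_i$ of $\cH_i$ nontrivial on $\cZ_i$; Theorem~\ref{Heisthm} (applied with the special isomorphism pinned down by Proposition~\ref{stJpol}) gives one-dimensionality of $\Hom_{\cH_i^+}(\tau_i,1)$ together with the $\chi^{\cP_i}$-equivariance of $\lambda_i$; the containment $f'_i(K^{i,\theta})\subset \cM_i$ yields the quadratic character $\hat\eta_i$; and Proposition~\ref{KJfactor} plus inflation propagates the identification up to $K^\theta$. One small correction to the worry you raise in your final paragraph: the containment $f'_i(K^{i,\theta})\subset\cM_i$ does not actually hinge on the canonicity of the splitting $\cH_i^-$, because $W_i^-$ equals the image of $\widehat{\cH}_i^-$ in $W_i$ (which is the same as the image of any splitting $\cH_i^-$), and $\widehat{\cH}_i^-$ is manifestly stable under conjugation by a $\theta$-fixed element. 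What Proposition~\ref{stJpol} really buys is the identity $\nu_i^\bullet(\cH_i^+)=W_i^+\times 1$, which is the hypothesis of the final assertion of Theorem~\ref{Heisthm}; this is what guarantees that the $\chi^{\cP_i}$-transformation law is stated relative to the Heisenberg--Weil lift built from Yu's own special isomorphism, i.e.\ the one that actually enters the definition of $\phi'_i$.
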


\section{Factorization of invariant linear forms}
\label{sec:factorization}

As in the previous section, we fix an  involution $\theta$ of $G$ and a $\theta$-symmetric generic cuspidal $G$-datum $\Psi = (\vec\bG, y,\rho , \vec\phi)$.  When $i\in \{\, 0,\dots,d-1\,\}$, we have defined a quadratic character $\eta_i$ of $K^{0,\theta}$ and, in Proposition \ref{multonekappai}, we  described the space $\Hom_{K^\theta} (\kappa_i, \inf_{K^{0,\theta}}^{K^\theta} (\eta_i))$ and
 showed that this space has dimension one.

We also attach a space of linear forms to 
 $\kappa_{-1}= \inf_{K^0}^K (\rho)$, namely, the space $$\Hom_{K^\theta}(\kappa_{-1}, \inf\nolimits_{K^{0,\theta}}^{K^\theta}(\eta ))= \Hom_{K^{0,\theta}}(\rho,\eta),$$
 where $\eta$ is the character of $K^{0,\theta}$ defined by
 $\eta (k) = \prod_{i=0}^d \eta_i(k)$, with $\eta_d = \phi_d\, |\, K^{0,\theta}$.
We show in this section $\Hom_{K^\theta}(\kappa,1)$ is canonically isomorphic to a tensor product of the spaces of linear forms we have associated to $\kappa_{-1},\dots,\kappa_{d-1}$.  All of these spaces of linear forms have dimension one, except for the factor attached to $\kappa_{-1}$.
Thus we obtain an isomorphism $$\Hom_{K^\theta}(\kappa,1)\cong \Hom_{K^{0,\theta}}(\rho,\eta).$$
More generally,  we show there exists  an isomorphism $$ \Hom_{K^\theta}( \kappa , \inf\nolimits^{K^\theta}_{K^{0,\theta}}(\eta\mu) ) \cong \Hom_{K^{0,\theta}}( \rho,\mu),$$  for each character $\mu$ of $K^{0,\theta}$ that is trivial 
on $K^{0,\theta}\cap J^1= G^{0,\theta}_{y,r_0}$. 

Our arguments use the inductive structure discussed in Section~\ref{sec:inductive}.   In particular, we use induction on the degree $d$ of the datum $\Psi$.   Recall our notations 
\begin{equation*}
\begin{split}
\partial (\bG^0,\dots,\bG^d)&= (\bG^0,\dots , \bG^{d-1})\\
\partial (\phi^0,\dots , \phi^d)&= (\phi^0,\dots , \phi^{d-1})\\
\partial (\vec\bG,y,\rho, \vec\phi)&= (\partial \vec\bG, y,\rho, \partial \vec\phi)\\
\partial (\vec\bG,\pi_0 , \vec\phi)&= (\partial \vec\bG, \pi_0 , \partial \vec\phi),
\end{split}
\end{equation*} which apply when $d>0$.
Recall also that Yu's construction actually associates to an extended datum $\Psi$ of degree $d$ a sequence $\vec\pi = (\pi_0,\dots, \pi_d)$ of tame supercuspidal representations of $G^0,\dots , G^d$, respectively, where $\pi_{d-i} = \partial^i\pi$ is associated to the datum $\partial^i \Psi$.  

The main tool developed in this section is the following:

\begin{lemma}\label{factorlem}
Fix an extended generic cuspidal $G$-datum.  For all $i\in \{\, 0,\dots ,d-1\,\}$, 
suppose we are 
given a subgroup $J^{i+1,\flat}$ of $J^{i+1}$ such that the space $\Hom_{J^{i+1,\flat}} (\tau_i ,1)$ has dimension one and suppose we have fixed some nonzero element $\lambda_i$ in\break 
 $\Hom_{J^{i+1,\flat}} (\tau_i ,1)$.  Assume that whenever $0\le i_1< i_2\le d-1$ the  group $J^{i_1+1,\flat}$ normalizes $J^{i_2+1,\flat}$.  When $i\in \{\,0,\dots,d-1\,\}$, assume 
$\phi_i\, |\, J^{1,\flat}\cdots J^{i,\flat} =1$ and 
 $$\Hom_{J^{1,\flat}\cdots J^{i+1,\flat}} (\phi'_i, 1) = \Hom_{J^{i+1,\flat}}(\tau_i,1) 
= \C \lambda_i.$$
  If $\lambda\in \Hom_{J^{1,\flat}\cdots J^{d,\flat}} (\kappa ,1 )$ there must exist a linear form $\lambda_{-1}\in \Hom (V_{-1},\C)$ such that $$\lambda (v_{-1}\otd v_{d-1}) = \lambda_{-1}(v_{-1})\cdots \lambda_{d-1}(v_{d-1}),$$ for all $v_{-1}\in V_{-1},\dots ,v_{d-1}\in V_{d-1}$.  The map $\lambda\mapsto\lambda_{-1}$ defines a linear isomorphism, $$\Hom_{J^{1,\flat}\cdots J^{d,\flat}}
 (\kappa ,1) \cong\Hom (V_{-1},\C).$$
\end{lemma}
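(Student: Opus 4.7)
The plan is to induct on the degree $d$.  The base case $d=0$ is immediate: the invariance group $J^{1,\flat}\cdots J^{d,\flat}$ is an empty product, $\kappa$ acts on $V_{-1}$ alone, and both sides of the asserted isomorphism reduce to $\Hom(V_{-1},\C)$.  For $d>0$, my strategy is to peel off the $V_{d-1}$ factor using the $J^{d,\flat}$-invariance of $\lambda$, show that the resulting form $\partial\lambda$ on $V_{-1}\otd V_{d-2}$ is invariant for $\partial\kappa$ under $J^{1,\flat}\cdots J^{d-1,\flat}$, and then apply the inductive hypothesis to the datum $\partial\Psi$ of Section~\ref{sec:inductive}.

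For the peeling, I use the inflation structure of Section~\ref{sec:construction}:  an element $k\in J^{d,\flat}\subset J^d$ acts on the tensor factors by $\kappa_i(k)=1$ for $-1\le i\le d-2$, by $\kappa_{d-1}(k)=\tau_{d-1}(k)$ on $V_{d-1}$, and by the scalar $\kappa_d(k)=\phi_d(k)$.  Fixing $v_{-1},\ldots,v_{d-2}$ and putting $\Lambda(v):=\lambda(v_{-1}\otd v_{d-2}\otimes v)$, the $J^{d,\flat}$-invariance of $\lambda$ forces $\Lambda\in\Hom_{J^{d,\flat}}(\tau_{d-1},\phi_d^{-1})$; combined with the one-dimensionality hypothesis $\Hom_{J^{d,\flat}}(\tau_{d-1},1)=\C\lambda_{d-1}$ together with the triviality $\phi_d|J^{d,\flat}=1$ (which I expect to be implicit, or to be adjoined to the hypothesis by analogy with the stated conditions for $\phi_0,\ldots,\phi_{d-1}$), this yields $\Lambda\in\C\lambda_{d-1}$.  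Varying $v_{-1},\ldots,v_{d-2}$ gives $\partial\lambda\in\Hom(V_{-1}\otd V_{d-2},\C)$ with $\lambda=\partial\lambda\otimes\lambda_{d-1}$.  Now for $k\in J^{j,\flat}$ with $j<d$, the identity $\lambda(\kappa(k)v)=\lambda(v)$ on a decomposable $v$, together with the $\phi'_{d-1}$-invariance of $\lambda_{d-1}$ coming from $\Hom_{J^{1,\flat}\cdots J^{d,\flat}}(\phi'_{d-1},1)=\C\lambda_{d-1}$, reduces (after choosing $v_{d-1}$ with $\lambda_{d-1}(v_{d-1})\ne 0$) to $\partial\lambda(\partial\kappa(k)w)=\phi_{d-1}(k)\phi_d(k)^{-1}\partial\lambda(w)$; the hypotheses $\phi_{d-1}|J^{j,\flat}=1$ and (again, the needed) $\phi_d|J^{j,\flat}=1$ then give the desired $\partial\kappa$-invariance of $\partial\lambda$.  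Applying the inductive hypothesis to $\partial\Psi$ with the restricted family of subgroups and fixed forms (the normalization property ensuring these remain groups, and the invariances restricting automatically to smaller products) produces $\partial\lambda=\lambda_{-1}\otd\lambda_{d-2}$ with $\lambda_{-1}\in\Hom(V_{-1},\C)$ arbitrary, completing the factorization.

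Linearity of $\lambda\mapsto\lambda_{-1}$ is transparent; injectivity follows from the nonvanishing of each $\lambda_i$; and surjectivity is checked directly by verifying that any tensor $\lambda_{-1}\otd\lambda_{d-1}$ is $J^{1,\flat}\cdots J^{d,\flat}$-invariant, using for each $k\in J^{j,\flat}$ the triviality $\kappa_i(k)=1$ for $i\le j-2$, the $\tau_{j-1}$-invariance of $\lambda_{j-1}$, and the $\phi'_i$-invariance of $\lambda_i$ for $i\ge j$.  The hard part is the character bookkeeping throughout the peeling step:  ensuring that every factor of $\phi_i$ and $\phi'_i$ arising when invariance is transported from $\lambda$ to $\partial\lambda$ is precisely cancelled.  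This forces the two hypotheses (triviality of $\phi_i$ on the stated products, and simultaneous $\phi'_i$- and $\tau_i$-invariance of $\lambda_i$) to be used in tandem, and isolates the one delicate point, namely the need for a suitable triviality of $\phi_d$ on the products $J^{1,\flat}\cdots J^{j,\flat}$ ($j\le d$), which the application will need to supply.
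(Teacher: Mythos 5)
Your argument is correct and is essentially the paper's own proof: the same peeling of the $V_{d-1}$ factor via the auxiliary form $\Lambda$, the same substitution $v_{d-1}\mapsto\kappa_{d-1}(h)v_{d-1}$ exploiting the $\phi'_{d-1}$-invariance of $\lambda_{d-1}$ to show $\partial\lambda\in\Hom_{J^{1,\flat}\cdots J^{d-1,\flat}}(\partial\kappa,1)$, and the same induction on $d$ through $\partial\Psi$, together with the direct check of the easy direction. The point you flag about needing $\phi_d$ to be trivial on the groups $J^{j,\flat}$ is genuine but is not a defect of your argument: the paper's displayed computations silently drop the factor $\phi_d(h)$, and in the intended application (Proposition \ref{appliedfactor}, where $J^{j,\flat}=J^{j,\theta}$ and the datum is $\theta$-symmetric) $\phi_d$ restricts to a quadratic character of the pro-$p$ groups $J^{j,\theta}$ and is therefore trivial, exactly as you anticipate.
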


\begin{proof}
It is easy to verify that if $\lambda_{-1}\in \Hom (V_{-1},\C )$ and $\lambda\in \Hom (V,\C)$ is defined on elementary tensors by
$$\lambda (v_{-1}\otd v_{d-1}) = \lambda_{-1}(v_{-1})\cdots \lambda_{d-1}(v_{d-1}),$$ for all $v_{-1}\in V_{-1},\dots ,v_{d-1}\in V_{d-1}$, then $\lambda$ must lie in $\Hom_{J^{1,\flat}\cdots J^{d,\flat}} (\kappa , 1 )$.

Now fix $\lambda\in \Hom_{J^{1,\flat}\cdots J^{d,\flat}}(\kappa, 1)$.  It only remains to show that $\lambda$ factors as indicated in the statement of the lemma.
We may as well assume that $\lambda$ and $d$ are nonzero, since otherwise our claim is trivial.  Choose $v_{-1}\in V_{-1},\dots , v_{d-2}\in V_{d-2}$ and define $\Lambda\in \Hom (V_{d-1},\C)$ by $$\Lambda (v_{d-1}) = \lambda( v_{-1}\otd v_{d-1}).$$  We claim that $\Lambda$ must lie in $\Hom_{J^{d,\flat}} (\tau_{d-1} ,1)$.  Indeed, using the fact that  $\kappa_i \,|\,J^d=1$ when $i<d-1$ (since $\kappa_i = \inf\nolimits_{K^{i+1}}^K(\phi'_i)$), we have for all $h\in J^{d,\flat}$
\begin{equation*}
\begin{split}
\Lambda(v_{d-1})&=\lambda (v_{-1}\otd v_{d-1})\\
&= \lambda (\kappa(h)(v_{-1}\otd v_{d-1})) \\
&= \lambda( \kappa_{-1}(h)v_{-1}\otd \kappa_{d-1}(h) v_{d-1}) \phi_d(h)\cr
&= \lambda (v_{-1}\otd v_{d-2}\otimes \kappa_{d-1}(h) v_{d-1})\\
&= \Lambda (\tau_{d-1}(h)v_{d-1}).
\end{split}
\end{equation*}
It follows that there exists a complex number $\partial\lambda (v_{-1}\otd v_{d-2})$ such that $$\lambda( v_{-1}\otd v_{d-1}) = \partial \lambda (v_{-1}\otd v_{d-2}) \  \lambda_{d-1}(v_{d-1}),$$ for all $v_{d-1}\in V_{d-1}$.  Clearly, $\partial \lambda$ defines a nonzero linear form on $$\partial V = V_{-1}\otd V_{d-2}.$$  If $d=1$, we are done.  So we assume $d>1$.  Our claim will now follow by induction on $d$ once we establish that $\partial\lambda$ lies in $\Hom_{J^{1,\flat}\cdots J^{d-1,\flat}} (\partial\kappa,1)$, where $$\partial\kappa = \partial\kappa_{-1}\otd \partial\kappa_{d-1},$$ with  $\partial \kappa_{d-1} = \phi_{d-1}\,|\,K^{d-1}$ and, otherwise, $\partial\kappa_i = \inf\nolimits_{K^{i+1}}^{K^{d-1}}(\phi'_i)= \kappa_i\, |\,K^{d-1}$.  
   
Fix $h\in J^{1,\flat}\cdots J^{d-1,\flat}$.
We have 
\begin{equation*}
\begin{split}
\partial\lambda (\partial\kappa(h)(v_{-1}&\otd v_{d-2})) \\
 &= 
{{\phi_{d-1}(h)\ \lambda(\kappa_{-1}(h)v_{-1}\otd \kappa_{d-2}(h) v_{d-2}
\otimes v_{d-1})
}\over{
\lambda_{d-1}(v_{d-1})}},
\end{split}
\end{equation*}
for all $v_{-1}\in V_{-1}, \dots, v_{d-1}\in V_{d-1}$ such that $\lambda_{d-1}(v_{d-1})\ne 0$.  Note that we are using the fact that $\partial\kappa_i (h) = \kappa_i (h)$ when 
$i\in \{\,-1,\dots,d-2\,\}$.  Since the previous identity holds for all $v_{-1},\dots , v_{d-1}$ with $\lambda_{d-1}(v_{d-1})\ne 0$ and since $\lambda_{d-1}(\kappa_{d-1}(h) v_{d-1}) = \lambda_{d-1}(\phi'_{d-1}(h)v_{d-1})=  \lambda_{d-1}(v_{d-1})$, we may replace $v_{d-1}$ by $\kappa_{d-1}(h) v_{d-1}$ to obtain
\begin{equation*}
\begin{split}
\partial\lambda (\partial\kappa(h)(v_{-1}\otd v_{d-2}))    &= 
{{\phi_{d-1}(h)\ \lambda(\kappa_{-1}(h)v_{-1}\otd \kappa_{d-1}(h) v_{d-1})
}\over{
\lambda_{d-1}(\kappa_{d-1}(h) v_{d-1})}}\\
&= 
{{\phi_{d-1}(h)\ \lambda(v_{-1}\otd v_{d-1})
}\over{\phi_d(h)\ \lambda_{d-1}(v_{d-1})}}\\
&= \partial\lambda (v_{-1}\otd v_{d-2}).
\end{split}
\end{equation*}
Since we have shown that $\partial\lambda$ lies in $\Hom_{J^{1,\flat}\cdots J^{d-1,\flat}} (\partial\kappa, 1)$, the proof is complete.
\end{proof}

We now apply the previous lemma taking $J^{i+1,\flat}$ to be the subgroup $J^{i+1,\theta}$.  For all $i\in \{\, 0,\dots , d-1\,\}$, Proposition \ref{multonekappai} says that if $\lambda_i$ is a nonzero element of $\Hom_{\cH_i^+}(\tau_i,1)$ then
$$\Hom_{K^{i+1,\theta}}(\phi'_i,\inf\nolimits_{K^{0,\theta}}^{K^{i+1, \theta}} (\eta_i)) =\Hom_{\cH_i^+} (\tau_i,1)= \C
\lambda_i.$$
Note that the fact that $H_\theta^1(N_i)$ is trivial (by Proposition~\ref{twodivprop})
implies that $\cH_i^+=J^{i+1,\theta}N_i/N_i$. It follows that
$$
\Hom_{J^{1,\theta}\cdots J^{i+1,\theta}} (\phi'_i,1) =\Hom_{\cH_i^+} (\tau_i,1).$$  
Lemma \ref{factorlem} now yields the following:

\begin{proposition}\label{appliedfactor}
Assume $\theta$ is an involution of $G$ and  $\Psi = (\vec\bG,y,\rho,\vec\phi)$ is 
a $\theta$-symmetric generic cuspidal $G$-datum. Fix a nonzero linear form $\lambda_i$ in 
the 1-dimensional space $\Hom_{\cH_i^+}(\tau_i,1)$, for all $i\in \{\, 0,\dots, d-1\,\}$.  
If $\lambda\in \Hom_{J^{1,\theta}\cdots J^{d,\theta}} (\kappa ,1)$ there must exist a 
linear form $\lambda_{-1}\in \Hom (V_{-1},\C)$ such that $$\lambda (v_{-1}\otd v_{d-1}) =
 \lambda_{-1}(v_{-1})\cdots \lambda_{d-1}(v_{d-1}),$$ for all 
$v_{-1}\in V_{-1},\dots ,v_{d-1}\in V_{d-1}$.  The map $\lambda\mapsto \lambda_{-1}$ 
defines a  linear isomorphism
$$
\Hom_{J^{1,\theta}\cdots J^{d,\theta}} (\kappa ,1) \cong
\Hom (V_{-1},\C).
$$ 
 If $\mu$ is a character of $K^{0,\theta}$ that is trivial on $G^0_{y,r_0}$ then the
 latter isomorphism restricts to an isomorphism
$$
\Hom_{K^\theta}(\kappa,  \inf\nolimits_{K^{0,\theta}}^{K^\theta}(\eta\mu))\cong 
\Hom_{K^{0,\theta}}(\rho, \mu ).
$$
\end{proposition}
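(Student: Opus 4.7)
The plan is to deduce Proposition~\ref{appliedfactor} directly from Lemma~\ref{factorlem} applied with $J^{i+1,\flat}=J^{i+1,\theta}$, and then to cut out the subspace corresponding to equivariance under $K^{0,\theta}$ by a direct computation using Proposition~\ref{multonekappai}. First I would verify the hypotheses of Lemma~\ref{factorlem}. For the normalization hypothesis, if $i_1<i_2$ then $J^{i_1+1}\subset K^{i_1+1}\subset K^{i_2}$, which normalizes $J^{i_2+1}$; conjugation by a $\theta$-fixed element preserves $\theta$-stability, so $J^{i_1+1,\theta}$ normalizes $J^{i_2+1,\theta}$. The triviality $\phi_i\,|\,J^{1,\theta}\cdots J^{i,\theta}=1$ follows from weak $\theta$-symmetry: each $J^{j,\theta}$ is a pro-$p$-subgroup on which the $\theta$-symmetric character $\phi_i$ is forced to be quadratic, hence trivial since $p$ is odd. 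To obtain $\Hom_{J^{1,\theta}\cdots J^{i+1,\theta}}(\phi'_i,1)=\Hom_{J^{i+1,\theta}}(\tau_i,1)$, I would observe that $\phi'_i(kj)=\phi_i(k)\,\hat\tau^\sharp_i(f'_i(k))\,\tau_i(j)$, and both factors are trivial on $K^{i,\theta}_+\supset J^{1,\theta}\cdots J^{i,\theta}$: $\phi_i|K^{i,\theta}_+=1$ by the previous argument, while $f'_i|K^i_+=1$ since $[K^i_+,J^{i+1}]\subset J^{i+1}_+$ (as noted in the proof of Lemma~\ref{hatphikplus}). Finally, the equality $\Hom_{J^{i+1,\theta}}(\tau_i,1)=\Hom_{\cH_i^+}(\tau_i,1)$, together with dimension one, follows from Proposition~\ref{multonekappai} once one knows $\cH_i^+=J^{i+1,\theta}N_i/N_i$; this last identification is a consequence of Proposition~\ref{twodivprop} applied to $N_i\subset J_+^{i+1}\subset G_{y,s_i^+}$, which gives $H^1_\theta(N_i)=1$ and hence allows any $\theta$-fixed coset $jN_i$ to be represented by a $\theta$-fixed element.

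Having checked the hypotheses, Lemma~\ref{factorlem} yields the first assertion of the proposition and the linear isomorphism $\Hom_{J^{1,\theta}\cdots J^{d,\theta}}(\kappa,1)\cong \Hom(V_{-1},\C)$, $\lambda\mapsto\lambda_{-1}$. For the refined statement, I would next invoke Proposition~\ref{KJfactor} to write $K^\theta=K^{0,\theta}J^{1,\theta}\cdots J^{d,\theta}$, so any $\lambda\in\Hom_{K^\theta}(\kappa,\inf_{K^{0,\theta}}^{K^\theta}(\eta\mu))$ already lies in the space factored by the first part, and the remaining content is the $K^{0,\theta}$-equivariance. One should first check that $\inf_{K^{0,\theta}}^{K^\theta}(\eta\mu)$ is well defined: the overlap group $K^{0,\theta}\cap(J^{1,\theta}\cdots J^{d,\theta})$ sits inside $G^{0,\theta}_{y,r_0}$ (since $J^{i+1}\cap G^0=G^0_{y,r_i}\subset G^0_{y,r_0}$ for $i\ge 0$), and on this subgroup $\mu$ is trivial by hypothesis while $\eta$ is trivial because weak $\theta$-symmetry forces $\vartheta(\Psi)|K^{0,\theta}_+=1$ (Proposition~\ref{TFAErefactor}(2)).

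The key computation is then as follows: for $k\in K^{0,\theta}$ and $v_i\in V_i$,
\[
\lambda_i(\phi'_i(k)v_i)=\phi_i(k)\,\lambda_i(\hat\tau^\sharp_i(f'_i(k))v_i)=\phi_i(k)\,\chi^{\cM_i}(f'_i(k))\,\lambda_i(v_i)=\eta_i(k)\lambda_i(v_i),
\]
where the middle equality is the content of Theorem~\ref{Heisthm}/Proposition~\ref{multonekappai} together with $f'_i(K^{i,\theta})\subset\cM_i$. Combining these for $i=0,\dots,d-1$ and using $\kappa_d(k)=\phi_d(k)$ and $\eta=\eta_0\cdots\eta_{d-1}\phi_d|K^{0,\theta}$, the equivariance of $\lambda$ under $K^{0,\theta}$ with character $\eta\mu$ translates, via the factorization $\lambda=\lambda_{-1}\otimes\lambda_0\otimes\cdots\otimes\lambda_{d-1}$, to the single condition $\lambda_{-1}(\rho(k)v_{-1})=\mu(k)\lambda_{-1}(v_{-1})$. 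Conversely, any $\lambda_{-1}\in\Hom_{K^{0,\theta}}(\rho,\mu)$ produces, via the factorization, a $\lambda\in\Hom_{K^\theta}(\kappa,\inf_{K^{0,\theta}}^{K^\theta}(\eta\mu))$.

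\textbf{Main obstacle.} The only delicate point is the bookkeeping underlying the identity $\Hom_{J^{1,\theta}\cdots J^{i+1,\theta}}(\phi'_i,1)=\Hom_{\cH_i^+}(\tau_i,1)$: one needs simultaneously the triviality of $\phi_i$ and of the symplectic action $f'_i$ on the relevant pro-$p$ pieces, together with the cohomological identification $\cH_i^+=J^{i+1,\theta}N_i/N_i$ via $H^1_\theta(N_i)=1$. Once this identification is in hand, the remainder of the argument is a formal application of Lemma~\ref{factorlem} followed by a direct character computation.
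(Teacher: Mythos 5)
Your overall architecture matches the paper's: apply Lemma~\ref{factorlem} with $J^{i+1,\flat}=J^{i+1,\theta}$, identify $\cH_i^+=J^{i+1,\theta}N_i/N_i$ via $H^1_\theta(N_i)=1$, and then pass from $J^{1,\theta}\cdots J^{d,\theta}$-invariance to $K^\theta$-equivariance by the decomposition $K^\theta=K^{0,\theta}J^{1,\theta}\cdots J^{d,\theta}$ of Proposition~\ref{KJfactor} and a direct computation. That part is correct.

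However, there is a genuine error in your verification of the hypothesis $\Hom_{J^{1,\theta}\cdots J^{i+1,\theta}}(\phi'_i,1)=\Hom_{J^{i+1,\theta}}(\tau_i,1)$. You assert the containment $J^{1,\theta}\cdots J^{i,\theta}\subset K^{i,\theta}_+$, and then use triviality of $\phi_i$ and of $f'_i$ on $K^i_+$. This containment is false in general. We have $K^i_+=K^0_+J^1_+\cdots J^i_+$, and for $j\le i$ the intersection $J^j\cap K^i_+$ is $J^j_+$, not $J^j$. Meanwhile, by Proposition~\ref{canpolar}/Theorem~\ref{Heisthm} the image of $J^{j,\theta}$ in $W_{j-1}=J^j/J^j_+$ is the maximal isotropic subspace $W^+_{j-1}$, which is nonzero whenever $W_{j-1}\ne 0$; thus $J^{j,\theta}\not\subset J^j_+$, hence $J^{j,\theta}\not\subset K^i_+$, in the typical case where the Heisenberg space is nontrivial. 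Your citation of $[K^i_+,J^{i+1}]\subset J^{i+1}_+$ is correct as a fact about $K^i_+$, but it does not apply to $J^{1,\theta}\cdots J^{i,\theta}$.

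The correct argument, which is the one the paper uses (via Proposition~\ref{multonekappai} and the discussion in Section~\ref{sec:appliedHeis}), is: $\lambda_i$ transforms under $K^{i,\theta}$ by the character $\hat\eta_i(k)=\phi_i(k)\,\chi^{\cM_i}(f'_i(k))$, which is a quadratic character of $K^{i,\theta}$, and $J^{1,\theta}\cdots J^{i,\theta}$ is a pro-$p$-group with $p$ odd, so the restriction of any quadratic character to it is trivial. (If you prefer to treat the two factors separately, both $\phi_i|K^{i,\theta}$ and $\chi^{\cM_i}\circ f'_i|K^{i,\theta}$ are individually quadratic, so the same pro-$p$ argument applies to each; but the mechanism is ``quadratic character of a pro-$p$-group,'' not containment in $K^i_+$.) With this repair, the rest of your proof goes through as written and coincides with the paper's.
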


\begin{proof}
The existence of $\lambda_{-1}$ and the first isomorphism of Hom-spaces follow immediately from the discussion above.  It remains to establish the second isomorphism.
From Proposition \ref{KJfactor}, we have $K^\theta = K^{0,\theta} J^{1,\theta}\cdots J^{d,\theta}$.
Hence, $\Hom_{K^\theta}(\kappa ,\inf_{K^{0,\theta}}^{K^\theta}(\eta\mu))$ is just the space of linear forms $\lambda$ in $\Hom_{J^{1,\theta}\cdots J^{d,\theta}} (\kappa ,1 )$ 
such that
$$\lambda (\kappa (h) v) = \eta (h)\mu(h)\lambda(v),$$ for all $v\in V$ and $h\in K^{0,\theta}$.  
  Suppose $\lambda\in \Hom_{J^{1,\theta}\cdots J^{d,\theta}} (\kappa ,1 )$ and $h\in K^{0,\theta}$.  
Then
$$\lambda (\kappa(h)(v_{-1}\otd v_{d-1})) = \lambda_{-1}(\rho(h)v_{-1}) \lambda_0(v_0)\cdots \lambda_{d-1}(v_{d-1})\ \eta (h),$$ for all $v_{-1}\in V_{-1},\dots, v_{d-1}\in V_{d-1}$.  Therefore, $\lambda$ lies in  
$\Hom_{K^\theta}(\kappa,\inf_{K^{0,\theta}}^{K^\theta}(\eta\mu))$ exactly when 
$$
\lambda (v_{-1}\otd v_{d-1})\,\eta(h)\,\mu(h)
 = \lambda_{-1}(\rho(h)v_{-1}) \lambda_0(v_0)\cdots \lambda_{d-1}(v_{d-1})\,
\eta(h),
$$ 
for all $h\in K^{0,\theta}$, $v_{-1}\in V_{-1},\dots, v_{d-1}\in V_{d-1}$.  
Equivalently, $\lambda_{-1}$ must lie in $\Hom_{K^{0,\theta}} (\rho,\mu)$.
\end{proof}

\section{Dimension formulas}
\label{sec:dimformulas}

In this section, we state our main results concerning
distinguishedness of tame supercuspidal representations.
Throughout this chapter, we have fixed our group $G$ 
and considered tame supercuspidal representations arising from Yu's construction via induction from some fixed group $K$.  These representations are attached to $(G,K)$-data 
$\Psi = (\vec\bG,y,\rho,\vec\phi)$, where a $(G,K)$-datum is simply a generic cuspidal $G$-datum for which the associated inducing subgroup $K(\Psi)$ is $K$.   (The group $K(\Psi)$ is defined 
in Section~\ref{sec:notations} and Yu's construction is discussed in 
Section \ref{sec:construction}.)

A notion of refactorization of cuspidal $G$-data
was introduced in Section~\ref{sec:refactorization}. In Section~\ref{sec:symmetrizing}, 
we defined an 
equivalence relation on the set of $(G,K)$-data, in terms
of refactorization, $K$-conjugation and elementary transformations. 
As indicated in Section~\ref{sec:symmetrizing}, the equivalence class of the inducing representation
$\kappa(\Psi)$ only depends on the $K$-equivalence class $\xi$ of the $(G,K)$-datum $\Psi$. Hence the equivalence class of the tame supercuspidal representation $\pi(\Psi)$ also only depends on $\xi$.

Fix a $G$-orbit $\Theta$ of involutions of $G$ and a $K$-equivalence class $\xi$ of 
$(G,K)$-data.  As in Section~\ref{sec:symmetrizing}, we assume that 
Hypothesis~C($\vec\bG$) is satisfied for all tamely ramified twisted Levi
 sequences
$\vec\bG$ that occur in any $(G,K)$-datum in $\xi$.
Recall that if  $\theta\in \Theta$ and $\Psi\in \xi$ then  the dimension of  
$\Hom_{G^\theta}
(\pi(\Psi),1)$ only depends on $\Theta$ and $\xi$ and we denote this number by
$\langle\Theta,\xi\rangle_G$. Similarly, if $\theta\in \Theta$ and $\Theta^\prime$ is the $K$-orbit of $\theta$ 
then the dimension of ${\Hom}_{K\cap G^\theta}(\kappa(\Psi),1)$ only depends on $\Theta^\prime$ and $\xi$ and it is denoted by $\langle \Theta^\prime,\xi\rangle_K$. 

Now fix $\Theta^\prime\in \Theta^K$, where $\Theta^K$ denotes the set of $K$-orbits of involutions of $G$ that are contained in $\Theta$. 
Assume that $\langle \Theta^\prime,\xi\rangle_K$ is nonzero.
Then, according to Lemma~\ref{GKeqlemone}, Proposition \ref{compatorbsum} 
and Proposition \ref{TFAErefactor}(2) 
there exists a $k\in K$ and a refactorization $\dot\Psi$ of $\Psi$ such
that ${}^k\dot\Psi$ is $\theta$-symmetric in the sense of
Definition~\ref{defweaksymdatum}. Equivalently, $\dot\Psi$ is $(k\cdot \theta)$-symmetric.

For simplicity, we replace $\theta$ by $(k^{-1}\cdot\theta)$ or, in other words, we assume that $\dot\Psi$ is $\theta$-symmetric.  The fact that $\dot\Psi$ is $\theta$-symmetric allows us to obtain a formula for the dimension of $\Hom_{K^\theta}(\kappa(\dot\Psi),1)$ using the methods of the previous two sections.  We obtain:
$$
\langle\Theta^\prime ,\xi\rangle_K=\dim \Hom_{K^0(\dot\Psi)^\theta}
(\rho(\dot\Psi),\eta_\theta(\dot\Psi)),
$$
where the character $\eta_\theta (\dot\Psi)$ is the character $\eta$ defined in Section~\ref{sec:factorization}.

In practice, it is desirable not to actually have to find a $\theta$-symmetric refactorization of $\Psi$.  Define a representation of $K^0 (\Psi)= K^0(\dot\Psi)$ by $$\rho' (\Psi) = \rho (\Psi)\otimes \prod_{i=0}^d (\phi_i\, |\,K^0(\Psi))$$ and define a quadratic character of $K^0(\Psi)^\theta = K^0(\dot\Psi)^\theta$ by
$$\eta'_\theta (k)
=\prod_{i=0}^{d-1}\chi^{\cM_i}(f_i^\prime(k)),
$$
in the notations of Section~\ref{sec:appliedHeis}. 
Then $\rho'(\dot\Psi) = \rho'(\Psi)$ and, similarly, $\eta'_\theta$ is invariant under refactorizations.

We have
\begin{equation*}
\begin{split}
\Hom_{K^0(\dot\Psi)^\theta}
(\rho(\dot\Psi),\eta_\theta(\dot\Psi))&=
\Hom_{K^0(\dot\Psi)^\theta}
(\rho'(\dot\Psi),\eta'_\theta(\dot\Psi))\cr
&=
\Hom_{K^0(\Psi)^\theta}
(\rho'(\Psi),\eta'_\theta(\Psi)).
\end{split}
\end{equation*}
Therefore
$$\langle \Theta',\xi\rangle_K = \dim 
\Hom_{K^0(\Psi)^\theta}
(\rho'(\Psi),\eta'_\theta(\Psi)),$$
which expresses $\langle \Theta',\xi\rangle_K$ in terms of 
$\Psi$ and eliminates
 the need to use a refactorization.

\begin{theorem}\label{maindimformula}
Let $\Psi$, $\xi$, $\Theta$, $K$, etc., be as above.
\begin{enumerate}
\item $\langle\Theta,\xi\rangle_G=m_K(\Theta)\,\sum_{\Theta^\prime
\in \Theta^K}\, \langle\Theta^\prime,\xi\rangle_K$, where $m_K(\Theta)$
is as in Section~\ref{sec:Mackey}.
\item The constants $\langle \Theta,\xi\rangle_G$, $\langle \Theta',\xi\rangle_K$ and $m_K(\Theta)$ in  (1) are all finite.  The number of nonzero summands in (1) is also finite.
\item If $\theta\in\Theta'\in \Theta^K$ and if  there exists
a $\theta$-symmetric refactorization   of $\Psi$ then
$$
\langle\Theta' ,\xi\rangle_K=
\dim\Hom_{K^0(\Psi)^\theta}(\rho' (\Psi),
\eta'_\theta (\Psi)).
$$
\item If $\theta\in\Theta'\in \Theta^K$ and $\langle
\Theta',\xi\rangle_K \not=0$ then there exists $k\in K$ 
and a refactorization $\dot\Psi$ 
of $\Psi$ such that ${}^k\dot\Psi$ is $\theta$-symmetric element of $\xi$.  In this case, $\dot\Psi$ is $\theta'$-symmetric where $\theta' = k^{-1}\cdot \theta\in \Theta'$.
\item Suppose that $\langle\Theta,\xi\rangle_G\not=0$.
Choose $\theta\in \Theta$ such that some refactorization
of $\Psi$ is $\theta$-symmetric. Let $g_1,\dots,g_m$ be a 
maximal (finite) sequence in $G$ such that $g_j\theta(g_j^{-1})
\in K^0=K^0(\Psi)$ and the $K$-orbits of the elements
$\theta_j=g_j\cdot\theta$ are distinct. Then
$$
{\rm Hom}_{G^\theta}(\pi(\Psi),1)\cong m_K(\Theta)\bigoplus_{j=1}^m
{\rm Hom}_{K^{0,\theta_j}}(\rho^\prime(\Psi),\eta'_{\theta_j}(\Psi)).
$$
Hence, letting $\Theta_j^\prime$ be the $K$-orbit of $\theta_j$, 
 $j\in \{\, 1,\dots,m\,\}$, 
$$
\langle\Theta,\xi\rangle_G=m_K(\Theta)\,\sum_{j=1}^m\langle\Theta_j^\prime,
\xi\rangle_K.
$$
\end{enumerate}
\end{theorem}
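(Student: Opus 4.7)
The plan is to assemble the five assertions of Theorem \ref{maindimformula} from the machinery already built up in Chapters~2--5; the hard analytic work has been done in Sections~\ref{sec:appliedHeis}--\ref{sec:factorization}, and what remains is essentially to package those results and keep track of the twists that appear. Part~(1) is the content of Lemma~\ref{orbmult}, obtained by applying the Mackey decomposition of Section~\ref{sec:Mackey} and grouping the double cosets $KgG^\theta$ according to the $K$-orbit of $g\cdot\theta$. Part~(2) follows from Proposition~\ref{finrelorbs} together with Lemma~\ref{mThetabound}: only finitely many $K$-orbits in $\Theta^K$ are moderately compatible with $\xi$, by Proposition~\ref{compatorbsum} only the moderately compatible orbits can satisfy $\langle\Theta',\xi\rangle_K\ne 0$, the constant $m_K(\Theta)$ is bounded by $|H_\Theta^1|$, and the finiteness of each $\langle\Theta',\xi\rangle_K$ is then forced by the same decomposition. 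Part~(4) is immediate from Proposition~\ref{compatorbsum} (strong implies moderate compatibility), Proposition~\ref{TFAErefactor}(2) (moderate compatibility yields a $\theta'$-symmetric element of $\xi$ for every $\theta'\in\Theta'$), and Lemma~\ref{GKeqlemone} (any member of $\xi$ is of the form ${}^k\dot\Psi$ for some refactorization $\dot\Psi$ of $\Psi$ and some $k\in K$).

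The core of the theorem is part~(3). Given $\theta\in\Theta'$ and a $\theta$-symmetric refactorization $\dot\Psi$ of $\Psi$, Proposition~\ref{refactorequiv} yields $\kappa(\dot\Psi)\simeq\kappa(\Psi)$, hence
$$\langle\Theta',\xi\rangle_K \;=\; \dim\Hom_{K^\theta}(\kappa(\dot\Psi),1).$$
I would decompose $K^\theta$ via Proposition~\ref{KJfactor} and then apply Proposition~\ref{appliedfactor} to $\dot\Psi$ with $\mu$ chosen so that the inflated character $\eta\mu$ equals $1$; since $\eta$ is quadratic and $p$ is odd, $\mu=\eta$ works, and the proposition produces a canonical isomorphism
$$\Hom_{K^\theta}(\kappa(\dot\Psi),1)\;\cong\;\Hom_{K^0(\dot\Psi)^\theta}(\rho(\dot\Psi),\eta_\theta(\dot\Psi)).$$
It then remains to translate the right-hand side, which involves $\dot\Psi$, into the refactorization-invariant quantities $\rho'(\Psi)$ and $\eta'_\theta(\Psi)$ stated in the theorem. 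I would do this in two steps: first, observe that tensoring by the character $\prod_{i=0}^d \phi_i|_{K^{0,\theta}}$ exchanges the pair $(\rho,\eta_\theta)$ with $(\rho',\eta'_\theta)$, both defined via $\dot\Psi$; second, verify that $\rho'$ is built into Condition~\textbf{F2} to be invariant under refactorization, and that $\eta'_\theta$, defined in terms of the $\chi^{\cM_i}\circ f_i'$, depends only on the groups $J^{i+1}$, the symplectic spaces $W_i$, and the action of $K^{0,\theta}$ by conjugation on them, all of which are refactorization-invariant.

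Part~(5) combines parts~(1), (3), and Proposition~\ref{oldLemmasBandC}(2). Assuming $\langle\Theta,\xi\rangle_G\ne 0$, part~(4) guarantees a $\theta\in\Theta$ together with a refactorization of $\Psi$ that is $\theta$-symmetric; by Proposition~\ref{oldLemmasBandC}(2) the $K$-orbits $\Theta'$ moderately (equivalently, strongly) compatible with $\xi$ are precisely those of the form $K\cdot(g\cdot\theta)$ with $g\theta(g)^{-1}\in K^0$. Enumerating a full set of representatives $g_1,\dots,g_m$ of these distinct $K$-orbits and setting $\theta_j=g_j\cdot\theta$, each $\theta_j$ admits a $\theta_j$-symmetric refactorization of $\Psi$ (conjugate the $\theta$-symmetric refactorization by $g_j$ and apply part~(4)), so part~(3) is applicable at each $\theta_j$ and gives the summands. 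The Mackey decomposition in part~(1), restricted to the nonvanishing summands, yields the claimed direct-sum formula for $\Hom_{G^\theta}(\pi(\Psi),1)$. The main obstacle throughout is the bookkeeping in part~(3): keeping track of the quadratic twist $\prod_{i=0}^d\phi_i|_{K^{0,\theta}}$ that mediates between the character $\eta_\theta$ arising from Proposition~\ref{appliedfactor} applied to a $\theta$-symmetric refactorization and the refactorization-invariant character $\eta'_\theta(\Psi)$ appearing in the theorem statement, and likewise for $\rho$ versus $\rho'$; this twist is precisely the price one pays for stating the final formula entirely in terms of $\Psi$ rather than in terms of a $\theta$-symmetric refactorization $\dot\Psi$.
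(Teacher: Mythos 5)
Your proposal follows essentially the same route as the paper: part (1) via Lemma~\ref{orbmult}, part (2) via Lemma~\ref{mThetabound} and Proposition~\ref{finrelorbs}, part (4) via Lemma~\ref{GKeqlemone}, Proposition~\ref{compatorbsum} and Proposition~\ref{TFAErefactor}(2), and part (3) by applying Proposition~\ref{appliedfactor} to a $\theta$-symmetric refactorization $\dot\Psi$ with $\mu=\eta$ and then passing to the refactorization-invariant pair $(\rho^\prime,\eta^\prime_\theta)$; the observation that $\eta=\bigl(\prod_{i=0}^d\phi_i\,|\,K^{0,\theta}\bigr)\cdot\eta^\prime_\theta$, together with the quadraticity of $\prod_i\phi_i\,|\,K^{0,\theta}$ (since $\Psi$ is weakly $\theta$-symmetric) and the invariance of $\rho^\prime$ and $\eta^\prime_\theta$ under refactorization, is exactly what the paper records in the discussion preceding the theorem.

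Two small imprecisions in your treatment of part (5) are worth flagging, though neither breaks the argument. First, your parenthetical ``(equivalently, strongly)'' conflates moderate and strong compatibility: Proposition~\ref{compatorbsum} gives only the implication strong $\Rightarrow$ moderate, and Example~\ref{exampleB} exhibits a moderately compatible $K$-orbit that is not strongly compatible. This does not affect the formula, since the extra terms are zero, but the characterization ``precisely those of the form $K\cdot(g\cdot\theta)$'' applies to the moderately compatible orbits only. Second, conjugating the $\theta$-symmetric refactorization $\dot\Psi$ by $g_j$ produces a refactorization of ${}^{g_j}\Psi$, not of $\Psi$, so it does not directly feed into part (4). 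The cleaner route --- the one the paper takes --- is to observe directly that $g_j\theta(g_j)^{-1}\in K^0$ forces $\dot\Psi$ itself to be $\theta_j$-symmetric: writing $h=g_j\theta(g_j)^{-1}\in K^0$ gives $\theta_j=\Int(h)\circ\theta$, and since $h\in G^i$ for all $i$, $h[y]=[y]$, and characters of $G^i$ are conjugation-invariant, each defining condition of $\theta_j$-symmetry for $\dot\Psi$ follows from $\theta$-symmetry. With that replacement the argument for part (5) is complete.
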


\begin{proof}
The first statement is identical to Equation \ref{basicmulteq}.  
The second statement follows from Lemma \ref{mThetabound} and 
Proposition \ref{finrelorbs}.  Statements (3) and (4) follow from the 
discussion at the beginning of this section.  

Suppose that $\dot\Psi$ is a refactorization of $\Psi$ that
is $\theta$-symmetric. It is easy to check (see 
Definition~\ref{defweaksymdatum})
that the fact that $\dot\Psi$ is $\theta$-symmetric
and $g_j\theta(g_j^{-1})\in K^0$ implies that 
$\dot\Psi$ is $\theta_j$-symmetric. 
Statement (5) follows from the results used to prove
 (1), (2) and (3), and from
 Proposition \ref{oldLemmasBandC}.
\end{proof}

\begin{remark}\label{remamainthm}
In this \paperbook, apart from those cases related to equivalence of tame
supercuspidal representations (see the next chapter), 
we do
not address the issue of computing the terms
$\dim\Hom_{K^0(\Psi)^\theta}(\rho' (\Psi),
\eta'_\theta (\Psi))$.
The representation $\rho' (\Psi)$ is a twist
of $\rho(\Psi)$ by a quasicharacter of $K^0(\Psi)$,
and $\rho(\Psi)\,|\, G_{y,0}^0$ is the inflation
of a cuspidal representation
of the finite group $G_{y,0:0^+}^0$.
Also, in the $\theta$-symmetric case,
$\theta$ factors to an involution of
this finite group.
Because of this, together with Proposition~\ref{quaddistprop},
it is natural to expect that computing terms 
of the above form  might be connected
to quadratic distinguishedness properties of cuspidal 
representations
of finite groups of Lie type. This was the
case for the distinguished tame supercuspidal
representations of general linear groups
that were studied in \cite{HM2} and \cite{HM3}.
\end{remark}

\begin{remark}\label{differentcontributions}
As we will see in Section~\ref{sec:examples},
there are examples where the terms
on the right hand side of (5) give different
contributions to ${\rm Hom}_{G^\theta}(\pi(\Psi),1)$.
That is, distict $K$-orbits that are moderately
compatible with $\xi$ do not necessarily contribute
in the same way to $\langle\Theta,\xi\rangle_G$.
\end{remark}

\begin{remark}\label{symrem}
Suppose that $\langle\Theta,\xi\rangle_G\not=0$.
Then there exist $\theta\in\Theta$ and $\Psi\in \xi$
such that $\Psi$ is $\theta$-symmetric
and $\Hom_{K^0(\Psi)^\theta}(\rho^\prime(\Psi),
\eta_\theta^\prime(\Psi))\not=0$. 
The results of Section~\ref{sec:contragredients}
show that $\kappa_i(\Psi)\circ\theta\simeq \widetilde
{\kappa_i(\Psi)}$
for all $i\in \{\, 0,\dots,d\,\}$. If we also
had $\rho(\Psi)\circ\theta\simeq\widetilde{\rho(\Psi)}$,
then we would have $\kappa(\Psi)\circ\theta\simeq
\widetilde{\kappa(\Psi)}$, hence
$\pi(\Psi)\circ\theta\simeq\widetilde{\pi(\Psi)}$.
However, it is not clear whether nonvanishing
of $\Hom_{K^0(\Psi)^\theta}(\rho^\prime(\Psi),
\eta_\theta^\prime(\Psi))$ implies a relation
between $\rho(\Psi)\circ\theta$
and $\widetilde{\rho(\Psi)}$.
In certain cases, (for example, see
\cite{HM2} and \cite{HM3}) the condition
$\pi(\Psi)\circ\theta\simeq\widetilde{\pi(\Psi)}$
is a necessary condition for a
tame supercuspidal representation $\pi(\Psi)$ 
of a general linear group to be distinguished by
an involution $\theta$.
\end{remark}

We now specialize to the case in which $\bG^0$ is a torus.    Then since $G^0/Z$ is compact the reduced building $\cB_{\rm red} (\bG^0,F)$ reduces to a point and thus $K^0 = G^0$. 
When $\theta(\bG^0)=\bG^0$, the condition $\theta [y] = [y]$ is automatic.  Therefore, the notions of weak and moderate compatibility coincide, as do the notions of weak $\theta$-symmetry and $\theta$-symmetry.   Note that $\rho$ is a quasicharacter of $G^0$.

\begin{definition}\label{toraldef}
A $(G,K)$-datum $\Psi = (\vec\bG, y,\rho,\vec\phi)$ is \textit{toral} if $\bG^0$ is a torus.  A $K$-equivalence class $\xi\in \Xi^K$ is \textit{toral} if one (hence all) of its elements are toral.
\end{definition}

Suppose $\Psi = (\vec\bG,y,\rho,\vec\phi)$ is a toral $(G,K)$-datum.  Let $\xi$ be its $K$-equivalence class.   In order for there to exist a $K$-orbit $\Theta'$ that is moderately compatible with $\xi$, there must be some involution $\theta\in \Theta$ such that $\theta (\vec\bG) = \vec\bG$ and $\prod_{i=0}^d \phi_i(k)=1$ for all $k\in G^{0,\theta}_{0^+}$.
If no moderately compatible orbits exist then $\langle \Theta,\xi\rangle_G =0$.  Otherwise, we have:

\begin{proposition}\label{invdimtorus} 
Assume that $\xi\in \Xi^K$ is toral and $\Theta'\in \Theta^K$ is moderately 
compatible with $\xi$.
Choose $\theta\in \Theta'$ and $\Psi\in \xi$ so that $\Psi$  is $\theta$-symmetric.  Then:
\begin{enumerate}
\item $\langle \Theta',\xi\rangle_K =\begin{cases}
1,&\text{if $\rho'(\Psi)\eta^{\prime}_\theta (\Psi)^{-1}\, |\, G^{0,\theta}= 1$,}\\
0,&\text{otherwise.}
\end{cases}
$
\item $\langle \Theta'',\xi\rangle_K = \langle \Theta',\xi\rangle_K$ whenever $\Theta''\in \Theta^K$ is moderately compatible with $\xi$.
\item
$\langle \Theta ,\xi \rangle_G= m_K(\Theta)\langle \Theta',\xi\rangle_K\cdot \#  S$, where $S$ is the set of elements of $\Theta^K$ that are moderately compatible with $\xi$.
\item If $\langle\Theta , \xi\rangle_G$ is nonzero then it is equal to the number of double cosets in $K\bs G/G^\theta$ that contain a representative $g$ such that $g\theta(g)^{-1}\in G^0$.
\end{enumerate}
\end{proposition}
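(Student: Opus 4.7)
The plan is to derive all four assertions from Theorem~\ref{maindimformula} together with the characterization of moderately compatible orbits in Proposition~\ref{oldLemmasBandC}(2). For part (1), the toral hypothesis gives $K^0(\Psi) = G^0$, and both $\rho'(\Psi)$ and $\eta'_\theta(\Psi)$ are one-dimensional characters of the abelian group $G^{0,\theta}$. Theorem~\ref{maindimformula}(3) then immediately yields the dichotomy: $\dim \Hom_{G^{0,\theta}}(\rho'(\Psi), \eta'_\theta(\Psi))$ equals $1$ if the two characters agree on $G^{0,\theta}$ and $0$ otherwise.

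For part (2), fix $\theta \in \Theta'$ and let $\Theta'' \in \Theta^K$ be moderately compatible with $\xi$. By Proposition~\ref{oldLemmasBandC}(2), I may choose $g \in G$ with $u := g\theta(g)^{-1} \in G^0$ and $\theta_0 := g \cdot \theta \in \Theta''$. The toral hypothesis makes this transition especially clean: since $G^0$ is abelian, $u$ acts trivially on $G^0$, giving $G^{0,\theta_0} = G^{0,\theta}$; each quasicharacter $\phi_i$ is trivial on commutators in $G^i$, so $\phi_i \circ \theta_0 = \phi_i \circ \Int(u) \circ \theta = \phi_i^{-1}$; and $\Int(u)$ preserves each $\bG^i$ as well as the class $[y]$ in the reduced building. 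Hence $\Psi$ is also $\theta_0$-symmetric, and Theorem~\ref{maindimformula}(3) gives
\[
\langle \Theta'', \xi\rangle_K = \dim \Hom_{G^{0,\theta}}(\rho'(\Psi), \eta'_{\theta_0}(\Psi)).
\]
The remaining task, which I expect to be the main obstacle, is to verify $\eta'_\theta(\Psi) = \eta'_{\theta_0}(\Psi)$ on $G^{0,\theta}$. For $k \in G^{0,\theta}$, the image $f'_i(k) \in \cS_i$ preserves both polarizations of $W_i$, hence lies in $\cM_i(\theta) \cap \cM_i(\theta_0)$. Because $\chi^{\cM_i}$ is characterized via Lemma~\ref{realtrace} as the sign of the trace of the Weil representation $\hat\tau_i$, and that trace is a class function on $\cS_i$ intrinsic to the Weil representation's equivalence class (not to the polarization used to model it), the two signs must agree wherever the trace is nonzero. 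The subtlety lies in elements on which the trace vanishes; my plan here is to exploit the root-space decomposition of $W_i$ under the torus $G^0$, where each polarization corresponds to a choice of positive system on the pairs of opposite weight spaces, and to show that replacing $\theta$ by $\theta_0$ only reorders these pairs in a way that does not affect $(\det_{W_i^+} f'_i(k))^{(p-1)/2}$.

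Parts (3) and (4) then follow by assembling (1) and (2) with Theorem~\ref{maindimformula}(1) and Proposition~\ref{compatorbsum}: the only $K$-orbits $\Theta' \in \Theta^K$ contributing to the sum in Theorem~\ref{maindimformula}(1) are the moderately compatible ones, each of which contributes the common value $\langle \Theta', \xi\rangle_K$ by (1) and (2), yielding (3). For (4), if $\langle\Theta,\xi\rangle_G \ne 0$ then (1) forces this common value to be $1$, so (3) reduces to $\langle\Theta,\xi\rangle_G = m_K(\Theta) \cdot \# S$. Using the fiber description of the surjection $K \backslash G / G^\theta \to \Theta^K$ from Section~\ref{sec:Mackey} together with the characterization of moderate compatibility in Proposition~\ref{oldLemmasBandC}(2), the product $m_K(\Theta) \cdot \# S$ equals the number of double cosets $KgG^\theta$ admitting a representative $g$ with $g\theta(g)^{-1} \in G^0$.
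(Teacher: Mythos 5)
Your proposal follows the paper's approach closely: part (1) via Theorem~\ref{maindimformula}(3) and one-dimensionality, part (2) via Proposition~\ref{oldLemmasBandC}(2) and the sign-of-trace characterization from Lemma~\ref{realtrace}, parts (3) and (4) via Theorem~\ref{maindimformula}(1) and the Mackey-theoretic fiber description from Section~\ref{sec:Mackey}. The verification that $\Psi$ is $\theta_0$-symmetric, that $G^{0,\theta_0}=G^{0,\theta}$, and that $f_i'(k)$ lands in both $\cM_i(\theta)$ and $\cM_i(\theta_0)$ is exactly what is needed.

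The one place where you introduce an unnecessary complication is the claimed ``subtlety'' in part (2) concerning elements on which the trace of the Weil representation vanishes. That situation never arises on $\cM_i$: in the model where the Weil representation acts on functions on $W^-$ via $(\hat\tau_+(g)\varphi)(w)=\chi^{\cM}(g)\,\varphi(g^{-1}w)$ (Lemma~\ref{realtrace} and the discussion preceding it), the trace of $\hat\tau_+(g)$ for $g\in\cM$ is $\chi^{\cM}(g)$ times the number of fixed points of $g^{-1}$ on $W^-$. Since $0\in W^-$ is always fixed, this count is at least $1$, so the trace is a nonzero real number and its sign is well defined and equal to $\chi^{\cM}(g)$. (Indeed, the very statement of Lemma~\ref{realtrace} — that the sign of the trace \emph{is} $\chi^{\cM}$ — already presupposes nonvanishing, and this is what the fixed-point count shows.) Consequently your backup plan via root-space decompositions and an explicit analysis of $(\det_{W_i^+}f_i'(k))^{(p-1)/2}$ is not needed; you can simply observe, as the paper does, that $\eta'_\theta(\Psi)(k)$ and $\eta'_{\theta_0}(\Psi)(k)$ both equal $\prod_{i=0}^{d-1}\operatorname{sign}(\operatorname{tr}\hat\tau_i^\sharp(f_i'(k)))$, an expression visibly independent of which polarization was used.
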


\begin{proof}
The first assertion follows from the discussion at the beginning of this section together with the fact that $\rho'(\Psi)$ is 1-dimensional.
Now suppose $\Theta''$ is moderately compatible with $\xi$.
Proposition \ref{oldLemmasBandC}(2) says that there exists $g\in G$ with $g\theta (g)^{-1}\in G^0$ such that $g\cdot \theta\in \Theta''$.
Because $g\theta(g)^{-1}\in G^0$ and $G^0$ is abelian, it is elementary to verify that $\Psi$ is $(g\cdot\theta)$-symmetric and $G^{0,g\cdot\theta} = G^{0,\theta}$.  
To deduce (2) from (1), it now suffices to show that $\eta'_\theta (\Psi)$ and $\eta'_{g\cdot\theta}(\Psi)$ agree on $G^{0,\theta}$.  But  Lemma \ref{realtrace} implies
$$
\eta'_\theta (\Psi)(k) = \prod_{i=0}^{d-1} {\rm sign} (\hat\tau_i^\sharp (f'_i (k))) 
= \eta'_{g\cdot\theta}(\Psi)(k),$$ for all $k\in G^{0,\theta}$.
(Here, ${\rm sign} (\hat\tau_i^\sharp(f^\prime_i(k)))$ is defined to be
the sign of the trace of $\hat\tau_i^\sharp(f^\prime_i(k))$, for
$k\in G^{0,\theta}$.)
Therefore, (2) follows.  Statement (3) follows from Theorem \ref{maindimformula}.
Statement (4) follows from (3) and the discussion in Section \ref{sec:Mackey}.
\end{proof}

\section{Multiplicity}
\label{sec:multiplicity}

Let $G$ be a totally disconnected group and let $H$ be a closed subgroup.  
We are especially interested in 
pairs $(G,H) = (G,G^\theta)$ for which $G$ is the group $\bG(F)$ of $F$-rational points of a connected reductive $F$-group $\bG$ and $H$ is the group of fixed points of an involution $\theta$ of $G$.

In applications of representation theory to number theory, it can be quite useful when $\Hom_{H}(\pi,1)$ has dimension at most one for a given representation $\pi$ of $G$ or for  some specified collection of representations.  This property plays a role in the harmonic analysis on $H\backslash G$ that is technically similar to the role played by the uniqueness of Whittaker models in representation theory.  It is frequently referred to as ``the multiplicity one property.'' \label{multoneproplabel}

When $\pi$ fails to have the multiplicity one property with respect to $H$, it could be important to detect the source of failure.  Doing so might allow one to enlarge $H$ to some subgroup $H'$ such that $(G,H')$ has the multiplicity one property.  In practice, replacing $H$ by $H'$ would only be useful if $(G,H')$ were to retain the essential features of $(G,H)$ for a given application.

We will identify several potential sources of failure of the multiplicity one property for tame supercuspidal representations.  In some cases, the failures may be analyzed in more than one way.  

\subsection*{The group $N_G(H)/H$}
Suppose, as usual, that $\pi$ is induced via Yu's construction from an irreducible smooth representation $\kappa$ of a compact-mod-center subgroup $K$ of $G$.  We observe  that if
$n$ lies in the normalizer $N_G(H)$ of $H$ in $G$
then $$\Hom_{K\cap gHg^{-1}}(\kappa,1)=
\Hom_{K\cap gnHn^{-1}g^{-1}}(\kappa,1).$$ 
Therefore, if $m_g$ is the number of double
cosets of $K\bs G/H$ lying in the double coset
$KgN_G(H)$ then:
$$\Hom_H(\pi,1)\cong \bigoplus_{KgN_G(H)\in
K\bs G/N_G(H)}m_g\cdot \Hom_{K\cap
gHg^{-1}}(\kappa,1).$$ The phenomenon of repeated
terms is directly related to the existence of
nontrivial orbits with respect to the action of
the group $N_G(H)/H$ on $\Hom_H(\pi,1)$ by
$$(nH\cdot \Lambda)(f) =
\Lambda(\pi(n)^{-1}f).$$
  Example \ref{exGLO} (when $-1$ is a square in $F^\times$)
 and Example~\ref{exampleA} provide examples in which the 
multiplicity one property fails to hold for a tame supercuspidal 
representation because of the phenomenon just 
described.
In such cases, it is natural to ask whether all irreducible tame supercuspidal
representations have the multiplicity one property relative to $N_G(H)$.
  
\subsection*{The constant $m_K(\Theta)$}
Assume again that $\pi$ is induced from an irreducible smooth representation of a 
compact-mod-center subgroup $K$ of $G$.  
Recall from Section  \ref{sec:Mackey} that we have associated to a triple $(G,K,\Theta)$ 
a constant $m_K(\Theta)$.  In particular, if $\theta$ is an involution of $G$ whose 
$G$-orbit is $\Theta$, then $K$ acts by $\theta$-twisted conjugation on 
$\{\, g\theta (g)^{-1} \ | \ g\in G\,\}$.  The constant $m_K(\Theta)$ is identical to the number of 
$K$-orbits in this set that contain an element of the center $Z$ of $G$.  
For $\pi$ as above,
the dimension of $\Hom_{G^\theta}(\pi,1)$ must be a multiple of $m_K(\Theta)$.
 
In many cases, we have $m_K(\Theta)=1$. For example, this occurs if $Z$ is trivial.  
More generally, in cases where $K\supset Z$, Lemma \ref{mThetabound} gives a cohomological 
condition  that, when satisfied, implies $m_K(\Theta)=1$.
(Note that in  our applications the condition $K\supset Z$ is automatic.)  
Example \ref{exGLO} (in the case where $-1$ is a square in $F^\times$)
 and Example~\ref{exampleA} provide examples for 
which $m_K(\Theta)>1$.

\subsection*{The toral case}

Let $\Theta$ be a $G$-orbit of involutions of $G$.
Assume $\Psi$ is a toral $(G,K)$-datum and denote its equivalence 
class by $\xi$.  Let $n_\xi (\Theta)$ be the number of distinct
 $K$-orbits $\Theta'\in \Theta^K$ that are moderately compatible with $\xi$.
Proposition \ref{invdimtorus} implies that if $\langle \Theta,\xi\rangle_G$ is nonzero then it equals $m_K(\Theta) n_\xi (\Theta)$.
Moreover, if $\langle \Theta,\xi\rangle_G $ is nonzero, then Lemma~\ref{mThetabound}  and 
Remark~\ref{remoldLemBandC} provide cohomological conditions that are sufficient for the triviality of $m_K(\Theta)$ and $n_\xi(\Theta)$, respectively.

\subsection*{The nontoral case}

For nontoral $G$-data, the situation is more complicated.  Higher multiplicities may 
result from the sources mentioned above, as well as the fact that the quantities $\langle \Theta' , \xi\rangle_K$ may themselves be larger than one (since $\rho$ is generally not
one-dimensional in the nontoral case). 

Another complicating factor in the nontoral case is that $\langle \Theta',\xi\rangle_K$ 
may be nonconstant as $\Theta'$ varies over the elements in $\Theta^K$ that are moderately compatible with $\xi$.  The latter phenomenon is evident in Example \ref{exampleB}.  Indeed, Example \ref{exampleB} shows that one can have 
$\langle \Theta,\xi \rangle_G =1$, even when $n_\xi (\Theta)>1$.

\subsection*{Gelfand pairs}
The theory of Gelfand pairs, in the context of representations of totally disconnected groups, was developed by Gelfand-Kazhdan \cite{GK} and Bernstein \cite{B}.  It is surveyed in \cite{Gr}, where the following definition appears:

\begin{definition}\label{Gelfandpair}
Assume $G$ is a totally disconnected group and $H$ is a closed subgroup such that $H\backslash G$ carries a $G$-invariant measure.  Then $(G,H)$ is a \textit{Gelfand pair} if  $$\dim\Hom_H (\pi,1)\cdot \dim\Hom_H (\tilde\pi ,1)\le 1$$ for every irreducible, admissible (smooth) representation $\pi$ of $G$.
\end{definition}

In the context of the theory of $G^\theta$-distinguished tame supercuspidal representations of a group $G$, we observe that if  it is known that $(G,G^\theta)$ is a Gelfand pair, then the problem of determining the dimension of $\Hom_{G^\theta}(\pi,1)$ simplifies.  For example, there is no need to compute the constants $m_K(\Theta)$.  If distinguished tame supercuspidal representations exist then the constants $m_K(\Theta)$ must be 1 (and if distinguished representations do not exist these constants are irrelevant).

The standard tool for establishing that $(G,G^\theta)$ is a Gelfand pair is the following lemma due to Gelfand-Kazhdan  (known colloquially as ``Gelfand's Lemma''):

\begin{lemma}[\cite{GK},\cite{Gr}]\label{GelfandLemma}
Assume $G$ is a totally disconnected group and $H$ is a closed subgroup such that $H\backslash G$ carries a $G$-invariant measure.  Suppose $\sigma$ is an anti-automorphism of $G$ of order two that stabilizes $H$ and acts trivially on every bi-$H$-invariant distribution on $G$.  Then $(G,H)$ is a Gelfand pair.
\end{lemma}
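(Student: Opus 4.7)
The plan is to follow the classical Gelfand--Kazhdan--Bernstein strategy: first use the hypothesis on $\sigma$ to make the convolution algebra of bi-$H$-invariant compactly supported distributions commutative, and then deduce the Gelfand pair property by exhibiting an action of this commutative algebra that detects the product of dimensions.

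For the first step, I will verify that $\sigma$ induces an involution $\sigma_\ast$ of the distribution algebra $\mathcal{D}_c(G)$ of compactly supported distributions on $G$ under convolution, defined on test functions by $\sigma_\ast(D)(f)=D(f\circ\sigma)$. Because $\sigma$ is an anti-automorphism of $G$, a direct computation using the convolution formula gives $\sigma_\ast(D_1\ast D_2)=\sigma_\ast(D_2)\ast \sigma_\ast(D_1)$. Since $\sigma(H)=H$, the subalgebra $\mathcal{A}:=\mathcal{D}_c(H\backslash G/H)$ of compactly supported bi-$H$-invariant distributions is stable under $\sigma_\ast$, and the hypothesis that $\sigma_\ast$ acts trivially on all bi-$H$-invariant distributions (and in particular on $\mathcal{A}$), combined with the anti-homomorphism property, forces
$$D_1\ast D_2=\sigma_\ast(D_1\ast D_2)=\sigma_\ast(D_2)\ast \sigma_\ast(D_1)=D_2\ast D_1,\qquad D_1,D_2\in\mathcal{A}.$$
Hence $\mathcal{A}$ is commutative.

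For the second step, I will fix an irreducible admissible $\pi$ with both $\mathrm{Hom}_H(\pi,1)$ and $\mathrm{Hom}_H(\tilde\pi,1)$ nonzero. By Frobenius reciprocity, nonzero elements in these spaces produce $G$-equivariant embeddings of $\pi$ and $\tilde\pi$ into $C^\infty(H\backslash G)$, and the matrix coefficient construction then yields bi-$H$-invariant functions on $G$, that is, elements of $C^\infty(H\backslash G/H)$, the natural module on which $\mathcal{A}$ acts by convolution. A careful setup will realize $\mathrm{Hom}_H(\pi,1)\otimes\mathrm{Hom}_H(\tilde\pi,1)$ as a subspace of this $\mathcal{A}$-module. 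Using the commutativity of $\mathcal{A}$ together with the Schur--Dixmier Lemma for admissible representations (which forces $\mathcal{A}$ to act through scalars on the isotypic component associated to $\pi$), the product of dimensions of the two Hom-spaces is bounded by $1$, which is the desired conclusion.

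The main obstacle is the second step: constructing the appropriate $\mathcal{A}$-module incarnation of $\mathrm{Hom}_H(\pi,1)\otimes\mathrm{Hom}_H(\tilde\pi,1)$ and exploiting commutativity to extract the product bound requires some care in handling admissibility and irreducibility. This is the substantive content of Gelfand's Lemma, for which detailed treatments in the totally disconnected setting are available in Bernstein \cite{B} and in the survey \cite{Gr} cited immediately after the statement.
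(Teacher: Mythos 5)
First, note that the paper does not prove this lemma at all: it is quoted as a known result with references to \cite{GK} and \cite{Gr}, so there is no internal proof to compare yours with; your sketch has to stand on its own, and as written it has a genuine gap. The gap is in your first step. For the pairs this lemma is meant for (e.g.\ $H=G^\theta$ noncompact), the space you call $\mathcal{A}$, the \emph{compactly supported} bi-$H$-invariant distributions, is zero: the support of a bi-$H$-invariant distribution is a closed union of double cosets $HgH$, and these are noncompact when $H$ is noncompact, so no nonzero element has compact support. If you drop compact support to get a nonzero space, convolution of such distributions is no longer defined. So the "commutativity of the Hecke-type algebra" step is either vacuous or ill-posed, and with it the second step collapses: there is no algebra $\mathcal{A}$ acting on $C^\infty(H\backslash G/H)$ to which one could apply Schur's lemma. (Two smaller points: a generalized matrix coefficient $g\mapsto\ell(\pi(g)v)$ is only one-sided $H$-invariant, so pairing an invariant functional with a vector does not produce elements of $C^\infty(H\backslash G/H)$; and even granting a commutative algebra acting by scalars on an isotypic piece, that by itself does not bound $\dim\Hom_H(\pi,1)\cdot\dim\Hom_H(\tilde\pi,1)$ by one --- that multiplicity statement is exactly what is to be proved.) The algebra-commutativity criterion you are invoking is the one valid for compact (or open compact) $H$; it is not the mechanism behind this lemma.

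The actual Gelfand--Kazhdan argument (as in \cite{GK}, or Proposition 4.2 of \cite{Gr}) avoids any convolution algebra. Given nonzero $\ell\in\Hom_H(\pi,1)$ and $\ell^\prime\in\Hom_H(\tilde\pi,1)$, one checks that for $f\in C_c^\infty(G)$ the functional $\ell\circ\pi(f)$ is a smooth vector of $\tilde\pi$, and one forms the bi-$H$-invariant distribution (the relative character) $T_{\ell,\ell^\prime}(f)=\ell^\prime(\ell\circ\pi(f))$. Irreducibility and admissibility show that $f\mapsto\ell\circ\pi(f)$ maps $C_c^\infty(G)$ onto the whole space of $\tilde\pi$, so $T_{\ell,\ell^\prime}$ determines $\ell^\prime$ once $\ell$ is fixed (and symmetrically). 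The hypothesis that $\sigma$ fixes all bi-$H$-invariant distributions, applied to the distributions $T_{\ell,\ell^\prime}$ together with the anti-automorphism property of $\sigma$, then forces any two invariant functionals on each side to be proportional, giving $\dim\Hom_H(\pi,1)\cdot\dim\Hom_H(\tilde\pi,1)\le 1$. If you want a complete write-up, this is the route to take; as it stands, your proposal defers precisely this substantive step to the literature while the preparatory step it does carry out does not apply in the noncompact setting.
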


The hypotheses of the previous lemma suggest that one must fully understand the family of bi-$H$-invariant distributions on $G$ in order to apply the lemma.  However, it is sometimes the case that these hypotheses are satisfied because one has an identity $H\sigma (g)H = HgH$, for all $g\in G$.
For supercuspidal representations, one may weaken the latter double coset identity as follows.
Assume $(G,H)= (G,G^\theta)$,  where $G=\bG(F)$ for some connected reductive $F$-group $\bG$ and some involution $\theta$ of $G$.

\begin{lemma}[\cite{H}]\label{superGelfandLemma}
Suppose $\theta'$ is an automorphism of $G$ of order two such that $\theta' (H) = H$ and the set of all $g\in G$ such that $ZH \theta'(g)^{-1} H = ZH gH$ has full measure in $G$.  If $\pi$ is an irreducible, supercuspidal representation of $G$ such that $\Hom_{H}(\pi,1)\ne 0$ then $\tilde\pi \simeq \pi^{\theta'}$ and the spaces $\Hom_{H}(\pi,1)$ and $\Hom_{H}(\tilde\pi,1)$ have dimension one.
\end{lemma}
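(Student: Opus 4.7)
The plan is to adapt the proof of Gelfand's Lemma (Lemma~\ref{GelfandLemma}) to the supercuspidal setting, exploiting that matrix coefficients of an irreducible supercuspidal representation $\pi$ are compactly supported modulo $Z$. First I would introduce the anti-automorphism $\sigma:G\to G$ defined by $\sigma(g)=\theta'(g)^{-1}$. Since $\theta'$ is an involutive automorphism that stabilizes $H$, the map $\sigma$ is an involutive anti-automorphism stabilizing $H$, and the hypothesis on $\theta'$ becomes: the set $\Omega=\{\,g\in G\mid ZH\sigma(g)H=ZHgH\,\}$ has full Haar measure in $G$. Let $\omega$ denote the central character of $\pi$.

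The central step is the following key claim: every bi-$H$-invariant distribution $D$ on $G$ that transforms by $\omega^{-1}$ under left $Z$-translation satisfies $\sigma^*D=D$, where $\sigma^*D(f)=D(f\circ\sigma)$. The argument proceeds locally on $\Omega$: at each $g\in\Omega$ one can find a small compact-open bi-$H$-invariant neighborhood on which $D$ is determined by the $(ZH,H)$-double coset $ZHgH$, and the identity $ZH\sigma(g)H=ZHgH$ together with the $\omega^{-1}$-equivariance forces $\sigma^*D$ and $D$ to agree there. Thus $D-\sigma^*D$ vanishes on the open, full-measure set $\Omega$. One then uses that any bi-$H$-invariant distribution with fixed central character is controlled by the discrete double coset space $ZH\backslash G/H$, so "full Haar measure on $G$" forces the complement of $\Omega$ to miss every $(ZH,H)$-double coset, yielding $\sigma^*D=D$ identically.

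Given the key claim, the conclusion follows by the standard Gelfand--Kazhdan-type argument. Pick a nonzero $\lambda\in\Hom_H(\pi,1)$, and, by Remark~\ref{contrmult}(i), a nonzero $\mu\in\Hom_H(\tilde\pi,1)$. The matrix coefficient map $v\otimes\tilde v\mapsto c_{v,\tilde v}(g)=\langle\pi(g)v,\tilde v\rangle$ embeds $V_\pi\otimes V_{\tilde\pi}$ $(G\times G)$-equivariantly into the space of smooth functions on $G$ supported compactly modulo $Z$ and transforming by $\omega$ under $Z$. The functional $\lambda\otimes\mu$ on $V_\pi\otimes V_{\tilde\pi}$ promotes, via duality against this embedding, to a bi-$H$-invariant distribution $D_{\lambda,\mu}$ on $G$ transforming by $\omega^{-1}$ under $Z$, characterized by $D_{\lambda,\mu}(c_{v,\tilde v})=\lambda(v)\mu(\tilde v)$. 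Applying the key claim gives $\sigma^*D_{\lambda,\mu}=D_{\lambda,\mu}$, and unwinding this identity in terms of matrix coefficients yields the isomorphism $\tilde\pi\simeq\pi^{\theta'}$ together with the one-dimensionality of $\Hom_H(\pi,1)$ and $\Hom_H(\tilde\pi,1)$ (both independently of the choice of $\lambda$, $\mu$).

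The main obstacle is the density step inside the key claim. In the classical Gelfand--Kazhdan lemma one has $\sigma$-invariance on every double coset, whereas here we only have it on the full-measure set $\Omega$, and \emph{a priori} a closed set of Haar measure zero can still support a nonzero bi-$H$-invariant distribution. The remedy is to exploit the specific $\omega^{-1}$-equivariance together with the fact that bi-$H$-invariant, $\omega^{-1}$-equivariant distributions are rigid across entire $(ZH,H)$-double cosets, so the full-measure hypothesis forces $\Omega$ to meet every such double coset. Making this step rigorous --- particularly without assuming $H$ is open --- is the technical heart of the proof; once it is settled, the remaining manipulations with matrix coefficients are routine.
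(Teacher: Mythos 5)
The paper does not prove Lemma~\ref{superGelfandLemma}; it is cited from reference~\cite{H} and used as a black box.  So there is no ``paper's own proof'' to compare against, but your proposal can be assessed on its mathematical merits, and there is a genuine gap in the central step.

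Your ``key claim'' asserts that \emph{every} bi-$H$-invariant distribution on $G$ with the stated $Z$-equivariance is $\sigma$-invariant.  If that were true, the conclusion of Lemma~\ref{superGelfandLemma} would follow for every irreducible admissible $\pi$, by exactly the Gelfand--Kazhdan mechanism of Lemma~\ref{GelfandLemma}, and the hypothesis that $\pi$ is \emph{supercuspidal} would be superfluous.  That the lemma is stated only for supercuspidal $\pi$ is a strong signal that the key claim is false in that generality and that supercuspidality must be used to restrict the class of distributions being considered.  The argument offered for the key claim does not repair this: the set $\Omega$ is a union of $(ZH,H)$-double cosets (you are right about this), but a full-measure subset of $G$ need not meet every such double coset, because a single double coset $ZHgH$ can have Haar measure zero (indeed, always does when $H$ is not open), and a nonempty union of countably or uncountably many such cosets can still have measure zero.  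Nothing in the ``rigidity'' of bi-$H$-invariant $\omega^{-1}$-equivariant distributions interacts with the measure-theoretic hypothesis to force $\Omega=G$; ``full measure'' and ``meets every double coset'' are logically independent once $H$ fails to be open.  Likewise, the interim claim that $\Omega$ is open is unjustified and is not needed in a correct proof.

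The way out, and the way the supercuspidal hypothesis actually does work, is to avoid passing to arbitrary distributions at all.  For a supercuspidal $\pi$, the matrix coefficients $c_{v,\tilde v}$ are smooth functions on $G$ that are compactly supported modulo $Z$ and transform by the central character.  The functional $D_{\lambda,\mu}$ you wish to study is therefore, when restricted to the span of matrix coefficients, realized by an honest integral over $G/Z$ of products of compactly supported smooth functions.  For integrals of integrable functions, removing a measure-zero set of $g$ does not change the value, so the full-measure hypothesis on $\Omega$ can be applied directly inside the integral to replace $g$ by an element of $ZH\theta'(g)^{-1}H$ on the domain of integration, yielding the desired symmetry $D_{\lambda,\mu}(\sigma^*c)=D_{\lambda,\mu}(c)$ on matrix coefficients — which is all that is needed for the Gelfand--Kazhdan step.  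This is where supercuspidality is essential: a general admissible $\pi$ does not have matrix coefficients that are compactly supported modulo $Z$, the relevant bi-$H$-invariant distribution need not be given by integration against a function, and a full-measure hypothesis has no purchase on a distribution supported on a measure-zero union of double cosets.  Your proposal correctly flags this as the technical heart of the argument, but the ``remedy'' you propose is a non-sequitur; the actual remedy is to work with integrals of matrix-coefficient functions rather than with distributions.
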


We observe that it follows from Remark \ref{contrmult} that for tame supercuspidal representations the spaces $\Hom_{H}(\pi,1)$ and $\Hom_{H}(\tilde\pi,1)$ have the same dimension.

\section{Examples involving representations of general linear groups}
\label{sec:glexamples}
In the papers \cite{HM2} and \cite{HM3}, we
studied distinguishedness of tame supercuspidal
representations of general linear groups relative 
to three kinds of involutions. In this section,
we discuss relations between the methodology
and results of \cite{HM2} and \cite{HM3} and those
of this paper. We also indicate how 
Theorem~\ref{maindimformula} can be applied to 
strengthen some results of \cite{HM3}.

At the end of the section, we indicate how to
correct an error in the definitions of certain
special isomorphisms that were defined in 
these papers. We remark that this adjustment
to the definitions leaves the results of
the papers intact.

To be consistent with the aforementioned papers, we denote our ground field by $F'$, 
rather than $F$.  In this section, we assume that $F^\prime$ has characteristic zero
(as this was an assumption in \cite{HM2} and \cite{HM3}).
We consider the following cases:
\begin{itemize}
\item[Case 1.] $\bG=R_{F/F^\prime}\bGL_n$, where $F$ is a quadratic extension of $F'$.  Fix $\eta\in G$ that is hermitian with respect to $F/F'$ and let $\theta$ be the involution of $G = \bG (F') = {\bf GL}_n (F)$ given by $\theta (g) = \eta\ {}^t\bar g^{-1}\  \eta^{-1}$, where $\bar g= (\bar g_{ij})$ and $\alpha \mapsto \bar\alpha$ is the nontrivial Galois automorphism of $F/F'$.  So $G^\theta$ is a unitary group in $n$ variables. (See \cite{HM2}.)

\item[Case 2.] $\bG=R_{F/F'}\bGL_n$, where $F$ is a quadratic extension of $F'$.  Take $\theta$ to be the involution
of $G$ given by $\theta (g) = \bar g$, with notations as in Case~1.  So $G = {\bf GL}_n (F)$ and $G^\theta = {\bf GL}_n (F')$. 
(See \cite{HM3}.)

\item[Case 3.] $\bG=\bGL_n$, with $n$ even. The involution $\theta$ is given by
conjugation by the diagonal matrix whose first $n$ diagonal
entries are $1$, and whose remaining diagonal entries are $-1$. In
this case, $G^\theta\cong \bGL_{n/2}(F')\times \bGL_{n/2}(F')$.  To treat all three cases with a uniform approach, we take $F = F'$ in this case.
  (See \cite{HM3}.)
\end{itemize}

Note that in all three cases our group $\bG$ is defined over $F'$ and we have defined an extension $F$ of $F'$ such that  $G = \bG(F')= {\bf GL}_n(F)$.   (Note that  this contrasts  with our conventions in the rest of the \paperbook\  to use $F$ as the ground field.)

Let $\pi$ be an irreducible tame supercuspidal
representation of $G$.
Let $\varphi$ be an
$F$-admissible quasicharacter of the multiplicative group
of a tamely ramified extension $E$ of $F$ of degree $n$ 
such that $\pi$ belongs to the equivalence
class of supercuspidal representations that
is associated to the quasicharacter
$\varphi$ via Howe's construction.  
The results of \cite{HM2} and \cite{HM3} express
conditions for distinguishedness of $\pi$ in
terms of properties of $\varphi$. 
In all three cases, it is relatively easy to show that $\pi$ cannot be
$\theta$-distinguished unless $\pi\circ\theta$
is equivalent to the contragredient $\tilde\pi$ of
$\pi$. So we assume from now on that $\pi\circ\theta\simeq\tilde\pi$.

The representation
$\tilde\pi$ belongs to the equivalence
class associated to the $F$-admissible
quasicharacter $\varphi^{-1}$ of $E^\times$.
As discussed in Section~2 of \cite{HM2}
and Section~3 of \cite{HM3},
there exists an embedding of $E^\times$
into $G$ having the property that $E^\times$
is $\theta$-stable and $\varphi\circ\theta
=\varphi^{-1}$. Furthermore, there exists
a Howe factorization of $\varphi$
that has good symmetry properties relative
to $\theta\,|\, E^\times$. 
These symmetry properties and the discussion in Section \ref{sec:Howeconstruction} yield an extended generic
cuspidal $G$-datum $\Psi=(\vec\bG,y,\rho,\vec\phi)$
that is $\theta$-symmetric and satisfies $\rho\circ\theta\simeq \tilde\rho$ and $\pi (\Psi)\simeq \pi$.

There exists a quadratic character
$\eta_\theta$ of $K^0(\Psi)^\theta$
such that
 $$\Hom_{K(\Psi)^\theta}(\kappa(\Psi),1)\cong \Hom_{K^0(\Psi)^\theta}(\rho,\eta_\theta).$$
The methods applied in \cite{HM2} and \cite{HM3} to obtain 
this result are essentially
special cases of some of the methods of Sections~5.4 and 5.5
of this paper. Recall that the equivalence class of
$\kappa(\Psi)$ is not affected by varying any of
the relevant special isomorphisms that are used 
in Yu's construction.   (See Lemma \ref{rsiequiv}. The arguments of \cite{HM2} and
\cite{HM3} could be simplified somewhat
by applying this result.)

Note that the toral case is the case in which $E=E_0$, where $E_0$ is defined as in Section \ref{sec:Howeconstruction}.
In this case, $\rho$ is the trivial character of $G^0 = E^\times$ and the space  $\Hom_{K(\Psi)^\theta}(\kappa(\Psi),1)$ is nonzero (and, in fact, 1-dimensional)
if and only if $\eta_\theta$ is the trivial
character of $E^{\times,\theta}$.

In the nontoral case, the quotient $G^0_{y,0 :0^+}$ is isomorphic
to a finite general linear group and the cuspidal
representations of such finite groups are well understood
(see Section~\ref{sec:Howeconstruction}), and
so the space $\Hom_{K^0(\Psi)^\theta}(\rho,\eta_\theta)$
can be analyzed further.   (This is a special
feature of general linear groups.)
Recall that Theorem~\ref{maindimformula} does not address the question
of analyzing $\Hom_{K^0(\Psi)^\theta}(\rho,\eta_\theta)$
when $\Psi$ is not toral.
The analysis carried out in Sections~12 and 6 of \cite{HM2} 
and \cite{HM3}, respectively, shows that, as in the toral case,
in order for $\Hom_{K^0(\Psi)^\theta}(\rho,\eta_\theta)$ to
be nonzero, a certain quadratic character of $E^{\times,\theta}$
must be trivial.

In Case~1, $\varphi\,|\, E^{\times,\theta}$
is trivial and, as shown in \cite{HM2},
$\Hom_{K(\Psi)^\theta}(\kappa(\Psi),1)$ is
1-dimensional.
Furthermore, 
$\Hom_{K(\Psi)^{g\cdot\theta}}(\kappa(\Psi),1)=0$
 whenever $g\notin K(\Psi)\,G^\theta$.
Hence $\pi\circ\theta\simeq\tilde\pi$
(that is, $\pi$ is $\gal (F/F')$-invariant)  if and only
if $\pi$ is $\theta$-distinguished, in which
case, ${\Hom}_{G^\theta}(\pi,1)$ has dimension one.

Cases~2 and 3 were studied in \cite{HM3}.
In each of these cases, it was already known
that $\dim\Hom_{G^\theta}(\pi,1)\le 1$.
Furthermore, as discussed in \cite{HM3}, there exists
a unique $\sigma\in \text{Aut}(E/F^\prime)$
of order $2$, such that $\sigma$
and $\theta$ agree on $E^\times$.
The condition $\varphi\circ(\theta\,|\, E^\times)
=\varphi^{-1}$ is equivalent to $\varphi\,|\,
N_{E/E^\sigma}(E^\times)=1$,
where $E^\sigma$ denotes the fixed field
of $\sigma$. Hence,
either $\varphi\,|\, E^{\times,\theta}$ is
trivial or it is the character
${\rm sgn}_{E/E^\sigma}$ of $E^{\times,\theta}=E^{\sigma\,\times}$
associated  to
the quadratic extension $E/E^\sigma$ by class field theory.
One of the main results of \cite{HM3} gives
necessary and sufficient conditions
on $\varphi\,|\, E^{\times,\theta}$
for nonvanishing of $\Hom_{K(\Psi)^\theta}
(\kappa(\Psi),1)$. For precise statements
of these conditions,
the reader may refer to \cite{HM3}.

In Case~2, when $n$ is odd, results of
\cite{HM3} show that  $\varphi\,|\, E^{\times,
\theta}$ is trivial if and only if $\Hom_{K(\Psi)^\theta}
(\kappa(\Psi),1)\not=0$. Furthermore,
when $\varphi\,|\, E^{\times,\theta}$ is nontrivial,
the central character of $\pi$ is nontrivial
on $F^{\prime\times}$. Because  any linear functional
on the space of $\pi$ tranforms under the action of
$H\cap Z=F^{\prime\times}$ by the restriction
of the inverse of the central character of 
$\pi$ to $H\cap Z$, 
nontriviality
of this restriction implies that $\pi$ is not
$\theta$-distinguished.
Thus, for Case~2 with
$n$ odd, $\pi$ is $\theta$-distinguished if and only 
if $\varphi\,|\, E^{\times,\theta}$ is trivial. This coincides with 
Kable's results (\cite{Ka}) showing that when $n$ is odd, $\pi$ is 
$\theta$-distinguished if and only if the central character of $\pi$ 
is trivial.

In \cite{HM3},
we did not study $\Hom_{K(\Psi)^{g\cdot\theta}}(
\kappa(\Psi),1)$ for $g\notin K(\Psi)G^\theta$.
Therefore, except for Case~2 when $n$ is odd,
we did not rule out the possibility
of $\Hom_{G^\theta}(\pi,1)$ being nonzero
when $\Hom_{K(\Psi)^\theta}(\kappa(\Psi),1)=0$.
In Case~3 and in Case~2 when $n$ is even,
Theorem~\ref{maindimformula}(3) and (5) can be used
to sharpen the results of \cite{HM3}. In
particular, the necessary and sufficient
conditions for nonvanishing of
$\Hom_{K(\Psi)^\theta}(\kappa(\Psi),1)$
can be shown to be equivalent to
nonvanishing of $\Hom_{G^\theta}(\pi,1)$.

In the toral case, this is particularly easy
to see because $K^0(\Psi)=E^\times$.
Let $g\in G$ be such that $g\theta(g)^{-1}
\in E^\times$. Then $\sigma(g\theta(g)^{-1})
=(g\theta(g)^{-1})^{-1}$, so there exists
$\alpha\in E^\times$ such that $\alpha\sigma(\alpha)^{-1}
=\alpha\theta(\alpha)^{-1}=g\theta(g)^{-1}$.
It follows that $g\in E^\times G^\theta\subset K(\Psi)G^\theta$.
Applying Theorem~\ref{maindimformula}(3) and (5), together
with the fact that $\dim\Hom_{G^\theta}(\pi,1)\le 1$, yields
$$\Hom_{G^\theta}(\pi,1)\cong 
\Hom_{K(\Psi)^\theta}(\kappa(\Psi),1).$$

In the nontoral case, $K^0(\Psi)\simeq E_0^\times
\bGL_{m}(\gO_{E_0})$ with $m\ge 2$,  and it is not immediately 
obvious that Theorem~\ref{maindimformula}(3) 
and (5)
imply that $$\Hom_{G^\theta}(\pi,1)
\cong \Hom_{K(\Psi)^\theta}(\kappa(\Psi),1).$$
To obtain the latter isomorphism, it is necessary 
to show that
if $g\in G$ is such that $g\theta(g)^{-1}
\in K^0(\Psi)$ and $g\notin K(\Psi)G^\theta$,
then $\Hom_{K^{0}(\Psi)^{g\cdot\theta}}(\rho,\eta_{g\cdot\theta})
=0$. We do not provide the details here.

In Case~2, when $n$ is even, there is an alternate
way to prove that vanishing of $\Hom_{K(\Psi)^\theta}(\kappa(\Psi),1)$
 implies $\Hom_{G^\theta}(\pi,1)=0$. As discussed
in \cite{HM3}, we may define a character $\chi$
of $F^\times$ that agrees with the character
${\rm sgn}_{F/F^\prime}$ of $F^{\prime\times}$ associated to $F/F'$ by class field theory  and is such
that $\phi(\chi\circ N_{E/F})$ is $F$-admissible
and $\pi\otimes (\chi\circ\det)$
is equivalent to $\ind_{K(\Psi)}^G(\kappa(\Psi)\otimes
(\chi\circ\det))$. Also $(\chi\circ N_{E/F})\,|\,
E^{\sigma\times}={\rm sgn}_{E/E^\sigma}$.
Kable (\cite{Ka}) proved that exactly one of the two representations $\pi$ and $\pi\otimes (\chi\circ\det)$ is $\theta$-distinguished.
Suppose that $\Hom_{K(\Psi)^\theta}
(\kappa(\Psi),1)=0$. Then $$\Hom_{K(\Psi)^\theta}
(\kappa(\Psi)\otimes (\chi\circ\det),1)\not=0.$$
Hence $\Hom_{G^\theta}(\pi\otimes(\chi\circ\det),1)\not=0$.
Kable's result tells us that $\pi$ is not $\theta$-distinguished. Thus $\Hom_{G^\theta}(\pi,1)=0$
 when $\Hom_{K(\Psi)^\theta}(\kappa(\Psi),1)=0$.

In addition to Kable's results for Case 2 that are mentioned above, D.~Prasad \cite{Pr}
has also obtained results in
Cases~1 and 2 when the extension $F/F'$ is
unramified.  

There is an error in the definition of the special isomorphisms $\nu_i^\natural$ 
and $\natural$ of \cite{HM2} and \cite{HM3}, respectively. 
The groups $\cH^+_i$ and $\cH^-_i$ defined there
both contain the center of the Heisenberg group $\cH_i$. Hence these groups 
do not form a split polarization of $\cH_i$, as is required in the
 definition of the special isomorphism. However if we replace $\cH^+_i$ 
by the image $\dot\cH_i^+$ of $J_i^\theta$ in $\cH_i$, then
$\dot\cH_i^+$ and
$\cH_i^-$ do form a polarization of $\cH_i$, and if one takes
the appropriate splitting of $(\dot\cH_i^+,\cH_i^-)$ and defines the
special isomorphism relative to the splitting (as in Proposition~\ref{stJpol}), 
then one obtains a relevant special isomorphism. 
The reader may refer to Sections~\ref{sec:Heis}
and \ref{sec:genHeis} of this paper for information on split polarizations,
the associated special isomorphisms, and the choices of split polarizations
that give rise to relevant special isomorphisms.
With this adjusted definition of the special isomorphisms,
the main results of \cite{HM2} and \cite{HM3} remain intact, since
the adjusted special isomorphisms have all the required properties.

\section{More examples}
\label{sec:examples}

The first two examples presented here each involve an equivalence class
$\xi$ of toral $(G,K)$-data, together with
a $G$-orbit $\Theta$ of involutions, for which $\langle \Theta,\xi\rangle_G=2$.
In the first example, we have $m_K(\Theta)=1$ or $2$, depending on 
whether or not $-1$ is a square in $F^\times$, while in the second example,
$m_K(\Theta)=2$. The third example exhibits equivalence classes
of nontoral $(G,K)$-data $\xi$, together with a $G$-orbit $\Theta$ of involutions,
having the property that there are two $K$-orbits in $\Theta$
that are moderately compatible with $\xi$. Furthermore, exactly
one of these $K$-orbits is strongly compatible with $\xi$.

Note that in all of our examples, when describing
the various filtration groups $G_{y,r}$, $y\in \cB(\bG,F)$, $r\ge 0$,
we assume that our valuation $v_F$ on $F$ is normalized
so that $v_F(F^\times)=\mathbb Z$.

\begin{example}\label{exGLO} 
Let $\bG = {\bf GL}_2$ and let $\theta$ be the involution of $\bG$ 
given by $\theta(g) = 
{}^tg^{-1}$ (where ${}^t g$ denotes the transpose of $g$). 
Let $E$ be an unramified quadratic extension of $F$.
Fix $a$, $b\in \gO_F^\times$
such that $a^2+b^2$ is not a square in $\gO_F^\times$. Set
$\varepsilon=a^2+b^2$. 
Then $E=F(\sqrt\varepsilon)$. Let $\eta=\left(\smallmatrix a & b\\
                                                            b & -a\endsmallmatrix\right)$.
Then $\eta^2=\varepsilon \, I$ (where $I$ is the $2\times 2$ identity matrix),
and the group $\{\, c\,I+d\,\eta\ | \ c,\, d\in F\,\}\cap G$ 
is isomorphic to
$E^\times$. Hence this group is equal to the $F$-rational
points $T=\bT(F)$ of a maximal $F$-torus $\bT$ in $\bG$ such
that $\bT=R_{E/F}\bGL_1$. Let $Z$ be the center of $G$.
Then $T/Z\simeq E^\times/F^\times$ and the building $\cB(\bT,F)$
embeds in $\cB(\bG,F)$ as a line. Fix a point $y\in \cB(\bT,F)$.
If $\ell$ is a nonnegative integer and $\ell<r\le \ell+1$,
then
$$
G_{y,r}=I + M_{2\times 2}(\gP_F^{\ell+1}).
$$

The Lie algebra $\t$ of $T$ consists of matrices of the form
$c\, I + d\, \eta$, $c$, $d\in F$. Given $c^\prime$, $d^\prime\in F$,
define $X^*_{(c^\prime,d^\prime)}\in \t^*$ by
$X^*_{(c^\prime,d^\prime)}(c\, I + d\, \eta)=c^\prime c + d^\prime d\varepsilon$,
$c$, $d\in F$. Fix a positive integer $j$. Fix $c^\prime$, $d^\prime
\in F^\times$ such that $v_F(c^\prime)\ge 2j+1=v_F(d^\prime)$.
Then the linear functional $X^*_{(c^\prime,d^\prime)}$ is
$G$-generic of depth $-r_0=-2j-1$. If $c\, I + d\, \eta
\in T_{2j+1}$, then $c-1$, $d\in \gP_F^{2j+1}$
and the map $(c\, I + d\,\eta)T_{2j+2}
\mapsto (c-1)I+d\,\eta + \t_{2j+2}$ defines an
isomorphism between $T_{2j+1:2j+2}\simeq (1+\gP_E^{2j+1})
/ (1+\gP_E^{2j+2})$ and $\t_{2j+1:2j+2}\simeq \gP_E^{2j+1}
/\gP_E^{2j+2}$. If $\psi$ is a character of $F$ that
is nontrivial on $\gO_F$ and trivial on $\gP_F$,
then the map
$$
(c\, I + d\, \eta)T_{2j+2}\mapsto \psi(X^*_{(c^\prime,d^\prime)}((c-1)I+d\,\eta))
=\psi(c^\prime(c-1)+d^\prime d\varepsilon)
$$
defines a $G$-generic character of $T_{r_0:r_0^+}=T_{2j+1:2j+2}$.

Let $\phi_0$ be a quasicharacter of $T$ that is trivial on
$T_{2j+2}$ and whose restriction to $T_{2j+1}$ factors
to the above character of $T_{2j+1:2j+2}$. The pair
$\vec\bG=(\bT,\bG)$ is a tamely ramified twisted Levi
sequence in $\bG$. Let $\rho$ and $\phi_1$ be the trivial characters of
$T$ and of $G$, respectively. Set $\vec\phi=(\phi_0,\phi_1)$
and $\Psi=(\vec\bG, y,\rho,\vec\phi)$. Then $\Psi$
is an extended generic cuspidal $G$-datum, and it is toral.
(We remark that $\phi_0$ is an $F$-admissible quasicharacter
of $E^\times$ and, as can be seen from the discussion
in Section~\ref{sec:Howeconstruction}, $\varphi_0$ gives 
rise (via Howe's construction)
 to the same equivalence
class of tame supercuspidal representations of $G$
as the datum $\Psi$ (via Yu's construction).)

Note that $G_{y,j+1/2}=G_{y,(j+1/2)^+}=G_{y,j+1}$. Hence the
irreducible representation $\kappa_0$ of $K=K(\Psi)
=T G_{y,j+1}$ corresponding to $\phi_0$ is one-dimensional.
Since both $\rho$ and $\phi_1$ are trivial, we
have $\kappa=\kappa(\Psi)=\kappa_0$ and $\pi=\pi(\Psi)
=\ind_K^G\kappa_0$.

Note that $\theta(t)=t^{-1}$ for all $t\in T$.
It follows that $\theta([y])=[y]$, $\theta(K)=K$,
and $\phi_0\circ\theta=\phi_0^{-1}$. According
to Proposition~\ref{invdimtorus}(1),
$$
\Hom_{K^\theta}(\kappa,1)\simeq \Hom_{T^\theta}(\phi_0,1),
$$
since $\rho$ and $\eta_\theta^\prime(\Psi)$ are trivial.
Observe that $T^\theta=\{\,\pm I\,\}$. Thus $\Hom_{K^\theta}(\kappa,1)$
is nonzero if and only if $\phi_0(-I)=1$. Note that there exist
quasicharacters $\phi_0$ as above that do satisfy $\phi_0(-I)=1$.
Indeed, if $\phi_0(-I)=-1$, we may obtain the desired quasicharacter
by squaring $\phi_0$.

Now we turn to computing $m_K(\Theta)$ for $\Theta$ equal
to the $G$-orbit of $\theta$. Suppose that $g\in G$
is such that $g\theta(g)^{-1}=g{}^t g\in Z\simeq F^\times$.
Then there exist $c$, $d\in F$ such that 
$g\theta(g)^{-1}=(c^2+d^2)I$
and either $g=\left(\smallmatrix c & d\\
                         -d & c\endsmallmatrix\right)$
or $g=  \left(\smallmatrix c & d\\
                         d & -c\endsmallmatrix\right)$.
Note that if
$g\theta(g)^{-1}= (c^2+d^2)I$ and $c^2+d^2=\alpha^2$ 
for some $\alpha\in E^\times$,
then $\alpha^{-1}g\in G^\theta$. That is, $g\in T\, G^\theta
\subset K\, G^\theta$.

Observe that if $g\in KG^\theta$ has the form $g=tkh$,
with $t\in T$, $k\in G_{y,j+1}$, and $h\in G^\theta$,
then $g\theta(g)^{-1}=t^2(t^{-1}k\theta(k)^{-1}t)
\in t^2 G_{y,j+1}$. Now if we also have $g\theta(g)^{-1}
=u\,I$, $u\in F^\times$, we can easily see (using $j+1>0$) that $u\in 
t^2(1+\gP_E^{j+1})\subset (E^\times)^2$. 

In view of the above, we conclude that if
$g\theta(g)^{-1}=(c^2+d^2)I$, then
$g\in K\, G^\theta$ if and only if $c^2+d^2$
is a square in $E^\times$.

Let $\varpi$ be a prime element in $F$. The set 
$F^\times$ is the disjoint union of $(F^\times)^2$,
 $\varepsilon (F^\times)^2$,
$\varpi(F^\times)^2$ and $\varepsilon\varpi (F^\times)^2$.
As $\varepsilon$ is a square in $E^\times$, and $\varpi$ is
not a square in $E^\times$,
the set of elements in $F^\times$ that are nonsquares in $E^\times$
is equal to $\varpi((E^\times)^2\cap F^\times)$. 
Suppose that $c$, $d\in F$ and $c^2+d^2\not=0$.
Observe that if $-1$ is not in $(F^\times)^2$, 
then $c^2+d^2$ cannot have odd valuation, and hence
cannot be a nonsquare in $E^\times$. Therefore
$m_K(\Theta)=1$ whenever $-1\notin (F^\times)^2$.

Now suppose that $-1\in (F^\times)^2$. Let $F(\sqrt\varpi)$
be the extension of $F$ generated by a square root
of $\varpi$, and let $N_{F(\sqrt\varpi)/F}$ be
the norm map from $F(\sqrt\varpi)^\times$
to $F^\times$. Then 
it is easy to see that the fact that
the image of $N_{F(\sqrt\varpi)/F}$ contains
squares in $F^\times$ implies existence
of $c$ and $d$ in $F$ such that $c^2+d^2
\in \varpi(F^\times)^2$.
Choose such elements $c$ and $d$, and let $g$
be an element of $G$ such that $g\theta(g)^{-1}
=(c^2+d^2)I$. Then we have seen above that
the fact that $c^2+d^2\notin (E^\times)^2$
guarantees that $g\notin K\, G^\theta$. 
Thus the two (distinct) double cosets
$K\, G^\theta$ and $KgG^\theta$ both give
rise to the same $K$-orbit of involutions,
namely the $K$-orbit of $\theta$.
Furthermore, even though we can also find
$g^\prime\in G$ such that $g^\prime\theta(g^\prime)^{-1}=
u\,I$ with 
$u\in \varepsilon\varpi(F^\times)^2$, the
fact that $\varepsilon\in (E^\times)^2$
implies that $g^\prime\theta(g^\prime)^{-1}
\in g\theta(g)^{-1}(E^\times)^2$, hence
$g^\prime\in TgG^\theta\subset KgG^\theta$.
Thus, when $-1\in (F^\times)^2$, we have
$m_K(\Theta)=2$. Note that the property
$g\theta(g)^{-1}\in Z$ implies that $g$
normalizes $G^\theta$.

Summarizing, if the quasicharacter
$\phi_0$ is chosen so that $\phi_0(-I)=1$
then
the supercuspidal representation $\pi(\Psi)$
satisfies $\Hom_{G^\theta}(\pi(\Psi),1)\not=0$.
If, in addition, $-1$ is a square in $F^\times$,
then $m_K(\Theta)=2$. We remark that
this shows that there exist distinguished tame 
supercuspidal
representations of general linear groups for
which multiplicity one fails. 
\end{example}

\begin{example}\label{exampleA}

Let $\varepsilon\in \gO_F^\times$ be a nonsquare, $E=F
(\sqrt\varepsilon)$, and $J=\left(\smallmatrix 0 & 1\\
                        1 & 0\endsmallmatrix\right)$.
Let $\bG$ be the corresponding $2\times 2$ unitary group, with
$F$-rational points
$$
G=\{\, g\in \bGL_2(E)\ | \ {}^t\bar g J g=J\, \},
$$
where, if $g=(g_{ij})_{1\le i,j\le 2}$,
then $\bar g=(\bar g_{ij})_{1\le i,j\le 2}$,
with $\bar\alpha$ denoting the image of $\alpha\in E$
under the nontrivial element of $\gal(E/F)$.

Let $\bT$ be the maximal $F$-torus in $\bG$ having
$F$-rational points 
$$
T=\left\{ \ t_{\alpha,\beta}=\left(\begin{matrix} \alpha & \beta\\
                                    \beta & \alpha\end{matrix}\right)
\ \mid\ \alpha,\, \beta\in E,\ \alpha \bar\beta=-\bar\alpha\beta,
\ \alpha\bar\alpha + \beta\bar\beta=1\ \right\}.
$$
The map $t_{\alpha,\beta}\mapsto (\alpha+\beta,\alpha-\beta)$
is an isomorphism of $T$ with $E^1\times E^1$, where $E^1$
is the kernel of the norm map $N_{E/F}:E^\times\rightarrow F^\times$.
Because $T$ is compact, the building $\cB(\bT,F)$ embeds as a point
$\{y\}$ in $\cB(\bG,F)$. If $\ell$ is a nonnegative
integer and $\ell < r\le \ell +1$, then
$$
G_{y,r}=\{\, g\in G \ | \ g-I\in M_{2\times 2}(\gP_E^{\ell +1})\, \}.
$$
The Lie algebra $\t$ of $T$ consists of matrices of the form
$$
X_{(a,b)} =\left( \begin{matrix} a\sqrt\varepsilon & b\sqrt\varepsilon\\
                 b\sqrt\varepsilon & a\sqrt\varepsilon\end{matrix}\right),
\qquad a,\, b\in F.
$$
Given $a$, $b\in F$, define $X_{(a,b)}^*\in \t^*$ by
$X_{(a,b)}^*(X_{(c,d)})=ac+bd$, $c$, $d\in F$.
Fix a positive integer $j$. Then,
if $a$, $b\in F^\times$ satisfy $v_F(a)\ge {2j+1}=v_F(b)$,
the linear functional $X_{(a,b)}^*\in \t^*$ is $G$-generic
of depth $-r_0=-2j-1$. Fix such an $a$ and $b$. 
If $t_{\alpha,\beta}\in T_{2j+1}$, then $(I-t_{\alpha,\beta})
(I+t_{\alpha,\beta})^{-1}\in \t_{2j+1}$
and $t_{\alpha,\beta}T_{2j+2}
\mapsto (I-t_{\alpha,\beta})(I+ t_{\alpha,\beta})^{-1}
+ \t_{2j+2}$ defines an isomorphism
between $T_{2j+1:2j+2}$ and $\t_{2j+1:2j+2}$.
If $\psi$ is a character of $F$ that is nontrivial
on $\gO_F$ and trivial on $\gP_F$, then
the map
\begin{equation*}
\begin{split}
t_{\alpha,\beta}T_{2j+2}
\mapsto &
\psi(X_{(a,b)}^*((1-t_{\alpha,\beta})(1+t_{\alpha,\beta})^{-1}))\\
&=\psi((a(1-\alpha^2+\beta^2)-2b\beta)((1+\alpha)^2-\beta^2)^{-1})
\end{split}
\end{equation*}
defines a $G$-generic character of $T_{r_0:r_0^+}=T_{2j+1:2j+2}$.

Let $\phi_0$ be a character of $T$ that is trivial on
$T_{2j+2}$ and whose restriction
to $T_{2j+1}$ factors to the above character of $T_{2j+1}/T_{2j+2}$.
Let $\vec\bG=(\bT,\bG)$. If $\rho$ and $\phi_1$ are the trivial
characters of $T$ and of $G$, respectively, and
$\vec\phi=(\phi_0,\phi_1)$, then
$\Psi=(\vec\bG,y, \rho,\vec\phi)$ is an extended generic cuspidal
 $G$-datum. Note that $G_{y,j+1/2}=G_{y,(j+1/2)^+}=G_{y,j+1}$.
Hence the irreducible representation $\kappa_0$ of $K=K(\Psi)=TG_{y,j+1}$
corresponding to $\phi_0$ is one-dimensional. Since both $\rho$ and
$\eta_\theta^\prime(\Psi)$ are trivial, $\kappa=\kappa(\Psi)$ is equal to $\kappa_0$,
and $\pi=\pi(\Psi)={\rm ind}_K^G \kappa_0$.

Let $\theta$ be the involution of $G$ defined by
$\theta(g)=\bar g$. Then $G^\theta$ is the $2\times 2$ orthogonal
group defined by $J$. Note that
$\theta(t_{\alpha,\beta})=t_{\alpha,\beta}^{-1}$ for all
$t_{\alpha,\beta}\in T$. It follows that $\theta(y)=y$,
$\theta(K)=K$, and $\phi_0\circ\theta=\phi_0^{-1}$. 
According to Proposition~\ref{invdimtorus}(1), 
$$
\Hom_{K^\theta}(\kappa_0,1)\cong \Hom_{T^\theta}(\phi_0,1),
$$
since $\rho$ and $\eta^\prime_\theta(\Psi)$ are trivial.
Observe that $T^\theta=\{\, \pm I,\pm J\,\}$.
Thus $\Hom_{K^\theta}(\kappa_0,1)$ is nonzero
if and only if $\phi_0\,|\, \langle -I,\,J\rangle
=1$, and in that case, it is one-dimensional.
According to Proposition~\ref{invdimtorus}, if $\phi_0\,|\, 
\langle - I,\,J\rangle=1$, then
$\dim\Hom_{G^\theta}(\pi(\Psi),1)$ is
equal to the number of distinct $K$-$G^\theta$
double cosets that contain an element $g$
with $g\theta(g)^{-1}\in T$.

Because the set of squares in $T$ coincides with
the set $\{\, t\theta(t)^{-1}\ | \ t\in T\,\}$,
if $g\in G$ is such that $g\theta(g)^{-1}$ is a square
in $T$, then $g\in TG^\theta\subset KG^\theta$.
Suppose that $t_{\alpha,\beta}\in T$ is equal to
$g\theta(g)^{-1}$ for some $g\in G$. It is easy
to see that $\det t_{\alpha,\beta}=\alpha^2-\beta^2$ 
must be a square in $E^1$.
If both $\alpha+\beta$ and $\alpha-\beta$ are squares
in $E^1$, then $t_{\alpha,\beta}$ is a square in
$T$, which implies $g\in KG^\theta$.
It follows that if $g\notin KG^\theta$, then
neither $\alpha-\beta$ nor $\alpha+\beta$
is a square in $E^1$. Assume that this is the case.
Let $\omega\in 
{\gO}_E^\times$ be such that $\omega\bar\omega^{-1}$
is a nonsquare in $E^1$. Then
$\alpha-\beta=\omega\bar\omega^{-1}\gamma^2$ and
$\alpha+\beta=\omega\bar\omega^{-1}\gamma^2\delta^2$
for some $\gamma$, $\delta\in E^1$. This implies that
$t_{\alpha,\beta}=t_{\omega\bar\omega^{-1},0}t^2$,
where $t\in T$ is the element corresponding to
$(\gamma^2\delta^2,\gamma^2)\in E^1\times E^1$.
That is, $t_{\alpha,\beta}=g^\prime\theta(g^\prime)^{-1}$
where $g^\prime=t\,\left(\smallmatrix \omega & 0\\
                           0 & \bar\omega^{-1}\endsmallmatrix\right)$.
This implies that $g\in T\left(\smallmatrix \omega & 0\\
                 0 & \bar\omega^{-1}\endsmallmatrix\right)G^\theta
\subset K \left(\smallmatrix\omega & 0\\
                     0 & \bar\omega^{-1}\endsmallmatrix\right)G^\theta$.
This shows that there are exactly two $K$-$G^\theta$ double cosets
in $G$ containing elements $g$ with $g\theta(g)^{-1}\in T$.
This allows us to conclude that, if $\pi=\pi(\Psi)$, then
$$
\dim\Hom_{G^\theta}(\pi,1)=\begin{cases} 2, & {\rm if}\ \phi_0\,|\, \langle -I,J\rangle
= 1,\\
0, & {\rm otherwise.}
\end{cases}
$$
It is a simple matter to check  that there exist 
characters $\phi_0$ as above that 
satisfy $\phi_0(-I)=\phi_0(J)=1$. Suppose that $\phi_0$
is chosen to have this property.
We remark that for $g=\left(\smallmatrix\omega & 0\\
             0 & \bar\omega^{-1}\endsmallmatrix\right)$,
we have $g\theta(g)^{-1}\in Z$. Thus $g\cdot\theta=\theta$.
As observed above, $g\notin KG^\theta$. Hence, letting
$\Theta$ denote the $G$-orbit of $\theta$, we have
$m_K(\Theta)=2$ for this example. Note that $g\theta(g)^{-1}
\in Z$ implies that $g$ normalizes $G^\theta$.
\end{example}

\begin{example}\label{exampleB}
In this example, our base field will be denoted $F^\prime$ (rather than the usual notation $F$),  
while $F$ will denote a quadratic totally ramified 
extension of $F^\prime$, where $n$ is an even positive integer.
Since we will be using results from \cite{HM2}, we
assume that $F^\prime$ has characteristic zero.
We take $G=(R_{F/F^\prime}\bGL_{n})(F^\prime)\cong \bGL_{n}(F)$.
Let $\theta$ be an involution of $G$ whose
group of fixed points is an $n\times n$ unitary group relative to
the quadratic extension $F/F^\prime$. If $\alpha\in F$,
let $\bar\alpha$ be the image of $\alpha$ under the nontrivial
element of $\gal(F/F^\prime)$, and if 
$g=(g_{ij})_{1\le i,j\le n}\in G$, let
$\bar g=(\bar g_{ij})_{1\le i,j\le n}$. 
The example that we consider here involves
irreducible tame supercuspidal representations
$\pi$ of $G$ that are Galois invariant in the sense
that the representation $g\mapsto \pi(\bar g)$ is equivalent 
to $\pi$. As we saw in \cite{HM2}, all such
representations satisfy $\dim\Hom_{G^\theta}(\pi,1)=1$.

Suppose that $E$ is an extension of $F$ of degree
$n$ having the property that there exists $\sigma
\in {\rm Aut}(E/F^\prime)$ of order $2$,
with $E$ totally ramified over $E^\sigma$
and $\sigma\,|\, F$ nontrivial. Assume that
$\varphi$ is an $F$-admissible quasicharacter
of $E^\times$ (see Section~\ref{sec:Howeconstruction})
having the following properties:
\begin{itemize}
\item $\varphi\circ\sigma=\varphi$
\item There exists a subfield $E_0$ of
$E$ such that $F\subset E_0$, $E$ is unramified
over $E_0$ of even degree $n_0$, and $\varphi
\,|\, 1+\gP_{E}$ factors through
the norm map $N_{E/E_0}$. In addition,
if $E_0\not=F$, then $\varphi\,|\, 1+\gP_E$
does not factor through $N_{E/L}$ for any
field $L$ with $F\subset L\subsetneq E_0$.
\end{itemize}

As discussed in Section~\ref{sec:Howeconstruction},
 we may associate a 
generic cuspidal $G$-datum $\Psi$ to each Howe factorization
of $\varphi$, in such a way that the equivalence
class of $\pi(\Psi)$ coincides with the equivalence
class arising from $\varphi$ via Howe's construction.
As shown in \cite{HM2}, we may choose a Howe factorization
of $\varphi$ that has good symmetry properties relative
to the involution $\theta$. We may also choose an
embedding of $E^\times$ in $G$ that has the property
$\theta(\alpha)=\sigma(\alpha)^{-1}$, $\alpha\in E^\times$.
This implies that $\Psi$ may be chosen to have the following
properties: There exist $\sigma$-stable fields $F=E_d\subsetneq\cdots\subsetneq
E_0$ such that 
\begin{equation*}
\begin{split}
& G^i\cong (R_{E_i/F^\prime}\bGL_{n_i})(F^\prime)=\bGL_{n_i}(E_i), \ 
n_i=n[E_i:F]^{-1}, \ \forall\ i\in \{\, 0,\dots,d\,\}, \\
& y\in {\cB}(R_{E/F^\prime}\bGL_1,F^\prime), \ \ \ \ \ 
G_{y,0}^0=\bGL_{n_0}(\gO_{E_0}).
\end{split}
\end{equation*}
Furthermore, 
$\phi_i\circ\theta =\phi_i^{-1}$ for all $i\in \{\, 0,\dots,d\,\}$, and
the involution of
$G^0_{y,0: 0^+}$ induced by $\theta\,|\,G_{y,0}^0$
has fixed points equal to an orthogonal group. 
Note that $\theta(G^i)=G^i$ is implied by the fact
that $\theta$ stabilizes the center $E_i^\times$ of
$G^i$.
The representation $\rho$ is an irreducible representation
of $G_{[y]}^0=E_0^\times \bGL_{n_0}(\gO_{E_0})$
whose equivalence class corresponds to the $\gal(E/E_0)$-orbit
of an $E_0$-admissible quasicharacter $\varphi_{-1}$ of $E^\times$ 
of depth zero, with $\varphi_{-1}\circ\sigma=\varphi_{-1}$. 
The case $d=0$ and $\phi_0$ trivial was studied in
\cite{HMa2} and the more general cases were
studied in \cite{HM2}.

Let $\xi$ be the equivalence class of $(G,K)$-data
that contains $\Psi$, where $K=K(\Psi)$, and
let $\Theta$ be the $G$-orbit of $\theta$.
Note that we have chosen $\Psi$ in such a way
that $K\cdot\theta$ and $\xi$ are moderately compatible.
We will see that there are two distinct $K$-orbits in
$\Theta$ that are moderately compatible with $\xi$,
exactly one of which, namely $K\cdot\theta$, is strongly 
compatible with $\xi$. 

According to results of \cite{HM2},
expressed in the notation of this paper, $\eta^\prime_\theta(\Psi)$
extends to a character of $K^0$, and 
$\rho^\prime(\Psi)\otimes\eta_\theta^\prime(\Psi)\,|\, G_{y,0^+}^0$ 
factors to
an irreducible cuspidal representation
of $G_{y,0:0^+}^0$ whose central
character is trivial on $-1$. Then the fact
that the image of $K^{0,\theta}\cap G_{y,0}^0$ in $G_{y,0:0^+}^0$
is an orthogonal group allows an application
of results of \cite{HMa2} to conclude that
$\dim\Hom_{K^{0,\theta}}(\rho^\prime(\Psi),\eta^\prime_\theta(\Psi))=1$.
It follows that
$
\langle K\cdot\theta,\xi\rangle_K=1$.

Now we turn to studying other orbits in
$\Theta^K$ that are moderately compatible
with $\xi$. Suppose that $\Theta^\prime\in \Theta^K$ is moderately
compatible with $\xi$ and $\theta\notin\Theta^\prime$.
 Choose $g\in G$ such that
$g\cdot\theta\in \Theta^\prime$. Then
$\vartheta(\Psi)\,|\, K_+^{g\cdot\theta}=1$
(Lemma~\ref{varthetaorbitinv}) and the arguments 
used in the proof of Lemma~11.5
of \cite{HM2} show that 
$g\in KG^0G^\theta$. (We remark
that the groups $H$ and $G_{r-1}$ and fields
$E_{r-1}$ and $E^\prime$ of \cite{HM2} are our
$G^\theta$, $G^0$, $E_0$ and $E^\sigma$, respectively.)
According to Proposition~\ref{oldLemmasBandC}(2),
there
exists $g^\prime\in KgG^\theta\subset KG^0G^\theta$
such that $g^\prime\theta(g^\prime)^{-1}\in G_{[y]}^0$.
According to Hilbert's Theorem~90, an element
$z$ of $Z$ such that $\theta(z)=z^{-1}$ is of the
form $z^\prime\theta(z^\prime)^{-1}$ for some
$z^\prime\in Z$.
Putting these facts together, we find that there exists
$\dot g\in G^0$ such that $\dot g\in KgG^\theta$
and $\dot g\theta(\dot g)^{-1}\in G_{[y]}^0$.
In \cite{HM2} (see page~234 and Lemma~12.6), we
 give a parametrization (based on results of
\cite{HMa2})
of the
$G_{y,0}^0$-$G^{0,\theta}$ double cosets in $G^0$
that contain elements $h$ with $h\,\theta(h)^{-1}
\in G_{[y]}^0$ and do not lie inside $G_{[y]}^0G^{0,\theta}$.
There are infinitely many such double cosets, one
for each odd integer.
Furthermore, using $G_{[y]}^0=E_0^\times G_{y,0}^0$,
it is easy to see that these $G_{y,0}^0$-$G^{0,\theta}$
double cosets belong to the same $G_{[y]}^0$-$G^{0,\theta}$
double coset. The element $\dot g$ belongs to this double coset
and Lemma~12.6 of \cite{HM2} tells us that if $\dot\theta=\dot g
\cdot\theta$, then the restriction $\dot\theta\,|\, G_{y,0}^0$
factors to an involution of $G_{y,0:0^+}^0$ whose
fixed points are a finite symplectic group.

Hence there are two distinct $K$-orbits $K\cdot\theta$ and
$K\cdot \dot\theta$ that are moderately
compatible with $\xi$.
By Theorem~\ref{maindimformula}, and remarks above, we have 
$$
\langle\Theta,\xi\rangle_G=\langle K\cdot\theta,\xi\rangle_K +
\langle K\cdot\dot\theta,\xi\rangle_K =
1 + \langle K\cdot\dot\theta,\xi\rangle_K.
$$
To finish, we indicate why $\langle K\cdot\dot\theta,\xi\rangle_K=0$.
By Theorem~\ref{maindimformula}, $\langle K\cdot\dot\theta,\xi\rangle_K
=\dim \Hom_{K^{0,\dot\theta}}(\rho^\prime(\Psi),\eta_{\dot\theta}^\prime(\Psi))$.
Note that $\dot\theta$-symmetry of each $\phi_i$ implies that
$\phi_i^2\,|\, K^{0,\dot\theta}=1$. By definition,
$\eta_{\dot\theta}^\prime(\Psi)^2=1$. In view of the above information
concerning $\dot\theta\,|\, G_{y,0}^0$, we see that the
restrictions of $\phi_i$ and $\eta_{\dot\theta}^\prime(\Psi)$
 to $G_{y,0}^{0,\dot\theta}$ must
be trivial, because a finite symplectic group has no nontrivial
characters. It follows that if 
$\Hom_{K^{0,\dot\theta}}(\rho^\prime(\Psi),\eta_{\dot\theta}^\prime(\Psi))$
is nonzero, then $\Hom_{G_{y,0}^{0,\dot\theta}}(\rho,1)$
is nonzero. But, as explained in the proof of Proposition~12.2(3)
of \cite{HM2}, results of Heumos and Rallis (\cite{HR}) can be used
to prove that an irreducible cuspidal representation of
a finite general linear group cannot be distinguished by
a symplectic group. Therefore $\Hom_{G_{y,0}^{0,\dot\theta}}
(\rho,1)=0$, which implies that $\langle K\cdot\dot\theta,\xi\rangle_K=0$.

\end{example}

\chapter{Equivalence of tame supercuspidal representations}

\section{Statement of results}
\label{sec:equivalence}

In this chapter, we apply results from the previous chapter
to obtain necessary and sufficient conditions for equivalence
of a pair of tame supercuspidal representations $\pi(\Psi)$
and $\pi(\dot\Psi)$ associated to generic cuspidal
$G$-data $\Psi$ and $\dot\Psi$.
Theorem~\ref{partialeqprob} is our first result
of this nature. It asserts that a simple equivalence
relation, called $G$-equivalence, defined on the
set of generic cuspidal $G$-data, coincides 
with the equivalence relation given by
$\pi(\Psi)\simeq \pi(\dot\Psi)$.
The notion of $G$-equivalence involves refactorization,
along with a couple of simple manipulations of $G$-data.

In Theorem~\ref{equivtheorem}, we reformulate the
conditions in Theorem~\ref{partialeqprob} in a way
that avoids reference to the notion of refactorization.
Suppose that $\Psi=(\vec\bG,\pi_{-1},\vec\phi)$ is
a reduced generic cuspidal $G$-datum. Let $\phi=\phi(\Psi)$
be the quasicharacter of $G^0$ given by $\phi=\prod_{i=0}^d
\phi_i\,|\, G^0$. Then Theorem~\ref{equivtheorem}
essentially says that the $G$-conjugacy class of
the equivalence class of the representation 
$\pi_{-1}\otimes\phi$ of $G^0$ determines
the equivalence class of $\pi(\Psi)$.
This can also be phrased in terms of $G$-conjugacy
of the inducing data for the
representation $\pi_{-1}\otimes\phi$.

In the previous chapter, we fixed $G$ and $K$ and used the terminology ``$(G,K)$-datum'' to 
refer to an extended generic cuspidal $G$-datum $\Psi$ such that $K(\Psi) = K$.  
We defined an equivalence relation on the set of $(G,K)$-data, called ``$K$-equivalence,'' 
via refactorization, $K$-conjugation and elementary transformations.
If we also allow conjugation by arbitrary elements of $G$,  we obtain an equivalence 
relation on the set of all extended generic cuspidal $G$-data.

\begin{definition}\label{Geqdef}
Two extended generic cuspidal $G$-data $\Psi$ and $\dot\Psi$ are said to be 
{\it  $G$-equivalent} if $\dot\Psi$ can be obtained from $\Psi$ by a finite 
sequence of refactorizations, $G$-conjugations and elementary transformations.
\end{definition}

Note that if $\Psi$ and $\dot\Psi$ are $G$-equivalent we may have
$K(\Psi)\not\not=K(\dot\Psi)$ (though $K(\Psi)$ and $K(\dot\Psi)$ will be conjugate).
Theorem \ref{partialeqprob} shows that $G$-equivalence 
of extended cuspidal
$G$-data corresponds to equivalence of the corresponding tame supercuspidal representations. 
Recall that if $\Psi=(\vec\bG,y,\rho,\vec\phi)$ is an extended generic cuspidal $G$-datum,
then, setting $\pi_{-1}={\rm ind}_{K^0(\Psi)}^{G^0}\rho$, we obtain
a reduced cuspidal $G$-datum $(\vec\bG,\pi_{-1},\vec\phi)$.
Of course, if we start with a reduced generic cuspidal $G$-datum, as discussed
in Section \ref{sec:notations}, we can produce
various extended generic cuspidal $G$-data $(\vec\bG,y,\rho,\vec\phi)$
 via various choices of inducing data $(y,\rho)$ for the
depth zero supercuspidal representation $\pi_{-1}$ of $G^0$. As shown by
Yu, the extended $G$-data arising in this way give rise to equivalent 
supercuspidal
representations of $G$.
If we start with a reduced generic cuspidal $G$-datum $\Psi^\prime=(\vec\bG,\pi_{-1},\vec\phi)$,
the various extended $G$-data $\Psi$ to which $\Psi^\prime$ is associated 
will have different inducing subgroups $K(\Psi)$. Hence there is not a natural choice
of open compact-mod-center
subgroup of $G$ that can be attached to the reduced datum $\Psi^\prime$.
Thus the notion of $K$-equivalence for $(G,K)$-data that was introduced
in Section \ref{sec:symmetrizing} does not have an obvious analogue
for reduced data.
By contrast, the above notion of $G$-equivalence for extended 
$G$-data translates into an analogous notion of $G$-equivalence for reduced 
$G$-data.

\begin{definition}\label{Geltran}
If $\Psi = (\vec\bG ,\pi_{-1},\vec\phi)$ is a reduced generic cuspidal 
$G$-datum and $\pi_{-1}$ is replaced by an equivalent representation of $G^0$,
then we say $\Psi$ has undergone an {\it elementary transformation}.
\end{definition}

\begin{definition}\label{redGeqdef}
Two reduced generic cuspidal $G$-data $\Psi$ and $\dot\Psi$ are said to be {\it  $G$-equivalent} if $\dot\Psi$ can be obtained from $\Psi$ by a finite sequence of refactorizations, $G$-conjugations and elementary transformations.\end{definition}

We have an analogue of Lemma \ref{GKeqlemone}:

\begin{lemma}\label{Geqlemone}
Let $\Psi$ and $\dot\Psi$
be (reduced or extended) generic cuspidal $G$-data. Then the following are equivalent:
\begin{enumerate}
\item $\Psi$ and $\dot\Psi$ are $G$-equivalent.
\item $\dot\Psi$ is an elementary transformation of a $G$-conjugate of a refactorization of $\Psi$.
\item The previous statement remains valid when the terms ``elementary transformation,'' ``$G$-con\-ju\-gate'' and ``refactorization''  are permuted arbitrarily.
\end{enumerate}
\end{lemma}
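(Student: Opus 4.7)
The plan is to follow the proof of Lemma \ref{GKeqlemone} with $K$ replaced by $G$ throughout, so the main task is to verify that each pair of the three basic operations (refactorization, $G$-conjugation, elementary transformation) commutes up to reordering. Once we have this, an arbitrary finite sequence of operations can be rearranged into any prescribed order, giving (1) $\Rightarrow$ (3), and the reverse implications (3) $\Rightarrow$ (2) $\Rightarrow$ (1) are immediate from Definitions~\ref{Geqdef} and \ref{redGeqdef}.

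First I would handle the extended case. The commutation of $G$-conjugation with refactorization is the critical point: suppose $\dot\Psi=(\vec\bG,y,\dot\rho,\vec{\dot\phi})$ is a refactorization of $\Psi=(\vec\bG,y,\rho,\vec\phi)$ and $g\in G$. Conjugation carries these to ${}^g\dot\Psi=({}^g\vec\bG,g\cdot y,{}^g\dot\rho,{}^g\vec{\dot\phi})$ and ${}^g\Psi=({}^g\vec\bG,g\cdot y,{}^g\rho,{}^g\vec\phi)$. The Moy--Prasad filtration is $G$-equivariant (that is, $\Int(g)G^i_{y,r}=G^{gi}_{g\cdot y,r}$ where $G^{gi}=\Int(g)G^i$), so Conditions \textbf{F0}, \textbf{F1}, and \textbf{F2} of Definition~\ref{defrefactor} transfer directly: the depth sequence is invariant under conjugation, the characters $\chi_i\circ\Int(g^{-1})$ relating $\vec{\dot\phi}$ and $\vec\phi$ transform correctly, and the twisting relation between $\dot\rho$ and $\rho$ is preserved. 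Hence ${}^g\dot\Psi$ is a refactorization of ${}^g\Psi$, and conversely every refactorization of ${}^g\Psi$ has this form. Commutation with elementary transformations is even more straightforward, since $[g\cdot y]=[g\cdot\dot y]$ whenever $[y]=[\dot y]$ and conjugation preserves equivalence classes of representations.

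For the reduced case, elementary transformations replace $\pi_{-1}$ by an equivalent representation, and refactorization acts on $\pi_{-1}$ via the rule $\dot\pi_{-1}=\pi_{-1}\otimes\chi_0$ from Definition~\ref{defrefactor}. The composition of such a twist with an arbitrary equivalence is again an equivalence followed by a twist (or vice versa), giving the required commutation with elementary transformations; and the extended-case argument for $G$-conjugation versus refactorization adapts verbatim once one replaces the pair $(y,\rho)$ by the single datum $\pi_{-1}$.

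I do not expect any serious obstacle: the entire argument is essentially formal, since refactorization is defined by intrinsic relations among the components of a $G$-datum and both $G$-conjugation and elementary transformations are compatible with those relations. The one point meriting care is bookkeeping in the reduced setting, where one must verify that the notion of refactorization given after Definition~\ref{defrefactor} (via $\dot\pi_{-1}=\pi_{-1}\otimes\chi_0$) interacts correctly with elementary transformations; this reduces to the trivial observation that twisting by a fixed quasicharacter is an operation on equivalence classes of representations, hence commutes with replacement by an equivalent representation.
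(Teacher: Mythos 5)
Your proposal is correct and follows exactly the approach the paper intends: the paper explicitly omits the proof, citing Lemma~\ref{GKeqlemone} as entirely analogous, and the proof of that lemma establishes pairwise commutation of the three operations and then observes that (2) and (3) clearly imply (1), which is precisely your structure. The extra detail you supply on why $G$-conjugation commutes with refactorization (equivariance of Moy--Prasad filtrations, invariance of Conditions \textbf{F0}--\textbf{F2} under $\Int(g)$) and on the reduced case is correct and fills in what the paper leaves to the reader.
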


We omit the proof of Lemma \ref{Geqlemone} since it is entirely analogous to the proof of Lemma \ref{GKeqlemone}.  

\begin{lemma}\label{GeqKeq}
Suppose $\Psi$ and $\dot\Psi$ are extended generic cuspidal $G$-data.   Let $\Psi'$ and $\dot\Psi'$ be the associated reduced data.  Then $\Psi$ and $\dot\Psi$ are $G$-equivalent if and only if  $\Psi'$ and $\dot\Psi'$ are $G$-equivalent.
\end{lemma}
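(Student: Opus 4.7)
The plan is to verify, for each of the three atomic operations (refactorization, $G$-conjugation, elementary transformation), that it passes from extended to reduced data, and that each atomic operation on reduced data lifts to a finite sequence of extended operations; Lemma~\ref{Geqlemone} then assembles these pieces. For the forward implication, each atomic step at the extended level descends cleanly: a refactorization $(\rho,\vec\phi)\to(\dot\rho,\vec{\dot\phi})$ satisfies $\dot\rho=\rho\otimes(\chi_0\,|\,K^0)$ by Condition~\textbf{F2}, so $\dot\pi_{-1}=\pi_{-1}\otimes\chi_0$, which is a reduced refactorization; a $G$-conjugation descends to a $G$-conjugation; and an elementary transformation $(y,\rho)\to(\dot y,\dot\rho)$ with $[y]=[\dot y]$ and $\rho\simeq\dot\rho$ preserves $K^0=G^0_{[y]}=G^0_{[\dot y]}$, so it merely replaces $\pi_{-1}$ by an equivalent representation of $G^0$.

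For the reverse implication, the crux is the following claim: any two extended data $\Xi_1=(\vec\bG,y,\rho,\vec\phi)$ and $\Xi_2=(\vec\bG,y',\rho',\vec\phi)$ with $\ind_{K^0(y)}^{G^0}\rho\simeq\ind_{K^0(y')}^{G^0}\rho'$ are $G$-equivalent at the extended level. Granted the claim, the reverse implication follows by induction on the length of the sequence realizing the $G$-equivalence of the reduced data: refactorizations and $G$-conjugations lift obviously, while a reduced elementary transformation is handled by invoking the claim on any two extended lifts of the two reduced data involved (a bridging argument at the endpoints, to pass from the given $\Psi$ and $\dot\Psi$ to chosen lifts, uses the same claim). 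To prove the claim, the Mackey decomposition of Section~\ref{sec:Mackey}, together with Remark~\ref{vertex} (which identifies $K^0$ as the normalizer in $G^0$ of the maximal parahoric $G^0_{y,0}$) and the cuspidality requirement built into Condition~\textbf{D4}, produces some $g\in G^0$ with $gK^0(y)g^{-1}=K^0(y')$ — equivalently $[gy]=[y']$ — and ${}^g\rho\simeq\rho'$.

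The key observation making the claim go through is that $G^0$-conjugation preserves both $\vec\bG$ and $\vec\phi$: since $G^0\subset G^i$ for every $i$, conjugation by $g\in G^0$ automatically normalizes $\bG^i$, and any quasicharacter $\phi_i$ of $G^i$ is fixed by inner automorphisms. Therefore ${}^g\Xi_1=(\vec\bG,gy,{}^g\rho,\vec\phi)$, after which an extended elementary transformation $(gy,{}^g\rho)\to(y',\rho')$ converts this to $\Xi_2$. I expect the main obstacle to be the rigidity statement producing $g$: this is standard for the depth-zero supercuspidals of $G^0$ induced from the normalizer of a maximal parahoric, but its verification requires combining the Mackey intertwining formula with the cuspidality of $\rho\,|\,G^0_{y,0}$ modulo $G^0_{y,0^+}$ to conclude that nonzero intertwining forces the inducing subgroups to be conjugate and the inducing representations to be equivalent after transport.
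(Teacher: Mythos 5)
Your argument is correct and runs along essentially the same lines as the paper's: each atomic operation descends from extended to reduced, and the only nontrivial point in the converse -- producing $g\in G^0$ with $g[y]=[\dot y]$ and ${}^g\rho\simeq\dot\rho$ from equivalence of the compactly induced depth-zero supercuspidals -- is exactly what the paper establishes, invoking the Moy--Prasad notion of associativity to show that $\rho\,|\,G^0_{y,0}$ and $\dot\rho\,|\,G^0_{\dot y,0}$ are associate, after which maximality of the parahoric forces the facets $[gy]$ and $[\dot y]$ to coincide. Your appeal to the Mackey intertwining formula together with cuspidality of the reductions mod the pro-unipotent radical is precisely the content of that associativity argument, so the two proofs agree in substance.
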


\begin{proof}
Let 
$
\Psi=(\vec\bG,y,\rho,\vec\phi)$ and
$\dot\Psi=(\vec{\dot\bG},\dot y,\dot\rho,\vec{\dot\phi})
$
be extended generic cuspidal $G$-data with associated reduced data
$\Psi'=(\vec\bG,\pi_{-1},\vec\phi)$ and
$\dot\Psi' =(\vec{\dot\bG},\dot\pi_{-1},\vec{\dot\phi})$.
It is routine to show that if $\Psi$ and $\dot\Psi$ are $G$-equivalent then so are $\Psi'$ and $\dot\Psi'$.

The converse is also straightforward once we show that if $\Psi'$ and $\dot\Psi'$ are related by an elementary transformation then there must exist an element $g\in G^0 = G^0(\Psi)= G^0(\dot\Psi)$ such that ${}^g\Psi$ and $\dot\Psi$ are related by an elementary transformation.  But the assumption that $\Psi'$ and $\dot\Psi'$ are related by an elementary transformation implies that 
$\rho\,|\,G^0_{y,0}$ and $\dot\rho\,|\,G^0_{\dot y,0}$ are associate in the sense of Moy and Prasad.  (See \cite{MP1}, \cite{MP2}.)  This means there must exist $g\in G^0$ such that $G^0_{gy,0}\cap G^0_{\dot y,0}$ maps surjectively onto both $G^0_{gy,0:0^+}$ and $G^0_{\dot y,0:0^+}$.  It must be the case that $g[y] = [\dot y]$, since otherwise $G^0_{gy,0}\cap G^0_{\dot y,0}$ would lie in a nonmaximal parahoric subgroup and thus it could not surject onto
$G^0_{gy,0:0^+}$ and $G^0_{\dot y,0:0^+}$.  It follows that ${}^g \Psi$ and $\dot\Psi$ are related by an elementary transformation.
\end{proof}

If $\Psi$ is an extended generic cuspidal $G$-datum then the equivalence class of 
$\pi (\Psi)$ only depends on the $G$-equivalence class of $\Psi$.  This 
easily follows from the corresponding fact for $(G,K)$-data.  
Next we state the two main results of this chapter.

\begin{theorem}\label{partialeqprob} 
Suppose $\Psi$ and $\dot\Psi$ are extended generic cuspidal $G$-data. 
Assume that Hypotheses~${\rm C}(\vec\bG)$ and ${\rm C}(\vec{\dot\bG})$ hold.
Then:
\begin{enumerate}
\item $\pi (\Psi)\simeq \pi (\dot\Psi)$ if and only if $\Psi$ and 
$\dot\Psi$ are $G$-equivalent.  
\item  If $\Psi$ and $\dot\Psi$ are $(G,K)$-data 
with $K=K(\Psi)=K(\dot\Psi)$, then $\kappa (\Psi)
\simeq \kappa (\dot\Psi)$ if and only if $\Psi$ and $\dot\Psi$ are 
$K$-equivalent.  
\end{enumerate}
\end{theorem}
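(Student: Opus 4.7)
The ``if'' direction in both parts follows immediately from earlier results: refactorizations preserve the equivalence class of $\kappa(\Psi)$ (and hence of $\pi(\Psi)$) by Proposition \ref{refactorequiv}; $K$-conjugations preserve $\kappa(\Psi)$ and $G$-conjugations preserve $\pi(\Psi)$ directly from the definition of induction; and elementary transformations preserve both essentially by definition. I will therefore focus on the ``only if'' directions of (1) and (2).

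The strategy is to recast the equivalence question as a distinguishedness question on $G\times G$, to which the main theorem of the previous chapter applies. Since $\pi(\Psi)$ and $\pi(\dot\Psi)$ are irreducible, $\pi(\Psi)\simeq\pi(\dot\Psi)$ is equivalent to the $(G\times G)$-representation $\pi(\Psi)\times\widetilde{\pi(\dot\Psi)}$ being distinguished by the diagonal $\Delta G$, which is the fixed-point subgroup of the swap involution $\theta(u,v)=(v,u)$. Combining Theorem \ref{contrathm} with Theorem \ref{prodthm}, this representation is equivalent to $\pi(\Psi\times\widetilde{\dot\Psi})$, the tame supercuspidal attached to the generic cuspidal $(G\times G)$-datum $\Psi\times\widetilde{\dot\Psi}$. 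Thus $\pi(\Psi)\simeq\pi(\dot\Psi)$ amounts to $\theta$-distinguishedness of $\pi(\Psi\times\widetilde{\dot\Psi})$.

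Assume this distinguishedness holds. Applying Theorem \ref{maindimformula}(1) yields a $K(\Psi\times\widetilde{\dot\Psi})$-orbit $\Theta'$ in the $G\times G$-orbit of $\theta$ that is strongly compatible with the $K$-equivalence class of $\Psi\times\widetilde{\dot\Psi}$; by Proposition \ref{compatorbsum} and Proposition \ref{TFAErefactor}(2), there exist $(g_1,g_2)\in G\times G$ and a refactorization $\Xi$ of $\Psi\times\widetilde{\dot\Psi}$ such that ${}^{(g_1,g_2)}\Xi$ is $\theta$-symmetric; equivalently, $\Xi$ is $\theta'$-symmetric where $\theta' = (g_1,g_2)^{-1}\!\cdot\theta$ acts by $\theta'(u,v)=(ava^{-1},a^{-1}ua)$ with $a=g_1g_2^{-1}\in G$. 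By Theorem \ref{prodthm} applied in the $G\times G$ setting, we may further refactor $\Xi$ into a product $\Xi_1\times\Xi_2$ of generic cuspidal $G$-data. A component-by-component inspection of the $\theta'$-symmetry condition (using genericity of each $\phi_i$ together with Lemma \ref{generictrivia} and Lemma \ref{eqlevi} to pin down the twisted Levi sequence of $\Xi_2$ from that of $\Xi_1$, and matching the depth-zero cuspidal data on $G^0$) forces $\Xi_2$ to be the $\Int(a^{-1})$-conjugate of $\widetilde{\Xi_1}$.

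Consequently $\Psi\times\widetilde{\dot\Psi}$ is $G\times G$-equivalent to $\Xi_1\times{}^{a^{-1}}\widetilde{\Xi_1}$. Using Theorem \ref{prodthm} again to recognize that refactorizations of a product datum are built from refactorizations of its factors, together with Lemma \ref{Geqlemone} to reorder refactorization/conjugation/elementary transformation steps, we conclude that $\Psi$ is $G$-equivalent to $\Xi_1$ and $\widetilde{\dot\Psi}$ is $G$-equivalent to ${}^{a^{-1}}\widetilde{\Xi_1}$. Taking contragredients and invoking that the contragredient operation commutes with $G$-conjugation, refactorization, and elementary transformation, we obtain that $\dot\Psi$ is $G$-equivalent to ${}^{a^{-1}}\Xi_1$. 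Hence $\Psi$ and $\dot\Psi$ are both $G$-equivalent to $G$-conjugates of $\Xi_1$, proving (1). For (2), the hypothesis $K(\Psi)=K(\dot\Psi)=K$ yields $K(\Psi\times\widetilde{\dot\Psi})=K\times K$, and Theorem \ref{maindimformula}(3)--(4), applied to the $K\times K$-orbit of $\theta$, places $(g_1,g_2)$ in $K\times K$. Running the same extraction through Theorem \ref{prodthm} with all conjugations now in $K$ produces $K$-equivalence of $\Psi$ and $\dot\Psi$.

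The principal technical obstacle is the third paragraph: showing that $\theta'$-symmetry of a product datum $\Xi_1\times\Xi_2$ rigidly forces $\Xi_2$ to be the $\Int(a^{-1})$-conjugate of $\widetilde{\Xi_1}$. This requires checking that each layer of Yu's construction is rigid enough---that the genericity of the $\phi_i$'s determines the twisted Levi sequence of $\Xi_2$ uniquely from that of $\Xi_1$ (and not merely up to some larger ambiguity), and that the depth-zero piece $\rho$ is similarly determined at the level of cuspidal representations of the finite reductive quotient. The remaining bookkeeping---assembling the refactorization, conjugation, and elementary transformation steps into a genuine $G$-equivalence (resp.\ $K$-equivalence), rather than only an equivalence of induced representations---is handled by the commutation lemma (Lemma \ref{Geqlemone}) and by Lemma \ref{GeqKeq} for passage between reduced and extended data.
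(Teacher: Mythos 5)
The overall architecture of your argument — encode $\pi(\Psi)\simeq\pi(\dot\Psi)$ as distinguishedness of $\pi(\Psi\times\widetilde{\dot\Psi})$ by the diagonal, extract a $\theta$-symmetric refactorization via Theorem~\ref{maindimformula} and Propositions~\ref{compatorbsum} and~\ref{TFAErefactor}, then unwind the product structure — matches the paper's. But your third paragraph contains a genuine gap that the paper must close with additional machinery, and your argument never does.

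You claim that $\theta'$-symmetry of the product datum $\Xi_1\times\Xi_2$ ``forces $\Xi_2$ to be the $\Int(a^{-1})$-conjugate of $\widetilde{\Xi_1}$.'' This is false. Definition~\ref{defweaksymdatum} imposes conditions on $\vec\bG$, on $\vec\phi$, and (for the strong version) on $[y]$, but places \emph{no condition on the depth-zero representation $\rho$}; the paper says so explicitly right after the definition. So $\theta'$-symmetry pins down the twisted Levi sequence, the quasicharacters, and the vertex of $\Xi_2$ in terms of $\Xi_1$, but leaves the $\rho$-component of $\Xi_2$ entirely unconstrained. Your component-by-component inspection, resting only on genericity (Lemmas~\ref{generictrivia} and~\ref{eqlevi}), cannot recover the relation between the two $\rho$'s, and without it you cannot conclude $G$-equivalence (or $K$-equivalence) of $\Psi$ and $\dot\Psi$: the data could agree in every component except $\rho$ and still be inequivalent.

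The missing ingredient is precisely the nonvanishing of the Hom-space that encodes strong (not merely moderate) compatibility. After conjugating to put $\theta$ in the relevant $K\times K$-orbit, the paper takes a $\theta$-symmetric $\v{\check\Psi}=\Psi'\times\widetilde{\dot\Psi}'$ in the equivalence class and observes — via Proposition~\ref{appliedfactor} and the group-case Heisenberg analysis of Section~\ref{sec:groupHeis}, packaged in Lemma~\ref{symprop}(5) — that
\[
0\ne\langle\Theta',\xi\rangle_{K(\v\Psi)}
=\dim\Hom_{K^0(\v{\check\Psi})^\theta}\bigl(\rho'(\v{\check\Psi}),1\bigr),
\]
and since $\rho'(\v{\check\Psi})\simeq\rho'(\Psi')\times\tilde\rho'(\dot\Psi')$ and the fixed points form the diagonal, this forces $\rho'(\Psi')\simeq\rho'(\dot\Psi')$. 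That equivalence, combined with the structural identities $\vec\bG=\vec{\dot\bG}$, $[y]=[\dot y]$, $K^0(\Psi')=K^0(\dot\Psi')$ extracted from $\theta$-symmetry, is what yields $K$-equivalence. Your argument drops the Hom-space input once Theorem~\ref{maindimformula}(4) has produced a $\theta$-symmetric datum, and so never establishes the relation between the $\rho$'s. This is the step that Section~\ref{sec:groupHeis} and Lemma~\ref{symprop} were written to supply, and without some substitute for them your proof does not go through.
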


If $H$ is a subgroup of $G$ and $g\in G$, let ${}^gH=\Int(g)H$.
Recall from Section~\ref{sec:Mackey} that if $\tau$ is a representation 
of a subgroup $H$ of $G$ and $g\in G$,
we define a representation ${}^g\tau$ of ${}^gH$ by setting
${}^g\tau(g^\prime)=\tau(\Int g^{-1}(g^\prime))$, $g^\prime\in  {}^gH$.

\begin{theorem}\label{equivtheorem} 
Suppose $\Psi=(\vec\bG,\rho,y,\vec\phi)$ and 
$\dot\Psi=(\vec{\dot\bG}, \dot\rho,\dot y,\vec{\dot\phi})$ are extended generic cuspidal 
$G$-data. Assume that Hypotheses~{\rm{C}}($\vec\bG$) and {\rm{C}}($\vec{\dot\bG}$) hold.
Set
$\phi=\prod_{i=0}^d \phi_i\,|\, G^0$, $\dot\phi=\prod_{i=0}^{\dot d}
\dot\phi_i\,|\, \dot G^0$,  $\pi_{-1}={\rm ind}_{K^0}^{G^0}\rho$, 
$\dot\pi_{-1}={\rm ind}_{\dot K^0}^{\dot G^0}\dot\rho$,
$\rho^\prime(\Psi)=\rho\otimes\phi$, and $\rho^\prime(\dot\Psi)
=\dot\rho\otimes\dot\phi$. Then the following are equivalent:
\begin{enumerate}
\item $\pi(\Psi) \simeq \pi (\dot\Psi)$. 
\item There exists $g\in G$ such that $K^0={}^g\dot K^0$
and $\rho^\prime(\Psi)\simeq {}^g \rho^\prime(\dot\Psi)$.
\item There exists $g\in G$ such that $K^0={}^g\dot K^0$,
 $\vec \bG={}^g\vec{\dot \bG}$
and $\rho^\prime(\Psi)\simeq {}^g\rho^\prime(\dot\Psi)$.
\item There exists $g\in G$ such that $\bG^0={}^g{\dot \bG}^0$
and $\pi_{-1}\otimes \phi \simeq {}^g(\dot\pi_{-1}\otimes\dot\phi)$.
\item There exists $g\in G$ such that $\vec \bG={}^g\vec{\dot \bG}$
and $\pi_{-1}\otimes \phi \simeq {}^g(\dot\pi_{-1}\otimes\dot\phi)$.
\end{enumerate}
\end{theorem}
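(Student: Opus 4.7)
The strategy is to establish the equivalence cycle $(1)\Rightarrow(3)\Rightarrow(5)\Rightarrow(4)\Rightarrow(2)\Rightarrow(1)$, together with the trivial arrows $(3)\Rightarrow(2)$ and $(5)\Rightarrow(4)$. Theorem~\ref{partialeqprob} provides the bridge between (1) and the data-theoretic statements, while the projection formula $\pi_{-1}\otimes\phi \simeq \ind_{K^0}^{G^0}(\rho'(\Psi))$ together with uniqueness (up to conjugacy) of the inducing pair of an irreducible supercuspidal links the ``inducing-data'' statements (2), (3) to the ``induced-representation'' statements (4), (5).

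For $(1)\Rightarrow(3)$, I would apply Theorem~\ref{partialeqprob}(1) to obtain $G$-equivalence of $\Psi$ and $\dot\Psi$, and then use Lemma~\ref{Geqlemone} to write $\dot\Psi$ as an elementary transformation of a refactorization of ${}^g\Psi$ for some $g\in G$. Each of the three operations generating $G$-equivalence preserves the triple $(\vec\bG, K^0, \rho')$ up to the natural $G$-action: refactorization preserves $\vec\bG$, $K^0$ and $\rho'$ by construction (Definition~\ref{defrefactor}, Condition~F2); $G$-conjugation by $g$ acts uniformly as ${}^g(\cdot)$; and elementary transformation preserves $\vec\bG$ and $K^0=G^0_{[y]}$ (since $[y]$ is unchanged) and replaces $\rho$ by an equivalent representation, hence $\rho'$ too. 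The implication $(3)\Rightarrow(5)$ uses the same $g$: the projection formula and the fact that compact induction commutes with conjugation translate the $\rho'$-equivalence into the equivalence of supercuspidals. The arrows $(3)\Rightarrow(2)$ and $(5)\Rightarrow(4)$ merely drop the clause $\vec\bG={}^g\vec{\dot\bG}$.

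For $(4)\Rightarrow(2)$, I observe that $(K^0,\rho'(\Psi))$ is an open, compact-mod-center pair with $\rho'\,|\,G^0_{y,0^+}$ being $\phi\,|\,G^0_{y,0^+}$-isotypic and $\rho'\,|\,G^0_{y,0}$ factoring to (the tensor of the character $\phi\,|\,G^0_{y,0}$ with) an irreducible cuspidal representation of $G^0_{y,0:0^+}$. Standard Mackey-theoretic arguments in $G^0$ then imply that the inducing pair of such an irreducible supercuspidal is determined up to $G^0$-conjugacy by the supercuspidal. Thus, given (4), after conjugating $\dot\Psi$ by $g$ we may assume $G^0=\dot G^0$ and $\pi_{-1}\otimes\phi \simeq \dot\pi_{-1}\otimes\dot\phi$; the uniqueness provides $h\in G^0$ with $K^0={}^h\dot K^0$ and $\rho'(\Psi)\simeq{}^h\rho'(\dot\Psi)$, and composition with $g$ yields (2).

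The main obstacle is $(2)\Rightarrow(1)$. Conjugating by $g^{-1}$ and applying an elementary transformation, I reduce to $y=\dot y$, $K^0=\dot K^0$ and $\rho'(\Psi)\simeq\rho'(\dot\Psi)$; the goal is then to exhibit a $G$-equivalence between $\Psi$ and $\dot\Psi$, after which Theorem~\ref{partialeqprob} delivers (1). I propose the candidate refactorization of $\dot\Psi$ defined by $\dot\phi_i^0:=\phi_i$ for all $i$. By Lemma~\ref{effequiv}, verifying Condition~F1 is equivalent to checking $\vec{\dot r}=\vec r$ together with $\vartheta(\Psi)=\vartheta(\dot\Psi)$ on $K_+$. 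This is proved by induction on the level $i$: at $i=0$, 1-isotypy of $\rho$ and $\dot\rho$ on $G^0_{y,0^+}$ gives $\rho'(\Psi)\,|\,G^0_{y,0^+}\propto\phi\,|\,G^0_{y,0^+}$, forcing $\phi=\dot\phi$ on $G^0_{y,0^+}$, and pinning down $r_0=\dot r_0$ and the agreement of $\vartheta$ on $G^0_{y,0^+}$; the inductive step applies Lemma~\ref{auxlemone} to generic realizers of $\phi_i\,|\,G^i_{y,r_i}$ (supplied by Hypothesis~C($\vec\bG$) and Lemma~\ref{generictrivia}) to force $\bG^{i+1}={}^{g'}\dot\bG^{i+1}$ for some $g'\in G^0$, and then uses Lemma~\ref{depth} to propagate the equality $\prod_{j\ge i}\phi_j=\prod_{j\ge i}\dot\phi_j$ on $G^i_{y,r_{i-1}^+}$ to the next level. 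Condition~F2 is then met by an elementary transformation using the equivalence $\rho\simeq\dot\rho\otimes(\prod_j\dot\phi_j\phi_j^{-1})\,|\,K^0$ extracted from $\rho'(\Psi)\simeq\rho'(\dot\Psi)$, completing the $G$-equivalence.
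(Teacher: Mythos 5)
Your outer architecture (Theorem~\ref{partialeqprob} as the bridge between (1) and the data-level statements, plus the projection formula for passing between (3) and (5)) matches the paper, but the two load-bearing arrows are where your argument breaks down. First, $(2)\Rightarrow(1)$: under (2) you know only $\bG^0={}^g\dot\bG^0$ and $[y]=[\dot y]$; you do not yet know $\vec\bG={}^g\vec{\dot\bG}$, nor even $d=\dot d$, so the ``candidate refactorization of $\dot\Psi$ defined by $\dot\phi_i^0:=\phi_i$'' is not well-posed — matching the Levi sequences and depth sequences is precisely the content to be proved. Moreover your bottom-up induction cannot produce it: from $\rho^\prime(\Psi)\simeq\rho^\prime(\dot\Psi)$ one extracts only $\phi=\dot\phi$ on $G^0_{y,0^+}$, and this does not ``pin down $r_0=\dot r_0$'' (under Hypothesis~C, $\phi\,|\,G^0$ has depth $r_d$, not $r_0$). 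More fundamentally, at level $i$ the tail product $\prod_{j\ge i}\phi_j\,|\,G^i$ has depth $r_d>r_i$, so its restriction to $G^i_{y,r_i}$ does not factor through $G^i_{y,r_i:r_i^+}$ and has no realizer at that level to feed into Lemma~\ref{auxlemone}; the genericity comparison that identifies $\bG^{i+1}$ with $\dot\bG^{i+1}$ only becomes available after the deeper layers have been stripped off, i.e.\ the argument must run top-down from the largest depth. This is exactly what the paper does: it forms the product datum $\Psi\times\widetilde{\dot\Psi}$ on $G\times G$ with the swap involution $\theta$, notes that $\phi=\dot\phi$ on $G^0_{y,0^+}$ is precisely the hypothesis of Lemma~\ref{extra}, whose (top-down) proof yields $\theta$-stability of the product twisted Levi sequence (hence $\vec\bG=\vec{\dot\bG}$ and equal depth sequences) together with a $\theta$-symmetric refactorization, and then Lemma~\ref{symprop}(5) — which rests on the Heisenberg/factorization results in the group case — converts $\rho^\prime(\Psi)\simeq\rho^\prime(\dot\Psi)$ into $\kappa(\Psi)\simeq\kappa(\dot\Psi)$. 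A direct, undoubled version of your plan would amount to reproving an analogue of Lemma~\ref{extra} for a pair of data, and your sketch of that step is the part that fails.

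Second, $(4)\Rightarrow(2)$: ``standard Mackey-theoretic arguments'' do not give what you claim. Mackey theory yields only a nonzero intertwining space $\Hom_{K^0\cap {}^g\dot K^0}({}^g\dot\rho^\prime,\rho^\prime)$ for some $g$, which is far from conjugacy of the inducing pairs; and since $\phi$ may have positive depth, $\rho^\prime$ is not a depth-zero datum, so the known uniqueness results for depth-zero supercuspidal inducing data do not apply off the shelf. The uniqueness you invoke is essentially the degree-zero instance of the theorem itself for the group $G^0$, and the paper obtains it by applying its already-established implication $(1)\Rightarrow(2)$ to the degree-zero generic cuspidal $G^0$-data $((\bG^0),\rho,y,\phi)$ and $((\bG^0),{}^g\dot\rho,g\cdot\dot y,{}^g\dot\phi)$ — i.e.\ it rests on Theorem~\ref{partialeqprob} (and Hypothesis~C($\bG^0$)), not on Mackey theory. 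Your cycle can be repaired at this node by the same device: apply your own $(1)\Rightarrow(3)$, proved via Theorem~\ref{partialeqprob}, to these $G^0$-data; but as written the step is unjustified, and together with the flaw in $(2)\Rightarrow(1)$ the equivalence cycle is broken.
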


\begin{remark} Note that it follows from the above theorems
that if $\Psi$ and $\dot\Psi$ are generic cuspidal $G$-data
and
$\pi(\Psi)\simeq \pi(\dot\Psi)$, not only must the
the twisted Levi sequences for $\Psi$ and $\dot\Psi$
be conjugate, the sequences $\vec r$ and $\vec{\dot r}$
of depths must be identical.
\end{remark}

\begin{remark} Let $\pi=\pi(\Psi)$ be as in Theorem~\ref{equivtheorem}.
It follows from the theorem that the supercuspidal
representation $\partial\pi$ defined in Section~\ref{sec:inductive}
depends only on the representation $\pi$, not on the datum
$\Psi$.
\end{remark}

Recall that we say a $G$-datum is toral if 
the first group $\bG^0$ in the associated
twisted Levi sequence is a torus. 
In this case, $\pi_{-1}=\rho$ is a quasicharacter
of $G^0$ that is trivial on $G^0_{0^+}$,
and $\rho^\prime(\Psi)=\rho\phi$.
It is clear from the results above that
if $\Psi$ and $\dot\Psi$ are generic cuspidal $G$-data
such that $\pi(\Psi)$ and $\pi(\dot\Psi)$ are equivalent,
then $\Psi$ is toral if and only if $\dot\Psi$
is toral.
In the toral case, Theorem~\ref{equivtheorem}
translates into the following corollary.

\begin{corollary}\label{toralequiv} Let $\Psi$
and $\dot\Psi$ be toral generic
cuspidal $G$-data. Let $\phi$ and $\dot\phi$ be as above.
Then $\pi(\Psi)\simeq \pi(\dot\Psi)$ if and only if
there exists $g\in G$ such that $G^0={}^g\dot G^0$
and $\rho\phi={}^g(\dot\rho\dot\phi)$ (and in that case,
$\vec\bG={}^g\vec{\dot\bG}$).
\end{corollary}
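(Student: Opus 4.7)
The plan is to derive Corollary~\ref{toralequiv} as a direct specialization of Theorem~\ref{equivtheorem}, so the proof is essentially a matter of unpacking what the toral hypothesis forces on each of the objects appearing in that theorem. No genuinely new machinery is needed; the whole task is bookkeeping combined with the observation that a quasicharacter is its own isomorphism class.

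First I would record the structural consequences of torality. Since $\bG^0$ is a torus, its derived group is trivial, so the reduced building $\cB_{\mathrm{red}}(\bG^0,F)$ is a single point; hence the image $[y]$ is fixed by all of $G^0$, and $K^0=G^0_{[y]}=G^0$. Similarly $\dot K^0=\dot G^0$. Condition~\textbf{D4} then forces $\rho$ to be an irreducible smooth representation of $G^0$ with $\pi_{-1}=\ind_{K^0}^{G^0}\rho=\rho$ irreducible, so $\rho$ is a quasicharacter; analogously for $\dot\rho$. Consequently $\rho^\prime(\Psi)=\rho\phi$ and $\rho^\prime(\dot\Psi)=\dot\rho\dot\phi$ are one-dimensional, so equivalence of these representations is literal equality of quasicharacters. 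Finally, since $\pi_{-1}\otimes\phi=\rho\phi$ in the toral case, the Hom-theoretic conditions (4) and (5) of Theorem~\ref{equivtheorem} reduce to the same equality of quasicharacters.

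Now I would handle both directions. For the forward direction, assume $\pi(\Psi)\simeq\pi(\dot\Psi)$. Theorem~\ref{equivtheorem}(2) supplies $g\in G$ with $K^0={}^g\dot K^0$ and $\rho^\prime(\Psi)\simeq{}^g\rho^\prime(\dot\Psi)$. By the translations above, this reads $G^0={}^g\dot G^0$ and $\rho\phi={}^g(\dot\rho\dot\phi)$, giving the stated condition. The parenthetical claim $\vec\bG={}^g\vec{\dot\bG}$ is then immediate from the strengthening (3) of the same theorem, which asserts that the $g$ produced in (2) can be taken (indeed is any $g$ satisfying (2)) to also conjugate the full twisted Levi sequence. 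For the reverse direction, suppose $g\in G$ satisfies $G^0={}^g\dot G^0$ and $\rho\phi={}^g(\dot\rho\dot\phi)$. Then $K^0=G^0={}^g\dot G^0={}^g\dot K^0$ and $\rho^\prime(\Psi)={}^g\rho^\prime(\dot\Psi)$, so hypothesis~(2) of Theorem~\ref{equivtheorem} holds and therefore $\pi(\Psi)\simeq\pi(\dot\Psi)$.

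There is no real obstacle here because all the technical content has been absorbed into Theorem~\ref{equivtheorem}; the only subtlety worth double-checking is that the application of that theorem is legitimate, i.e.\ that Hypothesis~C($\vec\bG$) and Hypothesis~C($\vec{\dot\bG}$) really are inherited from the corollary's hypotheses. Since those hypotheses are stipulated for toral data as a standing assumption in this chapter (matching the assumption under which Theorem~\ref{equivtheorem} is stated), no additional work is needed.
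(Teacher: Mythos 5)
Your proposal is correct and takes essentially the approach the paper intends: the paper introduces Corollary~\ref{toralequiv} with the remark ``In the toral case, Theorem~\ref{equivtheorem} translates into the following corollary'' and supplies no separate proof, so the content of the corollary really is the bookkeeping you carry out (torality $\Rightarrow$ $K^0=G^0$, $\rho$ a quasicharacter, $\pi_{-1}=\rho$, $\rho^\prime(\Psi)=\rho\phi$, and equivalence of one-dimensional representations is equality). You also correctly flag the one point that does not follow from the \emph{statement} of Theorem~\ref{equivtheorem} alone: the parenthetical $\vec\bG={}^g\vec{\dot\bG}$ for the \emph{same} $g$ is only visible from the proof of that theorem (which establishes (2)$\Rightarrow$(3) with the same conjugating element, via Lemma~\ref{extra} and Lemma~\ref{symprop}(1)), not from reading (2) and (3) as independent existence statements.
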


Let $n$ be an integer such that $n\ge 2$,
and let $\varphi$
be an $F$-admissible quasicharacter of $E^\times$,
where $E$ is a tamely ramified
 extension of $F$ of degree $n$.
Recall that in Section~\ref{sec:Howeconstruction},
we discussed how to attach an extended generic cuspidal 
$G$-datum $\Psi$ to $\varphi$, in such a way
that the supercuspidal representations  of $\bGL_n(F)$
attached
via the constructions of Howe and Yu to
$\varphi$ and $\Psi$, respectively, are equivalent.
Let $\dot\varphi$ be an $F$-admissible quasicharacter
of $\dot E^\times$, where $\dot E$ is  a tamely ramified
degree $n$ extension of $F$, and let $\dot\Psi$
be the associated generic cuspidal $G$-datum.
If $\Psi$ and $\dot\Psi$ are toral, then the condition
for equivalence of $\pi(\Psi)$ and $\pi(\dot\Psi)$ given
in Corollary~\ref{toralequiv} coincides with
Howe's notion of $F$-conjugacy of $\varphi$
and $\dot\varphi$. (See Section~\ref{sec:Howeconstruction}
for the definition.)
If $\Psi$ and $\dot\Psi$ are not toral, then a straightforward
argument, involving facts about equivalence of twists
of depth zero supercuspidal representations of
general linear groups, shows that Condition~(2) of
Theorem~\ref{equivtheorem} is equivalent to
$F$-conjugacy of $\varphi$ and $\dot\varphi$.
Thus, because Howe proved that $F$-conjugacy of
$\varphi$ and $\dot\varphi$ corresponds to
equivalence of the associated supercuspidal
representations, the results of this section,
when applied to tame supercuspidal
representations of $\bGL_n(F)$, are equivalent to Howe's
criterion for equivalence of the representations.

\section{Heisenberg $p$-groups in the group case}
\label{sec:groupHeis}

If $\cG$ is a group there is a corresponding  group $\v{\cG}= \cG\times\cG$ and an automorphism $\alpha (x,y) = (y,x)$ of $\v{\cG}$.  The diagonal of $\v{\cG}$ is the group $\v{\cG}^\alpha$ of points fixed by $\alpha$ and we have a bijection $\v{\cG}^\alpha (x,y)\mapsto y^{-1}x$ from $\v{\cG}^\alpha \bs \v{\cG}$ to $\cG$.
In the this way, each group $\cG$ may be canonically identified with a quotient space which is analogous to a symmetric space.  In the literature which surrounds harmonic analysis on symmetric spaces, the ``group case'' of symmetric space theory involves the theory of those symmetric spaces which are manufactured from groups in the manner we have described.

In many cases, translating harmonic analysis on a group $\cG$ into harmonic analysis on $\v{\cG}^\alpha\bs \v{\cG}$ merely yields new proofs of known results.  In other cases, this translation gives new insights which lead to new results about $\cG$ or points the way to results which apply to general symmetric spaces.  For example, trace formulas which involve averages over the diagonal of a kernel function $K(x,y)$ on $\cG\times \cG$ have been powerful tools in studying harmonic analysis on a group $\cG$.  
Our development in this section of the Heisenberg theory in the group case will be used in the next section in the proof of Theorem \ref{partialeqprob}.

The theory in this section is a  special case of the theory discussed in Section \ref{sec:Heis}.  Fix a Heisenberg $p$-group $\cH$ with center $\cZ$.  Let $\alpha$ be the automorphism of $\cH\times \cH$ given by $\alpha (h_1,h_2) = (h_2,h_1)$ and let
$$\v{\cH} = (\cH\times \cH)/(\cZ\times \cZ)^\alpha,$$ where $$(\cZ\times \cZ)^\alpha = \{\, (z,z)
\ | \ z\in \cZ\,\}.$$  
The automorphism $\alpha$ gives rise to an automorphism of $\v{\cH}$ which we also denote by $\alpha$.  It is easy to see that both the commutator subgroup and the center of $\v{\cH}$ are equal to
$$\v{\cZ} = (\cZ\times \cZ)/(\cZ\times \cZ)^\alpha.$$  It follows that $\v{\cH}$ is a Heisenberg $p$-group with center $\v{\cZ}$.  
Let $\v{W} = \v{\cH}/\v{\cZ} = W\times W$.

As in Section \ref{sec:Heis}, we define abelian groups
\begin{equation*}
\begin{split}
\v{\cH}_\alpha^+ &= \{\,  h\in \v{\cH} \ | \ \alpha (h) = h\,\}\cr
\widehat{\v{\cH}}{}_\alpha^-&= \{\,  h\in \v{\cH}\ | \ \alpha (h) = h^{-1}\,\}.
\end{split}
\end{equation*}
More concrete descriptions of these groups may be obtained by examining their preimages with respect to the natural projection
$$\cH\times \cH \to \v{\cH}.$$  Indeed, it is easy to show that an element $(h_1, h_2)\in \cH\times \cH$ projects to an element of $\v{\cH}_\alpha^+$ exactly when $h_1= h_2$.  On the other hand, $(h_1,h_2)$ projects to an element of $\v{\widehat{\cH}}{}_\alpha^-$ exactly when $h_1h_2\in \cZ$.  Moreover, each element of $\v{\widehat{\cH}}{}_\alpha^-$ has a unique preimage of the form $(h,h^{-1})$, for some $h\in \cH$. 

\begin{lemma}\label{groupsi}
Suppose $\nu : \cH \to W^\sharp$ is a special isomorphism.  Let $\v{\nu}: \v{\cH}\to \v{W}^\sharp$ be the special isomorphism obtained from the map $\nu \times \nu: \cH\times \cH \to W^\sharp \times W^\sharp$ and the obvious identification of $\v{W}^\sharp = \v{W}\boxtimes \v{\cZ}$ with $(W^\sharp \times W^\sharp)/(\cZ\times \cZ)^\alpha$.  As in Lemma \ref{polsplitting}, we may use $\v{\nu}$ to obtain a splitting map $\v{W}_\alpha^- \to \widehat{\v{\cH}}{}_\alpha^-$.  The image $\v{\cH}_\alpha^-$ of this splitting coincides with the image of the set
$$\{\, (h,h^{-1}) \ | \ h \in \nu^{-1}(W\times 1)\,\}$$ in $\widehat{\v{\cH}}{}_\alpha^-$.   Conversely, the special isomorphism associated by Lemma \ref{sppolarspeciso} to this split polarization  is $\v{\nu}$.
\end{lemma}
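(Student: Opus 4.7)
The plan is to exploit the uniqueness clause of Lemma \ref{sppolarspeciso}: once one knows that a special isomorphism $\v{\nu}$ sends both halves of a candidate split polarization into $\v{W} \times 1$, it must coincide with the special isomorphism canonically associated to that splitting. So the work reduces to two concrete identifications inside $\v{W}^\sharp$, after which everything falls out.

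First I would unwind the formulas. The key is to use the explicit description preceding the lemma: every element of $\widehat{\v{\cH}}{}_\alpha^-$ has a unique preimage in $\cH \times \cH$ of the form $(h, h^{-1})$. In $W^\sharp = W \boxtimes \cZ$ the inverse of $(w,z)$ is $(-w,-z)$, because $\langle w, -w\rangle = 0$ makes the commutator correction vanish. So if $\nu(h) = (w,z)$, then $\nu \times \nu$ sends $(h,h^{-1})$ to $((w,z),(-w,-z))$, whose image in $\v{W}^\sharp \cong (W^\sharp \times W^\sharp)/(\cZ \times \cZ)^\alpha$ corresponds under the identification $\v{W}^\sharp = \v{W} \boxtimes \v{\cZ}$ to the pair $((w,-w),\,[(z,-z)])$. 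This lies in $\v{W} \times 1$ iff $(z,-z) \in (\cZ \times \cZ)^\alpha$, i.e.\ $z = -z$; since $\cZ$ has order a power of the odd prime $p$, this forces $z = 0$. Hence the image of $(h, h^{-1})$ lies in $\v{\nu}^{-1}(\v{W}\times 1)$ precisely when $\nu(h) \in W \times 1$, which proves the first assertion by Lemma \ref{polsplitting}.

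Next I would handle $\v{\cH}_\alpha^+$. A parallel computation shows that for $h \in \cH$ with $\nu(h) = (w,z)$, the element $\v{\nu}$ applied to the image of $(h,h)$ corresponds to $((w,w),\,[(z,z)])$; but $(z,z) \in (\cZ \times \cZ)^\alpha$ automatically, so $[(z,z)] = 1$ and $\v{\cH}_\alpha^+ \subset \v{\nu}^{-1}(\v{W} \times 1)$ with no further condition. Together with the first step, both $\v{\cH}_\alpha^+$ and $\v{\cH}_\alpha^-$ lie inside $\v{\nu}^{-1}(\v{W}\times 1)$. A routine diagram-chase shows that $\v{\nu}$ is itself a special isomorphism (the defining commutative diagram from Definition \ref{defspeciso} for $\nu \times \nu$ descends to the quotient by $(\cZ \times \cZ)^\alpha$). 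The uniqueness statement in Lemma \ref{sppolarspeciso} then forces $\v{\nu}$ to equal the special isomorphism associated to the split polarization $(\v{\cH}_\alpha^+, \v{\cH}_\alpha^-)$, giving the converse.

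The main obstacle is purely bookkeeping: one has to keep straight the three descriptions of $\v{W}^\sharp$ (as $\v{W} \boxtimes \v{\cZ}$, as the quotient of $W^\sharp \times W^\sharp$, and as the target of $\v{\nu}$) and verify that the inverse in $W^\sharp$ has no commutator correction. Once those identifications are made explicit, the computation is short and the uniqueness in Lemma \ref{sppolarspeciso} does the rest.
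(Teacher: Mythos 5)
Your proof is correct and takes essentially the same approach as the paper's. The one small variation is that for the first assertion you work directly with the unique $(h,h^{-1})$ preimage established in the preamble and compute $\v{\nu}$ explicitly, whereas the paper parametrizes elements of $\v{\cH}_\alpha^-$ as $(hz,h^{-1}z)$ with $hz,h^{-1}z\in\nu^{-1}(W\times 1)$ and deduces $z^2=1$; for the converse you invoke the uniqueness clause of Lemma \ref{sppolarspeciso} directly rather than checking agreement on $\v{\cH}_\alpha^+$, $\v{\cH}_\alpha^-$ and $\cZ$ as the paper does, but these amount to the same thing.
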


\begin{proof}
We have 
\begin{equation*}
\begin{split}
\widehat{\v{\cH}}{}_\alpha^-&= \{\, (h,h^{-1}) (\cZ\times\cZ)^\alpha\ | \ h\in \cH\,\}\cr
\v{\cH}{}_\alpha^-&= \widehat{\v{\cH}}{}_\alpha^-\cap \v{\nu}^{-1}(\v{W}\times 1)\cr
\v{W}_\alpha^-&= \{\, (w,w^{-1})\ | \ w\in W\,\}\cr
\v{\nu}^{-1}(\v{W}\times 1)&= \nu^{-1}(W\times 1)\times \nu^{-1}(W\times 1)\quad (\hbox{mod }(\cZ\times \cZ)^\alpha).
\end{split}
\end{equation*}
We see that $\v{\cH}_\alpha^-$ is a coset space which has coset representatives of the form $(hz,h^{-1}z)$, where $hz,h^{-1}z\in \nu^{-1}(W\times 1)$.  The fact that both $hz$ and $h^{-1}z$ lie in $\nu^{-1}(W\times 1)$ implies that $hz= hz^{-1}$.  But then $z^2=1$ and thus $z=1$.  Therefore $\v{\cH}_\alpha^-$ has the form asserted in the statement of the lemma. 

If $h\in \cH$ and $\nu (h) = (w,1)$ let $s(w,w^{-1}) = (h,h^{-1})(\cZ\times \cZ)^\alpha$.  This is the splitting map associated to our split polarization.  One easily verifies that $\v{\nu}$ is the same as the special isomorphism associated to $s$ by checking that the two special isomorphisms on $\v{\cH}$ agree on $\v{\cH}_\alpha^+$, $\v{\cH}_\alpha^-$ and $\cZ$.
\end{proof}

Now fix $\nu$ and use the notations of Lemma \ref{groupsi}. Let $\v{\zeta}$ be a nontrivial character of $\v{\cZ}$ and let $\v{\tau} $ be a Heisenberg representation of $\v{\cH}$ with central character $\v{\zeta}$.
Theorem \ref{Heisthm} implies  that $\Hom_{\v{\cH}_\alpha^+}(\v{\tau} ,1)$ has dimension one. 

We now recall a standard result:

\begin{lemma}\label{lemflath}
Let $\cG$ be a totally disconnected group and let $\v{\cG}= \cG\times \cG$.  Assume $\v{\pi}$ is an irreducible admissible representation of $\v{\cG}$ such that $\Hom_{\v{\cG}^\alpha}(\v{\pi},1)\ne 0$, where $\v{\cG}^\alpha$ is the diagonal of $\cG\times \cG$.  Then there exists an irreducible admissible representation $\pi$ of $\cG$ such that $\v{\pi}$ is equivalent to $\pi\times \tilde\pi$.
\end{lemma}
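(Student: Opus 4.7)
The plan is to invoke the standard tensor-product factorization of irreducible admissible representations of a direct product (Flath's theorem), and then use the hypothesis on diagonal invariants together with a Schur-type argument to pin down the second factor as the contragredient of the first.

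First I would apply Flath's theorem: since $\cG$ is a totally disconnected group and $\v\pi$ is an irreducible admissible representation of $\cG\times\cG$, there exist irreducible admissible representations $\pi_1$ and $\pi_2$ of $\cG$ such that $\v\pi\simeq \pi_1\times\pi_2$. This is the tensor-product decomposition for admissible representations of a direct product of l-groups; it does not require any finiteness hypothesis beyond admissibility, and this is the source of the label \texttt{lemflath}.

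Next I would translate the invariance hypothesis. Identifying $\v\pi$ with $\pi_1\times\pi_2$, a nonzero element of $\Hom_{\v{\cG}^\alpha}(\v\pi,1)$ is precisely a nonzero $\cG$-invariant linear form on the (algebraic) tensor product $V_{\pi_1}\otimes V_{\pi_2}$, where $\cG$ acts by $g\cdot(v_1\otimes v_2)=\pi_1(g)v_1\otimes\pi_2(g)v_2$. Because $\pi_2$ is admissible, such an invariant form corresponds canonically to a nonzero element of $\Hom_\cG(\pi_1,\tilde\pi_2)$, via the standard isomorphism
\[
\Hom_\cG(\pi_1\otimes\pi_2,1)\;\cong\;\Hom_\cG(\pi_1,\tilde\pi_2).
\]

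Finally I would apply Schur's lemma. Since $\pi_1$ and $\tilde\pi_2$ are both irreducible and admissible, the existence of a nonzero intertwining operator between them forces $\pi_1\simeq\tilde\pi_2$, hence $\pi_2\simeq\tilde\pi_1$. Setting $\pi=\pi_1$ gives $\v\pi\simeq\pi\times\tilde\pi$, as required. The only nontrivial ingredient is Flath's tensor-product theorem; once that is in hand, the rest is routine Frobenius/Schur bookkeeping, so I do not anticipate any genuine obstacle.
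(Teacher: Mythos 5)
Your proposal is correct and takes essentially the same approach as the paper: apply Flath's tensor-product decomposition to write $\v\pi\simeq\pi_1\times\pi_2$, then observe that a nonzero diagonal-invariant form gives a nonzero $\cG$-invariant pairing between $\pi_1$ and $\pi_2$, which by irreducibility forces $\pi_2\simeq\tilde\pi_1$. The paper states the last step more tersely (``a $\cG$-invariant bilinear pairing between $\pi$ and $\pi'$, hence they are contragredients''), but the underlying argument is the Frobenius/Schur bookkeeping you spell out.
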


\begin{proof}
According to Theorem 1 in \cite{F}, $\v{\pi}$ must factor as a product $\pi\times \pi'$ of two irreducible admissible representations of $\cG$.  But if $\v{\lambda}$ is a nonzero element of $\Hom_{\v{\cG}^\alpha}(\v{\pi},1)$ then $\v{\lambda}$ corresponds to a $\cG$-invariant bilinear pairing between $\pi$ and $\pi'$.  Hence, $\pi$ and $\pi'$ must be contragredients of each other.
\end{proof}

We will use this fact for various groups $\cG$.
In particular, we may apply it to our representation $\v{\tau}$, viewed as a representation of $\cH\times \cH$.  We deduce that there must exist an irreducible representation $\tau$ of $\cH$ such that $\v{\tau}\simeq \tau\times \tilde\tau$.  The central character of $\tau$ must be the character $\zeta (z) = \v{\zeta}(z,1)$, where we are viewing $\v{\zeta}$ as a character of $\cZ\times \cZ$.  Since $\zeta$ is nontrivial, $\tau$ must be a Heisenberg representation of $\cH$ with central character~$\zeta$.  

The Heisenberg representation $\v{\tau}^\sharp=\v{\tau}\circ\v{\nu}^{-1}$ of $\v{W}^\sharp$ has a unique extension to a Heisenberg-Weil representation $\hat{\v{\tau}}^\sharp$ of $\v{\cS}\ltimes \v{W}^\sharp$, where  $\v{\cS}= {\rm Sp}(\v{W})$.  Pulling back via $\v{\nu}$ gives a representation $\hat{\v{\tau}}$ of $\v{\cS}\ltimes_{\v{\nu}} \v{\cH}$ which extends $\v{\tau}$.  
Similarly, $\tau^\sharp= \tau\circ \nu^{-1}$ extends to a representation $\hat\tau^\sharp$ of $\cS\ltimes W^\sharp$, where $\cS = {\rm Sp}(W)$.  It should be emphasized that $\cS\times\cS$ is properly contained in $\v{\cS}$.

\begin{lemma}\label{tauhatsharp}
The pullback of $\v{\hat\tau}^\sharp$ to $(\cS\ltimes W^\sharp)\times (\cS\ltimes W^\sharp)$ is isomorphic to $\hat\tau^\sharp\times \tilde{\hat\tau}{}^\sharp$.  The pullback of $\v{\hat\tau}$ to $(\cS\ltimes_\nu\cH)\times (\cS\ltimes_\nu \cH)$ is isomorphic to $\hat\tau\times  \tilde{\hat\tau}$.
\end{lemma}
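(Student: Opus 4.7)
The plan is to show that both sides are extensions of the same Heisenberg representation of $\v W^\sharp$ to a common group, and to invoke uniqueness of such extensions. The natural map $(\cS\ltimes W^\sharp)\times(\cS\ltimes W^\sharp)\to \v\cS\ltimes \v W^\sharp$ has kernel equal to the image of $(\cZ\times\cZ)^\alpha$, and both the pullback of $\hat{\v\tau}^\sharp$ and the external tensor product $\hat\tau^\sharp\times\tilde{\hat\tau}^\sharp$ are trivial on this subgroup---the latter because $\hat\tau^\sharp$ and $\tilde{\hat\tau}^\sharp$ have reciprocal central characters $\zeta$ and $\zeta^{-1}$. Hence both descend to representations of the quotient group $(\cS\times\cS)\ltimes \v W^\sharp$ on the space $V_\tau\otimes V_{\tilde\tau}$.

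Using the identification $\v\tau^\sharp\simeq \tau^\sharp\times\tilde\tau^\sharp$ established immediately before the lemma (via Lemma \ref{lemflath}), both representations restrict on $\v W^\sharp$ to $\tau^\sharp\times\tilde\tau^\sharp$. Since the latter is irreducible, Schur's Lemma forces any two extensions to $(\cS\times\cS)\ltimes\v W^\sharp$ to differ by a character of $\cS\times\cS$. Thus it suffices to show this character is trivial. A character of $\cS\times\cS$ is determined by its restriction to each factor $\cS$, so the problem reduces to checking that the restriction of the pullback of $\hat{\v\tau}^\sharp$ to $(\cS\ltimes W^\sharp)\times 1$ agrees with $\hat\tau^\sharp\otimes 1_{V_{\tilde\tau}}$, and symmetrically for the second factor.

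When $\cS={\rm Sp}(2\ell,\F_p)$ is equal to its own commutator subgroup---which happens whenever $(\ell,p)\ne(1,3)$---it admits no nontrivial characters, so both restrictions are automatically equal to the unique extension and the result follows at once. The hard part will be the exceptional case $(\ell,p)=(1,3)$, in which $\cS\cong {\rm SL}(2,3)$ has a character group of order $3$ and there are three nonisomorphic extensions of $\tau^\sharp$ to $\cS\ltimes W^\sharp$. In that case, however, $\v\ell=2$, so $\hat{\v\tau}^\sharp$ itself is unambiguously defined, and one must identify the restriction of $\hat{\v\tau}^\sharp$ to each factor as an extension of $\tau^\sharp$ (respectively $\tilde\tau^\sharp$) and verify, using the explicit formulas of Section \ref{sec:eselltwo} together with the prescription of Definition \ref{HeisWeilrep}, that it matches $\hat\tau^\sharp$ (respectively $\tilde{\hat\tau}^\sharp$). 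Computing the trace at a single element of $\cS$ outside the commutator subgroup suffices to pin down which of the three lifts one obtains.

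The second assertion, concerning $\v{\hat\tau}$ on $(\cS\ltimes_\nu\cH)\times(\cS\ltimes_\nu\cH)$, follows from the first by pulling back along the map induced by $\nu\times\nu$. Indeed, by Lemma \ref{groupsi} the special isomorphism $\v\nu$ on $\v\cH$ is precisely the one built from $\nu\times\nu$, so $\v{\hat\tau}$ pulls back to $\v{\hat\tau}^\sharp$ via $\v\nu$, while $\hat\tau$ and $\tilde{\hat\tau}$ pull back from $\hat\tau^\sharp$ and $\tilde{\hat\tau}^\sharp$ via $\nu$, and the commutative diagram yields the desired isomorphism.
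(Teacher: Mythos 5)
Your approach is essentially the same as the paper's: view both the pullback of $\v{\hat\tau}^\sharp$ and $\hat\tau^\sharp\times\tilde{\hat\tau}^\sharp$ as extensions of the irreducible Heisenberg representation $\tau^\sharp\times\tilde\tau^\sharp$, and argue that all such extensions coincide. Where you differ is in candor: the paper's proof asserts outright that $\hat\tau^\sharp\times\tilde{\hat\tau}^\sharp$ is \emph{the} unique extension and stops, whereas you correctly observe that this uniqueness fails precisely when $(\ell,p)=(1,3)$, since then $\cS\simeq{\rm SL}(2,3)$ has a cyclic character group of order $3$ and there are three extensions of $\tau^\sharp$ to $\cS\ltimes W^\sharp$. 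Flagging this case is a genuine improvement over the paper's argument, which does not address it.

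However, you do not close the gap you identify. For $(\ell,p)=(1,3)$ your text reads ``one must identify\ldots and verify\ldots Computing the trace at a single element\ldots suffices''; this is a plan, not a proof, and you do not exhibit the element, carry out the trace computation, or appeal to the explicit formulas of Section~\ref{sec:eselltwo}. So the proposal is complete and correct for $(\ell,p)\neq(1,3)$ (where it is identical to the paper's argument) and incomplete in the exceptional case. A cleaner way to finish, which avoids explicit traces, is to invoke the compatibility of Weil representations with orthogonal decompositions of symplectic spaces (see \cite{Ge}): the Weil representation of ${\rm Sp}(W_1\oplus W_2)$ restricts on ${\rm Sp}(W_1)\times{\rm Sp}(W_2)$ to the external tensor product of the Weil representations of the two factors. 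Applied to $\v W=W\oplus W$ with the symplectic form induced by $\zeta\times\zeta^{-1}$, this shows that $\v{\hat\tau}^\sharp$ restricted to each $\cS$ factor is the Weil representation associated to $\zeta$, respectively $\zeta^{-1}$, which by Definition~\ref{HeisWeilrep} and Section~\ref{sec:eselltwo} is exactly the prescription defining $\hat\tau^\sharp$, respectively $\tilde{\hat\tau}^\sharp$, even when $(\ell,p)=(1,3)$. Your treatment of the second assertion via $\nu\times\nu$ and Lemma~\ref{groupsi} is correct and matches the paper.
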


\begin{proof}
The representation $\hat\tau^\sharp\times \tilde{\hat\tau}{}^\sharp$ is the unique representation of 
$(\cS\ltimes W^\sharp)\times (\cS\ltimes W^\sharp)$ which extends  $\tau^\sharp \times \tilde\tau^\sharp$.  To prove the first assertion, it therefore suffices to show that the pullback of $\v{\hat\tau}^\sharp$ extends a representation equivalent to $\tau^\sharp \times \tilde\tau^\sharp$.  But $\v{\hat{\tau}}^\sharp$ extends $\v{\tau}^\sharp = \v{\tau}\circ \v{\nu}^{-1}$ and $\v{\tau}$, viewed as a representation of $\cH\times \cH$ is equivalent to $\tau\times \tilde\tau$.  This gives our first assertion.  The second assertion also follows directly.
\end{proof}

Let us now return to the setup of Section \ref{sec:genHeis}.  
There, we had a pair $(\bG^\prime,\bG)$, a torus $\bT$ and a point $y\in A(\bG,\bT,E)$.  
We now replace $\bG$ by $\v{\bG} = \bG\times \bG$ and consider the involution $\theta (a,b)= (b,a)$ on $\v{\bG}$.  Let  
\begin{equation*}
\begin{split}
\v{\bG^\prime}&= \bG^\prime\times\bG^\prime\cr
\v{\bT}&=\bT \times \bT\cr
 \v{\phi}&= \phi\times \phi^{-1}\cr
 \v{J} &= J\times J\cr 
 \v{J}_+&= J_+\times J_+\cr
 \v{\zeta}&= \zeta\times \zeta^{-1}\cr
\v{N} &= \ker( \v{\zeta}) = (\v{J}_+)^\theta(N\times N)\cr
\v{\cH} &= \v{J}/\v{N}\cr
\v{\cZ}&= \v{J}_+/ \v{N}.
\end{split}
\end{equation*}  
We also let $\v{K}' = G^\prime_{[y]}\times G^\prime_{[y]}$, and let $\v{f}: \v{K}'\to {\rm Sp}(\cH)$ and $\v{f}': \v{K'}\to \v{\cS}$ be the maps coming from conjugation.  Fix a relevant special isomorphism $\nu :\cH \to W^\sharp$.  Then the associated special isomorphism $\v{\nu} : \v{\cH}\to \v{W}^\sharp$ must also be relevant.  (In fact, the mapping 
$K'\to {\rm Sp}({\v{W}}^\sharp) : k\mapsto \v{\nu}\circ \v{f}(k)
\circ\v{\nu}^{-1}$ has image in $\cS\times\cS$.)
In particular, letting $\nu^\bullet : \cH\to W^\sharp$ be Yu's canonical special isomorphism, it must be the case that the associated special isomorphism $\v{\nu}^\bullet$ on $\v{\cH}$ is relevant.  We actually have:

\begin{lemma}\label{grYusi}
$\v{\nu}^\bullet : \v{\cH}\to \v{W}^\sharp$ is Yu's special isomorphism on $\v{\cH}$.
\end{lemma}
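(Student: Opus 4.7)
The plan is to exploit the uniqueness statement built into Lemma \ref{sppolarspeciso}: any special isomorphism sending both halves of a given split polarization into $W\times 1$ must be the canonical one. So it suffices to identify a split polarization of $\v{\cH}$ that realizes Yu's construction (for the group $\v{\bG}$) and to verify that $\v{\nu}^\bullet$ (as built in Lemma \ref{groupsi}) carries both halves into $\v{W}\times 1$.

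First I would use Remark \ref{rsiYurem} to choose the auxiliary data defining Yu's special isomorphism on $\v{\cH}$ compatibly with the data defining $\nu^\bullet$ on $\cH$. Specifically, I take the tame maximal $F$-torus in $\v{\bG}^{\prime}=\bG^\prime\times\bG^\prime$ to be $\v{\bT}=\bT\times\bT$, the splitting field to be the same field $E$, and I order the root system $\Phi(\v{\bG},\v{\bT})=\Phi(\bG,\bT)\sqcup\Phi(\bG,\bT)$ by declaring $(a,0)$ and $(0,a)$ positive if and only if $a$ is positive in the chosen ordering of $\Phi(\bG,\bT)$. With these choices, the subgroup $\v{J}(E)(+)$ of $\v{J}(E)$ generated by the root subgroups for positive roots in $\Phi(\v{\bG},\v{\bT})\setminus\Phi(\v{\bG}^\prime,\v{\bT})$ is $J(E)(+)\times J(E)(+)$, and similarly $\v{J}(E)(-)=J(E)(-)\times J(E)(-)$. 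Passing to $F$-rational points and then modding out by $\v{N}$ gives $\v{\cH}(+)=(\cH(+)\times\cH(+))/(\cZ\times\cZ)^\alpha$ and $\v{\cH}(-)=(\cH(-)\times\cH(-))/(\cZ\times\cZ)^\alpha$. By construction, Yu's special isomorphism on $\v{\cH}$ is the one attached by Lemma \ref{sppolarspeciso} to this split polarization $(\v{\cH}(+),\v{\cH}(-))$.

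Next I would check that the special isomorphism $\v{\nu}^\bullet$ of Lemma \ref{groupsi} satisfies the hypothesis in the uniqueness part of Lemma \ref{sppolarspeciso}. By the very definition of $\nu^\bullet$ via the split polarization $(\cH(+),\cH(-))$, we have $\nu^\bullet(\cH(\pm))\subset W\times 1$. Applying $\nu^\bullet\times\nu^\bullet$ to $\cH(\pm)\times\cH(\pm)\subset\cH\times\cH$ and projecting modulo $(\cZ\times\cZ)^\alpha$ to $\v{W}^\sharp$, we obtain
\[
\v{\nu}^\bullet(\v{\cH}(\pm))\subset \bigl((W\times 1)\times(W\times 1)\bigr)/(\cZ\times\cZ)^\alpha=\v{W}\times 1.
\]
By the uniqueness half of Lemma \ref{sppolarspeciso}, $\v{\nu}^\bullet$ coincides with the special isomorphism attached to $(\v{\cH}(+),\v{\cH}(-))$, which is Yu's special isomorphism on $\v{\cH}$.

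The only real content is the bookkeeping required to verify the identification of $\v{J}(E)(\pm)$ with $J(E)(\pm)\times J(E)(\pm)$; this is straightforward once one notes that the root subgroups of $\v{\bT}$ in $\v{\bG}$ are precisely products of root subgroups of $\bT$ in $\bG$ with the trivial subgroup in the other factor. No subtle obstacle is expected because the construction of $\v{\nu}^\bullet$ was tailored (via the choice of splitting in Lemma \ref{groupsi}) so that its value on the diagonal-antidiagonal split polarization is transparent; the compatibility with Yu's preferred split polarization on $\v{\cH}$ follows purely from functoriality of the positive/negative root decomposition under direct products.
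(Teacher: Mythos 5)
Your overall strategy is the same as the paper's: choose the torus $\v{\bT}=\bT\times\bT$, the same splitting field $E$, and the product ordering of $\Phi(\v{\bG},\v{\bT})=\Phi(\bG\times 1,\bT\times 1)\sqcup\Phi(1\times\bG,1\times\bT)$, observe that then $\v{J}(E)(\pm)=J(E)(\pm)\times J(E)(\pm)$, and conclude that the two special isomorphisms agree; the paper states the last step is ``routine,'' and your way of making it precise -- the uniqueness clause of Lemma \ref{sppolarspeciso} applied to the images of the polarization halves -- is a perfectly good way to do it.

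There is, however, a field-of-definition slip in the way you carry this out. The subgroups $\cH(+),\cH(-)$ and hence $\v{\cH}(+),\v{\cH}(-)$ are only defined, and only form split polarizations, over the splitting field $E$: they are generated by root subgroups of $\bT$, and ``passing to $F$-rational points'' does not in general produce a split polarization of $\cH$ or of $\v{\cH}$ (when $\bT$ is anisotropic over $F$, for instance, the $F$-points of the positive root groups can be far too small to map onto a maximal isotropic subspace of $W$). For the same reason, Yu's special isomorphism on $\v{\cH}$ is \emph{not} by construction the isomorphism attached to an $F$-level polarization; by Definition \ref{defYuspecialiso} it is the restriction to $\v{\cH}$ of the special isomorphism $\v{\nu}_E^\bullet$ on $\v{\cH}(E)$ attached to $(\v{\cH}(E)(+),\v{\cH}(E)(-))$, so your assertions ``$\nu^\bullet(\cH(\pm))\subset W\times 1$'' and ``Yu's special isomorphism on $\v{\cH}$ is the one attached to $(\v{\cH}(+),\v{\cH}(-))$'' are unjustified as $F$-level statements. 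The repair is exactly the reduction with which the paper opens its proof: run your entire argument over $E$, where everything you wrote is correct verbatim (the containments $\nu_E^\bullet(\cH(E)(\pm))\subset W(E)\times 1$ hold by definition, and the uniqueness part of Lemma \ref{sppolarspeciso} identifies the isomorphism on $\v{\cH}(E)$ induced from $\nu_E^\bullet\times\nu_E^\bullet$ with Yu's $E$-level isomorphism for the product), and then restrict from $\v{\cH}(E)$ to $\v{\cH}$, noting that the product-and-quotient construction of Lemma \ref{groupsi} commutes with restriction of special isomorphisms (Lemma \ref{specisores}), so the restriction of the induced isomorphism is $\v{\nu}^\bullet$ while the restriction of Yu's $E$-level isomorphism is Yu's isomorphism on $\v{\cH}$. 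With that adjustment your proof is complete and coincides in substance with the paper's.
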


\begin{proof}
Recall that Yu's special isomorphism is defined by first working over a splitting field and then restricting.  It is therefore evident that it suffices to prove our claim in the split case.  Now observe that $\Phi (\v{\bG},\v{\bT},F) = \Phi (\bG\times 1,\bT\times 1, F)\sqcup \Phi (1\times \bG,1\times \bT,F)$.   Choose an ordering of $\Phi (\bG,\bT,F)$ and use this to order $\Phi (\bG\times 1,\bT\times 1,F)$ and $\Phi (1\times\bG,1\times \bT,F)$.  The resulting set of positive roots in $\Phi (\v{\bG},\v{\bT},F)$ determines an ordering on $\Phi (\v{\bG},\v{\bT},F)$.  Given this ordering, we get subgroups $\v{J}(+)$ and $\v{J}(-)$, as in the definition of $\nu_E^\bullet$ in Section \ref{sec:genHeis}.  By construction, $\v{J}(+) = J(+)\times J(+)$ and $\v{J} (-) = J(-)\times J(-)$, with the obvious notations.  The rest of the proof is now routine.
\end{proof}

Having fixed $\nu$, we define a representation $\v{\phi}'$ of $\v{K}'\v{J}$, 
as in Section \ref{sec:genHeis}, by
$$\v{\phi}'(kj) = \v{\phi}(k)\; \v{\hat\tau}^\sharp (\v{f}' (k),\v{\nu}(j)),$$
with $k=(k_1,k_2)\in \v{K}'$ and $j= (j_1,j_2)\in \v{J}$.  
Lemma \ref{tauhatsharp} implies that $\v{\phi}' = \phi'\times \tilde\phi'$, where $\phi'$
is defined on $G'_{[y]}J$ with the usual procedure.

Let $K' = G^\prime_{[y]}$.  The group $\v{\cM}$ consists of the elements of 
$\v{\cS}$ which preserve our polarization.  It contains the automorphisms 
$\v{f}' (\v{k})$, where $\v{k}\in \v{K}'\cap \v{G}^\theta$.  
Let $\chi^{\v{\cM}}$ be the unique character of $\v{\cM}$ of order two.

\begin{lemma}\label{grtrivchar}
The characters 
$k\mapsto\v{\phi}(\v{k})$
 and 
 $k\mapsto \chi^{\v{\cM}}(\v{f}'(\v{k}))$ of $\v{K}'\cap \v{G}^\theta$ are trivial.   Consequently, 
$\Hom_{\v{K}'\cap \v{G}^\theta}(\v{\phi}',1) = \Hom_{\v{\cH}^+_\alpha}(\v{\tau},1)$.
\end{lemma}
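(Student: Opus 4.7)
The plan is to evaluate both characters explicitly on $\v{K}'\cap\v{G}^\theta$ and then deduce the Hom-space identification from Theorem~\ref{Heisthm}. Because the involution is $\theta(a,b)=(b,a)$, the group $\v{K}'\cap\v{G}^\theta$ is the diagonal $\{(k,k):k\in K'\}$ of $\v{K}'=K'\times K'$. The first character is immediate from the definition $\v{\phi}=\phi\times\phi^{-1}$: on the diagonal, $\v{\phi}((k,k))=\phi(k)\phi(k)^{-1}=1$, so the first character is tautologically trivial.

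For the second character, I would begin by verifying that $\v{f}'((k,k))=(f'(k),f'(k))$ actually lies in $\v{\cM}$, the stabilizer of the polarization $\v{W}=\v{W}^+\oplus\v{W}^-$ described in Lemma~\ref{groupsi}. Under the isomorphism $W\to\v{W}^+$, $w\mapsto(w,w)$, the action of $(f'(k),f'(k))$ on $\v{W}^+$ transports to $f'(k)$ on $W$, and similarly it preserves $\v{W}^-$. Hence $\v{f}'((k,k))\in\v{\cM}$. I would then invoke the explicit formula $\chi^{\v{\cM}}(m(y))=(\det y)^{(p-1)/2}$ from Section~\ref{sec:Heis}, which describes the unique order-two character of $\v{\cM}$ in terms of the determinant of the action on $\v{W}^+$ in any choice of dual bases for $\v{W}^+$ and $\v{W}^-$. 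Since $f'(k)\in\cS={\rm Sp}(W)$, every element of $\cS$ preserves the symplectic form and hence has determinant $1$, so $(\det f'(k))^{(p-1)/2}=1$. This gives $\chi^{\v{\cM}}(\v{f}'((k,k)))=1$.

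For the consequence, fix $\v{\lambda}\in\Hom_{\v{\cH}^+_\alpha}(\v{\tau},1)$, which by Theorem~\ref{Heisthm} forms a one-dimensional space. Applying Theorem~\ref{Heisthm} to $\v{\cH}$ with involution $\alpha$ and Yu's special isomorphism $\v{\nu}^\bullet$ (which is relevant by Lemma~\ref{grYusi}), $\v{\lambda}$ transforms under $\v{\hat\tau}^\sharp(s)$ by $\chi^{\v{\cP}}(s)$ for $s\in\v{\cP}\supset\v{\cM}$. Combining with the formula $\v{\phi}'(\v{k})=\v{\phi}(\v{k})\,\v{\hat\tau}^\sharp(\v{f}'(\v{k}))$ for $\v{k}\in\v{K}'$ and the two triviality statements above, we conclude $\v{\lambda}(\v{\phi}'(\v{k})\varphi)=\v{\lambda}(\varphi)$ for $\v{k}\in\v{K}'\cap\v{G}^\theta$. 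Thus $\v{\lambda}\in\Hom_{\v{K}'\cap\v{G}^\theta}(\v{\phi}',1)$, giving the inclusion of the right-hand side into the left-hand side.

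The main obstacle is the reverse containment, because a priori an element of $\Hom_{\v{K}'\cap\v{G}^\theta}(\v{\phi}',1)$ carries only diagonal $\v{K}'$-invariance and not obvious $\v{J}^\theta$-invariance. The natural way to handle this is to show that the left-hand side is at most one-dimensional: since $\v{K}'$ normalizes $\v{J}$ and the restriction $\v{\phi}'|_{\v{J}}$ factors through $\v{\cH}=\v{J}/\v{N}$ as $\v{\tau}$, any additional diagonal symmetry can be analyzed via the Gelfand-pair statement of Theorem~\ref{HeisGelf} applied to $(\v{\cH},\v{\cH}^+_\alpha)$, which caps the multiplicity at one. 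Thus the one-dimensional right-hand side exhausts the left-hand side, and equality holds.
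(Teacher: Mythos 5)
Your computation of the two trivialities is correct and matches the paper's argument exactly: $\v{\phi}$ is tautologically trivial on the diagonal, and for the character $\chi^{\v{\cM}}\circ\v{f}'$ you correctly transport the action on $\v{W}^+_\alpha\simeq W$ to the operator $f'(k)\in\cS={\rm Sp}(W)$, which has determinant $1$. Your argument that $\Hom_{\v{\cH}^+_\alpha}(\v{\tau},1)\subset\Hom_{\v{K}'\cap\v{G}^\theta}(\v{\phi}',1)$ via Theorem~\ref{Heisthm} and Lemma~\ref{realtrace} is also valid.

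The gap is in your handling of the reverse containment. You invoke the Gelfand pair $(\v{\cH},\v{\cH}^+_\alpha)$ from Theorem~\ref{HeisGelf} to ``cap the multiplicity at one,'' but that theorem bounds $\dim\Hom_{\v{\cH}^+_\alpha}(\sigma,1)$ for irreducible representations $\sigma$ of $\v{\cH}$ — it says nothing about $\dim\Hom_{\v{K}'\cap\v{G}^\theta}(\v{\phi}',1)$. The group $\v{K}'\cap\v{G}^\theta$ is not a subgroup or subquotient of $\v{\cH}$; it acts on $V$ through the \emph{Weil} representation $\v{\hat\tau}^\sharp\circ\v{f}'$, and the Gelfand-pair statement for the Heisenberg group gives no bound on Weil-representation multiplicities over a subgroup of $\cS$. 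The paper instead cites Proposition~\ref{multonekappai}. That proposition works with the group $\v{K}^{i+1,\theta}=(\v{K}'\v{J})\cap\v{G}^\theta$, which contains $\v{J}^\theta$, so the containment $\Hom_{\v{K}^{i+1,\theta}}(\v{\phi}',\cdot)\subset\Hom_{\v{\cH}^+_\alpha}(\v{\tau},1)$ is automatic (restrict to $\v{J}^\theta$); once the twist $\eta_i$ is shown trivial by the two triviality statements you proved, Proposition~\ref{multonekappai} gives the equality outright. Indeed, this is exactly how the result is used at the end of Section~6.2, where the Hom-space is taken over $\v{K}^{i+1,\theta}$ rather than $\v{K}'\cap\v{G}^\theta$. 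So the paper's ``consequently'' should be read as deriving the Proposition~\ref{multonekappai} conclusion from the triviality of $\eta_i$, rather than as an independent statement whose reverse containment needs a Gelfand-pair bound; your attempt to supply such a bound does not succeed.
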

\begin{proof}
It is elementary to verify that $\v{\phi}$ is trivial on $\v{K}'\cap \v{G}^\theta$.
Now suppose $j\in J$ and let $j^+ = (j,j)$ and $j^-= (j,j^{-1})$.  Then $j^+$ and $j^-$  modulo $\v{N}$ are elements of $\v{\cH}^+_\alpha$ and $\v{\cH}^-_\alpha$, respectively.  Let $\v{k} = (k,k)$, with $k\in K'$.  Then, modulo $\v{N}$, we have $\v{f}'(\v{k})j^\pm = (kjk^{-1}, kj^{\pm 1}k^{-1})\in \v{\cH}^\pm_\alpha$.  Now $\v{f}' (\v{k})$ restricts to give an invertible linear transformation of the $\F_p$-vector space $\v{W}^+_\alpha$, where the latter space is the image of $\v{\cH}^+_\alpha$ in $\v{W}$.   The determinant of this transformation is $\chi^{\v{\cM}}(\v{f}' (\v{k}))$.   In fact, the operator $\v{f}' (\v{k})$ on $\v{W}^+_\alpha$ is naturally identified with  the operator $f' (k)$ on $W$.  Since the latter operator lies in $\cS$, it must have determinant one.  Thus, in general, $\chi^{\v{\cM}}(\v{f}' (\v{k}) )=1$.   It follows from 
Proposition~\ref{multonekappai} that $\Hom_{\v{K}'\cap \v{G}^\theta}
(\v{\phi}',1) = \Hom_{\v{\cH}^+_\alpha}(\v{\tau},1)$.
\end{proof}

Finally, we switch to the setup of Section \ref{sec:construction}, replacing 
$\bG$ by $\v{\bG} = \bG\times \bG$ and using the involution $\theta (g_1,g_2)= (g_2,g_1)$,
$g_1$, $g_2\in \bG$.
Suppose that we have a generic cuspidal
 $G$-datum $\Psi= (\vec\bG , y,  \rho, \vec\phi)$.
Let $\dot y\in \cB(\bG^0,F)$ be such that $[\dot y]=[y]$.
Set $\vec\phi^{-1}=(\phi_0^{-1}, \dots,\phi_d^{-1})$, and
let $\dot\rho$ be a representation of $K^0$ such that
$\dot\Psi=(\vec\bG,\dot y,\dot\rho,\vec\phi^{-1})$ is a generic
 cuspidal $G$-datum (for this, it suffices that $\dot\rho$ satisfies
Condition \textbf{D4} in the definition of cuspidal $G$-datum).
Next, let
 $\v{\Psi} = (\v{\vec\bG},\v{y},\v{\rho},\v{\vec{\phi}})$, where 
\begin{equation*}
 \begin{split}
 \v{\bG}^i &= \bG^i\times \bG^i\cr
 \v{y} &= (y,\dot y),\cr
 \v{\rho} &= \rho\times\dot\rho\cr
 \v{\phi}_i &= \phi_i\times\phi_i^{-1}.
 \end{split}
\end{equation*}
In other words, $\v{\Psi} = \Psi \times \dot{\Psi}$ is the product
 $\v{G}$-datum attached to $\Psi$ and $\dot{\Psi}$, as defined in
Section~\ref{sec:Yuproducts}. Note that $\v{\Psi}$ is
a $\theta$-symmetric generic cuspidal $\v{G}$-datum.

Let $i\in \{\,0,\ldots,d-1\,\}$. Set $\v{K}^{i+1}=K^{i+1}(\v{\Psi})$.
Then $\v{K}^{i+1}=K^{i+1}\times K^{i+1}$, where $K^{i+1}=K^{i+1}(\Psi)=K^{i+1}(\dot\Psi)$.
 The above discussion can be used to construct a representation $\v{\phi}'_i$ of 
$\v{K}^{i+1}$.  According to Lemma \ref{rsiequiv}, the
 choice of relevant special isomorphism used in the construction does not matter 
(up to isomorphism).  The representation $\v{\phi}'_i$ inflates to a representation 
$\v{\kappa}_i$ of $\v{K}= K\times K$.  Since $\v{\phi}'_i = \phi'_i\times\tilde\phi'_i$, 
we have $\v{\kappa}_i =\kappa_i\times \tilde\kappa_i$.  
Proposition \ref{multonekappai} and Lemma \ref{grtrivchar} imply that,
 $$
\Hom_{\v{K}^\theta}(\v{\kappa}_i,1) = \Hom_{\v{K}^{i+1,\theta}}(\v{\phi}'_i,1) = 
\Hom_{\v{\cH}_i^+} (\v{\tau}_i ,1)
$$
 and Proposition \ref{appliedfactor} implies
 $$
\Hom_{\v{K}^\theta}(\kappa(\v{\Psi}), 1)\cong
\Hom_{\v{K}^{0,\theta}}(\rho(\v{\Psi}),1).
$$
By definition, $\rho(\v{\Psi})=\rho\times\dot\rho$, and
Lemma~\ref{productgenericity} shows that 
$\kappa(\v{\Psi})\simeq \kappa\times\dot\kappa$, where
$\kappa=\kappa(\Psi)$ and $\dot\kappa=\kappa(\dot\Psi)$.
Hence the above isomorphism can be rewritten in the
form
$$
\Hom_{\v{K}^\theta}(\kappa\times\dot\kappa,1)\cong
\Hom_{\v{K}^{0,\theta}}(\rho\times\dot\rho,1).
$$
These results will be applied to $\theta$-symmetric
refactorizations of $\v{G}$-data in the next section.


\section{Proofs of Theorems 6.6 and 6.7}
\label{sec:proofofthm}

In the proofs of Theorems~\ref{partialeqprob} and \ref{equivtheorem},
we apply results from the previous chapter to the group
$\v{\bG}=\bG\times\bG$ and to the $\v{G}$-orbit of the involution
$\theta$ of $\v{G}$ defined by $\theta(g_1,g_2)=(g_2,g_1)$,
$g_1$, $g_2\in G$. 

For the rest of this section, let 
$\Psi=(\vec\bG,y,\rho,\vec\phi)$ and $\dot\Psi
=(\vec{\dot\bG},\dot y,\dot\rho,\vec{\dot\phi})$ be
extended generic cuspidal $G$-data.
Assume that Hypotheses~C($\vec\bG$)
and C($\vec{\dot\bG}$) hold.

Recall that the $G$-datum $\widetilde{\dot\Psi}
=(\vec{\dot\bG},\dot y,\tilde{\dot\rho},\vec{\dot\phi}^{-1})$,
as defined in Section~\ref{sec:contragredients},
has the property that $K(\widetilde{\dot\Psi})
=K(\dot\Psi)$ and $\kappa(\widetilde{\dot\Psi})$
is contragredient to $\kappa(\dot\Psi)$, as
shown in Theorem~\ref{contrathm}.
Hence $\pi(\widetilde{\dot\Psi})$ is contragredient
to $\pi(\dot\Psi)$.

Let $\v{\Psi}=\Psi\times\widetilde{\dot\Psi}$
be the product $\v{G}$-datum associated to
$\Psi$ and $\widetilde{\dot\Psi}$.
As shown in Lemma~\ref{productgenericity},
$\v{\Psi}$ is a generic cuspidal $\v{G}$-datum,
and $\kappa(\v{\Psi})\simeq \kappa(\Psi)
\times \kappa(\widetilde{\dot\Psi})$.

\begin{lemma}\label{symprop} 
Let $\theta$ be the involution of $\v{G}$ defined
above. Suppose that there exists a $\theta$-symmetric
refactorization of $\v{\Psi}$.
Then 
\item{(1)} $\vec\bG=\vec{\dot\bG}$.
\item{(2)} The depths of $\phi_i$ and $\dot\phi_i$ are equal,
for all $i\in \{\,0,\dots,d\,\}$, where $d$ is the degree of 
$\Psi$ (and of $\dot\Psi$).
\item{(3)} $[y]=[\dot y]$.
\item{(4)} $K(\Psi)=K(\dot\Psi)$ and $K^0(\Psi)=K^0(\dot\Psi)$.
\item{(5)} $\kappa(\Psi)\simeq\kappa(\dot\Psi)$ if and only
if $\rho^\prime(\Psi)\simeq \rho^\prime(\dot\Psi)$.
\end{lemma}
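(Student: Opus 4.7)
The plan is to extract all structural information from the hypothesized $\theta$-symmetric refactorization $\Xi$ of $\v{\Psi}$. Since refactorization preserves the Levi sequence, the point $\v{y}=(y,\dot y)$, and the depth sequence (Lemma~\ref{effones}), $\Xi$ has the same $\vec{\v{\bG}}$, $\v{y}$, and quasicharacter depths as $\v{\Psi}=\Psi\times\widetilde{\dot\Psi}$. The proof proceeds by translating $\theta$-symmetry of $\Xi$ into information about $\Psi$ and $\dot\Psi$.

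For (1)--(4), I would apply the product construction of Section~\ref{sec:Yuproducts} to write $\v{\bG}^i=\bH^{(1),i}\times \bH^{(2),i}$, where the two halves arise from interleaving $\vec\bG$ and $\vec{\dot\bG}$. The $\theta$-stability $\theta(\v{\bG}^i)=\v{\bG}^i$ (with $\theta$ swapping factors) forces $\bH^{(1),i}=\bH^{(2),i}$ for every $i$. At $i=0$ this gives $\bG^0=\dot\bG^0$; since the sequence is strictly increasing and the product-construction rule grows only the half whose current character has depth $r_i$, both halves must grow simultaneously at each step. A straightforward induction on $i$ then forces $\vec{\dot r}=\vec r$ (yielding (2)) and $\vec\bG=\vec{\dot\bG}$ (yielding (1)). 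For (3), we use $\cB_{\rm red}(\v{\bG},F)=\cB_{\rm red}(\bG,F)\times\cB_{\rm red}(\bG,F)$, so $[\v{y}]=([y],[\dot y])$ and $\theta$-fixity of $[\v{y}]$ reduces to $[y]=[\dot y]$. Statement (4) is then immediate from (1)--(3) via the explicit definitions of $K$ and $K^0$.

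For (5), the plan is to identify $\Hom_K(\kappa(\Psi),\kappa(\dot\Psi))$ with $\Hom_{K^0}(\rho^\prime(\Psi),\rho^\prime(\dot\Psi))$ by applying the factorization theory at the end of Section~\ref{sec:groupHeis} to $\Xi$. Since the relevant sign character is trivial in the group case (Lemma~\ref{grtrivchar}), that factorization gives
\[
\Hom_{\v{K}^\theta}(\kappa(\Xi),1)\cong \Hom_{\v{K}^{0,\theta}}(\rho(\Xi),1).
\]
Refactorization preserves $\kappa$ up to equivalence (Proposition~\ref{refactorequiv}), so the left-hand side equals $\Hom_{\v{K}^\theta}(\kappa(\v{\Psi}),1)$; the decomposition $\kappa(\v{\Psi})\simeq \kappa(\Psi)\times\kappa(\widetilde{\dot\Psi})\simeq \kappa(\Psi)\times\tilde\kappa(\dot\Psi)$ (Lemma~\ref{productgenericity} and Theorem~\ref{contrathm}), combined with the identification of $\v{K}^\theta$ with the diagonal copy of $K$, rewrites this as $\Hom_K(\kappa(\Psi),\kappa(\dot\Psi))$. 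On the right, $\theta$-symmetry forces each quasicharacter of $\Xi$ to have the form $\alpha_i\times\alpha_i^{-1}$, so its restriction to the diagonal $\v{K}^{0,\theta}$ is trivial; together with the refactorization-invariance $\rho^\prime(\Xi)=\rho^\prime(\v{\Psi})$ and the product identity $\rho^\prime(\v{\Psi})\simeq \rho^\prime(\Psi)\times\widetilde{\rho^\prime(\dot\Psi)}$, this rewrites the right-hand side as $\Hom_{K^0}(\rho^\prime(\Psi),\rho^\prime(\dot\Psi))$. Both Hom-spaces are at most one-dimensional by Schur's lemma (all representations involved being irreducible), so one vanishes exactly when the other does, proving (5).

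The principal obstacle is the combinatorial bookkeeping in (1), translating $\theta$-stability of the interleaved Levi into equality of the original sequences. This requires careful case analysis of the product-construction rule, particularly at the top level where $\phi_d$ or $\dot\phi_d$ may be trivial and the depth $r_d$ is defined by a max convention. Beyond this bookkeeping, the argument reduces to cleanly assembling the Heisenberg factorization theory and the contragredient calculus already established in earlier sections.
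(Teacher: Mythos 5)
Your proposal is correct and, for (3), (4), and (5), matches the paper's proof essentially step for step: the factorization theory at the end of Section~\ref{sec:groupHeis} together with the triviality of $\eta'_\theta$ in the group case (Lemma~\ref{grtrivchar}) reduces $\Hom_{\v{K}^\theta}(\kappa(\Xi),1)$ to $\Hom_{\v{K}^{0,\theta}}(\rho'(\Xi),1)$, and the refactorization-invariance of $\kappa$ and $\rho'$ plus the product/contragredient identities transport this to the desired biconditional. The one place you diverge slightly is in (1) and (2): the paper treats (1) by the same $\theta$-stability argument you give, but for the interior depths in (2) it appeals to the genericity requirement on the product quasicharacters (via Lemma~\ref{geeee}), whereas you instead extract $\vec{r}=\vec{\dot r}$ directly from the "both halves grow simultaneously" combinatorics of the product construction. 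Your route is self-contained and arguably cleaner, since the conclusion $\bH^{(1),i}=\bH^{(2),i}$ for all $i$ already forces the growth steps of the two interleaved sequences to coincide. You are right to flag the top-level case (where $\phi_d$ or $\dot\phi_d$ could be trivial and $r_d$ is fixed by the max convention) as the place that needs separate care; the paper handles it by an unelaborated "clear from the definition," and the cleanest way to close it in your framework is to note that a weakly $\theta$-symmetric $\check{\v\phi}_d$ has the form $\alpha\times\alpha^{-1}$, so Condition~\textbf{F1} forces $\phi_{d_1}$ and $\dot\phi_{d_2}$ to agree on $G_{y,r_{d-1}^+}$, hence to have equal depth (and to be simultaneously trivial or not).
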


\begin{proof} Let $\v{\check\Psi}$ be a $\theta$-symmetric
refactorization of $\v{\Psi}$. As the twisted Levi
sequence for $\v{\check\Psi}$ is the same as that
for $\v{\Psi}$, the fact that each subgroup occurring
in the twisted Levi sequence is $\theta$-stable translates
into statement~(1). Part~(3) also follows immediately
from the definition
of $\theta$-symmetric refactorization.
It is also clear from that definition, and
from the definition of $\v{\Psi}$ that if
the depths of $\phi_d$ and $\dot\phi_d$
are distinct, there cannot exist a
$\theta$-symmetric refactorization
of $\v{\Psi}$. 
For the rest of (2), in view of (1), the fact that $\v{\Psi}$ is a 
generic cuspidal
$\v{G}$-datum requires that for each $i\in \{\,0,\dots,d-1\,\}$
the quasicharacter of
${G}^i\times G^i$ that occurs in $\v{\Psi}$
is $G^{i+1}\times G^{i+1}$-generic.
Looking at the definition of $\v{\Psi}$,
the only way that this can happen
is if $\phi_i$ and $\dot\phi_i$
are of equal depth.

Part~(4) is an immediate consequence of Parts~(1)--(3).

Let $K=K(\Psi)=K(\dot\Psi)$, and $K^0=K^0(\Psi)=K^0(\dot\Psi)$.
Let $\tilde\rho^\prime(\dot\Psi)$ and $\tilde\kappa(\dot\Psi)$
be the contragredients of $\rho^\prime(\dot\Psi)$ and
$\kappa(\dot\Psi)$, respectively.
Referring back to the discussion at the end of
Section~\ref{sec:groupHeis}, we see that, because
$\v{\check\Psi}$ is a $\theta$-symmetric refactorization
of $\v{\Psi}$, the character
$\eta'_\theta (\v{\check\Psi})$ is trivial,
and thus we have
$$
\Hom_{(K\times K)^\theta}(\kappa(\v{\check\Psi}),1)
\simeq \Hom_{(K^0\times K^0)^\theta}(\rho(\v{\check\Psi}),1)
=
\Hom_{(K^0\times K^0)^\theta}(\rho^\prime(\v{\check\Psi}),1).
$$
Combining this with 
$$\kappa(\v{\check\Psi})\simeq
\kappa(\v{\Psi})\simeq \kappa(\Psi)\times \tilde{\kappa}(\dot\Psi)
$$
and
$$
\rho^\prime(\v{\check\Psi}) \simeq \rho^\prime(\v{\Psi})\simeq
\rho^\prime(\Psi)\times \tilde{\rho}^\prime(\dot\Psi)
$$
we obtain
$$
\Hom_{(K\times K)^\theta}(\kappa(\Psi)\times 
\tilde{\kappa}(\dot\Psi),1)\simeq \Hom_{(K^0\times K^0)^\theta}
(\rho^\prime(\Psi)\times \tilde{\rho}^\prime(\dot\Psi),1).
$$
This relation is equivalent to the statement in Part~(5).
\end{proof}

\begin{proof} [Proof of Theorem~\ref{partialeqprob}]
Suppose that $\Psi$ and $\dot\Psi$ are $G$-equivalent. It follows from
properties of $G$-equivalence and Proposition \ref{refactorequiv}
that there
exists $g\in G$ such that $K(\dot\Psi)={}^gK(\Psi)$ and $\kappa(\dot\Psi)
\simeq {}^g \kappa(\Psi)$. This implies that $\pi(\dot\Psi)\simeq
\pi(\Psi)$. Similarly, if $K(\Psi)=K(\dot\Psi)$ and $\Psi$ and $\dot\Psi$
are $K=K(\Psi)$-equivalent, then Proposition \ref{refactorequiv}
yields $\kappa(\dot\Psi)\simeq \kappa(\Psi)$.

Suppose that $\pi(\Psi)\simeq \pi(\dot\Psi)$.  Let $\tilde\pi(\dot\Psi)$
be the contragredient of $\pi(\dot\Psi)$. Because
$\pi(\v{\Psi})$ is equivalent to $\pi(\Psi)\times\tilde{\pi}(\dot\Psi)$,
we have 
$\Hom_{\v{G}^\theta}(\pi(\v{\Psi}),1)\not=0$.
Hence $\langle \Theta ,\xi\rangle_{\v{G}}\ne 0$, where $\Theta$ is the 
$\v{G}$-orbit of $\theta$ 
and $\xi$ is the $K(\v{\Psi})$-equivalence class of $\v{\Psi}$. 
Hence $\langle \Theta',\xi\rangle_{K(\v{\Psi})}\ne 0$ for some 
$K(\v{\Psi})$-orbit $\Theta'$ contained in $\Theta$.  After replacing $\v{\Psi}$ by a 
$\v{G}$-conjugate, we may assume that $\Theta'$ contains the involution 
$\theta$.
We note that replacing $\v{\Psi}$ by a $\v{G}$-conjugate is equivalent to 
replacing $\Psi$ and $\dot\Psi$ by $G$-conjugates. 
Hence to complete the proof of~(1), it suffices
to show that if $\Theta^\prime$ is the $K(\v{\Psi})$-orbit
of $\theta$, then $\langle\Theta^\prime,\xi\rangle_{K(\v{\Psi})}\ne
0$ implies that $K(\Psi)=K(\dot\Psi)$ and
$\Psi$ and $\dot\Psi$ are $K(\Psi)$-equivalent.
According to Theorem \ref{maindimformula}(4), there exists a 
$\theta$-symmetric 
element $\v{\check\Psi}\in \xi$. As observed at the end of
Section~\ref{sec:groupHeis},
$\eta^\prime_\theta(\v{\check\Psi})$ is trivial and
$$
0\not=\langle\Theta^\prime,\xi\rangle_{K(\v{\Psi})}=
\dim {\Hom}_{K^0(\v{\check\Psi})^\theta}
(\rho^\prime(\v{\check\Psi}),1).
$$

There exist generic cuspidal $G$-data $\Psi^\prime$
and $\dot\Psi^\prime$ such that
such that
\begin{itemize}
\item $K(\Psi^\prime)=K(\Psi)$ and $\Psi^\prime$ is 
$K(\Psi)$-equivalent to $\Psi$,
\item $K(\dot\Psi^\prime)=K(\dot\Psi)$ and $\dot\Psi^{\prime}$ is 
$K(\dot\Psi)$-equivalent to $\dot\Psi$,
\item $\v{\check\Psi} = \Psi^{\prime}\times \widetilde{\dot\Psi}^\prime$.
\end{itemize}

Applying Lemma~\ref{symprop} to the $\theta$-symmetric
$\v{G}$-datum $\v{\check\Psi}$, and using 
the first two items above, we find that $[y]=[\dot y]$,
$\vec\bG=\vec{\dot\bG}$, and 
$K(\Psi^\prime)=K(\dot\Psi^\prime)$. 

To finish the proof of Part~(1) of the theorem, it suffices
to show that $\Psi^\prime$ and $\dot\Psi^\prime$ are 
$K(\Psi^\prime)$-equivalent.
In view of the above conditions, 
to do that, it remains to show that $\rho^\prime(\Psi^\prime)$ and 
$\rho^\prime(\dot\Psi^\prime)$ are equivalent.
This last equivalence is immediate from
$\Hom_{K^0(\v{\check\Psi})^\theta}(\rho^\prime(\v{\check\Psi}),1)
\not=0$ together with the equivalence of 
$\rho^\prime(\v{\check\Psi})$ and $\rho^\prime(\Psi^\prime)
\times \tilde{\rho}^\prime(\dot\Psi^\prime)$. (Here,
$\tilde{\rho}^\prime(\dot\Psi^\prime)$ denotes the
contragredient of $\rho^\prime(\dot\Psi^\prime)$.)

To complete the proof of the remaining direction of Part~(2) of 
Theorem \ref{partialeqprob},
we assume that $K(\Psi)=K(\dot\Psi)$ and $\kappa(\Psi)\simeq 
\kappa(\dot\Psi)$. Then from the equivalence of $\kappa(\v{\Psi})$
and $\kappa(\Psi)\times \kappa(\widetilde{\dot\Psi})$,
we have 
$\Hom_{K(\v{\Psi})^\theta}(\kappa(\v{\Psi}),1)\not=0$.
As shown above, the latter inequality
implies that $\Psi$ and $\dot\Psi$ are $K(\Psi)$-equivalent.
\end{proof}

\begin{proof} [Proof of Theorem~\ref{equivtheorem}]
Suppose that $K^0=\dot K^0$. It is easy to see that
if two $F$-subgroups
of $\bG$ have the property that their $F$-rational
points share a neighbourhood of the identity that is open
in both subgroups, then the subgroups are equal.
Hence $K^0=\dot K^0$ implies that $\bG^0=\dot{\bG}^0$.
Since $[y]$ and $[\dot y]$ are vertices in the
reduced building of $G^0$, if $[y]\not=[\dot y]$,
there exists an element of $G^0$ which moves
$[y]$ and fixes $[\dot y]$. 
Combining this with the fact that $K^0$ and $\dot K^0$ are the
stabilizers in $G^0$
of $[y]$ and $[\dot y]$, respectively,
we have that 
$K^0\not=\dot K^0$ whenever $[y]\not=[\dot y]$.
Thus, as we have assumed that $K^0=\dot K^0$,
we must have $[y]=[\dot y]$.

Suppose that $\rho^\prime(\Psi)\simeq \rho^\prime(\dot\Psi)$.
The above equivalence implies that $\phi$ and
$\dot\phi$ agree on $G^0_{y,0^+}$.
Note that $\phi(\v{\Psi})=\phi\times\dot\phi^{-1}$
and $K_+^0(\v{\Psi})=G_{y,0^+}^0\times G_{y,0^+}^0$.
Hence $\phi(\v{\Psi}) \,|\, K_+^0(\v{\Psi})^\theta
\equiv 1$. Applying Lemma~\ref{extra}, we
conclude that there exists a weakly $\theta$-symmetric
refactorization of $\v{\Psi}$. Since
the assumption $[y]=[\dot y]$ guarantees that
any weakly $\theta$-symmetric refactorization
of $\v{\Psi}$ is $\theta$-symmetric,
we may apply Lemma~\ref{symprop} to conclude
that $K(\Psi)=K(\dot\Psi)$ and $\kappa(\Psi)
\simeq\kappa(\dot\Psi)$.
Adding in conjugation by an element $g\in G$,
the above argument shows that (2) implies (3)
and (3) implies (1).

Assume (1). Then $\Psi$ and $\dot\Psi$ are $G$-equivalent,
by Theorem~\ref{partialeqprob}. Hence there
exists $g\in G$ such that $K(\Psi)=K({}^g\dot\Psi)={}^gK(\dot\Psi)$,
and $\Psi$ and ${}^g\dot\Psi$ are $K=K(\Psi)$-equivalent.
It then follows from the definition of $K$-equivalent
$(G,K)$-data that there exists $k\in K$
with $K^0(\Psi)={}^{kg}K^0(\dot\Psi)$
and $\rho^\prime(\Psi)\simeq \rho^\prime({}^{kg}\dot\Psi)
={}^{kg}\rho^\prime(\Psi)$. Hence (1) implies (2).

Assume (3). That is, assume that there exists
$g\in G$ with $K^0={}^g\dot K^0$, $\vec\bG={}^g\vec{\dot\bG}$,
and
$$
\rho^\prime(\Psi)\simeq {}^g\rho^\prime(\dot\Psi)
=\rho^\prime({}^g\dot\Psi).
$$
Recall that $\rho^\prime(\Psi)=\rho(\Psi)\otimes\phi\,|\, K^0$,
and ${}^g\rho^\prime(\dot\Psi)={}^g(\rho(\dot\Psi)\otimes\dot\phi\,|\, \dot K^0)$. Inducing up to $G^0$, we obtain equivalence
of $\pi_{-1}\otimes\phi$ and ${}^g(\dot\pi_{-1}\otimes\dot\phi)$.
Hence (3) implies (5), (which in turn implies (4)).

Finally, assume (4). Let $\Psi^0=((\bG^0),\rho,y,\phi)$
and $\dot\Psi^0=(({\bG}^0),{}^g\dot\rho,g\cdot\dot y,{}^g\dot\phi)$.
Then $\Psi^0$ and $\dot\Psi^0$ are extended generic cuspidal
$G^0$-data. By assumption, the corresponding supercuspidal
 representations
$\pi(\Psi^0)$ and $\pi(\dot\Psi^0)$ of $G^0$ are equivalent.
From the implication (1) implies (2) for these representations,
we see that there
exists $h\in G^0$ such that $K(\Psi^0)={}^h K(\dot\Psi^0)$
and $\rho^\prime(\Psi^0)\simeq {}^h \rho^\prime(\dot\Psi^0)$.
Noting that $\rho^\prime(\Psi^0)=\rho^\prime(\Psi)$
and $\rho^\prime(\dot\Psi^0)={}^g\rho^\prime(\dot\Psi)$,
we conclude that (2) holds. Thus (4) implies (2).
\end{proof}

\backmatter

\bibliographystyle{amsalpha}

\Printindex{HM-index}{Index of Terminology}
\Printindex{HM-notations}{Index of Notations}
\end{document}